\documentclass[12pt]{amsart}

\usepackage{amsmath} 
\usepackage{amsthm} 
\usepackage{amssymb} 

\usepackage{microtype} 
\usepackage{pinlabel} 

\newcommand{\sqdiamond}{\tikz [x=1.2ex,y=1.85ex,line width=.1ex,line join=round, yshift=-0.285ex] \draw  (0,.5) -- (0.75,1) -- (1.5,.5) -- (0.75,0) -- (0,.5) -- cycle;}%
\renewcommand{\Diamond}{$\sqdiamond$} 

\usepackage[scaled=0.9]{sourcecodepro} 

\usepackage{enumitem}
\usepackage{color}
\usepackage{subfig, caption}

\usepackage{wrapfig}
\usepackage{pdflscape}
\usepackage{array}
\usepackage{tikz-cd}
\usepackage{longtable, booktabs}

\usepackage[hidelinks, pagebackref]{hyperref}

\makeatletter
\define@key{href}{font}{#1}
\makeatother
\usepackage{xpatch}
\newcommand\hrefdefaultfont{\ttfamily}
\xpatchcmd\href{\setkeys{href}{#1}}{\setkeys{href}{font=\hrefdefaultfont,#1}}{}{\fail}

\renewcommand*{\backref}[1]{}
\renewcommand*{\backrefalt}[4]{
  \ifcase #1 
  [No citations.]
  \or [#2]
  \else [#2]
  \fi }

\usepackage{cite}
\makeatletter
\newcommand{\citecomment}[2][]{\citenum{#2}#1\citevar}
\newcommand{\citeone}[1]{\citecomment{#1}}
\newcommand{\citetwo}[2][]{\citecomment[,~#1]{#2}}
\newcommand{\citevar}{\@ifnextchar\bgroup{,~\citeone}{\@ifnextchar[{,~\citetwo}{]}}}
\newcommand{\citefirst}{\@ifnextchar\bgroup{\citeone}{\@ifnextchar[{\citetwo}{]}}}
\newcommand{\cites}{[\citefirst}
\makeatother

\let\originalleft\left
\let\originalright\right
\renewcommand{\left}{\mathopen{}\mathclose\bgroup\originalleft}
\renewcommand{\right}{\aftergroup\egroup\originalright}

\newcommand{\calA}{\mathcal{A}}
\newcommand{\calB}{\mathcal{B}}

\newcommand{\calD}{\mathcal{D}}

\newcommand{\calL}{\mathcal{L}}
\newcommand{\calM}{\mathcal{M}}

\newcommand{\calO}{\mathcal{O}}
\newcommand{\calP}{\mathcal{P}}

\newcommand{\calR}{\mathcal{R}}
\newcommand{\calS}{\mathcal{S}}
\newcommand{\calT}{\mathcal{T}}
\newcommand{\calU}{\mathcal{U}}
\newcommand{\calV}{\mathcal{V}}
\newcommand{\calW}{\mathcal{W}}

\newcommand{\CC}{\mathbb{C}}

\newcommand{\HH}{\mathbb{H}}

\newcommand{\NN}{\mathbb{N}}

\newcommand{\RR}{\mathbb{R}}

\newcommand{\ZZ}{\mathbb{Z}}

\newcommand{\st}{\mathbin{\mid}} 
\newcommand{\from}{\colon} 
 
\newcommand{\homeo}{\mathrel{\cong}} 

\newcommand{\isom}{\cong} 

\newcommand*{\medcap}{\mathbin{\scalebox{1.5}{\ensuremath{\cap}}}}
\newcommand*{\medcup}{\mathbin{\scalebox{1.5}{\ensuremath{\cup}}}}

\newcommand{\cover}[1]{{\widetilde{#1}}}

\newcommand{\closure}[1]{{\overline{#1}}}

\newcommand{\bdy}{\partial} 
 
\newcommand{\link}{\operatorname{link}}

\newcommand{\CP}{\mathbb{CP}} 

\newcommand{\Stab}{\operatorname{Stab}}

\newcommand{\Isom}{\operatorname{Isom}} 

\input{header_article.tex}

\theoremstyle{plain}
\newtheorem{XXXtheoremQED}[equation]{Theorem} 
  {\pushQED{\qed}\begin{XXXtheoremQED}}
  {\popQED\end{XXXtheoremQED}}

\newcommand{\fakeenv}{} 

\newenvironment{restate}[2]  
{ 
 \renewcommand{\fakeenv}{#2} 
 \theoremstyle{plain} 
 \newtheorem*{\fakeenv}{#1~\ref{#2}} 
 \begin{\fakeenv}
}
{
 \end{\fakeenv}
}

\newenvironment{restated}[2]  
{ 
 \renewcommand{\fakeenv}{#2} 
 \theoremstyle{definition} 
 \newtheorem*{\fakeenv}{#1~\ref{#2}} 
 \begin{\fakeenv}
}
{
 \end{\fakeenv}
}

\usepackage{subfig}
\usepackage[margin=0.5cm]{caption}
\usepackage{xcolor}

\usepackage{graphicx}

\newcommand{\preacw}{\scalebox{0.7}{$\curvearrowleft$}}
\newcommand{\acw}{{\rotatebox[origin=c]{-90}{\preacw}}}
\newcommand{\sign}{\operatorname{sign}}

\renewcommand{\link}{\mathsf{L}}
\newcommand{\mat}{\textrm{mat}}
\newcommand{\bdyi}{\bdy_\infty}  

\newcommand{\Circle}{\calS^1(\calV)}
\newcommand{\Sphere}{\calS^2(\calV)}
\newcommand{\Link}{\link(\calV)}
\newcommand{\Disc}{\calD(\calV)}
\newcommand{\Mobius}{\calM(\calV)}

\newcommand{\acts}{\curvearrowright}

\newcommand{\subgroup}[1]{\langle #1 \rangle}

\newcommand{\Hull}{\textsf{Hull}}

\newsavebox{\FigEightVeer}
\newsavebox{\fibredSnappy}
\newsavebox{\fibredVeer}
\newsavebox{\NonfibredSnappy}
\newsavebox{\NonfibredVeer}

\begin{lrbox}{\FigEightVeer}
\small
\verb+cPcbbbiht_12+
\end{lrbox}
\begin{lrbox}{\fibredSnappy}
\small
\verb+m232+
\end{lrbox}
\begin{lrbox}{\fibredVeer}
\small
\verb+fLLQcbecdeehhnkei_12001+
\end{lrbox}
\begin{lrbox}{\NonfibredSnappy}
\small
\verb+s227+
\end{lrbox}
\begin{lrbox}{\NonfibredVeer}
\small
\verb+gLLAQbecdfffhhnkqnc_120012+
\end{lrbox}

\usepackage{capt-of}  

\makeatletter
\newcommand{\titlefigure}[1]{\def\@titlefigure{#1}}
\let\@titlefigure\@empty

\let\old@setabstract\@setabstract
\renewcommand{\@setabstract}{%
  \ifx\@titlefigure\@empty
  \else
    \begin{center}
      \@titlefigure
    \end{center}
    \vskip .5\baselineskip
  \fi
  \old@setabstract
}
\makeatother

\begin{document}

\title[To convergence actions]{From veering triangulations to convergence actions and back again}

\author[Manning]{Jason Fox Manning}
\address{\hskip-\parindent
        Department of Mathematics\\
        Cornell University\\
        Ithaca, New York, 14853, United States}
\email{jfmanning@cornell.edu}
\author[Schleimer]{Saul Schleimer}
\address{\hskip-\parindent
        Mathematics Institute\\
        University of Warwick\\
        Coventry CV4 7AL, United Kingdom}

\email{s.schleimer@warwick.ac.uk}
\author[Segerman]{Henry Segerman}
\address{\hskip-\parindent
        Department of Mathematics\\
        Oklahoma State University\\
        Stillwater, Oklahoma, 74078, United States\\}
\email{henry.segerman@okstate.edu}

\date{\today}

\titlefigure{%
\includegraphics[width = 0.8\textwidth]{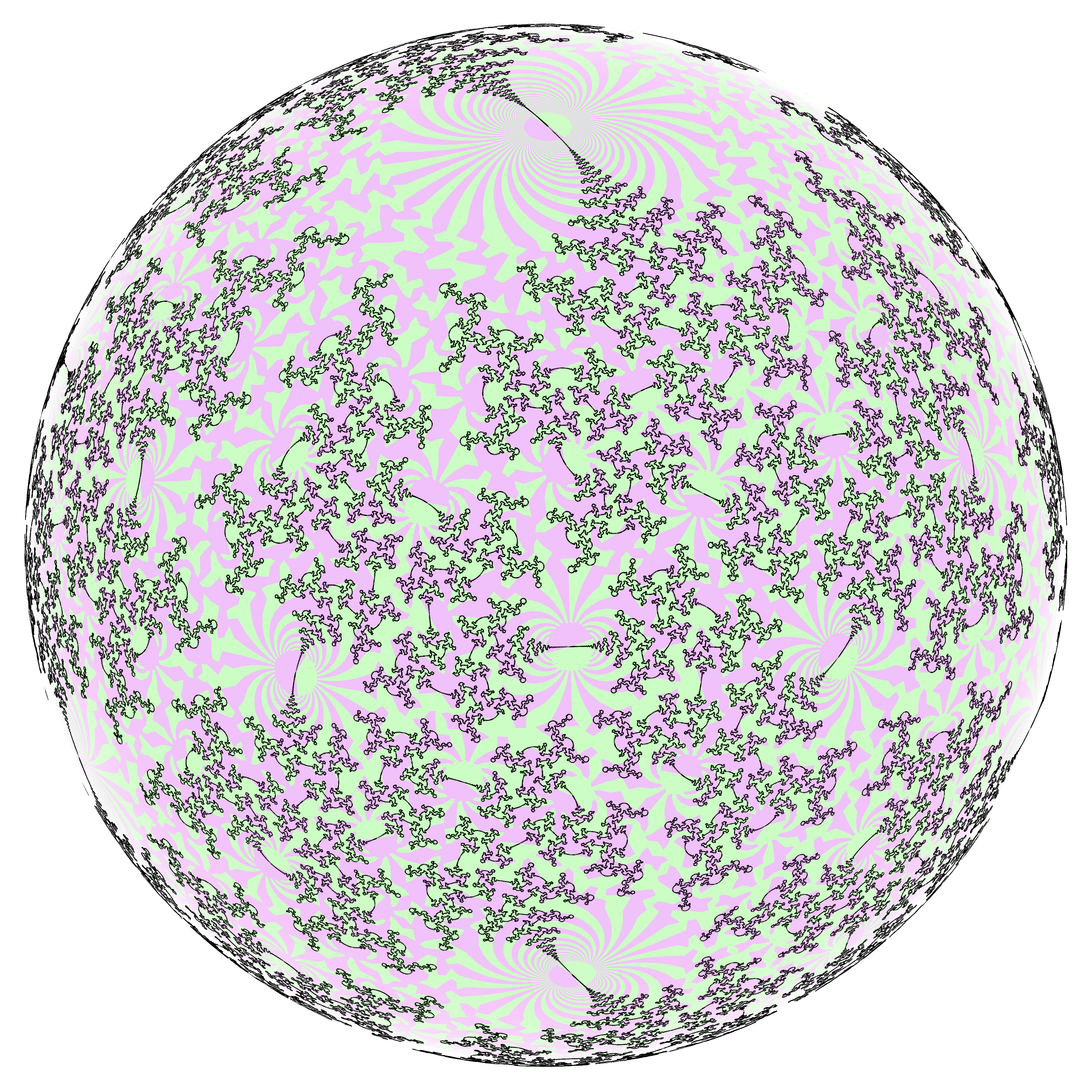}
\captionof{figure}{Two approximations to the Cannon--Thurston map associated to the figure-eight knot complement.
Thurston's technique is in black;
ours is the common boundary of the green and purple regions.}
\label{Fig:CT_fig8_intro}
}

\begin{abstract}
Suppose that $M$ is a finite-volume cusped hyperbolic three-manifold, equipped with a veering triangulation $\calV$.
We prove that the action of the fundamental group of $M$ on the veering two-sphere is a geometrically finite convergence action.
Applying a result of Yaman, we deduce that the veering two-sphere is equivariantly homeomorphic to the boundary of hyperbolic space. 

As an application, we obtain Cannon--Thurston maps associated to veering triangulations.  
If $\calV$ is layered we recover the classical Cannon--Thurston map.
If it is not we obtain Cannon--Thurston maps that do not come from surface subgroups. 
These are the first such examples in the cusped case.

Finally, we implement an algorithm to draw approximations of these Cannon--Thurston maps. 
This improves upon previous approximations obtained by Thurston and others. 
\end{abstract}






\vspace*{-2cm}
\maketitle

\section{Introduction}
\label{Sec:Intro}

\subsection{Motivation}

In a paper first circulated as a preprint in the 1980s, Cannon and Thurston~\cite{CannonThurston07} produce sphere-filling curves associated to closed hyperbolic three-manifolds that fibre over the circle.  
Let $M$ be such a fibred manifold with fibre $F$.
Set $G = \pi_1(M)$ and $H = \pi_1(F)$.  
Since $F$ is a closed surface, with genus at least two, we have $\bdyi H \homeo S^1$ and $\bdyi G \homeo S^2$.  
Cannon and Thurston show:
\begin{enumerate}
\item
\label{dir:mj} 
there is a continuous extension of $H\hookrightarrow G$ to a map $\bdyi H\to \bdyi G$, and
\item
\label{dir:dc} 
there is an action of $G$ on the circle $\bdyi H\homeo S^1$ (extending the action of $H$), with respect to which the above extension is equivariant, surjective, and sphere-filling.
\end{enumerate}
At least two overlapping but distinct mathematical directions emerge from this remarkable paper, giving rise to two meanings of the term ``Cannon--Thurston map''.
  
In the direction of~\eqref{dir:mj}, we consider a group $H$ acting properly by isometries on a Gromov hyperbolic space $X$.
We ask whether an orbit map extends continuously to a map from some boundary for $H$ to the Gromov boundary of $X$.
Such an extension is often called a Cannon--Thurston map.
For example, if $H$ is a geometrically finite kleinian group and $X$ is $\HH^3$, then many authors consider cases of this question, with a complete affirmative answer obtained by Mj~\cite{Mj17}.
When $H$ is a hyperbolic group, and $X$ is an ambient hyperbolic supergroup, the Cannon--Thurston map sometimes does not exist~\cite{BakerRiley13}.
Nonetheless, Mj and collaborators prove the existence of an extension in many circumstances;
see, for example~\cites
{Mj98top} 
{Mj98jdg} 
{MjSardar12}. 
Many of these ideas are extended to the relatively hyperbolic setting in~\cite{MjPal11}.
Much more on this direction can be found in Mj's ICM address~\cite{Mj18}. 

In the direction of~\eqref{dir:dc}, we fix a group $G$ and an appropriate boundary $Z$.
We ask whether there is an action $G\curvearrowright S^1$ leading to an equivariant surjection $S^1\to Z$.
When $G = \pi_1(M)$ for a hyperbolic three-manifold $M$, then $Z$ is the sphere at infinity of hyperbolic three-space.
 For a general hyperbolic (or relatively hyperbolic) group, $Z$ is instead the Gromov (or Bowditch) boundary of $G$.
From this point of view a \emph{Cannon--Thurston map} consists of an action of $G$ on a circle, and an equivariant map from that circle \emph{onto} $Z$.
This is the point of view we take in the current paper; 
see \refdef{CT}.

The first Cannon--Thurston maps not coming from fibrations are constructed by Fenley from pseudo-Anosov flows (without perfect fits) on three-manifolds~\cite{Fenley12}.
Flows with weaker hypotheses are shown to give rise to Cannon--Thurston maps in~\cites
{Frankel15} 
{Fenley16}. 
These kinds of Cannon--Thurston maps are the subject of much current research;
see~\cites{FLT26} 
{CalegariLoukidou26}. 
A different and quite recent construction of Cannon--Thurston maps is given by Buckminster~\cite{Buckminster26};
the domain of Buckminster's Cannon--Thurston map is the \emph{leftmost universal circle} constructed by Calegari--Dunfield from a taut foliation on a closed hyperbolic three-manifold in~\cite{CalegariDunfield03}.
In all of these papers, the Cannon--Thurston map arises from some structure on a closed hyperbolic manifold.   

These constructions do not obviously generalise to the cusped setting. 
Thurston anticipated work in the fibred cusped case by producing the first images of approximations to the Cannon--Thurston map for the figure-eight knot complement~\cite{Thurston82}.
His pictures were made rigorous by Alperin, Dicks, and Porti~\cite{AlperinDicksPorti99}.
Bowditch generalises the original Cannon--Thurston machinery to punctured surface groups in~\cite[Section~6]{Bowditch07};  
see also~\cite{Mj09}.

\subsection{Our results} 
We focus on the cusped, but not necessarily fibred, case.
We construct Cannon--Thurston maps from a structure which can only occur in the finite-volume cusped setting: namely, a \emph{veering triangulation} (\refdef{Veering}).

Suppose that $M$ is a complete and finite-volume cusped hyperbolic three-manifold.
In the case that $M$ is fibred, with monodromy $\varphi$ having no internal singularities, Agol~\cite{Agol11} constructs a (layered) veering triangulation $\calV$ of $M$.
This is an ideal triangulation whose combinatorics are canonically generated by $\varphi$.
Gu\'eritaud~\cite{Gueritaud16}
connects the combinatorics of the Cannon--Thurston map for $\varphi$ to the combinatorics of the triangulation $\cover{\calV}$ of the universal cover.

The more general (not necessarily layered) notion of a veering triangulation is introduced in~\cite{HRST11};
they give the first veering triangulations on non-fibred manifolds.
In previous work with Frankel~\cite{FSS22}, the second two authors construct 
the \emph{veering circle}, $\Circle$ (\refsec{Dynamics}),
from the data of the veering triangulation $\cover{\calV}$ of the universal cover $\cover{M}$.

In this paper, we collapse the veering circle to obtain the \emph{veering two-sphere}, $\Sphere$ (\refdef{VeeringSphere}).
Our main result is the following:

\begin{restate}{Theorem}{Thm:Convergence}
Suppose that $\calV$ is a finite locally veering triangulation of a three-manifold $M$. 
Then the action of $\pi_1(M)$ on the veering two-sphere $\Sphere$ is a geometrically finite convergence action.  
Furthermore, its parabolic points are exactly the cusps $\Delta_\calV$.
\end{restate}

Combined with the work of Yaman~\cite{Yaman04}, we obtain the following. 

\begin{restate}{Corollary}{Cor:Yaman}
Suppose that $\calV$ is a finite locally veering triangulation of a three-manifold $M$. 
Then the veering two-sphere $\Sphere$ is equivariantly homeomorphic to $\bdyi \HH^3$ (as equipped with the action of $\pi_1(M)$ coming from the discrete and faithful representation). 
\end{restate}

This, together with the map from $\Circle$ to $\Sphere$, gives a Cannon--Thurston map $\Psi_\calV$ for a veering triangulation (\refthm{VeeringCT}).
When the veering triangulation is layered, this recovers the Cannon--Thurston map for any carried fibration. 
When the veering triangulation is not layered, our Cannon--Thurston maps are new (\refcor{NotMadeThere}).
To our knowledge, these give the first examples of Cannon--Thurston maps associated to non-fibred finite-volume cusped hyperbolic manifolds.
(Although one could perhaps produce a construction using Thurston's slitherings paper~\cite{Thurston97}.)

The first illustrations of (approximations to) Cannon--Thurston maps are due to Thurston~\cites{Thurston82}{Thurston98}.
Like his, our machinery is explicit enough that we also obtain illustrations.
Our illustrations are significantly different from Thurston's, even in the fibred case, as shown in \reffig{CT_fig8_intro}.
In particular, we obtain better approximations while doing less computational work.
For details, see \refapp{Drawing}.
This answers a question of Fran\c{c}ois Gu\'eritaud~\cite{Gueritaud13}.

\subsection{Outline}

In \refsec{Triangulations} we recall the definitions of \emph{ideal}, \emph{taut}, and \emph{veering triangulations}.
We also review (from \cite{FSS22}) the \emph{veering circle} and its associated laminations.
In \refsec{Sphere} we define the \emph{veering two-sphere}.
In \refsec{Loom} we recall the definitions of \emph{loom} and \emph{link} spaces and a version of the \emph{astroid lemma} (\refsec{Astroid}).
In \refsec{Disc} we prove that the veering circle $\Circle$ compactifies the link space $\Link$, giving the \emph{veering disc} $\Disc$ (\refthm{Disc}).
In \refsec{Hausdorff} we characterise the possible Hausdorff limits of sequences of leaves (\refprop{LimitOfLeaves}) and then sequences of skeletal rectangles (\refprop{LimitOfRectangles}) in $\Disc$.

In \refsec{GeomFinite} we recall the definition of a \emph{geometrically finite convergence action}.
In \refsec{ObtainingConvergence} we prove that the action of $\pi_1(M)$ on the veering two-sphere is a convergence action (\refprop{Convergence}).
In Sections \ref{Sec:Parabolic},  \ref{Sec:LeafConical}, and  \ref{Sec:SingletonConical} we show that cusps are bounded parabolic points and all other points of $\Sphere$ are conical.
Proving that singletons are conical requires the machinery of \emph{hulls} between pairs of points of $\Disc$ (\refdef{Hull}).
We gather these results together and prove our main theorem in \refsec{MainProof}.
In \refsec{CannonThurston} we apply our results to Cannon--Thurston maps.
In particular, we prove that one can recover the veering triangulation $\calV$ from the Cannon--Thurston map $\Psi_\calV$ (\refthm{Equivalence}).

In \refapp{Action} we show that the action of $\pi_1(M)$ on $\Circle$ also recovers $\calV$ (\refthm{Isomorphism}).
In \refapp{InternalSing} we give a lemma that we will use in future work on classifying veering triangulations.
In \refapp{Drawing} the second and third authors give a new algorithm for drawing approximations to Cannon--Thurston maps.
Finally, \refapp{Topology} collects the necessary tools from point-set topology needed in Sections~\ref{Sec:Sphere} and~\ref{Sec:Disc}.

\subsection*{Acknowledgements} 

We thank Ian Agol, Nathan Dunfield, Fran\c{c}ois Gu\'eritaud, Katie Mann, and Sam Taylor for helpful conversations.

This material is based in part upon work supported by the National Science Foundation under Grant No.~DMS-1439786 and the Alfred P. Sloan Foundation award G-2019-11406 while the authors were in residence at the Institute for Computational and Experimental Research in Mathematics in Providence, RI, during the Illustrating Mathematics program.

This material is based in part upon work supported by the National Science Foundation under Grant No.~DMS-1928930, while the second and third authors were in residence at the Simons Laufer Mathematical Sciences Institute in Berkeley, California, during the Topological and Geometric Structures in Low Dimensions program.

The first author was visiting Cambridge University for part of this work and thanks Trinity College and DPMMS for their hospitality.
He also thanks the Simons Foundation for support (grants \#524176 and \#942496, JFM).

The third author visited Institut Henri Poincar\'e (UAR 839 CNRS-Sorbonne Universit\'e), supported by LabEx CARMIN (ANR-10-LABX-59-01) for part of this work. 
He was also supported in part by National Science Foundation grants DMS-1708239 and DMS-2203993.

\subsection*{Tool use declaration}

The second and third authors used Sagemath~\cite{sage}, Regina~\cite{regina}, SnapPy~\cite{snappy}, PyX~\cite{pyx}, and Rhinoceros~\cite{rhino} together with the Veering codebase~\cite{VeeringCodebase} to generate the images of approximations to Cannon--Thurston maps in this paper.

The second author used various AI models to suggest line-by-line copyediting in the final stages of writing.
Transcripts are available upon request.

\section{Triangulations}
\label{Sec:Triangulations}

We review the definition of an ideal triangulation.
See~\cite[Sections~4.1 and~4.2]{Thurston80} for more details.

\begin{definition}
\label{Def:IdealTriangulation}
Suppose that $(t_i)$ is a disjoint collection of tetrahedra.
Suppose that $(\phi_j)$ is a collection of linear homeomorphisms between the faces of the $t_i$ that pairs every face with exactly one other face.
Let $\calT$ be the result of quotienting the disjoint union of the $(t_i)$ by the face pairings $(\phi_j)$.

Suppose that $M$ is a three-manifold with boundary.
If $M - \bdy M$ is homeomorphic to $\calT$ minus its vertices, then we say that $\calT$ is an \emph{ideal triangulation} of $M$.
\end{definition}

We say that $\calT$ is \emph{finite} if the collection $(t_i)$ is finite.
We take $\cover{\calT}$ to be the induced ideal triangulation of the universal cover $\cover{M}$.
This gives a canonical bijection between $\Delta_\calT$, the ideal vertices of $\cover{\calT}$, and $\Delta_M$, the components of the boundary of the universal cover $\cover{M}$.
We refer to the elements of $\Delta_\calT$ or $\Delta_M$ as \emph{cusps}.

\subsection{Taut triangulations}
\label{Sec:Taut}

We review the definition of a taut ideal triangulation.
See~\cite[pages~370 and~371]{Lackenby00} for more details.

\begin{definition}
\label{Def:Taut}
Suppose that $\calT$ is an ideal triangulation with tetrahedra $(t_i)$.
Let $E$ be the collection of all edges of all tetrahedra (six per tetrahedron).
Let $\alpha \from E \to \{0,\pi\}$ be a map such that
\begin{itemize}
\item for each vertex $v$ of each tetrahedron $t_i$, the sum of the values assigned to the edges of $t_i$ incident to $v$ is $\pi$, and
\item for each collection of edges that are identified together in the quotient, the sum of the assigned values is $2\pi$.
\end{itemize}
Then we say that $\alpha$ is a \emph{taut angle structure} on $\calT$.
\end{definition}

We suppress the notation for $\alpha$ and refer to $\calT$ as a \emph{taut ideal triangulation}.
Smoothing the two-skeleton of a taut ideal triangulation according to the angles gives a branched surface $\calB = \calB_\calT$ (without vertices) properly embedded in $M$.
We call $\calB$ the \emph{horizontal branched surface}.

If $\calB$ admits a coorientation then we say that $\calT$ is \emph{transverse taut}.
A \emph{weighting} of $\calB$ is an assignment of (non-negative) real numbers to the faces.
A weighting satisfies the \emph{branched surface equation} at an edge $e$ if the sums of the weights, on either side (as determined by the smoothing) of $e$, are equal.

\begin{definition}
\label{Def:Layered}
Suppose that $\calT$ is a transverse taut triangulation of a manifold $M$.
We say that $\calT$ is \emph{layered} if the branched surface equations for $\calB$ have a positive solution.
\end{definition}

If $\calT$ is a layered triangulation of a manifold $M$ then $M$ can be realised as a surface bundle with fibre carried by $\calB$.
Conversely, if $M$ is fibred then it admits a layered triangulation.
See the first paragraph of~\cite[page~377]{Lackenby00}.

\begin{lemma}
\label{Lem:CommensurablePreservesLayered}
Suppose that $(M, \calS)$ and $(N, \calT)$ are commensurable.
Then $\calS$ is layered if and only if $\calT$ is. 
\end{lemma}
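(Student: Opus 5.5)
Commensurable here means there is a triangulated manifold $(P, \calR)$ together with finite covers $P \to M$ and $P \to N$ under which $\calR$ is the pulled-back triangulation of both $\calS$ and $\calT$. (If commensurability is instead defined via a chain of such common covers, the argument below applies at each step.) So it suffices to show: if $p \from (P, \calR) \to (M, \calS)$ is a finite cover of degree $d$, with $\calR$ the lift of $\calS$ together with its taut angle structure and coorientation, then $\calS$ is layered if and only if $\calR$ is. Note that $\calR$ is transverse taut exactly when $\calS$ is, since the coorientation lifts, and conversely a coorientation on $\calB_\calR$ descends because the covering maps preserve the lifted coorientation by construction.

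\emph{Layered downstairs implies layered upstairs.} Suppose $w$ is a positive solution of the branched surface equations for $\calB_\calS$. Define $\tilde w$ on faces of $\calB_\calR$ by $\tilde w(f) = w(p(f))$. This is positive. Take an edge $\tilde e$ of $\calR$ with image $e$. The covering map restricted to a small neighbourhood of $\tilde e$ is a homeomorphism onto a neighbourhood of $e$, and it respects the smoothing. Hence the faces on the two sides of $\tilde e$ map bijectively to the faces (counted with multiplicity of incidence) on the two sides of $e$. So the branched surface equation at $\tilde e$ is the one at $e$.

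\emph{Layered upstairs implies layered downstairs.} Suppose $\tilde w$ is a positive solution for $\calB_\calR$. Average it over fibres by setting
\[
w(f) = \sum_{\tilde f \in p^{-1}(f)} \tilde w(\tilde f).
\]
This is positive. For an edge $e$ of $\calS$, sum the equations at all edges $\tilde e \in p^{-1}(e)$. Each face incidence on a given side of $e$ lifts to exactly one face incidence, on the corresponding side, at exactly one lift of $e$. Hence the summed equation is precisely the branched surface equation for $w$ at $e$.

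\emph{Main obstacle.} The only delicate point is the local bookkeeping: a face may meet an edge more than once, and self-gluings can occur. Both directions therefore need to be phrased in terms of face-edge incidences (branches of $\calB$ at the edge) rather than faces. Once the equations are written as sums over incidences, with sides determined by the smoothing, which the covering preserves, the bijection of incidences under $p$ handles everything. Alternatively, one could use the surface-bundle characterisation, since finite covers of fibred manifolds are fibred with the lifted fibre carried. However, the converse, that a fibre carried upstairs descends, is less immediate, so the averaging argument is preferable.
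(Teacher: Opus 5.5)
Your proof is correct and follows the paper's own argument: lift a positive weighting to the common cover $(P,\calR)$, then push it down to the other triangulation by summing the weights of the finitely many preimages of each face. Your incidence-level bookkeeping makes explicit what the paper leaves implicit. One aside is wrong but unused: a coorientation on $\calB_\calR$ need not descend (a non-transverse taut triangulation can become transverse in a double cover), yet nothing depends on this, since the branched surface equations involve only the smoothing and not the coorientation.
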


\begin{proof}
Let $(P, \calR)$ be the common finite-sheeted cover.
Suppose that $\calS$ is layered.
By definition, the horizontal branched surface $\calB_\calS$ admits a positive weighting. 
Lifting, we find that $\calB_\calR$ admits a positive weighting.
We find a positive weighting for $\calB_\calT$ by, for any fixed face, adding the weights of its finitely many preimages.
Thus $\calT$ is layered.
\end{proof}

\subsection{Veering triangulations}
\label{Sec:Veering}

We review the definitions.
See~\cite[Definition~4.1]{Agol11}, \cite[Definition~1.3]{HRST11}, \cite[Definition~1.2]{FuterGueritaud13}, and~\cite[Section~1.2]{Gueritaud16} for more details.

\begin{definition}
\label{Def:Veering}
Suppose that $\calT$ is a transverse taut ideal triangulation of an oriented manifold $M$.
We pull back the orientation of $M$ to obtain orientations on the tetrahedra $t_i$.
We say that $\calT$ is \emph{transverse veering} if there is a two-colouring, in red and blue, of the edges of the one-skeleton of $\calT$ with the following property.
We pull the colouring back to the edges of each tetrahedron $t_i$.
For every vertex $v$ of $t_i$, the edges of $t_i$ incident to~$v$, taken in anticlockwise order as viewed from $v$, have
\begin{itemize}
\item angle $\pi$,
\item the colour blue, and
\item the colour red. \qedhere
\end{itemize}
\end{definition}

\begin{remark}
\label{Rem:LocallyVeering}
Throughout most of the paper we will simply refer to transverse veering triangulations as veering triangulations.
However, when the need arises, we consider taut triangulations (possibly not transverse) on three-manifolds (possibly not orientable).
We call such a taut triangulation \emph{locally veering} if it becomes transverse veering in a finite cover.
See~\cite[Definition~10.2]{FSS22}.
\end{remark}

\subsection{Hyperbolic geometry of veering triangulations}

By taking covers, one can promote a locally veering triangulation to a veering triangulation.
This allows us to apply the work of~\cite{HRST11}.

\begin{lemma}
\label{Lem:VeeringHyperbolic}
Suppose that $\calV$ is a finite locally veering triangulation of a three-manifold $M$.
Then the interior of $M$ admits a complete finite-volume hyperbolic metric.
\end{lemma}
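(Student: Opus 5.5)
The plan is to reduce to the transverse veering case and then apply the hyperbolisation result of Hodgson, Rubinstein, Segerman, and Tillmann~\cite{HRST11}. That result says that an orientable three-manifold admitting a (finite) transverse veering triangulation has interior which is hyperbolic: it is irreducible, atoroidal, and not Seifert fibred. The argument there goes through a strict angle structure; Casson--Lackenby show this gives such a structure, and Thurston's hyperbolisation for manifolds with torus boundary then produces a complete finite-volume hyperbolic metric.

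First I pass to a cover. By \refrem{LocallyVeering}, there is a finite-sheeted cover $N \to M$ such that the lifted triangulation $\calT$ is transverse veering. If needed, I also pass to the orientation double cover, which is still finite and still transverse veering: lift the coorientation and colouring, and take the pulled-back orientation. Since $\calV$ has finitely many tetrahedra and the cover is finite, $\calT$ is finite. Then~\cite{HRST11} applies, and the interior of $N$ admits a complete finite-volume hyperbolic metric.

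The main step, and the only real obstacle, is to descend from $N$ to $M$. For this I use geometrisation and Mostow--Prasad rigidity, or equivalently the following topological route. The interior of $M$ is finitely covered by a hyperbolic manifold. Finite covers of reducible, toroidal, or Seifert fibred manifolds contain essential spheres, tori, or Seifert structures detected in $\pi_1$. In particular, $\pi_1(M)$ contains no $\ZZ^2$ other than peripheral ones up to finite index, and has no infinite cyclic normal subgroup. Together with irreducibility of $M$ (via the equivariant sphere theorem and the fact that $\cover{M} = \cover{N}$ is contractible, hence $\RR^3$), this shows $M$ is hyperbolic by Thurston's hyperbolisation theorem, in the form for Haken manifolds with nonempty torus or Klein-bottle boundary.

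For nonorientable $M$, I would interpret ``hyperbolic metric'' as allowing orientation-reversing isometries. Equivalently, the deck involution of the orientation cover is realised by an isometry by Mostow--Prasad rigidity, and then I take the quotient. Finite volume is immediate, since the volume of $M$ is the volume of $N$ divided by the degree. Completeness passes to the quotient by a free isometric action. Finally, the boundary of $M$ consists of tori or Klein bottles, since these are finitely covered by the torus cusps of $N$. So the interior of $M$ is a complete finite-volume cusped hyperbolic manifold.
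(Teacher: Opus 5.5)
Your proposal is correct and follows the paper's proof. Pass to a finite cover in which the triangulation is transverse veering, invoke \cite{HRST11} (via strict angle structures) to hyperbolise that cover, and then descend to $M$. The paper discharges the descent by citing \cite[Theorem~8.36]{Kapovich10}, whereas you sketch it by hand; if you keep your sketch, ``every $\ZZ^2$ is peripheral up to finite index'' must still be upgraded to ``every $\ZZ^2$ is peripheral'' (no essential tori in $M$), and your Mostow--Prasad alternative needs freeness of the isometric involution (from torsion-freeness of $\pi_1(M)$) plus a Waldhausen-type step identifying the quotient with the interior of $M$.
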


\begin{proof}
It suffices to show the interior of some finite-sheeted cover of $M$ admits a complete finite-volume hyperbolic metric;
see~\cite[Theorem~8.36]{Kapovich10}.
Since $M$ admits a locally veering triangulation, there is a finite-sheeted cover $M' \to M$ which admits a veering triangulation.  
Appealing to~\cite[Theorem~1.5]{HRST11}, and the sentence immediately after it, we obtain the desired hyperbolic structure on this cover.
\end{proof}

\begin{remark}
\label{Rem:VeeringHyperbolic}
With \reflem{VeeringHyperbolic} in hand, for the remainder of the paper, whenever $M$ is a manifold with a finite (locally) veering triangulation we fix an identification between the universal cover of $M - \bdy M$ and hyperbolic three-space $\HH^3$.
\end{remark}

\subsection{Dynamics of veering triangulations}
\label{Sec:Dynamics}

From a (transverse) veering triangulation we build a \emph{veering circle} $\Circle$; 
it is equipped with an orientation as well as an orientation-preserving, continuous action of $\pi_1(M)$~\cite[Theorem~7.1]{FSS22}. 
If $x, y \in \Circle$ are distinct points then we use $(y, x)^\acw$ to denote the component of $\Circle - \{x, y\}$ which is anticlockwise of $y$ and clockwise of $x$.  
We use $[y, x]^\acw$ to denote the closure of $(y, x)^\acw$ in $\Circle$. 
The cusps $\Delta_\calV$ are a distinguished countable dense subset of $\Circle$.

The \emph{M\"obius band past infinity}, denoted $\Mobius$, is the space of unordered pairs of points in $\Circle$.  
There is a natural compactification of $\Mobius$ by $\Circle$.
In this larger space, a sequence $(\{x_n, y_n\})$ converges to a point $z \in \Circle$ if and only if $x_n \to z$ and $y_n \to z$.

We construct the \emph{upper and lower laminations} $\Lambda^\calV$ and $\Lambda_\calV$ as subsets of $\Mobius$~\cite[Theorem~8.1]{FSS22}. 
These are both invariant under the action of $\pi_1(M)$.  
See \reffig{Laminations} for examples of leaves, each represented by an arc in the disc.  
This drawing style leads us to the following terminology;
if $\lambda = \{x, y\}$ then we call $x$ and $y$ the \emph{ideal points} of $\lambda$.
Every leaf of both laminations is either a \emph{boundary} or \emph{interior leaf}~\cite[Definitions~8.13 and~8.14]{FSS22}.  
Every boundary leaf is associated to a unique cusp; 
the collection of boundary leaves, in each lamination, is dense.

Suppose that $c \in \Delta_\calV$ is a cusp.  
The \emph{upper crown}~\cite[Definition~9.5]{FSS22} of the cusp $c$, denoted $\Lambda^c \subset \Lambda^\calV$, is the collection of upper boundary leaves associated to $c$.  
We define the \emph{lower crown} $\Lambda_c\subset \Lambda_\calV$ similarly.  
We take the \emph{ideal points} of a crown to be the ideal points of its leaves, together with its cusp.
We refer to the other ideal points of a crown as its \emph{thorns}.

We now collect together (in order) 
Lemmas~7.15(1),
8.17(5), 
8.17(6),
8.19, 
9.2, 
and~9.9 
from~\cite{FSS22}. 

\begin{figure}[htbp]
\includegraphics[width=0.45\textwidth]{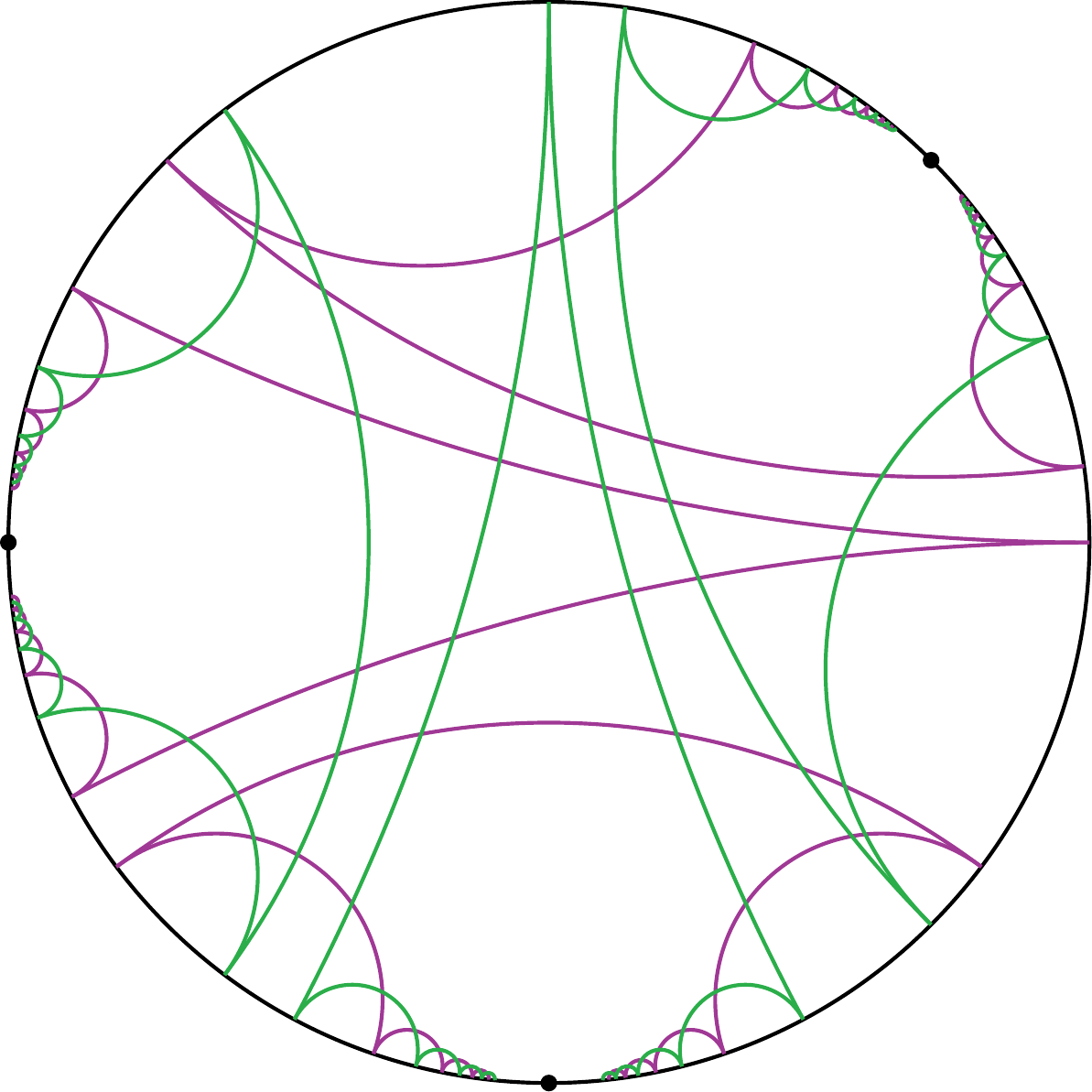}
\caption{A bit of the upper (green) and lower (purple) laminations. 
Here we show the six crowns associated to three cusps.}
\label{Fig:Laminations}
\end{figure}

\begin{lemma}
\label{Lem:Laminations}
\leavevmode
\begin{enumerate}
\item
\label{Itm:CrownZZ}
The leaves in a crown associated to a cusp $c$ accumulate on $c$ from both sides.
Also, $c$ is the unique accumulation point of any sequence of distinct leaves in the crown.
\item
\label{Itm:NoCusps}
No leaf of either lamination has an ideal point at any cusp. 
\item
\label{Itm:Adjacent}
Distinct leaves $\lambda$ and $\lambda'$ have a common ideal point $x$ if and only if 
they are adjacent boundary leaves of a common crown.
\item
\label{Itm:Approach}
Every leaf of either lamination is the limit of both boundary and interior leaves of that lamination.  
Interior leaves are limits from both sides while boundary leaves are only approached from the side opposite their associated cusp.
\item
\label{Itm:NoEndpoints}
No leaf $\lambda \in \Lambda^\calV$ shares an ideal point with any leaf $\mu \in \Lambda_\calV$. 
\item
\label{Itm:Interleave}
For any cusp $c$, the crowns $\Lambda^c$ and $\Lambda_c$ \emph{interleave}:
any pair of (non-cusp) ideal points of one crown is linked by some pair of (non-cusp) ideal points from the other.  \qed
\end{enumerate}
\end{lemma}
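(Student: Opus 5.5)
This lemma is a compilation rather than a new result. The paper says explicitly that it gathers, in order, Lemmas~7.15(1), 8.17(5), 8.17(6), 8.19, 9.2 and~9.9 of~\cite{FSS22}, and it ends with \qed. So the plan is to verify that each item matches the cited statement, and to check that the setting of~\cite{FSS22} applies here. That setting is a transverse veering triangulation, its veering circle $\Circle$, and the laminations $\Lambda^\calV$ and $\Lambda_\calV$ inside $\Mobius$.

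First I would recall the construction of $\Circle$ from~\cite{FSS22}. The cusps $\Delta_\calV$ are a dense subset, and the remaining points of the circle are added as limits along the upper and lower ``ladders''. The crown $\Lambda^c$ consists of the boundary leaves joining consecutive thorns around $c$. Given this, the items would be matched as follows.

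\begin{enumerate}
\item Item~\eqref{Itm:CrownZZ} is Lemma~7.15(1). The leaves of a crown are indexed by $\ZZ$ via the ladderpole edges at $c$, and in both directions they converge to $c$.
\item Items~\eqref{Itm:NoCusps} and~\eqref{Itm:Adjacent} are Lemma~8.17(5) and~(6). The endpoints of a leaf are non-cusp limits of nested rectangles. Two leaves can share an endpoint only when they come from consecutive edges of the same ladderpole.
\item Item~\eqref{Itm:Approach} is Lemma~8.19. It follows from density of boundary leaves and from the one-sidedness of crowns.
\item Item~\eqref{Itm:NoEndpoints} is Lemma~9.2. An upper and a lower leaf sharing an endpoint would give a shared limit point of an upper and a lower ladder, which is impossible.
\item Item~\eqref{Itm:Interleave} is Lemma~9.9. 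Upper and lower thorns of a cusp alternate with the fan of edges at $c$.
\end{enumerate}

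The main obstacle is hygiene rather than mathematics: making sure the conventions agree. Concretely, I would check that ``upper'' and ``lower'' mean the same thing here as in~\cite{FSS22}, and that ``ideal point'' and ``thorn'' correspond to their endpoint terminology. For the locally veering case, all of these statements are local to $\cover{\calV}$, so they pass to a finite transverse veering cover without change.
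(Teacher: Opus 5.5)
Your proposal is correct and takes the same approach as the paper: the lemma carries its own \qed and is justified only by citing Lemmas~7.15(1), 8.17(5), 8.17(6), 8.19, 9.2, and~9.9 of~\cite{FSS22}, in that order, which is exactly your item-by-item correspondence. Your informal sketches of why each cited result holds (ladders, ladderpoles, and so on) are extra and play no logical role; they should not be taken as the actual arguments of~\cite{FSS22}.
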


\section{The veering two-sphere}
\label{Sec:Sphere}

The goal of this section is to build the veering two-sphere $\Sphere$ as a quotient of the veering circle $\Circle$.  

\begin{definition}
\label{Def:DecompositionElementsCircle}
A \emph{decomposition element} in $\Circle$ is one of the following subsets of $\Circle$. 
\begin{enumerate}
\item
Suppose that $c \in \Delta_\calV$ is a cusp.  
Then $c$, together with the ideal points of $\Lambda^c$ and the ideal points of $\Lambda_c$ (its thorns) is a decomposition element.
We refer to this decomposition element as the \emph{clamshell boundary} associated to $c$.
\item
Suppose that $\lambda$ is an interior leaf of $\Lambda^\calV$ or of $\Lambda_\calV$.  
Then the pair of points $\lambda$ is a decomposition element.
\item  
Suppose that $x \in \Circle$ is not contained in one of the above.
Then the singleton set $\{x\}$ is a decomposition element.
\end{enumerate}
The decomposition element containing $x$ is denoted $[x]_{S^1}$. 
\end{definition}

We justify the name ``decomposition element'' with the following corollary of \reflem{Laminations}.

\begin{corollary}
\label{Cor:CircleDecomposition}
The decomposition elements partition $\Circle$. \qed
\end{corollary}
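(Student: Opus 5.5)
Every point lies in some decomposition element: if $x$ is not in a clamshell boundary and not an ideal point of an interior leaf, then $\{x\}$ is a decomposition element by the third clause of \refdef{DecompositionElementsCircle}. So the real task is to show that distinct decomposition elements are disjoint. Singletons are disjoint from everything else by construction, which leaves three kinds of overlap to rule out.

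\textbf{Two clamshell boundaries.} Suppose the clamshell boundaries of distinct cusps $c \neq c'$ share a point $x$.
\begin{itemize}
\item If $x = c$, then $c$ is an ideal point of a leaf of $\Lambda^{c'}$ or $\Lambda_{c'}$, since $c \neq c'$. This contradicts \reflem{Laminations}\refitm{NoCusps}. The case $x = c'$ is symmetric.
\item Otherwise $x$ is a thorn of both. By \reflem{Laminations}\refitm{NoEndpoints}, an upper leaf and a lower leaf never share an ideal point. So $x$ is an ideal point of two leaves in the same lamination, say $\lambda \in \Lambda^c$ and $\lambda' \in \Lambda^{c'}$. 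These are distinct, since each boundary leaf is associated to a unique cusp.
\item By \reflem{Laminations}\refitm{Adjacent}, $\lambda$ and $\lambda'$ are then adjacent boundary leaves of a common crown. That crown is $\Lambda^c$, because $\lambda$ belongs to it and has a unique associated cusp. It is also $\Lambda^{c'}$, for the same reason applied to $\lambda'$. Hence $c = c'$, a contradiction.
\end{itemize}

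\textbf{A clamshell boundary and an interior leaf.} Let $\mu$ be an interior leaf.
\begin{itemize}
\item $\mu$ cannot contain the cusp $c$, by \reflem{Laminations}\refitm{NoCusps}.
\item If $\mu$ shares a thorn with a boundary leaf $\lambda$ of $\Lambda^c$ or $\Lambda_c$, then by \refitm{NoEndpoints} the two leaves lie in the same lamination. They are distinct, since one is interior and the other is boundary. By \refitm{Adjacent}, $\mu$ would then be a boundary leaf, which is a contradiction.
\end{itemize}

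\textbf{Two interior leaves.} Distinct interior leaves sharing an ideal point would, by \refitm{NoEndpoints} and then \refitm{Adjacent}, both be boundary leaves, again a contradiction.

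The one point requiring care is the step "the common crown must be both $\Lambda^c$ and $\Lambda^{c'}$". It relies on the fact, recalled in \refsec{Dynamics}, that every boundary leaf is associated to a unique cusp. Once that is in hand, the remaining cases are direct applications of \reflem{Laminations}.
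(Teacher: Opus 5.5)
Your proof is correct and takes the same approach as the paper. The paper records this corollary as an immediate consequence of \reflem{Laminations}, with no written argument. Your case check, using \refitm{NoCusps}, \refitm{Adjacent} and \refitm{NoEndpoints} together with the uniqueness of the cusp associated to each boundary leaf, is exactly that omitted verification.
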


\begin{remark}
We abuse notation slightly and refer to $x \in \Circle$ as a \emph{singleton} if it is neither a cusp nor an ideal point of a leaf.
\end{remark}

\begin{definition}
\label{Def:VeeringSphere}
We denote the quotient of $\Circle$ by the decomposition elements as $\Sphere$.
We denote the quotient map by $\Psi_\calV \from \Circle \to \Sphere$.
We call $\Sphere$ the \emph{veering two-sphere}.   
\end{definition}

Since the decomposition elements are defined in terms of $\Lambda^\calV$ and $\Lambda_\calV$, and since these are invariant under the action of $\pi_1(M)$, the action descends to $\Sphere$.  
The main result of this section is the following.

\begin{theorem}
\label{Thm:Sphere}
The veering two-sphere $\Sphere$ is a two-sphere.  
Furthermore, the action of $\pi_1(M)$ on $\Sphere$ is continuous, faithful, and orientation preserving. 
Finally, if $\calV$ is finite then the action has dense orbits.
\end{theorem}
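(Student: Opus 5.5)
We will prove that $\Sphere$ is a two-sphere with Moore's theorem on upper semicontinuous decompositions, in the form that lets us replace $\Circle$ by a closed disc. Think of $\Circle$ as the boundary of a closed disc $D$. For each decomposition element $[x]_{S^1}$, take its convex hull $H_x$ in $D$, using the picture of leaves as chords from \reffig{Laminations}. A singleton gives a point. An interior leaf gives a chord. A clamshell boundary gives the closed region bounded by the leaves of $\Lambda^c$ and $\Lambda_c$ together with $c$; this is a closed set by \reflem{Laminations}\eqref{Itm:CrownZZ}. We then extend this to a decomposition $\calD$ of the sphere $S^2 = D \cup_{\Circle} D'$, where $D'$ is a second disc. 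The upper (green) hulls are placed in $D$, the lower (purple) hulls in $D'$, and the points not touched by any hull are singletons. A clamshell is the union of its upper hull in $D$ and its lower hull in $D'$. The quotient $S^2/\calD$ is then identical to $\Circle/\!\sim \,=\Sphere$, because every element of $\calD$ meets $\Circle$ in exactly one decomposition element. One checks this using \refcor{CircleDecomposition} and \reflem{Laminations}\eqref{Itm:NoEndpoints}: an upper hull and a lower hull can only meet when they come from the same cusp.

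Moore's theorem needs three properties. First, each element of $\calD$ must be a non-separating continuum. Chords and points are clearly continua of this kind. For a clamshell, the upper hull is a disc-like region whose boundary passes through $c$ and the thorns, and similarly for the lower hull. By \reflem{Laminations}\eqref{Itm:Interleave} the two hulls meet $\Circle$ in a set that is connected through $c$, and their union is cellular. Second, the elements must be pairwise disjoint. This uses \reflem{Laminations}\eqref{Itm:Adjacent} and~\eqref{Itm:NoEndpoints}, together with the fact that leaves within a single lamination are unlinked. Third, $\calD$ must be upper semicontinuous, and \textbf{this is the main obstacle}. We must show that if elements $g_n$ meet a neighbourhood of a point of $g$, then they eventually lie in any given neighbourhood of $g$. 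Equivalently, Hausdorff limits of elements lie inside elements. The plan is as follows.
\begin{itemize}
\item Limits of leaves are leaves, since each lamination is closed in $\Mobius$.
\item The only other limits are points of $\Circle$ at which leaves shrink.
\item A sequence of distinct crowns has diameters tending to zero, or else it converges into a leaf. We expect to use a finiteness and compactness argument for this, possibly deferring to the point-set tools of \refapp{Topology}.
\item A sequence of leaves that approach a crown from outside stays close to that crown. This follows from \reflem{Laminations}\eqref{Itm:Approach}: boundary leaves are approached only from the side away from their cusp.
\end{itemize}
Moore's theorem then gives $S^2/\calD \homeo S^2$.

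The remaining claims follow from this description. The action of $\pi_1(M)$ on $\Circle$ is by homeomorphisms and preserves the laminations, so it permutes the elements of $\calD$. It therefore descends to a continuous action on the quotient, since the quotient map is closed. The action is orientation preserving because the action on $\Circle$ is, and the induced action on $D \cup D'$ respects the two sides. For faithfulness, suppose $g$ acts trivially on $\Sphere$. Then $g$ fixes every clamshell, so it fixes every cusp in $\Delta_\calV$. These cusps are dense in $\Circle$, so $g$ is trivial on $\Circle$, and the action on $\Circle$ is faithful by \cite{FSS22}. Alternatively, $g$ fixes infinitely many parabolic points in $\HH^3$, so $g = 1$.

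For dense orbits when $\calV$ is finite, it suffices to show that every orbit in $\Circle$ is dense. Take any point $x$ and any open interval $I \subset \Circle$. Because boundary leaves are dense, $I$ contains both endpoints of some leaf, and hence the whole crown of some cusp $c$. We then use the dynamics from \cite{FSS22}: there is an element $g$ that is parabolic at $c$, or loxodromic with fixed points inside $I$, and powers of $g$ push any point other than its fixed points into $I$. If $x$ happens to be one of those fixed points, we replace $g$ by a different element.
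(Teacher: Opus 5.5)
Your overall route is the paper's: a pre-sphere made of two discs carrying $\Lambda^\calV$ and $\Lambda_\calV$ as chords, clamshells realised as the one-point union $D^c \cup D_c$ of two crown polygons, and Moore's theorem. Your upper semicontinuity is equivalent to the paper's closed equivalence relation. There are, however, two gaps in the sphere part.

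\textbf{Clamshell limits.} You leave the step you flag as the main obstacle as an expectation, and it is the heart of the argument. The paper's version (in Lemma~\ref{Lem:Equiv}) is short.
\begin{enumerate}
\item The upper polygons $D^n$ of a sequence of distinct clamshells are convex, so their Hausdorff limit is convex.
\item Distinct crown polygons are distinct complementary regions of the upper lamination. So they have disjoint interiors, the limit has empty interior, and its ideal set has at most two points.
\item With one ideal point the limit is a point. With two, $u$ and $v$, the vertices of $D^n$ split into clusters near $u$ and near $v$. The passage between clusters happens across exactly two sides of $D^n$, so the limiting chord is a limit of leaves, hence a leaf.
\item The limit of $D^n \cup D_n$ is connected. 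An upper leaf and a lower leaf could therefore only both occur by sharing an endpoint, which Lemma~\ref{Lem:Laminations}\eqref{Itm:NoEndpoints} forbids.
\end{enumerate}

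\textbf{Coverage.} You never check that the hulls cover the open discs. If some complementary region of the upper lamination were not a crown polygon, your ``points not touched by any hull'' would include singletons off $\Circle$, and $S^2/\calD$ would not be $\Sphere$. The paper uses Lemma~\ref{Lem:Laminations}\eqref{Itm:Approach}: a leaf bounding a gap is not approached from the gap's side. So it is a boundary leaf, the gap lies on its cusp side, and the gap is some $D^c$. With coverage in hand your identification is fine, and simpler than the paper's Lemma~\ref{Lem:Homeo}: the induced map $\Sphere \to S^2/\calD$ is a continuous bijection from a compact space to a Hausdorff one.

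\textbf{Orientation.} This is also not yet justified. A homeomorphism of $\Circle$ comes with no action on your discs that preserves the hulls, so ``the induced action on $D \cup D'$'' must be constructed. For example, act affinely on chords and by coning on crown polygons. You would then also need that the Moore quotient map is cell-like, hence of degree $\pm 1$, to transfer degrees. The paper avoids this by working in $\Sphere$ directly. The image of the open upper hemisphere is a countably branching dendrite, and the action preserves the circular order of the arcs at each branch point.

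\textbf{Smaller slips.}
\begin{itemize}
\item Non-separation of $D^c \cup D_c$ needs no interleaving: a wedge of two closed discs never separates $S^2$.
\item In the dense-orbit argument, a small interval $I$ does not contain a whole crown.
\item A parabolic element at $c$ need not push points into $I$: $\beta$ fixes $[c]_{S^1}$ pointwise. It suffices to pick a cusp $c \in I$ and $\gamma \in \Stab(c) - \subgroup{\beta}$. Then $\gamma^n(x) \to c$ for every $x$ by Proposition~\ref{Prop:ParabolicFacts}\eqref{Itm:gamma}. The paper simply cites \cite[Theorem~7.1]{FSS22}.
\end{itemize}
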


The proof is closely modelled on the discussion around~\cite[Theorem~5.7]{Thurston82}.  
Note that we have additional subtleties to deal with;
the complementary regions to our laminations have a countable infinity of sides. 
We proceed as follows.

\begin{definition}
\label{Def:PreSphere}
Let $S^2$ be the unit sphere in $\RR^3$;
let $Z = \{ z = 0 \}$ be the horizontal plane.
We identify $\Circle$ with the intersection $S^2 \cap Z$.
The orientation of $\Circle$ is anticlockwise as viewed from above.
Let $D^\calV$ and $D_\calV$ be the (closed) upper and lower hemispheres of $S^2$.
Using the hemisphere model of $\HH^2$ we identify their interiors with $\HH^2$.
We call this copy of $S^2$ the \emph{pre-sphere}.
\end{definition}

Suppose that $\lambda \in \Lambda^\calV$ is a leaf.
Let $[\lambda]^D \subset D^\calV$ be the intersection of $D^\calV$ with the vertical plane in $\RR^3$ containing the ideal points of $\lambda$.
Note that $[\lambda]^D$ is a closed arc embedded in the upper hemisphere of the pre-sphere.
We define 
\[
[\Lambda^\calV]^D = \bigcup_{\lambda \in \Lambda^\calV} [\lambda]^D 
\qquad
\mbox{and}
\qquad
[\Lambda^c]^D = \bigcup_{\lambda \in \Lambda^c} [\lambda]^D
\]
We use similar notation, $[\lambda]_D$, for the lower lamination $\Lambda_\calV$ extended into the lower hemisphere $D_\calV$.

Fix a cusp $c \in \Delta_\calV$.
We define $D^c$ to be the closure of the component of $D^\calV -  [\Lambda^c]^D$ that is disjoint from $[\Lambda^\calV]^D$.
By \reflem{Laminations}\refitm{CrownZZ} and~\refitm{Adjacent} we find that $D^c$ is the closure of an ideal polygon with countably many sides, meeting $\Circle$ in countably many points, whose only accumulation point in $S^2$ is at the cusp $c \in \Circle$.  
We define $D_c$ similarly.
By \reflem{Laminations}\refitm{NoEndpoints}, we have that $D^c \cap D_c = \{c\}$.  
Thus $D^c \cup D_c$ is the one-point union of two closed discs in the pre-sphere. 
See \reffig{DecompositionElementsOnPreSphere}.

\begin{figure}[htbp]
\includegraphics[width=0.65\textwidth]{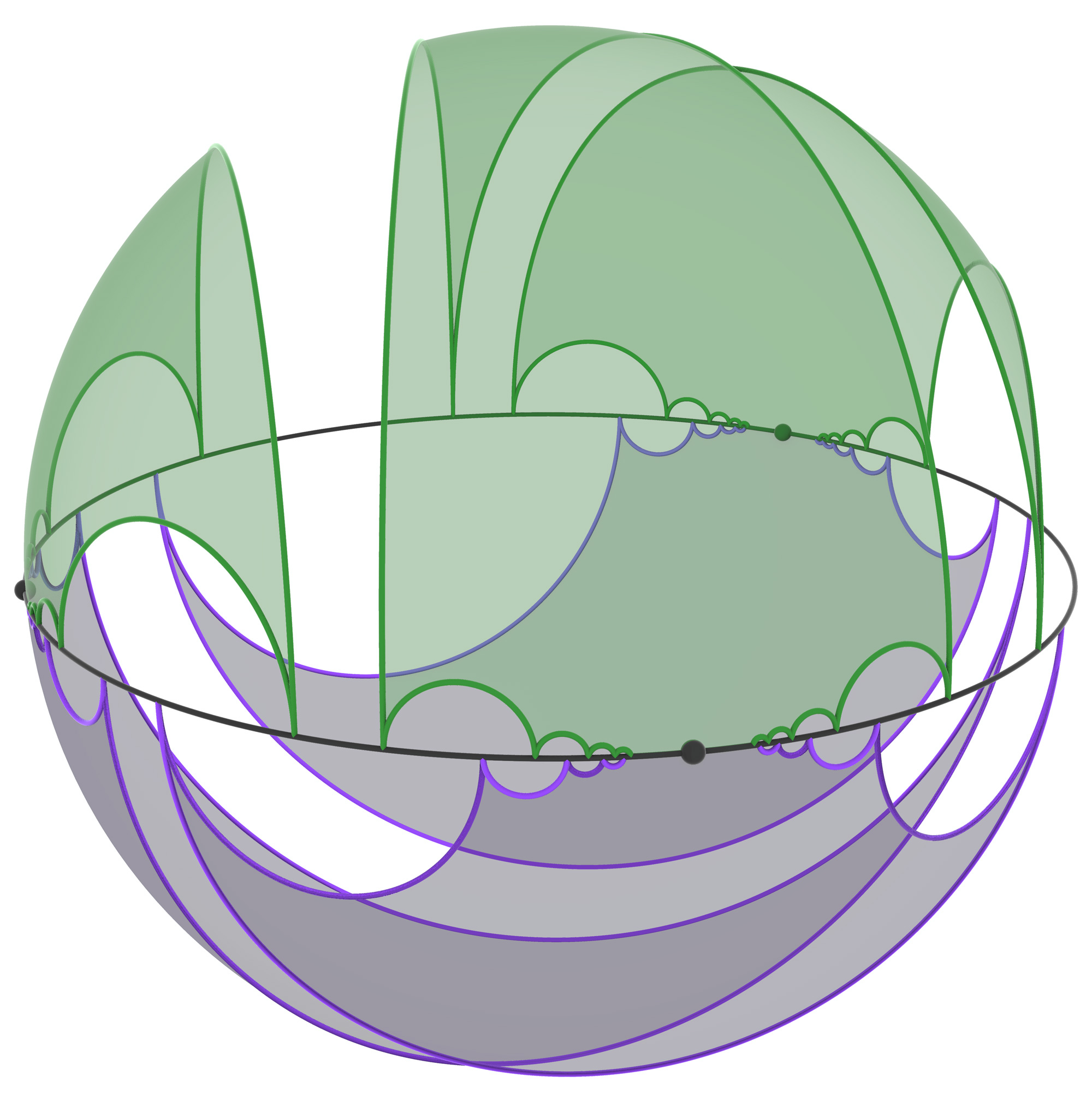}
\caption{The three clamshells in the pre-sphere associated to three cusps. 
Compare with \reffig{Laminations}.}
\label{Fig:DecompositionElementsOnPreSphere}
\end{figure}

We now define decomposition elements in the pre-sphere.
\begin{definition}
\label{Def:DecompositionElementsSphere}
Suppose that $x$ is a point of $S^2$.
We construct its decomposition element, $[x]_{S^2}$, as follows. 
\begin{enumerate}
\item 
If $x$ lies in $D^c$ or $D_c$, for some cusp $c \in \Delta_\calV$, then we take $[x]_{S^2} = D^c \cup D_c$. 
\item
If $x$ lies in $[\lambda]^D$ for some interior leaf $\lambda$ of $\Lambda^\calV$, then we take $[x]_{S^2}  = [\lambda]^D$. 
\item
If $x$ lies in $[\lambda]_D$ for some interior leaf $\lambda$ of $\Lambda_\calV$, then we take $[x]_{S^2}  = [\lambda]_D$. 
\item
If $x$ lies in $\Circle$, and is not a cusp or a leaf ideal point, then we take $[x]_{S^2}  = \{x\}$. 
\end{enumerate}
We call these decomposition elements \emph{clamshells}, \emph{upper leaves}, \emph{lower leaves}, and \emph{singletons}, respectively.
\end{definition}

From Definitions~\ref{Def:DecompositionElementsCircle} and~\ref{Def:DecompositionElementsSphere} we deduce the following.

\begin{lemma}
\label{Lem:Slice}
For any point $x$ in the pre-sphere, we have that $[x]_{S^2}$ meets the veering circle $\Circle$.
Also, for a point $x \in \Circle$ we have $[x]_{S^2} \cap \Circle = [x]_{S^1}$.
This induces a bijection between the quotient of the pre-sphere and the veering two-sphere $\Sphere$. \qed
\end{lemma}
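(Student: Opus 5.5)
The plan is to check the three claims of \reflem{Slice} directly from Definitions~\ref{Def:DecompositionElementsCircle} and~\ref{Def:DecompositionElementsSphere}. Before doing so we must confirm that the sets $[x]_{S^2}$ really partition the pre-sphere, that is, that every point falls into exactly one of the four cases.

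\textbf{Step 1: the four cases cover the pre-sphere.}
Take $x$ in the open upper hemisphere.
\begin{itemize}
\item If $x$ lies on some $[\lambda]^D$, then either $\lambda$ is interior (upper leaf case), or $\lambda$ is a boundary leaf of some crown $\Lambda^c$, in which case $x \in D^c$.
\item If $x$ lies in a complementary region of $[\Lambda^\calV]^D$, we need that every such region is the interior of some $D^c$. This is where we must use the structure of $\Lambda^\calV$ from~\cite{FSS22}: complementary regions of the lamination are exactly the crown polygons, with density of boundary leaves and \reflem{Laminations}\refitm{Approach}. If a region had no associated cusp, its boundary leaves would have to be interior leaves, which are approached from both sides and so cannot bound a region. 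I expect this to be the main obstacle, since it requires identifying the gaps of the lamination with crowns. I would try to quote the relevant statement of~\cite{FSS22} that complementary regions of $\Lambda^\calV$ in the disc correspond to cusps.
\end{itemize}
The lower hemisphere is handled symmetrically. A point of $\Circle$ either is a cusp, or is an ideal point of a leaf (boundary or interior), or is a singleton.

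\textbf{Step 2: the four cases are disjoint.}
This uses \reflem{Laminations}:
\begin{itemize}
\item Upper and lower arcs meet only on $\Circle$, and there they share no ideal points by \refitm{NoEndpoints}.
\item Distinct upper arcs are disjoint in the open hemisphere because leaves are unlinked. They share an ideal point only when they are adjacent boundary leaves of a crown, by \refitm{Adjacent}, and then both lie in $D^c$.
\item Different clamshells are disjoint because distinct crowns have disjoint ideal point sets, by \refitm{NoCusps} and \refitm{Adjacent}.
\item An interior-leaf arc meets no $D^c$, since an interior leaf is not a side of a crown polygon and shares no ideal point with a boundary leaf.
\end{itemize}

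\textbf{Step 3: the three claims.}
Each element meets $\Circle$:
\begin{itemize}
\item a clamshell contains $c$;
\item $[\lambda]^D$ contains the ideal points of $\lambda$;
\item a singleton lies on $\Circle$.
\end{itemize}
For $x \in \Circle$ we compute $[x]_{S^2} \cap \Circle$ case by case. Each $D^c$ is a closed ideal polygon whose sides are the arcs of $\Lambda^c$, so it meets $\Circle$ exactly in the ideal points of $\Lambda^c$ together with $c$. Hence $(D^c \cup D_c) \cap \Circle$ is the clamshell boundary of $c$. Likewise $[\lambda]^D \cap \Circle = \lambda$, and singletons match singletons. Therefore $[x]_{S^2} \cap \Circle = [x]_{S^1}$.

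Define a map from the quotient of the pre-sphere to $\Sphere$ by sending $[x]_{S^2}$ to $\Psi_\calV$ of any point of $[x]_{S^2} \cap \Circle$. This is well defined because that intersection is a single $S^1$-decomposition element. It is surjective since $[y]_{S^1} = [y]_{S^2} \cap \Circle$ for every $y \in \Circle$. It is injective because distinct $S^2$-elements are disjoint, so their traces on $\Circle$ are disjoint and nonempty, hence they are distinct $S^1$-elements.
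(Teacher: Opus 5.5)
Your proposal is correct and follows the paper's route. The paper treats \reflem{Slice} as immediate from Definitions~\ref{Def:DecompositionElementsCircle} and~\ref{Def:DecompositionElementsSphere}, which is your Step~3, and your Step~1 is the covering argument via \reflem{Laminations}\refitm{Approach} that the paper gives afterwards in the proof of \reflem{Equiv}; your direct pairwise-disjointness check in Step~2 is, if anything, more careful than the paper's partition argument there, which reduces to $\Circle$ by invoking this very lemma.
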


\begin{lemma}
\label{Lem:Equiv}
The decomposition elements of \refdef{DecompositionElementsSphere} form a closed equivalence relation on the pre-sphere.
\end{lemma}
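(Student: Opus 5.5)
Two things have to be checked: that the decomposition elements partition the pre-sphere, so that we really have an equivalence relation, and that the relation $R = \{(x,y) \st [x]_{S^2} = [y]_{S^2}\}$ is closed in $S^2 \times S^2$.

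\emph{The partition.} Each element is closed. Upper and lower leaves are closed arcs. Clamshells are closed, since $D^c$ and $D_c$ are closures of ideal polygons whose only accumulation point on $\Circle$ is $c$, by \reflem{Laminations}\refitm{CrownZZ}. Singletons are trivially closed. Disjointness reduces to $\Circle$ via \reflem{Slice} and \refcor{CircleDecomposition}, together with three observations:
\begin{itemize}
\item upper geodesic arcs $[\lambda]^D$ are pairwise disjoint in the open hemisphere, because the leaves of a lamination are unlinked;
\item the upper and lower hemispheres meet only in $\Circle$;
\item the interior of $D^c$ meets no $[\lambda]^D$, by definition of $D^c$ as a complementary region, while boundary arcs of $D^c$ come from boundary leaves, which are not interior leaves.
\end{itemize}
Also, by \reflem{Laminations}\refitm{Adjacent}, two distinct clamshells cannot share a point of $\Circle$.

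\emph{Closedness.} I would use sequences; this suffices since $S^2$ is metrisable. Take $x_n \to x$ and $y_n \to y$ with $[x_n]_{S^2} = [y_n]_{S^2} = E_n$; we must show $[x]_{S^2} = [y]_{S^2}$. Passing to a subsequence, the $E_n$ all have the same type, and $E_n$ converges in the Hausdorff topology to a closed connected set $E$ containing $x$ and $y$. The claim to prove is that every Hausdorff limit $E$ of decomposition elements lies inside a single decomposition element. For leaves this is mostly standard lamination closedness: $\Lambda^\calV$ is closed in $\Mobius \cup \Circle$, as shown in \cite{FSS22}. So the limit of $[\lambda_n]^D$ is either $[\lambda]^D$ for a leaf $\lambda$, or a single point of $\Circle$. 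If $\lambda$ is interior we are done. If $\lambda$ is a boundary leaf of $\Lambda^c$, then $[\lambda]^D \subset D^c$, which lies in the clamshell of $c$.

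\emph{The clamshell case: main obstacle.} Here $E_n = D^{c_n} \cup D_{c_n}$ with distinct cusps $c_n$, each a polygon with infinitely many sides. I must show these sets shrink: their Hausdorff limit is a point, a leaf arc, or contained in one clamshell. The approach I would try first uses \reflem{Laminations}\refitm{Interleave}. Suppose $D^{c_n}$ contains points $p_n, q_n$ of $\Circle$ converging to distinct points $p \neq q$. These are ideal points of the crown $\Lambda^{c_n}$ (or its cusp), and interleaving provides lower crown ideal points of $c_n$ linking them. So both $D^{c_n}$ and $D_{c_n}$ stay big, in the sense that they contain points of $\Circle$ spread over both complementary arcs of $\{p, q\}$. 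Passing to limits of the leaves involved via closedness of both laminations, we obtain an upper limit leaf and a lower limit leaf that link or share ideal points. Distinct $c_n$ give disjoint polygons, so the limit set must be squeezed into a region bounded by limit leaves. Using \reflem{Laminations}\refitm{NoEndpoints} and~\refitm{Approach}, a nondegenerate limit forces the limit leaves to be boundary leaves of a single crown, so $E$ lies in that clamshell.

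The delicate point is ruling out a limit that is a ``fat'' region not belonging to any element. For this I would use that $\Delta_\calV$ is dense, together with a no-perfect-fits/astroid-type separation. I expect this step to need the finer structure of the laminations from \cite{FSS22}, possibly deferred to \refapp{Topology}.
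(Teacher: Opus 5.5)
Your handling of singletons and leaves agrees with the paper. Your reduction of closedness to ``every Hausdorff limit of decomposition elements lies in a single decomposition element'' is also the right framing. But there is a genuine gap: the clamshell case, which is the heart of the lemma, is left open, and the partial argument you sketch has a false step. Separately, you never check that the decomposition elements \emph{cover} the pre-sphere. The paper gets this from \reflem{Laminations}\refitm{Approach}: a point of the open upper hemisphere not on any $[\lambda]^D$ lies in a complementary region whose frontier consists only of boundary leaves of a single crown, so that region is $D^c$.

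\textbf{The false step.} You claim that interleaving keeps both $D^{c_n}$ and $D_{c_n}$ ``big''. Interleaving only gives lower thorns linking $p_n$ and $q_n$, and these may converge to $p$ or $q$.
\begin{itemize}
\item Suppose the $D^{c_n}$ converge to a leaf $[\mu]^D$. This occurs, for instance, for crowns whose boundary leaves converge to an interior leaf $\mu$, as provided by \reflem{Laminations}\refitm{Approach}.
\item Then the $D_{c_n}$ must shrink to a point. Otherwise the two limit leaves would share the ideal point $\lim c_n$, contradicting \reflem{Laminations}\refitm{NoEndpoints}.
\end{itemize}
So the nondegenerate limit of a non-constant sequence of clamshells is a single leaf, possibly interior. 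It is not a subset of a clamshell, as you predict.

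\textbf{The missing idea.} Use convexity in the hemisphere metric, combined with the disjointness of distinct polygons that you already noted. This also removes your worry about ``fat'' limits, with no density or astroid argument.
\begin{itemize}
\item Each $D^{c_n}$ is a convex ideal polygon, so its Hausdorff limit $L^\infty$ is convex.
\item If $L^\infty$ has interior, then all but finitely many $D^{c_n}$ contain a common small disc. Since distinct $D^c$ have disjoint interiors, the sequence is eventually constant.
\item Otherwise convexity forces $L^\infty \cap \Circle$ to have one or two points. With one point, $L^\infty$ is that point.
\item With two points $u, v$, the ideal vertices of $D^{c_n}$ eventually cluster near $u$ and $v$. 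So exactly two sides of $D^{c_n}$ run from near $u$ to near $v$, and $L^\infty$ is a Hausdorff limit of leaves, hence a leaf.
\item The same holds for $L_\infty$. Both contain $\lim c_n$, so \reflem{Laminations}\refitm{NoEndpoints} forbids both being leaves.
\end{itemize}
Thus the limit is a point, a single leaf, or an eventually constant clamshell, and each of these lies inside one decomposition element.
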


\begin{proof}
We show that the decomposition elements cover the pre-sphere.
Suppose that $x$ is a point of the pre-sphere.
By \refcor{CircleDecomposition}, if $x$ lies in $\Circle$ then $x$ is either a cusp, a point in the boundary of a leaf (of $\Lambda^\calV$ or of $\Lambda_\calV$), or a singleton.
In all cases, $[x]_{S^2}$ is a decomposition element.
Suppose instead, breaking symmetry, that $x$ lies in the interior of $D^\calV$.
If $x$ lies in $[\lambda]^D$ for some $\lambda \in \Lambda^\calV$ then we are done.
Suppose not.
Let $E$ be the closure of the component of $D^\calV - [\Lambda^\calV]^D$ containing $x$.
By \reflem{Laminations}\refitm{Approach} the disc $E$ meets only boundary leaves of $[\Lambda^\calV]^D$ and these boundary leaves are all associated to a single cusp, say $c$.
We deduce that $E = D^c$ and thus $x \in [c]_{S^2}$.
Thus the decomposition elements cover the pre-sphere.

We now show that decomposition elements partition the pre-sphere.
Fix $x$ and $y$ in the pre-sphere.  
Suppose that $[x]_{S^2} \cap [y]_{S^2}$ is non-empty.  
Since all decomposition elements meet $\Circle$, we may assume that the representatives $x$ and $y$ lie in $\Circle$.  
Applying \refcor{CircleDecomposition} and \reflem{Slice}, we conclude that the decomposition elements partition the pre-sphere. 

We now show that the resulting equivalence relation is closed.
Suppose that $(x_n, y_n)$ is a sequence in $S^2 \times S^2$ so that $[x_n]_{S^2} = [y_n]_{S^2}$ for all $n$.
Suppose that the sequence converges to $(x_\infty, y_\infty)$.
We must show that $[x_\infty]_{S^2} = [y_\infty]_{S^2}$.
Taking subsequences if necessary, we may assume that all decomposition elements $[x_n]_{S^2}$ are of the same type. 

Suppose that $[x_n]_{S^2}$ is a singleton for all $n$.
Thus $x_n = y_n$ for all $n$.
Thus $x_\infty = y_\infty$.
Therefore $[x_\infty]_{S^2} = [y_\infty]_{S^2}$.

Suppose now that $[x_n]_{S^2}$ is a leaf for all $n$.
Passing to a further subsequence and breaking symmetry if needed, we may assume that $[\lambda_n]^D =  [x_n]_{S^2}$ is a leaf of the lamination $[\Lambda^\calV]^D$ in the upper hemisphere $D^\calV$.
Passing to a further subsequence, we may assume that the $[\lambda_n]^D$ Hausdorff converge to either a single point $z_\infty$ in $\Circle$ or a leaf $[\lambda_\infty]^D$ of $[\Lambda^\calV]^D$.
In the first case $x_\infty = z_\infty = y_\infty$ and we are done.
In the second case, $[\lambda_\infty]^D$ contains both $x_\infty$ and $y_\infty$.
Since $[\lambda_\infty]^D \subset [\lambda_\infty]_{S^2}$ we are done.

Finally, suppose that $[x_n]_{S^2} = C_n$ is a clamshell for all $n$.
Pass to a subsequence so that the $C_n$ Hausdorff converge to $L$. 

We write $D^n = C_n \cap D^\calV$ and $D_n = C_n \cap D_\calV$. 
Each of $D^n$ and $D_n$ is an infinity-gon.
These Hausdorff converge to $L^\infty$ and $L_\infty$, respectively.

\begin{claim*}
Each of $L^\infty$ and $L_\infty$ is either a point of $\Circle$, a leaf of $[\Lambda^\calV]^D$ or $[\Lambda_\calV]_D$, or an infinity-gon.
\end{claim*}

\begin{proof}
It suffices to consider $L^\infty$.
Note that $L^\infty$ is convex in the upper hemisphere metric on $D^\calV$ since the approximating infinity-gons $D^n$ are.
If $L^\infty$ has interior then the $D^n$ eventually contain a small disc of the interior, hence they meet each other, hence the sequence is eventually constant.
In this case $L^\infty$ is an infinity-gon.

Set $\bdyi D^n = D^n \cap \Circle$.
Note that $\bdyi L^\infty$ is the Hausdorff limit of $\bdyi D^n$.
If $\bdyi L^\infty$ is a single point then by convexity, $L^\infty$ is a single point and we are done.
If $\bdyi L^\infty$ contains three or more points then by convexity $L^\infty$ has interior and we are in the previous case.
So suppose that $\bdyi L^\infty$ consists of exactly two points, $u$ and $v$ in $\Circle$.
Thus $L^\infty$ is a geodesic.
For large $n$, the vertices of $D^n$ are partitioned by being close to either $u$ or $v$.
Thus there are exactly two boundary leaves, of $D^n$, each of which has its ideal points close to $u$ and $v$.
Thus $L^\infty$ is a Hausdorff limit of leaves, and so is itself a leaf.
\end{proof}

Note that the $C_n$ are connected.
Thus, $L = L^\infty \cup L_\infty$ is also connected.
We now consider cases.
\begin{itemize}
\item
Suppose that both $L^\infty$ and $L_\infty$ are points. 
Then $L$ is a point and we are done.

\item
Suppose that neither $L^\infty$ nor $L_\infty$ is an infinity-gon and at least one is a leaf.
Then the other is not a leaf by \reflem{Laminations}\refitm{NoEndpoints}.
Thus $L$ is a leaf of either $[\Lambda^\calV]^D$ or $[\Lambda_\calV]_D$, so is contained in a decomposition element, and we are done.

\item
Finally, breaking symmetry, suppose that $L^\infty$ is an infinity-gon.
Since infinity-gons have non-empty interiors, we deduce that the sequence $D^n$ is eventually constant.
Thus the sequence $D_n$ is eventually constant.
Thus $L$ is a clamshell and we are done. 
\end{itemize}
This completes the proof of \reflem{Equiv}.
\end{proof}

\begin{lemma} 
\label{Lem:Homeo}
The bijection of \reflem{Slice} is a homeomorphism.
\end{lemma}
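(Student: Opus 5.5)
We must show that the bijection of \reflem{Slice}, from the quotient of the pre-sphere $S^2/\!\sim$ to $\Sphere = \Circle/\!\sim$, is a homeomorphism. The plan is to factor both quotient maps through a common continuous map and then use compactness.

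First, let $\iota \from \Circle \to S^2$ be the inclusion of the veering circle as the equator, and let $q \from S^2 \to S^2/\!\sim$ be the quotient map. The composition $q \circ \iota$ is continuous. By \reflem{Slice}, $q\circ\iota$ is constant on each decomposition element $[x]_{S^1}$, since $[x]_{S^1} \subset [x]_{S^2}$. It is also surjective, because every element $[x]_{S^2}$ meets $\Circle$. Moreover, two points of $\Circle$ have the same image under $q\circ\iota$ exactly when they lie in the same $[\cdot]_{S^1}$, since $[x]_{S^2} \cap \Circle = [x]_{S^1}$. By the universal property of the quotient $\Psi_\calV \from \Circle \to \Sphere$, we obtain a continuous bijection $f \from \Sphere \to S^2/\!\sim$. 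This $f$ is the inverse of the bijection in \reflem{Slice}.

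Second, we need $f$ to be a homeomorphism. The space $\Sphere$ is compact, being a quotient of the circle $\Circle$. So it suffices to show that $S^2/\!\sim$ is Hausdorff, because a continuous bijection from a compact space to a Hausdorff space is a homeomorphism. Here we use \reflem{Equiv}: the relation $\sim$ on the compact metric space $S^2$ is closed. A closed equivalence relation on a compact Hausdorff space has Hausdorff quotient; this is a standard point-set fact, presumably among those collected in \refapp{Topology}. The argument runs as follows. Each equivalence class is closed, because it is a slice of the closed graph. For a closed set $A$, the saturation $\sim[A]$ is closed, because it is the projection to the second factor of $(A \times S^2) \cap {\sim}$, and projection along a compact factor is a closed map. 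Hence $q$ is a closed map, and disjoint closed classes can be separated by disjoint saturated open sets. Concretely, given disjoint classes $K, K'$, choose disjoint open $U \supset K$, $U' \supset K'$; then $S^2 - \sim[S^2 - U]$ and $S^2 - \sim[S^2 - U']$ are disjoint saturated open neighbourhoods of $K$ and $K'$.

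The only real content is the closedness of $\sim$, which is already established in \reflem{Equiv}, so no step is a serious obstacle. The one point needing care is verifying that the map induced on $\Sphere$ really matches the bijection of \reflem{Slice}, so that bijectivity transfers; this is exactly the slice identity $[x]_{S^2}\cap\Circle = [x]_{S^1}$. If one preferred to show directly that $\Sphere$ is Hausdorff, one would need closedness of the circle relation. That follows by restricting the closed relation from \reflem{Equiv} to $\Circle \times \Circle$, but the route above avoids needing it.
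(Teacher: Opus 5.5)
Your proof is correct, but it is organised differently from the paper's. The paper shows directly that the bijection and its inverse are both closed maps. One direction matches your universal-property step: a closed saturated subset of the pre-sphere meets $\Circle$ in a closed saturated set, so its image in $\Sphere$ is closed.

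For the other direction, the paper takes the closed saturated preimage $C \subset \Circle$ of a closed set in $\Sphere$. It enlarges $C$ to $B$ by adding every leaf and clamshell that meets $C$, and shows $B$ is closed by a sequential argument. Given $x_n \in B$ with $x_n \to x_\infty$, it picks $c_n \in [x_n]_{S^2} \cap \Circle$ and uses compactness of $\Circle$ to get $c_n \to c_\infty \in C$. The sequential closedness of the relation from Lemma~\ref{Lem:Equiv} then gives $[x_\infty]_{S^2} = [c_\infty]_{S^2} \subset B$.

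So the paper only needs that saturations of closed subsets of the circle are closed. This is a special case of your tube-lemma step, and the paper never has to separate classes. Your route replaces that step with the compact-to-Hausdorff principle. This costs you the extra, standard, verification that $S^2/\!\sim$ is Hausdorff. In return it is more conceptual, and it gives Hausdorffness of the pre-sphere quotient directly rather than waiting for Moore's theorem. Both arguments rest on the same two inputs: compactness, and the closedness of the relation from Lemma~\ref{Lem:Equiv}.
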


\begin{proof}
We prove that both directions of the bijection are closed maps.

Suppose we have a closed subset $A$ of the quotient of the pre-sphere.
The preimage $B$ of $A$ in the pre-sphere is closed and saturated in the pre-sphere.
Therefore $C$, the intersection of $B$ with the circle, is closed and saturated in the circle.
Therefore $D$, the image of $C$ in the quotient of the circle, is closed.

Suppose that we have a closed subset $D$ of the quotient of the circle, $\Sphere$.
Thus $C$, the preimage of $D$ in the circle, is closed and saturated in the circle.
Extend $C$ by taking its union with all leaves and clamshells it meets. 
Call the resulting extension $B$; this is saturated by construction.
We now prove that $B$ is closed.
Suppose that $x_n \in B$ is a sequence with limit $x_\infty$ in the pre-sphere.
For each $n$, applying \reflem{Slice} we choose $c_n \in [x_n]_{S^2} \cap \Circle$.
Since $\Circle$ is compact, we pass to a subsequence so that $c_n \to c_\infty$.
Note that $c_\infty$ lies in $C$.
So $[c_\infty]_{S^2}$ lies in the extension $B$.
Note that $[x_n]_{S^2} = [c_n]_{S^2}$. 
By \reflem{Equiv}, the equivalence relation on the pre-sphere is closed.
Thus it is sequentially closed. 
Thus $[x_\infty]_{S^2} = [c_\infty]_{S^2}$.
Thus $x_\infty$ lies in $B$, so $B$ is closed.
Thus $A$, the image of $B$ in the quotient of the pre-sphere, is closed.
\end{proof}

\begin{lemma}
\label{Lem:ConnNonSep}
For each $x$ in the pre-sphere, the decomposition element $[x]_{S^2}$ is connected and non-separating. 
\end{lemma}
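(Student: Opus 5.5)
We argue by cases on the type of decomposition element from \refdef{DecompositionElementsSphere}. Singletons are single points of $S^2$, so they are connected and their complement $S^2 - \{x\}$ is an open disc, hence connected. Upper and lower leaves are closed arcs $[\lambda]^D$ or $[\lambda]_D$ embedded in $S^2$; they are connected. An arc in $S^2$ does not separate, either by the Jordan curve theorem for arcs or directly. Directly, $[\lambda]^D$ is a chord of the closed disc $D^\calV$ meeting the boundary circle only at its two ideal points. So $D^\calV - [\lambda]^D$ has two components, each of which meets $\Circle$ in a nonempty open arc. Both components are therefore joined through the connected set $D_\calV - \{x,y\}$, where $\{x,y\}$ are the ideal points of $\lambda$. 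Hence $S^2 - [\lambda]^D$ is connected.

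The main case is a clamshell $D^c \cup D_c$. Connectedness is immediate: by \reflem{Laminations}\refitm{NoEndpoints} we have $D^c \cap D_c = \{c\}$, and each of $D^c$, $D_c$ is the closure of a convex region, hence connected. Both contain $c$, so their union is connected.

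For non-separation, we describe the complement. By \reflem{Laminations}\refitm{CrownZZ} and~\refitm{Adjacent}, the complement of $D^c$ in $D^\calV$ is a countable disjoint union of open half-discs $U_\lambda$, indexed by leaves $\lambda \in \Lambda^c$. Each $U_\lambda$ is the open region cut off by $[\lambda]^D$ on the side away from $c$, together with the open arc $I_\lambda$ of $\Circle$ between the ideal points of $\lambda$ not containing $c$. Each $U_\lambda$ is connected. Similarly, the complement of $D_c$ in $D_\calV$ is the union of connected half-discs $U_\mu$ with boundary arcs $I_\mu$, indexed by $\mu \in \Lambda_c$. The complement of the clamshell is then
\[
\bigcup_{\lambda \in \Lambda^c} U_\lambda \cup \bigcup_{\mu \in \Lambda_c} U_\mu ,
\]
with the points of $\Circle$ minus the thorns and $c$ lying in exactly one $I_\lambda$ and exactly one $I_\mu$. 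Two pieces are glued (in $S^2$) precisely when $I_\lambda \cap I_\mu \neq \emptyset$, and then their union is connected.

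Consider the bipartite graph whose vertices are the leaves of $\Lambda^c \cup \Lambda_c$, with an edge whenever the arcs $I_\lambda$, $I_\mu$ overlap. Non-separation reduces to showing this graph is connected. We use interleaving, \reflem{Laminations}\refitm{Interleave}. Order the upper leaves cyclically around $\Circle - \{c\}$, so that consecutive upper leaves $\lambda, \lambda'$ share a thorn $t$. The arc $I_\lambda$ lies on one side of $t$ and $I_{\lambda'}$ on the other. Since $t$ is not an ideal point of any lower leaf (again \refitm{NoEndpoints}), $t$ lies in the interior of some $I_\mu$. This $I_\mu$ meets both $I_\lambda$ and $I_{\lambda'}$, so $\lambda$ and $\lambda'$ are joined in the graph. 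By \reflem{Laminations}\refitm{CrownZZ}, the upper crown is order-isomorphic to $\ZZ$ with adjacency as successor, so all upper leaves lie in one component. Every lower leaf $\mu$ has nonempty open $I_\mu$, which meets some $I_\lambda$, so the lower leaves lie in that component too.

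The main obstacle is this combinatorial bookkeeping. One must verify that $I_\mu$ genuinely contains $t$ in its interior rather than having $t$ as an endpoint. One must also check that the arcs $I_\lambda$ exhaust $\Circle - (\{c\} \cup \text{thorns})$, which uses that the crown accumulates only at $c$. Given these, the union of connected sets linked along a connected graph is connected, so the clamshell is non-separating.
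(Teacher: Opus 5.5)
Your proof is correct, but it takes a more hands-on route than the paper. The paper's proof is one sentence. Each decomposition element is a point, an embedded arc, or a one-point union of two embedded closed discs, and the paper treats the connectedness and non-separation of such sets in $S^2$ as standard plane topology. (One could justify this, for instance, by the classical theorem that no arc separates the sphere, or by Alexander duality for these contractible compacta.) You instead use the specific position of these sets in the pre-sphere. For a leaf, you use that $[\lambda]^D$ is a chord of $D^\calV$ meeting $\Circle$ only at its ideal points. For a clamshell, you write the complement explicitly as the half-discs $U_\lambda$ and $U_\mu$, glued along the open arcs $I_\lambda \cap I_\mu$, and show the intersection graph is connected. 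The key step is that a shared thorn $t$ of consecutive upper leaves lies in some open arc $I_\mu$. This follows from \reflem{Laminations}\refitm{NoEndpoints}, from \refitm{NoCusps} (so that $t \neq c$), and from the fact that the closed arcs $\closure{I}_\mu$ tile $\Circle - \{c\}$. The interleaving property you announce at the start is not actually used. Your approach buys self-containment and an explicit description of the complement, with no appeal to a general separation theorem, at the cost of the crown bookkeeping. The paper's approach buys brevity, at the cost of deferring to general facts about which continua separate the sphere.
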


\begin{proof}
This is because $[x]_{S^2}$ is either a single point, an embedded arc, or a one-point union of a pair of embedded discs. 
\end{proof}

\begin{proof}[Proof of \refthm{Sphere}]
Lemmas~\ref{Lem:Equiv} and~\ref{Lem:ConnNonSep} verify the hypotheses of \refthm{Moore}.  
Thus the quotient of the pre-sphere is homeomorphic to a two-sphere.  
By \reflem{Homeo}, we have that $\Sphere$ is a two-sphere.

We now analyse the action of $\pi_1(M)$ on the veering two-sphere $\Sphere$.  
By~\cite[Theorem~7.1]{FSS22}, the action of $\pi_1(M)$ on $\Circle$ has all of the properties listed in the conclusions of \refthm{Sphere}.
This implies that the action on $\Sphere$ is continuous and, when $\calV$ is finite, has dense orbits.
The action of $\pi_1(M)$ on the cusps $\Delta_\calV$ is faithful;
these embed in $\Sphere$.
Thus the action on $\Sphere$ is faithful.

We now show that the action on $\Sphere$ is orientation preserving.
Let $d^\calV$ be the interior of the upper hemisphere $D^\calV$, in the pre-sphere.
The image of $d^\calV$ in $\Sphere$ is a countably branching dendrite. 
The action of $\pi_1(M)$ preserves the dendrite and the circular order of the arcs emanating from any branch point.
Thus the action is orientation preserving.
\end{proof}

We need the following in the proof of \refthm{Equivalence}.

\begin{lemma}
\label{Lem:DecompositionElementsRecover}
The collection of infinite decomposition elements in $\Circle$ recovers the upper and lower laminations $\Lambda^\calV$ and $\Lambda_\calV$, up to swapping the two laminations.
\end{lemma}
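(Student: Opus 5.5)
We will read off the laminations from the decomposition elements of \refdef{DecompositionElementsCircle}. The infinite decomposition elements are exactly the clamshell boundaries: each contains the cusp $c$ together with the infinitely many thorns of $\Lambda^c$ and $\Lambda_c$, by \reflem{Laminations}\refitm{CrownZZ}. Interior leaves give two-point elements, and singletons are finite. So the input data is the collection of clamshell boundaries $K_c$, one for each cusp $c$. The plan is to recover, from each $K_c$ alone, the cusp $c$ and the two crowns $\Lambda^c$ and $\Lambda_c$, up to a simultaneous swap. The boundary leaves are then the union of the crowns. The interior leaves are obtained by closure: by \reflem{Laminations}\refitm{Approach} every leaf is a limit of boundary leaves, and $\Lambda^\calV$ is closed in $\Mobius$. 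So $\Lambda^\calV$ is the closure in $\Mobius$ of the union of the upper crowns, and similarly for $\Lambda_\calV$.

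First we identify the cusp within $K_c$. By \reflem{Laminations}\refitm{CrownZZ}, the set $K_c$ accumulates only at $c$, so $c$ is the unique accumulation point of $K_c$ in $\Circle$. The remaining points, the thorns, form a discrete set in $\Circle - \{c\}$. This set is order-isomorphic to $\ZZ$ along the arc $\Circle - \{c\}$, so it comes with a well-defined successor relation. Next, we use \reflem{Laminations}\refitm{Adjacent} and~\refitm{NoEndpoints}. Each leaf of $\Lambda^c$ shares each of its ideal points with one adjacent leaf of $\Lambda^c$. The thorns of the upper crown are therefore consecutive ideal points of a bi-infinite zigzag, so the leaves of $\Lambda^c$ are exactly the pairs of consecutive upper thorns, and likewise for $\Lambda_c$. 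Moreover, upper and lower thorns are distinct by~\refitm{NoEndpoints}.

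The main obstacle is to partition the thorns into upper and lower ones using only the set $K_c$. For this we use the interleaving of \reflem{Laminations}\refitm{Interleave}. It implies that between any two consecutive upper thorns there is a lower thorn, and vice versa. Hence, in the $\ZZ$-ordering, the thorns strictly alternate between upper and lower. There are then exactly two ways to 2-colour the thorns alternately. One colouring gives the true pair $(\Lambda^c, \Lambda_c)$, and the other gives the swapped pair.

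The remaining difficulty is that this swap ambiguity must be resolved consistently across all cusps. To do this we use that leaves of $\Lambda^\calV$ are pairwise unlinked, while upper and lower crowns interleave, that is, they link. Choose a colouring at one cusp $c$. At another cusp $c'$, exactly one of the two colourings makes the upper leaves at $c'$ unlinked with the upper leaves at $c$. If both colourings were compatible, then some pair of leaves at $c'$ of the same colour would both be unlinked with $\Lambda^c$. This would contradict the fact that the lower crown at $c'$ links some leaf of the upper lamination near $c$; to see this, approximate using the density of boundary leaves. Deriving that contradiction is where the real care is needed. With it, fixing the colouring at one cusp determines the colouring at every other cusp, and this yields $\Lambda^\calV$ and $\Lambda_\calV$ up to a single global swap.
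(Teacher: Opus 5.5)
Your analysis at a single cusp is correct and matches the paper's first step. The cusp is the unique accumulation point of $[c]_{S^1}$, the thorns are $\ZZ$-ordered, and \reflem{Laminations}\refitm{Interleave} and~\refitm{NoEndpoints} force them to alternate. So at each cusp you recover the pair $\{\Lambda^c, \Lambda_c\}$ up to a swap.

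\textbf{The gap is the cross-cusp step.} You assert that for \emph{any} other cusp $c'$, exactly one of the two colourings at $c'$ is compatible with the colouring chosen at $c$. That requires some crown leaf at $c'$ to link some crown leaf at $c$, and for distant cusps no such leaf exists. For instance, suppose $c$ and $c'$ are separated by pairwise disjoint non-cusp leaves $n_1 \in F^\calV$, $n_2 \in F_\calV$, $n_3 \in F^\calV$, nested in that order from $c$ to $c'$. Such configurations exist: nest half-disc neighbourhoods of a singleton using \refcor{SingletonSingleLeaf}, then pick $c'$ by density of cusps.
\begin{itemize}
\item Leaves of one lamination are pairwise unlinked, and consecutive crown leaves share an ideal point. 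Hence every ideal point of $\Lambda^c$ lies on the same side of $n_1$ as $c$, and every ideal point of $\Lambda_{c'}$ lies on the same side of $n_2$ as $c'$. These two arcs of $\Circle$ are disjoint.
\item In the same way, $n_2$ and $n_3$ separate the ideal points of $\Lambda_c$ from those of $\Lambda^{c'}$.
\end{itemize}
So no crown leaf at $c$ links any crown leaf at $c'$, both colourings at $c'$ pass your test, and the contradiction you hope to derive does not exist. Appealing instead to an upper leaf ``near $c$'' obtained from density of boundary leaves is circular. Such a leaf lies in the crown of a third cusp, whose colouring has not been fixed.

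What is missing is a connectivity input. The paper does not colour cusp by cusp. It takes the pairs of thorns two apart in the $\ZZ$-ordering as the boundary leaves of both laminations together, and notes that linked leaves lie in opposite laminations. It then uses that, since $\Link$ is a plane, the linking graph on all boundary leaves is connected and bipartite. Such a graph has a unique two-colouring up to a global swap, and taking closures gives $\Lambda^\calV$ and $\Lambda_\calV$.

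Your route can be repaired in the same spirit: propagate the colouring along a chain of cusps in which consecutive cusps do have linked crowns. The two endpoints of an edge of $\cover{\calV}$ work. The sides of the edge rectangle give an $F^\calV$ cusp leaf at one endpoint crossing an $F_\calV$ cusp leaf at the other, and this forces a leaf of the upper crown at the first endpoint to link a leaf of the lower crown at the second. Connectivity of $\cover{\calV}$ then reaches every cusp. Since the true colouring satisfies every constraint, the outcome does not depend on the chain chosen.
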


\begin{proof}
Suppose that $[c]_{S^1} \in \Circle$ is an infinite decomposition element.
By \refcor{CircleDecomposition}, we deduce that there is exactly one element $c \in [c]_{S^1}$ which is an accumulation point; 
this is the cusp.
Up to choosing a basepoint, this gives a unique order isomorphism between $[c]_{S^1} - \{c\}$ and $\ZZ$.
Using this isomorphism, pairs of points differing by two are the ideal points of boundary leaves of the upper and lower laminations $\Lambda^\calV$ and $\Lambda_\calV$.

However, leaves that link belong to opposite laminations.
Since the link space is a plane, under the relation of linking, the boundary leaves form a connected bipartite graph.
This allows us to separate the boundary leaves of $\Lambda^\calV$ from the boundary leaves of $\Lambda_\calV$.
Since they are the closures of their boundary leaves (see \refsec{Dynamics}), the upper and lower laminations $\Lambda^\calV$ and $\Lambda_\calV$ are thus determined. 
However, we do not learn which lamination is which.
\end{proof}

\section{Loom and link spaces}
\label{Sec:Loom}

The goal of this section is to collect various definitions and notions regarding of \emph{loom} and \emph{link} spaces~\cites{SchleimerSegerman24, FSS22}.
These are closely modelled on the \emph{leaf} (or \emph{orbit}) spaces arising from pseudo-Anosov flows (without perfect fits).
For an introduction, see~\cites{Calegari07}{BarthelmeMann26}.

\subsection{Loom spaces}
\label{Sec:Loom}

Suppose that $\calL$ is a copy of the plane $\RR^2$ equipped with a pair of non-singular transverse foliations $F^\calL$ and $F_\calL$.
We reproduce the following definitions from~\cite[Section~2]{SchleimerSegerman24}.

\begin{definition}
\label{Def:Rectangle}
Suppose that $f_R \from (0, 1)^2 \to \calL$ is an embedding with image $R$.  
We call $R$ a \emph{rectangle} if $f_R$ sends arcs parallel to the $y$--axis ($x$--axis) to arcs of $F^\calL$ (of $F_\calL$).  
\end{definition}

\begin{figure}[htbp]
\vspace{-10pt}
\subfloat[Cusp rectangle]{
\label{Fig:CuspRect}
\includegraphics[width = 0.4\textwidth]{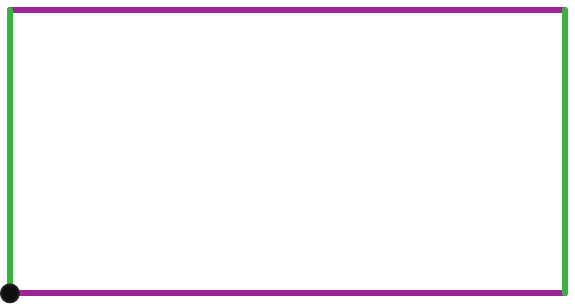}
}
\subfloat[Edge rectangle]{
\label{Fig:EdgeRect}
\includegraphics[width = 0.4\textwidth]{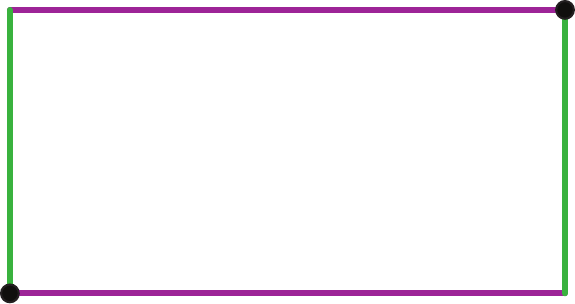}
}

\subfloat[Face rectangle]{
\label{Fig:FaceRect}
\includegraphics[width = 0.4\textwidth]{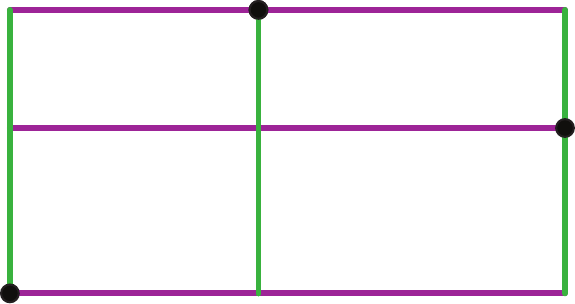}
}
\subfloat[Tetrahedron rectangle]{
\label{Fig:TetRect}
\includegraphics[width = 0.4\textwidth]{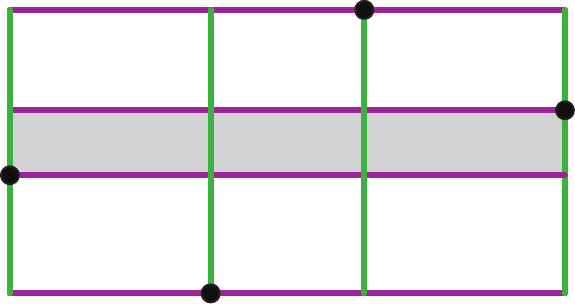}
}
\caption{Examples of cusp, edge, face, and tetrahedron rectangles.
When $\calL = \Link$ is a link space, \refcor{OnlyAsymptotics} implies that the points of $\bdy R$ contained in $\Circle$ are in fact cusps of $\Delta_\calV$.}
\label{Fig:SkeletalRects}
\end{figure}

\begin{definition}
\label{Def:CuspRect}
A rectangle $R$ is a \emph{south-west cusp rectangle} if there is a continuous extension of $f_R$ to a homeomorphism 
\[
\closure{f}_R \from [0,1]^2 - \{(0,0)\} \to \closure{R}
\]
See \reffig{CuspRect}.
We define cusp rectangles with the other intercardinal directions similarly.
The two sides (see~\cite[Definition~2.6]{SchleimerSegerman24}) of a cusp rectangle which are not closed intervals are called \emph{cusp sides}.
\end{definition}

\begin{definition}
\label{Def:TetRect}
A rectangle $R$ is a \emph{tetrahedron rectangle} if there are $a, b, c, d \in (0,1)$ and a continuous extension of $f_R$ to a homeomorphism 
\[
\closure{f}_R \from [0,1]^2 - \{(a,0), (1,b), (c,1), (0,d)\} \to \closure{R} \qedhere
\]
\end{definition}

See \reffig{TetRect}. 
Edge and face rectangles are defined similarly, see~\cite[Definitions~2.28 and~2.29]{SchleimerSegerman24} and Figures~\ref{Fig:EdgeRect} and~\ref{Fig:FaceRect}.
We collectively call the edge, face, and tetrahedron rectangles \emph{skeletal rectangles}.

\begin{definition}
\label{Def:Loom}
We say that $\calL$ is a \emph{loom space} if its rectangles satisfy the following axioms.
\begin{enumerate}
\item 
\label{Itm:Cusp}
For every cusp side $s$ of every cusp rectangle, some initial open interval of $s$ is contained in some rectangle.
(See \reffig{CuspSide}.)
\item 
\label{Itm:Tet}
Every rectangle is contained in some tetrahedron rectangle. \qedhere
\end{enumerate}
\end{definition}

\begin{wrapfigure}[10]{r}{0.29\textwidth}
\centering
\vspace{-3 pt}
\includegraphics[width = 0.22\textwidth]{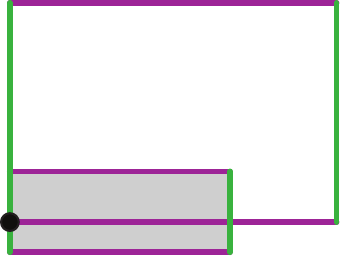}
\caption{A south-west cusp rectangle. An initial segment of its southern cusp side is contained in the shaded rectangle.}
\label{Fig:CuspSide}
\end{wrapfigure}

We now reproduce~\cite[Definitions~3.3 and~3.8]{SchleimerSegerman24}.

\begin{definition}
\label{Def:Cusp}
A \emph{cusp} $c$ of a loom space $\calL$ is an equivalence class of the transitive closure of the following relation.
Two cusp rectangles of $\calL$ are \emph{adjacent} if some cusp side of one is contained in some cusp side of the other.
\end{definition}

\begin{definition}
\label{Def:CuspNonCuspLeaves}
A leaf of either foliation $F^\calL$ or $F_\calL$ is a \emph{cusp leaf} if it contains some cusp side of some cusp rectangle.
We say that the leaf \emph{belongs to} the cusp containing the cusp rectangle.
\end{definition}

We will need the following simple lemma.

\begin{lemma}
\label{Lem:NoTriangles}
Suppose that $\ell$ and $\ell'$ are distinct cusp leaves of the same foliation, belonging to a common cusp $c$.
Then any leaf $m$ of the opposite foliation meets at most one of $\ell$ and $\ell'$.
\end{lemma}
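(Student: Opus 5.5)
We argue by contradiction, using the planar topology of $\calL$ together with the structure of cusp rectangles at a cusp. Suppose that $m$ is a leaf of the opposite foliation meeting both $\ell$ and $\ell'$. Since $F^\calL$ and $F_\calL$ are transverse non-singular foliations of the plane, a leaf of one foliation meets a leaf of the other in at most one point. Say $m \cap \ell = \{p\}$ and $m \cap \ell' = \{p'\}$, and let $[p,p']$ be the arc of $m$ between them.

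\textbf{Step 1: a path through the cusp.} Since $\ell$ and $\ell'$ both belong to $c$, there are cusp rectangles $Q$ and $Q'$, belonging to $c$, with cusp sides $s \subset \ell$ and $s' \subset \ell'$. By \refdef{Cusp} they are joined by a finite chain of adjacent cusp rectangles. From this chain I would build a path $\gamma$ in $\calL$ that starts at a point of $s$, travels through the chain, and ends at a point of $s'$. Every cusp rectangle in the chain abuts the same ideal corner, so all of $\gamma$ can be kept inside a small neighbourhood of the cusp. Precisely, $\gamma$ stays in the union of the cusp rectangles, close to their missing corners.

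\textbf{Step 2: a bigon.} Concatenate the segment of $\ell$ from $p$ to the start of $\gamma$, then $\gamma$, then the segment of $\ell'$ from the end of $\gamma$ to $p'$, then $[p',p]$ in $m$. Alternatively one can work directly with rectangles: the region bounded by $\ell$, $m$ and $\ell'$ would force a rectangle whose closure contains the cusp corner.

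\textbf{Step 3: the contradiction.} A closed curve of this kind, built from leaf segments and transverse to both foliations away from the cusp, bounds a disc by the Jordan curve theorem. Restricted to this disc, the two foliations would then contradict a Poincaré--Hopf, or no-Reeb-bigon, argument, because non-singular transverse foliations of the plane admit no bigons. There is a subtlety: the curve from Step 2 uses $\gamma$ rather than a single leaf, so the argument must use the full product structure. The cleanest approach is to take the maximal rectangle containing the half-open segment of $m$ between $p$ and $p'$, extended along $\ell$ toward $c$. The product structure transports the cusp side $s$ across this rectangle. Since $s$ has no endpoint at the corner, the transported side must accumulate at an ideal point. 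Following the chain of adjacent cusp rectangles then yields a rectangle that contains points of both $\ell$ and $\ell'$ on the same side of $m$. That would make $\ell = \ell'$, contradicting distinctness.

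I expect Step 3 to be the main obstacle. The difficulty is making precise that the cusp corner cannot lie inside a region bounded by genuine leaf segments. This is where the loom axioms, especially \refdef{Loom}\refitm{Cusp}, must enter, since without them two cusp leaves could be linked by a common transversal.
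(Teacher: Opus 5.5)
Your proposal has a genuine gap in Step 3, and no contradiction is actually derived there. The closed curve from Step 2 contains the path $\gamma$, which is not a union of leaf segments. So neither the no-bigon fact nor a Poincar\'e--Hopf (Euler characteristic) count applies to the disc it bounds. Also, $\gamma$ is only described as staying ``near the cusp'', so nothing prevents it from crossing $m$, and the curve need not even be embedded. Your fallback argument with a maximal rectangle ends by claiming that a rectangle containing points of both $\ell$ and $\ell'$ on one side of $m$ forces $\ell=\ell'$. This is false: a rectangle meets many distinct leaves of each foliation.

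The missing idea is a leaf of $m$'s own foliation that separates $\ell$ from $\ell'$. Because $\ell\neq\ell'$ lie in the same foliation, the chain of adjacent cusp rectangles joining them yields a cusp leaf $m'$ at $c$ with two properties. First, $m'$ lies in the foliation opposite to $\ell$, which is the same foliation as $m$, and contains a cusp side of one of the rectangles in the chain. Second, $m'$ separates $\ell$ from $\ell'$ in $\calL$. Then $m\neq m'$, since $m$ meets $\ell$ and $m'$ does not. Hence $m$ and $m'$ are disjoint, and the connected leaf $m$ lies in one component of $\calL-m'$. In your notation, the arc $[p,p']\subset m$ would have to cross $m'$, which is impossible. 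This is the paper's entire proof. It uses only the chain from the definition of a cusp and the fact that $m'$ separates the plane, not the loom axioms you expected to need. It makes the loop, the bigon and index discussion, and the rectangle-transport argument unnecessary.
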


\begin{proof}
Since $\ell$ and $\ell'$ belong to a common cusp there is a chain of cusp rectangles with $\ell$ containing a cusp side of the first and $\ell'$ containing a cusp side of the last.
Since $\ell$ and $\ell'$ are distinct and in the same foliation, this implies that there is a cusp leaf $m'$ of the opposite foliation which
\begin{itemize}
\item
contains a cusp side of one of the cusp rectangles and
\item
separates $\ell$ from $\ell'$ in the plane $\calL$.
\end{itemize}
The leaf $m$ cannot cross $m'$ since they belong to the same foliation.
\end{proof}

Each side of a tetrahedron rectangle lies in the union of a pair of cusp leaves belonging to a given cusp.
We formalise such pairs as follows.

\begin{definition}
\label{Def:BiLeaf}
Suppose that $\ell$ and $\ell'$ are cusp leaves of the same foliation.  
Suppose that $c$ is the cusp of both $\ell$ and $\ell'$.  
Suppose further that $\ell$ and $\ell'$ are separated (in $\Link$) by exactly one cusp leaf $m$ in the other foliation.  
Note that $m$ necessarily also has its cusp at $c$.  
We call the union 
\[
k = \ell \cup \{c\} \cup \ell'
\]
a \emph{bi-leaf}.
In an abuse of notation, we say that $k$ \emph{belongs to} the foliation that contains $\ell$ and $\ell'$. 
We call $c$ the \emph{centre} of $k$.
We call the leaf $m$ the \emph{divider} of $k$.
\end{definition}

See \reffig{BiLeaf}.

\begin{figure}[htbp]
\subfloat[Rectangular style.]{
\labellist
\small\hair 2pt
\pinlabel $c$ [r] at 0 137
\pinlabel $\ell$ [l] at 6 210
\pinlabel $m$ [t] at 100 134
\pinlabel $\ell'$ [l] at 6 60
\endlabellist
\includegraphics[height=2.5in]{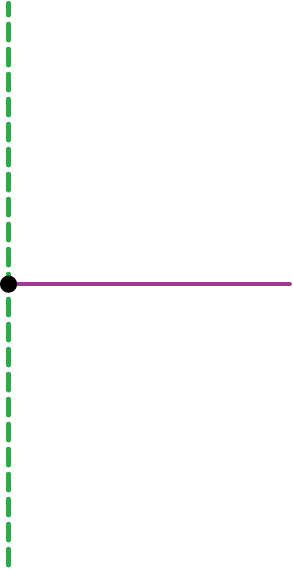}
}
\qquad
\subfloat[Hyperbolic style.]{
\labellist
\small\hair 2pt
\pinlabel $c$ [r] at 10 170
\pinlabel $\ell$ [tl] at 110 200
\pinlabel $m$ [t] at 120 148
\pinlabel $\ell'$ [bl] at 125 67
\endlabellist
\includegraphics[height=2.5in]{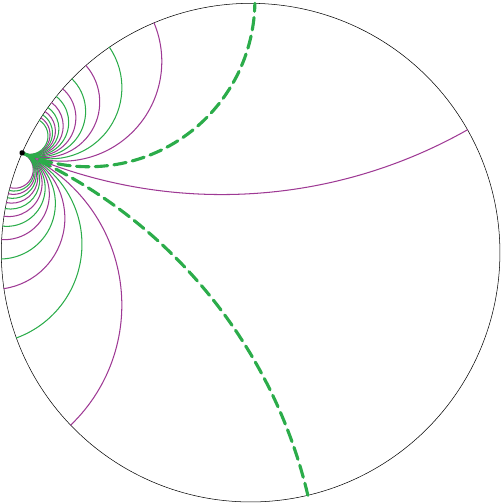}
}
\caption{A bi-leaf centred at $c$; it is drawn with a thick dashed line.} 
\label{Fig:BiLeaf}
\end{figure}

\subsection{The astroid lemma}
\label{Sec:Astroid}

One of the key properties of loom spaces is recorded in the \emph{astroid lemma}~\cite[Lemma~4.10]{SchleimerSegerman24}. 
For details, see that paper. 
Here we only give a brief description.

Suppose that $p$ is a point of the loom space $\calL$.
Let $\ell$ and $m$ be the leaves of $\calL$ through $p$. 
(If $\ell$ or $m$ is a cusp leaf, we extend it to the appropriate bi-leaf, as in \refdef{BiLeaf}.)
The union of all closures of rectangles with $p$ as their south-west corner is the \emph{north-east staircase for $p$}.
The three other intercardinal directions give the three other staircases for $p$.
We now consider all cusp leaves, emanating from cusps in the boundary of the staircases. 
The intersection of these cusp leaves with $\ell$ and $m$ gives a countable set of points. 
The astroid lemma tells us that these intersection points accumulate at $p$ and also exit all ends of $\ell$ and $m$. 
\reffig{Astroid} shows the four staircases about a point on an intersection of non-cusp leaves.

\begin{figure}[htbp]
\includegraphics[width=\textwidth]{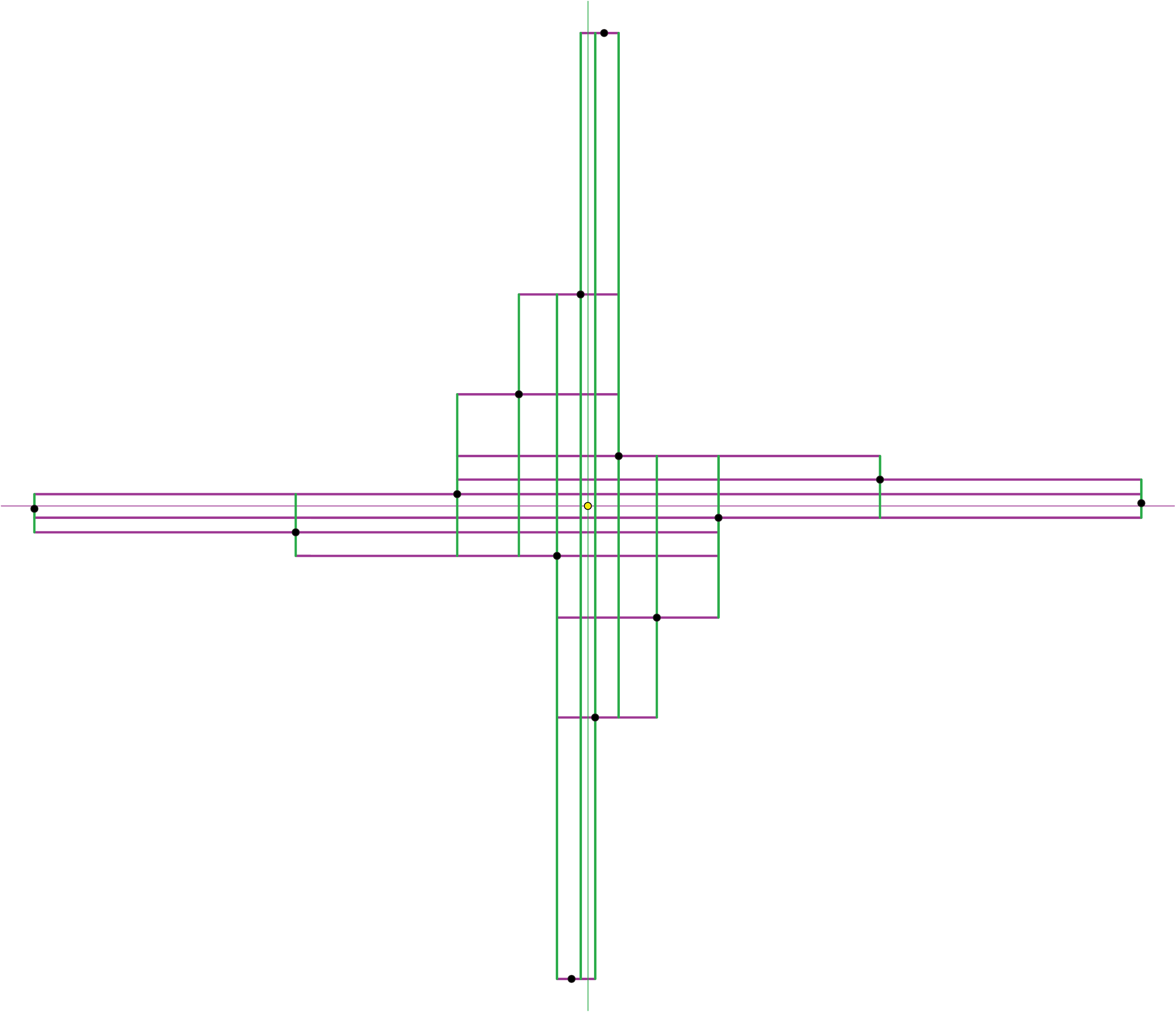}
\caption{The four staircases about a Weierstrass point (centre) for the fibre of the monodromy of the figure eight knot complement.
}
\label{Fig:Astroid}
\end{figure}

\subsection{The link space}
\label{Sec:Link}

The \emph{link space} $\Link$ is constructed in~\cite[Section~10]{FSS22} by collapsing a certain subset of the product of laminations $\Lambda^\calV \times \Lambda_\calV$.  
In the process, interior leaves of $\Lambda^\calV$ and $\Lambda_\calV$ are taken to \emph{non-cusp leaves} of $F^\calV$ and $F_\calV$, respectively.
The remaining leaves of $F^\calV$ and $F_\calV$ are \emph{cusp leaves} in the sense of~\cite[Definition~10.7]{FSS22}.

\begin{remark}
\label{Rem:NonCuspLeavesDense}
There are countably many cusps and countably many cusp leaves emanating from each.
(Cusp leaves come from boundary leaves, which come from branch lines; see~\cite[Lemma~6.10(2) and Definition~8.14]{FSS22}.)
So there are countably many cusp leaves in total.
In particular, non-cusp leaves are dense.
\end{remark}

The following is a combination of~\cite[Theorems~10.8 and~10.51]{FSS22}.

\begin{theorem}
\label{Thm:LinkIsLoom}
Suppose that $\calV$ is a veering triangulation of a three-manifold $M$.  
Then the link space $\Link$, equipped with the foliations $F^\calV$ and $F_\calV$, is a loom space.
A leaf is a cusp leaf (in the sense of the link space) if and only if it is a cusp leaf of the loom space structure.
Furthermore, $\Link$ has a canonical orientation.
The fundamental group $\pi_1(M)$ acts on $\Link$ by orientation-preserving homeomorphisms preserving the loom space structure. \qed
\end{theorem}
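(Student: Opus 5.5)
The statement is assembled from results of~\cite{FSS22}, so my plan is to match each clause of the theorem with the corresponding part of~\cite[Theorems~10.8 and~10.51]{FSS22} and check that the definitions agree. I expect no new argument to be needed beyond reconciling vocabulary. First, I would recall from~\cite[Section~10]{FSS22} that $\Link$ is obtained from the product of laminations $\Lambda^\calV \times \Lambda_\calV$ by collapsing. I would then invoke~\cite[Theorem~10.8]{FSS22} to get three facts: $\Link$ is homeomorphic to $\RR^2$, the images $F^\calV$ and $F_\calV$ of the two laminations are transverse non-singular foliations, and $\Link$ carries a canonical orientation. The orientation comes from the orientation of $\Circle$ together with the ordering of the two laminations. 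The same theorem, together with the $\pi_1(M)$--invariance of $\Lambda^\calV$ and $\Lambda_\calV$ recorded in \refsec{Dynamics}, shows that the action of $\pi_1(M)$ descends to an action on $\Link$ by homeomorphisms. These homeomorphisms preserve each foliation and preserve the orientation, because the action on $\Circle$ is orientation preserving.

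Second, I would use~\cite[Theorem~10.51]{FSS22} to verify the two axioms of \refdef{Loom}. For the tetrahedron axio \refitm{Tet}, the idea is that every rectangle of $\Link$ lies inside the rectangle coming from some tetrahedron of $\cover{\calV}$. For the cusp axiom \refitm{Cusp}, the idea is that the cusp sides of a cusp rectangle run along cusp leaves coming from boundary leaves of a crown, and an initial segment of such a side lies in a rectangle coming from an adjacent face or edge. Since the action of $\pi_1(M)$ preserves both foliations, it sends rectangles to rectangles and so preserves the loom structure.

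The step needing the most care, and so the main obstacle, is the clause identifying cusp leaves. The link space defines cusp leaves~\cite[Definition~10.7]{FSS22} as the images of boundary leaves, whereas \refdef{CuspNonCuspLeaves} defines them as leaves containing a cusp side of a cusp rectangle. I would prove the two directions separately.
\begin{enumerate}
\item For one direction, I would show that every boundary leaf $\lambda$ in a crown at a cusp $c$ bounds, together with an adjacent leaf of the other crown at $c$, a cusp rectangle in $\Link$ with cusp $c$. This uses the interleaving of crowns from \reflem{Laminations}\refitm{Interleave} and the adjacency statement \reflem{Laminations}\refitm{Adjacent}.
\item For the other direction, I would show that a cusp side of a cusp rectangle cannot lie in a non-cusp leaf. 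The reason is that the missing corner must be an accumulation point on $\Circle$, which forces the side to be the image of a boundary leaf by \reflem{Laminations}\refitm{Approach} and~\refitm{CrownZZ}.
\end{enumerate}
I expect this matching is exactly what the relevant part of~\cite[Theorem~10.51]{FSS22} provides, so here I would simply cite it.
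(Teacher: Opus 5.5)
Your proposal takes the same route as the paper: the paper gives no independent argument for this theorem and simply cites \cite[Theorems~10.8 and~10.51]{FSS22}, which is also where your proposal ultimately rests. One caution: your heuristic for the second direction of the cusp-leaf clause (that the missing corner is a point of $\Circle$ shared by the two cusp sides) uses the compactification of \refthm{Disc} and \refcor{OnlyAsymptotics}, both of which are proved in this paper using \refthm{LinkIsLoom}, so that sketch cannot stand in for the citation. Also, a cusp leaf corresponds to a pair of adjacent boundary leaves of a crown (\refdef{IdealPoints}), not to a single boundary leaf.
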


\begin{definition}
\label{Def:IdealPoints}
Suppose that $\ell$ is a leaf of $F^\calV$.
If $\ell$ is the non-cusp leaf associated to an interior leaf $\lambda$ in $\Lambda^\calV$ then the \emph{ideal endpoints} of $\ell$ are the (ideal) points of $\lambda$.
If $\ell$ is the cusp leaf associated to adjacent boundary leaves $\lambda$ and $\lambda'$ of the crown $\Lambda^c$ then the \emph{ideal endpoints} of $\ell$ are the cusp $c$ and the common ideal point of $\lambda$ and $\lambda'$.
In this case we say that $\ell$ \emph{emanates} from $c$.
We make similar definitions when $\ell$ is a leaf of $F_\calV$.
See \reffig{HypRectStyles}.
\end{definition}

\reflem{Laminations}\ref{Itm:Adjacent} and~\ref{Itm:NoEndpoints}, together with \refthm{LinkIsLoom}, imply the following.

\begin{corollary}
\label{Cor:OnlyAsymptotics}
Suppose that $\ell$ and $m$ are distinct leaves in $\Link$ (in either of the foliations).
Suppose that $\ell$ and $m$ share a common ideal point.
Then $\ell$ and $m$ are both cusp leaves and the shared point is a cusp.
\qed
\end{corollary}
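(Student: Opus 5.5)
The plan is to translate the statement about leaves of $\Link$ back into a statement about leaves of the laminations $\Lambda^\calV$ and $\Lambda_\calV$ in $\Mobius$, and then invoke \reflem{Laminations}. By \refdef{IdealPoints}, every leaf of $F^\calV$ or $F_\calV$ has a pair of ideal endpoints, and these come in two kinds. A non-cusp leaf associated to an interior leaf $\lambda$ has as ideal endpoints exactly the two points of $\lambda$; by \reflem{Laminations}\refitm{NoCusps} neither is a cusp. A cusp leaf emanating from $c$ and associated to adjacent boundary leaves $\lambda, \lambda'$ of a crown has ideal endpoints $c$ and the common thorn of $\lambda$ and $\lambda'$. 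By \refthm{LinkIsLoom} there is no ambiguity between the two notions of cusp leaf.

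Now suppose that $\ell \neq m$ share an ideal point $x$. First I will dispose of the case that $x$ is a cusp. Any leaf having a cusp as an ideal endpoint must be a cusp leaf, since the ideal endpoints of non-cusp leaves are not cusps. So in this case both $\ell$ and $m$ are cusp leaves and $x$ is a cusp, as required.

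It remains to rule out the case that $x$ is not a cusp. Then $x$ is an ideal point of some lamination leaf $\lambda$ underlying $\ell$: either an ideal point of an interior leaf, or the thorn of the pair $\lambda, \lambda'$. Likewise $x$ is an ideal point of some lamination leaf $\mu$ underlying $m$.
\begin{itemize}
\item If $\ell$ and $m$ lie in opposite foliations, then $\lambda$ and $\mu$ lie in opposite laminations and share $x$. This contradicts \reflem{Laminations}\refitm{NoEndpoints}.
\item If they lie in the same foliation, then by \reflem{Laminations}\refitm{Adjacent} the leaves $\lambda$ and $\mu$ (if distinct) are adjacent boundary leaves of a common crown, and $x$ is their common thorn. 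Each thorn is the common point of exactly one adjacent pair in the crown, since a crown is ordered like $\ZZ$ by \reflem{Laminations}\refitm{CrownZZ}. Hence $\ell$ and $m$ arise from the same adjacent pair.
\end{itemize}

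The main obstacle is the bookkeeping in the same-foliation case. I must check that an interior leaf never shares an ideal point with a boundary leaf, which is exactly the content of \refitm{Adjacent}, and that the association of cusp leaves to adjacent pairs of boundary leaves is injective, so that $\ell = m$. The latter is part of the construction of $\Link$ in \cite{FSS22}, cited via \refthm{LinkIsLoom}. Together these give the contradiction $\ell = m$, which completes the proof.
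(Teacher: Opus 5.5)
Your proposal is correct and is essentially the paper's argument: the paper derives the corollary in one line from \reflem{Laminations}\refitm{Adjacent} and~\refitm{NoEndpoints} together with \refthm{LinkIsLoom}. Your case split (cusp versus non-cusp shared point, then opposite versus same foliation, also using \refitm{NoCusps}) spells that deduction out; the one step you leave implicit, that distinct non-cusp leaves come from distinct interior leaves, is like the cusp-leaf injectivity part of the construction of $\Link$ in~\cite{FSS22}.
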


\begin{figure}[htbp]
\subfloat[Rectangular style.]{
\label{Fig:RectStyle}
\includegraphics[width=0.4\textwidth]{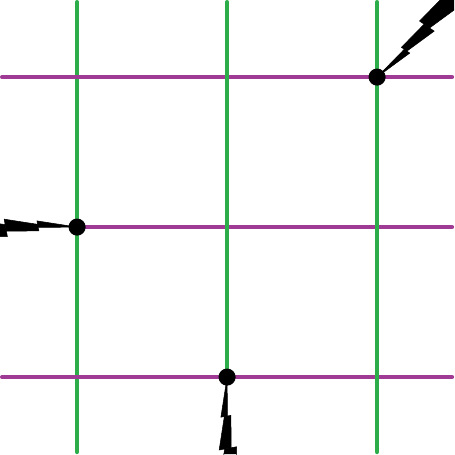}
}
\qquad
\subfloat[Hyperbolic style.]{
\label{Fig:HypStyle}
\includegraphics[width=0.4\textwidth]{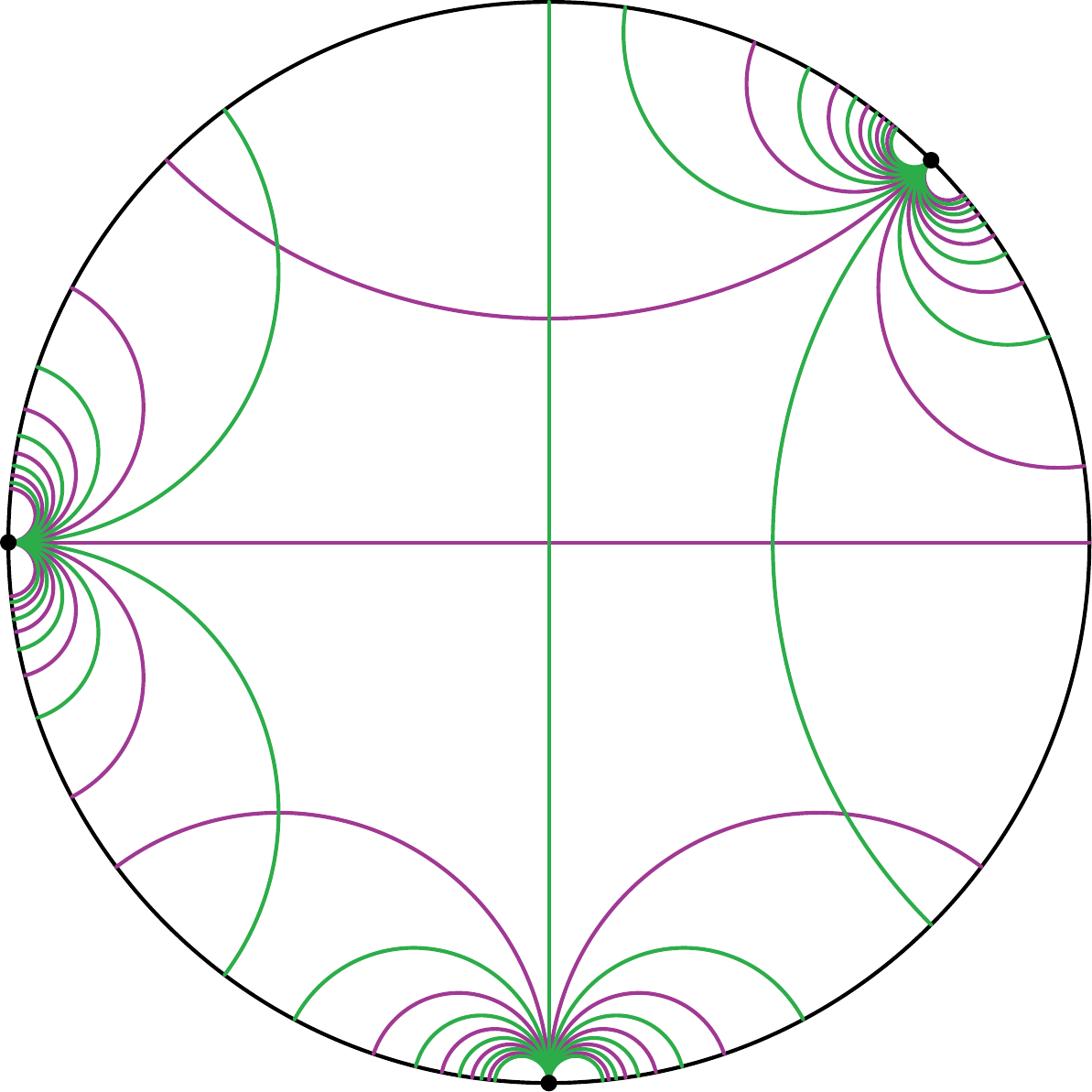}
}

\caption{Some of the cusp leaves of the foliations $F^\calV$ (in green) and $F_\calV$ (in purple), drawn in the \emph{rectangular} and \emph{hyperbolic} styles.
By \refcor{OnlyAsymptotics}, the black dots are cusps of $\Delta_\calV$ (which are dense in $\Circle$).
In \refthm{Disc} we show that $\Circle$ equivariantly compactifies $\Link$ giving a topological disc $\Disc$.
When drawing local pictures it is often useful to use the rectangular style of \reffig{RectStyle};
here the foliations $F^\calV$ and $F_\calV$ are vertical and horizontal, respectively.
The ``lightning bolts'' attached to each cusp indicate the location of the branch cut
(needed when drawing a point with infinite cone angle in the rectangular style).
}
\label{Fig:HypRectStyles}
\end{figure}





\noindent
In \refsec{Disc} we compactify the link space by the veering circle to obtain the veering disc. 
Doing this requires understanding various finite and infinite paths in $\Link$. 

\subsection{Polygonal paths}

We use \emph{arc}, \emph{loop}, \emph{ray}, and \emph{line} to denote (the image of) a proper embedding of the following spaces: 
the interval $[0, 1]$, the circle $S^1$, the half-line $\RR_{\geq 0}$, and the line $\RR$.  
Here is our version of~\cite[Definition~3.1]{Fenley12}.  

\begin{definition}
\label{Def:Polygonal}
A \emph{polygonal path} $\rho$ in $\Link$ is an oriented arc, loop, ray, or line with the following properties. 
\begin{itemize}
\item
The path $\rho$ is a finite union $(\rho_i)_{i = 0}^{n-1}$ of \emph{segments}: oriented closed connected subsets of leaves of $F^\calV$ and $F_\calV$.  
(When $\rho$ is a loop, all indices are taken modulo $n$.)
\item 
The segment $\rho_i$ lies in $F^\calV$ if and only if $\rho_{i+1}$ lies in $F_\calV$. 
\item
The terminal point of $\rho_i$ equals the initial point of $\rho_{i+1}$.
\end{itemize}
The intersection points $\rho_i \cap \rho_{i+1}$ are called the \emph{material corners} of $\rho$.  
\end{definition}

See \reffig{PolygonalPath}. There is an important special case: if $\rho$ is a polygonal line with only one segment, then $\rho$ is an oriented leaf of $F^\calV$ or of $F_\calV$. 

Note that \refdef{Polygonal} is more restrictive than the polygonal paths given in~\cite[Definition~6.3]{SchleimerSegerman24}; here we do not allow polygonal paths to pass through cusps.

\begin{figure}[htbp]
\includegraphics[width=0.35\textwidth]{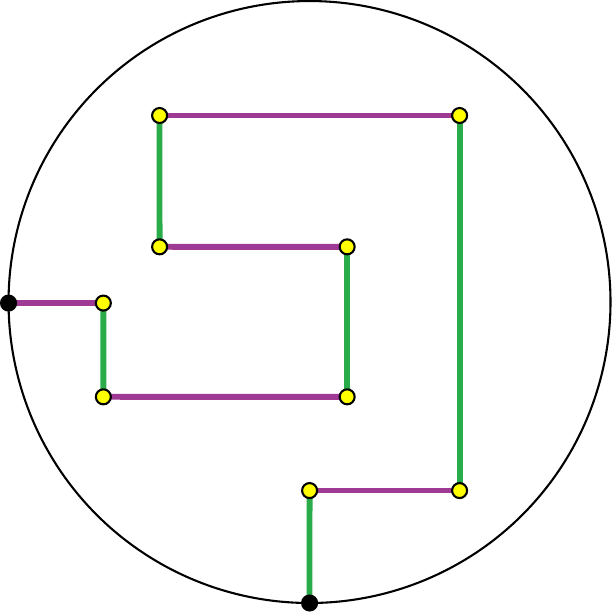}
\caption{An example of an embedded polygonal line $\rho$. 
Material corners are indicated with yellow dots, the points of $\bdyi \rho$ (not necessarily cusps) are indicated with black dots.}
\label{Fig:PolygonalPath}
\end{figure}

From \refthm{LinkIsLoom} we have the following observations. 

\begin{corollary}
\label{Cor:TransverseOmnibus}
\leavevmode
\begin{enumerate}
\item
\label{Itm:PreCompactBasis}
The precompact rectangles form a basis for the topology on $\Link$. 
\item
\label{Itm:PreCompactBoundary}
The boundary $\bdy R$ of a precompact rectangle $R$ is an embedded polygonal loop with exactly four segments. 
\item
\label{Itm:Ascending}
Every rectangle is an ascending union of precompact rectangles.
\item
\label{Itm:Proper}
Suppose that $\rho \subset \Link$ is an embedded polygonal line.  
Then $\rho$ is properly embedded in $\Link$.  
\item
\label{Itm:DiscUnion}
Suppose that $\rho$ is an embedded polygonal loop in $\Link$.  
Then the open disc bounded by $\rho$ is a finite union of (necessarily precompact) rectangles. \qed
\end{enumerate}
\end{corollary}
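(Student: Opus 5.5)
Since the statement is marked as a direct consequence of \refthm{LinkIsLoom}, the plan is to transport the standard facts about transverse pairs of foliations of the plane into loom-space language, item by item. Throughout, the facts used are: $\Link$ is a plane carrying two transverse non-singular foliations; every rectangle lies in a tetrahedron rectangle; and rectangles are images of embeddings $f_R \from (0,1)^2 \to \Link$ sending coordinate arcs into leaves.

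For \refitm{PreCompactBasis} and \refitm{Ascending}, work in product charts. Given a point $p$ and an open set $U$ containing it, transversality gives a small foliated chart about $p$ inside $U$, and that chart is a rectangle. Shrinking it to the image of a closed subsquare $[\epsilon,1-\epsilon]^2$ under the chart map, and taking the interior, gives a precompact rectangle containing $p$ and contained in $U$. For an arbitrary rectangle $R$, take $R_n = f_R((1/n,1-1/n)^2)$. These have compact closures $f_R([1/n,1-1/n]^2)$, and they exhaust $R$. This also settles \refitm{PreCompactBoundary}: $\bdy R_n$ is the image under $f_R$ of the boundary of a square, so it is an embedded loop made of four segments, alternately vertical and horizontal, with four material corners. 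Every precompact rectangle has this form once we know its closure is the image of a closed square. To see that, note its sides are limits of leaf segments, and precompactness means no side runs off to infinity or to a cusp, so $f_R$ extends to the closed square. I expect this to follow from the loom axioms, with \refdef{Loom}\refitm{Tet} used to place $R$ inside a larger rectangle.

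For \refitm{Proper}, the key input is that leaves of a foliation of the plane with no holonomy-type pathology are properly embedded lines. In a loom space this follows because every rectangle is contained in a tetrahedron rectangle, and a leaf meets any rectangle in at most one arc (the astroid picture). Given a compact set $K$, cover it by finitely many precompact rectangles. Each segment of $\rho$ meets each of these rectangles in at most one compact arc. Since $\rho$ has finitely many segments, $\rho \cap K$ is compact, so $\rho$ is proper.

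For \refitm{DiscUnion}, the Jordan--Schoenflies theorem shows that $\rho$ bounds an open disc $D$. Then induct on the number of material corners. Extend the segments of $\rho$ through the corners into $D$ to cut $D$ into finitely many pieces. Each piece is bounded by fewer segments, or is a disc bounded by four segments with only convex corners. A four-segment bounded disc is a rectangle: the leaves crossing it give a product structure, because a leaf cannot enter and leave through the same side, again by the at-most-one-arc property. \textbf{The main obstacle} is this step, namely ruling out leaves that wander inside $D$ without exiting, or that hit a cusp. Cusps are not points of $\Link$, and properness from \refitm{Proper} forces every leaf entering the compact set $\closure{D}$ to exit it. So every leaf meeting $D$ crosses it as an arc, which gives finitely many rectangles. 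Each is precompact, being contained in the compact set $\closure{D}$.
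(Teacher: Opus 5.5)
Your route is the natural one and matches what the paper intends: the corollary is recorded as an immediate consequence of Theorem~\ref{Thm:LinkIsLoom} (a plane with two transverse foliations satisfying the loom axioms), and no proof is given. Items (1) and (3) are fine as written.

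\textbf{Item (5) has a genuine gap.} Cutting $D$ along the leaf arcs that continue segments of $\rho$ at \emph{reflex} corners terminates. (At convex corners the continuations leave $D$.) Add the turning-number argument, as in the proof of Lemma~\ref{Lem:BasicOmnibus}, that a loop with no reflex corners has exactly four segments. Then the final pieces are rectangles. But these open pieces are pairwise disjoint, and their union is $D$ minus the open cut arcs, not $D$. Your last step (``every leaf meeting $D$ crosses it as an arc, which gives finitely many rectangles'') leaves the points on the cuts uncovered.

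One repair is to run the procedure twice. Once always cut along the $F^\calV$--segment at each reflex corner, and once always along the $F_\calV$--segment; each cut still removes a reflex corner. This gives $D = \bigcup_a R_a \sqcup \bigcup_b \kappa_b = \bigcup_c R'_c \sqcup \bigcup_d \kappa'_d$, with the $\kappa_b$ in leaves of $F^\calV$ and the $\kappa'_d$ in leaves of $F_\calV$. A point of $D$ in no $R_a$ and no $R'_c$ lies in some $\kappa_b \cap \kappa'_d$. There are finitely many such points, since each such pair meets at most once. By (1), each has a rectangle neighbourhood inside $D$.

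\textbf{Item (4): the key property is misattributed.} You rely on a leaf meeting a rectangle in at most one plaque. This does not follow from the tetrahedron axiom or the astroid lemma: containment of $R$ in a tetrahedron rectangle says nothing about a leaf leaving and re-entering $R$. What you need is the standard planar fact that a leaf of $F^\calV$ meets a leaf of $F_\calV$ at most once, and that leaves are properly embedded lines. For transverse foliations of $\RR^2$ this holds because an innermost bigon, or a leaf arc returning to a transversal, would bound a disc contradicting Poincar\'e--Hopf. The same fact is what makes a four-sided piece in (5) a product. Given it, your covering argument for (4) works, using (2) to see that each plaque of a precompact rectangle has compact closure in its leaf.

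\textbf{Item (2): the step you leave as ``I expect'' is short.} Suppose $R \subset T$ with $T$ a tetrahedron rectangle. Then $f_T^{-1}(R)$ is a coordinate box $(s,t)\times(u,v)$, since every vertical plaque of $R$ meets every horizontal one. Precompactness of $R$ forces $[s,t]\times[u,v]$ to avoid the four points omitted from the domain of $\closure{f}_T$. So $\closure{R}$ is the homeomorphic image of $[s,t]\times[u,v]$, and its boundary is the image of the boundary square.
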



Next we generalise \refdef{IdealPoints} to polygonal paths.

\begin{definition}
\label{Def:Ideal}
Suppose that $\rho$ is a polygonal ray or a line in $\Link$.  
We define $\bdyi \rho$ to be the ideal points of those segments $\rho_i$ which are non-compact.  
There are at most two of these. 
See \reffig{PolygonalPath}.
\end{definition}

\begin{figure}[htbp]
\labellist
\small\hair 3pt
\pinlabel $m$ [t] at 75 95
\pinlabel $m'$ [b] at 75 198
\pinlabel $l$ [l] at 145 148
\pinlabel $x$ [t] at 145 0
\pinlabel $q$ [tl] at 145 105
\pinlabel $q'$ [bl] at 145 185
\pinlabel $y$ [b] at 145 290
\endlabellist
\includegraphics[width=0.35\textwidth]{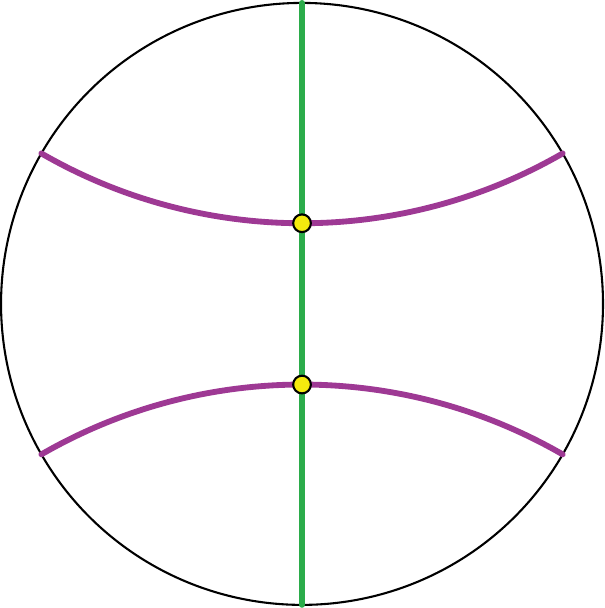}
\caption{The ideal points of the leaf $m$ separate $x$ from the ideal points of $m'$.}
\label{Fig:LeftIsLeft}
\end{figure}

We have the following relations between the topologies of $\Link$ and of $\Circle$.

\begin{lemma}
\label{Lem:LeftIsLeft}
Suppose that $\ell \subset \Link$ is a leaf of either foliation, and similarly for $m$ and $m'$. We have the following.
\begin{enumerate}
\item
\label{Itm:TwoEndpoints}
The two ideal points $\bdyi \ell$ are distinct.
\item
\label{Itm:CrossIFFLinked}
The leaves $\ell$ and $m$ cross if and only if $\bdyi \ell$ and $\bdyi m$ are linked in $\Circle$. 
\item
\label{Itm:LeftIsLeft}
Let $\{x, y\} = \bdyi \ell$.
Suppose that $m$ and $m'$ are distinct leaves that cross $\ell$ at $q$ and $q'$, respectively.
Then $q$ separates $q'$ from the end of $\ell$ associated to $x$ if and only if, in $\Circle$, the ideal points $\bdyi m$ separate $\bdyi m'$ from $x$ (see \reffig{LeftIsLeft}).
\end{enumerate}
\end{lemma}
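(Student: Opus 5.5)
We reduce all three items to \reflem{Laminations}, using the fact that the link space is built from $\Lambda^\calV \times \Lambda_\calV$, where a point of $\Link$ is (the collapse of) a pair of linked leaves $(\lambda,\mu)$. Item~\refitm{TwoEndpoints} is immediate from \refdef{IdealPoints}. For a non-cusp leaf, the ideal endpoints are the two points of an interior leaf $\lambda\in\Mobius$, and these are distinct by definition of $\Mobius$. For a cusp leaf, the ideal endpoints are the cusp $c$ and the common thorn of two adjacent boundary leaves. These differ because, by \reflem{Laminations}\refitm{NoCusps}, no leaf has an ideal point at a cusp.

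For \refitm{CrossIFFLinked}, take $\ell\in F^\calV$ and $m\in F_\calV$; leaves of the same foliation never cross, and by \reflem{Laminations} leaves of one lamination are unlinked, so that case is trivial. First suppose $\ell$ and $m$ cross at a point $p$. Then $p$ is the image of a pair $(\lambda,\mu)$ of linked leaves. Here $\lambda$ is either the interior leaf giving $\ell$ or one of the two adjacent boundary leaves giving $\ell$, and similarly for $\mu$. In the cusp-leaf case, linking of $\lambda$ with $\mu$ passes to linking of $\bdyi\ell$ with $\bdyi\mu$. The reason is that $\bdyi\ell$ consists of the cusp and the shared thorn, and $\mu$ has no ideal point at $c$ by \reflem{Laminations}\refitm{NoCusps}, nor at the thorn by \refitm{NoEndpoints}. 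So $\mu$ separates the remaining endpoint of $\lambda$ from $\lambda\cap\lambda'$ exactly as needed. I expect to check this using the crown picture of \reflem{Laminations}\refitm{CrownZZ} and~\refitm{Interleave}.

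Conversely, suppose $\bdyi\ell$ and $\bdyi m$ are linked. We pick representative lamination leaves $\lambda,\mu$ that are linked, arguing the same way in reverse. Their pair then gives a point of $\Link$ lying on both $\ell$ and $m$. The main obstacle is confirming from the construction in~\cite[Section~10]{FSS22} that every linked pair $(\lambda,\mu)$ indeed survives the collapse as a point lying on both associated leaves. I would cite the relevant lemma there rather than reprove it.

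For \refitm{LeftIsLeft}, the leaves $m$ and $m'$ lie in the foliation opposite to $\ell$ and are disjoint. By \refitm{CrossIFFLinked} both $\bdyi m$ and $\bdyi m'$ link $\{x,y\}$, so each meets both arcs $(x,y)^\acw$ and $(y,x)^\acw$. Moreover $\bdyi m$ and $\bdyi m'$ are unlinked, since otherwise $m$ and $m'$ would cross. Hence they are nested along $\ell$: one of them separates the other from $x$. We orient $\ell$ from $y$ to $x$ and show that the map sending a crossing point $q$ to the point $\bdyi m\cap(x,y)^\acw$ is monotone. To do this, consider the half-plane of $\Link-m$ containing the $x$-end of $\ell$. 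Every leaf crossing $\ell$ inside this half-plane has ideal points in the $x$-side arc cut by $\bdyi m$. This holds by \refitm{CrossIFFLinked} applied to such a leaf and $m$, since the two do not cross, together with connectedness, as in \reffig{LeftIsLeft}. Monotonicity gives the equivalence in both directions.
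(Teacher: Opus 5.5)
Your treatment of (1), and your reliance on the construction of $\Link$ in \cite[Section~10]{FSS22} for (2), agree with the paper. There is a genuine gap in (3). Once you know $m$ and $m'$ do not cross, item (2) only tells you that $\bdyi m'$ lies weakly on one side of $\bdyi m$. The whole content of (3) is deciding \emph{which} side, and your justification for that (``together with connectedness'') has nothing to rest on at this stage. There is not yet any topology on $\Link \sqcup \Circle$. The veering disc is built later, and this lemma is one of its inputs (through \reflem{Triple} and \reflem{Extension}). So connectedness cannot carry ``the component of $\Link - m$ containing the $x$-end of $\ell$'' over to ``the component of $\Circle - \bdyi m$ containing $x$''. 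Suppose you grant continuity of ideal points along $\ell$, which would give monotonicity. You still need to know which end of $\ell$ corresponds to $x$. That does not follow from (2) and planar topology; it is part of how $\ell$ is built.

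This is exactly what the paper uses. For a non-cusp leaf $\ell$ coming from $\lambda$, the set $\Lambda_\lambda$ of lower leaves linking $\lambda$ is linearly ordered, compatibly with separation in $\Circle$. The leaf $\ell$ is an order-preserving collapse of $\Lambda_\lambda$, and that order is what orients $\ell$ and attaches its ends to $x$ and $y$. Hence $\mu<\mu'$ in $\Lambda_\lambda$ exactly when $q<q'$ along $\ell$, which is (3). The cusp-leaf case is similar.

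There are two smaller points. First, ``one of them separates the other from $x$'' requires $\bdyi m$ and $\bdyi m'$ to be disjoint, and being unlinked does not give this. By \refcor{OnlyAsymptotics}, $m$ and $m'$ could share a cusp if both are cusp leaves of a common cusp. In that case neither pair separates the other from $x$, and the equivalence would fail. The paper excludes this with \reflem{NoTriangles}: a leaf of the opposite foliation crosses at most one of two distinct cusp leaves belonging to the same cusp. Second, your derivation in the cusp case of (2) is not correct as written. A crown leaf $\{t_{j-1},t_j\}$ underlies both the cusp leaf from $c$ to $t_{j-1}$ and the one from $c$ to $t_j$. So ``$\mu$ links $\lambda$'', together with $\mu$ avoiding $c$ and the thorn, does not imply that $\mu$ links $\bdyi \ell$. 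You need the construction's rule for which pairs $(\lambda,\mu)$ land on $\ell$, which is what the paper means by ``follows directly from the definition''.
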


\begin{proof}
Conclusions~\refitm{TwoEndpoints} and~\refitm{CrossIFFLinked} follow directly from the definition of $\Link$ (see~\cite[Section~10]{FSS22}).
 
Suppose that $\ell$ is a non-cusp leaf.
Let $\lambda$ be the interior leaf of $\Lambda^\calV$ giving $\ell$.
Let $\Lambda_\lambda \subset \Lambda_\calV$ be the set of leaves that link $\lambda$.
As in the proof of~\cite[Lemma~8.24]{FSS22}, the set $\Lambda_\lambda$ is linearly ordered and locally has the topology of the Cantor set.
As in the proof of~\cite[Theorem~10.51]{FSS22}, a copy of the set $\Lambda_\lambda$ collapses to give the leaf $\ell$;
the order on $\Lambda_\lambda$ orients $\ell$.

The collapsing map preserves order as follows.
Suppose that $\mu$ and $\mu'$ are leaves of $\Lambda_\lambda$ with images containing $m$ and $m'$, respectively.
Then $\mu < \mu'$ in $\Lambda_\lambda$ if and only if $q < q'$ along $\ell$.
By \reflem{NoTriangles}, the ideal points of $m$ and $m'$ are distinct.
Conclusion~\refitm{LeftIsLeft} now follows when $\ell$ is a non-cusp leaf.
The proof is similar when $\ell$ is a cusp leaf.
\end{proof}

From \reflem{Laminations}\refitm{NoCusps} we deduce the following. 

\begin{lemma}
\label{Lem:NoDoubleCusps}
A leaf $\ell$, in either foliation, has at most one ideal point at a cusp.  \qed
\end{lemma}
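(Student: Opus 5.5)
The statement to prove is \reflem{NoDoubleCusps}: a leaf $\ell$ of $F^\calV$ or $F_\calV$ has at most one ideal point at a cusp. The plan is to argue directly from \refdef{IdealPoints}, splitting into the two kinds of leaves and transferring \reflem{Laminations}\refitm{NoCusps} from the laminations to the link space.

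\emph{Non-cusp leaves.} Suppose $\ell$ is a non-cusp leaf, associated to an interior leaf $\lambda$ of $\Lambda^\calV$ (or of $\Lambda_\calV$). By \refdef{IdealPoints}, the ideal endpoints of $\ell$ are exactly the two ideal points of $\lambda$. By \reflem{Laminations}\refitm{NoCusps}, no leaf of either lamination has an ideal point at a cusp. So in this case $\ell$ has no ideal point at a cusp at all.

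\emph{Cusp leaves.} Suppose $\ell$ is a cusp leaf associated to adjacent boundary leaves $\lambda$ and $\lambda'$ of the crown $\Lambda^c$ (or of $\Lambda_c$). By \refdef{IdealPoints}, the ideal endpoints of $\ell$ are the cusp $c$ and the common ideal point $x$ of $\lambda$ and $\lambda'$. Since $x$ is an ideal point of the leaf $\lambda$, \reflem{Laminations}\refitm{NoCusps} again shows that $x$ is not a cusp. Hence $c$ is the only ideal endpoint of $\ell$ at a cusp. (The two endpoints are distinct, as also recorded in \reflem{LeftIsLeft}\refitm{TwoEndpoints}, but this is automatic here because one is a cusp and the other is not.)

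Every leaf is one of these two kinds (see \refsec{Link}), so the statement follows. The only step needing care is to confirm, from the description of $\Link$ in \cite[Section~10]{FSS22}, that every cusp leaf arises from a pair of adjacent boundary leaves in a crown, so that \refdef{IdealPoints} covers all leaves. This is how the definition is set up, so there is no real obstacle.
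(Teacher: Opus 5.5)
Your proposal is correct and follows the paper's own route: the paper deduces this lemma directly from \reflem{Laminations}\refitm{NoCusps}, and your case split via \refdef{IdealPoints} (non-cusp leaves have no cusp ideal point, cusp leaves have exactly the cusp $c$) is precisely that deduction written out in full.
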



\section{The veering disc}
\label{Sec:Disc}

Suppose that $\calV$ is a (locally) veering triangulation of a three-manifold $M$.  
In \refsec{Veering}, we discussed the associated veering circle $\Circle$ and link space $\Link$.  
The goal of this section is to prove \refthm{Disc}; 
that is, we compactify $\Link$ by $\Circle$ to produce the \emph{veering disc} $\Disc$.

\begin{remark}
\label{Rem:Fenley}
Our result and parts of its proof are inspired by a result of Fenley~\cite[Theorem~3.31]{Fenley12}.  
We follow him in relying on Zippin's characterisation of the two-disc, stated above as \refthm{Zippin}.

Our task is easier than Fenley's in some respects but harder in others.
On the one hand, our task is easier because we begin with the veering circle; 
indeed we used it to build the link space.  
Fenley instead begins with the \emph{leaf space} and then gives a delicate construction of a circle at infinity. 
On the other hand, our task is more difficult; 
unlike Fenley, our three-manifold $M$ has torus boundary.  
Thus our proof must contend with the cusp points in $\Circle$.
\end{remark}

\begin{remark}
There are at least two other approaches to proving \refthm{Disc}.

Modelled on the construction of the veering two-sphere, we form the pre-sphere out of hemispheres $D^\calV$ and $D_\calV$, glued along $\Circle$.
As before, we build a copy of $[\Lambda^\calV]^D$ in $D^\calV$.
Now we build a (reflected) copy of $[\Lambda_\calV]_D$ also in $D^\calV$.
We denote this copy by $[\Lambda_\calV]^D$.
We define a partition of the pre-sphere as follows.
\begin{itemize}
\item Each point of $\Circle$ is its own equivalence class.
\item Each point of the interior of $D_\calV$ is its own equivalence class.
\item Two points of the interior of $D^\calV$ are equivalent if they are connected by an arc in $D^\calV$ that does not strictly cross any leaf of either $[\Lambda^\calV]^D$ or $[\Lambda_\calV]^D$.
\end{itemize}
These give a closed equivalence relation on the pre-sphere, satisfying the hypotheses of \refthm{Moore}.
Taking the quotient, we obtain a two-sphere.
The image of $\Circle$ is a Jordan curve and bounds the image of $D^\calV$.
This is the desired disc $\Disc$.

Alternatively one could follow~\cite{Mather82}. 
The exact formulation required can be found as~\cite[Theorem~1.2]{Bonatti26}. 
Note, however, that for this approach one would need to carefully check that $\Circle$ places the same circular order on the cusps as Mather's circle.
\end{remark}

\subsection{Space and topology}

We define the space $\Disc$.

\begin{definition}
\label{Def:Set}
The underlying set for the \emph{veering disc} $\Disc$ is the disjoint union of the link space $\Link$ and the veering circle $\Circle$.
\end{definition}

\begin{definition}
\label{Def:Action}
We equip $\Disc$ with an action by $\pi_1(M)$ by taking the ``disjoint union'' of its actions on $\Link$ and on $\Circle$.
\end{definition}

\begin{definition}
\label{Def:HalfDisc}
Suppose that $\rho \subset \Link$ is an oriented embedded polygonal line.  
Let $u$ and $v$ be the initial and terminal ideal points of $\rho$, respectively. 
So $\bdyi \rho = \{u, v\}$. 
Throughout this definition we assume that $u \neq v$.  
By \refcor{TransverseOmnibus}\refitm{Proper} the line $\rho$ is properly embedded.  
Thus \refthm{Schoenflies} implies that $\rho$ separates $\Link$ into two connected components.  

\begin{itemize}
\item
Let $H^\circ(\rho)$ be the component of $\Link - \rho$ to the left of $\rho$.  
We call $H^\circ(\rho)$ the \emph{half-plane} associated to $\rho$.   
\item
Let $H(\rho) = H^\circ(\rho) \cup (v, u)^\acw$.  
We call $H(\rho)$ the \emph{half-disc} associated to $\rho$.  
When $\rho$ is a cusp leaf, we call $H(\rho)$ a \emph{cusp half-disc}.
\item
We call $\bdyi \rho = \{u, v\}$ the \emph{ideal corners} of $H(\rho)$.
\item
We call the material corners of $\rho$ the \emph{material corners} of $H(\rho)$.  
\item
At every material corner we say that $\rho$ \emph{turns left} or \emph{turns right} as it turns towards or away from $H^\circ(\rho)$.  
\item
We call $\bdyi H(\rho) = (v, u)^\acw$ the \emph{ideal boundary} of $H(\rho)$.
\item
We call $\bdy_\mat H(\rho) = \rho$ the \emph{material boundary} of $H(\rho)$.
\qedhere
\end{itemize}
\end{definition}

\begin{remark}
Note that $\rho \cup \bdyi \rho$ is disjoint from $H(\rho)$.  
\end{remark}

\begin{definition}
\label{Def:Topology}
The topology on $\Disc$ is defined to be the smallest topology where the cusp half-discs $H(\ell)$ are open, as $\ell$ ranges over oriented cusp leaves of $F^\calV$ and of $F_\calV$. 
\end{definition}

\begin{remark}
\label{Rem:SubBasis}
The cusp half-discs give a \emph{sub-basis}~\cite[page~4]{SteenSeebach95}.  
\end{remark}

\begin{lemma}
\label{Lem:Action}
The $\pi_1(M)$--action on $\Disc$ is by homeomorphisms. 
\end{lemma}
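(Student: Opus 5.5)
Since the topology on $\Disc$ is generated by the sub-basis of cusp half-discs (\refrem{SubBasis}), it suffices to show that each $g \in \pi_1(M)$ permutes this sub-basis. Then the preimage under $g$ of every sub-basic open set is sub-basic, so $g$ is continuous. Applying the same argument to $g^{-1}$ shows that $g$ is a homeomorphism. Concretely, I will prove that for every oriented cusp leaf $\ell$ (of either foliation) and every $g \in \pi_1(M)$ we have
\[
g \cdot H(\ell) = H(g \cdot \ell),
\]
where $g \cdot \ell$ carries the pushed-forward orientation.

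First, by \refthm{LinkIsLoom}, $g$ acts on $\Link$ by an orientation-preserving homeomorphism that preserves the loom structure. In particular it sends leaves to leaves of the same foliation, cusp rectangles to cusp rectangles, and hence cusp leaves to cusp leaves (\refdef{CuspNonCuspLeaves}). So $g \cdot \ell$ is an oriented cusp leaf. Since $g$ is a homeomorphism of $\Link$, it maps the two components of $\Link - \ell$ to the two components of $\Link - g\cdot\ell$. Because $g$ preserves the orientation of $\Link$ and we push forward the orientation of $\ell$, the component to the left of $\ell$ goes to the component to the left of $g\cdot\ell$. Thus $g\cdot H^\circ(\ell) = H^\circ(g\cdot \ell)$.

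Second, I need the ideal part. The ideal points of $\ell$ are defined (\refdef{IdealPoints}) from the corresponding leaves of $\Lambda^\calV$ or $\Lambda_\calV$ and their cusp. The laminations are $\pi_1(M)$-invariant, and the construction of $\Link$ in \cite[Section~10]{FSS22} is equivariant. Hence $\bdyi(g \cdot \ell) = g \cdot \bdyi \ell$, with initial and terminal points matching, since the orientation of the leaf corresponds to the order of its ideal points. Since $g$ acts on $\Circle$ by an orientation-preserving homeomorphism \cite[Theorem~7.1]{FSS22}, it sends $(v,u)^\acw$ to $(gv, gu)^\acw$. Combining the two parts gives the displayed identity.

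The main obstacle is the compatibility between the two actions. We must check that the equivariant identification of leaves of $\Link$ with laminations matches orientations, so that "left of $\ell$" in $\Link$ corresponds to the arc $(v,u)^\acw$ and not $(u,v)^\acw$. If this were ever off, it would be off by a global orientation convention, so it is the same for all $g$ and causes no problem. Alternatively, one can argue via \reflem{LeftIsLeft}: the leaves crossing $\ell$ on its left have ideal points in one determined complementary arc of $\bdyi\ell$, and this characterisation is manifestly preserved by $g$.
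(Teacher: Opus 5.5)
Your proposal is correct and follows the same route as the paper, whose proof is the one-line observation that $\pi_1(M)$ preserves the set of cusp leaves and hence permutes the sub-basis of cusp half-discs. Your check that $g \cdot H(\ell) = H(g \cdot \ell)$ fills in details the paper leaves implicit: $g$ preserves the orientations of both $\Link$ and $\Circle$, and ideal points are assigned equivariantly. Your orientation-matching worry is moot, since only the equivariance of each piece is needed.
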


\begin{proof}
The action preserves the set of cusp leaves;
thus it preserves the given sub-basis.
\end{proof}

\begin{lemma}
\label{Lem:LeafHalfDisc}
Suppose that $\ell$ is an oriented leaf in $\Link$.  
Then $H(\ell)$ is open in $\Disc$. 
\end{lemma}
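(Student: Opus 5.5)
The plan is to write $H(\ell)$ as a union of finite intersections of cusp half-discs. By definition the cusp half-discs $H(k)$ form a sub-basis, so this suffices. If $\ell$ is itself a cusp leaf there is nothing to prove, so suppose $\ell$ is a non-cusp leaf, say of $F^\calV$, with $\bdyi \ell = \{u, v\}$. Orient $\ell$ so that $H(\ell) = H^\circ(\ell) \cup (v,u)^\acw$. Fix a point $p \in H(\ell)$. We will find cusp leaves $k_1, \ldots, k_n$ with
\[
p \in H(k_1) \cap \cdots \cap H(k_n) \subset H(\ell).
\]

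The main case is $p \in H^\circ(\ell)$. By the astroid lemma applied at a point $q \in \ell$, and using that non-cusp leaves are dense (\refrem{NonCuspLeavesDense}), there are cusp leaves of $F_\calV$ crossing $\ell$ at points exiting both ends of $\ell$. Take two such cusp leaves $m_1, m_2$ of $F_\calV$ crossing $\ell$ on either side of the segment of $\ell$ near $p$'s "projection". Also take a cusp leaf $k$ of $F^\calV$ on the same side of $\ell$ as $p$, lying between $\ell$ and $p$. Its existence uses a staircase from a point of $\ell$ toward $p$, together with the fact that cusps lie in the boundary of staircases.

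By \reflem{LeftIsLeft}\refitm{LeftIsLeft} and \refitm{CrossIFFLinked}, the ideal points of $k$ lie in $(v,u)^\acw$. Moreover, the half-disc $H(k)$ on the side away from $\ell$ has ideal boundary inside $(v,u)^\acw$. Here \reflem{NoDoubleCusps} and \refcor{OnlyAsymptotics} ensure that the endpoints are distinct and not shared with $\ell$.

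However, $k$ alone may not contain $p$, because $p$ might not lie past $k$. So I would instead choose $k$ separating $\ell$ from $p$. This is possible because the rectangle from $q$ to $p$ is contained in a tetrahedron rectangle (loom axiom \refitm{Tet}). Moving slightly toward $\ell$ and using the astroid density of cusp-leaf intersections near $q$, we get cusp leaves arbitrarily close to $\ell$ on the $p$ side. Then $p \in H(k) \subset H(\ell)$ directly: $H(k)$ is the component away from $\ell$, and its closure in $\Disc$ misses $\ell \cup \{u, v\}$. The containment $H^\circ(k) \subset H^\circ(\ell)$ follows from planarity. The ideal boundary containment follows from linking considerations via \reflem{LeftIsLeft}.

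The second case is $p \in (v,u)^\acw$, an ideal point. Here we need a cusp leaf $k$ with $p \in \bdyi H(k) \subset (v,u)^\acw$. That is, we need a cusp leaf whose two ideal points separate $p$ from $u$ and $v$. Cusps are dense in $\Circle$, and crowns accumulate on their cusp from both sides (\reflem{Laminations}\refitm{CrownZZ}). So choose a cusp $c$ in $(v, u)^\acw$ near $p$ and use a crown leaf. Failing a single leaf, intersect two cusp half-discs $H(k_1) \cap H(k_2)$, with $k_1, k_2$ emanating from cusps $c_1, c_2$ on either side of $p$ in $(v,u)^\acw$. Each is chosen so that its ideal points lie in $(v,u)^\acw$. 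We must then check that the intersection contains $p$ and is contained in $H(\ell)$. The material part lies in $H^\circ(\ell)$, since a material point outside it would force a leaf crossing inconsistent with the linking order.

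The main obstacle is the material case. The difficulty is producing a cusp leaf separating $p$ from $\ell$ and proving the inclusion $H(k) \subset H(\ell)$, including the ideal boundaries. I expect to rely on the astroid lemma and \reflem{LeftIsLeft} there.
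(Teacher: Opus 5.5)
Reducing to the sub-basis is fine in principle. Your material-case idea, using cusp leaves parallel to $\ell$ that approach it from the left, is the right one. But there is a genuine gap, most seriously in the ideal case.

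\textbf{Material case.} You only obtain cusp leaves close to $\ell$ near a point $q \in \ell$. The ``rectangle from $q$ to $p$'' exists only if the leaf through $p$ of the opposite foliation crosses $\ell$, and this fails for most $p \in H^\circ(\ell)$. Closeness to $\ell$ near $q$ does not show that such a cusp leaf separates an arbitrary $p$ from $\ell$.

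\textbf{Ideal case.} The two-leaf fallback cannot work. Suppose both ideal points of $k_i$ lie in $(v,u)^\acw$. Then $\bdyi H(k_i)$ is either the arc between them inside $(v,u)^\acw$, or the complementary arc, which contains $u$ and $v$. In the first case $H(k_i)$ alone already suffices. In the second case any intersection of such half-discs still contains $u \notin H(\ell)$. So you really need a single cusp leaf whose ideal points separate $p$ from $\{u,v\}$. Taking a cusp $c$ near $p$ with one of its crown leaves does not provide one: nothing forces a thorn of $c$ to lie beyond $p$ while staying inside $(v,u)^\acw$.

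\textbf{The missing ingredient.} You need the approaching cusp leaves to converge to $\ell$ \emph{globally}, so that one family handles every point of $H(\ell)$ at once. This is the paper's proof. Fix $q \in \ell$ and let $S$ be the staircase for $q$ on the left of $\ell$ containing the initial end of $\ell$. Each cusp $c_i$ of $S$ has a cusp leaf $m_i$ crossing $\ell$ and a cusp leaf $\ell_i$ in the same foliation as $\ell$. By the astroid lemma, the points $\ell \cap m_i$ exit the initial end of $\ell$, and the $\ell_i$ accumulate on $\ell$ from the left. Orienting each $\ell_i$ with $\ell$ on its right gives $H(\ell) = \bigcup_i H(\ell_i)$, a union of sub-basic sets. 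No finite intersections or case split are needed.

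Concretely, the union covers everything for two reasons.
\begin{itemize}
\item The ideal points of $\ell_i$ tend to $u$ and $v$, so the arcs $\bdyi H(\ell_i)$ exhaust $(v,u)^\acw$.
\item For material $p$, the leaf through $p$ parallel to $\ell$ has both ideal points in the open arc $(v,u)^\acw$. This holds by Corollary~\ref{Cor:OnlyAsymptotics}, since $\ell$ is a non-cusp leaf. Hence this leaf is eventually cut off from $\ell$ by some $\ell_i$.
\end{itemize}
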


\begin{proof}
Suppose that $\ell$ is a leaf of either foliation.  
If $\ell$ is a cusp leaf then $H(\ell)$ is a cusp half-disc; 
thus it is open by \refdef{Topology}.

Suppose instead that $\ell$ is a non-cusp leaf.
Fix a point $p$ on $\ell$.
We build, as in \refsec{Astroid}, the staircase $S$ about $p$ to the left of $\ell$ containing the initial end of $\ell$. 
See \reffig{ApproachLeaf}.
Let $(c_i)$ be the cusps of $S$. 
Each $c_i$ has two cusp leaves $\ell_i$ and $m_i$ meeting the interior of $S$. 
By the astroid lemma~\cite[Lemma~4.10]{SchleimerSegerman24} the intersection points $\ell \cap m_i$ exit the initial end of $\ell$. 
Again applying the astroid lemma, we deduce that the cusp leaves $\ell_i$ approach $\ell$ from the left. 
Thus $H(\ell) = \medcup_i H(\ell_i)$ is open. 
\end{proof}

\begin{figure}[htbp]
\labellist
\small\hair 2pt
\pinlabel $p$ [t] at 700 10
\pinlabel $\ell$ [l] at 1390 10
\pinlabel $\ell_i$ [l] at 1390 120
\pinlabel $m_i$ [t] at 420 10
\pinlabel $c_i$ [br] at 420 120
\endlabellist
\includegraphics[width=0.75\textwidth]{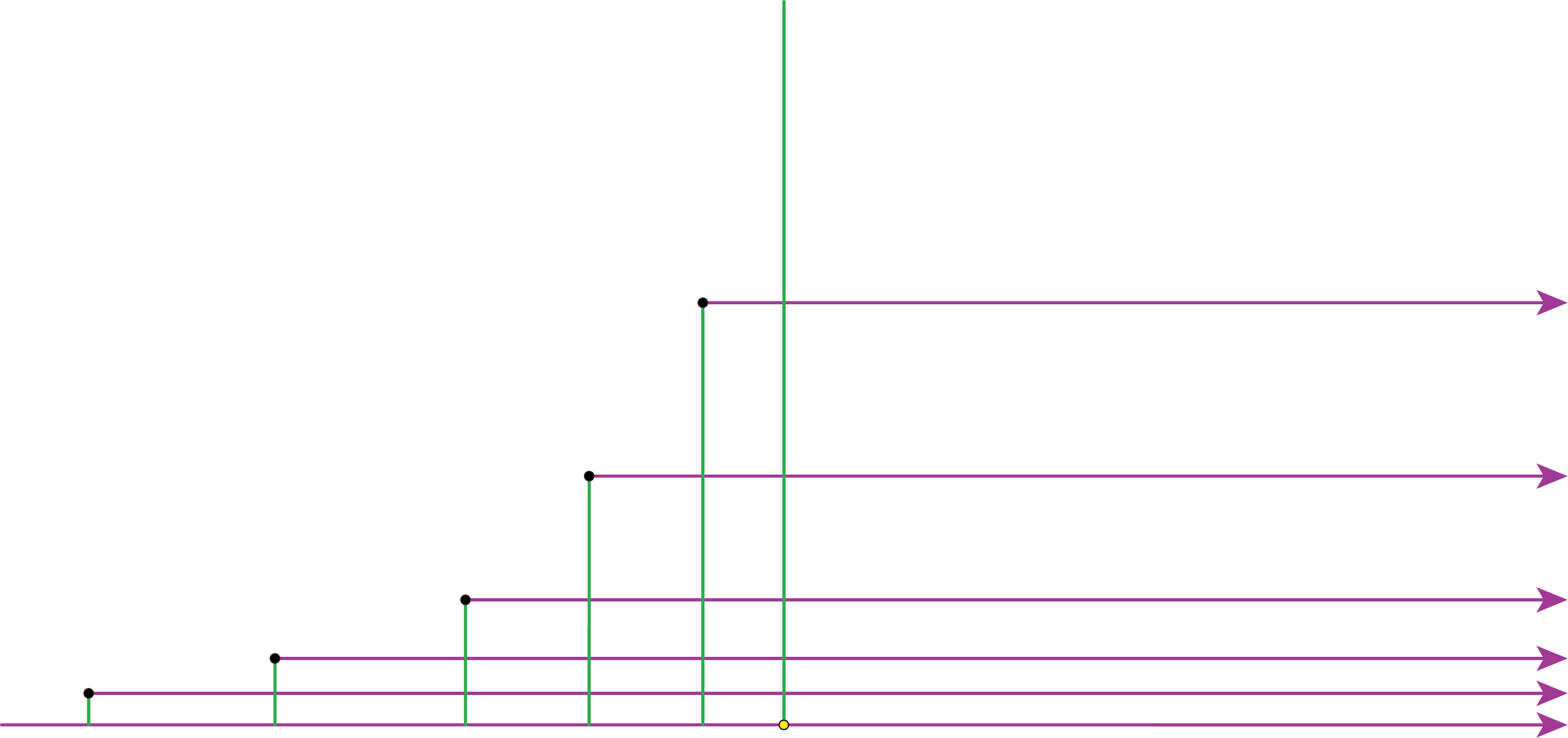}
\caption{A sequence of cusp leaves $\ell_i$ approaching a leaf $\ell$.}
\label{Fig:ApproachLeaf}
\end{figure}

\begin{lemma}
\label{Lem:Rectangle}
Suppose that $R$ is a rectangle in $\Link$.  
Then $R$ is open in $\Disc$. 
\end{lemma}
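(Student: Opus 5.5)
We will write $R$ as an intersection (or a union of intersections) of half-discs $H(\ell)$ of leaves, each of which is open by \reflem{LeafHalfDisc}.
The main case is a precompact rectangle. By \refcor{TransverseOmnibus}\refitm{PreCompactBoundary}, its boundary $\bdy R$ is an embedded polygonal loop with four segments. Each segment lies in a leaf: two leaves $\ell_1, \ell_2$ of $F^\calV$ and two leaves $m_1, m_2$ of $F_\calV$. We orient each of these leaves so that $R$ lies to its left. We then claim that
\[
R = H(\ell_1) \cap H(\ell_2) \cap H(m_1) \cap H(m_2).
\]

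To check this claim, first consider a point $p \in \Link$ lying in all four half-planes. The vertical leaf through $p$ lies strictly between $\ell_1$ and $\ell_2$, since it is disjoint from both and lies on the correct side of each. Similarly, the horizontal leaf through $p$ lies between $m_1$ and $m_2$. We must show that $p$ actually lies in $R$, and not in some region "between" the leaves but outside the rectangle. To do this we use the product structure: the vertical leaf $\ell$ through $p$ separates $\ell_1$ from $\ell_2$. Since $m_1$ crosses both $\ell_1$ and $\ell_2$, the leaf $m_1$ must cross $\ell$, and likewise $m_2$ crosses $\ell$. The segment of $\ell$ between these two crossings is a vertical arc of $\closure{R}$. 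Since $p$ lies between $m_1$ and $m_2$ along $\ell$, we conclude $p \in R$. Leaves cross at most once, so this reasoning is legitimate in the plane.

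Next consider a point $x \in \Circle$ lying in all four ideal arcs $\bdyi H$. We must derive a contradiction. The ideal points of $\ell_1, m_1, \ell_2, m_2$ occur in cyclic order around $\Circle$, with linking as given by \reflem{LeftIsLeft}\refitm{CrossIFFLinked} and \refitm{LeftIsLeft}. From this we expect the four ideal arcs $(v,u)^\acw$ to have empty common intersection: each consecutive pair of crossing leaves has linked endpoints, so the arc to the left of $\ell_1$ and the arc to the left of $\ell_2$ lie on opposite sides of the $m_i$. \refcor{OnlyAsymptotics} and \reflem{LeftIsLeft}\refitm{TwoEndpoints} ensure that there are no coincident endpoints. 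Hence the intersection is exactly $R$, and $R$ is open.

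For a general rectangle we use \refcor{TransverseOmnibus}\refitm{Ascending}: $R$ is a union of precompact rectangles, each of which is open, so $R$ is open.

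We expect the main obstacle to be the ideal-point bookkeeping in the second step, namely showing that no point of $\Circle$ lies in all four half-discs. We would first try to settle it by carefully tracking the cyclic order of the eight endpoints using \reflem{LeftIsLeft}\refitm{LeftIsLeft}.
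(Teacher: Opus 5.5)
Your route is the paper's: reduce to precompact $R$ by \refcor{TransverseOmnibus}\refitm{Ascending}, orient the four leaves containing the sides of $R$ so that $R$ lies to their left, note that $R = \medcap_i H(\ell_i)$, and apply \reflem{LeafHalfDisc}. The paper asserts that equality without comment. Your attempt to verify it, however, contains a step that fails as written.

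From $p \in H^\circ(\ell_1) \cap H^\circ(\ell_2)$ alone you cannot conclude that the vertical leaf $\ell$ through $p$ separates $\ell_1$ from $\ell_2$, because $\Link$ has no global product structure. For example, take a cusp $c$ in the arc of $\Circle$ between the northern endpoints of $\ell_1$ and $\ell_2$. Take a cusp leaf of $F^\calV$ at $c$ whose thorn also lies in that arc; such thorns exist since they accumulate on $c$ (\reflem{Laminations}\refitm{CrownZZ}). This leaf lies between $\ell_1$ and $\ell_2$. By \reflem{LeftIsLeft}\refitm{CrossIFFLinked} it crosses neither $m_1$ nor $m_2$, so it does not separate $\ell_1$ from $\ell_2$. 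The hypotheses $p \in H^\circ(m_1) \cap H^\circ(m_2)$ therefore have to be used before this point.

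One clean fix goes as follows.
\begin{enumerate}
\item Each $m_j$ meets each $\ell_i$ exactly once. So $m_j$ meets the strip $H^\circ(\ell_1) \cap H^\circ(\ell_2)$ exactly in the open side $s_j$ of $R$, since the two outer rays of $m_j$ cannot re-enter the strip.
\item The disjoint arcs $s_1$ and $s_2$ cut the strip into three pieces.
\item The two outer pieces lie on the wrong sides of $m_1$ and $m_2$ respectively.
\item The middle piece is connected, disjoint from $\bdy R$, and contains $R$. Since $R$ is a component of $\Link - \bdy R$, the middle piece equals $R$.
\end{enumerate}

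In the ideal step, the sentence saying the arcs to the left of $\ell_1$ and $\ell_2$ ``lie on opposite sides of the $m_i$'' is also not right. In fact $\bdyi H(\ell_1) \cap \bdyi H(\ell_2)$ is non-empty. It consists of two open arcs: one between the southern endpoints of $\ell_1$ and $\ell_2$, and one between their northern endpoints. Take $m_1$ to be the leaf containing the south side of $R$. Then $\bdyi H(m_1)$ misses the southern arc and $\bdyi H(m_2)$ misses the northern one. So the four-fold intersection in $\Circle$ is empty, as you expected, and $R = \medcap_i H(\ell_i)$ holds in $\Disc$.
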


\begin{proof}  
Applying \refcor{TransverseOmnibus}\refitm{Ascending}, it suffices to consider the case where $R$ is precompact.  
We give $\bdy R$ its induced orientation.  
By \refcor{TransverseOmnibus}\refitm{PreCompactBoundary}, there are four oriented leaves $\{\ell_i\}_{i = 0}^3$ so that 
\begin{itemize}
\item
$R$ is to the left of $\ell_i$ and 
\item
one segment of $\bdy R$ lies in $\ell_i$. 
\end{itemize}
Thus $R = \medcap_i H(\ell_i)$ and we are done by \reflem{LeafHalfDisc}.
\end{proof}

\begin{lemma}
\label{Lem:SubspaceL}
The subspace topology for $\Link \subset \Disc$ has the following properties.
\begin{enumerate}
\item
\label{Itm:Sub}
It agrees with the original topology on $\Link$; 
thus it is homeomorphic to $\RR^2$. 
\item
\label{Itm:Open}
$\Link$ is open as a subset of $\Disc$.  
\item
\label{Itm:Arc}
$\Link$ is path-connected by embedded polygonal arcs. 
\end{enumerate}
\end{lemma}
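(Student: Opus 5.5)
For \refitm{Sub} we compare bases. In one direction, \reflem{Rectangle} shows that every rectangle of $\Link$ is open in $\Disc$. By \refcor{TransverseOmnibus}\refitm{PreCompactBasis} the precompact rectangles form a basis for the original topology. So every originally open set in $\Link$ is open in $\Disc$, and in particular in the subspace topology.

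For the other direction it suffices to check the sub-basis. Each cusp half-disc meets $\Link$ in $H(\ell)\cap\Link = H^\circ(\ell)$. This is a component of the complement of the properly embedded line $\ell$ (\refcor{TransverseOmnibus}\refitm{Proper} and \refthm{Schoenflies}), hence open in the original topology. Finite intersections and arbitrary unions of originally open sets are originally open. So the subspace topology is no finer than the original one, and the two agree. Since $\Link$ is a loom space and hence a copy of $\RR^2$, this proves \refitm{Sub}.

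For \refitm{Open} we need $\Link$ to be a union of sets open in $\Disc$. By \refcor{TransverseOmnibus}\refitm{PreCompactBasis} every point of $\Link$ lies in some rectangle, and rectangles lie in $\Link$. By \reflem{Rectangle} each rectangle is open in $\Disc$, so $\Link$ is open.

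For \refitm{Arc}, fix $p,q \in \Link$. We cover a compact topological arc from $p$ to $q$ by finitely many precompact rectangles $R_0,\ldots,R_k$ forming a chain, with consecutive rectangles intersecting, $p \in R_0$ and $q\in R_k$. Inside a single rectangle any two points are joined by an embedded polygonal arc with at most two segments (an L-shape along the product structure). Concatenating these gives a polygonal path from $p$ to $q$; if consecutive segments lie in the same foliation we merge them. This concatenation may fail to be embedded, and making it embedded is the main, though minor, obstacle. To handle it we parametrise the path and shortcut as follows. Whenever the path revisits a point, we delete the intervening loop. The segments are compact subsets of leaves and there are finitely many of them, so there are finitely many segment pairs. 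Two segments of the same foliation meet in a subsegment or not at all. Two segments of opposite foliations meet in at most one point, since a leaf of $F^\calV$ meets a leaf of $F_\calV$ at most once in a loom space, because there are no bigons. Hence finitely many excisions produce an embedded polygonal arc. The alternation condition of \refdef{Polygonal} is restored by merging collinear segments, and the resulting arc still avoids cusps because cusps are not points of $\Link$.
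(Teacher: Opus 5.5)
Your proof is correct and follows the paper's argument essentially step for step. Rectangles are open in $\Disc$ by \reflem{Rectangle} and form a basis, $H(\ell)\cap\Link = H^\circ(\ell)$ is open in the original topology, $\Link$ is a union of open rectangles, and embedded polygonal arcs come from covering a path by finitely many rectangles. The differences are cosmetic: you justify openness of $H^\circ(\ell)$ as a complementary component of a proper line rather than as a union of rectangles, and you spell out the loop-removal step that the paper leaves implicit.
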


\begin{proof} 
By \reflem{Rectangle}, rectangles are open in the subspace topology on $\Link$.  
By \refcor{TransverseOmnibus}\refitm{PreCompactBasis}, precompact rectangles give a basis for the original topology. 
Thus the subspace topology has at least as many open sets as the original topology.   

Now suppose that $\ell$ is an oriented cusp leaf.  
The half-plane $H^\circ(\ell) = H(\ell) \cap \Link$ is a union of rectangles; thus the subspace topology on $\Link$ has no more open sets than the original topology on $\Link$.  
This proves \refitm{Sub}.

Since $\Link$ is a union of rectangles it is open in $\Disc$.  
This gives \refitm{Open}.

Finally, suppose that $p$ and $q$ are points of $\Link$.  
Since $\Link$ is homeomorphic to $\RR^2$ it is path-connected.  
Let $s$ be an arc from $p$ to $q$.  
Since $s$ is compact, we can cover it by finitely many rectangles.  
There is an embedded polygonal arc $\sigma$ from $p$ to $q$ contained in the union of these rectangles.  
This proves \refitm{Arc}.
\end{proof}

We turn to the task of understanding the subspace topology induced on $\Circle$.
We begin with an observation that may be proved using \refcor{TransverseOmnibus}\refitm{DiscUnion}.

\begin{lemma}
\label{Lem:HalfDiscOpen}
Suppose that $\rho$ is an embedded polygonal line in $\Link$.  
Then $H(\rho)$ is open. \qed
\end{lemma}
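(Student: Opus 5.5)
We will write $H(\rho)$ as a finite union and intersection of half-discs of single leaves, and then apply \reflem{LeafHalfDisc}. Let $\rho = \rho_0 \cup \cdots \cup \rho_{n-1}$ be the segments, with $\rho_0$ and $\rho_{n-1}$ the non-compact ones; these carry the ideal points $u$ and $v$. For each $i$ let $\ell_i$ be the leaf containing $\rho_i$, oriented so that $H^\circ(\rho)$ lies locally to the left of $\rho_i$. We induct on the number of material corners. If there are none, $\rho$ is a leaf and \reflem{LeafHalfDisc} gives the claim directly.

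For the inductive step, look at the material corner $p = \rho_{n-2} \cap \rho_{n-1}$. Let $\rho'$ be the polygonal line obtained by replacing $\rho_{n-2} \cup \rho_{n-1}$ with the full ray of $\ell_{n-2}$ that continues past $p$. This ray does not turn at $p$. We must check that $\rho'$ is still embedded; if it is not, we instead extend $\rho_{n-1}$ backwards through $p$ and modify the other end.

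\emph{Case: $\rho$ turns left at $p$.} Then $H(\rho) = H(\rho') \cap H(\ell_{n-1})$.

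\emph{Case: $\rho$ turns right at $p$.} Then $H(\rho) = H(\rho') \cup H(\ell_{n-1})$.

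In both cases we must verify the equality on two pieces. On the material part, this is a planar statement about the regions cut out by $\rho$, $\rho'$ and $\ell_{n-1}$, and it follows from Schoenflies. On the ideal part, it is a statement about arcs of $\Circle$, and it follows from \reflem{LeftIsLeft}\refitm{LeftIsLeft} and \refitm{CrossIFFLinked}: these lemmas fix the cyclic order of the ideal points of $\ell_{n-2}$ and $\ell_{n-1}$, which cross at $p$. Openness of $H(\rho)$ then follows by induction.

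The main obstacle is embeddedness. The extended ray of $\ell_{n-2}$ may meet other segments of $\rho$, and the leaves may share ideal points at cusps, as allowed by \refcor{OnlyAsymptotics}. Either of these would break the set identities above.

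If the reduction fails, we will use the alternative suggested by the paper's hint, \refcor{TransverseOmnibus}\refitm{DiscUnion}. Truncate $\rho$ far along its two non-compact ends, joining the truncation points by a long polygonal arc that runs near infinity. By \refcor{TransverseOmnibus}\refitm{DiscUnion}, the region between $\rho$ and this arc is a finite union of rectangles, hence open by \reflem{Rectangle}. We then write $H(\rho)$ as the union of this region with the intersection $H(\ell_0) \cap H(\ell_{n-1})$, restricted appropriately, and with half-discs of cusp leaves chosen to lie inside $H(\rho)$ and to approach the ideal arc $(v,u)^\acw$. The astroid lemma supplies such cusp leaves, exactly as in the proof of \reflem{LeafHalfDisc}. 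Every piece is open, so $H(\rho)$ is open.
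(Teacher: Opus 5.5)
\textbf{Gap in the inductive step.} Your induction rests on the identities $H(\rho)=H(\rho')\cap H(\ell_{n-1})$ at a left turn and $H(\rho)=H(\rho')\cup H(\ell_{n-1})$ at a right turn. These are false in general, even when $\rho'$ is embedded. The trouble is not the forward extension of $\rho_{n-2}$. It is the other half of the full leaf $\ell_{n-1}$: its ray from $p$ away from $v$ can cross earlier segments of $\rho$.

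Here is a counterexample in the rectangular style.
\begin{itemize}
\item Let $\rho_0$ run west along a horizontal leaf $h$ from its eastern ideal point $u$ to a point $a$.
\item Let $\rho_1$ run north from $a$ to $b$, and $\rho_2$ run east from $b$ to $p$.
\item Let $\rho_3$ run north from $p$ to $v$ along a vertical leaf $\ell_3$ that crosses $h$ east of $a$.
\end{itemize}
The turn at $p$ is a left turn. Also $\rho'$ is embedded, since the eastward extension of $\rho_2$ lies in a horizontal leaf distinct from $h$, hence disjoint from it. Yet points just south of $\rho_0$ and east of $\ell_3$ lie in $H^\circ(\rho)$ but not in $H^\circ(\ell_3)$. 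Likewise, the ideal arc between the southern end of $\ell_3$ and $u$ lies in $(v,u)^\acw$ but not in $\bdyi H(\ell_3)$. The mirror-image configuration defeats the union identity.

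So neither Schoenflies nor Lemma~\ref{Lem:LeftIsLeft} can deliver these identities. The latter only controls the two crossing leaves $\ell_{n-2}$ and $\ell_{n-1}$. Whether the far end of $\ell_{n-1}$ lands in $(v,u)^\acw$ depends on the rest of $\rho$. Your remedy when $\rho'$ is not embedded (extend $\rho_{n-1}$ backwards and ``modify the other end'') is unspecified and can fail at both ends at once, but that is secondary.

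\textbf{The fallback does not close the gap.} Points of $H^\circ(\rho)$ are already handled by Lemma~\ref{Lem:SubspaceL}. So the whole content of the lemma is this: for each $x\in(v,u)^\acw$, find an open $U$ with $x\in U\subset H(\rho)$.
\begin{itemize}
\item A ``long polygonal arc near infinity'' has no meaning in $\Link$ before the topology of $\Disc$ is understood. In any case, the disc it cuts off is a finite union of precompact rectangles, so it never reaches $\Circle$ and contributes nothing at the ideal arc.
\item By the same example, $H(\ell_0)\cap H(\ell_{n-1})$ is neither contained in nor contains $H(\rho)$. ``Restricting appropriately'' does not preserve openness.
\item The claim that the astroid lemma supplies cusp leaves $m$ with $H(m)\subset H(\rho)$, whose ideal arcs cover $(v,u)^\acw$, is exactly what must be proved.
\end{itemize}
In Lemma~\ref{Lem:LeafHalfDisc} the astroid lemma approximates a single leaf from one side. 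Here you would also have to show that such $m$ avoid all other segments of $\rho$ and that their ideal arcs stay inside $(v,u)^\acw$. This includes points $x$ near $u$ or $v$, and near ideal points, like the southern end of $\ell_3$ above, of leaves that cross $\rho$ several times.

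That requires a polygonal-line analogue of Lemma~\ref{Lem:Triple}, relating the two sides of $\rho$ in $\Link$ to the two arcs of $\Circle-\bdyi\rho$. Your proposal never establishes it. The paper itself only points to Corollary~\ref{Cor:TransverseOmnibus}(\ref{Itm:DiscUnion}), so whatever route you take, this comparison between material and ideal sides is where the real work lies.
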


\begin{lemma}
\label{Lem:Span}
Suppose that $c$ and $d$ are cusps.  
Then there is an embedded polygonal line $\rho$ with ideal points at $c$ and $d$. 
\end{lemma}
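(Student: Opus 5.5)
To build $\rho$, I will choose a cusp leaf $\ell$ emanating from $c$ and a cusp leaf $\ell'$ emanating from $d$, and join them by an embedded polygonal arc. By \refdef{IdealPoints}, each cusp leaf has one ideal point at its cusp. So, fixing $p \in \ell$ and $p' \in \ell'$, the sub-ray $r \subset \ell$ from $p$ towards $c$ is a polygonal ray with $\bdyi r = \{c\}$, and similarly $r' \subset \ell'$ from $p'$ towards $d$ has $\bdyi r' = \{d\}$. The line will be $\rho = r \cup \sigma \cup r'$, where $\sigma$ is a polygonal arc from $p$ to $p'$. Its ideal points are then $c$ and $d$ by \refdef{Ideal}, since the only non-compact segments are $r$ and $r'$.

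First, I will handle the case $c = d$. This is presumably excluded by the intended use, or can be handled by taking two distinct cusp leaves from $c$. I will assume $c \neq d$, which is the case \refdef{HalfDisc} needs.

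Second, I will make $r$ and $r'$ disjoint. If $\ell$ and $\ell'$ lie in the same foliation, they are distinct leaves and hence disjoint. If they lie in opposite foliations, they meet in at most one point. In that case I shrink $r$ and $r'$, moving $p$ and $p'$ closer to $c$ and $d$, until they are disjoint. Alternatively I can simply choose $\ell$ and $\ell'$ both in $F^\calV$.

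Third, I will take $\sigma$ to be an embedded polygonal arc from $p$ to $p'$, given by \reflem{SubspaceL}\refitm{Arc}. Two problems remain.
\begin{itemize}
\item The arc $\sigma$ may meet $r \cup r'$ in more points than its endpoints.
\item The alternation of foliations required by \refdef{Polygonal} must hold at the junctions.
\end{itemize}
For the first, let $q$ be the last point of $\sigma$ on $r$. Replace $p$ by $q$ and $r$ by its subray from $q$ towards $c$, and replace $\sigma$ by its sub-arc after $q$. Do the same symmetrically at the other end with the first point of the remaining arc on $r'$. Because $\sigma$ is a finite union of compact segments, each segment meets a leaf either in finitely many points or in a subsegment. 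So these first and last points exist, and the truncated path is embedded.

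For the second, if the segment of $\sigma$ adjacent to $r$ lies in the same foliation as $r$, it is contained in the leaf $\ell$. I then merge it into $r$, or reroute slightly inside a small rectangle around $q$ to insert a short transverse segment. The same fix applies at $r'$. The alternation condition is thus a local fix.

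The main obstacle is checking that $\rho$ is genuinely a line, that is, a proper embedding of $\RR$. This follows from \refcor{TransverseOmnibus}\refitm{Proper} once $\rho$ is known to be an embedded polygonal line. I also need the rays to leave every compact set. This holds because $r$ and $r'$ are closed sub-rays of leaves, and leaves are properly embedded lines in $\Link \homeo \RR^2$. By \refcor{OnlyAsymptotics} and \reflem{NoDoubleCusps}, the ends of $r$ and $r'$ converge to the correct, distinct ideal points $c$ and $d$.
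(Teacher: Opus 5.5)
Your proposal is correct and follows essentially the same route as the paper: choose cusp leaves at $c$ and $d$, join points on them by the embedded polygonal arc from \reflem{SubspaceL}\refitm{Arc}, attach the subrays running towards $c$ and $d$, and clean up. Your cleanup step (truncating at the last and first intersections with the rays, and merging any same-foliation segment into the adjacent ray) is an explicit version of the paper's ``delete loops and backtracking.''
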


\begin{proof}
Suppose that $\ell$ and $m$ are distinct cusp leaves ending at $c$ and $d$, respectively.  
\reflem{SubspaceL}\refitm{Arc} gives us an embedded polygonal arc $\beta$ running from a point of $\ell$ to a point of $m$.  
We extend $\beta$ by subrays of $\ell$ and $m$ towards $c$ and $d$, respectively. 
Finally, we delete loops and backtracking. 
\end{proof}

\begin{lemma}
\label{Lem:SubspaceS}
The subspace topology on $\Circle$ agrees with its original topology.  
\end{lemma}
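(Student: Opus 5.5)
We will show two inclusions: every set open in the subspace topology is open in the original topology of $\Circle$, and conversely. The basic open sets in each case are intervals, and the cusps $\Delta_\calV$ are dense in $\Circle$, so the bridge in both directions is to compare cusp half-discs with intervals whose endpoints are cusps.

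\textbf{Subspace open implies originally open.} By \refdef{Topology}, the subspace topology on $\Circle$ is generated by the sets $H(\ell) \cap \Circle = \bdyi H(\ell)$, where $\ell$ ranges over oriented cusp leaves. Each such set is an open interval $(v,u)^\acw$ with $u \neq v$ by \reflem{LeftIsLeft}\refitm{TwoEndpoints}. Finite intersections of open intervals are open in the original topology. Hence every subspace-open set is open in the original topology.

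\textbf{Originally open implies subspace open.} Fix $x \in \Circle$ and an original open interval $I \ni x$. We must find a subspace-open set $U$ with $x \in U \subset I$. Since cusps are dense, choose cusps $c, d \in I$ with $x \in (c,d)^\acw \subset I$. Apply \reflem{Span} to get an embedded polygonal line $\rho$ with ideal points $c$ and $d$, oriented so that $\bdyi H(\rho) = (c,d)^\acw$. By \reflem{HalfDiscOpen}, $H(\rho)$ is open in $\Disc$. So $H(\rho) \cap \Circle = (c,d)^\acw$ is subspace-open, and it contains $x$ and lies in $I$.

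If one prefers to avoid \reflem{HalfDiscOpen}, the same conclusion can be reached directly from the sub-basis, as follows.
\begin{itemize}
\item If $x$ is not a cusp, use density of cusp leaves near $x$: choose a leaf $\ell$ whose ideal points lie in $I$ and link-separate $x$ from the complement of $I$. A cusp leaf suffices, since each cusp has crown leaves accumulating to it from both sides by \reflem{Laminations}\refitm{CrownZZ}.
\item If $x$ is a cusp, use two cusp leaves emanating from $x$, one on each side, whose other ideal points are thorns inside $I$. Their half-discs cover a neighbourhood of $x$ except at $x$ itself, so instead take the half-disc of a suitable cusp leaf of a nearby cusp whose ideal points straddle $x$.
\end{itemize}

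\textbf{Main obstacle.} The delicate step is the last one: showing a cusp $x$ has arbitrarily small subspace neighbourhoods. A single cusp leaf has $x$ as an endpoint, so its half-disc excludes $x$. This is exactly why \reflem{Span}, with polygonal lines between two other cusps $c$ and $d$ straddling $x$, is the right tool. The one thing to verify is that the half-disc is on the correct side. Reversing the orientation of $\rho$ swaps $(c,d)^\acw$ and $(d,c)^\acw$, so one of the two orientations gives the interval containing $x$. With that checked, both inclusions hold and the topologies agree.
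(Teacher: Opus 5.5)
Your main argument is correct and is essentially the paper's proof. The subspace topology is generated by the intervals $\bdyi H(\ell)$, so it is coarser than the original topology. Conversely, \reflem{Span} and \reflem{HalfDiscOpen} realise every open interval with cusp endpoints as the trace of an open half-disc.

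You should, however, delete the optional route that avoids \reflem{HalfDiscOpen}, because its cusp bullet is false.
\begin{itemize}
\item Let $I$ be an interval about a cusp $x$ whose closure misses a thorn $t$ of $\Lambda^x$ and a thorn $s$ of $\Lambda_x$.
\item Suppose $\ell$ is any leaf, cusp or non-cusp, with $x \in \bdyi H(\ell) \subset I$. Then $\bdyi \ell$ lies in $\closure{I}$ and separates $x$ from both $t$ and $s$.
\item So $\bdyi \ell$ links both $\{x,t\}$ and $\{x,s\}$. These are the ideal points of cusp leaves at $x$ in $F^\calV$ and in $F_\calV$ respectively.
\item By \reflem{LeftIsLeft}\refitm{CrossIFFLinked}, $\ell$ would then cross a cusp leaf of its own foliation, which is impossible.
\end{itemize}
No single leaf can cut out a small neighbourhood of a cusp. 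This is exactly why, as the paper remarks just before \refprop{SingleLeaf}, half-disc neighbourhoods of cusps need polygonal boundaries with more and more corners. In your non-cusp bullet, the stated justification (crown leaves accumulating on their own cusp) says nothing about accumulation at $x$, so that bullet is unsupported as written.
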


\begin{proof}
By construction, the subspace topology on $\Circle$ has no more open sets than the original topology.  

By Lemmas~\ref{Lem:Span} and~\ref{Lem:HalfDiscOpen} the subspace topology also contains a basis for the original topology, namely the open intervals ending at cusps.  
Thus the subspace topology has no fewer open sets than the original topology.
\end{proof}

We will need a finer analysis of the neighbourhoods (in $\Disc$) of points in $\Circle$.  

\begin{definition}
\label{Def:Basic}
A \emph{basic} set $H = \medcap_i H(\ell_i)$ is a finite intersection of cusp half-discs.
We extend the notations of \refdef{HalfDisc} to basic sets.  
We orient the components of its material boundary, $\bdy_\mat H$, so that $H$ is (locally) to the left of each. 
We define $H^\circ = H \cap \Link$. 
\end{definition}

\begin{remark}
\label{Rem:Basis}
By \refrem{SubBasis}, the basic sets form a \emph{basis}~\cite[page~4]{SteenSeebach95} for the topology on $\Disc$. 
\end{remark}

Note that $\bdy_\mat H$ need not be connected when $H$ is a basic set.

\begin{lemma}
\label{Lem:BasicOmnibus}
Suppose that $H$ is a basic set.
\begin{enumerate}
\item
\label{Itm:TurnsLeft}
Suppose that $\rho$ is a component of $\bdy_\mat H$ and suppose that $q$ is a material corner of $\rho$.  
Then $\rho$ turns left at $q$. 
\item
\label{Itm:Loop}
Suppose that $\rho$ is a component of $\bdy_\mat H$ which is a loop.
Then $H$ is a precompact rectangle. 
\item
\label{Itm:PathConnected}
The intersection $H^\circ = H \cap \Link$ is path-connected by embedded polygonal arcs. 
\end{enumerate}
\end{lemma}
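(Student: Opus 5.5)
We will treat $H^\circ = \medcap_{i=1}^k H^\circ(\ell_i)$ as a finite intersection of open half-planes bounded by the oriented cusp leaves $\ell_i$. The idea is to understand $\bdy_\mat H$ locally: every point of it lies on some $\ell_i$, and $H^\circ$ lies locally to the left of the pieces of $\ell_i$ that meet it. We will prove the three items in order, each one building on the previous.

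\textbf{Item (1).} Let $q$ be a material corner of a component $\rho$ of $\bdy_\mat H$. Then $q = \ell_i \cap \ell_j$ for a leaf $\ell_i$ of one foliation and a leaf $\ell_j$ of the other. Near $q$ the two leaves cut a small precompact rectangle about $q$ into four quadrants. By \refcor{TransverseOmnibus}\refitm{PreCompactBasis} we may take the rectangle so small that only $\ell_i$ and $\ell_j$ among the $\ell$'s meet it; this uses properness of leaves, \refcor{TransverseOmnibus}\refitm{Proper}. Near $q$, the set $H^\circ$ is then the intersection of two half-rectangles, which is exactly one quadrant. Since $q$ is a corner of $\rho$, that quadrant is nonempty. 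The boundary of a single quadrant, oriented with the quadrant on its left, turns left at $q$. This is the local claim; it holds because the convex angle at $q$ is $\pi/2$ and lies on the left.

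\textbf{Item (2).} Suppose $\rho$ is a loop component of $\bdy_\mat H$. By \refcor{TransverseOmnibus}\refitm{DiscUnion} it bounds an open disc $U$, which is a finite union of rectangles.
\begin{itemize}
\item Since $\rho$ is oriented with $H^\circ$ locally on its left and $\rho$ turns left at every corner by (1), the set $H^\circ$ near $\rho$ lies in $U$ (an anticlockwise orientation). Connectedness then gives $H^\circ \subset U$.
\item A polygonal loop with alternating horizontal and vertical segments that turns only left must have exactly four segments. The total turning is $2\pi$ and each turn is $\pi/2$; this holds in the loom space because the interior $U$ has no cusps. So $\rho$ bounds a precompact rectangle.
\item Every segment of $\rho$ lies on some $\ell_i$ with $U$ to its left, so $U \subset H^\circ(\ell_i)$ for each of those leaves.
\item To get equality $H = U$, we show that no other $\ell_j$ can cut into $U$. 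If one did, it would create a further boundary component of $H^\circ$ inside $U$. But $H^\circ$ is adjacent to $\rho$ from inside, and a leaf crossing the rectangle $U$ would meet $\rho$, contradicting that $\rho$ is a whole component.
\end{itemize}
So $H^\circ = U$, and $H$ has no ideal points since $U$ is precompact.

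\textbf{Item (3).} Take $p, q \in H^\circ$. We induct on $k$, the number of half-discs. For $k = 1$, $H^\circ(\ell_1)$ is a half-plane; it is path-connected by polygonal arcs by the argument of \reflem{SubspaceL}\refitm{Arc}, covering a connecting arc by finitely many rectangles. For the inductive step, connect $p$ to $q$ inside $\medcap_{i<k} H^\circ(\ell_i)$ by a polygonal arc $\sigma$. If $\sigma$ crosses $\ell_k$, it exits and re-enters $H^\circ(\ell_k)$; replace each excursion by a subarc of a leaf parallel to $\ell_k$, pushed slightly into its left side. Such leaves exist because non-cusp leaves are dense (\refrem{NonCuspLeavesDense}).

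The \textbf{main obstacle} is ensuring that the pushed-off leaf segments stay inside the other half-planes $H^\circ(\ell_i)$. Item (1) controls this: between the exit and entry points, $\ell_k$ runs along the boundary of the region, and turning only left forbids the other $\ell_i$ from separating those two points along $\ell_k$ on the $H$ side. An alternative route would be to show directly that $H^\circ$ is connected, since an intersection of half-planes in the plane with leaf boundaries is connected. Then (3) would follow from the covering-by-rectangles argument of \reflem{SubspaceL}\refitm{Arc}, using that $H^\circ$ is open.
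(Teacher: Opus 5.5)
Your items (1) and (2) follow the paper's route. For (1) this is the four-quadrant picture at a material corner. For (2) it is the Jordan curve theorem plus the turning-count (Euler characteristic) argument for a loop that turns only left. You actually say more than the paper about why no further $\ell_j$ cuts into $U$: such a leaf, being proper, would have to cross $\rho$ in the interior of a segment. That would put part of $\rho$ strictly to the right of $\ell_j$, hence outside $\closure{H^\circ}$. One ordering point: in (2) you use connectedness of $H^\circ$, which is item (3). Your proof of (3) does not use (2), so just prove (3) first.

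The gap is in (3). Your induction on the number of half-discs is a reasonable variant of the paper's surgery of an arbitrary polygonal arc off $\bdy_{\mat} H$. However, the crux you identify is not settled by item (1). Item (1) is a local statement about corners of $\bdy_{\mat} H$. It says nothing about whether some $\ell_i$ with $i<k$ crosses the segment $[x,y]\subset\ell_k$ between an exit $x$ and the next re-entry $y$. Your phrase ``$\ell_k$ runs along the boundary of the region'' presupposes exactly that.

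What is missing is a Jordan curve (bigon) argument.
Put $\sigma$ in general position so that it crosses $\ell_k$ transversely finitely often. You may assume $\ell_k$ differs from the earlier $\ell_i$, and note that $x, y \in K^\circ = \medcap_{i<k} H^\circ(\ell_i)$ lie on no $\ell_i$. The excursion $\sigma|[x,y]$ and the segment $[x,y]$ form a Jordan curve bounding a disc $D$ on the right of $\ell_k$. If some $\ell_i$ crossed $[x,y]$, it would enter $D$. Being properly embedded, it must leave $\closure{D}$, and since it meets $\ell_k$ at most once it would have to cross the excursion. That is impossible, because the excursion lies in $K^\circ \subset H^\circ(\ell_i)$.

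So $[x,y]$ is connected, contains $x$, and misses every $\ell_i$; hence $[x,y] \subset K^\circ$. By compactness, a nearby parallel leaf segment to the left of $\ell_k$ lies in $K^\circ \cap H^\circ(\ell_k)$. This can be taken in a rectangle about $[x,y]$, and any leaf works, so density of non-cusp leaves is not needed. Splice it into $\sigma$ and discard loops.

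With this inserted your induction is complete. Localising the surgery to one leaf at a time is exactly what makes this bigon argument available. Your ``alternative route'' only asserts that $H^\circ$ is connected, which is the content of (3), so it does not help.
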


\begin{proof}
Suppose that $H$ equals $\medcap_{i = 1}^n H(\ell_i)$ where the $\ell_i$ are cusp leaves. 

Suppose that $\rho$ is a component of $\bdy_\mat H$ and suppose that $q$ is a material corner of $\rho$.  
The segments of $\rho$ lie in the leaves $\ell_i$.  
Let $\ell$ and $m$ be the leaves of the two foliations through $q$.  
Let $R$ be a small rectangle about $q$.  
Then $\ell$ and $m$ cut $R$ into four subrectangles.  
Since $H \subset H(\ell) \cap H(m)$, the basic set $H$ only contains one of the four subrectangles.  
Finally, since $\rho$ is oriented so that $H$ lies to its left, we deduce that $\rho$ turns left at $q$.  
This proves \refitm{TurnsLeft}.  

Suppose that $\rho$ is a component of $\bdy_\mat H$ which is a loop.  
By our orientation convention, $H$ locally lies to the left of $\rho$. 
By the Jordan curve theorem (\refthm{Schoenflies}), the loop $\rho$ cuts $\Link$ into a precompact piece (to its left) and a non-precompact piece (to its right).
Thus $H$ is a precompact open disc and $\bdy_\mat H = \rho$.  
By the previous paragraph, $\rho$ turns left at all material corners.  
An Euler characteristic argument (see~\cite[Section~3]{SchleimerSegerman20}) now shows that $H$ is a rectangle.  
This proves \refitm{Loop}.

Suppose that $p$ and $q$ are points of $H$.  
By \reflem{SubspaceL}\refitm{Arc} there is an embedded polygonal arc $\beta \subset \Link$ connecting them.  
Since the segments of $\beta$ are compact, we can tile them by rectangles and so move $\beta$ (inside the rectangles) so that it is transverse to $\bdy_\mat H$.  
Surger $\beta$ off of $\bdy_\mat H$, discarding loops as they arise.
This proves \refitm{PathConnected}.
\end{proof}

\begin{lemma}
\label{Lem:HalfDiscNeighBasis}
Suppose that $x$ is a point of $\Circle$.  
The half-discs (as given in \refdef{HalfDisc}) containing $x$ give a neighbourhood basis for $x$ in $\Disc$. 
\end{lemma}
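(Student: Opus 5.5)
Since the basic sets form a basis (\refrem{Basis}), it suffices to show the following: given a basic set $H = \medcap_{i=1}^n H(\ell_i)$ containing $x \in \Circle$, there is an embedded polygonal line $\rho$ with $x \in H(\rho) \subset H$. Each such $H(\rho)$ is open by \reflem{HalfDiscOpen}, so this gives the neighbourhood basis. The plan is to locate $x$ inside the ideal boundary of $H$, choose cusps on either side of $x$ close to it, and join them by a polygonal line that stays inside $H^\circ$.

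\textbf{Step 1: an interval of cusps around $x$.} For each $i$ we have $x \in \bdyi H(\ell_i)$, an open interval of $\Circle$. So $x$ lies in the open set $\bigcap_i \bdyi H(\ell_i)$. Cusps are dense in $\Circle$, so we can choose cusps $c$ and $d$ with $x \in (c, d)^\acw$ and $[c,d]^\acw \subset \bigcap_i \bdyi H(\ell_i)$.

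\textbf{Step 2: cusp leaves inside $H$.} We want cusp leaves $\ell$ emanating from $c$ and $m$ emanating from $d$, each disjoint from every $\ell_i$ and contained in $H^\circ$. Each cusp has countably many cusp leaves, arranged as a crown whose thorns accumulate on the cusp from both sides (\reflem{Laminations}\refitm{CrownZZ}). Hence we can pick a cusp leaf at $c$ whose other ideal point lies in $[c,d]^\acw$ and is as close to $c$ as we like. Both ideal points of such a leaf then lie in $\bdyi H(\ell_i)$, so neither is linked with $\bdyi \ell_i$. By \reflem{LeftIsLeft}\refitm{CrossIFFLinked} the leaf does not cross $\ell_i$. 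It is also not equal to $\ell_i$, since its ideal points avoid $\bdyi \ell_i$. A connectedness argument then places it in $H^\circ(\ell_i)$: its ideal points are on that side, and a leaf in the complementary half-plane would have its ideal points in the complementary closed interval. The same reasoning gives $m$ at $d$.

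\textbf{Step 3: joining inside $H^\circ$.} By \reflem{BasicOmnibus}\refitm{PathConnected}, $H^\circ$ is connected by embedded polygonal arcs. Take such an arc $\beta$ from a point of $\ell$ to a point of $m$. Concatenate it with the subrays of $\ell$ toward $c$ and of $m$ toward $d$, then delete loops and backtracking as in \reflem{Span}. This yields an embedded polygonal line $\rho \subset H^\circ$ with $\bdyi \rho = \{c, d\}$, oriented so that $x$ lies in its ideal boundary $\bdyi H(\rho)$.

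\textbf{Step 4: containment $H(\rho) \subset H$, the main obstacle.} The half-plane $H^\circ(\rho)$ is connected and disjoint from each $\ell_i$. It should lie on the same side of $\ell_i$ as $\rho$, namely inside $H^\circ(\ell_i)$. The concern is that $\ell_i$ might lie entirely inside $H^\circ(\rho)$, which would break this. That would force both ideal points of $\ell_i$ into the closure of $\bdyi H(\rho) = (c,d)^\acw$. But $\bdyi \ell_i$ is disjoint from $[c,d]^\acw \subset \bdyi H(\ell_i)$, a contradiction. Making this rigorous requires a small lemma: a leaf contained in a half-plane $H^\circ(\rho)$ has its ideal points in $\closure{\bdyi H(\rho)}$. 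I would prove it using the properness of $\rho$ (\refcor{TransverseOmnibus}\refitm{Proper}) together with \reflem{LeftIsLeft}\refitm{LeftIsLeft}, by tracking the ends of the leaf against the cusp leaves $\ell$ and $m$. I expect this step, relating ideal points to sides of polygonal lines, to need the most care. Given it, $H^\circ(\rho) \subset H^\circ$ and $\bdyi H(\rho) \subset \bigcap_i \bdyi H(\ell_i)$, so $H(\rho) \subset H$.
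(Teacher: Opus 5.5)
Your argument is correct, but it is organised differently from the paper's. The paper works with the shape of the basic set $H$ itself. It takes the component $(v,u)^\acw$ of $\bdyi H$ containing $x$, and the components $\rho$ and $\sigma$ of $\bdy_\mat H$ with ideal points at $u$ and $v$. When $\rho \neq \sigma$ it joins them by a polygonal arc $\beta$ inside $H$ to get $\tau = \rho' \cup \beta \cup \sigma'$, so that $H(\tau)$ has the whole component $(v,u)^\acw$ as its ideal boundary; when $\rho = \sigma$ it takes $H(\rho)$ directly. You instead shrink to a cusp interval $[c,d]^\acw$ about $x$ and build the line entirely inside $H^\circ$, in the style of Lemma~\ref{Lem:Span}. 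This avoids any analysis of how the components of $\bdy_\mat H$ meet $\Circle$. The price is a smaller half-disc, with cusp ideal corners, and an explicit containment argument. That argument, your Step~4, is essentially what the paper compresses into ``Note that the half-disc $H(\tau)$ is contained in $H$.''

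The auxiliary fact you flag is easier than you expect. Routing it through part~(3) of Lemma~\ref{Lem:LeftIsLeft} is awkward, since $\ell_i$ need not cross your leaves $\ell$ or $m$ at all. Argue instead as follows. Let $\bar\rho$ be $\rho$ reversed, and suppose a leaf $k$ lies in $H^\circ(\rho)$ while some $y \in \bdyi k$ lies in $(d,c)^\acw = \bdyi H(\bar\rho)$. Then $H(\bar\rho)$ is open by Lemma~\ref{Lem:HalfDiscOpen}. So Lemma~\ref{Lem:Extension} puts a tail of the ray of $k$ towards $y$ inside $H^\circ(\bar\rho)$, which is disjoint from $H^\circ(\rho)$, a contradiction. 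This is not circular: the proof of Lemma~\ref{Lem:Extension} uses only the sub-basis, Lemma~\ref{Lem:Triple}, and Lemma~\ref{Lem:LeftIsLeft}, not the present lemma. The same observation streamlines Step~2. Any ray of a cusp leaf towards $c$ already has a tail in the open set $H$, so there is no need to select thorns near $c$.
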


\begin{proof}
Suppose that $H$ is a basic set containing $x$.  
Since $\bdyi H$ is an intersection of finitely many open intervals, 
it is a disjoint union of finitely many open intervals.  
Let $(v, u)^\acw$ be the component of $\bdyi H$ containing $x$.  
Let $\rho$ and $\sigma$ be the components of $\bdy_\mat H$ that start and end at $u$ and $v$, respectively. 
(Here we use the induced orientations coming from $H$.)
If $\rho = \sigma$ then the half-disc $H(\rho)$ equals $H$ and we are done.

Suppose instead that $\rho$ is not equal to $\sigma$.
By \reflem{BasicOmnibus}\refitm{PathConnected} there is an embedded polygonal arc $\beta' \subset H$ connecting a point near $\rho$ to a point near $\sigma$.  
We extend $\beta'$ to obtain an embedded polygonal arc $\beta$ so that $\beta$ connects a point of $\rho$ to a point of $\sigma$ and $\beta \cap \bdy_\mat H$ consists of those two points.
We orient $\beta$ from $\rho$ and towards $\sigma$.  
Let $\rho'$ be the component of $\rho - \beta$ with ideal point at $u$; 
let $\sigma'$ be the component of $\sigma - \beta$ with ideal point at $v$.  
Thus $\tau = \rho' \cup \beta \cup \sigma'$ is an embedded polygonal line.  
Note that the half-disc $H(\tau)$ is contained in $H$.
Furthermore, $\bdyi H(\tau) = (v, u)^\acw$ contains $x$.
\end{proof}

The next lemma is somewhat technical, but is needed to prove \reflem{Extension}.
That, in turn, shows how points of $\Link$ are path-connected to points of $\Circle$. 

\begin{lemma}
\label{Lem:Triple}
Suppose that $\rho$ is a leaf of either $F^\calV$ or $F_\calV$.  
Let $q$ be a point of $\Link$ not lying on $\rho$.  
Let $\ell$ and $m$ be the leaves of $F^\calV$ and $F_\calV$ meeting $q$.  
Then $q$ lies in $H^\circ(\rho)$ if and only if at least three of the four ideal points of $\ell$ and $m$ lie in $\bdyi H(\rho)$. 
\end{lemma}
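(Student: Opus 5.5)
We prove the lemma by comparing, for each of $\ell$ and $m$, where $q$ sits relative to $\rho$ with where the ideal points of that leaf sit relative to $\bdyi \rho = \{u, v\}$. Write $\bdyi H(\rho) = (v,u)^\acw$ and let $J$ be the other open arc $(u,v)^\acw$. One of $\ell, m$ lies in the same foliation as $\rho$; say, breaking symmetry, $\ell$ is in the foliation of $\rho$ and $m$ is transverse.

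\emph{The parallel leaf $\ell$.} The leaves $\ell$ and $\rho$ are distinct, since $q \notin \rho$, and they lie in the same foliation, so they are disjoint. Hence $\ell$ lies entirely in one component of $\Link - \rho$. By \reflem{LeftIsLeft}\refitm{CrossIFFLinked} the pairs $\bdyi \ell$ and $\bdyi \rho$ are unlinked. By \refcor{OnlyAsymptotics} they can share only a cusp, and then only when both are cusp leaves. I will show that $\ell \subset H^\circ(\rho)$ if and only if $\bdyi \ell \subset \closure{(v,u)^\acw}$. To do this, take a transverse leaf $n$ crossing $\rho$, which exists since $\rho$ crosses many leaves. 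Then apply \reflem{LeftIsLeft}\refitm{LeftIsLeft} to $n$: it converts the order of the crossing points of $\rho$ and $\ell$ along $n$ into separation of ideal points on $\Circle$. If $\ell$ does not meet any such $n$, I instead connect $\ell$ to $\rho$ by a short chain of leaves and argue inductively.

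\emph{The transverse leaf $m$.} If $m$ crosses $\rho$, then by \reflem{LeftIsLeft}\refitm{CrossIFFLinked} exactly one ideal point of $m$ lies in $(v,u)^\acw$. Moreover, \reflem{LeftIsLeft}\refitm{LeftIsLeft}, applied with $\rho$ as the crossing leaf, shows that this point is the endpoint of the ray of $m - \rho$ lying in $H^\circ(\rho)$; the ray containing $q$ is the one on $q$'s side. If $m$ misses $\rho$, then, as in the parallel case, both ideal points of $m$ lie on the side of $q$.

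\emph{Counting.} Combining the two cases, if $q \in H^\circ(\rho)$ then both ideal points of $\ell$ lie in $\bdyi H(\rho)$. At least one ideal point of $m$ lies there as well, for three or more in total. If $q \notin H^\circ(\rho)$, the same reasoning puts both ideal points of $\ell$ in $J$, leaving at most two in $\bdyi H(\rho)$.

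\emph{Main obstacle.} The delicate step is the boundary case where an ideal point of $\ell$ or $m$ equals $u$ or $v$. This happens only at a cusp, and only when the leaves involved are cusp leaves (\refcor{OnlyAsymptotics}). Such a point is in neither open arc, so the count must be made with care. When $\rho$ is a cusp leaf with ideal point at a cusp $c$ and $\ell$ also emanates from $c$, I use \reflem{NoTriangles} and \reflem{NoDoubleCusps}. These show that the other ideal point of $\ell$ is not a cusp. They also show that $m$, being transverse, cannot meet both $\ell$ and $\rho$ while landing at $c$. In every arrangement, the cusp point is then compensated by the remaining three ideal points. I would settle this case by drawing the crown at $c$ in the rectangular style of \reffig{RectStyle} and checking the possible positions of $q$ in the sectors between consecutive cusp leaves at $c$.
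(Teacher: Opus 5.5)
Your per-leaf bookkeeping and your final count are sound. However, the step that everything rests on is not actually proved: that a leaf $k \neq \rho$ which misses $\rho$ and lies in $H^\circ(\rho)$ has $\bdyi k \subset \closure{(v,u)^\acw}$.

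Your mechanism is a single transverse leaf $n$ crossing both $\rho$ and $\ell$, followed by Lemma~\ref{Lem:LeftIsLeft}(\ref{Itm:LeftIsLeft}). It needs such an $n$ to exist, and in a loom space there is often none.
\begin{itemize}
\item If $\ell$ and $\rho$ are distinct cusp leaves at a common cusp (exactly your boundary case), Lemma~\ref{Lem:NoTriangles} says no leaf meets both.
\item For $m$ missing $\rho$ the mechanism is never available, since every leaf crossing $\rho$ lies in the foliation of $m$. So ``as in the parallel case'' does not apply there.
\end{itemize}
Thus ``connect $\ell$ to $\rho$ by a short chain of leaves and argue inductively'' is a placeholder for the whole content of the lemma. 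Moreover, part~(\ref{Itm:LeftIsLeft}) of Lemma~\ref{Lem:LeftIsLeft} only compares two leaves crossing a common third leaf, so it cannot carry information along such a chain by itself. It also only yields separation. That the end of a crossing leaf lying to the left of $\rho$ is in $(v,u)^\acw$ is the orientation convention, which the paper also uses tacitly in its base case.

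What is needed is an induction along a path.
\begin{itemize}
\item Take an embedded polygonal arc $\sigma$ from $q$ to $\rho$ meeting $\rho$ only at its endpoint (Lemma~\ref{Lem:SubspaceL}(\ref{Itm:Arc})). Its interior lies in $H^\circ(\rho)$. Induct on the number of segments.
\item Base case: the leaf of the last segment crosses $\rho$. Part~(\ref{Itm:LeftIsLeft}), applied along that leaf, places the ideal points of the other leaf through the last material corner of $\sigma$.
\item Propagation backwards uses the linking criterion, part~(\ref{Itm:CrossIFFLinked}). Suppose $k'$ crosses $k$ and $\bdyi k \subset \closure{(v,u)^\acw}$. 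Then one point of $\bdyi k'$ lies strictly inside $(v,u)^\acw$.
\item If $k'$ misses $\rho$, its other point cannot lie in $(u,v)^\acw$, since otherwise $\bdyi k'$ would link $\bdyi \rho$. If instead $k'$ crosses $\rho$, you are back in the base case.
\end{itemize}
This is the paper's proof. The paper packages it as an induction on the invariant ``at least three of the four ideal points of the two leaves through the current corner lie in $\bdyi H(\rho)$''. It has subcases for a leaf that crosses $\rho$ and for a leaf that shares the cusp of $\rho$.

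With this in hand your boundary count goes through. Since $\ell$ and $m$ cross, at most one of the four points lies in $\bdyi \rho$, and it is the cusp of $\rho$ (Corollary~\ref{Cor:OnlyAsymptotics} and Lemma~\ref{Lem:NoDoubleCusps}).
\begin{itemize}
\item If that point lies on $\ell$, then Lemma~\ref{Lem:NoTriangles} keeps $m$ off $\rho$, and both points of $m$ are in $(v,u)^\acw$.
\item If it lies on $m$, then both points of $\ell$ are in $(v,u)^\acw$.
\end{itemize}
Your direct treatment of the converse is fine. The paper instead reduces it to the forward direction, applied to $\rho$ with the reverse orientation.
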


\begin{proof}
It suffices to prove the forward direction.
So, assume that $q$ lies in $H^\circ(\rho)$.

\reflem{SubspaceL}\refitm{Arc} gives us an oriented embedded polygonal arc $\sigma \subset \Link$ running from $q$ to $\rho$, and meeting each only at its endpoints.  
By \refcor{TransverseOmnibus}\refitm{Proper} and \refthm{Schoenflies}, and since $q$ lies in $H^\circ(\rho)$, the interior of $\sigma$ lies in $H^\circ(\rho)$.  
Suppose that $\{\ell_i\}_{i = 0}^{n-1}$ are the leaves which $\sigma$ runs along, with orientations induced by $\sigma$.  
We order the indices of the $\ell_i$ to increase with the orientation of $\sigma$. 
We will assume that $\ell_0 = \ell$, the leaf of $F^\calV$ through $q$.
Set $\bdyi \ell = \{x, y\}$ with $\ell$ running from $x$ to $y$.  
Also, let $q_i$ and $q_{i+1}$ be the endpoints of $\sigma \cap \ell_i$.  
So $q_0 = q$ and $q_n$ lies in $\rho$.  
We will induct on $n$. 

Suppose that $n = 1$.  
By \reflem{LeftIsLeft}\refitm{CrossIFFLinked}, since $\rho$ and $\ell$ cross, their ideal points are linked in $\Circle$.
So exactly one of $x$ and $y$ lies in $\bdyi H(\rho)$.
Since $q$ lies in $H^\circ(\rho)$, we find that $m$ is not equal to $\rho$.
Since $\ell$ is oriented from $q$ to $q_1$, we deduce that $q$ separates $q_1$ from the end of $\ell$ associated to $x$.
Applying \reflem{LeftIsLeft}\refitm{LeftIsLeft}, we find that $\bdyi m$ separates $\bdyi \rho$ from $x$.  
Thus when $n = 1$ the intersection $(\bdyi \ell \cup \bdyi m) \cap \bdyi H(\rho)$ equals $\{x\} \cup \bdyi m$.
This completes the base case.  

Suppose that $n > 1$.  
Thus $q_1$ lies in $H^\circ(\rho)$. 
Recall that $\ell_0$ and $\ell_1$ are the two leaves through $q_1$.
Thus, by induction, at least three of the four ideal points of $\ell_0$ and $\ell_1$ lie in $\bdyi H(\rho)$.  

We now have various subcases. 
\begin{itemize}
\item
Suppose that both $x$ and $y$ lie in $\bdyi H(\rho)$.  
Since $\ell = \ell_0$ and $m$ cross, their ideal points are linked.  
So at least one point of $\bdyi m$ also lies in $\bdyi H(\rho)$.
\item
Suppose that $x$, say, lies in $\bdyi H(\rho)$ while $y$ does not, and $y$ is not an ideal point of $\rho$.  
Thus $\ell$ and $\rho$ cross.
In this case we replace $\sigma$ by a segment of $\ell$, and appeal to the base case.
\item
Finally, suppose that $x$, say, lies in $\bdyi H(\rho)$ while $y$ is one of the two points of $\bdyi \rho$.  
By \refcor{OnlyAsymptotics}, and because $\rho$ is not equal to $\ell$, we find that $y$ is a cusp. 
Since $\{x, y\}$ links $\bdyi m$, at least one point of $\bdyi m$ lies in $\bdyi H(\rho)$.  
Let $z$ be the other point of $\bdyi m$.
There are three subcases.
\begin{itemize}
\item
If $z$ lies in $\bdyi H(\rho)$ then we are done.
\item
If $z$ lies in $\bdyi \rho$ then by \refcor{OnlyAsymptotics} and \reflem{Laminations}\refitm{NoCusps}, the leaf $m$ is a cusp leaf emanating from $z = y$.
Thus $\bdyi \ell$ does not link $\bdyi m$, a contradiction.
\item 
If $z$ lies outside of $\bdyi H(\rho) \cup \bdyi \rho$ then $\bdyi m$ links $\bdyi \rho$.
Thus $m$ and $\rho$ cross, and we reduce to the base case.
\qedhere
\end{itemize}
\end{itemize}
\end{proof}

\begin{lemma}
\label{Lem:Extension}
Suppose that $\ell$ is a leaf of either foliation.  
Suppose that $r$ is a subray of $\ell$.  
Then the union $r \cup \bdyi r$ is an arc in $\Disc$.  
\end{lemma}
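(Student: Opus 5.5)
We will show that the map $f\from [0,\infty] \to \Disc$, given by a parametrisation of $r$ (as a closed subray of $\ell$) extended by sending $\infty$ to the ideal point $x \in \bdyi r$, is a continuous injection. Since $[0,\infty]$ is compact, it will then suffice to check that $\Disc$ is Hausdorff at the relevant points; more simply, we check that $f$ is an embedding by hand. Injectivity is clear because $r \subset \Link$ and $x \in \Circle$. By \reflem{SubspaceL}\refitm{Sub}, $f$ restricted to $[0,\infty)$ is continuous and an embedding into the open set $\Link$. The whole content is therefore continuity at $\infty$, together with the fact that the image of a neighbourhood of $\infty$ is relatively open in $r \cup \{x\}$.

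For continuity at $\infty$ we use \reflem{HalfDiscNeighBasis}: it suffices to show that every half-disc $H(\rho)$ with $x \in \bdyi H(\rho)$ contains a tail of $r$. We first reduce to the case that $\rho$ is a single leaf. As in the proof of \reflem{LeafHalfDisc}, using the staircase at a point of $\ell$ and the astroid lemma, we find cusp leaves $m_i$ of the opposite foliation crossing $\ell$ at points exiting the end of $r$. By \reflem{LeftIsLeft}\refitm{LeftIsLeft}, their ideal points nest down to $x$ in $\Circle$. The fact that $x$ is a limit of these nested intervals uses \reflem{LeftIsLeft} together with the fact that $\Circle$-intervals ending at cusps form a basis (\reflem{SubspaceS}). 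Orient the $m_i$ so that $x$ lies to their left; then $H(m_i)$ contains the tail of $r$ beyond $\ell\cap m_i$, since it contains three of the four ideal points of the leaves through any such point (\reflem{Triple}). The key claim is that for large $i$ we have $H(m_i) \subset H(\rho)$. On the ideal boundary this holds because $\bdyi H(m_i)$ shrinks to $x \in (v,u)^\acw$. For the material part, we show that $m_i$ eventually misses the compact polygonal part of $\rho$, and also misses the two non-compact segments of $\rho$, by using ideal points and \reflem{LeftIsLeft}\refitm{CrossIFFLinked}. A leaf of $\rho$'s end segments has endpoints outside a small interval about $x$, so it is unlinked with $\bdyi m_i$ for large $i$. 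Since $\Link$ is a plane and $\rho$ separates it, $m_i$ then lies in $H^\circ(\rho)$, and hence so does $H^\circ(m_i)$.

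Openness of the image of tails is immediate from the same half-discs. The set $H(m_i) \cap (r\cup\{x\})$ equals a tail together with $x$, because $r$ crosses $m_i$ exactly once (leaves of opposite foliations meet at most once in a loom space). This shows that $f$ is an embedding, so $r\cup\bdyi r$ is an arc.

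The main obstacle is the second paragraph: ensuring that cusp leaves crossing $r$ have ideal points converging to $x$, rather than to some other limit. This needs care when $\ell$ is a cusp leaf and $x$ is its cusp endpoint. In that case we instead take $m_i$ to be the successive cusp leaves of the crown at $x$ crossing the other boundary leaves, and use \reflem{Laminations}\refitm{CrownZZ} to see that they accumulate only at $x$.
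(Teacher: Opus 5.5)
\textbf{There is a genuine gap: the single-leaf reduction fails when $x$ is a cusp.} Your strategy is to shrink an arbitrary half-disc $H(\rho)$ about $x$ down to the half-disc $H(m_i)$ of a single leaf crossing $r$. This breaks down when $\ell$ is a cusp leaf and $r$ runs into its cusp, so that $x = \bdyi r$ is a cusp. That case cannot be set aside: it is the ``cusp leaves connect to their cusps'' statement used immediately afterwards, for \refcor{HalfDiscPathConnected} and \reflem{ConnectedImpliesConnected}.

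In this case your claim that the ideal points of the crossing leaves nest down to $x$ is false. Let $\ell_-$ and $\ell_+$ be the cusp leaves at $x$, in the foliation of $\ell$, on either side of $\ell$. By \reflem{NoTriangles}, a leaf crossing $r$ crosses neither of them. So both of its ideal points lie in the arc between the thorns of $\ell_-$ and $\ell_+$ that avoids $x$. Hence every interval you want to shrink contains a fixed arc around $x$; the crossing leaves converge instead to the bi-leaf with centre $x$ and divider $\ell$.

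No other single leaf helps either. If $x \in \bdyi H(m)$, then $m$ must cross the cusp leaf from $x$ to every thorn of $x$ outside $\bdyi H(m)$. Those thorns form a consecutive block, and consecutive cusp leaves at $x$ lie in opposite foliations, so at most one thorn lies outside $\bdyi H(m)$. Thus $H(m)$ is never a small neighbourhood of $x$. This is why the paper says neighbourhoods of cusps need boundaries with more and more corners, and why \refprop{SingleLeaf} is stated only for non-cusps. Your proposed patch also fails: the cusp leaves of the crown at $x$ have $x$ as an ideal corner, so $x \notin \bdyi H(m_i)$, and they do not cross $r$.

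\textbf{Even for non-cusp $x$, two steps are unsupported.} First, from $m_i \cap \rho = \emptyset$ and $\bdyi m_i \subset (v,u)^\acw$ you conclude that $m_i \subset H^\circ(\rho)$. Nothing established so far relates the side of a polygonal line on which a set lies to the location of its ideal points; that relation is essentially what this lemma provides. \reflem{Triple} gives it only when $\rho$ is a single leaf. Second, the convergence $\bdyi m_i \to x$ needs a real argument, for example closedness of the lamination together with \reflem{Laminations}\refitm{NoEndpoints}; \reflem{SubspaceS} does not give it.

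\textbf{The fix is to avoid \reflem{HalfDiscNeighBasis} and the reduction altogether, as the paper does.} By \refdef{Topology}, a basic neighbourhood of $x$ is a finite intersection of cusp half-discs $H(m)$ with $x \in \bdyi H(m)$. So it suffices to show that each such $H(m)$ contains a tail of $r$.
\begin{enumerate}
\item If $\ell \subset H(m)$ there is nothing to do. Otherwise pick $q \in \ell \cap H^\circ(m')$, where $m'$ is $m$ with the opposite orientation.
\item \reflem{Triple}, applied to the leaf $m'$, puts three of the four ideal points of $\ell$ and of the other leaf through $q$ into $\bdyi H(m')$.
\item Since $x \in \bdyi H(m)$, this forces $\bdyi \ell$ to link $\bdyi m$. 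So $\ell$ crosses $m$, by \reflem{LeftIsLeft}\refitm{CrossIFFLinked}.
\item \reflem{LeftIsLeft}\refitm{LeftIsLeft} places the crossing between $q$ and the end of $r$, so the tail of $r$ beyond it lies in $H(m)$.
\end{enumerate}
This argument uses only the given leaf $m$. It therefore works verbatim when $x$ is a cusp, and needs no convergence of crossing leaves.
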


\begin{proof}
We define a function $\lambda \from [0, 1] \to \Disc$ by
\begin{itemize}
\item
taking $\lambda | [0, 1)$ to be an orientation-preserving homeomorphism from $[0, 1)$ to $r$ and
\item
setting $\lambda(1) = \bdyi r$.
\end{itemize}
Note that $\lambda$ is injective and its image meets $\Circle$ only at $\bdyi r$.  
We must show that $\lambda$ is continuous.

Let $U \subset \Disc$ be any open set.  
\reflem{SubspaceL}\refitm{Open} implies that $U \cap \Link$ is open.  
Thus $\lambda^{-1}(U \cap \Link)$ is open.  
If $\bdyi r$ does not lie in $U$ then there is nothing left to prove.  
Suppose that $\bdyi r$ lies in $U$; thus $U$ is an open neighbourhood of $\bdyi r$.   

\begin{claim*}
For some $t \in (0, 1)$ we have $(t, 1) \subset \lambda^{-1}(U \cap \Link)$.
\end{claim*}

\begin{proof}
Applying \refdef{Topology} we may replace $U$ by a smaller neighbourhood of the form $V = \medcap_{i} H(m_i)$.  
Here $\{H(m_i)\}$ is a finite collection of cusp half-discs whose ideal boundaries each contains $\bdyi r$.

Fix $i$.
We now must show that a subray of $r$ lies in $H(m_i)$.

Recall that $\bdyi r$ lies in $\bdyi H(m_i)$.  
Thus $\ell$ is not equal to $m_i$.  
If $\ell$ lies in $H(m_i)$ then we are done.   
So suppose that $\ell$ contains a point $q$ of $H^\circ(m'_i)$, where $m'_i$ is the leaf $m_i$ with the opposite orientation.
Let $m$ be the other leaf through $q$.  
Applying \reflem{Triple} (with $m'_i$ for $\rho$), at least three of the four ideal points $\bdyi m \cup \bdyi \ell$ lie in $\bdyi H(m'_i)$.  
However, $\bdyi r$ lies in $\bdyi H(m_i)$.
Therefore $\ell$ and $m_i$ cross, say at the point $q_i$.  
By \reflem{LeftIsLeft}\refitm{LeftIsLeft} we find that $q_i$ separates $q$ from the end of $\ell$ associated to $\bdyi r$.  
Thus the end of $r$ lies in $H(m_i)$, proving the claim.
\end{proof}

Since $\bdyi r$ lies in $U$, we deduce that $(t, 1]$ lies in $\lambda^{-1}(U)$, proving that $\lambda$ is continuous.
\end{proof}

As a special case of \reflem{Extension}, cusp leaves connect to their cusps. 
Since cusps are dense in $\Circle$, we deduce the following.

\begin{corollary}
\label{Cor:HalfDiscPathConnected}
Suppose that $\rho$ is an embedded polygonal line in $\Link$.  
Then $H(\rho)$ is path-connected. \qed
\end{corollary}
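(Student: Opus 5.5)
We show that any two points of $H(\rho)$ can be joined by a path inside $H(\rho)$. Since $H(\rho)=H^\circ(\rho)\cup(v,u)^\acw$, it suffices to do three things: join any two points of $H^\circ(\rho)$, join any point of the ideal boundary $(v,u)^\acw$ to some point of $H^\circ(\rho)$, and then concatenate.

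\emph{Step one: the half-plane.} The half-plane $H^\circ(\rho)$ is a complementary component of a properly embedded line in $\Link\homeo\RR^2$; see \refthm{Schoenflies} and \refcor{TransverseOmnibus}\refitm{Proper}. Hence it is an open connected subset of the plane, and so it is path-connected. By \reflem{SubspaceL}\refitm{Sub} its topology as a subspace of $\Disc$ agrees with its topology in $\Link$, so such paths are paths in $\Disc$.

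\emph{Step two: cusps of the ideal boundary.} Suppose that $c\in(v,u)^\acw$ is a cusp. We need a subray $r$ of a cusp leaf emanating from $c$ with $r\subset H^\circ(\rho)$. Then \reflem{Extension} says that $r\cup\{c\}$ is an arc in $\Disc$, and this arc joins $c$ to a point of $H^\circ(\rho)$. To find $r$, use \reflem{HalfDiscOpen}: $H(\rho)$ is an open neighbourhood of $c$. Take any cusp leaf $\ell$ emanating from $c$, and any subray $r'$ of $\ell$ limiting to $c$. The continuity of the map $\lambda$ in the proof of \reflem{Extension} shows that a tail of $r'$ lies in $H(\rho)\cap\Link=H^\circ(\rho)$. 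Take $r$ to be that tail.

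\emph{Step three: non-cusp points of the ideal boundary.} This is the main obstacle: a point $x\in(v,u)^\acw$ that is not a cusp. The remark before the statement says to use density of cusps, but density by itself does not give a path. The plan is to build one directly.
\begin{itemize}
\item Choose cusps $c_n\to x$ in $(v,u)^\acw$.
\item By \reflem{HalfDiscNeighBasis}, choose a nested sequence of half-discs $H(\tau_n)\subset H(\rho)$ forming a neighbourhood basis of $x$.
\item Choose each $c_n$ in $\bdyi H(\tau_n)$.
\item Apply steps one and two inside each $H(\tau_n)$ to obtain a path $\gamma_n$ from $c_n$ to $c_{n+1}$ contained in $H(\tau_n)$. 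Here we use that $H(\tau_{n+1})\subset H(\tau_n)$, and we use step one for the half-plane $H^\circ(\tau_n)$, which is also path-connected.
\item Concatenate the $\gamma_n$ on the time intervals $[1-2^{-n},1-2^{-n-1}]$ and send $1$ to $x$.
\end{itemize}
The resulting map is continuous at $1$ because the neighbourhoods $H(\tau_n)$ of $x$ form a basis. Its image lies in $H(\rho)$. Together with step two, this connects $x$ to $H^\circ(\rho)$, and with step one it connects any two points of $H(\rho)$.

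There is an alternative to step three. Extending \reflem{Extension} to show that polygonal rays accumulating on $x$ give arcs would take more work. The nested-neighbourhood concatenation avoids this, so it is the route to take.
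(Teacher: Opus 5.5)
Your proof is correct and follows the paper's route. Lemma~\reflem{Extension} joins each cusp of $\bdyi H(\rho)$ to the half-plane along a cusp-leaf ray, and density of cusps handles the remaining ideal points; your nested-half-disc concatenation supplies the limiting step for non-cusp points that the paper's one-line justification leaves implicit.

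The one thing to add is why a countable nested basis $(H(\tau_n))$ at $x$ exists. There are countably many cusp leaves, so the cusp half-discs form a countable sub-basis and $\Disc$ is first countable. You can then use \reflem{HalfDiscNeighBasis} to choose each $H(\tau_{n+1})$ inside $H(\tau_n)$ intersected with the $n$th member of a countable neighbourhood basis of $x$.
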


Here is a more subtle application of \reflem{Extension}.

\begin{lemma}
\label{Lem:ConnectedImpliesConnected}
Suppose that $H$ and $K$ are half-discs with $\bdyi H \cap \bdyi K$ non-empty.  
Then $H^\circ \cap K^\circ$ is also non-empty. 
\end{lemma}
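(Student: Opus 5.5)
Since $\bdyi H$ and $\bdyi K$ are open intervals of $\Circle$ with nonempty intersection, and the cusps $\Delta_\calV$ are dense in $\Circle$, we can choose a cusp $c$ in $\bdyi H \cap \bdyi K$. We then choose a cusp leaf $\ell$ emanating from $c$; such a leaf exists by \refdef{CuspNonCuspLeaves} and \refrem{NonCuspLeavesDense}. Let $r$ be a subray of $\ell$ whose ideal point is $c$; this is the end of $\ell$ associated to $c$, so $\bdyi r = c$. By \reflem{Extension}, the union $r \cup \{c\}$ is an arc in $\Disc$, given by a continuous map $\lambda \from [0,1] \to \Disc$ with $\lambda(1) = c$ and $\lambda([0,1)) = r \subset \Link$.

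By \refdef{HalfDisc}, a half-disc is $H(\rho)$ for some embedded polygonal line $\rho$. By \reflem{HalfDiscOpen}, both $H$ and $K$ are open in $\Disc$, so $H \cap K$ is an open neighbourhood of $c$. Continuity of $\lambda$ then gives some $t \in (0,1)$ with $\lambda((t,1]) \subset H \cap K$. Any point $\lambda(s)$ with $s \in (t,1)$ lies in $r \subset \Link$, and hence in $H \cap K \cap \Link = H^\circ \cap K^\circ$. So this intersection is nonempty.

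Only two points need care. First, the ideal boundaries are open arcs: $(v,u)^\acw$ is open by definition, and we must make sure the intersection is a nonempty open set rather than, say, a single shared endpoint. The hypothesis is that the open arcs themselves meet, so density of cusps applies directly. Second, the ideal point of $\ell$ at the cusp really is $c$ and not the thorn; this follows from \refdef{IdealPoints}, where each cusp leaf emanating from $c$ has $c$ as one of its ideal endpoints. Neither point should be a real obstacle.
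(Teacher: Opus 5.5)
Your proof is correct and is essentially the paper's argument: choose a cusp $c$ in $\bdyi H \cap \bdyi K$ by density, take a subray of a cusp leaf ending at $c$, and apply Lemma~\ref{Lem:Extension}. Openness of $H$ and $K$ and continuity of the resulting arc then give a point of the ray, hence of $\Link$, lying in both.
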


\begin{proof}
Note that $\bdyi H$ and $\bdyi K$ are open intervals in $\Circle$.  
Thus there is some cusp $c$ in their intersection.  
Let $r$ be a subray of a cusp leaf ending at $c$.  
Let $\lambda \from [0, 1] \to \Disc$ be the arc given by \reflem{Extension}.  
So $\lambda^{-1}(H)$ and $\lambda^{-1}(K)$ are open neighbourhoods of $1$; 
thus there is a point $t \in (0, 1)$ in both. 
Thus $\lambda(t)$ lies in $H^\circ \cap K^\circ$. 
\end{proof}

\subsection{Peano continuum}

Before giving the proof of \refthm{Disc}, we must prove that $\Disc$ is a Peano continuum (\refdef{PeanoContinuum}).  

\begin{lemma}
\label{Lem:Hausdorff}
The space $\Disc$ is Hausdorff.
\end{lemma}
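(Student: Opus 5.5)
We must separate any two distinct points $p, q$ of $\Disc$ by disjoint open sets, and we split into three cases according to where the points lie.

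\emph{Both points in $\Link$.} By \reflem{SubspaceL}\refitm{Sub} and~\refitm{Open}, $\Link$ is an open subset of $\Disc$ homeomorphic to $\RR^2$. Hausdorff neighbourhoods of $p$ and $q$ in $\Link$ are then open in $\Disc$. Concretely, we can take disjoint precompact rectangles, which are open in $\Disc$ by \reflem{Rectangle}.

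\emph{$p \in \Link$ and $x = q \in \Circle$.} Choose a precompact rectangle $R$ containing $p$, using \refcor{TransverseOmnibus}\refitm{PreCompactBasis}. Its closure is compact in $\Link$, so it meets only finitely many leaves in the boundary segments. We want a half-disc $H(\rho)$ containing $x$ and disjoint from $\closure{R}$. Since cusps are dense in $\Circle$, pick cusps $c, d$ close to $x$ on either side, so that $x \in (d, c)^\acw$. By \reflem{Span} there is an embedded polygonal line $\rho$ from $c$ to $d$, and then $H(\rho)$ is open by \reflem{HalfDiscOpen}.

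The difficulty is ensuring that $H(\rho)$ avoids $R$. Here I would take $\rho$ to be the concatenation of two cusp leaves, together with a connecting polygonal arc, chosen to lie near $x$. A cleaner route is to use \reflem{Triple}. Let $\ell$ and $m$ be the two leaves through $p$, with their four ideal points. Choose the interval $(d,c)^\acw$ about $x$ to contain at most one of those four points; this is possible because the four points are finitely many and distinct by \reflem{LeftIsLeft}\refitm{TwoEndpoints}. Now let $\rho$ be a single cusp leaf whose ideal points both lie near $x$, for instance a cusp leaf emanating from a cusp near $x$ whose other ideal point is also near $x$. Then \reflem{Triple} gives $p \notin H^\circ(\rho)$. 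To get a whole neighbourhood of $p$ outside $H(\rho)$, note that $\rho$ is a leaf, so $\Link - H^\circ(\rho)$ is the closed side. Replace $\rho$ by its reversal $\rho'$, so that $p \in H^\circ(\rho')$ (or $p \in \rho$, which we avoid by choosing a different leaf). Then $H(\rho)$ and $H(\rho')$ are disjoint open sets separating $x$ from $p$.

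The real obstacle is producing a leaf whose ideal points both lie in a prescribed small interval around an arbitrary $x$. For this I would use the crown structure. By \reflem{Laminations}\refitm{CrownZZ}, taking a cusp $c$ near $x$, the thorns of its crown accumulate on $c$. So some boundary leaf of $\Lambda^c$, and hence some cusp leaf $\ell$ emanating from $c$, has both ideal points within any given neighbourhood of $c$. We need $x$ on the correct side, that is, inside $\bdyi H$ for the appropriate orientation. If $x$ is itself a cusp, this is handled by using adjacent leaves of its own crown on both sides. Otherwise, the density of cusps lets us place $c$ and the thorn so as to straddle $x$.

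\emph{Both points in $\Circle$, $x \neq y$.} Pick cusps $c, d$ separating $x$ from $y$, so that $x \in (d,c)^\acw$ and $y \in (c,d)^\acw$. By \reflem{Span} there is an embedded polygonal line $\rho$ from $c$ to $d$; let $\rho'$ be its reversal. Then $H(\rho)$ and $H(\rho')$ are open by \reflem{HalfDiscOpen}, they are disjoint because they lie on opposite sides of $\rho$ and have disjoint ideal boundaries, and they contain $x$ and $y$ respectively. This case is immediate.
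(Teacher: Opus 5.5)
Your first and third cases are exactly the paper's argument. In the mixed case, the bookkeeping is also fine \emph{once} you have a leaf $\rho$, not through $p$, with $x \in \bdyi H(\rho)$ and at most one of the four ideal points of the leaves through $p$ in $\bdyi H(\rho)$: you then apply \reflem{Triple} and reverse $\rho$, so that $H(\rho)$ and $H(\rho')$ separate $x$ from $p$.

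The gap is the construction of that leaf. You look for it near $x$, as a cusp leaf from a nearby cusp $c$ whose thorn lands just past $x$. However, \reflem{Laminations}\refitm{CrownZZ} only says that the thorns of $c$ accumulate on $c$. Nothing forces a thorn of $c$ to lie on the far side of $x$ within your interval, and density of cusps does not repair this. In fact the claim you need, a leaf with both ideal points in an arbitrary small interval and straddling $x$, is false in general.

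\begin{itemize}
\item If $x$ is a cusp, any cusp leaf emanating from $x$ has $x$ as an ideal point, so $x \notin \bdyi H(\rho)$. Thus ``adjacent leaves of its own crown'' give no neighbourhood of $x$ at all.
\item Worse, leaves of $\Lambda^\calV$ do not link one another. So any leaf $\rho$ of $F^\calV$ (cusp or not) with $x$ in the open arc $\bdyi H(\rho)$ has every thorn of $\Lambda^x$ in that arc too; similarly for $F_\calV$ and $\Lambda_x$. Hence no single leaf has a small ideal arc about a cusp.
\item Likewise, suppose $x$ is a thorn of $\Lambda^{c_0}$. Every cusp $c$ near $x$ lies on the side away from $c_0$ of one of the two boundary leaves of $\Lambda^{c_0}$ ending at $x$. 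So all thorns of $\Lambda^c$ lie on one side of $x$, and no cusp leaf of $F^\calV$ from such a $c$ straddles $x$ in a small arc.
\end{itemize}
For singletons the step can be justified, but the natural tool, \refprop{SingleLeaf}, uses Hausdorff limits in $\Disc$ and so comes after this lemma.

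The missing idea is to find the separating leaf near $p$ rather than near $x$; this removes any need to control the arc about $x$. Take a precompact rectangle $R \ni p$, which is open in $\Disc$ by \reflem{Rectangle}. Let $\ell_0, \ldots, \ell_3$ be the leaves containing its sides, oriented so that $R$ lies to the right of each. Consecutive sides cross, so their ideal points are linked by \reflem{LeftIsLeft}\refitm{CrossIFFLinked}. Hence the four open arcs $\bdyi H(\ell_i)$ cover $\Circle$. Pick $i$ with $x \in \bdyi H(\ell_i)$. Then $H(\ell_i)$ is open by \reflem{LeafHalfDisc} and disjoint from $R$. This is the paper's argument, and it handles cusps, thorns, leaf endpoints, and singletons uniformly.
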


\begin{proof}
Suppose that $x$ and $y$ are distinct points.  
If both lie in $\Link$ then we appeal to \reflem{SubspaceL}\refitm{Sub}.

Suppose that $x$ and $y$ both lie in $\Circle$.  
Let $c, d \in \Delta_\calV$ be a pair of cusps that link the pair $(x, y)$.  
By \reflem{Span} there is an embedded polygonal line $\rho$ with ideal points $c$ and $d$.  
Thus $x$ and $y$ are contained in the left and right half-discs corresponding to $\rho$.  
By \reflem{HalfDiscOpen} these are open. 

Finally suppose that $x$ lies in $\Link$ while $y$ lies in $\Circle$.  
Let $R$ be a rectangle which is precompact in $\Link$ and contains $x$; 
by \reflem{Rectangle} the rectangle $R$ is open.  

We orient the sides of $R$ anticlockwise.  
This induces orientations of the leaves $\{\ell_i\}_{i = 0}^3$ containing the sides of $R$.  
Note that $\ell_i$ and $\ell_{i+1}$ cross.  
By \refcor{OnlyAsymptotics} the leaves $\ell_i$ have no ideal points in common.  
Thus the half-discs $H(\ell_i)$ cover $\Circle$; 
breaking symmetry suppose that $H(\ell_0)$ contains $y$; 
by \reflem{LeafHalfDisc} the half-disc $H(\ell_0)$ is open.  
Since $H(\ell_0)$ and $R$ are disjoint, and are open as above, we are done.
\end{proof}

\begin{lemma}
\label{Lem:Compact}
The space $\Disc$ is compact.
\end{lemma}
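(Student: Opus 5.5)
By \refrem{SubBasis}, the cusp half-discs form a sub-basis for the topology on $\Disc$. So by the Alexander sub-basis theorem it suffices to show that every cover of $\Disc$ by cusp half-discs has a finite subcover. Fix such a cover $\calU = \{H(\ell_\alpha)\}$. The plan is to cover $\Circle$ first, and then cover what remains, which lies in $\Link$.

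\textbf{Covering the circle.} Each $\bdyi H(\ell_\alpha)$ is an open interval of $\Circle$. By \reflem{SubspaceS} the subspace topology on $\Circle$ is the original one, so $\Circle$ is compact. Hence finitely many intervals $\bdyi H(\ell_1), \ldots, \bdyi H(\ell_n)$ cover $\Circle$. Let $U = \medcup_{i} H(\ell_i)$. This is an open set containing $\Circle$.

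\textbf{Showing the remainder is compact.} It remains to show that $K = \Disc - U$, a closed subset of $\Link$, is compact in $\Link$. Then $K$ is covered by finitely many more members of $\calU$ (their traces on $\Link$ are open by \reflem{SubspaceL}), and we are done. The claim is that $K$ lies in a finite union of precompact rectangles. To see this, consider the material boundaries $\ell_i$. Each $\ell_i$ is a cusp leaf, and its complement half-plane $\Link - H^\circ(\ell_i)$ has ideal boundary the closed arc $\Circle - \bdyi H(\ell_i)$.

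The intervals $\bdyi H(\ell_i)$ cover the circle. Take a minimal cyclically ordered subfamily. Consecutive intervals overlap, so consecutive leaves $\ell_i, \ell_{i+1}$ have linked ideal points, or share a cusp ideal point. In the linked case they cross by \reflem{LeftIsLeft}\refitm{CrossIFFLinked}. Consider the region $\Link - \medcup_i H^\circ(\ell_i)$. It is bounded by the arcs of the $\ell_i$ between consecutive crossings, which form a closed polygonal loop (possibly degenerate or empty). Every turn is a left turn for the complementary region, as in \reflem{BasicOmnibus}\refitm{TurnsLeft}. So by \refcor{TransverseOmnibus}\refitm{DiscUnion}, or the Euler characteristic argument of \reflem{BasicOmnibus}\refitm{Loop}, this region is a precompact rectangle together with its closed boundary, and so it is compact.

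\textbf{Main obstacle.} The hardest point is the shared-cusp case, where consecutive leaves $\ell_i$ and $\ell_{i+1}$ share a cusp ideal point. By \refcor{OnlyAsymptotics} this happens only at a cusp, and such leaves do not cross. The cusp itself lies in some other interval $\bdyi H(\ell_j)$. Using \reflem{Triple}, one checks that $H(\ell_j)$ contains a neighbourhood of the cusp end of both leaves. So after refining the family we may assume consecutive leaves genuinely cross. If that fails, I would instead argue sequentially. Suppose a sequence $p_n \in K$ leaves every compact set of $\Link$. Using the astroid lemma and \reflem{Triple}, the leaves through $p_n$ have ideal points that, along a subsequence, accumulate so that three of their four ideal points lie in some $\bdyi H(\ell_i)$. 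Then \reflem{Triple} puts $p_n$ in $H^\circ(\ell_i) \subset U$, a contradiction.
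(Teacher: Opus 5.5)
Your reduction is sound. By Alexander's sub-basis theorem it suffices to treat covers by cusp half-discs. Since $\Circle$ is compact, finitely many $H(\ell_1), \ldots, H(\ell_n)$ cover $\Circle$. Everything then hinges on showing that $K = \Link - \medcup_i H^\circ(\ell_i)$ is compact, and that is the step you do not prove.

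\textbf{The gap.} Saying that $K$ is bounded by a closed polygonal loop built from arcs of consecutive $\ell_i$ presupposes that $K$ does not run off to infinity in $\Link$. That is the whole content of the lemma. The combinatorics you sketch is also wrong as stated.
\begin{itemize}
\item With $n=2$, the two leaves have unlinked ideal points and are disjoint (and then $K=\emptyset$).
\item With $n=3$, all pairs are consecutive, and three leaves cannot pairwise cross. So the loop picture never applies there.
\item In the shared-cusp case, ``refining the family'' is not an operation you have defined. It cannot mean inserting $\ell_j$ between the two leaves at the cusp: $\ell_j$ crosses at most one of them, since either they lie in opposite foliations or \reflem{NoTriangles} applies.
\end{itemize}
Your fallback does not close the gap either. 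The claim that the leaves through an escaping sequence $p_n$ have ideal points accumulating with three of the four in one $\bdyi H(\ell_i)$ is essentially the statement that escaping sequences in $\Link$ converge to $\Circle$ in $\Disc$. That is compactness itself. The Hausdorff-limit results that might support it (\refprop{LimitOfLeaves}) are proved later using \refthm{Disc}, which depends on this lemma.

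\textbf{The missing idea.} Apply \reflem{Extension} to every $\ell_i$, not only at a shared cusp.
\begin{itemize}
\item Each ideal point of $\ell_i$ lies in some open $H(\ell_j)$. For each end $r$ of $\ell_i$, the set $r \cup \bdyi r$ is an arc in $\Disc$. So a tail of each end of $\ell_i$ lies in some $H^\circ(\ell_j)$, and hence $\ell_i \cap K$ is compact.
\item Since $K = \medcap_i (\Link - H^\circ(\ell_i))$, its frontier in $\Link$ lies in $\medcup_i (\ell_i \cap K)$, which is compact.
\item Each $H^\circ(\ell_i)$ is connected, misses $K$, and is not precompact. So it lies in the unbounded component of the complement of that frontier.
\item Therefore $K$ lies in the frontier together with the bounded complementary components, which is a compact set.
\end{itemize}

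\textbf{Comparison with the paper.} This is essentially the paper's route. The paper uses half-discs $H(\rho_x)$ bounded by polygonal lines (via \reflem{HalfDiscNeighBasis}) rather than Alexander's theorem. It then uses \reflem{Extension} and \reflem{HalfDiscOpen} to show the frontier of the uncovered set contains no rays, so it is a finite graph, and finishes with planar topology. With the step above supplied, your Alexander-based version works and avoids polygonal lines altogether. Without it, the rectangle picture is unjustified.
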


\begin{proof}
Suppose that $\calU_I = \{ U_i \}_{i \in I}$ is the given open cover.  
Since the $U_i$ are open, by appealing to \reflem{HalfDiscNeighBasis} for every $x \in \Circle$ we may pick an embedded polygonal line $\rho_x$ so that 
$x \in H(\rho_x) \subset U_i$ for some $i \in I$.
Set $V_x = H(\rho_x)$ and $\calV_{S^1} = \{ V_x \}_{x \in S^1}$.  
So $\calV_{S^1}$ covers $\Circle$.  
Appealing to the compactness of $\Circle$ there is a finite set $X \subset S^1$ so that $\calV_X = \{ V_x \}_{x \in X}$ covers $\Circle$.   


\begin{claim*}
The set $C = \Link - \medcup_X V_x$ is compact. 
\end{claim*}

\begin{proof}
Let $\Sigma$ be the closure of $C$ minus the interior of $C$.  
Each polygonal line $\rho_x$ is made up of a finite collection of segments lying in leaves from $F^\calV$ and $F_\calV$.  
Also, $\Sigma$ lies in the union $\cup_{x\in X} \rho_x$.
So $\Sigma$ is a union of finitely many segments in leaves.
For a contradiction, suppose that one of these segments contains a ray, say $\rho$.
Let $y \in \Circle$ be the ideal point associated to $\rho$.
Since the intervals $\bdyi V_x$, for $x \in X$, cover $\Circle$, the point $y$ lies in one of these.  
Thus, by Lemmas~\ref{Lem:Extension} and~\ref{Lem:HalfDiscOpen} the ray $\rho$ enters $V_x$, a contradiction.  
Thus all (finitely many) segments of $\Sigma$ are intervals.
So $\Sigma$ is a finite graph in the plane $\Link$.

For every loop $\sigma \subset \Sigma$, \refthm{Schoenflies} provides a unique disc $H(\sigma)$ in $\Link$ with $\bdy H(\sigma) = \sigma$.   
Choose $\sigma$ so that $H(\sigma)$ does not contain any smaller such discs.
Suppose for a contradiction that there is some $x \in X$ so that $H^\circ(\rho_x)$ lies in $H(\sigma)$.  
Then the points $\bdyi \rho_x$ are contained in the closure of $H(\sigma)$.  
Thus they are contained in $\Link$, the desired contradiction. 

We deduce that $C$ is contained in the union of the closures of the (finitely many) $H(\sigma)$, together with finitely many intervals in leaves. 
Thus $C$ is compact. 
\end{proof}

Applying the claim and recalling that $\calU_I$ is an open cover, there is a finite subset $K \subset I$ so that $\calU_K$ covers $C$.  
For each $x \in X$ choose some $j \in I$ so that $V_x \subset U_j$.
Collecting the finitely many indices $j$ into a set $J \subset I$, we find that $\calU_{J \cup K}$ is a finite cover of $\Disc$. 
\end{proof}

\begin{lemma}
\label{Lem:Continuum}
The space $\Disc$ is a Peano continuum.
\end{lemma}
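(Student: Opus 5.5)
We must check the conditions of \refdef{PeanoContinuum}: $\Disc$ is a compact, connected, locally connected, metrisable space. Compactness is \reflem{Compact} and the Hausdorff property is \reflem{Hausdorff}. For metrisability we will show that $\Disc$ is second countable and then invoke Urysohn's metrisation theorem; a compact Hausdorff space is normal, so regularity is automatic. For second countability, the natural candidate is the countable sub-basis of cusp half-discs. By \refrem{NonCuspLeavesDense} there are only countably many cusp leaves, hence countably many oriented cusp leaves. So the sub-basis of \refdef{Topology} is already countable, and finite intersections of its members (the basic sets of \refdef{Basic}) form a countable basis by \refrem{Basis}.

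Connectedness will follow from path-connectedness. By \reflem{SubspaceL}\refitm{Arc}, $\Link$ is path-connected. Any point of $\Circle$ lies in the closure of $\Link$, since every basic neighbourhood $H$ of it has $H^\circ \neq \emptyset$; this last fact comes from \reflem{ConnectedImpliesConnected}, or directly from \reflem{HalfDiscNeighBasis}, because half-discs have non-empty half-planes. Then $\Disc = \closure{\Link}$ is connected, being the closure of a connected set.

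For local connectedness it suffices to produce a basis of path-connected neighbourhoods at each point. At a point of $\Link$, use precompact rectangles: they are open in $\Disc$ by \reflem{Rectangle}, form a basis by \refcor{TransverseOmnibus}\refitm{PreCompactBasis} together with \reflem{SubspaceL}\refitm{Sub}, and are homeomorphic to open discs, so they are path-connected. At a point $x \in \Circle$, \reflem{HalfDiscNeighBasis} gives a neighbourhood basis of half-discs $H(\rho)$ with $\rho$ an embedded polygonal line. Each of these is path-connected by \refcor{HalfDiscPathConnected}.

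The main obstacle is this last step, if one insists on justifying \refcor{HalfDiscPathConnected} in detail. One must connect an arbitrary ideal point $y \in \bdyi H(\rho)$, which need not be a cusp, to $H^\circ(\rho)$ by a path staying inside $H(\rho)$. The plan is as follows. For cusps $c$ we use \reflem{Extension} applied to a subray of a cusp leaf at $c$ lying in $H^\circ(\rho)$; such a subray exists by the Claim in the proof of \reflem{Extension}. For a general $y$, we instead take a leaf ray with ideal point $y$, which exists since every point of $\Circle$ is either a cusp or an ideal point of some leaf, or is a limit of such points. If $y$ is a singleton not reached by any leaf, we build a path by concatenating arcs to cusps $c_n \to y$ with shrinking polygonal arcs between them, and check continuity at the endpoint using the half-disc neighbourhood basis at $y$. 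If this is too delicate, we fall back on the weaker criterion for local connectedness, namely connected neighbourhood bases. Here $H(\rho)$ is connected because $H^\circ(\rho)$ is connected (it is a component of $\Link - \rho$) and is dense in $H(\rho)$, by the same closure argument as above. Hence $\Disc$ is locally connected, and so it is a Peano continuum.
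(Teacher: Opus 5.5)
Your proof is correct and is essentially the paper's argument. Second countability comes from the countable sub-basis of cusp half-discs, normality from \reflem{Hausdorff} and \reflem{Compact}, and local connectedness at points of $\Circle$ from \reflem{HalfDiscNeighBasis} together with \refcor{HalfDiscPathConnected}. Your density argument for connectedness is a harmless variant of the paper's path-connectedness argument. You should also record non-degeneracy and local compactness, which Wilder's definition lists and which are immediate.

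The ``obstacle'' you anticipate is not real. The ideal boundary $\bdyi H(\rho) = (v,u)^\acw$ is itself path-connected in $\Circle$, whose subspace topology is the original one by \reflem{SubspaceS}. So a single cusp in this interval, joined to $H^\circ(\rho)$ by the tail of a cusp-leaf ray (\reflem{Extension} and \reflem{HalfDiscOpen}), already shows that $H(\rho)$ is path-connected. That is all the paper's brief justification of \refcor{HalfDiscPathConnected}, via density of cusps, amounts to.
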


\begin{proof}
Since $\Disc$ contains $\Link$, which is homeomorphic to $\RR^2$, it has two distinct points.  
Thus $\Disc$ is non-degenerate.

The sub-basis for the topology, given in \refdef{Topology}, is countable.  
Taking finite intersections gives a basis (\refrem{Basis}) which is still countable.  
Thus $\Disc$ is second countable.

Lemmas~\ref{Lem:Hausdorff} and~\ref{Lem:Compact} tell us that $\Disc$ is Hausdorff and compact.  Thus, by~\cite[Theorem~1.27, page~74]{Wilder49}, the space $\Disc$ is normal.

Since $\Disc$ is compact it is locally compact.

From Lemmas~\ref{Lem:SubspaceL}\refitm{Arc} and~\ref{Lem:SubspaceS} we have that both $\Link$ and $\Circle$ are path-connected.
\refcor{HalfDiscPathConnected} tells us that $H(\ell)$ is path-connected for any leaf $\ell$.
Thus $\Disc$ is path-connected and so is connected.

By \reflem{SubspaceL}\refitm{Sub} we have that for all $x \in \Link$ the space $\Disc$ is locally connected at $x$.  
Suppose instead that $x$ lies in $\Circle$.  
Fix any open neighbourhood $U$ of $x$ in $\Disc$.  
By \reflem{HalfDiscNeighBasis} there is some half-disc $H \subset U$ so that $x \in \bdyi H$.  
By \refcor{HalfDiscPathConnected} we have that $H$ is path-connected.  
Thus $\Disc$ is locally connected at $x$.  
Thus $\Disc$ is a Peano space. 

Finally, by \reflem{Compact}, we have that $\Disc$ is compact. 
Thus it is a Peano continuum.
This completes the proof. 
\end{proof}

\subsection{The veering disc is a disc}

\begin{theorem}
\label{Thm:Disc}
Suppose that $\calV$ is a transverse veering triangulation of a three-manifold $M$.
Then we have the following.
\begin{enumerate}
\item 
\label{Itm:Disc}
The veering disc $\Disc$ is a disc, having the veering circle $\Circle$ as its boundary and the link space $\Link$ as its interior. 
\item 
\label{Itm:Ends}
For any ray $r$ of any leaf of $F^\calV$ or $F_\calV$ we have that its closure in $\Disc$ is $\closure{r} = r \cup \bdyi r$.
\item 
\label{Itm:Dense}
The endpoints of cusp leaves (respectively, non-cusp leaves) are dense in $\Circle$.
\item 
\label{Itm:Unique}
Any other compactification of $\Link$ by a circle yielding a disc, with properties \refitm{Ends} and \refitm{Dense}, is homeomorphic to $\Disc$.
\item 
\label{Itm:Action}
Furthermore, the action of $\pi_1(M)$ on $\Disc$ is by orientation-preserving homeomorphisms. 
\end{enumerate}
\end{theorem}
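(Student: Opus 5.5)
The main item is \refitm{Disc}; I will get it from Zippin's characterisation of the closed disc, \refthm{Zippin}, exactly as Fenley does. By \reflem{Continuum}, $\Disc$ is a Peano continuum. By \reflem{SubspaceS}, the subspace $\Circle$ is a simple closed curve. I then need to check the two remaining hypotheses.
\begin{enumerate}
\item $\Disc$ contains an arc spanning $\Circle$, meeting $\Circle$ only at its endpoints. For this I take two distinct cusp leaves emanating from different cusps $c$ and $d$. \reflem{Span} gives an embedded polygonal line $\rho$ from $c$ to $d$. Applying \reflem{Extension} to its two end rays shows that $\rho \cup \{c, d\}$ is an arc in $\Disc$.
\item Every arc spanning $\Circle$ separates $\Disc$, while no closed proper subset of such an arc separates.
\end{enumerate}

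For the separation property I argue as follows. Let $\alpha$ be an arc meeting $\Circle$ exactly in its endpoints $a \neq b$. Its interior is a properly embedded line in $\Link \homeo \RR^2$: properness holds because $\alpha$ is compact in $\Disc$ and meets $\Circle$ only at $a$ and $b$. So by \refthm{Schoenflies} its interior cuts $\Link$ into two half-planes $U$ and $V$. I claim that $\closure{U} \cap \Circle$ and $\closure{V} \cap \Circle$ are exactly the two closed arcs $[a,b]^\acw$ and $[b,a]^\acw$. To prove this I take a point $x \in (b, a)^\acw$, say, and use \reflem{HalfDiscNeighBasis} to choose a half-disc $H(\tau)$ around $x$ whose ideal boundary avoids $a$ and $b$. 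Compactness of $\alpha$ then lets me shrink $H(\tau)$ so that it misses $\alpha$; this is the same kind of argument as in the claim inside \reflem{Compact}. By \refcor{HalfDiscPathConnected}, $H(\tau)$ is path-connected, so its material part lies entirely in one of $U$ or $V$. A connectedness argument along the open interval $(b,a)^\acw$ shows the choice of side is constant. Finally, because cusp leaves are dense, linking pairs of cusps on the two sides show that the two intervals go to different sides. Hence $\Disc - \alpha = (U \cup (b,a)^\acw) \sqcup (V \cup (a,b)^\acw)$, and both pieces are open by \reflem{HalfDiscNeighBasis} and \reflem{SubspaceL}. No proper closed subset of $\alpha$ separates, because removing a proper closed subset leaves a gap in the line, through which $U$ and $V$ join in $\Link$. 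Zippin then gives that $\Disc$ is a closed disc with boundary $\Circle$ and interior $\Link$.

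I expect the main obstacle to be the step showing that each boundary interval lies on a single side. The delicate part is producing half-disc neighbourhoods that genuinely avoid $\alpha$ near $x$; I will do this with the finite-cover compactness argument from \reflem{Compact}.

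The remaining items are short. Item \refitm{Ends} is \reflem{Extension} combined with Hausdorffness (\reflem{Hausdorff}): the extended ray $r \cup \bdyi r$ is compact, hence closed, so it contains $\closure{r}$, and $\bdyi r$ is a limit point of $r$. For \refitm{Dense}, cusp-leaf endpoints include the cusps, which are dense in $\Circle$. For non-cusp leaves, I use \refrem{NonCuspLeavesDense}: a basic neighbourhood $H(\tau)$ of a point of $\Circle$ contains a cusp leaf ray, and non-cusp leaves nearby, by the astroid lemma as in \reflem{LeafHalfDisc}, have an ideal point in $\bdyi H(\tau)$. For \refitm{Unique}, I define the identity on $\Link$ and send each point of $\Circle$ to the common limit of leaf rays landing there. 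Density and \refitm{Ends} make this well defined and order preserving on dense subsets, hence a homeomorphism of circles, and continuity of the combined map on the disc follows from half-disc neighbourhood bases on both sides. For \refitm{Action}, \reflem{Action} gives homeomorphisms, and orientation preservation follows from \refthm{LinkIsLoom} on the interior $\Link$.
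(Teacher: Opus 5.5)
Your plan for item (1) is the paper's. You use Zippin's characterisation and apply Schoenflies to $\alpha^\circ$. You use half-disc neighbourhoods missing $\alpha$ to attach each component of $\Circle-\{a,b\}$ to a single side; the paper does this connectedness step with Lemma~\ref{Lem:ConnectedImpliesConnected}. Finally you show the two components attach to opposite sides. The paper takes the closure of a single leaf as its spanning arc, via Lemmas~\ref{Lem:LeftIsLeft} and~\ref{Lem:Extension}; your polygonal line also works.

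You have, however, misplaced the difficulty. Half-discs missing $\alpha$ come for free: $\alpha$ is compact in the Hausdorff space $\Disc$ (Lemma~\ref{Lem:Hausdorff}), hence closed, so Lemma~\ref{Lem:HalfDiscNeighBasis} applies directly. The step that needs an idea is the one you compress into ``linking pairs of cusps show the two intervals go to different sides''. The paper argues as follows. Suppose both arcs attached to $U$. Join cusps $d\in(b,a)^\acw$ and $e\in(a,b)^\acw$ by an embedded polygonal line $\rho\subset U$, oriented so that $H^\circ(\rho)$ misses $\alpha^\circ$. Since $\{d,e\}$ links $\{a,b\}$, one endpoint of $\alpha$ lies in $H(\rho)$, which is open by Lemma~\ref{Lem:HalfDiscOpen}. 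So $\alpha^\circ$ enters $H^\circ(\rho)$, a contradiction. You cannot shortcut this with ``linked spanning arcs in a disc meet'', since that presupposes the conclusion.

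The genuine gaps are in items (3) and (4).

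In (3), being close in $\Link$ to a cusp-leaf ray does not control ideal points. Non-cusp leaves accumulating on a cusp leaf, or crossing it near its cusp, accumulate on a whole bi-leaf. Their ideal points therefore approach that bi-leaf's two thorns, not the cusp, and those thorns need not lie in $\bdyi H(\tau)$. The paper's argument is what you want. By density of cusps and Lemma~\ref{Lem:Laminations}(1), every open interval of $\Circle$ contains a thorn of some crown. By Lemma~\ref{Lem:Laminations}(4), the corresponding boundary leaf is a limit of interior leaves, whose ideal points then enter the interval.

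In (4), note first that, read literally, the statement follows from (1), since all closed discs are homeomorphic. The substantive version asks that the identity of $\Link$ extend to a homeomorphism. Your key claim is that (2) and density make the endpoint correspondence well defined and order preserving, hence a homeomorphism of circles. This does not follow from those hypotheses. Let $J\subset\Circle$ be a non-degenerate proper closed arc. Then $\Disc/J$ is a closed disc compactifying $\Link$ by the circle $\Circle/J$. It satisfies (2) and (3), because the quotient map is continuous, closed, and surjective. Yet the only continuous extension of the identity of $\Link$ is the quotient map, which is not injective. So any proof of the substantive version needs an additional hypothesis forcing distinct, non-asymptotic leaf ends to have distinct limits. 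The paper does not attempt this by hand; it invokes the characterisation in Bonatti's Theorem~2.1.

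Items (2) and (5) are fine.
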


\begin{remark}
Note that here we do not assume that $\calV$ is a finite triangulation.
\end{remark}

\begin{proof}[Proof of \refthm{Disc}]
To prove that $\Disc$ is a disc, we verify the hypotheses of Zippin's characterisation (\refthm{Zippin}).  
Note that \reflem{Continuum} proves that $\Disc$ is a Peano continuum.  
Our candidate boundary is the veering circle $\Circle$.  

Suppose that $\ell$ is a leaf of $F^\calV$ or $F_\calV$.  
By \reflem{LeftIsLeft}\refitm{TwoEndpoints} the leaf $\ell$ has two distinct ideal points.  
By \reflem{Extension} the union $\ell \cup \bdyi \ell$ is an arc in $\Disc$.  
By construction, this union meets $\Circle$ in exactly $\bdyi \ell$.  
Thus $\ell \cup \bdyi \ell$ is a spanning arc, giving hypothesis \refitm{Exists} of \refthm{Zippin}.

Suppose that 
\[
(\gamma, \bdy \gamma) \subset (\Disc, \Circle)
\]
is an oriented spanning arc.  Let $x$ and $y$ be the two points of $\bdy \gamma$, with $\gamma$ oriented away from $x$.  Let $\delta = (y, x)^\acw$ and $\epsilon = (x, y)^\acw$; these are the two components of $\Circle - \bdy \gamma$.  

We write $\gamma^\circ$ for $\gamma - \bdy \gamma$; we call this the \emph{interior} of $\gamma$.  Note that $\gamma^\circ$ is an embedding of $(0, 1)$ into $\Link$.  

\begin{claim}
The interior $\gamma^\circ$ is a Jordan line in $\Link$.
\end{claim}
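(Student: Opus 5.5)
The plan is to show that $\gamma^\circ$ is a properly embedded line in $\Link \homeo \RR^2$. A Jordan line is the image of a proper embedding of $\RR$. We already know $\gamma^\circ$ is an injective continuous image of $(0,1)$ in $\Link$; here we use \reflem{SubspaceL}\refitm{Sub}, which says the subspace topology on $\Link$ is the original one. So it suffices to prove properness: every compact set $K \subset \Link$ meets $\gamma^\circ$ in a compact set. Equivalently, as $t \to 0$ or $t \to 1$ the points $\gamma(t)$ leave every compact subset of $\Link$.

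The key step is to exhibit, for each compact $K \subset \Link$, neighbourhoods of $x$ and $y$ in $\Disc$ that are disjoint from $K$. By \reflem{HalfDiscNeighBasis} and \reflem{Hausdorff}, half-discs form a neighbourhood basis at points of $\Circle$. It therefore suffices to show two things. First, the family of half-discs $H(\rho)$ containing $x$ has complements exhausting $\Link$. Second, for any compact $K$ there is such an $H(\rho)$ with $H(\rho) \cap K = \emptyset$.

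For the second point we use compactness of $\Disc$ (\reflem{Compact}) in the following form. For each point $p \in K$, since $\Disc$ is Hausdorff, choose an open set about $p$ and a half-disc about $x$ that are disjoint; this is possible because $K \subset \Link$ and $x \notin \Link$. Cover $K$ by finitely many of these open sets and intersect the corresponding finitely many neighbourhoods of $x$. The result is a neighbourhood $U$ of $x$ disjoint from $K$. The continuity of $\gamma$ at the endpoint $0$ then gives $t_0$ with $\gamma([0,t_0)) \subset U$, so $\gamma((0,t_0)) \cap K = \emptyset$. The same argument at $y$ gives $t_1$ with $\gamma((t_1,1)) \cap K = \emptyset$.

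It follows that $\gamma^{-1}(K) \cap (0,1) \subset [t_0,t_1]$ is a closed subset of a compact interval, hence compact, so the map $(0,1) \to \Link$ is proper. A proper injective continuous map into a locally compact Hausdorff space is a closed embedding, so $\gamma^\circ$ is a Jordan line.

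The main obstacle is only the bookkeeping that $\gamma$ is continuous into $\Disc$ at its endpoints. This holds because $\gamma$ is an arc in $\Disc$ by hypothesis. We also need that $K$, being compact in $\Link$, is compact in $\Disc$; this follows from \reflem{SubspaceL}\refitm{Sub}. So the argument is essentially the general fact that the interior of an arc whose endpoints lie outside the open set $\Link$ is properly embedded in $\Link$.
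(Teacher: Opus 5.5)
Your proposal is correct and is essentially the paper's argument. The paper simply notes that $K$ is compact, hence closed, in the Hausdorff space $\Disc$, so $\gamma^{-1}(K)$ is a compact subset of $[0,1]$ missing $\{0,1\}$; your finite-cover separation of $x$ and $y$ from $K$ just writes out that standard fact by hand, your first of the ``two things'' (complements exhausting $\Link$) is never used, and the compactness you actually invoke is that of $K$, not the compactness of $\Disc$.
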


\begin{proof}
We must prove that $\gamma^\circ$ is properly embedded.  Fix a compact subset $K \subset \Link$.  Since $\gamma$ is continuous and $[0, 1]$ is compact, the preimage $\gamma^{-1}(K)$ is compact.  It is disjoint from $\{0, 1\}$ by assumption.  Note that $(\gamma^\circ)^{-1}(K) = \gamma^{-1}(K)$.  Thus the latter is compact in $(0, 1)$ and we are done. 
\end{proof}

Applying \refthm{Schoenflies}, we find that $\gamma^\circ$ separates $\Link$ into two open discs.  We denote these by $H$ and $K$.  

Fix $u \in \delta$.  
Since $\Disc$ is normal, there is a basic open set $U$ containing $u$ and disjoint from $\gamma$.  
By \reflem{BasicOmnibus}\refitm{PathConnected} the set $U^\circ = U \cap \Link$ is connected; 
thus $U^\circ$ is contained in $H$ or in $K$.  
Breaking symmetry, suppose that $U^\circ$ is contained in $H$.  

\begin{claim}
\label{Clm:DinD}
Suppose that $v$ lies in $\delta$.  
Suppose that $H_v$ is a half-disc neighbourhood of $v$ that misses $\gamma$.
Then $H_v \cap \Link$ lies in $H$.
\end{claim}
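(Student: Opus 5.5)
First note that $H_v^\circ = H_v \cap \Link$ is connected: by \refcor{HalfDiscPathConnected} and \reflem{BasicOmnibus}\refitm{PathConnected} (a half-disc $H(\rho)$ has $H^\circ(\rho)$ a component of $\Link - \rho$, hence path-connected). Since $H_v$ misses $\gamma$, the set $H_v^\circ$ misses $\gamma^\circ$. So $H_v^\circ$ lies entirely in $H$ or entirely in $K$. It therefore suffices to find a single point of $H_v^\circ$ lying in $H$. The plan is a connectedness argument along the interval $\delta$.

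Let $A \subset \delta$ be the set of $v \in \delta$ such that some half-disc neighbourhood of $v$ missing $\gamma$ has material part in $H$. Let $B$ be the analogous set with $K$ in place of $H$.

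First I check that this choice is independent of the neighbourhood. Suppose $H_v$ and $H'_v$ are two half-disc neighbourhoods of $v$, both missing $\gamma$. Then $\bdyi H_v \cap \bdyi H'_v$ contains $v$, so it is non-empty. By \reflem{ConnectedImpliesConnected}, $H_v^\circ \cap H'^\circ_v \neq \emptyset$. Hence both material parts lie in the same component of $\Link - \gamma^\circ$.

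Next I check that every $v \in \delta$ has such a neighbourhood. Since $\gamma$ is compact and $v \notin \gamma$, and $\Disc$ is Hausdorff (\reflem{Hausdorff}), there is an open set about $v$ missing $\gamma$. By \reflem{HalfDiscNeighBasis} it contains a half-disc neighbourhood $H_v$ of $v$. Thus $\delta = A \sqcup B$.

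Both $A$ and $B$ are open in $\delta$. If $v \in A$ via $H_v$, then every $w \in \bdyi H_v \cap \delta$ has $H_v$ itself as a half-disc neighbourhood missing $\gamma$, with material part in $H$. By the independence just shown, $w \in A$.

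Finally, $u \in A$. The basic set $U$ contains a half-disc neighbourhood $H_u$ of $u$ by \reflem{HalfDiscNeighBasis}. This $H_u$ misses $\gamma$, and $H_u^\circ \subset U^\circ \subset H$. Since $\delta$ is an open arc it is connected, so $A = \delta$, giving the claim.

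The main obstacle I expect is the well-definedness step. It requires that two half-disc neighbourhoods of the same ideal point genuinely share material points, and \reflem{ConnectedImpliesConnected} supplies exactly this. Everything else is routine point-set topology.
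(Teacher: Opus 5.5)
Your proof is correct and is essentially the paper's argument. Both propagate the choice of side of $\Link - \gamma^\circ$ along $\delta$ by applying Lemma~\ref{Lem:ConnectedImpliesConnected} to overlapping half-disc neighbourhoods (supplied by Lemma~\ref{Lem:HalfDiscNeighBasis}) and using the connectedness of $\delta$. The only difference is packaging: the paper takes a finite chain cover of a compact subarc $\delta'$ containing $u$ and $v$, whereas you run an open--closed argument on all of $\delta$ and spell out explicitly the independence of the choice of neighbourhood, which the paper leaves implicit.
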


\begin{proof}
Let $\delta'$ be any compact subarc of $\delta$ containing both $u$ and $v$.  
Since $\Disc$ is normal, and applying \reflem{HalfDiscNeighBasis}, 
for every $w \in \delta'$ we may pick a half-disc $H_w$ so that $w \in H_w$ and so that $H_w$ misses $\gamma$.  
The compactness of $\delta'$ gives us a finite subcollection of the $H_w$ that covers $\delta'$.  
By \reflem{ConnectedImpliesConnected}, if $H_w$ and $H_{w'}$ meet in $\Circle$ then they meet in $\Link$.  
The connectedness of $\delta'$ now implies that all of the sets $H_w \cap \Link$ lie in $H$.  
We deduce that $H_v \cap \Link$ lies in $H$, as desired. 
\end{proof}

\begin{claim}
\label{Clm:EinE}
Suppose that $v$ lies in $\epsilon$.  
Suppose that $K_v$ is a half-disc neighbourhood of $v$ that misses $\gamma$.  
Then $K_v \cap \Link$ lies in $K$.
\end{claim}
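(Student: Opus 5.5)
The plan is to mirror the proof of \refclm{DinD}, once we know that \emph{some} half-disc neighbourhood of \emph{some} point of $\epsilon$ has its material part in $K$. Given such a seed, I would propagate it exactly as before. Take a compact subarc $\epsilon' \subset \epsilon$ containing the seed point and $v$. Cover $\epsilon'$ by finitely many half-discs missing $\gamma$, using normality of $\Disc$ and \reflem{HalfDiscNeighBasis}. Each trace $H_w \cap \Link$ is connected by \reflem{BasicOmnibus}\refitm{PathConnected}, so it lies in $H$ or in $K$. By \reflem{ConnectedImpliesConnected}, overlapping half-discs in $\Circle$ overlap in $\Link$, so connectedness of $\epsilon'$ forces all the traces into the same component, namely $K$. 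Hence $K_v \cap \Link \subset K$.

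So the real content is the seed. Argue by contradiction: suppose every half-disc neighbourhood of every point of $\epsilon$ that misses $\gamma$ has its material part in $H$. Combined with \refclm{DinD}, every point of $\Circle - \{x, y\}$ then has a neighbourhood $N$ with $N \cap \Link \subset H$, and $N$ is disjoint from $K$. Therefore the closure of $K$ in $\Disc$ meets $\Circle$ only in $\{x, y\}$, so $\closure{K} \subset K \cup \gamma$.

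To get a contradiction I would use leaves. Choose a small rectangle $R \subset K$ (\refcor{TransverseOmnibus}\refitm{PreCompactBasis}); there are uncountably many leaves of $F^\calV$ crossing $R$. By \refcor{OnlyAsymptotics}, at most finitely many distinct leaves share an ideal point, so all but finitely many of these leaves have no ideal point at $x$ or $y$. Fix such a leaf $\ell$ and a subray $r \subset \ell$ starting in $R$. By \refthm{Disc}\refitm{Ends}, in the form of \reflem{Extension}, $r \cup \bdyi r$ is an arc, and $\bdyi r \in \delta \cup \epsilon$. Hence a terminal subray of $r$ lies in $H$, so $r$ must cross $\gamma^\circ$. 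The same holds for both rays of $\ell$ and for leaves of $F_\calV$.

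This alone is not contradictory, since $\gamma^\circ$ can be wild, so I expect this to be the main obstacle. The first thing I would try is to exploit $\closure{K} \subset K \cup \gamma$ more cleverly. The set $\closure{K}$ is compact, and $\closure{K} \cap \Link = K \cup \gamma^\circ$, which is closed in $\Link$ but not compact, since both complementary components of a proper line in the plane are unbounded. So $K$ must escape to infinity in $\Link$ only towards $x$ and $y$. Every escaping sequence in $K$ would then converge to $x$ or $y$. I would aim to contradict this using the leaves crossing $R$ together with \reflem{Triple}: the portions of these leaves inside $K$ are trapped between crossings with $\gamma^\circ$. Meanwhile the region of $K$ far from $R$ must be filled by leaf segments whose ideal points, by \reflem{LeftIsLeft}\refitm{LeftIsLeft}, are forced into an interval of $\Circle$ between ideal points of leaves through $R$ and away from $\{x, y\}$. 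Such leaf segments would produce a point of $\closure{K} \cap (\delta \cup \epsilon)$, which is the desired contradiction.
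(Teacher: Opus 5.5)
There is a genuine gap: the seed step, which is the entire content of the claim, is never proved. Your reduction to $\closure{K} \subset K \cup \gamma$ is fine. You also correctly note that leaves through a rectangle $R \subset K$ having to cross $\gamma^\circ$ gives no contradiction. But the final paragraph only states an aim, namely that leaf segments are ``forced into an interval of $\Circle$ away from $\{x,y\}$''. It gives no argument for this, and it is unclear it can be made to work, since controlling a wild $\gamma^\circ$ is exactly the difficulty.

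A small aside: \refcor{OnlyAsymptotics} does not give ``finitely many'' leaves with a common ideal point, since a cusp has infinitely many cusp leaves. Still, by \reflem{NoTriangles} at most one cusp leaf per cusp per foliation crosses $R$, so your choice of $\ell$ survives.

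The missing idea is to build a polygonal line inside $H$ whose ideal points link $\{x,y\}$.
\begin{enumerate}
\item Suppose $K_v \cap \Link \subset H$. Fix a cusp $d \in \delta$ and a half-disc neighbourhood $H_d$ of $d$ missing $\gamma$. By \refclm{DinD}, $H_d \cap \Link \subset H$.
\item Pick a cusp $e \in \bdyi K_v$. Since $K_v$ misses $x$ and $y$, we have $\bdyi K_v \subset \epsilon$, so $\{d,e\}$ links $\{x,y\}$.
\item By \reflem{Extension}, terminal subrays of cusp leaves ending at $d$ and $e$ lie in $H_d$ and $K_v$, hence in $H$. Join them by a polygonal arc in the connected open set $H$ and delete loops. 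This gives an embedded polygonal line $\rho \subset H$ with $\bdyi \rho = \{d,e\}$.
\item The connected set $\gamma^\circ$ misses $\rho$, so it lies in one component of $\Link - \rho$. Orient $\rho$ so that the other component $W$ is $H^\circ(\rho)$; thus $W$ misses $\gamma^\circ$.
\item Since $\{d,e\}$ links $\{x,y\}$, the ideal boundary $\bdyi H(\rho)$ contains $x$ or $y$. By \reflem{HalfDiscOpen} the half-disc $H(\rho)$ is open. So $\gamma^{-1}(H(\rho))$ is an open subset of $[0,1]$ containing an endpoint, hence contains interior parameters. Therefore $\gamma^\circ$ meets $W$, a contradiction.
\end{enumerate}

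This argument works directly for arbitrary $v$ and $K_v$, so your propagation step, while correct, is unnecessary. Under your stronger contradiction hypothesis the same argument applies verbatim.
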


\begin{proof}
Recall that $x$ and $y$ are the ideal points of $\gamma$ and that $\Circle - \{x, y\} = \delta \sqcup \epsilon$.  
Set $K_v^\circ = K_v \cap \Link$.  
For a contradiction, suppose that $K_v^\circ$ lies in $H$.  
Fix any cusp $d$ in $\delta$ and suppose that $H_d$ is a half-disc neighbourhood of $d$, disjoint from $\gamma$.  
By \refclm{DinD}, the set $H_d^\circ$ lies in $H$.  
Let $e$ be any cusp in $\bdyi K_v$.  
As a bit of notation we take $K_e = K_v$. 
Since $v$ lies in $\epsilon$, which is connected, we deduce that $e$ also lies in $\epsilon$.  
Thus $(x, y)$ links $(d, e)$.

Pick rays $r_d$ and $r_e$ (contained in cusp leaves) ending at the cusps $d$ and $e$ and contained in $H_d$ and $K_e$, respectively.  
Since $H$ is path-connected, and since $H_d^\circ$ and $K_e^\circ$ both lie in $H$, there is a polygonal arc connecting a point of $r_d$ to a point of $r_e$.  
Deleting loops, we find that there is an embedded polygonal line $\rho$, contained in $H$, that has $d$ and $e$ as its ideal points.  

Note that $\gamma^\circ$ and $\rho$ are disjoint properly embedded lines in $\Link$.  
Thus $\rho$ divides $H$ into two components.  
We orient $\rho$ so that $H^\circ(\rho)$ is the component of $H - \rho$ that does not contain any point of $\gamma^\circ$ in its closure.  
Since $(x, y)$ links $(d, e)$, breaking symmetry, we have that $y$ lies in $\bdyi H(\rho)$.  
By \reflem{HalfDiscOpen} the half-disc $H(\rho)$ is open.  
Since $\gamma$ is an arc, thinking of it as a function we have that $\gamma^{-1}(H(\rho))$ is open and non-empty. 
So it contains an interval.  
Thus $\gamma^\circ$ meets $H^\circ(\rho)$, a contradiction.
\end{proof}

We extend our previous notation for half-discs and define $H(\gamma) = H \cup \delta$.
Let $\gamma'$ be $\gamma$ with the opposite orientation.
We take $H(\gamma') = K \cup \epsilon$.  
From the claims above, from \refcor{HalfDiscPathConnected}, 
and from the fact that $H$ and $K$ are open discs, we deduce that $H(\gamma)$ and $H(\gamma')$ are path-connected, disjoint, and open.   
Thus $\gamma$ separates, giving hypothesis \refitm{All} of \refthm{Zippin}.

Suppose that $q$ is any point of $\gamma$.  If $q$ lies in $\Link$ then \refthm{Schoenflies} implies that $H \cup \{ q \} \cup K$ is path-connected.  
If $q$ lies in $\Circle$ then $q$ is equal to one of $x$ or $y$: the ideal points of $\gamma$.  
Thus $\delta \cup \{ q \} \cup \epsilon$ is path-connected.  
In either case, $H(\gamma) \cup \{ q \} \cup H(\gamma')$ is path-connected, giving hypothesis \refitm{None} of \refthm{Zippin}.

Thus $\Disc$ is a closed disc with boundary $\Circle$. 
It follows that $\Link$ is the interior of $\Disc$. 
This proves property \refitm{Disc}.

\reflem{Extension} proves property \refitm{Ends}.

Cusps are dense in $\Circle$ by construction~\cite[Theorem~7.1]{FSS22}.
Ends of non-cusp leaves are dense by \reflem{Laminations}\refitm{Approach}.
Together, these imply property \refitm{Dense}.

Properties \refitm{Ends} and \refitm{Dense} imply (with appropriate choices) the characterising properties of~\cite[Theorem~2.1]{Bonatti26}.
Property \refitm{Unique} follows.

By \reflem{Action}, the action of $\pi_1(M)$ on $\Disc$ is by homeomorphisms.  
The elements of $\pi_1(M)$ preserve the orientation of $\Circle$~\cite[Theorem~7.1(1)]{FSS22} and of $\Link$~\cite[Theorem~10.8(6)]{FSS22}.  
Thus they preserve the induced orientation on $\Disc$. 
This proves property \refitm{Action}.
\end{proof}

\section{Hausdorff limits}
\label{Sec:Hausdorff}

We equip $\Disc$ with a metric compatible with its topology.
Recall that the set of closed sets in $\Disc$ is itself a metric space, called the \emph{Hausdorff metric topology}~\cite[page~154]{SteenSeebach95}.
A sequence of closed sets in $\Disc$ converging in the Hausdorff metric topology is said to \emph{Hausdorff converge}.
In this section we characterise Hausdorff limits of (closures of) leaves and skeletal rectangles in $\Disc$. 

As we shall see, the possible Hausdorff limits include (closures of) bi-leaves.
Recall from \refdef{BiLeaf} that a bi-leaf $k$ has the form $\ell \cup \{c\} \cup \ell'$ where $\ell$ and $\ell'$ are cusp leaves belonging to a cusp $c$.
We write $\bdyi k = (\bdyi \ell \cup \bdyi \ell') - \{c\}$.  
We call the points $\bdyi k$ the \emph{endpoints} of $k$.
Note that there are only countably many bi-leaves in $\Disc$. 




\subsection{Hausdorff limits of leaves}

The goal of this section is to prove the following.

\begin{proposition} 
\label{Prop:LimitOfLeaves}
Suppose that $(\ell_i)$ is a sequence of non-cusp leaves and/or bi-leaves in $\Disc$.
Suppose that the closures $(\closure{\ell}_i)$ Hausdorff converge to a set $L$.
Then $L$ is one of the following:
\begin{itemize}
\item
a point of $\Circle$,
\item
the closure of a bi-leaf, or
\item
the closure of a non-cusp leaf. 
\end{itemize}
Furthermore, in the last two cases, let $F$ be the foliation containing $L$.
Then all but finitely many of the $\ell_i$ belong to $F$.
\end{proposition}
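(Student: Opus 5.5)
We pass to subsequences freely; by pigeonhole we may assume all $\ell_i$ lie in one foliation, say $F^\calV$, and are all non-cusp leaves or all bi-leaves. Since $\Circle$ is compact we may also assume the endpoints converge: for non-cusp leaves write $\bdyi \ell_i = \{x_i, y_i\}$ with $x_i \to x$ and $y_i \to y$. For bi-leaves there are only countably many, so either the sequence is eventually constant (and then $L$ is the closure of that bi-leaf) or the $\ell_i$ are pairwise distinct. In the distinct case we track both endpoints and the centre $c_i$, and again assume convergence.

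\textbf{Degenerate case.} If $x = y$ (for bi-leaves, the two endpoints and the centre all converge to one point $x$), we show that $L = \{x\}$. By \reflem{HalfDiscNeighBasis}, $x$ has a half-disc neighbourhood $H(\rho)$, and by \reflem{Span} we may take $\rho$ with ideal points at cusps on either side of $x$. For large $i$ both endpoints of $\ell_i$ lie in $\bdyi H(\rho)$. We need $\ell_i \subset H^\circ(\rho)$. If not, $\ell_i$ meets $\rho$; we move a point of $\ell_i$ into the complementary half-plane and apply \reflem{Triple} to a leaf of $\rho$. This forces an endpoint outside $\bdyi H(\rho)$, or forces $\ell_i$ to cross some segment leaf of $\rho$ twice, which is impossible since leaves of opposite foliations cross at most once. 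Because the $H(\rho)$ shrink to $x$, we get $L = \{x\}$.

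\textbf{Nondegenerate case.} Here $x \neq y$. Choose cusps $c, d$ separating $x$ from $y$, and a cusp leaf or polygonal line between them. Then for large $i$ the pairs $\bdyi \ell_i$ are linked with a fixed pair of leaf endpoints of the opposite foliation. By \reflem{LeftIsLeft}\refitm{CrossIFFLinked}, the $\ell_i$ eventually all cross a fixed transversal leaf $m \in F_\calV$. (We can pick $m$ non-cusp, with endpoints near $c$ and $d$, using density from \refthm{Disc}\refitm{Dense}.) Let $q_i = \ell_i \cap m$, and pass to a subsequence with $q_i$ converging in $\closure{m}$. It cannot converge to an endpoint of $m$, because the ideal points of $\ell_i$ stay near $x$ and $y$, away from $\bdyi m$; this uses \reflem{LeftIsLeft}\refitm{LeftIsLeft} and the degenerate-case argument. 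So $q_i \to q \in m$. The leaf of $F^\calV$ through $q$, extended to its bi-leaf if it is a cusp leaf, is the candidate $k$. Inside a rectangle about $q$ the plaques of $\ell_i$ converge to the plaque of $k$. We then propagate this along $k$ using a chain of rectangles, in the style of the astroid lemma, to show that every compact piece of $k$ is a limit of points of $\ell_i$. The two ends are handled by \reflem{Extension} together with the degenerate argument applied near $x$ and $y$. This gives $\closure{k} \subset L$.

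For the reverse inclusion, $L \subset \closure{k}$, we argue as follows. Points of $\ell_i$ on either side of $q_i$ lie in half-planes cut out by leaves of $F_\calV$ crossing $k$ near its ends. For the cusp-leaf case, the crossing leaves are the cusp leaves of $F_\calV$ at the centre, and \reflem{NoTriangles} ensures that the limit is the entire bi-leaf rather than a single cusp leaf. The limit cannot be a single cusp leaf, since $x \neq y$ and neither endpoint can be the cusp, by \reflem{Laminations}\refitm{NoCusps}.

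For the final claim, suppose infinitely many $\ell_i$ lay in the other foliation $F_\calV$ while the limit is a leaf of $F^\calV$. Both subsequences would then cross a common small rectangle near $q$, transversely in different directions. A vertical plaque and a horizontal plaque cannot Hausdorff converge to the same arc, so this is a contradiction.

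I expect the main obstacle to be the reverse inclusion and the bi-leaf phenomenon: ruling out that parts of $\ell_i$ wander off and accumulate elsewhere, for example on the other side of a cusp. This needs careful use of \reflem{Triple} and of the non-Hausdorff behaviour of cusp leaves near a common cusp.
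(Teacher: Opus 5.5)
Your subsequence reductions and the final argument about the foliation are fine. The nondegenerate case, however, has two unjustified steps, and that is where the content of the proposition lies.

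\textbf{The transversal.} Density of endpoints (Theorem~\ref{Thm:Disc}(3)) gives endpoints of \emph{some} leaves near $c$ and near $d$. It does not give a single leaf of $F_\calV$ with one endpoint near each. Indeed, if $\{x,y\} = \bdyi n$ for a leaf $n$ of $F_\calV$, then by Lemma~\ref{Lem:LeftIsLeft}(2) no leaf of $F_\calV$ links $\{x,y\}$ at all. So $m$ must be extracted from the $\ell_i$ themselves, for instance because they must cross a separating cusp leaf or polygonal line.

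\textbf{Escape of $q_i$.} Your claim that $q_i$ cannot run off to an end of $m$ is false for cusp transversals. If the $\ell_i$ converge to a bi-leaf from the side of its divider, and $m$ is that divider, then $q_i$ tends to the centre. For non-cusp $m$, ruling out escape requires knowing that the half-discs of $F^\calV$-leaves crossing $m$ near an end shrink to that end. That is the first case of Proposition~\ref{Prop:SingleLeaf}, which the paper deduces \emph{from} the statement you are proving. The remark that ``the ideal points of $\ell_i$ stay away from $\bdyi m$'' does not supply it.

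\textbf{The reverse inclusion.} The same circular input is what your reverse inclusion $L \subset \closure{k}$ needs: you assume that half-planes cut off by transversals near the ends of $k$ shrink to those ends. You leave this unproved, along with the identification of the ends of $k$ with $x$ and $y$.

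\textbf{The missing idea.} What you are missing is Lemma~\ref{Lem:TwoImpliesLeaf}; the paper never tracks endpoints or ends. Given any two distinct points of $L$, it takes nested neighbourhoods $U_j, V_j$ of them. The arcs of the $\ell_i$ running between arcs of $\bdy U_j$ and $\bdy V_j$ cobound rectangles whose union is a rectangle. By Definition~\ref{Def:Loom}(2) this rectangle lies in a tetrahedron rectangle $R'$. Inside $\closure{R}'$ the limit of the plaques is automatically a plaque, or a plaque through one cusp, and never degenerates. The ascending union over $j$ is then a leaf or bi-leaf contained in $L$ whose closure contains both points. Applying this to a point of the limit leaf together with an arbitrary point of $L$ gives the reverse inclusion at once. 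Combined with Lemma~\ref{Lem:CuspLeafImpliesBiLeaf}, it also handles the bi-leaf phenomenon.

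\textbf{Minor repair to the degenerate case.} A leaf can cross two different segments of a polygonal $\rho$ that turns both ways, each once, while having both ends in $\bdyi H(\rho)$. So your ``crosses some segment twice'' dichotomy fails. Use a basic set $\bigcap_j H(m_j)$ instead, and apply Lemma~\ref{Lem:Triple} to each cusp leaf $m_j$ separately.
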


We prove the proposition via a sequence of lemmas, all using the same notation as the proposition.

\begin{lemma}
\label{Lem:TwoImpliesLeaf}
Suppose that $L$ contains two distinct points $x$ and $y$.
Then for one of the two foliations, say $F$, there is a leaf or bi-leaf $\ell_\infty$ contained in $F$ so that
\begin{itemize}
\item $L$ contains $\ell_\infty$ and 
\item both $x$ and $y$ lie in the closure of~${\ell}_\infty$.
\end{itemize}
\end{lemma}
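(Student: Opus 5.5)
We argue by passing to subsequences and using the structure of $\Disc$ as a closed disc (\refthm{Disc}). Pass to a subsequence so that every $\ell_i$ lies in the same foliation $F$, and so that each $\ell_i$ is either a non-cusp leaf or a bi-leaf throughout. Write $\bdyi \ell_i = \{a_i, b_i\}$. By compactness of $\Circle$, pass further so that $a_i \to a$ and $b_i \to b$. Since $x, y \in L$, choose points $x_i, y_i \in \closure{\ell}_i$ with $x_i \to x$ and $y_i \to y$.

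The first key step is to show that $a \neq b$. Suppose instead that $a = b$. We may pick a half-disc neighbourhood $H(\rho)$ of $a$, using \reflem{HalfDiscNeighBasis}, whose closure avoids at least one of $x$ and $y$. For large $i$, both $a_i$ and $b_i$ lie in $\bdyi H(\rho)$. Take $\rho$ to be a cusp leaf, or a polygonal line, with ideal points at cusps, which is possible by \reflem{Span}. Since $\ell_i$ is a leaf or bi-leaf with both ends in $\bdyi H(\rho)$, \reflem{Triple} and \reflem{LeftIsLeft}\refitm{CrossIFFLinked} show that $\closure{\ell}_i$ is contained in $\closure{H(\rho)}$. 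Here we use that leaves cannot cross a polygonal line an even number of times while exiting through the same side; this is where the loom structure and \refcor{TransverseOmnibus} enter. Hence $L$ lies in arbitrarily small closed neighbourhoods of $a$, so $L = \{a\}$, contradicting $x \neq y$.

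So $a \neq b$. Now consider the set $\Lambda$ of leaves of $F$ whose ideal points are close to $a$ and $b$. By \reflem{LeftIsLeft}\refitm{CrossIFFLinked}, the leaves of the opposite foliation crossing $\ell_i$ are exactly those whose endpoints link $\{a_i, b_i\}$. Choose two non-cusp leaves $m, m'$ of the opposite foliation whose ideal points link $\{a, b\}$ strictly; these exist by density of endpoints, \refthm{Disc}\refitm{Dense}. Then $m$ and $m'$ cross $\ell_i$ for all large $i$. The intersection points $\ell_i \cap m$ lie in the compact arc $\closure{m}$; pass to a subsequence so that they converge. Their limit cannot be an ideal point of $m$: otherwise, by \reflem{Extension} and the previous step applied on one side, $\{a_i, b_i\}$ would collapse. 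So the limit is a point $p \in m \subset \Link$.

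Let $\ell_\infty$ be the leaf of $F$ through $p$, extended to a bi-leaf if it is a cusp leaf, via \refdef{BiLeaf}. We expect this identification to be the main obstacle. We would use the local product structure from small rectangles about $p$ (\refcor{TransverseOmnibus}\refitm{PreCompactBasis}), together with the astroid lemma~\cite[Lemma~4.10]{SchleimerSegerman24}, to show that the $\ell_i$ converge to $\ell_\infty$ on compacta. When $p$ lies on a cusp leaf, the $\ell_i$ approaching from the side opposite the cusp converge to the full bi-leaf, because the divider blocks them from following a single cusp leaf; this uses \reflem{NoTriangles}. We then check that $\bdyi \ell_\infty = \{a, b\}$. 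This follows from \reflem{LeftIsLeft}\refitm{LeftIsLeft} applied to crossing leaves, which traps the ends of $\ell_i$ between those of nearby crossing cusp leaves. Hence $\closure{\ell}_\infty \subset L$.

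Finally, we show that $x$ and $y$ lie in $\closure{\ell}_\infty$. Otherwise, by normality we separate the point from $\closure{\ell}_\infty$ by a half-disc, or by a rectangle together with half-discs (\reflem{Rectangle}, \reflem{HalfDiscOpen}). Convergence on compacta plus convergence of the endpoints then keeps $\closure{\ell}_i$ away from that point, contradicting $x_i \to x$.
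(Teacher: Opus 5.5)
Your argument that $a\neq b$ does not work as written. It rests on the principle that a leaf with both ends in $\bdyi H(\rho)$ stays in $\closure{H(\rho)}$. That principle is false for polygonal $\rho$. Suppose $\rho$ consists of two rays in $F_\calV$ joined by a short segment in $F^\calV$ (a ``staple''). A leaf of $F^\calV$ just beside that segment crosses both rays: it leaves $H(\rho)$ and returns, with both ends on the same side. Note also that \reflem{Triple} is only stated for a single leaf.

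The principle does hold when $\rho$ is a single leaf, but that does not rescue the step. Single-leaf half-discs do not shrink to a cusp $c$. By \reflem{NoTriangles}, a leaf meets at most one of the cusp leaves at $c$ of the other foliation, so if its ideal arc contains $c$, it contains all but at most one thorn of $c$. Near a non-cusp point, the fact that single-leaf half-discs do shrink is \refprop{SingleLeaf}, whose proof uses \refprop{LimitOfLeaves}, the very proposition this lemma is a step towards. Moreover, for a bi-leaf $k_i$ the pair $\bdyi k_i$ omits the centre $c_i$, so locating $a_i$ and $b_i$ does not locate $\closure{k}_i$.

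What does work is one cusp leaf separating $a$ from whichever of $x,y$ differs from $a$, with \reflem{NoTriangles} controlling bi-leaves. The paper begins the same way: it separates $x$ from $y$ by a cusp leaf $m$, which every $\ell_i$ in a tail must cross. That leaf is also the transverse leaf you need later. ``Density of endpoints'' does not by itself produce a leaf whose two ideal points link the specific pair $\{a,b\}$.

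The second gap is more serious. Two of your claims need a shrinking fact you never prove: that $\lim_i(\ell_i\cap m)$ is not an ideal point of $m$, and that $\bdyi\ell_\infty=\{a,b\}$. The fact is this: when leaves of the other foliation cross a fixed leaf at points exiting one of its ends, the arcs of $\Circle$ they cut off shrink to that end's ideal point. \reflem{LeftIsLeft}\refitm{LeftIsLeft} only says these arcs are nested. That they shrink to a point is precisely the first case of \refprop{SingleLeaf}, again downstream of \refprop{LimitOfLeaves}. You need some independent input here, for instance closedness of $\Lambda^\calV$ and $\Lambda_\calV$ in $\Mobius$ combined with \reflem{Laminations}\refitm{NoEndpoints}. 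Your last step uses endpoint convergence to keep the rays of $\ell_i$ from wandering toward $x$ or $y$, so it inherits this gap. How the $\ell_i$ get around the centre of a limiting bi-leaf is also only gestured at.

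The paper's proof is arranged so that no control of the ends of the $\ell_i$ is needed. It takes nested neighbourhoods $U_j, V_j$ of $x, y$ with disjoint closures, and the sub-arcs $[p_i,q_i]$ of $\ell_i$ running from an arc of $\bdy U_j$ to an arc of $\bdy V_j$. These sub-arcs cobound rectangles whose union lies in a tetrahedron rectangle by \refdef{Loom}\refitm{Tet}. That forces the Hausdorff limit $\lambda^j\subset L$ to be one leaf segment, or two and a cusp. Since $\lambda^j$ meets $\closure{U}_j$ and $\closure{V}_j$, the ascending union accumulates on $x$ and $y$. So $x,y\in\closure{\ell}_\infty$ holds by construction rather than by identifying $\bdyi\ell_\infty$.
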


\begin{proof}
Since $x$ and $y$ are distinct, there is some cusp leaf $m$ that separates them.
Choose small basic neighbourhoods $U$ and $V$ of $x$ and $y$ disjoint from $m$.
Pass to a tail of the sequence $(\ell_i)$ so that all of the $\ell_i$ meet both $U$ and $V$.
All leaves in this tail cross $m$.
Therefore all $\ell_i$ in this tail are in one of the two foliations, say $F$.

By \refthm{Disc}\refitm{Disc}, the disc $\Disc$ is normal and first countable.
Thus there are nested neighbourhood bases $(U_j)$ and $(V_j)$, about $x$ and $y$ respectively, so that $\closure{U}_j \cap \closure{V}_j$ is empty.

We now recurse on $j$, starting with the $j$th subsequence $(\ell_i)$ of the original sequence of leaves.
Pass to a further tail of $(\ell_i)$ whose members meet both $U_j$ and $V_j$.
We deduce that there are arcs, rays, or lines $m_x$ and $m_y$ in the boundaries of $U_j$ and $V_j$ respectively so that the leaves of an infinite subsequence
of $(\ell_i)$ cross $m_x$ and $m_y$. 
(This gives the $j+1$st subsequence of the original.)
We deduce that the leaves containing $m_x$ and $m_y$ are not in $F$.

Set $p_i = \ell_i \cap m_x$ and set $q_i = \ell_i \cap m_y$.
Note that $p_i \neq q_i$ because the closures of $U_j$ and $V_j$ are disjoint.
Set $\ell'_i = [p_i, q_i] \subset \ell_i$.

The arcs $\ell'_i$ between $m_x$ and $m_y$ all cobound rectangles (with appropriate subarcs of $m_x$ and $m_y$).
The union of these rectangles gives a rectangle $R$.
Since $\Link$ is a loom space (\refthm{LinkIsLoom}), 
by \refdef{Loom}\refitm{Tet}, we have that $R$ is contained in a tetrahedron rectangle $R'$.  
Let $\lambda_i = \ell_i \cap R'$.
Let $\lambda^j$ be the Hausdorff limit of the (closures of the) $\lambda_i$.
Thus $\lambda^j$ is either one leaf, or two leaves and a cusp, of $F \cap \closure{R}'$.
Note that $\lambda^j$ lies in the Hausdorff limit $L$.
This completes the recursive step.

Now let $\lambda_\infty \subset L$ be the ascending union of the $\lambda^j$.
Note that $\lambda_\infty$ accumulates (in $\Disc$) on both $x$ and $y$.
The ascending union $\lambda_\infty$ either has no cusps, and so is contained in a leaf of $F$, or has a cusp, and so is contained in a bi-leaf of $F$.
Call this leaf or bi-leaf $\ell_\infty$.
Thus $\ell_\infty$ accumulates (in $\Disc$) on both $x$ and $y$.

We now show that $\ell_\infty \subset L$. 
Let $p$ be any point of $\ell_\infty$.
Let $q$ be a point of $\lambda_\infty$, chosen so that $q$ and $p$ are in the same component of $\ell_\infty$.
Let $[p, q]$ be the interval of $\ell_\infty$ with endpoints $p$ and $q$.  
Let $Q$ be a rectangle containing $[p, q]$ in its interior. 
Since the leaves $\ell_i$ approach $q$, they eventually enter $Q$ and thus approach $p$.
This proves that $\ell_\infty \subset L$.
\end{proof}

\begin{lemma}
\label{Lem:OneInteriorImpliesLeaf}
Suppose that $L$ contains a point $p$ of $\Link$.
Then $L$ contains the leaf $\ell_p$ of $F$ through $p$.
\end{lemma}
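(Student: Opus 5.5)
The plan is to work locally near $p$, using the product structure of rectangles, and then to propagate along the leaf through $p$ with a chain of rectangles, as at the end of the proof of \reflem{TwoImpliesLeaf}. Here $F$ is the foliation containing infinitely many of the $\ell_i$. Passing to a subsequence does not change the Hausdorff limit $L$, so we may assume all $\ell_i$ lie in $F$. (If infinitely many lie in each foliation, the argument below applies to either choice.)

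\textbf{Step 1: a segment of $\ell_p$ near $p$ lies in $L$.} Fix a precompact rectangle $Q \ni p$; by \refcor{TransverseOmnibus}\refitm{PreCompactBasis} such rectangles form a neighbourhood basis. Since $p \in L$, the sets $\closure{\ell}_i$ eventually meet every neighbourhood of $p$, and in particular eventually meet $Q$.
\begin{itemize}
\item Cusps lie in $\Circle$, not in $\Link$. So where a bi-leaf $\ell_i$ meets $Q$, it meets $Q$ in leaves of $F$.
\item By the product structure of $Q$ (\refdef{Rectangle}), each such leaf meets $Q$ in a full transversal: an arc $\{s_i\} \times (0,1)$ in the coordinates $f_Q$.
\item The points $\ell_i \cap Q$ accumulate on $p = f_Q(s,t)$, so $s_i \to s$ along a subsequence.
\item Hence the arcs $f_Q(\{s_i\}\times(0,1))$ converge pointwise to $f_Q(\{s\}\times(0,1)) = \ell_p \cap Q$.
\end{itemize}
Therefore $\ell_p \cap Q \subset L$.

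\textbf{Step 2: propagate along $\ell_p$.} Let $p'$ be an arbitrary point of $\ell_p$, and let $[p,p'] \subset \ell_p$ be the compact segment between them. Cover $[p,p']$ by finitely many precompact rectangles $Q = Q_0, Q_1, \ldots, Q_k$, with consecutive ones meeting along $\ell_p$; this is possible by compactness and \refcor{TransverseOmnibus}\refitm{PreCompactBasis}.

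We induct along the chain. Suppose the $\ell_i$ approach a point of $\ell_p \cap Q_j \cap Q_{j+1}$. Then they meet $Q_{j+1}$, so by the transversal argument of Step~1 they cross $Q_{j+1}$ along arcs converging to $\ell_p \cap Q_{j+1}$. After $k$ steps we get $p' \in L$. Since $p'$ was arbitrary, $\ell_p \subset L$.

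\textbf{Main obstacle.} The delicate point is the claim in Step~1 that an $F$-leaf meeting $Q$ crosses it as a complete transversal whose position converges. This needs:
\begin{itemize}
\item that a leaf meets a rectangle in whole plaques $\{s\}\times(0,1)$, which is immediate from \refdef{Rectangle} since $f_Q$ is an embedding sending vertical arcs into leaves of $F$;
\item that when $\ell_i$ is a bi-leaf, each of its two cusp leaves is handled separately, with the cusp never lying in $Q$;
\item that consecutive rectangles in the chain share an open subarc of $\ell_p$, so that approach at one point of the overlap forces entry into the next rectangle.
\end{itemize}
If $\ell_p$ is itself a cusp leaf, the argument is unchanged: $[p,p']$ stays in $\Link$, away from the cusp.
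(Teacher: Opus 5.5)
Your proof is correct and is essentially the paper's. The paper likewise fixes a rectangle $P$ about $p$, notes that the $\ell_i$ eventually cross $P$ so that the plaque of $F \cap P$ through $p$ lies in $L$, and then finishes by citing \reflem{TwoImpliesLeaf}, whose final paragraph is the same chain-of-rectangles propagation you carry out directly. Running that propagation yourself, and fixing $F$ by passing to a subsequence, makes your argument slightly more self-contained.
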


\begin{proof}
Fix a rectangle $P$ containing $p$.
The $\ell_i$ eventually pass through $P$.
Thus the Hausdorff limit $L$ (of their closures) contains the leaf $\lambda_p$ of $F \cap P$ containing $p$.
We are now done by \reflem{TwoImpliesLeaf}.
\end{proof}

\begin{figure}[htbp]
\labellist
\small\hair 2pt
\pinlabel $c$ [tr] at 178 32
\pinlabel $c'_i$ [r] at 70 48
\pinlabel $c_i$ [l] at 365 53
\pinlabel $\ell_i$ [l] at 460 64
\pinlabel $\ell_c$ [l] at 460 37
\pinlabel $C$ at 230 100
\pinlabel $D_i$ at 130 50
\endlabellist
\includegraphics[height=1.5in]{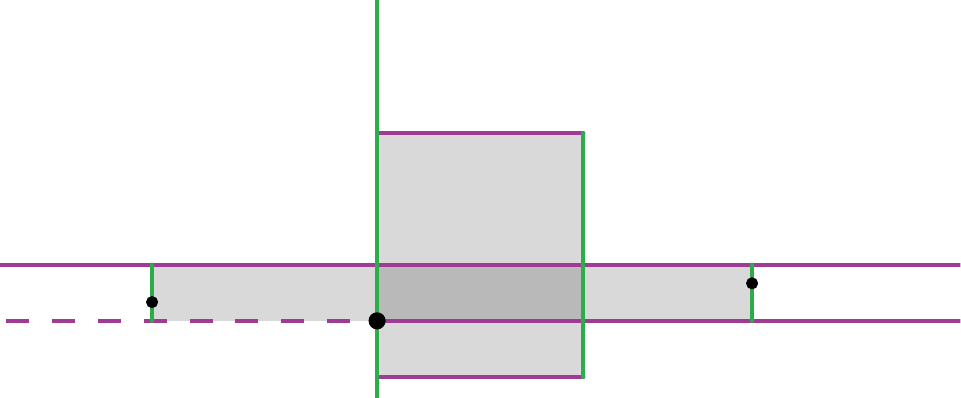}
\caption{One possibility when a sequence of leaves $\ell_i$ approaches a cusp leaf $\ell_c$.}
\label{Fig:CuspLeafImpliesBiLeaf}
\end{figure}

\begin{lemma}
\label{Lem:CuspLeafImpliesBiLeaf}
Suppose that $L$ contains a cusp leaf $\ell_c$ with cusp $c$.
Then $L$ contains a bi-leaf $k_c$ of $F$ containing $\ell_c$. 
\end{lemma}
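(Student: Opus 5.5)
Here is the plan. Let $\ell_i$ be a sequence of non-cusp leaves and/or bi-leaves of $F$, as in \refprop{LimitOfLeaves}, whose closures Hausdorff converge to $L$. Suppose $L$ contains the cusp leaf $\ell_c$ emanating from $c$. We must find a second cusp leaf $\ell'$ of $F$ at $c$ with $\ell_c \cup \{c\} \cup \ell' \subset L$, where $\ell'$ is separated from $\ell_c$ by exactly one cusp leaf $m$ of the other foliation (the divider).

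\textbf{Step 1: choose the divider and a cusp rectangle.}
Near $c$, on the side of $\ell_c$ from which the $\ell_i$ accumulate, consider the cusp rectangles with a cusp side in $\ell_c$. Take $C$ to be one such rectangle lying on the side where the $\ell_i$ pass; this is possible because the $\ell_i$ approach a point of $\ell_c$, and a small rectangle around that point meets them on one side. The other cusp side of $C$ lies in a cusp leaf $m$ of the opposite foliation emanating from $c$. Continuing past $m$, adjacency of cusp rectangles (\refdef{Cusp}) gives a cusp rectangle $D$ on the far side of $m$ sharing the cusp side in $m$. Its other cusp side lies in a cusp leaf $\ell'$ of $F$ at $c$, and $\ell_c\cup\{c\}\cup\ell'$ is a bi-leaf $k_c$ with divider $m$. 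Which of the two possible bi-leaves containing $\ell_c$ we get is determined by which side of $\ell_c$ the $\ell_i$ approach from.

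\textbf{Step 2: the $\ell_i$ cross $m$ near $c$.}
The $\ell_i$ converge to points of $\ell_c$ arbitrarily close to $c$, using \refthm{Disc}\refitm{Ends} and the fact that $\ell_c\subset L$. So for large $i$ the $\ell_i$ enter $C$ and run parallel to $\ell_c$ toward the cusp side in $m$. Since $\ell_i$ is not $\ell_c$ and is not a bi-leaf containing $\ell_c$ (otherwise we are done trivially, after passing to a subsequence), it cannot end at $c$, by \refcor{OnlyAsymptotics}. Hence, by \reflem{LeftIsLeft} and the linking of ideal points, $\ell_i$ must cross $m$ at points $c_i$ converging to $c$; see \reffig{CuspLeafImpliesBiLeaf}. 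After crossing, $\ell_i$ enters $D$, or a rectangle $D_i$ adjacent to $m$ on the far side.

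\textbf{Step 3: the far segments converge to $\ell'$.}
This is the main obstacle. We must show that the continuations of the $\ell_i$ beyond $m$ Hausdorff converge onto all of $\ell'$, and do not escape to some other cusp leaf or collapse. The approach is to use \reflem{NoTriangles} and the astroid lemma at points of $\ell'$. Fix $p\in\ell'$ and a rectangle $Q$ containing the segment of $\ell'$ from $p$ to near $c$, extended slightly past $m$. The subarcs of $\ell_i$ after $c_i$ stay in the region bounded by $m$ and $\ell'$ near $c$, since they cannot cross $m$ again and $\ell_i$ lies in $F$. As $c_i\to c$ they are squeezed against $\ell'$, so they pass through $Q$ close to $\ell'$ and hence approach $p$. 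Therefore $\ell'\subset L$, and $L$ is closed, so $c\in L$ as well. Concretely, one checks, as in the final paragraph of \reflem{TwoImpliesLeaf}, that a rectangle containing $[p,q]$ for $q$ near $c$ is eventually entered by the $\ell_i$.
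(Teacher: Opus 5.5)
Your argument is correct, but it takes a different and more local route than the paper's. The paper starts from the rectangle $C$ given by the loom axiom, which has $c$ in the interior of one of its sides. It passes to a subsequence in which the $\ell_i$ approach $\ell_c$ from one side, and then considers maximal rectangles $D_i$ with one side in $\ell_i$ and the opposite side $s_i$ containing $c$. Two applications of the astroid lemma follow: the sides of $D_i$ carry cusps $c_i, c'_i$, and their projections exit the ends of a bi-leaf centred at $c$. This shows that the $s_i$ exhaust a bi-leaf $k_c$.

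You only need to carry the convergence across the divider $m$, and the product structure of the adjacent cusp rectangles $C$ and $D$ does exactly that. The compact-segment argument from the end of \reflem{TwoImpliesLeaf} then spreads the convergence along all of $\ell'$, with no appeal to the astroid lemma. The paper's route yields more structure: single rectangles joining $\ell_i$ to exhausting pieces of $k_c$, bounded by cusps that escape to the ends of $k_c$. This is the same staircase picture reused in \refprop{LimitOfRectangles}. Yours is shorter, and it handles the passage from one point of $\ell'$ to all of $\ell'$ by a mechanism the paper already uses.

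Several details need tightening.
\begin{enumerate}
\item The cusp rectangle $D$ exists by \refdef{Loom}\refitm{Cusp} applied to the cusp side of $C$ in $m$. \refdef{Cusp} only defines adjacency; it does not give existence.
\item The crossing in Step~2 needs neither \refcor{OnlyAsymptotics} nor \reflem{LeftIsLeft}. Each leaf of $F\cap\closure{C}$ already has an endpoint on the cusp side of $C$ in $m$.
\item In Step~3, replace ``squeezed against $\ell'$'' with the precise statement. For large $i$ the crossing point lies on the cusp side of $D$ in $m$, so $\ell_i$ contains the whole leaf of $F\cap\closure{D}$ through it. These leaves converge to the cusp side of $D$ in $\ell'$.
\item Then take $Q$ to contain the segment of $\ell'$ from $p$ to a point of that cusp side. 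The phrase ``extended slightly past $m$'' is meaningless, since $\ell'$ ends at $c$.
\item Choose the approach point $q\in\ell_c$ inside the cusp sides of cusp rectangles on both sides of $\ell_c$ before passing to a subsequence. Then $C$ can be taken on the correct side with $q$ on its cusp side.
\end{enumerate}
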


\begin{proof}
By \refthm{LinkIsLoom} and \refdef{Loom}\refitm{Cusp}, we have a rectangle $C$ so that 
\begin{itemize}
\item
$c$ lies in the interior of one of the sides of $C$ and 
\item
the interior of $C$ meets the interior of $\ell_c$.  
\end{itemize}
All but finitely many of the $\ell_i$ meet $C$.  
See \reffig{CuspLeafImpliesBiLeaf}.
Passing to a subsequence, either 
\begin{itemize}
\item
the $\ell_i$ are all the same bi-leaf, call it $k_c$, containing $\ell_c$, or
\item
the $\ell_i$ approach $\ell_c$ from one side. 
\end{itemize}
Again, see \reffig{CuspLeafImpliesBiLeaf}.
In the first case, $L$ contains $k_c$ and we are done.
In the second case, \refdef{Loom}\refitm{Cusp} gives a sequence of rectangles $D_i$ so that 
\begin{itemize}
\item
a side of $D_i$ is contained in $\ell_i$,
\item
the opposite side, $s_i$ say, of $D_i$, contains $c$ in its interior, and 
\item
$D_i$ is maximal with respect to the previous two properties.
\end{itemize}
Choose a point $p$ in $\ell_c \cap C$.
Consider the two staircases for $p$ that meet $C$.
Let $m_p$ be the leaf containing $p$ and crossing $\ell_c$. 
By the astroid lemma~\cite[Lemma~4.10]{SchleimerSegerman24}, the projections of the cusps of these staircases to $m_p$ accumulate at $p$.
Thus the two sides of $D_i$ in the same foliation as $m_p$ contain cusps, say $c_i$ and $c'_i$.
See \reffig{CuspLeafImpliesBiLeaf}.

Applying the astroid lemma again, the projections of $c_i$ and $c'_i$ to the other axes of the staircases exit the ends of a bi-leaf with centre $c$.
We deduce that the ascending union of the sides $s_i$ is a bi-leaf, say $k_c$.
Thus $k_c$ contains $\ell_c$ and is contained in $L$.
\end{proof} 

We now combine the lemmas to prove the proposition.

\begin{proof}[Proof of \refprop{LimitOfLeaves}]
By \reflem{OneInteriorImpliesLeaf}, the Hausdorff limit $L$ cannot be a single point of $\Link$.
If $L$ is a single point of $\Circle$, we are done.
If $L$ contains at least two points of $\Disc$ then $L$ contains a leaf, by \reflem{TwoImpliesLeaf}.

Suppose that $L$ contains exactly one leaf $\ell$.
By \reflem{CuspLeafImpliesBiLeaf}, we deduce that $\ell$ is a non-cusp leaf.
Thus $\ell = L \cap \Link$ by \reflem{OneInteriorImpliesLeaf}.
Suppose that $x$ is a point of $L - \ell$.
Let $y$ be any point in $\ell$.
By \reflem{TwoImpliesLeaf}, we find that $x$ lies in the closure of ${\ell}$.
Thus in this case, $L$ is the closure of ${\ell}$ as desired.

Suppose that $L$ contains at least two distinct leaves $\ell$ and $\ell'$.
Pick $x$ and $y$ to be points in the interiors of $\ell$ and $\ell'$, respectively.
By \reflem{TwoImpliesLeaf}, we deduce that there is a cusp $c$ so that $k_c = \ell \cup \{c\} \cup \ell'$ is a bi-leaf. 
Suppose now that $z$ is a point of $L - k_c$.
By \reflem{TwoImpliesLeaf}, we find that $z$ lies in the closure of ${k}_c$.
Thus in this case, $L$ is the closure of ${k}_c$ as desired.
\end{proof}

\subsection{Neighbourhood bases}

In \reflem{HalfDiscNeighBasis} we prove that the half-discs in $\Disc$ give neighbourhood bases for the points of $\Circle$. 
For a cusp $c$ the boundaries of these half-discs have more and more corners as they nest.
However, for non-cusps, a single leaf suffices.

\begin{proposition}
\label{Prop:SingleLeaf}
Suppose that $x$ is a point of $\Circle - \Delta_\calV$.
Then there is a sequence of non-cusp leaves $(\ell_i)$ so that the half-discs $H(\ell_i)$ give a neighbourhood basis for $x$ in $\Disc$.
Furthermore, if $x$ is the end of a leaf $m$ then the $\ell_i$ all cross $m$.
\end{proposition}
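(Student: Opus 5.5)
We will build the leaves from an arbitrary half-disc neighbourhood, shrinking it to one bounded by a single non-cusp leaf. By \reflem{HalfDiscNeighBasis} and second countability of $\Disc$ (from \reflem{Continuum}), it suffices to prove the following. For every half-disc $H = H(\rho)$ with $x \in \bdyi H$, there is a non-cusp leaf $\ell$ with $x \in \bdyi H(\ell)$ and $H(\ell) \subset H$. If moreover $x$ is an end of a leaf $m$, we also want $\ell$ to cross $m$. Given such leaves for a countable nested neighbourhood basis, we take the $\ell_i$ to be the leaves so produced; the half-discs $H(\ell_i)$ are open by \reflem{LeafHalfDisc} and contain $x$, so they form a neighbourhood basis.

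Fix $H = H(\rho)$ with $\bdyi H = (v,u)^\acw \ni x$. Since $x$ is not a cusp, choose cusps $c, d \in (v,u)^\acw$ with $x \in (d,c)^\acw$ and $[d,c]^\acw \subset (v,u)^\acw$. By \refthm{Disc}\refitm{Dense}, endpoints of non-cusp leaves are dense, and by \reflem{Laminations}\refitm{Approach} interior leaves accumulate on every leaf. We seek a non-cusp leaf $\ell$ with both endpoints in $(d,c)^\acw$ separating $x$ from $\{c,d\}$. The key tool is the astroid-lemma argument of \reflem{LeafHalfDisc}, run in reverse. Take a polygonal line $\sigma$ from $d$ to $c$ using \reflem{Span}, lying in $H$ and cutting off $H(\sigma) \ni x$. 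We want non-cusp leaves approaching $x$ arbitrarily closely, that is, whose closures Hausdorff converge to $\{x\}$. Take any sequence of non-cusp leaves $\ell_i$ with endpoints $a_i, b_i$ where $a_i \to x$ from one side and $b_i$ lies on the other side of $x$. Using \refprop{LimitOfLeaves}, the Hausdorff limit is a point, a bi-leaf closure, or a non-cusp leaf closure. If every leaf through points near $x$ had Hausdorff limit containing $x$ but not equal to $\{x\}$, then $x$ would be an endpoint of a leaf or a bi-leaf endpoint.

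We then split into cases. \emph{Case 1}: $x$ is a singleton, meaning it is not an endpoint of any leaf. Choose $\ell_i$ with endpoints converging to $x$ from both sides; these exist since leaf endpoints are dense and leaves of a lamination are unlinked, using nestedness of complementary regions. By \refprop{LimitOfLeaves} their closures converge to $\{x\}$, so eventually $\closure{\ell}_i$ lies in any given neighbourhood of $x$. Then \reflem{Triple} shows that $H(\ell_i) \subset H(\sigma)$, since every material point of $H^\circ(\ell_i)$ has leaves with at least three ideal points in $\bdyi H(\ell_i) \subset \bdyi H(\sigma)$. \emph{Case 2}: $x$ is an endpoint of a leaf $m$, possibly a thorn of a crown or an endpoint of an interior leaf. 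Fix a point $p \in m$ and run the astroid lemma \cite[Lemma~4.10]{SchleimerSegerman24} on the staircases about points of $m$ near the end of $m$ at $x$. This gives cusp leaves crossing $m$ ever closer to $x$. By \reflem{Laminations}\refitm{Approach} and \refrem{NonCuspLeavesDense}, we can replace these by nearby non-cusp leaves $\ell_i$ crossing $m$ at points $q_i$ exiting the end of $m$. By \reflem{LeftIsLeft}\refitm{LeftIsLeft}, their endpoint pairs are nested around $x$. Their Hausdorff limit is then a single point of $\Circle$: it cannot contain a leaf, since a leaf in $L$ would cross $m$ or be a limit of crossings forced outward, contradicting \reflem{OneInteriorImpliesLeaf}. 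That point must be $x$. We conclude as in Case 1.

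The main obstacle is Case 2 when $x$ is the common end of two leaves, that is, a thorn adjacent to two boundary leaves of a crown, or a leaf endpoint also approached by the other lamination. There, crossing leaves might a priori converge to a bi-leaf whose endpoint is $x$, rather than shrinking to $x$. We would handle this by using \reflem{Laminations}\refitm{NoEndpoints} and \refitm{Adjacent}. Only the leaves in one lamination share $x$, and the crossing leaves $\ell_i$ lie in the opposite foliation. Their limit bi-leaf would then have to be in the foliation of the $\ell_i$ (by \refprop{LimitOfLeaves}) with endpoint $x$. But its endpoints are distinct from the endpoints of $m$ by \reflem{Laminations}\refitm{NoEndpoints}, which is a contradiction.
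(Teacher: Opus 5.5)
Your two-case structure, your use of \refprop{LimitOfLeaves} to force the Hausdorff limit to be $\{x\}$, and your leaf-end case all match the paper. The genuine gap is in the singleton case, at ``choose $\ell_i$ with endpoints converging to $x$ from both sides; these exist since leaf endpoints are dense and leaves of a lamination are unlinked.'' That step is the whole content of this case, and those two facts do not give it. If $z$ is an ideal vertex of a complementary ideal polygon of a lamination, leaf endpoints accumulate on $z$ from both sides. Yet no leaf has endpoints close to $z$ on both sides of it, since such a leaf would link the side of the polygon ending at $z$. Thorns are exactly such vertices. You must use that $x$ is a singleton, and you never say how.

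The paper gets the leaves from a neighbourhood basis instead. Fix $w \neq x$ and half-discs $H(\rho_i)$ forming a neighbourhood basis of $x$, whose ideal arcs have closures missing $w$. The ideal points of $\rho_i$ separate $x$ from $w$. The leaves carrying consecutive segments of $\rho_i$ cross, so they have linked ideal points. Hence some leaf carrying a segment of $\rho_i$ separates $x$ from $w$. Replace it by a nearby non-cusp leaf (\refrem{NonCuspLeavesDense}) that still separates $x$ from $w$. These leaves pass arbitrarily close to $\rho_i \subset \closure{H(\rho_i)}$, so any Hausdorff limit of their closures contains $x$. A singleton lies in the closure of no leaf or bi-leaf, so the limit is $\{x\}$, and the arcs cut off on the side of $x$ shrink to $x$.

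There are three smaller repairs.

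\emph{The containment step.} You misapply \reflem{Triple}, which is stated only for a leaf. Its three-of-four criterion fails for polygonal lines: a point to the right of $\sigma$, inside a C-shaped bend, can have three of its four ideal points in $\bdyi H(\sigma)$. The lemma is also unnecessary. Once $\closure{\ell}_i \to \{x\}$, the open set $H(\rho)$ eventually contains $\closure{\ell}_i$. Then the half-disc of $\ell_i$ on the side of $x$ is connected and disjoint from $\rho \cup \bdyi \rho$, so it lies in $H(\rho)$. You do not need $\sigma$ at all, and \reflem{Span} does not place it inside $H$ anyway.

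\emph{The astroid lemma.} In the leaf-end case it is superfluous. Take the opposite-foliation leaves through points of $m$ tending to $x$, as the paper does.

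\emph{Why $L$ is a point in the leaf-end case.} Your first reason, via \reflem{OneInteriorImpliesLeaf}, proves nothing, since that lemma rules nothing out. The correct reason is the one in your last paragraph. Since $x \in L$ and $x$ is not a cusp, if $L$ were the closure of a leaf or bi-leaf then $x$ would be an ideal point of a leaf in the foliation opposite to $m$. This contradicts \reflem{Laminations}\refitm{NoEndpoints}, and it rules out non-cusp leaf limits as well as bi-leaves.
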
 

\begin{proof}
Suppose first that $x$ is an ideal point of a leaf $m$.
By \reflem{Extension} we may pick a sequence of points $p_k \in m$ converging to $x$. 
Let $\ell_k$ be the leaf of the opposite foliation to $m$ so that $m \cap \ell_k = p_k$.
Since non-cusp leaves are dense, we may perturb the $p_k$ to ensure that the leaves $\ell_k$ are not cusp leaves.
Pass to a subsequence so that the closures of the $\ell_k$ Hausdorff converge to a set $L$.
Note that $x$ lies in $L$.
By \refprop{LimitOfLeaves} and the fact that $x$ is not a cusp, we have that $L = \{ x \}$. 
Fix any oriented embedded polygonal line $\rho$ so that $H(\rho)$ is a neighbourhood of $x$.
Since the $\ell_i$ Hausdorff converge to $x$, all but finitely many are contained in $H(\rho)$.
Thus, for sufficiently large $i$, the neighbourhood $H(\ell_i)$ (with $\ell_i$ oriented appropriately) is contained in $H(\rho)$.

Suppose now that $x$ is a singleton.
Fix a reference point $w \in \Circle - \{x\}$.
Choose a sequence $\rho_i$ so that $H(\rho_i)$ is a neighbourhood basis of $x$.
After passing to a subsequence, none of the $H(\rho_i)$ contain $w$.
Thus there is some leaf $\ell_i$ containing a segment of $\rho_i$ so that $H(\ell_i)$ contains $x$ and not $w$.
By \refrem{NonCuspLeavesDense} non-cusp leaves are dense.
So we may replace $\ell_i$ by a nearby non-cusp leaf that again separates $x$ from $w$.

As before, we pass to a subsequence so that the closures of the $\ell_i$ Hausdorff converge to a set $L$.
Note that $L$ contains $x$.
Because $x$ is a singleton, it is not contained in the closure of any bi-leaf or non-cusp leaf.
The remainder of the argument is as in the first case.
\end{proof}

\begin{definition}
\label{Def:Sector}
Suppose that $p \in \Link \cup \Delta_\calV$.
Suppose that $S^\circ$ is a component of $\Disc$ minus the (closures of the) leaves at $p$.
We call the closure of $S^\circ$ a \emph{sector} at $p$.
Two sectors at $p$ are \emph{adjacent} if they share a point other than $p$.
\end{definition}

\begin{lemma}
\label{Lem:NoneShallPass}
Suppose that $S$ is a sector based at a cusp $d$.
Suppose that $C$ is a clamshell boundary associated to a cusp $c$.
Suppose that $c$ is not in $\bdyi S$.
Then we have
\begin{enumerate}
\item
\label{Itm:MeetsOne}
$\bdyi C$ meets precisely one of the two components of $\Circle - \bdyi S$.
\item
\label{Itm:NoBoundary}
$\bdyi C$ does not contain $d$, nor the thorns at the ends of the bounding leaves of $S$.
\end{enumerate}
\end{lemma}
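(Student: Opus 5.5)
Here the boundary of the sector $S$ at the cusp $d$ consists of two adjacent cusp leaves $\ell$ and $m$ emanating from $d$ (one in each foliation, adjacent in the cyclic order about $d$), together with $d$. So $\bdyi S$ is a closed interval $[t, t']^\acw$ whose endpoints $t$ and $t'$ are the thorns of $\ell$ and $m$, and which contains $d$ in its interior. We will use $\ell \cup \{d\} \cup m$ as a ``wall'' and show that no leaf of the crowns at $c$ can cross it.

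\textbf{Part~\refitm{NoBoundary}.} Since $c \neq d$, the point $d$ is not the cusp of $C$. By \reflem{Laminations}\refitm{NoCusps}, no leaf has an ideal point at a cusp, so $d$ is not a thorn of $C$. Next suppose that the thorn $t$ of $\ell$ lies in $\bdyi C$. Then $t$ is an ideal point of some leaf $\lambda$ in a crown at $c$. The point $t$ is also an ideal point of boundary leaves of the crown at $d$; in the link space this is the cusp leaf $\ell$. By \refcor{OnlyAsymptotics} (equivalently, \reflem{Laminations}\refitm{Adjacent} and~\refitm{NoEndpoints}), two distinct leaves sharing an ideal point must be adjacent boundary leaves of a common crown. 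So $\lambda$ would be in the crown at $d$, contradicting $c \neq d$. If $\lambda$ and the leaves at $t$ lie in opposite laminations, \reflem{Laminations}\refitm{NoEndpoints} forbids the shared endpoint directly. The thorn $t'$ is handled the same way.

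\textbf{Part~\refitm{MeetsOne}.} By part~\refitm{NoBoundary}, $\bdyi C$ avoids $t$ and $t'$. Since $c \notin \bdyi S$, the set $\bdyi C$ does meet the complementary component, so it suffices to show that no thorn of $C$ lies in the interior of $\bdyi S$. Suppose, for a contradiction, that some thorn $z$ of $C$ lies in $(t, t')^\acw$.
\begin{enumerate}
\item Since $z$ is a thorn, it is the common endpoint of a cusp leaf $k$ at $c$ (in $\Link$); the other endpoint of $k$ is $c$, which lies outside $\bdyi S$.
\item So $\bdyi k$ links $\{t, d\}$ or $\{d, t'\}$, namely whichever subinterval contains $z$.
\item By \reflem{LeftIsLeft}\refitm{CrossIFFLinked}, $k$ then crosses $\ell$ or $m$. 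Say it crosses $\ell$; then $k$ lies in the foliation opposite to $\ell$.
\item Consider the sector. Near $d$, the interior of $S$ is the region swept between $\ell$ and $m$. By \reflem{Laminations}\refitm{Interleave}, there are no further cusp leaves at $d$ inside $S$, because $\ell$ and $m$ are adjacent.
\item The leaf $k$ enters $S$ across $\ell$ and must reach its endpoint $z \in (t, t')^\acw$ inside $\bdyi S$. So the ray of $k$ after the crossing lies in the interior of $S$.
\item The leaf $m$ lies in the same foliation as $k$ and is a cusp leaf at $d$. In local rectangular coordinates near $d$, the region of $S$ is a cusp-rectangle-like quadrant. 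A leaf crossing $\ell$ in the foliation of $m$ stays parallel to $m$, so its ideal point is forced to lie between $t$ and $d$ on the appropriate side.
\end{enumerate}
This is consistent so far, so this step needs more care: a leaf crossing $\ell$ does have an endpoint in $\bdyi S$, and that is genuinely allowed. The real point is that $k$ is a cusp leaf whose other end is $c \notin \bdyi S$, while both ends of $k$ must be separated appropriately.

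Use instead the wall $w = \ell \cup \{d\} \cup m$, which separates $\Disc$ into $S$ and its complement. Both $c$ and $z$ are endpoints of the single arc $\closure{k}$, which meets $w$ only by crossing $\ell$ or $m$, each at most once (leaves of opposite foliations meet at most once). Since $k$ is a cusp leaf at $c$, the crossing point $p = k \cap \ell$ gives a cusp rectangle with corner $c$ and a side along $\ell$. The astroid lemma, or \reflem{NoTriangles}, then says that the other leaves at $c$, namely the neighbours of $k$ in the crown, cannot also cross $\ell$. We would exploit the fact that $C$'s crowns interleave (\reflem{Laminations}\refitm{Interleave}): the adjacent leaf $k'$ at $c$ in the other foliation has an endpoint $z'$ adjacent to $z$, and $k'$ would have to cross $m$ or $\ell$ as well. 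Repeating this along the crown, which accumulates only at $c$ (\reflem{Laminations}\refitm{CrownZZ}), yields infinitely many crossings of the two finite-type leaves $\ell$ and $m$. The plan is to contradict this using \reflem{NoTriangles} applied at $c$: leaves $k$ and $k''$ of the same foliation at $c$ cannot both meet $\ell$. \textbf{The main obstacle} is making this final combinatorial step airtight.
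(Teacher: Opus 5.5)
There is a genuine gap in Part~\refitm{MeetsOne}, and it starts with your description of $\bdyi S$. Your Part~\refitm{NoBoundary} is fine: it unpacks the paper's one-line reason, namely that $c \neq d$ makes $C$ and $[d]_{S^1}$ distinct, hence disjoint, decomposition elements (\refcor{CircleDecomposition}).

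The sector $S$ is the wedge cut off by two consecutive cusp leaves $\ell$ and $m$ at $d$. So $\bdyi S$ is the closed arc from $t$ to $t'$ that \emph{avoids} $d$ (no other leaves at $d$ land there), together with the isolated point $d$. This is exactly why $\Circle - \bdyi S$ has two components, one with endpoints $d,t$ and one with endpoints $t',d$; your closed interval containing $d$ in its interior would leave only one.

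You then reduce to showing that no thorn of $C$ lies in the open arc strictly between $t$ and $t'$, and that claim is false. Take a cusp leaf of $c$ in the foliation of $m$ that crosses $\ell$. It enters $S$, cannot cross $m$, and so its thorn lies strictly between $t$ and $t'$. Such leaves do occur: the astroid lemma at a point of $\ell$ supplies cusp leaves crossing $\ell$ from the side away from $S$. Your own step~6 concedes this. So the final step cannot be made airtight. Many leaves crossing $\ell$ is no contradiction, and \reflem{NoTriangles} only says that at most one cusp leaf of $c$ crosses $\ell$.

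What must be excluded is a thorn in the \emph{far} component, not in $\bdyi S$. Suppose $c$ lies in the component with endpoints $d$ and $t$, so that $\ell$ separates $c$ from $S$. Suppose some thorn $z$ of $C$ lay in the component with endpoints $t'$ and $d$. Then the cusp leaf of $c$ ending at $z$ has ideal points $\{c, z\}$, which link both $\bdyi \ell = \{d, t\}$ and $\bdyi m = \{d, t'\}$; all these points are distinct by part~\refitm{NoBoundary}. By \reflem{LeftIsLeft}\refitm{CrossIFFLinked}, that leaf would cross both $\ell$ and $m$. This is impossible, since $\ell$ and $m$ lie in opposite foliations and a leaf only crosses leaves of the other foliation. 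Since $c$ itself lies in the near component, $\bdyi C$ meets exactly one component. This is the paper's argument; your steps~1--3 already contain the linking computation, just aimed at the wrong target.
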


\begin{proof}
The cusp $c$ lies in one of the two components.
Suppose that $\ell$ and $m$ are the boundary leaves of $S$.
Breaking symmetry, suppose that $\ell$ separates $c$ from $S$.
Any cusp leaf of $c$ crossing $\ell$ cannot cross $m$.
This proves \refitm{MeetsOne}.
We have that $c \neq d$, which implies \refitm{NoBoundary}.
\end{proof}

\begin{lemma}
\label{Lem:IVT}
Suppose that $\ell$ is a leaf and $x$ is a singleton.
Suppose that either
\begin{itemize}
\item $\ell$ is a non-cusp leaf or
\item $\ell$ is a cusp leaf on the boundary of a sector $S$ containing $x$.
\end{itemize}
Then there is a cusp $c$ with cusp leaves $m$, $m'$, and $m''$ so that:
\begin{itemize}
\item $m$ crosses $\ell$,
\item $m$ and $m'$ form a bi-leaf, as do $m$ and $m''$, and
\item exactly one of $m'$ or $m''$ separates $\ell$ from $x$. 
\end{itemize}
\end{lemma}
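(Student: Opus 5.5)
We plan to take a point $p$ on $\ell$, use the astroid lemma to produce cusps whose cusp leaves cross $\ell$ and converge to $x$, and then read off the required bi-leaves from a single such cusp.

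Since $x$ is a singleton, it lies in the ideal boundary of exactly one side of $\ell$. Orient $\ell$ so that $x \in \bdyi H(\ell)$; in the second case this $H(\ell)$ is the side containing the sector $S$. By \refprop{SingleLeaf} there is a non-cusp leaf $n$ with $x \in \bdyi H(n) \subset \bdyi H(\ell)$ and $\closure{H(n)} \cap \closure{\ell} = \emptyset$. Such an $n$ exists because the closures $\closure{H(n_i)}$ shrink to $x$, and $x$ is not an ideal point of $\ell$ since $x$ is a singleton.

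Next we build cusp leaves crossing $\ell$ that pass between $\ell$ and $n$. Fix a point $p \in \ell$ and consider the two staircases about $p$ on the $x$-side of $\ell$. By the astroid lemma~\cite[Lemma~4.10]{SchleimerSegerman24}, their cusps $c_i$ have cusp leaves $m_i$, of the foliation opposite to $\ell$, that cross $\ell$ at points exiting both ends of $\ell$. Each $c_i$ also has a cusp leaf, of the same foliation as $\ell$, whose projections to the axis through $p$ transverse to $\ell$ exit that axis. We take $m = m_i$ for a suitable $i$. Its two neighbouring cusp leaves at $c = c_i$ in the same foliation as $\ell$ are then $m'$ and $m''$; by \refdef{BiLeaf} each forms a bi-leaf with $m$, with dividers the cusp leaves of $c$ adjacent to $m$. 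By \reflem{NoTriangles}, at most one of $m'$ and $m''$ meets any given leaf crossing it, so $m'$ and $m''$ lie on opposite sides of $m$ near $c$.

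The key step is to choose $i$ so that the cusp $c$, together with the thorns of $m'$ and $m''$, lies in $\bdyi H(\ell)$ and the pair $\bdyi m' \cup \bdyi m''$ separates $\ell$ from $x$ on one side.

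\begin{itemize}
\item \emph{Trapping $x$.} By \reflem{LeftIsLeft}\refitm{LeftIsLeft} and \refthm{Disc}\refitm{Ends}, the ideal points of $m$, $m'$ and $m''$ are totally ordered along $\bdyi H(\ell)$.
\item \emph{Which leaf separates.} Because $x$ is a singleton, $x$ is not an endpoint of any of these leaves. So $x$ lies strictly within one of the complementary arcs. In exactly one arrangement, that arc is cut off from $\ell$ by $m'$ alone, or by $m''$ alone; the other of the two leaves lies on the far side of $m$.
\item \emph{Ruling out the bad arrangements.} The leaf $n$ is used to exclude the case where $x$ lies beyond both leaves, or between $m$ and $\ell$. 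If $x$ lay beyond both, then $n$ would have to meet both $m'$ and $m''$, contradicting \reflem{NoTriangles}; this uses that $n$ separates $x$ from $\ell$. If $x$ lay between $m$ and $\ell$, we replace $i$ by a later index, since the crossings $\ell \cap m_i$ exit the ends of $\ell$.
\end{itemize}

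In the cusp-leaf case, we additionally use \reflem{NoneShallPass}\refitm{NoBoundary} to ensure that $c$ is not the cusp of $S$. This is automatic if we take the staircases inside $S$.

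The main obstacle is this last step: choosing the index and proving that exactly one of $m'$ and $m''$ separates. This requires careful bookkeeping of circular orders in $\Circle$, and I expect a limiting argument via \refprop{LimitOfLeaves} to be needed. If $x$ remained between $m$ and $\ell$ for every $i$, the leaves $m_i$ would Hausdorff converge to a set containing $x$. By \refprop{LimitOfLeaves} that limit is a point, a bi-leaf closure or a non-cusp leaf closure. Each of these is either disjoint from a singleton or forces $x$ to be a cusp or a leaf endpoint, which is a contradiction.
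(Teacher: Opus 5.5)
\textbf{There is a genuine gap: your method for choosing the cusp cannot work in general.}

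The cusp in the lemma is forced by $x$. Take any cusp $c\in\bdyi H(\ell)$ having a cusp leaf $m_c$ that crosses $\ell$, and let $t'_c,t''_c$ be the thorns of the two bi-leaf partners of $m_c$.
\begin{itemize}
\item Leaves of one foliation do not cross, and Lemma~\ref{Lem:NoTriangles} applies. So no leaf crossing $\ell$ has an ideal point in the arc $(t'_c,t''_c)$ of $\bdyi H(\ell)$, other than $m_c$ at $c$.
\item On the other hand, $t'_c$ and $t''_c$ are limits of such ideal points, by Lemma~\ref{Lem:CuspLeafImpliesBiLeaf}.
\item Hence these arcs are pairwise disjoint for distinct $c$. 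The cusp $c$ satisfies the conclusion exactly when $x\in(t'_c,t''_c)$, so at most one cusp works.
\end{itemize}

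The staircase cusps of your fixed $p$ give crossing points on $\ell$ that accumulate only at $p$ and at the ends of $\ell$. By contrast, crossings of cusp leaves are dense along $\ell$. So in general the required cusp is not among your $c_i$, and no index is ``suitable''.

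Moving to a later index goes the wrong way. Once $\ell\cap m_i$ exits an end of $\ell$, Proposition~\ref{Prop:LimitOfLeaves} and Corollary~\ref{Cor:OnlyAsymptotics} force the bi-leaves $m_i\cup\{c_i\}\cup m_i'$ to Hausdorff converge to the corresponding ideal point of $\ell$ (when $\ell$ is non-cusp). So $x$ lies in the bad region for all large $i$, and the limit does not contain $x$. The contradiction in your last paragraph therefore never materialises. Also, the astroid lemma gives you no cusp leaves converging to $x$, contrary to your opening sentence.

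Two smaller points:
\begin{itemize}
\item Bi-leaf partners of $m$ lie in the foliation of $m$, two steps from $m$ around $c$. The neighbours of $m$ in the foliation of $\ell$ are the dividers, not the partners.
\item The case ``beyond both'' is vacuous, since $m'$ and $m''$ lie on opposite sides of $m$. So the leaf $n$ does no work.
\end{itemize}

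\textbf{The missing idea is to locate $c$ from $x$ rather than from a base point.} The paper lets $C_\ell$ be the closure of the $x$-side ideal points of non-cusp leaves of the opposite foliation crossing $\ell$. This is a Cantor set, and it misses the singleton $x$. The complementary gap $U$ containing $x$ contains a unique cusp $c$ with a cusp leaf $m$ crossing $\ell$; in the cusp-leaf case, the hypothesis $x\in S$ is what guarantees this. The endpoints of $U$ are the thorns of $m'$ and $m''$. Since $x\in U-\{c\}$, exactly one of $m'$, $m''$ separates $\ell$ from $x$.

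Equivalently, an intermediate-value argument along $\ell$ works. Find the transverse leaf at which $x$ switches from one side to the other. By Lemma~\ref{Lem:Perturb} it cannot be a non-cusp leaf, since $x$ would then be its ideal point. So it is the cusp leaf $m$, with $c$ on the $x$-side. Your Hausdorff-limit reasoning belongs in that argument, not in a search over the staircase cusps of a fixed point.
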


\begin{proof}
Breaking symmetry, suppose that $\ell$ belongs to $F^\calV$.
Breaking symmetry again, suppose that $x$ is to the right of $\ell$.
Let $R_\ell$ be the set of rays in non-cusp leaves of $F_\calV$ emanating from $\ell$ to the right.
Let $C_\ell$ be the closure of the endpoints at infinity $\bdyi R_\ell$. 
Note that $C_\ell$ is a Cantor set; the argument is similar to that in~\cite[Lemma~8.24]{FSS22}. 
Let $U$ be the complementary component of $C_\ell$ in $\Circle$ which contains $x$.
Then there is a unique cusp $c$ with the following properties.
\begin{itemize}
\item $c$ lies in $U$ and
\item $c$ has a cusp leaf $m$ that crosses $\ell$.
\end{itemize}
(When $\ell$ is a cusp leaf, this is where we use the sector $S$.)
Let $m'$ and $m''$ be the cusp leaves at $c$ whose endpoints are the endpoints of $U$.
\end{proof}

For a picture in the case where $\ell$ is a non-cusp leaf, see \reffig{IVT}.

\begin{corollary}
\label{Cor:SingletonSingleLeaf}
With hypotheses as in \refprop{SingleLeaf}, if $x$ is a singleton then we may choose the sequence $(\ell_i)$ to lie in either foliation. 
\end{corollary}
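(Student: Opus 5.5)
The plan is to rerun the singleton case of the proof of \refprop{SingleLeaf}, using \reflem{IVT} to trade each separating leaf for one in whichever foliation we like. Fix the target foliation, say $F_\calV$ (the case of $F^\calV$ is symmetric). Start from the sequence of non-cusp leaves $(\ell_i)$ given by \refprop{SingleLeaf}, oriented so that $H(\ell_i)$ is a neighbourhood basis of $x$. Whenever $\ell_i$ already lies in $F_\calV$, keep it.

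Now suppose $\ell_i$ lies in $F^\calV$. Apply \reflem{IVT} to the non-cusp leaf $\ell_i$ and the singleton $x$. This gives a cusp $c_i$ with a cusp leaf $m_i$ crossing $\ell_i$, so $m_i$ lies in $F_\calV$. It also gives bi-leaf partners $m'_i$, $m''_i$, also in $F_\calV$ since a bi-leaf lies in a single foliation, exactly one of which separates $\ell_i$ from $x$. Call that one $n_i$. Orient $n_i$ so that $x \in H(n_i)$. Because $n_i$ separates $\ell_i$ from $x$, we have $H(n_i)\subset H(\ell_i)$, which follows from \reflem{Triple} together with the separation property. Next, $n_i$ is a cusp leaf, but the conclusion wants non-cusp leaves. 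The endpoints of $n_i$ are $c_i$ and a thorn, and neither equals $x$ since $x$ is a singleton. By \refrem{NonCuspLeavesDense} we may therefore replace $n_i$ by a nearby non-cusp leaf $n'_i$ of $F_\calV$, on the $x$-side of $n_i$, that still separates $x$ from $\ell_i$. Then $x\in H(n'_i)\subset H(\ell_i)$.

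Since the $H(\ell_i)$ form a neighbourhood basis of $x$ and $H(n'_i)\subset H(\ell_i)$ with $x\in H(n'_i)$, the sets $H(n'_i)$ also form a neighbourhood basis, and every $n'_i$ lies in $F_\calV$. As a check, one can alternatively pass to a subsequence whose closures Hausdorff converge. By \refprop{LimitOfLeaves}, and since a singleton lies on no leaf or bi-leaf closure, the limit is $\{x\}$, which gives the basis exactly as in the first case of \refprop{SingleLeaf}.

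The main obstacle is the perturbation step, which must produce a non-cusp leaf of $F_\calV$ strictly between $n_i$ and $x$. To do this, approach $n_i$ from the side containing $x$ by non-cusp leaves, using \reflem{Laminations}\refitm{Approach} in the lamination picture. A boundary leaf is approached from the side opposite its cusp, and one must check this is the $x$-side. This should follow because \reflem{IVT} chose $n_i$ with endpoints at the ends of the gap $U$ containing $x$. If it failed, one would instead take the approximating leaves from $R_{\ell_i}$, whose endpoints accumulate on the endpoints of $U$ from inside $C_{\ell_i}$. That alternative does not immediately give the $x$-side either, so the orientation bookkeeping here is the delicate point.
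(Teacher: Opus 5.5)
There is a genuine gap. It is exactly the perturbation step you flag at the end, and it is not a matter of orientation bookkeeping: the step fails as stated. A cusp leaf is approached from both sides by leaves of its foliation, but by \reflem{CuspLeafImpliesBiLeaf} these approximants converge to a bi-leaf, not to the cusp leaf itself.

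Concretely, let $k_i$ be the cusp leaf of $F^\calV$ at $c_i$ adjacent to $n_i$ on the $x$--side. Let $n^*_i$ be the next cusp leaf of $F_\calV$ at $c_i$ beyond $k_i$, so that $n_i \cup \{c_i\} \cup n^*_i$ is a bi-leaf with divider $k_i$. Any leaf of $F_\calV$ close to $n_i$ on the $x$--side crosses $k_i$ near $c_i$ and then runs alongside $n^*_i$. Its ideal points therefore lie near the thorns of $n_i$ and $n^*_i$. The half-disc it cuts off away from $\ell_i$ has ideal boundary close to the arc between these two thorns that avoids $c_i$.

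But $H(n_i)$ contains infinitely many sectors at $c_i$, accumulating on $c_i$, and \reflem{IVT} gives no control over which of them contains $x$. Whenever $x$ lies between the thorn of $n^*_i$ and $c_i$, no leaf near $n_i$ on the $x$--side separates $x$ from $\ell_i$. Your fallback does not help either. Leaves from $R_{\ell_i}$ cross $\ell_i$ by definition, as do leaves near $n_i$ on its $\ell_i$--side, which run alongside $m_i$. So their half-discs are never contained in $H(\ell_i)$.

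The missing idea is to approximate a bi-leaf from its divider side, rather than a single cusp leaf. Let $S_i = S(c_i, x)$ be the sector at $c_i$ containing $x$, and let $S'_i$ be the sector adjacent to $S_i$ across its boundary leaf in $F^\calV$. The boundary of $S_i \cup S'_i$ is then a bi-leaf of $F_\calV$, whose divider is that $F^\calV$ leaf. The sectors at $c_i$ on the $x$--side of $n_i$ pair off in exactly this way, so $S_i \cup S'_i$ lies in the closure of $H(n_i)$.

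The point $x$ lies in the interior of the ideal arc of $S_i \cup S'_i$ and is not a thorn. The bi-leaf is approached by non-cusp leaves of $F_\calV$ from inside $S_i \cup S'_i$; apply \reflem{Laminations}\refitm{Approach} to the corresponding boundary leaf of the crown. So a sufficiently close such leaf $m_i$ satisfies $x \in H(m_i) \subset H(n_i) \subset H(\ell_i)$.

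This is the paper's argument. The rest of your outline is fine: keeping the leaves already in the target foliation, and deducing the neighbourhood basis from the nesting.
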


\begin{proof}
Let $(\ell_i)$ be the sequence provided by \refprop{SingleLeaf}.
If the $\ell_i$ lie in both foliations infinitely often then we are done.
Breaking symmetry, suppose that all but finitely many of the $\ell_i$ belong to $F^\calV$.
For each $\ell_i$, \reflem{IVT} gives a cusp $c_i$.
Let $S_i$ be the sector at $c_i$ which contains $x$.
Let $S'_i$ be the sector adjacent to $S_i$ across a leaf of $F^\calV$.
Let $m'_i$ be the bi-leaf boundary of $S_i \cup S'_i$.
Let $m_i$ be a non-cusp leaf sufficiently close to $m'_i$.
Note that $H(m_i)$ is contained in $H(\ell_i)$.
Then $(m_i)$ is the desired sequence of non-cusp leaves.
\end{proof}

\begin{lemma}
\label{Lem:Perturb}
Every non-cusp leaf $\ell$ is a limit (on either side) of non-cusp leaves $(\ell_k)$ in the same foliation.
Furthermore, the ideal points of $\ell_k$ tend to those of $\ell$ in $\Circle$.
\end{lemma}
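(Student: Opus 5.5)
The plan has two steps. First I work at the level of the laminations in $\Circle$, where approximation by nearby leaves is available. Then I transport the result to $\Link$ and read off convergence of ideal points from \reflem{LeftIsLeft} together with the compactness results of the previous subsection.

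Breaking symmetry, suppose that $\ell$ belongs to $F^\calV$ and is the non-cusp leaf associated to the interior leaf $\lambda$ of $\Lambda^\calV$. By \reflem{Laminations}\refitm{Approach}, interior leaves are approached from both sides by other leaves of $\Lambda^\calV$. In particular, they are approached by interior leaves, since boundary leaves are themselves limits of interior leaves. So on a chosen side of $\lambda$ we get a sequence of distinct interior leaves $\lambda_k \to \lambda$ in $\Mobius$, which means their ideal points converge to those of $\lambda$.

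Let $\ell_k$ be the non-cusp leaf of $F^\calV$ given by $\lambda_k$. Its ideal points are exactly those of $\lambda_k$, by \refdef{IdealPoints}, so the second assertion is immediate. It remains to show two things: the $\ell_k$ lie on the chosen side of $\ell$, and they converge to $\ell$ in $\Link$.

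For the side, I use that $\ell_k$ and $\ell$ are disjoint leaves of one foliation, so by \reflem{LeftIsLeft}\refitm{CrossIFFLinked} their ideal points do not link. Since the ideal points are distinct (\refcor{OnlyAsymptotics}), the points $\bdyi \ell_k$ lie in one component of $\Circle - \bdyi \ell$. That component is the chosen side, so $\ell_k \subset H(\ell)$ for an appropriate orientation of $\ell$.

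For convergence, I pass to a subsequence so that the closures $\closure{\ell}_k$ Hausdorff converge to some $L$. I then apply \refprop{LimitOfLeaves}. The set $L$ contains the two distinct limit ideal points $\bdyi \ell$, so $L$ is not a single point of $\Circle$. Hence $L$ is the closure of a non-cusp leaf or of a bi-leaf of $F^\calV$. Its endpoints are $\bdyi \ell$, because the endpoints of the $\ell_k$ converge to them.

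The main obstacle is ruling out a bi-leaf or a different leaf with these endpoints. A bi-leaf $\ell' \cup \{c\} \cup \ell''$ has endpoints equal to the two thorns of adjacent boundary leaves, so the endpoints $\bdyi \ell$ would be those thorns. I would argue this is impossible because no interior leaf shares an ideal point with a boundary leaf (\reflem{Laminations}\refitm{Adjacent}). Similarly, a non-cusp leaf with the same two endpoints as $\ell$ must equal $\ell$, since leaves of $\Link$ correspond to leaves of $\Lambda^\calV$ with those ideal points. So $L = \closure{\ell}$. Because this holds for every subsequence, the whole sequence $\ell_k$ converges to $\ell$.

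Finally, convergence inside $\Link$, meaning the $\ell_k$ eventually cross any small rectangle about any point of $\ell$, follows from Hausdorff convergence together with \reflem{SubspaceL}\refitm{Sub}.
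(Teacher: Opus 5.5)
Your argument is correct, but it takes a different route from the paper's.

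\textbf{The paper's route.} The paper works entirely inside $\Link$. \refprop{SingleLeaf} supplies non-cusp leaves $m_k$ and $m'_k$ that cross $\ell$ and exit its two ends. The loom axiom puts the segment of $\ell$ between them inside a rectangle $R_k$. Density of non-cusp leaves (\refrem{NonCuspLeavesDense}) then gives a leaf $\ell_k$ in $R_k$, on the chosen side, that cobounds a rectangle with $\ell$, $m_k$ and $m'_k$. Convergence of ideal points comes for free: the ends of $\ell_k$ lie in the half-discs $H(m_k)$ and $H(m'_k)$, and these shrink to $\bdyi\ell$.

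\textbf{Your route.} You start in $\Mobius$, where \reflem{Laminations}\refitm{Approach} makes convergence of ideal points immediate. You then recover convergence in $\Disc$ by identifying the Hausdorff limit through \refprop{LimitOfLeaves}. That identification is where your proof does its real work. It is sound, but three small points should be said explicitly.
\begin{itemize}
\item The closure of a bi-leaf also contains its centre cusp. So you need \reflem{Laminations}\refitm{NoCusps} to conclude that $\bdyi\ell$ is exactly the pair of thorns.
\item Getting \emph{interior} approximants on the chosen side needs a short diagonal argument. A boundary leaf on that side shares no ideal point with $\lambda$, so interior leaves close enough to it stay on that side.
\item Placing $\ell_k$ in $H(\ell)$ uses the fact that $\closure{\ell}$ separates $\Disc$ (\refthm{Disc}), applied to the connected set $\closure{\ell}_k$.
\end{itemize}

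\textbf{Comparison.} Your route handles the ideal points more cleanly, and it yields the a priori stronger statement that $\closure{\ell}_k \to \closure{\ell}$ in the Hausdorff topology. The paper's route gives the explicit local picture directly: the $\ell_k$ fellow-travel $\ell$ across ever longer rectangles $R_k$. In your version this picture has to be extracted afterwards from Hausdorff convergence.
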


\begin{proof}
By \refprop{SingleLeaf} we can pick sequences of non-cusp leaves $(m_k)$ and $(m'_k)$ exiting the two ends of $\ell$.
Fix $k$.
Let $L_k$ be the segment of $\ell$ between $m_k$ and $m'_k$.
Since $\Link$ is a loom space, $L_k$ is contained in a rectangle $R_k$.
By \refrem{NonCuspLeavesDense} non-cusp leaves are dense.
So we can find a non-cusp leaf $\ell_k$ crossing $R_k$ (on either side of $\ell$) which forms a rectangle with $\ell$, $m_k$, and $m'_k$.
\end{proof}

\begin{remark}
\reflem{Perturb} does not hold for cusp leaves.
It holds on the ``non-cusp side'' of a bi-leaf.  
However we will not use this fact.
\end{remark}

\subsection{Hausdorff limits of skeletal rectangles}

The goal of this section is to prove the following.

\begin{proposition}
\label{Prop:LimitOfRectangles}
Suppose that $(R_i)$ is a sequence of skeletal rectangles in $\Disc$.
Suppose that the closures $\closure{R}_i$ Hausdorff converge to a set $L$.  
Then $L$ is one of the following:
\begin{itemize}
\item
a point of $\Circle$,
\item
the closure of a leaf (cusp or non-cusp),
\item
the closure of a bi-leaf, or
\item
the closure of a skeletal rectangle.
\end{itemize}
The last possibility can only occur if the sequence is eventually constant.
\end{proposition}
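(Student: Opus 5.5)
The plan is to reduce to \refprop{LimitOfLeaves} by looking at the sides of the rectangles. Each skeletal rectangle $R_i$ has closure in $\Disc$ bounded by four sides. Each side lies in a cusp leaf or a bi-leaf, and the ideal points of $\closure{R}_i$ on $\Circle$ are finitely many cusps (at most four, by \refcor{OnlyAsymptotics}). Pass to a subsequence so that all $R_i$ have the same type (edge, face, or tetrahedron). Pass further so that each of the finitely many sides, and each of the cusps $c_i^k$ on $\bdy R_i$, converges: the sides in the Hausdorff metric, the cusps in $\Circle$.

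The first case is when $L$ meets $\Link$ in an open set. Then some small rectangle $Q$ lies in $R_i$ for all large $i$. Skeletal rectangles come from cells of $\cover{\calV}$, and only finitely many of them contain a given precompact rectangle. To see this, each tetrahedron rectangle containing $Q$ corresponds to a tetrahedron, and there are finitely many of these by local finiteness of the triangulation in the loom space (astroid lemma / \refdef{Loom}). Hence the sequence is eventually constant, and $L$ is the closure of a skeletal rectangle.

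Otherwise $L \cap \Link$ has empty interior. I would show the rectangles are becoming thin, meaning the pairs of opposite sides in one foliation converge together. Write $\closure{R}_i$ as the union of the closed segments of leaves of $F^\calV$ crossing it. Convexity-type reasoning, as in the Claim in \reflem{Equiv}, lets me enlarge these segments: each such segment extends to a leaf or bi-leaf whose closure lies within $\closure{R}_i$ union a bounded neighbourhood. Apply \refprop{LimitOfLeaves} to leaves through chosen points of $R_i$. Either every such sequence limits to a point of $\Circle$, or some sequence limits to a leaf or bi-leaf $\ell_\infty$.

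In the first subcase, use that the ideal points of $\closure{R}_i$ converge. If they all converge to one point $x$, then $L=\{x\}$, because the half-disc neighbourhoods of \reflem{HalfDiscNeighBasis} eventually contain $\closure{R}_i$. If they converge to two or more points, \reflem{TwoImpliesLeaf} (whose proof works verbatim for connected sets made of leaf segments) produces a leaf or bi-leaf in $L$, putting us in the second subcase.

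In the second subcase, suppose $L \supset \closure{\ell}_\infty$ and $L\cap\Link$ has no interior. Then \reflem{OneInteriorImpliesLeaf} and its proof show that every point of $L\cap\Link$ lies on a leaf of one fixed foliation contained in $L$. Two distinct such leaves would bound a region, by the rectangle argument of \reflem{TwoImpliesLeaf}, and eventually the $R_i$ would contain an open set. So they can only be joined through a cusp, as in the proof of \refprop{LimitOfLeaves}. Hence $L$ is the closure of a single leaf (cusp or non-cusp) or of a bi-leaf.

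Note that cusp leaves can now occur, unlike in \refprop{LimitOfLeaves}. A thin edge or face rectangle can degenerate onto a single cusp leaf, since its side cusp sits exactly at the leaf's cusp and \reflem{CuspLeafImpliesBiLeaf} no longer forces a bi-leaf. I expect this to be the main obstacle: ruling out pathological limits near cusps, such as a cusp leaf union extra boundary arcs. I would handle it by tracking the limits of the corner cusps $c_i^k$ using \reflem{Laminations}\refitm{CrownZZ}, and by using \reflem{NoneShallPass} to confine the other cusps to one side of a sector.
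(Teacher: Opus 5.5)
Your handling of the non-degenerate cases matches the paper. A single point of $\Circle$ is immediate. Two points of $L \cap \Circle$ force a point of $\Link$, although Lemma~\ref{Lem:TwoImpliesLeaf} does not transfer verbatim: its first step uses that each $\ell_i$ lies in one foliation, so the paper instead takes maximal subrectangles $Q_i \subset R_i$ running from $m_x$ to $m_y$, with a common spanning direction. An open set in $L$ forces eventual constancy, because only finitely many skeletal rectangles contain a given rectangle~\cite[Lemma~4.16]{SchleimerSegerman24}.

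The gap is in the degenerate case, which is where the content of the proposition lies. Limits of full leaves through points of $R_i$ tell you nothing about $L$, because those leaves are not contained in $\closure{R}_i$. They show neither $L \supset \closure{\ell}_\infty$ (which you simply assume at the start of your second subcase) nor $L \subset \closure{\ell}_\infty$. The ``convexity'' enlargement sentence does not produce a usable bound. Nor does the claim that two distinct leaves in $L$ would force the $R_i$ to contain an open set: thin rectangles can converge to the closure of a bi-leaf, so $L$ can contain two distinct leaves of one foliation without the $R_i$ containing any fixed open set. Nothing in your argument explains why only bi-leaf pairs survive, or rules out stray points of $L \cap \Circle$. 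Similarly, in your first subcase, convergence of the corner cusps to one point $x$ does not by itself put $\closure{R}_i$ inside half-disc neighbourhoods of $x$. The clean route there is to apply the two-point argument to points of $L$ itself.

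What is missing is an upper bound and a lower bound on $L$. Fix $p \in L \cap \Link$ and a rectangle $P$ containing $p$. Since $L$ contains no rectangle, after passing to a subsequence and breaking symmetry the widths of the $R_i$ in $P$ tend to zero, so the $R_i$ south-north span $P$.

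For the upper bound, let $k^W_i$ and $k^E_i$ be the bi-leaves containing the west and east sides of $R_i$ whose dividers meet $\closure{R}_i$. These are legitimate inputs to Proposition~\ref{Prop:LimitOfLeaves}, both of their limits contain $\closure{\ell}_p$, and $R_i$ is sandwiched between them. So when $\ell_p$ is a non-cusp leaf both limits equal $\closure{\ell}_p$, and $L \subset \closure{\ell}_p$.

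For the lower bound, apply the astroid lemma to the four staircases at $p$, which contain the $R_i$. It forces the north and south sides of the $R_i$ to exit the ends of $\ell_p$, giving $L = \closure{\ell}_p$.

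When $\ell_p$ is a cusp leaf with cusp $c$, say at its south end, split on whether $c$ lies on the south sides of the $R_i$.
If it does, $L = \closure{\ell}_p$ as before.
If it does not, the rectangles accumulate on $\ell_p$ from one side and meet the staircase $\Gamma$ having $p$ as its north-east corner. The astroid lemma then shows they run past $c$ along the cusp leaf $\ell_c$ emanating south from $c$. This gives $L = \closure{k}_c$ with $k_c = \ell_p \cup \{c\} \cup \ell_c$.

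This is exactly the obstacle you flagged, and the tools you propose do not reach it. Lemma~\ref{Lem:NoneShallPass} concerns clamshell boundaries versus sectors, and the crown statement of Lemma~\ref{Lem:Laminations} concerns ideal points in $\Circle$, not how thin rectangles behave near a cusp.
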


We will require the following version of~\cite[Definition~4.20]{SchleimerSegerman24}.

\begin{definition}  
\label{Def:Spans}
Suppose that $Q$ and $R$ are rectangles in $\Link$.  
We say that $Q$ \emph{south-north spans} $R$ if $Q \cap R$ contains a leaf of $R \cap F^\calV$.
We define \emph{west-east spans} similarly, using $F_\calV$. 
\end{definition}

\begin{proof}[Proof of \refprop{LimitOfRectangles}]
Throughout we use the fact that $\Link$ is a loom space (\refthm{LinkIsLoom}).
If $L$ is a single point of $\Circle$ then we are done.

\begin{claim*}
Suppose that $L$ contains two distinct points $x$ and $y$ of $\Circle$.
Then $L$ contains a point of $\Link$.
\end{claim*}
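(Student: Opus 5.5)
We argue by contradiction. Suppose that $L$ contains distinct points $x, y \in \Circle$, and that $L \cap \Link$ is empty. The plan is to find a single leaf that every $\closure{R}_i$ (for large $i$) must cross, and then to show that the crossing points stay in a compact part of $\Link$. Any accumulation point of those crossing points then lies in $L \cap \Link$, which is the contradiction.

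First we build the crossing leaf. The cusps are dense in $\Circle$, so we may choose cusps $c, d$ such that $\{c, d\}$ links $\{x, y\}$. By \reflem{Span} there is an embedded polygonal line $\rho$ with ideal points $c$ and $d$. The half-discs $H(\rho)$ and $H(\rho')$, where $\rho'$ is $\rho$ with the opposite orientation, are open by \reflem{HalfDiscOpen}; one contains $x$ and the other contains $y$. By Hausdorff convergence, for large $i$ the set $\closure{R}_i$ meets both half-discs. Since $\closure{R}_i \cap \Link = \closure{R}_i - (\closure{R}_i \cap \Circle)$ is connected, and the ideal points of a skeletal rectangle are finitely many cusps, $\closure{R}_i$ must meet $\rho$. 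The line $\rho$ consists of finitely many segments, so after passing to a subsequence every $\closure{R}_i$ meets one fixed segment $\sigma$ of $\rho$.

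Next we get a compact piece. If $\sigma$ is compact, choose $p_i \in \closure{R}_i \cap \sigma$. A subsequence of the $p_i$ converges to a point $p \in \sigma \subset \Link$, and $p$ lies in $L$, which is the contradiction. If $\sigma$ is a ray, note that its ideal point is one of the cusps $c$ or $d$, not $x$ or $y$. If the $p_i$ escape to that ideal point, we show that eventually the rectangles lie inside small neighbourhoods of that cusp. Then they could not also approach both $x$ and $y$, since $c$ and $d$ can be chosen distinct from $x$ and $y$ and separated from them.

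The main obstacle is this last step. A skeletal rectangle can have a corner at a cusp, so $\closure{R}_i$ may touch $\sigma$ only very near its cusp end while still reaching $x$ and $y$ through that cusp. To avoid this, we reselect the points: we pick $p_i$ on the part of $\closure{R}_i$ that separates $x$ from $y$ within $\closure{R}_i$. Alternatively, we replace $\rho$ by a non-cusp leaf $\ell$ whose ideal points link $\{x, y\}$ (\reflem{Laminations}\refitm{Approach}, \refthm{Disc}\refitm{Dense}). Then $\ell$ has no cusp ends, and a crossing point escaping to an ideal point $z$ of $\ell$ would force $z \in L$. In that case we would apply the same argument with a second linking leaf, choosing two linking non-cusp leaves whose ideal points are disjoint. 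At least one of them must then have its crossing points stay bounded.
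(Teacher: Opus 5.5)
Your reduction to the case where every $R_i$ meets one fixed segment $\sigma$ of $\rho$ is fine, and so is the case where $\sigma$ is compact. The gap is the case where $\sigma$ is a ray ending at the cusp $c$. You flag this yourself but do not close it.

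The assertion that rectangles meeting $\sigma$ near $c$ eventually lie in small neighbourhoods of $c$ is unsupported, and nothing you have established forces it. Leaves of the other foliation that cross $\sigma$ near $c$ Hausdorff converge to the closure of the bi-leaf centred at $c$ whose divider contains $\sigma$, not to $c$. So thin rectangles can cross $\sigma$ arbitrarily close to $c$ and still run out alongside that bi-leaf towards its thorns.

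Fix (a) is not an argument. Fix (b) has two problems.
\begin{enumerate}
\item Density of endpoints of non-cusp leaves does not give you a single non-cusp leaf whose two ideal points link $\{x, y\}$. You need something like \refprop{SingleLeaf}, which requires one of $x$, $y$ to be a non-cusp. When both are cusps you need an extra step. For instance, $L$ is a Hausdorff limit of connected sets, so under your contradiction hypothesis it is a connected subset of $\Circle$ and contains an arc, hence a non-cusp point.
\item More seriously, the closing sentence ``at least one of them must then have its crossing points stay bounded'' has no justification. Learning that $L$ also contains extra ideal points $z$ and $z'$ contradicts nothing, so a second leaf buys you nothing.
\end{enumerate}

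What is missing is a shrinking argument at a non-cusp end. Suppose $\ell$ is a non-cusp leaf whose ideal points link $\{x, y\}$ and which every $R_i$ crosses.
\begin{itemize}
\item The set $R_i \cap \ell$ is a single leaf segment of $R_i$, and $R_i$ is the union of the transverse segments of $R_i$ through it.
\item Under your hypothesis $L \cap \Link = \emptyset$, after passing to a subsequence these segments escape to one ideal point $z$ of $\ell$.
\item Choose non-cusp transverse leaves $\ell_k$ crossing $\ell$ at points $p_k \to z$. Since leaves of one foliation do not cross, every transverse leaf meeting $\ell$ beyond $p_k$ lies in $H(\ell_k)$. Hence $R_i \subset H(\ell_k)$ for large $i$.
\item Since $z$ is not a cusp, the first case of the proof of \refprop{SingleLeaf} shows that the $H(\ell_k)$ form a neighbourhood basis of $z$. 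So $L = \{z\}$, contradicting $x \in L$ (note $x \neq z$ because $\bdyi \ell$ links $\{x, y\}$).
\end{itemize}
With this, a single linking non-cusp leaf suffices.

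The paper avoids crossing leaves altogether. It fixes basic neighbourhoods $U$ and $V$ of $x$ and $y$ with disjoint closures, and finds material boundary segments $m_x$ of $U$ and $m_y$ of $V$ that every $R_i$ crosses. The subrectangles $Q_i \subset R_i$ running from $m_x$ to $m_y$ all lie in one rectangle $Q$. It then uses \refdef{Loom}\refitm{Tet} to place $Q$ inside a tetrahedron rectangle, where compactness forces the limit of the spanning $Q_i$ to contain a non-trivial leaf segment.
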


\begin{proof}
We use an (easier) version of the proof of \reflem{TwoImpliesLeaf}.
Pick $U$ and $V$ basic open sets about $x$ and $y$, respectively, having disjoint closures.
Passing to a subsequence of $(R_i)$ there are arcs, rays, or lines $m_x$ and $m_y$ in the boundaries of $U$ and $V$, respectively, crossed by all of the rectangles $R_i$.
Let $Q_i \subset R_i$ be the maximal subrectangle having one side in $m_x$ and one in $m_y$.
Let $Q \subset \Link$ be the smallest rectangle containing all of the $Q_i$.
Breaking symmetry, we may assume that all of the $Q_i$ south-north span $Q$.
\refdef{Loom}\refitm{Tet} gives us a tetrahedron rectangle $R$ containing $Q$.
Thus the Hausdorff limit of the sequence $(\closure{Q}_i)$ contains a non-trivial segment of a leaf of $F^\calV \cap \closure{R}$.
Thus $L$ contains a point of $\Link$. 
\end{proof}

Suppose that the limit $L$ contains $P$, a rectangle in $\Link$.
Shrinking $P$ slightly, and passing to a subsequence, we may assume that all of the $R_i$ contain $P$.
Now~\cite[Lemma~4.16]{SchleimerSegerman24}
tells us that there are only finitely many skeletal rectangles containing $P$.
Thus $L$ is the closure of a skeletal rectangle $R_\infty$ and $R_i=R_\infty$ for sufficiently large $i$.

We are left with the case that $L$ contains a point $p$ of $\Link$, but does not contain a rectangle.
Let $P$ be any rectangle containing $p$.
We fix a homeomorphism $f_P \from (0, 1)^2 \to P$ sending the vertical and horizontal foliations to $F^\calV$ and $F_\calV$, respectively. 
Passing to a subsequence, we may assume that all of the rectangles $R_i$ meet $P$.
Since $L$ contains no rectangle, we deduce that either the widths or the heights of the preimages $f_P^{-1}(R_i)$ tend to zero.
Breaking symmetry, we may assume that the widths tend to zero.
It follows that the $R_i$ south-north span $P$.

Let $k^W_i$ be the bi-leaf containing the west side of $R_i$ whose divider (see \refdef{BiLeaf}) meets the closure of $R_i$.
We define $k^E_i$ similarly.
Let $L^W$ and $L^E$ be the Hausdorff limits of the sequences $(\closure{k^W_i})$ and $(\closure{k^E_i})$, respectively.
Since the widths of the $f_P^{-1}(R_i)$ tend to zero, we find that $L^W$ and $L^E$ contain $p$.
It follows from \refprop{LimitOfLeaves} that $L^W$ and $L^E$ contain the closure of $\ell_p$, the leaf of $F^\calV$ through $p$.

There are now two subcases, as $\ell_p$ is not, or is, a cusp leaf.
Suppose that $\ell_p$ is not a cusp leaf.
By \refprop{LimitOfLeaves} the limits $L^W$ and $L^E$ both equal the closure of $\ell_p$.
Thus $L$ is a subset of the closure of $\ell_p$.
To prove that $L$ is equal to the closure of $\ell_p$, we argue as follows.
Consider the four staircases for $p$.
The rectangles $R_i$ are contained in the union of these staircases.
Since the widths of the $R_i$ go to zero, the astroid lemma~\cite[Lemma~4.10]{SchleimerSegerman24} implies that (the projections of) the north and south sides of the $R_i$ exit the ends of $\ell_p$.
(See \reffig{Astroid} for an example.)
Thus $L$ is the closure of $\ell_p$.

In the other subcase $\ell_p$ is a cusp leaf.
Suppose that $c$ is the cusp of $\ell_p$.
Breaking symmetry, suppose that $c$ is the southern ideal point of $\ell_p$. 
After passing to a subsequence of the $R_i$ there are two possibilities, as follows.
\begin{itemize}
\item
The cusp $c$ lies on the south side of all of the rectangles $R_i$.
By an argument similar to the one above (applied only to the north sides of the $R_i$), the astroid lemma~\cite[Lemma~4.10]{SchleimerSegerman24} implies that $L$ is the closure of $\ell_p$ and we are done.
\item
The cusp $c$ does not lie in the south side of any of the $R_i$.
Breaking symmetry again, and passing to a further subsequence, we may assume that the rectangles $R_i$ accumulate on $\ell_p$ from the west.
Consider the staircase $\Gamma$ for $p$ so that $p$ is its north-east point.
So $c$ lies in the eastern boundary of $\Gamma$.
Let $\ell_c$ be the cusp leaf lying in the boundary of $\Gamma$ that emanates southward from $c$.
We deduce that $k_c = \ell_p \cup \{c\} \cup \ell_c$ is a bi-leaf.
We next prove that $L$ equals the closure of $k_c$.

\begin{figure}[htbp]
\labellist
\small\hair 2pt
\pinlabel $p$      [l] at 585 578
\pinlabel $c$      [l] at 585 267
\pinlabel $\ell_p$ [l] at 583 410
\pinlabel $\ell_c$ [l] at 583 130
\pinlabel $R_i$        at 341 412
\pinlabel $c_i$    [b] at 402 227
\endlabellist
\includegraphics[height=0.7\textwidth]{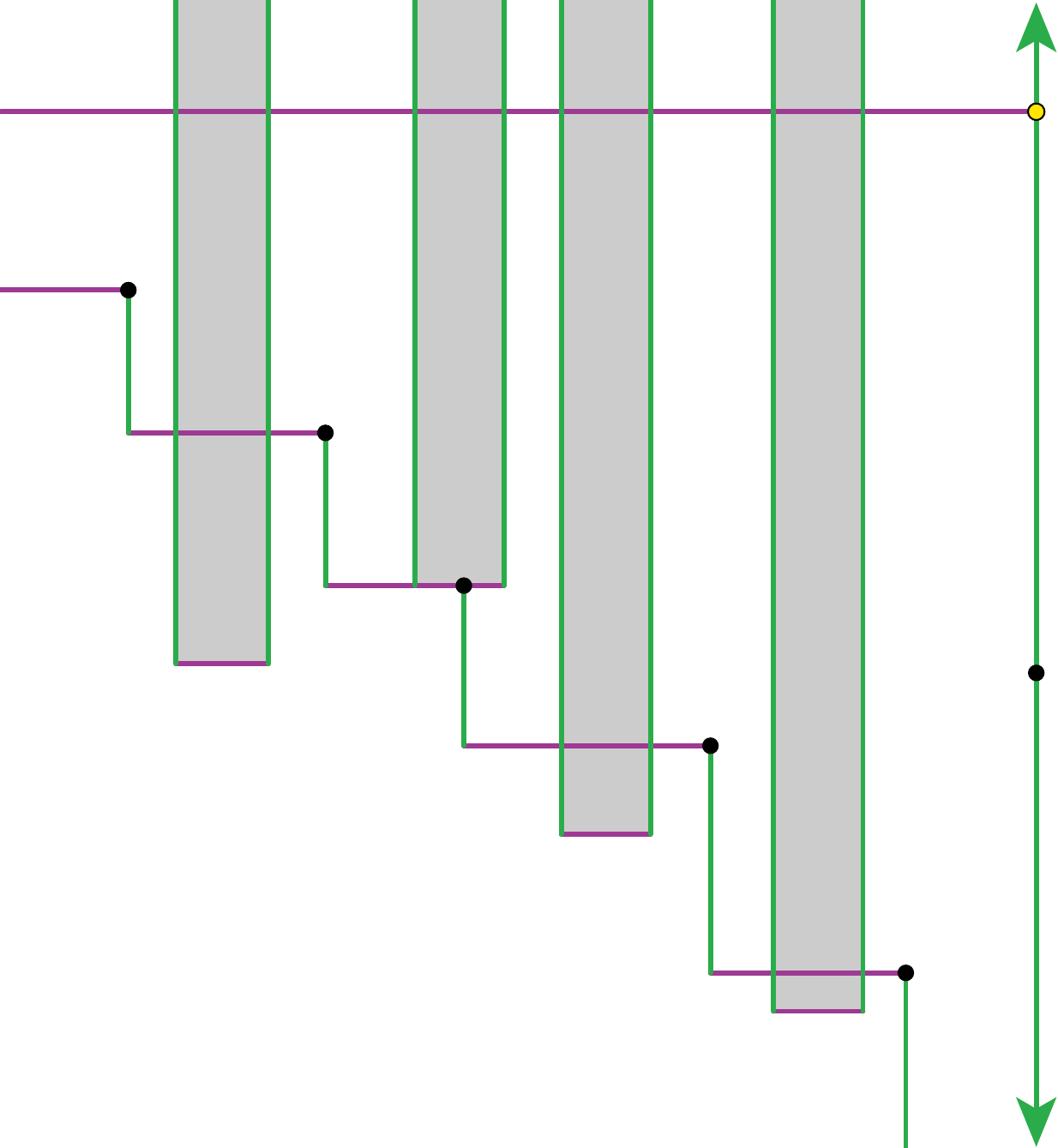}
\caption{A sequence of skeletal rectangles $R_i$, accumulating on $p$ from the west and thus meeting the staircase $\Gamma$. 
We do not draw most of the cusps incident to the $R_i$.}
\label{Fig:SneakyRectangles}
\end{figure}

Recall that the skeletal rectangles $R_i$ accumulate on $p$ from the west.
Thus, after passing to a subsequence, they all meet $\Gamma$.
See \reffig{SneakyRectangles}.
Let $s_i$ be the southern side of the rectangle $R_i \cap \Gamma$.
Thus $s_i$ meets a (westward) cusp leaf emanating from a cusp $c_i$ in the south-west boundary of $\Gamma$.
The astroid lemma~\cite[Lemma~4.10]{SchleimerSegerman24} tells us that the (eastward projections of the) cusps $c_i$ eventually land in $\ell_c$, and then exit to the south.
We deduce that the $R_i$ accumulate on the cusp $c$ from the west.
Arguing as above we deduce that the Hausdorff limits $L$, $L^W$, and $L^E$ all equal the closure of $k_c$, as desired.
\end{itemize}

This completes the proof of \refprop{LimitOfRectangles}.
\end{proof}

\section{Geometrically finite convergence action}
\label{Sec:GeomFinite}

Over the next several sections we prove the various parts of our main result, \refthm{Convergence}.
Here we give the remaining relevant definitions.

\subsection{Definitions}

The notion of a \emph{convergence group} acting on the two-sphere goes back at least to Gehring and Martin~\cite{GehringMartin}.  
The ideas were further developed by many others in the context of groups acting on more general compacta, especially by Tukia~\cite{Tukia94}, Freden~\cites{Freden95}{Freden97}, and Bowditch~\cites{Bowditch99}{Bowditch98}.

Recall that $Z$ is a \emph{compactum} if it is compact and Hausdorff. 
Recall that $Z$ is \emph{perfect} if it has no isolated points. 

\begin{definition}
\label{Def:Collapsing}
Suppose that $Z$ is a compactum with at least three points.
Suppose that $(g_i)_{i\in\NN}$ is a sequence of homeomorphisms of $Z$ to itself.
Suppose that $x, y \in Z$ are points (not necessarily distinct).
Suppose that the restrictions $g_i|(Z - \{x\})$ converge uniformly on compact sets to the constant map $(Z - \{x\}) \to \{y\}$.
Then we call $(g_i)$ a \emph{collapsing sequence}.
Also, we call $x$ a \emph{repelling} point, and we call $y$ an \emph{attracting} point, for the sequence. 
\end{definition}

\begin{remark}
Note that collapsing sequences have unique repellers and attractors.
\end{remark}



\begin{definition}
\label{Def:Convergence}
Suppose that $Z$ is a compactum with at least three points.
Suppose that $G$ is a countable group acting by homeomorphisms of $Z$.  
Then $G$ is a \emph{convergence group} if every infinite sequence of distinct elements of $G$ has a subsequence which is a collapsing sequence.  
We may also say $G\acts Z$ is a \emph{convergence action}.
\end{definition}

\begin{remark}
\label{Rem:FiniteIndex}
Suppose that $G$ acts on $Z$.
Suppose that $H < G$ is a subgroup of finite index. 
Suppose that the induced action of $H$ on $Z$ is a convergence action. 
Then the action of $G$ on $Z$ is a convergence action.
\end{remark}

\begin{definition}
\label{Def:CLP}
Suppose that $G \acts Z$ is a convergence action.
Suppose that $x, y, z \in Z$ are points, with $y \neq z$.
Suppose that $(g_i)_{i \in \NN}$ is a collapsing sequence with $x$ and $y$ as its repeller and attractor, respectively.
Suppose that $(g_i(x))$ converges to $z$.
Then we call $x$ a \emph{conical limit point} of the action.
\end{definition}

\begin{definition}
Let $f \from Z \to Z$ be a homeomorphism.
We say that $f$ is \emph{loxodromic} if it is of infinite order and it has precisely two fixed points.    
\end{definition}

\begin{definition}
\label{Def:Parabolic}
Suppose that $G \acts Z$ is a convergence action.  
Suppose that $x \in Z$ has infinite stabiliser, but no loxodromic fixes $x$. 
Then we call $x$ a \emph{parabolic point} of the action.

Furthermore, suppose that the stabiliser of $x$ acts cocompactly on $Z - \{x\}$.
Then we call $x$ a \emph{bounded} parabolic point.
Finally, we call the stabiliser of any parabolic point a \emph{maximal parabolic} subgroup for the action.
\end{definition}

\begin{definition}
\label{Def:GeoFinConv}
Suppose that $G \acts Z$ is a convergence action. 
Suppose that every point of $Z$ is either a conical limit point or is a bounded parabolic point. 
Then we call the action \emph{geometrically finite}. 
\end{definition}

The terminology ``geometrically finite'' is taken from kleinian groups.
The definition we are using was abstracted by Bowditch from results on geometrically finite kleinian groups due to Beardon and Maskit~\cite{BeardonMaskit74}.  
In particular, following~\cite[Theorem~2]{BeardonMaskit74} (see also~\cite[Section~4]{Bowditch93}) we have the following.

\begin{theorem}
\label{Thm:HyperbolicGeomFinite}
Suppose that $M$ is a finite-volume cusped hyperbolic three-manifold.
Then the induced action of $\pi_1(M)$ on $\bdyi \HH^3$ is a geometrically finite convergence action. 
\end{theorem}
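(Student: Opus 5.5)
The theorem is the classical statement that a finite-volume hyperbolic three-manifold group $G=\pi_1(M)$, acting on $\bdyi \HH^3$ via its discrete faithful representation, is a geometrically finite convergence group. The plan has three parts: first show the action is a convergence action, then show every limit point is conical or bounded parabolic, and finally note that the limit set is all of $\bdyi\HH^3$ because $M$ has finite volume.

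\emph{Convergence.} Let $(g_i)$ be a sequence of distinct elements of $G$, viewed as elements of $\Isom(\HH^3)$. Fix a basepoint $o\in\HH^3$. Since $G$ is discrete, $g_i(o)$ and $g_i^{-1}(o)$ leave every compact set. Passing to a subsequence, we may assume $g_i(o)\to y$ and $g_i^{-1}(o)\to x$ in $\bdyi\HH^3$.
\begin{itemize}
\item Write $g_i = k_i a_i k_i'$ using the Cartan decomposition, with $k_i,k_i'$ in the stabiliser of $o$ and $a_i$ a translation of length tending to infinity along a fixed axis.
\item Pass to a further subsequence so that $k_i$ and $k_i'$ converge.
\item The standard north--south dynamics of $a_i$ then shows that $g_i$ converges to the constant $y$ uniformly on compact subsets of $\bdyi\HH^3-\{x\}$.
\end{itemize}
So every sequence of distinct elements has a collapsing subsequence.

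\emph{Limit points are conical or bounded parabolic.} Use the thick--thin decomposition. The $\epsilon$-thin part of $M$ consists of finitely many cusp neighbourhoods; since $M$ is a manifold there are no torsion elements, and any short geodesics can be absorbed into the compact part. Hence $M$ deformation retracts onto a compact core $M_0$. Lift the cusp neighbourhoods to a $G$-invariant family of disjoint horoballs $B_p$, centred at the parabolic fixed points $p$. The complement $X$ of the open horoballs is then $G$-cocompact.

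Now take $z\in\bdyi\HH^3$ that is not the centre of any $B_p$, and follow a geodesic ray $r$ from $o$ to $z$. There are two cases.
\begin{itemize}
\item Suppose $r$ returns to $X$ at times tending to infinity. By cocompactness there are $g_i\in G$ with $g_i^{-1}(r(t_i))$ lying in a fixed compact set $K$. Passing to the collapsing subsequence of $(g_i^{-1})$ from the first part, one checks that $z$ is the repeller and that $g_i^{-1}(z)$ converges to a point distinct from the attractor. This holds because $g_i^{-1}(r)$ passes through $K$, so its two endpoints converge to distinct points. Hence $z$ is conical.
\item Otherwise $r$ eventually stays in a single horoball $B_p$. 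A geodesic ray that stays in a horoball must converge to its centre, so $z=p$, which contradicts our assumption on $z$.
\end{itemize}

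For the centres $p$ themselves, the stabiliser $\Stab(p)$ is a rank-two parabolic group $\ZZ^2$ acting by translations on $\bdyi\HH^3-\{p\}\homeo\CC$. This action is cocompact, and $\Stab(p)$ contains no loxodromics, so $p$ is a bounded parabolic point. Finally, every point of $\bdyi\HH^3$ has now been shown to be either conical or bounded parabolic, so the action is geometrically finite. (Distinct parabolic points are not conical, but this is not needed here.)

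The main obstacle is the conical-point verification: one must match the sequence produced by the geometry with the precise form of \refdef{CLP}. In particular, the attracting point must be distinct from the limit of $g_i(x)$, and this requires careful use of the compact set $K$. Beyond this, the result is classical; it is due to Beardon--Maskit and Bowditch, and is recorded in the sources the paper cites.
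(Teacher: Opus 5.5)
Your proof is correct, but it takes a different route from the paper. The paper gives no argument for this statement. It imports the result from \cite[Theorem~2]{BeardonMaskit74}, which characterises geometrically finite Kleinian groups as those whose limit points are all points of approximation or cusped parabolic fixed points, proved via finite-sided fundamental polyhedra. It then translates this into convergence-group language via \cite[Section~4]{Bowditch93}.

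You instead check Definitions~\ref{Def:Convergence}, \ref{Def:CLP} and~\ref{Def:Parabolic} directly. The Cartan decomposition produces collapsing subsequences with repeller $\lim g_i^{-1}(o)$ and attractor $\lim g_i(o)$. Cocompactness of the complement of an invariant family of disjoint horoballs then sorts every point of $\bdyi\HH^3$ into conical or bounded parabolic.

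The citation buys brevity and generality: it covers all geometrically finite Kleinian groups, where your horoball argument would have to be run inside the convex hull rather than all of $\HH^3$. Your argument is self-contained, and it verifies the exact form of Definition~\ref{Def:CLP}, including $y \neq z$, rather than inheriting it through a chain of equivalences.

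The step you leave as ``one checks'' is immediate. Since $g_i^{-1}(r(t_i)) \in K$, the distance $d(g_i(o), r(t_i)) = d(o, g_i^{-1}(r(t_i)))$ is bounded, so $g_i(o) \to z$. By your identification in the first part, $z$ is therefore the repeller of $(g_i^{-1})$.

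One small imprecision: if $M$ is allowed to be non-orientable, as can happen for the locally veering triangulations in this paper, a cusp stabiliser may be a Klein bottle group rather than $\ZZ^2$. Such a group still acts cocompactly on $\CC$ by Euclidean isometries and contains no loxodromics, so your argument goes through unchanged.
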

  
A \emph{group pair} $(G, \calP)$ is a group $G$ and a collection $\calP$ of subgroups of $G$ that is closed under conjugation.

For completeness, and in order to state Yaman's theorem (\refthm{Yaman}), we give the following definition. 

\begin{definition}
Suppose that $(G, \calP)$ is a group pair. 
Suppose that $X$ is a connected simplicial graph. 
Suppose that $G \acts X$ acts via graph automorphisms.
Suppose, moreover, that: 
\begin{enumerate}
\item 
$X$ is Gromov hyperbolic,
\item 
$X$ is \emph{fine}: for any edge $e$ and any $n>0$, there are finitely many embedded edge-circuits of length $n$ containing $e$,
\item 
the action $G \acts X$ is cocompact, 
\item 
$\calP$ is the set of infinite stabilisers of vertices,
\item
every $P \in \calP$ is finitely generated, and
\item 
edge stabilisers are finite.
\end{enumerate}
Then we call $(G, \calP)$ \emph{hyperbolic in the sense of Bowditch}.
\end{definition}


Here then is our statement of Theorem~0.1 of~\cite{Yaman04}.

\begin{theorem}
\label{Thm:Yaman}
Suppose that $Z$ is a perfect metrisable compactum, with at least three points.
Suppose that $G$ acts on $Z$ via a geometrically finite convergence action.
Let $\calP$ be the set of maximal parabolics.
Suppose that every $P$ in $\calP$ is finitely generated.
Suppose that $\calP / G$ is finite.
Then $(G, \calP)$ is hyperbolic in the sense of Bowditch.

Suppose that $Z'$ is another such compactum, equipped with another such action by $G$, having the same set of maximal parabolics.
Then there is a $G$--equivariant homeomorphism from $Z$ to $Z'$. \qed
\end{theorem}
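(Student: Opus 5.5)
This is Theorem~0.1 of~\cite{Yaman04}, which the paper quotes rather than proves. The plan below reconstructs how I would prove it. The strategy is to build, from the convergence action alone, a hyperbolic space with an action of $G$ whose boundary is $Z$. Uniqueness of the compactum should then follow from the fact that the Bowditch boundary of $(G,\calP)$ depends only on the group pair.

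\textbf{Building a cusped space.} First, I would use that $\calP/G$ is finite to choose finitely many representative bounded parabolic points $p_1, \ldots, p_k$. Since each $p_j$ is bounded parabolic, its stabiliser $P_j$ acts cocompactly on $Z - \{p_j\}$. Using this, I would choose compact fundamental sets $K_j \subset Z - \{p_j\}$ and, invariantly, closed neighbourhoods of the parabolic points; these play the role of horoballs, and should be chosen so that distinct ones are disjoint. Next I would consider the space $\Theta(Z)$ of distinct triples in $Z$. For a convergence action, $G$ acts properly discontinuously on $\Theta(Z)$. Because every non-parabolic point is conical, I expect the complement of the horoball regions in $\Theta(Z)$ to have compact quotient. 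Choosing a $G$--invariant locally finite graph structure on this thick part and attaching combinatorial horoballs along each $P_j$--orbit gives a connected graph $X$ with a cocompact action of $G$. The infinite vertex stabilisers should be exactly the conjugates of the $P_j$, the edge stabilisers should be finite, and the $P_j$ are finitely generated by hypothesis. Coning off the horoballs should then produce a fine graph.

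\textbf{Hyperbolicity and the boundary.} The main obstacle is proving that $X$ is Gromov hyperbolic and that $\bdyi X \homeo Z$ equivariantly. I would follow Bowditch's approach to the topological characterisation of hyperbolic groups. Choose a finite $G$--invariant family of ``annuli'', that is, pairs of disjoint closed subsets of $Z$, and define a crossratio-like quasimetric on $Z$ by counting the annuli that separate two pairs of points. The task is to verify Bowditch's four-point condition for this quasimetric. The essential inputs are the convergence property, which gives uniform collapsing on compacta, and the bounded-parabolic property, which lets one treat the horoball regions separately. Conical limit points should correspond to geodesic rays that return infinitely often to the thick part, and parabolic points to the centres of horoballs. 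Together these identify $Z$ with $\bdyi X$. This is the delicate step, because it controls geometry that is defined only through the dynamics of the action.

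\textbf{Uniqueness.} Suppose $Z'$ is a second such compactum with an action of $G$ having the same maximal parabolics $\calP$. Applying the construction to $Z'$ gives a fine hyperbolic graph $X'$ for the same pair $(G,\calP)$. By Bowditch, any two fine hyperbolic graphs with cocompact action realising $(G,\calP)$ have attached cusped spaces that are equivariantly quasi-isometric. Their Gromov boundaries are therefore $G$--equivariantly homeomorphic. Composing with the identifications $Z \homeo \bdyi X$ and $Z' \homeo \bdyi X'$ yields the desired equivariant homeomorphism $Z \to Z'$.
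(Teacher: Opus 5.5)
Your opening sentence matches the paper's proof. The paper gives no argument for this statement: it simply quotes Theorem~0.1 of~\cite{Yaman04}, and the uniqueness clause amounts to identifying $Z$ equivariantly with the Bowditch boundary of $(G,\calP)$, which is an invariant of the pair. So the citation settles the statement.

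The reconstruction that follows is extra and should not be read as a proof, and one step in it is wrong as written. You cannot choose a $G$--invariant family of pairwise disjoint closed neighbourhoods of the parabolic points \emph{in $Z$}. Every point of $Z$ is conical or parabolic, so $Z$ is the limit set and each orbit $G \cdot p$ is dense. Hence any neighbourhood $U$ of $p$ contains some $q = gp \neq p$, and then $U \cap gU \neq \emptyset$.

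The horoballs have to be $\Stab(p)$--invariant subsets of $\Theta(Z)$ that accumulate on $Z$ only at $p$, just as a horoball in $\HH^3$ is not a neighbourhood of its point at infinity. Likewise, Bowditch's annulus count is a crossratio on quadruples of points of $Z$, and the hyperbolic quasimetric built from it lives on triples, not on $Z$.

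Beyond that, the steps you mark with ``should'' and ``I expect'' are precisely the substance of Yaman's paper, building on Bowditch and Tukia. These are cocompactness of the thick part, fineness of the coned-off graph, the four-point condition, and $\bdyi X \homeo Z$. So the sketch is a roadmap resting on the citation rather than an independent argument.

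Your uniqueness argument via equivariantly quasi-isometric cusped spaces is fine. It uses that $G$ is finitely generated, which follows from the cofinite action on a connected graph with finitely generated vertex stabilisers. It is a reasonable substitute for Bowditch's proof that the boundary does not depend on the choice of fine graph.
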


\section{Convergence action}
\label{Sec:ObtainingConvergence}

In this section we prove the following. 

\begin{proposition}
\label{Prop:Convergence}
Suppose that $M$ is a three-manifold with boundary.
Suppose that $\calV$ is a locally veering triangulation on $M$.
The action of $\pi_1(M)$ on the veering two-sphere $\Sphere$ is a convergence action.  
\end{proposition}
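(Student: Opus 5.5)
First reduce to the transverse veering case: by \refrem{LocallyVeering} some finite-index subgroup $H < \pi_1(M)$ is the fundamental group of a cover carrying a transverse veering triangulation. The veering circle, laminations and sphere of $M$ agree with those of the cover, and the action restricts. So by \refrem{FiniteIndex} it suffices to treat $H$. From now on $\calV$ is transverse veering, and $\Disc$ is a disc by \refthm{Disc}.

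\textbf{Step 1: a compact fundamental set.} Fix a sequence $(g_i)$ of distinct elements. The aim is to exploit the geometry of skeletal rectangles. The group acts freely on tetrahedron rectangles of $\Link$, since these correspond to tetrahedra of $\cover{\calV}$, and the tetrahedra of $\cover{\calV}$ are permuted freely. Choose a fixed tetrahedron rectangle $R$. The images $g_i(R)$ are then pairwise distinct skeletal rectangles. Passing to a subsequence, their closures Hausdorff converge to some $L^+$, and the closures of $g_i^{-1}(R)$ Hausdorff converge to some $L^-$. By \refprop{LimitOfRectangles}, and since neither sequence is eventually constant, each of $L^\pm$ is one of the following: a point of $\Circle$, the closure of a leaf, or the closure of a bi-leaf. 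Its intersection with $\Circle$ therefore lies in a single decomposition element, so it maps to a single point of $\Sphere$. The key observation is that a leaf has ideal points forming a decomposition element (interior leaf), and a cusp leaf or bi-leaf has endpoints lying in a clamshell boundary. Call these images $y$ and $x$.

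\textbf{Step 2: collapsing.} I would show that $g_i$ converges to $y$ uniformly on compacta of $\Sphere - \{x\}$. Take a compact $K \subset \Sphere - \{x\}$ and pull it back to a closed saturated set $\hat K \subset \Circle$ disjoint from $\Psi_\calV^{-1}(x)$. The difficulty is to convert "$g_i^{-1}(R)$ is close to $L^-$" into "$g_i$ pushes $\hat K$ into a small neighbourhood of $\Psi_\calV^{-1}(y)$".

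\textbf{Main obstacle.} I expect the main obstacle to be carrying out this conversion, and I would try the following. Use \refprop{SingleLeaf}, \refcor{SingletonSingleLeaf} and the structure near cusps (sectors, \reflem{NoneShallPass}) to surround $\Psi_\calV^{-1}(x)$ by finitely many cusp half-discs $H(\ell_j)$. Choose them so that $\hat K$ lies in the complement of their union, while $g_i^{-1}(R)$ eventually lies inside their union. Since the complementary ideal intervals are separated from $g_i^{-1}(R)$ by the leaves $\ell_j$, applying $g_i$ puts $g_i(\hat K)$ on the far side of the leaves $g_i(\ell_j)$ from $R$. Take a subsequence so that the closures of $g_i(\ell_j)$ Hausdorff converge. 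By \refprop{LimitOfLeaves}, and because these leaves must become small relative to $R$ being fixed, the limits are contained in $L^+$. Hence $g_i(\hat K)$ is squeezed into an arbitrarily small neighbourhood of $L^+ \cap \Circle$.

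Finally, because the equivalence relation is closed (\reflem{Equiv}), small neighbourhoods of a decomposition element in $\Circle$ project to small neighbourhoods of its image in $\Sphere$ (\reflem{Homeo}). So $g_i(K) \to y$ uniformly, giving the collapsing subsequence. The care needed is when $x$ is a clamshell, where infinitely many leaves converge to the cusp. In that case I would use crowns and interleaving (\reflem{Laminations}\refitm{Interleave}) to choose finitely many leaves cutting off all but a neighbourhood of the cusp together with finitely many thorns.
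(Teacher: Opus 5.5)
Your Step~1 is sound, and your attractor $y$ is the right one; it agrees with the paper's $A$, cf.\ Lemma~\ref{Lem:WellDef}. The gap is in Step~2, at the sentence claiming that, ``because these leaves must become small relative to $R$ being fixed,'' the limits of $g_i(\closure{\ell}_j)$ lie in $L^+$, so that $g_i(\hat K)$ is squeezed toward $L^+\cap\Circle$. Nothing you have established forces this. The $\ell_j$ are fixed leaves that merely avoid $L^-$, and there is no properness or hyperbolicity statement for the action on $\Link$ that makes images of fixed leaves shrink. Proposition~\ref{Prop:LimitOfLeaves} only says each limit is a point, a leaf closure or a bi-leaf closure; it gives no relation to $L^+$. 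If a limit is a non-cusp leaf, the region on the far side from $R$ need not shrink at all.

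The claim is in fact false. Take $g_i=\beta^i$ with $\beta\in\Stab(c)$ fixing $C=[c]_{S^1}$ pointwise (Proposition~\ref{Prop:ParabolicFacts}).
\begin{itemize}
\item Then $\beta$ translates each complementary interval of $C$ toward one of its endpoint thorns.
\item For any $R$, the limit $L^+$ is a point of $C$, or the closure of a cusp leaf or bi-leaf at $c$. So $L^+\cap\Circle$ is at most three points of $C$.
\item Choose a closed interval $J$ inside a complementary interval whose forward endpoint $t$ is not in $L^+$, and set $K=\Psi_\calV(J)$. Then $\beta^i(J)\to t$, so $g_i(\hat K)$ is not eventually near $L^+\cap\Circle$.
\end{itemize}
The collapse holds in $\Sphere$ only because the whole clamshell is a single point. (Separately, when $L^-$ is a non-cusp leaf closure, a union of half-discs avoiding $\hat K$ need not eventually contain $g_i^{-1}(R)$; you would need strips or polygonal half-discs.)

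What is missing is a mechanism controlling the image of the entire circle relative to whole decomposition elements. That is where the paper's work lies, and it never uses $g_i^{-1}(R)$.
\begin{itemize}
\item It passes to a diagonal subsequence along which every cusp converges and every bi-leaf and skeletal rectangle closure Hausdorff converges (Lemma~\ref{Lem:Diagonal}). Along it, the orders $\calO_\calV(g_i(c),g_i(d),r)$ stabilise (Lemma~\ref{Lem:Direction}).
\item It then defines collapsing intervals and the parting set $P$. Uniform convergence to $A$ off $\Psi_\calV(P)$ is then easy (Lemma~\ref{Lem:AlmostCollapsing}).
\item The substance is showing that $P$ lies in one decomposition element. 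A finite $P$ has at most two points, which are joined by a limiting leaf or bi-leaf (Lemmas~\ref{Lem:FiniteImpliesTwo} and~\ref{Lem:Case2}). An infinite $P$ accumulates only at a cusp $c_P$ and lies in $[c_P]_{S^1}$ (Lemmas~\ref{Lem:AccumulationCusp} and~\ref{Lem:InfiniteCase}).
\end{itemize}
Your bisector-style argument would need a statement of comparable strength about the link space, and the proposal does not supply one.
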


The remainder of this section is devoted to the proof.

\begin{remark}
\label{Rem:Cover}
We may replace the hypothesis that $\calV$ is a locally veering triangulation by the hypothesis that $\calV$ is a transverse veering triangulation.
We do this by taking the appropriate number of degree-two covers and appealing to \refrem{FiniteIndex}. 
\end{remark}

Fix an infinite sequence of group elements $(g_i)$.  
We must find a collapsing subsequence. 

In the following lemma, closures are taken in $\Disc$.

\begin{lemma}
\label{Lem:Diagonal}
There is a subsequence of $(g_i)$ that, after reindexing, gives the following.
\begin{enumerate}
\item Suppose that $c$ is a cusp. 
Then the sequence $(g_i(c))$ converges in~$\Circle$. 
\item Suppose that $k$ is a bi-leaf.
Then the sequence $\left(g_i\left(\closure{k}\right)\right)$ Hausdorff converges in~$\Disc$.
\item Suppose that $R$ is a skeletal rectangle.
Then the sequence $\left(g_i\left(\closure{R}\right)\right)$ Hausdorff converges in~$\Disc$.
\end{enumerate}
\end{lemma}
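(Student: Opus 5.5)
This is a Cantor diagonal argument, resting on two facts: the objects involved are countable, and each lies in a sequentially compact space. First I would record countability. There are countably many cusps $\Delta_\calV$, and countably many bi-leaves, as noted at the start of \refsec{Hausdorff}. Skeletal rectangles are also countable. Each skeletal rectangle is an edge, face or tetrahedron rectangle, and these correspond to the edges, faces and tetrahedra of $\cover{\calV}$, of which there are countably many. Alternatively, each skeletal rectangle is determined by the finitely many cusps on its boundary, and there are only countably many finite sets of cusps. So the union of the three families can be enumerated as a single sequence $(X_n)_{n\in\NN}$, where each $X_n$ is either a cusp $c$, the closure $\closure{k}$ of a bi-leaf, or the closure $\closure{R}$ of a skeletal rectangle.

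Next I would record compactness. $\Circle$ is a circle, so it is compact and metrisable. By \refthm{Disc}, $\Disc$ is a closed disc, hence a compact metric space. The hyperspace of non-empty closed subsets of a compact metric space, with the Hausdorff metric, is itself compact (the Blaschke selection theorem). Since $\Circle$ and this hyperspace are compact metric spaces, every sequence in them has a convergent subsequence. By \reflem{Action}, each $g_i$ acts as a homeomorphism of $\Disc$. Hence $g_i(\closure{k})$ and $g_i(\closure{R})$ are non-empty closed sets, and they are the closures of $g_i(k)$ and $g_i(R)$.

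With these two facts the diagonal argument runs as usual. Starting from $(g_i)$, I pass to a subsequence along which $g_i(X_0)$ converges, in $\Circle$ if $X_0$ is a cusp and in the Hausdorff metric otherwise. From that subsequence I pass to a further subsequence along which $g_i(X_1)$ converges, and so on. Then I take the diagonal subsequence, whose $j$th term is the $j$th term of the $j$th subsequence. For each $n$, the tail of the diagonal sequence from index $n$ onward is a subsequence of the $n$th subsequence, so $g_i(X_n)$ converges along it. The diagonal subsequence also consists of distinct elements, because it is a subsequence of the original sequence.

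There is no real obstacle here. The only point that needs care is the countability of skeletal rectangles, and I would justify it through the correspondence with the simplices of $\cover{\calV}$ or through the boundary-cusp argument above.
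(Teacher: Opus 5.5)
Your proposal is correct and is essentially the paper's own proof: countability of cusps, bi-leaves and skeletal rectangles, compactness of $\Disc$ (and hence of the space of its closed subsets under the Hausdorff metric), and then a diagonal argument. The only cosmetic difference is that the paper cites Schleimer--Segerman for the countability of skeletal rectangles. You instead derive it from their correspondence with the edges, faces and tetrahedra of the universal cover, which is equally valid.
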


\begin{proof}
There are only countably many cusps, bi-leaves, and skeletal rectangles~\cite[Corollary~4.25]{SchleimerSegerman24}.
For each there is a subsequence that (Hausdorff) converges by the compactness of $\Disc$ (by \refthm{Disc}).
Thus, passing to subsequences countably many times and taking a diagonal gives the result.
\end{proof}

Fix any cusp $c_0 \in \Delta_\calV$.
Let $A = [\lim_i g_i(c_0)] \subset \Circle$ be the associated decomposition element, as given in \refdef{DecompositionElementsCircle}.  
We now show that $A$ does not depend on the choice of $c_0$.

\begin{lemma}
\label{Lem:WellDef}
For any cusp $c$, the point $\lim_i g_i(c)$ lies in $A$.
\end{lemma}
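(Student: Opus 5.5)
We compare the limits of $g_i(c)$ and $g_i(c_0)$ through an intermediate object whose images converge by \reflem{Diagonal}, and then read off membership in a decomposition element from \refprop{LimitOfLeaves} and \refprop{LimitOfRectangles}. The first step reduces to the case where $c$ and $c_0$ are joined by a short chain. By \refthm{LinkIsLoom}, any two cusps are connected by a finite chain of cusps, consecutive ones being corners of a common skeletal rectangle; for instance, use a polygonal arc between cusp leaves (\reflem{Span}), cover it by finitely many rectangles, and enlarge each to a tetrahedron rectangle via \refdef{Loom}\refitm{Tet}. Once we show that consecutive cusps in such a chain have limits in a common decomposition element, we chain the conclusions. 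Since $A$ is a single decomposition element and decomposition elements partition $\Circle$ (\refcor{CircleDecomposition}), transitivity is automatic.

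The second step handles two cusps $c, c'$ both lying in $\bdy R$ for a skeletal rectangle $R$. By \reflem{Diagonal}, $g_i(\closure{R})$ Hausdorff converges to some $L$, and \refprop{LimitOfRectangles} lists the options.
\begin{enumerate}
\item If $L$ is a point of $\Circle$, both limits coincide.
\item If $L$ is the closure of a skeletal rectangle, the sequence $g_i(R)$ is eventually constant. By \cite[Lemma~4.16]{SchleimerSegerman24}, only finitely many group elements can send $R$ to a fixed rectangle, contradicting distinctness of the $g_i$. So this case does not occur; a finite stabiliser argument is needed here.
\item If $L$ is the closure of a leaf or bi-leaf, then $L\cap\Circle$ consists of its ideal points (using \refthm{Disc}\refitm{Ends}). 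The limits $\lim g_i(c)$ and $\lim g_i(c')$ lie in $L\cap \Circle$.
\end{enumerate}
In the third case we must check that $L \cap \Circle$ is contained in one decomposition element of \refdef{DecompositionElementsCircle}. For a non-cusp leaf, the two ideal points form an interior leaf of $\Lambda^\calV$ or $\Lambda_\calV$. For a cusp leaf or a bi-leaf centred at $d$, the ideal points are $d$ together with thorns of crowns at $d$, and these all lie in the clamshell boundary of $d$.

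The main obstacle I expect is the third case, specifically verifying that the limit points of cusps land in the ideal boundary of $L$ in the correct way. A Hausdorff limit containing $\lim g_i(c)$ only tells us that this point lies in $L\cap\Circle$, so the issue is making precise that $L \cap \Circle = \bdyi L$ for leaves and bi-leaves. I would argue this via \refthm{Disc}\refitm{Ends} and \reflem{Extension}: the closure of a leaf in $\Disc$ adds exactly its ideal points, and the closure of a bi-leaf adds its centre and endpoints. The secondary subtlety is the finiteness in the second case, which I would get from the fact that the stabiliser of a skeletal rectangle is finite. This in turn follows from the discreteness of the action on the countable set of skeletal rectangles, equivalently on tetrahedra of $\cover{\calV}$, which is free.
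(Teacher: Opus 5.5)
Your overall route is the paper's. You join $c_0$ to $c$ through skeletal rectangles, apply \refprop{LimitOfRectangles} to each, and exclude the eventually constant case using distinctness of the $g_i$; that exclusion rests on trivial stabilisers of cells of $\cover{\calV}$, which the paper uses implicitly. You then note that the points of $\Circle$ in the closure of a leaf or bi-leaf lie in a single decomposition element. Linking consecutive rectangles by a shared cusp is slightly more direct than the paper's passage through the face rectangles $R_j \cap R_{j+1}$, since it puts both limit points in the same $L \cap \Circle$.

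The step that fails as written is your suggested construction of the chain of cusps. Covering a polygonal path by rectangles and enlarging each to a tetrahedron rectangle gives a chain of \emph{overlapping} tetrahedron rectangles. Overlapping tetrahedron rectangles need not share a cusp: they can cross like a plus sign. For example, if $\gamma$ is non-peripheral and fixes a point $p$ of a tetrahedron rectangle $T$, then $T$ and $\gamma^k(T)$ overlap at $p$. Yet they share a cusp for only finitely many $k$, since otherwise a nontrivial power of $\gamma$ would fix a cusp. Nothing in your construction rules such pairs out. For them, overlap only tells you that the two Hausdorff limits intersect, possibly only in $\Link$, and that says nothing about decomposition elements. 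Likewise, a tetrahedron rectangle through a point of a cusp leaf of $c$ need not contain $c$ in its closure.

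The chain you want does exist, for the reason the paper gives. Since $\cover{\calV}$ is connected, there is a face-connected chain of tetrahedra from $c_0$ to $c$, and consecutive tetrahedron rectangles share three cusps. Equivalently, use connectivity of the one-skeleton together with the equivariant correspondence between edges and edge rectangles. With this substitution your argument is complete.

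A minor point on your second case: \cite[Lemma~4.16]{SchleimerSegerman24} bounds the number of skeletal rectangles containing a given rectangle, not the number of group elements. The finiteness you need comes from the deck action being free on cells, as you go on to say.
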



\begin{proof}
Recall that $\cover{\calV}$ is the induced ideal triangulation of the universal cover.
Since $\cover{\calV}$ is connected there is a chain of tetrahedra glued along faces which joins $c_0$ to $c$.
Recall that the veering tetrahedra of $\cover{\calV}$ are in bijection with the tetrahedron rectangles of $\link(\calV)$. 
Thus there is a finite sequence $(R_j)_{j=0}^n$ of skeletal rectangles satisfying the following:
\begin{itemize}
\item
$\closure{R}_0$ meets $c_0$;
\item
$R_j$ meets $R_{j+1}$ (along a face rectangle) for each $j<n$; and
\item
$\closure{R}_n$ meets $c$.  
\end{itemize}
For each $j$, \reflem{Diagonal} implies that the sequence $g_i(\closure{R}_j)$ Hausdorff converges to some set $L_j$. \refprop{LimitOfRectangles} enumerates the possibilities for $L_j$.
Since the $g_i$ are all distinct, the sequence $g_i(R_j)$ is not eventually constant 
and the limit $L_j$ is either a point of $\Circle$, the closure of a leaf, or the closure of a bi-leaf.  In any of these cases $L_j \cap \Circle$ lies in a decomposition element $A_j$.
Let $Q_j = R_j \cap R_{j+1}$.
Applying \refprop{LimitOfRectangles} to $g_i(\closure{Q_j})$, we deduce that $L_j$ meets $L_{j+1}$.  
Hence $A_j$ meets $A_{j+1}$.  
But decomposition elements meet if and only if they are equal.  
Thus $A_n = A_0 = A$. 
\end{proof}

Fix, once and for all, a \emph{reference point} $r \in \Circle$ chosen so that
$r$ is not a point of $A$, not a cusp, and not the endpoint of a leaf of either lamination.
Recall that $\calO_\calV$ is the given circular order on $\Circle$.  
Applying another diagonalization argument yields the following.

\begin{lemma}
\label{Lem:Direction}
By passing to a subsequence of $(g_i)$ we obtain the following property.  
For any pair of cusps $c, d \in \Delta_\calV$ the sequence 
\[
\calO_\calV( g_i(c), g_i(d), r )
\]
is eventually constant. \qed
\end{lemma}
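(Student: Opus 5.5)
The plan is a second diagonal argument, run on the countable set of pairs of cusps. The set $\Delta_\calV \times \Delta_\calV$ is countable, since there are only countably many cusps (\refrem{NonCuspLeavesDense}). We enumerate its elements as $(c_k, d_k)_{k \in \NN}$.

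For a fixed pair $(c, d)$, the quantity $\calO_\calV(g_i(c), g_i(d), r)$ takes only finitely many values. We may take these to be $+1$, $-1$, or $0$, where $0$ records that two of the three points coincide. Since $g_i$ acts by a homeomorphism, $g_i(c) = g_i(d)$ exactly when $c = d$. Also $g_i(c) \neq r$, because $g_i(c)$ is a cusp and $r$ was chosen not to be one. So for $c \neq d$ the value is $\pm 1$, and for $c = d$ it is identically $0$. Either way, some value is attained infinitely often, and we may pass to a subsequence on which the order of the triple is constant.

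We then proceed recursively. Start with the subsequence from \reflem{Diagonal}, which we have already fixed. At stage $k$, pass to a further subsequence on which $\calO_\calV(g_i(c_k), g_i(d_k), r)$ is constant. Finally, take the diagonal subsequence whose $k$th term is the $k$th term of the $k$th subsequence.

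For each $k$, the tail of the diagonal from index $k$ onward is a subsequence of the $k$th subsequence. Hence for every pair $(c_k, d_k)$ the order is eventually constant, which is the claim. The diagonal is also a subsequence of the sequence from \reflem{Diagonal}, so the conclusions of \reflem{Diagonal} still hold, and subsequences do not change the limits used to define $A$.

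There is no real obstacle here. The only points needing care are the following.
\begin{itemize}
\item We must check that $r$ is never hit by the cusp images $g_i(c)$, which holds because $r$ is not a cusp.
\item We must check that all properties from earlier lemmas survive passing to subsequences, which holds because convergence and Hausdorff convergence pass to subsequences.
\end{itemize}
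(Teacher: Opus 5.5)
Your proposal is correct and is the same argument the paper intends: it obtains the lemma by ``another diagonalization argument'' over the countably many pairs of cusps. You also handle the necessary side points: $\calO_\calV$ takes finitely many values, the images $g_i(c)$ never hit $r$, and passing to the diagonal subsequence preserves the conclusions of \reflem{Diagonal} and the attracting element $A$.
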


This finishes the process of passing to subsequences.  
We show in several steps that $(g_i)$ is a collapsing sequence, and so obtain \refprop{Convergence}.  
The decomposition element $A$ will serve as the attracting point for $(g_i)$.  
The repelling decomposition element $B$ is described in the next subsection.

\subsection{The repelling point}
\label{Sec:Repell}

\begin{definition}
Let $a\in A$.  
The set $I(a)\subset \Delta_\calV$ consists of those cusps $c$ so that $\lim_ig_i(c) = a$.  
\end{definition}

\begin{remark}
\label{Rem:AttractorPartition}
By \reflem{WellDef}, the sets $I(a)$ partition the cusps $\Delta_\calV$.
\end{remark}

\begin{definition}
A \emph{collapsing interval} $J$ in $\Circle$ is a closed interval so that, for some $a$ in $A$, the restrictions $g_i|J$ converge uniformly to the constant map $J\to\{a\}$.
In this case we say that $J$ \emph{collapses to} $a$ and we say that $J$ is an $a$--\emph{interval}.
\end{definition}

\begin{lemma}
\label{Lem:Union}
Suppose that $I$ and $J$ are collapsing intervals.
Suppose that $I$ and $J$ intersect.
Then $I \cup J$ is a collapsing interval.
\end{lemma}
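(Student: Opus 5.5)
The plan is to show that $I$ and $J$ collapse to the same point of $A$, and then to use uniform convergence on each piece. Suppose that $I$ is an $a$--interval and $J$ is an $a'$--interval, with $a, a' \in A$.

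First we show that $a = a'$. Pick any point $z \in I \cap J$. The restrictions $g_i|I$ converge uniformly to the constant map to $a$. In particular the sequence $g_i(z)$ converges to $a$. Similarly, since $z \in J$, the same sequence $g_i(z)$ converges to $a'$. Since $\Circle$ is Hausdorff, limits are unique, so $a = a'$.

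Next, $I \cup J$ is a closed interval in $\Circle$. This is because $I$ and $J$ are closed intervals (connected closed arcs) with non-empty intersection, so their union is connected and closed. There is one degenerate possibility: the union could be all of $\Circle$ if $I$ and $J$ together wrap around. In that case I would either observe that it still satisfies the uniform collapsing condition (so the conclusion is only a statement about uniform convergence) or note that this cannot happen. The second route uses the fact that the cusps are partitioned by the sets $I(a)$ of \refrem{AttractorPartition}, together with the distinctness of the $g_i$. I expect the paper treats ``closed interval'' loosely enough that this causes no trouble, and I would mention it briefly.

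Finally, we establish uniform convergence on the union. Fix a metric on $\Circle$ and $\epsilon > 0$. Choose $N_I$ so that $d(g_i(x), a) < \epsilon$ for all $x \in I$ and $i \ge N_I$. Choose $N_J$ similarly for $J$. Then for $i \ge \max(N_I, N_J)$ every $x \in I \cup J$ satisfies $d(g_i(x), a) < \epsilon$. Hence $g_i|(I \cup J)$ converges uniformly to the constant map to $a$, and $I \cup J$ is an $a$--interval.

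There is no real obstacle here. The only step needing care is the uniqueness of the limit point, which uses the Hausdorff property of $\Circle$, together with the remark about the union being an interval.
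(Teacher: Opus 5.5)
Your identification $a = a'$ via uniqueness of limits matches the paper's proof, and so does your $\max(N_I, N_J)$ argument for uniform convergence on $I \cup J$. The gap is the degenerate case $I \cup J = \Circle$, which you leave unresolved, and neither of your proposed treatments works.

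First, this case cannot be waved away as looseness about ``closed interval''. Excluding it is precisely what the lemma is later used for. Lemma~\ref{Lem:a_interval} rules out both $[c,d]^\acw$ and $[d,c]^\acw$ being $a$--intervals because their union would be $\Circle$. Lemma~\ref{Lem:Parting} proves that the parting set is non-empty by showing that $\Circle$ cannot be a finite union of collapsing intervals. Under your ``first route'' both of these arguments would collapse.

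Second, your other route does not exclude the case either. The partition of the cusps into the sets $I(a)$ (Remark~\ref{Rem:AttractorPartition}) is compatible with a single $I(a)$ being all of $\Delta_\calV$; this possibility appears explicitly in Lemma~\ref{Lem:InaccessibleCusps}. Distinctness of the $g_i$ says nothing here.

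The missing idea takes one line. Each $g_i$ is a homeomorphism of $\Circle$, hence surjective, so $g_i(\Circle) = \Circle$ is never contained in a neighbourhood $U$ of $a$ that omits some point. Therefore no sequence of circle homeomorphisms converges uniformly to a constant map on all of $\Circle$. The uniform convergence you established on $I \cup J$ then forces $I \cup J \neq \Circle$. So $I \cup J$ is a genuine closed interval, that is, an $a$--interval. With this added, your proof coincides with the paper's.
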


\begin{proof}
Suppose that $I$ collapses to $a$ and $J$ collapses to $b$.
Since $I$ and $J$ intersect, we deduce that $a = b$.
Choose a neighbourhood $U$ of $a$.
Thus there exists $i_0$ so that for all $i \geq i_0$ we have that $g_i(I) \subset U$.
We define $j_0$ similarly for $J$.
So for $i \geq \max\{i_0, j_0\}$ we have $g_i(I \cup J) \subset U$.
Thus $I \cup J$ converges uniformly to $a$.
However, there is no sequence of homeomorphisms of the circle to itself which converges uniformly to a constant map.
Thus $I \cup J$ is an interval and thus is an $a$--interval.
\end{proof}

\begin{lemma}
\label{Lem:a_interval}
Fix $a \in A$, and suppose $c, d \in I(a)$.
Then exactly one of $[c, d]^\acw$, $[d, c]^\acw$ is an $a$--interval.
\end{lemma}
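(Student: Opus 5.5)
We plan to prove the two halves separately: \emph{at most one} of the arcs is an $a$--interval, and \emph{at least one} is. Throughout we assume $c \neq d$, the degenerate case being trivial. We fix a metric on $\Circle$ compatible with its topology.

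\textbf{At most one.} Suppose that both $[c, d]^\acw$ and $[d, c]^\acw$ were $a$--intervals. Their union is all of $\Circle$. Arguing exactly as in the proof of \reflem{Union}, the restrictions $g_i|\Circle$ would converge uniformly to the constant map to $a$. This is impossible for a sequence of homeomorphisms of the circle, since each $g_i$ is surjective. So at most one of the two arcs collapses.

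\textbf{At least one.} We use the reference point $r$ and \reflem{Direction}.
\begin{enumerate}
\item By \reflem{Direction}, the circular order $\calO_\calV(g_i(c), g_i(d), r)$ is eventually constant. Since $r$ is not a cusp, $r$ is never equal to $g_i(c)$ or $g_i(d)$.
\item Each $g_i$ preserves orientation. So $g_i([c,d]^\acw) = [g_i(c), g_i(d)]^\acw$, and likewise for $[d,c]^\acw$.
\item By the previous two steps, one of the following holds for all large $i$: either $r \notin [g_i(c), g_i(d)]^\acw$, or $r \notin [g_i(d), g_i(c)]^\acw$. By symmetry we treat the first case; call the arc $J = [c,d]^\acw$.
\item By the choice of $r$ we have $r \notin A$, so $r \neq a$. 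Set $\delta = \operatorname{dist}(r, a) > 0$, and fix $\epsilon < \delta/2$ small enough that the $\epsilon$--ball $U_\epsilon$ about $a$ is an open arc not containing $r$.
\item Since $c, d \in I(a)$, for all large $i$ both $g_i(c)$ and $g_i(d)$ lie in $U_\epsilon$. Their complement $\Circle - \{g_i(c), g_i(d)\}$ has two components. One of them lies in the arc $U_\epsilon$. The other contains $\Circle - U_\epsilon$, and hence contains $r$.
\item Since $g_i(J) = [g_i(c), g_i(d)]^\acw$ avoids $r$, it must be the closure of the first component, which lies in $\closure{U}_\epsilon$. Hence $g_i(J) \subset \closure{U}_\epsilon$ for all large $i$. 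As $\epsilon$ was arbitrary, $g_i|J$ converges uniformly to the constant map to $a$.
\end{enumerate}
So $J$ is an $a$--interval.

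The only delicate point is step 5: identifying which of the two arcs between $g_i(c)$ and $g_i(d)$ is the "short" one. This is exactly why $r$ was chosen outside $A$: it gives a fixed point bounded away from $a$ that the short arc must avoid. Combined with the eventual constancy of the circular order from \reflem{Direction}, this pins down a single arc that works for all large $i$.
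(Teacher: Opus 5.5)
Your proof is correct and is essentially the paper's argument. At most one arc can collapse, since otherwise the whole circle would collapse, as in Lemma~\ref{Lem:Union}. At least one arc collapses because the circular order of $g_i(c)$, $g_i(d)$, $r$ is eventually constant by Lemma~\ref{Lem:Direction}, while $g_i(c)$ and $g_i(d)$ eventually lie in a neighbourhood of $a$ that misses $r$.

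One small tidy-up is needed in step 4. For an arbitrary compatible metric, small balls about $a$ need not be arcs. So either use the metric pulled back from a round circle, or, as the paper does, work with a nested basis of open arcs about $a$ that avoid $r$.
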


\begin{proof}
If both intervals were $a$--intervals then, since their union is $\Circle$, we would contradict \reflem{Union}.

Let $(U_j)_{j\in\NN}$ be a nested neighbourhood basis of $a$, chosen so that $U_0$ misses the reference point $r$.  
By \reflem{Direction}, there is an $n_0$ so that for all $i\geq n_0$, we have $\calO_\calV(g_i(c), g_i(d), r) = \calO_\calV(g_{n_0}(c), g_{n_0}(d), r)$.  
Moreover for any $j$ there is an $n_j\geq n_0$ so that $\{g_i(c),g_i(d)\}\subset U_j$ whenever $i\geq n_j$.  
If $\calO_\calV(g_{n_0}(c), g_{n_0}(d), r) = 1$, then we have $g_i([c, d]^\acw)\subset U_j$ for all $i\ge n_j$ and thus $[c, d]^\acw$ is an $a$--interval.  
Otherwise $g_i([d, c]^\acw)\subset U_j$ for all $i\ge n_j$ and so $[d, c]^\acw$ is an $a$--interval.
\end{proof}

\begin{corollary}\label{Cor:ConnectedI(a)}
For any $a \in A$, the set $I(a)$ is the intersection of $\Delta_\calV$ with some connected (possibly empty) subset of $\Circle$. \qed
\end{corollary}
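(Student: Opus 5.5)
Fix $a \in A$ and suppose $I(a)$ is non-empty (otherwise take the empty set). The plan is to define
\[
J(a) = \bigcup \{ J \st J \mbox{ is an $a$--interval with endpoints in } I(a) \} \cup I(a)
\]
and show that $J(a)$ is connected and that $J(a) \cap \Delta_\calV = I(a)$. The second claim has two halves. First, every cusp $e$ lying in an $a$--interval $J$ with endpoints in $I(a)$ satisfies $g_i(e) \to a$, because $g_i|J$ converges uniformly to the constant map $a$; hence $e \in I(a)$. Second, each cusp of $I(a)$ lies in $J(a)$ by construction. So $J(a) \cap \Delta_\calV = I(a)$.

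For connectedness, fix a basepoint $c_0 \in I(a)$. For any $d \in I(a)$ with $d \neq c_0$, \reflem{a_interval} gives exactly one of $[c_0, d]^\acw$ and $[d, c_0]^\acw$ as an $a$--interval; call it $J_d$. Then $J_d$ is a connected set containing both $c_0$ and $d$. Any other $a$--interval $J$ with endpoints $c, d \in I(a)$ is connected and, by \reflem{Union}, may be replaced by $J \cup J_c$ (which meets $J$ at $c$ and is again an $a$--interval) without changing the union. So every piece of $J(a)$ is a connected set through $c_0$, and hence their union $J(a)$ is connected.

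The main subtlety is the orientation ambiguity: a priori $J_d$ and $J_{d'}$ could go around the circle in opposite directions from $c_0$, so that their union is all of $\Circle$. That case is harmless for connectedness. \reflem{Union} in fact rules it out, since a union of intersecting collapsing intervals that covered $\Circle$ would be a collapsing set equal to the whole circle, which is impossible. Either way the union is a connected subset of $\Circle$, which is all the corollary requires.
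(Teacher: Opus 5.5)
Your proof is correct and follows the paper's (implicit) argument: it takes the union of the $a$--intervals between cusps of $I(a)$, supplied by Lemma~\ref{Lem:a_interval}, which is connected because all of them can be chained through a basepoint, and which meets $\Delta_\calV$ exactly in $I(a)$ by uniform convergence. The appeal to Lemma~\ref{Lem:Union} in your last paragraph is harmless but not needed for the corollary.
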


\begin{definition}
\label{Def:Parting}
We say $p \in \Circle$ is a \emph{parting} for $(g_i)$ if it does not lie in the interior of a collapsing interval.
The set $P$ of partings for the sequence $(g_i)$ is called the \emph{parting set}. 
\end{definition}

\begin{lemma}
\label{Lem:Parting}
The parting set $P$ is closed and non-empty.
\end{lemma}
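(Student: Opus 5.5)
Closedness follows directly from the definition. The complement $\Circle - P$ is the set of points lying in the interior of some collapsing interval. Such interiors are open, so the complement is a union of open sets and is therefore open, and $P$ is closed.

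For non-emptiness, suppose for a contradiction that $P$ is empty. Then every point of $\Circle$ lies in the interior of some collapsing interval. By compactness of $\Circle$, finitely many such interiors cover the circle. Call the corresponding collapsing intervals $J_1, \ldots, J_n$.

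We would like to merge these intervals using \reflem{Union}. The union of the $J_k$ is all of $\Circle$, which is connected. Hence the intersection graph on $\{J_1, \ldots, J_n\}$ (with an edge when two intervals meet) is connected. Repeatedly applying \reflem{Union} along this graph, we combine the intervals one at a time, each time adjoining an interval that meets the current union. At every stage the union is again a collapsing interval. In particular, at the final stage the set $\Circle = \bigcup_k J_k$ itself is a collapsing interval. That is, the $g_i$ converge uniformly on the whole circle to a constant map.

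This is the desired contradiction. The proof of \reflem{Union} already notes that no sequence of homeomorphisms of the circle converges uniformly to a constant map, since each $g_i(\Circle) = \Circle$ is not contained in a small neighbourhood of a point. (Equivalently, the final merge would produce a union that is the whole circle rather than an interval, which \reflem{Union} rules out.)

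The only mildly delicate point is ensuring that each merge step in \reflem{Union} applies, since its conclusion asserts the union is an interval. Ordering the merges so that each new interval meets the current union handles this: at the last step the union would be all of $\Circle$, and that is exactly the contradiction.
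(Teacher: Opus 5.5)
Your proof is correct and matches the paper's argument. The complement of $P$ is open by definition, and if $P$ were empty, compactness would give finitely many collapsing intervals covering $\Circle$, contradicting \reflem{Union}. Your merging along the intersection graph spells out a step the paper leaves implicit; any intermediate union that already equals $\Circle$ gives the same contradiction.
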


\begin{proof}
From the definition, the complement of $P$ is open in $\Circle$.
For a contradiction, suppose that $P$ is empty.
Then $\Circle$ is covered by the interiors of collapsing intervals, hence by finitely many.
However, this contradicts \reflem{Union}.
\end{proof}

\begin{lemma}
\label{Lem:AtMostTwo}
For all $a \in A$, the intersection $I(a) \cap P$ has at most two points.
\end{lemma}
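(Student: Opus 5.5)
Fix $a \in A$ and suppose for a contradiction that $I(a) \cap P$ contains three distinct cusps $c_1, c_2, c_3$. We will show that one of them lies in the interior of an $a$--interval, contradicting that it is a parting.

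First we use \reflem{a_interval} on each of the three pairs $\{c_j, c_k\}$. Each pair then has exactly one of its two complementary closed arcs being an $a$--interval. Call this arc $J_{jk}$.

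Since $c_1, c_2, c_3$ are distinct points of the circle, they cut $\Circle$ into three open arcs. Every closed arc joining two of them either avoids the third point or contains it in its interior. We split into cases according to which arcs the $J_{jk}$ are.

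Case one: some $J_{jk}$ contains the third cusp $c_l$, necessarily in its interior. Then $c_l$ is in the interior of a collapsing interval, so $c_l \notin P$. This is a contradiction.

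Case two: none of the $J_{jk}$ contains the third cusp. Then the three arcs $J_{12}$, $J_{23}$, $J_{31}$ are the three closures of the complementary arcs of $\{c_1,c_2,c_3\}$, and together they cover $\Circle$. Consecutive ones intersect; for example $J_{12} \cap J_{23} = \{c_2\}$. Applying \reflem{Union} twice, $J_{12} \cup J_{23} \cup J_{31} = \Circle$ would be a collapsing interval. This contradicts \reflem{Union}, which rules out the whole circle collapsing since a homeomorphism of the circle cannot be uniformly close to a constant. Equivalently, $J_{12} \cup J_{23}$ is an $a$--interval containing $c_2$ in its interior, so $c_2 \notin P$ directly.

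In all cases we reach a contradiction, so $|I(a) \cap P| \le 2$.

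The only delicate point is the bookkeeping that the arcs chosen by \reflem{a_interval} fall into exactly these two configurations. This holds because for a pair $\{c_j,c_k\}$ the two candidate arcs $[c_j,c_k]^\acw$ and $[c_k,c_j]^\acw$ are distinguished precisely by whether they contain $c_l$. We also note that the union of two intervals meeting only at an endpoint has that endpoint in its interior. That fact is what makes the partings fail in the second case.
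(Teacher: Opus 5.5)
Your proof is correct and is essentially the paper's argument. The paper orders the three partings $p,q,r$ anticlockwise and observes, via \reflem{a_interval} and the definition of a parting, that $[p,q]^\acw$ and $[q,r]^\acw$ must be the $a$--intervals, since the complementary arcs contain a parting in their interiors; \reflem{Union} then makes $[p,r]^\acw$ an $a$--interval containing $q$ in its interior, and your case split is just an unpacked version of the same step.
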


\begin{proof}
Suppose for contradiction that $p$, $q$, and $r$ are distinct cusps of $I(a) \cap P$, ordered anticlockwise.
By \reflem{a_interval} and \refdef{Parting} we have that $[p, q]^\acw$ is an $a$--interval.
Similarly $[q, r]^\acw$ is an $a$--interval.
Therefore $[p, r]^\acw$ is an $a$--interval that contains the parting point $q$, a contradiction.
\end{proof}

\begin{lemma}
\label{Lem:ClosedSubIntervals}
Suppose that $C$ is a component of $\Circle - P$.
Then every closed sub-interval of $C$ is a collapsing interval.
\end{lemma}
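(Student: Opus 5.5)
Fix a component $C$ of $\Circle - P$ and a closed sub-interval $J \subset C$. The plan is a compactness argument: cover $J$ by finitely many collapsing intervals, and then merge them using \reflem{Union}.

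First, every point $x \in J$ lies in $C$, so $x$ is not a parting. By \refdef{Parting}, $x$ therefore lies in the interior of some collapsing interval $K_x$. The interiors $\{\mathrm{int}\, K_x\}_{x \in J}$ form an open cover of $J$. Since $J$ is a closed interval, hence compact, finitely many of them, say $K_{x_1}, \ldots, K_{x_n}$, already cover $J$.

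Second, we merge. Order the $x_k$ along $J$ and discard any interval whose interior is redundant. Because $J$ is connected and covered by the open sets $\mathrm{int}\, K_{x_k}$, we can arrange a chain in which consecutive intervals intersect. For instance, build the union greedily: start with an interval containing one endpoint of $J$, and at each step add an interval meeting the current union. Connectedness of $J$ guarantees that such an interval always exists until $J$ is covered. Applying \reflem{Union} repeatedly, the union $K$ of the chain is a single collapsing interval. The lemma also guarantees that the union is an interval, not the whole circle, so it collapses to a single $a \in A$.

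Third, $J \subset K$, and uniform convergence of $g_i|K$ to the constant map restricts to uniform convergence on $J$. We also need $J$ itself to be a closed interval; this holds by hypothesis. Hence $J$ is an $a$--interval.

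The only mild subtlety is the merging step. Two of the intervals may each meet the current union while not meeting each other, and the union could a priori wrap around the circle. \reflem{Union} already rules out wrapping, since a collapsing set cannot be all of $\Circle$, so an inductive application along the greedy chain suffices.
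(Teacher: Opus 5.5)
Your proof is correct and is the paper's argument: the paper just notes that $J$ is covered by interiors of collapsing intervals and invokes Lemma~\ref{Lem:Union}. You have filled in the steps that invocation leaves implicit, namely the finite subcover from compactness of $J$ and the chaining of intersecting intervals, with wrap-around excluded because a collapsing set cannot be all of the circle.
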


\begin{proof}
Suppose that $c$ and $d$ are points of $C$.
Let $J$ be the interval in $C$ with endpoints at $c$ and $d$.
Since there are no partings in $C$, the interval $J$ is covered by the interiors of collapsing intervals.
\reflem{Union} implies that $J$ is a collapsing interval.
\end{proof}

\begin{lemma}
\label{Lem:I(a)Component}
Suppose that, for some $a \in A$, the set $I(a)$ contains at least two points of $\Circle$.
Then there is a component $C$ of $\Circle - P$ whose closure contains $I(a)$.
Furthermore, $C \cap \Delta_\calV$ is contained in the set $I(a)$. 
\end{lemma}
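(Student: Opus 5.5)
We argue directly from \reflem{a_interval}, \reflem{AtMostTwo}, and \reflem{ClosedSubIntervals}. Suppose $I(a)$ contains two distinct cusps $c$ and $d$. By \reflem{a_interval} exactly one of $[c,d]^\acw$, $[d,c]^\acw$ is an $a$--interval; call it $J_{cd}$. Let $K$ be the union of all the intervals $J_{cd}$ as $\{c,d\}$ ranges over pairs in $I(a)$. Any two such intervals meet at a common endpoint of $I(a)$, or one contains endpoints of the other. More precisely, given $J_{cd}$ and $J_{ce}$, the set $J_{cd}\cup J_{ce}$ is a union of intersecting $a$--intervals, so by \reflem{Union} it is an $a$--interval. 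Thus all the $J_{cd}$ are nested or overlap, and $K$ is a connected subset of $\Circle$. By \reflem{Union} again, $K$ is not all of $\Circle$. Moreover every closed subinterval of $K$ lies in a finite union of overlapping $a$--intervals, so it is itself an $a$--interval. (This is \refcor{ConnectedI(a)} made quantitative.)

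Next we locate partings. A point of the interior of $K$ lies in the interior of some collapsing interval, so it is not a parting. Hence $P \cap K \subset \bdy K$, which has at most two points. The interior $K^\circ$ is therefore a connected subset of $\Circle - P$, and we take $C$ to be the component of $\Circle - P$ containing $K^\circ$. Then $I(a) \subset K \subset \closure{K^\circ} \subset \closure{C}$. This last inclusion needs $K^\circ$ to be non-empty, which holds since $K$ contains a nondegenerate interval $J_{cd}$. This gives the first conclusion.

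For the second conclusion, let $e$ be a cusp in $C$. By \reflem{ClosedSubIntervals}, the closed interval in $C$ from $e$ to a point of $K^\circ$ is a collapsing interval. It meets $K$, which collapses to $a$, so by \reflem{Union} it collapses to $a$. Hence $\lim_i g_i(e) = a$, and so $e \in I(a)$.

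The main obstacle is the first step, namely showing that the chosen intervals $J_{cd}$ are mutually compatible, so that $K$ is connected and its interior is free of partings. The subtle case is when $J_{cd}$ and $J_{ef}$ are disjoint for four cusps of $I(a)$. We handle it by comparing with $J_{ce}$: this is an $a$--interval sharing the endpoint $c$ with $J_{cd}$ and the endpoint $e$ with $J_{ef}$. Applying \reflem{Union} twice then shows that $J_{cd}\cup J_{ce}\cup J_{ef}$ is a single $a$--interval. Thus $K$ is a directed union of $a$--intervals, hence connected, and it avoids the complementary arc because of \reflem{Union} and the fact that no circle homeomorphisms collapse all of $\Circle$.
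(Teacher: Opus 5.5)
Your proof has a genuine gap at the sentence ``By \reflem{Union} again, $K$ is not all of $\Circle$.'' \reflem{Union} only controls finitely many intervals, and an increasing union of $a$--intervals can exhaust the circle.

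Here is a concrete example. Take $\gamma \in \Stab(c) - \subgroup{\beta}$ as in \refprop{ParabolicFacts}, and let $(g_i)$ be a subsequence of $(\gamma^i)$. Then $A = [c]_{S^1}$, $a = c$, and every cusp lies in $I(c)$. The element $\gamma$ acts as a translation on $\Circle - \{c\}$, pushing everything into $c$ from one side. So for each cusp $d \neq c$, the $a$--interval $J_{cd}$ is the arc from $d$ to $c$ on that side. All of these arcs contain $c$, and since cusps are dense, their union is $K = \Circle$. On the other hand, $c$ lies in the interior of no collapsing interval, so $P = \{c\}$.

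Several of your claims fail in this example:
\begin{itemize}
\item points of the interior of $K$ lying in interiors of collapsing intervals fails at $c$;
\item $P \cap K \subset \bdy K$ fails, since here $\bdy K$ is empty;
\item every closed subinterval of $K$ being an $a$--interval fails for small arcs around $c$;
\item the component of $\Circle - P$ ``containing $K^\circ$'' does not exist.
\end{itemize}
The lemma itself holds in this example, with $C = \Circle - \{c\}$, but your argument does not reach it.

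The underlying issue is a point $q \in K$ that is an endpoint of every $J_{cd}$ containing it. Your directedness argument shows that all the $J_{cd}$ then have $q$ as an endpoint on the same side, so either $q \in \bdy K$ or $K = \Circle$. You therefore need a separate case.

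If $K = \Circle$, there is exactly one such $q$. Two such points would force all the $J_{cd}$ to coincide. If there were none, compactness and directedness would produce a single $J_{cd}$ equal to $\Circle$. Every other point is interior to some $J_{cd}$, so $P = \{q\}$ by \reflem{Parting}. Every cusp lies in some $J_{cd}$ and hence in $I(a)$, and $C = \Circle - \{q\}$ works.

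If $K \neq \Circle$, your argument goes through, provided you justify by compactness and directedness that each closed subinterval of $K$ lies in a single $J_{cd}$.

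The paper's route avoids $K$ altogether:
\begin{itemize}
\item Since $I(a)$ is infinite, \reflem{AtMostTwo} gives that $I(a) - P$ is non-empty.
\item For $c, d \in I(a) - P$, the $a$--interval $J_{cd}$ lies in $\Circle - P$: its interior points are not partings, and its endpoints were chosen outside $P$. So $I(a) - P$ lies in a single component $C$.
\item Any $p \in I(a) \cap P$ lies in $\closure{C}$, because $J_{pc} - \{p\} \subset C$ for $c \in I(a) - P$.
\end{itemize}
Your treatment of the second conclusion, via \reflem{ClosedSubIntervals} and \reflem{Union}, then applies with a point of $I(a) - P$ in place of a point of $K^\circ$.
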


\begin{proof}
Since $I(a)$ has at least two points, \reflem{a_interval} implies that $I(a)$ is countably infinite.
By \reflem{AtMostTwo}, the difference $I(a) - P$ is non-empty.
So, again using \reflem{a_interval}, there is a unique component $C$ of $\Circle - P$ containing $I(a) - P$.
By \reflem{Parting}, the component $C$ is an open interval.
Any points of $I(a) \cap P$ must be boundary points of $C$ by \reflem{a_interval} and \refdef{Parting}.
This gives the first half of the lemma.
The second half of the lemma follows from \reflem{ClosedSubIntervals}.
\end{proof}

\subsection{From the circle to the sphere}

Recall from \refdef{VeeringSphere} that $\Psi_\calV \from \Circle \to \Sphere$ is the quotient map collapsing decomposition elements.

\begin{lemma}
\label{Lem:AlmostCollapsing}
On $\Sphere - \Psi_\calV(P)$, the maps $(g_i)$ uniformly converge on compact sets to the constant map with image $A$.
\end{lemma}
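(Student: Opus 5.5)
We argue on the circle and then push down through the quotient map $\Psi_\calV$. Here $A$ is regarded as the single point $\Psi_\calV(A)$ of $\Sphere$. Since $\Sphere$ is a compact metrisable space, and $\Psi_\calV$ is a closed continuous surjection from the compact space $\Circle$, it suffices to prove the following. For every compact $K \subset \Sphere - \Psi_\calV(P)$ and every neighbourhood $W$ of $\Psi_\calV(A)$ in $\Sphere$, we have $g_i(K) \subset W$ for all large $i$. Set $\widetilde K = \Psi_\calV^{-1}(K)$. This is a compact saturated subset of $\Circle$, and it is disjoint from every decomposition element that meets $P$. Also, $\Psi_\calV^{-1}(W)$ is a saturated open neighbourhood of $A$ in $\Circle$.

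The first step is to cover $\widetilde K$ by finitely many collapsing intervals. Each point of $\widetilde K$ lies outside $P$, so it lies in the interior of a collapsing interval. By compactness, finitely many such intervals $J_1, \ldots, J_m$ cover $\widetilde K$, say with $J_k$ collapsing to $a_k \in A$. By the definition of a collapsing interval, $g_i|J_k$ converges uniformly to the constant map with value $a_k$. Since $\Psi_\calV^{-1}(W)$ is an open set containing every $a_k$, we get $g_i(J_k) \subset \Psi_\calV^{-1}(W)$ for all large $i$ and all $k$. Therefore $g_i(\widetilde K) \subset \Psi_\calV^{-1}(W)$ for all large $i$. The action commutes with $\Psi_\calV$ and $\widetilde K$ is saturated, so applying $\Psi_\calV$ gives $g_i(K) = \Psi_\calV(g_i(\widetilde K)) \subset W$. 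Note that this argument never uses the Hausdorff convergence of leaves. The only inputs are that $\Circle - P$ is covered by interiors of collapsing intervals (\refdef{Parting}), together with \reflem{Union} and \reflem{ClosedSubIntervals}, which let us assume the intervals lie in components of $\Circle - P$.

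The point needing care is that the attracting values $a_k$ may differ across the components of $\Circle - P$. They all lie in the single decomposition element $A$, which maps to one point of $\Sphere$. This is exactly why the collapse happens on $\Sphere$ rather than on $\Circle$, and it is handled by taking $W$ to be a neighbourhood of the point $\Psi_\calV(A)$ and using saturation of $\Psi_\calV^{-1}(W)$.

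The main obstacle I expect is the compactness reduction. We need $\Psi_\calV^{-1}(K)$ to be compact and to avoid $P$, not merely to avoid $\Psi_\calV^{-1}(\Psi_\calV(P))$. This holds because $K$ misses $\Psi_\calV(P)$, so its preimage misses every class that meets $P$. We also need that uniform convergence on compact sets is the right notion in $\Sphere$, and this follows from the metrisability of $\Sphere$, which we know is a two-sphere by \refthm{Sphere}.
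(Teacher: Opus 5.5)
Your proof is correct and follows the paper's argument: pull $K$ back to a compact subset of $\Circle - P$, cover it by finitely many collapsing intervals, and use that $\Psi_\calV^{-1}(U)$ is a single open set containing every point of $A$. The only cosmetic difference is that you take the finite cover directly from the definition of partings (interiors of collapsing intervals), whereas the paper covers by closed intervals disjoint from $P$ and invokes \reflem{ClosedSubIntervals}.
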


\begin{proof}
Suppose that $K\subset \Sphere - \Psi_\calV(P)$ is compact.  
Then the preimage $\Psi_\calV^{-1}(K)$ is compact and misses the closed set $P$.  
In particular $\Psi_\calV^{-1}(K)$ is contained in finitely many closed intervals disjoint from $P$.
Each of the finitely many intervals is a collapsing interval by \reflem{ClosedSubIntervals}. 
So for any open neighbourhood $U$ of $A$ in $\Sphere$, and for all but finitely many $i$, the set $g_i(\Psi_\calV^{-1}(K))$ lies in $\Psi_\calV^{-1}(U)$.
Thus for all but finitely many $i$, the image $g_i(K)$ lies in $U$.
\end{proof}

\subsection{A decomposition element for the parting set}

We show in the rest of the section that there is a decomposition element $B$ which contains $P$.
(However, $P$ may be a proper subset of $B$.)  
The fact that $(g_i)$ is a collapsing sequence (for the action on $\Sphere$) then follows from \reflem{AlmostCollapsing}.
Our proof proceeds by cases, depending on how many partings there are.
We first prove some preliminary lemmas about the partition of $\Delta_\calV$ (\refrem{AttractorPartition}) into the sets $I(a)$.

\begin{lemma}
\label{Lem:OrdersAgree}
Suppose that $a_1$, $a_2$, and $a_3$ are distinct elements of $A$, and that $c_j \in I(a_j)$ for each $j$.  
Then $\calO_\calV(c_1, c_2, c_3) = \calO_\calV(a_1, a_2, a_3)$.  
\end{lemma}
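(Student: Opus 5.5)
The idea is that orientation-preserving homeomorphisms of $\Circle$ preserve $\calO_\calV$. We then pass to the limit using the fact that the limit points $a_j$ are distinct.

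Since each $g_i$ acts on $\Circle$ by orientation-preserving homeomorphisms (\cite[Theorem~7.1]{FSS22}), we have
\[
\calO_\calV(c_1, c_2, c_3) = \calO_\calV(g_i(c_1), g_i(c_2), g_i(c_3))
\]
for every $i$. By \reflem{WellDef} and the definition of $I(a_j)$, the sequence $g_i(c_j)$ converges to $a_j$ in $\Circle$ for each $j \in \{1,2,3\}$.

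Because $a_1$, $a_2$, $a_3$ are pairwise distinct, we choose pairwise disjoint open intervals $U_1$, $U_2$, $U_3$ in $\Circle$ with $a_j \in U_j$. Any triple $(x_1, x_2, x_3)$ with $x_j \in U_j$ then has $\calO_\calV(x_1,x_2,x_3) = \calO_\calV(a_1,a_2,a_3)$. The reason is that the circular order is locally constant on the open set of triples of distinct points, and $U_1 \times U_2 \times U_3$ is connected and consists of distinct triples.

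For $i$ large, $g_i(c_j) \in U_j$ for all $j$. Hence
\[
\calO_\calV(g_i(c_1), g_i(c_2), g_i(c_3)) = \calO_\calV(a_1,a_2,a_3),
\]
and combining this with the first display gives the claim.

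The only step needing care is the local constancy of the circular order on distinct triples, which is standard for the circular order of an oriented circle. We could instead sidestep it by noting that the three intervals $U_j$ sit in $\Circle$ in the cyclic order of the $a_j$. I expect no genuine obstacle here.
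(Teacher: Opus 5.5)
Your proof is correct and is essentially the paper's argument: the paper simply notes that, after reducing to the transverse veering case via Remark~\ref{Rem:Cover}, the action of $\pi_1(M)$ on $\Circle$ preserves the circular order, while you also spell out the limiting step, using that the $a_j$ are distinct so the circular order is constant near $(a_1,a_2,a_3)$. Your appeal to orientation preservation through \cite[Theorem~7.1]{FSS22} relies on that reduction to the transverse case, which the paper states explicitly and you should too.
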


\begin{proof}
By \refrem{Cover}, we have that $\calV$ is transverse veering. 
Thus the action $\pi_1(M)\acts\Circle$ preserves the circular order.  
\end{proof}

\begin{lemma}
\label{Lem:InaccessibleCusps}
Let $a \in A$.  Then $I(a)$ is either
\begin{itemize}
\item empty,
\item a singleton,
\item $J \cap \Delta_\calV$ where $J$ is a closed interval in $\Circle$, or
\item all of $\Delta_\calV$.
\end{itemize}
\end{lemma}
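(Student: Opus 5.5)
By \refcor{ConnectedI(a)}, $I(a)$ is already $K \cap \Delta_\calV$ for some connected $K \subset \Circle$. Such a $K$ is empty, a point, an interval (open, half-open, or closed), or the whole circle. So the plan is to reduce to a single remaining issue. If $I(a)$ is empty, a single cusp, or all of $\Delta_\calV$ there is nothing to prove. Otherwise $I(a)$ has at least two cusps. By \reflem{a_interval} it is then infinite, and we may take $K$ to be the smallest connected set containing $I(a)$ and not equal to the circle. It remains to show that the "endpoints" of $K$ can be taken to be cusps lying in $I(a)$. Equivalently, $I(a)$ has a first and a last element in the linear order inherited from $K$.

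The key step is to rule out an endpoint $e$ of $\closure{K}$ that is approached by cusps $c_n \in I(a)$ while $I(a)$ contains no last cusp. First we use \reflem{I(a)Component}: there is a component $C$ of $\Circle - P$ with $I(a) \subset \closure{C}$ and $C \cap \Delta_\calV \subset I(a)$. Cusps are dense, so $I(a)$ contains every cusp of $C$. Consequently $\closure{K} = \closure{C}$, and each endpoint $e$ of $\closure{K}$ is a parting, that is, $e \in P$.

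The key point is then that any cusp outside $\closure{C}$ lies in some $I(a')$ with $a' \neq a$. We must show that $e$ itself is a cusp and that $e \in I(a)$. Suppose instead that $e$ is not a cusp. Take cusps $d_n \to e$ from outside $\closure{C}$. If infinitely many of the $d_n$ lay in a single $I(a')$, then \reflem{a_interval} would force $[d_n, c_m]$ or its complement to be an $a'$--interval containing points of $I(a)$, which is impossible. Hence the $d_n$ lie in infinitely many distinct classes $I(a'_n)$. These classes are disjoint intervals of cusps accumulating at $e$. Now \reflem{OrdersAgree} says the attracting points $a'_n$ respect the circular order relative to $a$.

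Since $A$ is a single decomposition element, it is a point, a leaf pair, or a clamshell boundary. Only a clamshell boundary contains infinitely many points, so $A$ must be a clamshell boundary. Its points accumulate only at its cusp, so the $a'_n$ must converge to the cusp of $A$. By \reflem{OrdersAgree}, the $a'_n$ converge monotonically from one side towards $a$. Hence $a$ must be the cusp of $A$, approached from that side.

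I expect this to be the main obstacle: it requires genuinely new input rather than bookkeeping. The plan is to use \reflem{Diagonal} on a cusp leaf, or bi-leaf, $k$ emanating from a cusp $c$ near $e$. By \refprop{LimitOfLeaves}, the Hausdorff limit of $g_i(\closure{k})$ is a point, a leaf closure, or a bi-leaf closure. Its ideal points are limits of cusps in different classes $I(\cdot)$, and by \reflem{WellDef} they lie in $A$. I would try to show that this forces the cusp $e' := \lim g_i(e)$-side behaviour to make $e$ a cusp. Alternatively, the contradiction could come from the crown structure, \reflem{Laminations}\refitm{CrownZZ}: infinitely many thorns of $A$ approaching its cusp correspond to crown leaves. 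Their preimages under $g_i$ are leaves whose endpoints separate the classes $I(a'_n)$. Passing to a limit of these, via \refprop{LimitOfLeaves}, should produce a leaf or bi-leaf ending at $e$ whose endpoint must be a cusp by \refcor{OnlyAsymptotics}.

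Once $e$ is known to be a cusp, continuity of the order in \reflem{Direction} puts $e \in I(a)$ rather than in a neighbouring class, so $I(a) = \closure{C} \cap \Delta_\calV$ with $J = \closure{C}$ closed. The case where $\closure{C}$ is the whole circle is exactly the case $I(a) = \Delta_\calV$.
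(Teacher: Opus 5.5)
You have a genuine gap, and it starts with the target you set. The lemma only asks that $I(a) = J \cap \Delta_\calV$ for some closed interval $J$. It does not ask that the endpoints of $J$ be cusps, let alone cusps of $I(a)$. Since $I(a) = K \cap \Delta_\calV$, if an endpoint $e$ of $\closure{K}$ is not a cusp there is nothing to prove at $e$: take $J = \closure{K}$.

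Your stronger claim is false in general. That claim is that $e$ is a cusp lying in $I(a)$, equivalently that $I(a)$ has a first and a last element. For a counterexample, let $\gamma$ be as in \refprop{FixedPoints}\refitm{NSNS} and $g_i = \gamma^{2i}$. The parting set is the pair of repelling leaf ends $\{u, v\}$, and neither is a cusp by \reflem{Laminations}\refitm{NoCusps}. Each attracting end $a$ has $I(a)$ equal to the cusps of one of the two arcs between $u$ and $v$. Compare \refcor{NotCuspBefore} and \reflem{Case2}, where the paper works with exactly such non-cusp endpoints.

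So the argument beginning ``Suppose instead that $e$ is not a cusp'' cannot succeed. Its first step also misuses \reflem{a_interval}. That lemma applies only to two cusps of the same class $I(a')$. If all the $d_n$ lie in one $I(a')$, the $a'$--interval between two of them is the arc avoiding $C$, which gives no contradiction.

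The real content of the lemma is the case you postpone to your last paragraph: an endpoint $e$ of $\closure{K}$ that \emph{is} a cusp must lie in $I(a)$. Appealing to ``continuity of the order in \reflem{Direction}'' does not give this. \reflem{Direction} and \reflem{OrdersAgree} only control circular orders of triples of cusps. Nothing order-theoretic prevents $\lim_i g_i(e) = b \neq a$ while the cusps of $K$ near $e$ converge to $a$, because the convergence need not be uniform up to $e$.

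The missing idea, which is the paper's argument, is geometric and uses both foliations. Suppose $e \in I(b)$ with $b \neq a$. Both crowns at $e$ accumulate on $e$ from both sides (\reflem{Laminations}\refitm{CrownZZ}). So there are bi-leaves $\ell$ of $F^\calV$ and $m$ of $F_\calV$ centred at $e$ with their thorns in the interior of $K$. The Hausdorff limits of $g_i(\closure{\ell})$ and $g_i(\closure{m})$ exist by \reflem{Diagonal}, and both contain $a$ and $b$. By \refprop{LimitOfLeaves}, each limit is the closure of a non-cusp leaf or a bi-leaf. At most one of $a, b$ is a cusp, so the two limits share a non-cusp ideal point. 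Then \refcor{OnlyAsymptotics} forces them to share a leaf, so they lie in the same foliation. This contradicts the final clause of \refprop{LimitOfLeaves}, because $g_i(\ell)$ and $g_i(m)$ always lie in opposite foliations.

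Your alternative sketch via bi-leaves and \refprop{LimitOfLeaves} points in this direction. However, it is aimed at the false conclusion above and never uses this two-foliation contradiction.
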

\begin{proof}
  
\begin{figure}[htbp]
\centering
\labellist
\small\hair 3pt
\pinlabel $C$ [r] at 3 140
\pinlabel $\ell$ [bl] at 56 120
\pinlabel $m$ [bl] at 28 110
\pinlabel $c$ [tr] at 34 34
\endlabellist
\includegraphics[width=0.6\textwidth]{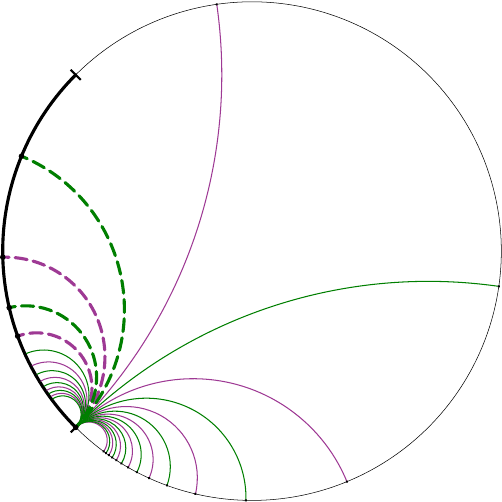}
\caption{The bi-leaves $\ell$ and $m$ are drawn with thick dashed lines.}
\label{Fig:CuspAtEndOfInterval}
\end{figure}
  
\refcor{ConnectedI(a)} tells us that $I(a) = C \cap \Delta_\calV$ where $C$ is a connected subset of $\Circle$.  
So $C$ is empty, a point, an interval (closed, open, or half open), or all of $\Circle$.
We claim that $\closure{C} - C$ does not meet $\Delta_\calV$.  
Given this claim, we may replace $C$ by its closure and the lemma follows.

So, to obtain a contradiction, suppose that $c\in \closure{C} - C$ is a cusp.
Since $c \notin C$, we have that $c \notin I(a)$.
Thus $c\in I(b)$ with $b \neq a$. 
Both $F^\calV$ and $F_\calV$ contain infinitely many bi-leaves with centre $c$ whose endpoints are contained in $C$.  
See \reffig{CuspAtEndOfInterval}.  
Fix bi-leaves $\ell$ of $F^\calV$ and $m$ of $F_\calV$, each containing $c$ and two points of $C$.  
By \reflem{Diagonal}, the Hausdorff limits $L = \lim_i g_i(\closure{\ell})$ and $M = \lim_i g_i(\closure{m})$ both exist.
The limits $L$ and $M$ must both contain $a$ and $b$.   
By \refprop{LimitOfLeaves}, and since $a \neq b$, the limits $L$ and $M$ are (closures of) non-cusp leaves or bi-leaves.  
By \refcor{OnlyAsymptotics}, since $L$ and $M$ have a common ideal point, they must be closures of bi-leaves $\ell_\infty$ and $m_\infty$, respectively, emanating from a common cusp and sharing a cusp leaf.
In particular, $\ell_\infty$ and $m_\infty$ are in the same foliation.

Since $\ell$ and $m$ are in opposite foliations, so are $g_i(\ell)$ and $g_i(m)$.
From the final statement of \refprop{LimitOfLeaves}, we deduce that $\ell_\infty$ and $m_\infty$ are in different foliations, a contradiction.
\end{proof}

\reflem{OrdersAgree} and \reflem{InaccessibleCusps} have some useful consequences.  

\begin{lemma}
\label{Lem:PartingsDecomposeDeltaM}
Suppose that $I(a)$ contains at least two points.
Then there is a component $C$ of $\Circle - P$ so that $I(a) = \closure{C} \cap \Delta_\calV$. 
\end{lemma}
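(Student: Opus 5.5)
We combine \reflem{I(a)Component} and \reflem{InaccessibleCusps}. Since $I(a)$ has at least two points, \reflem{I(a)Component} gives a component $C$ of $\Circle - P$ with $I(a) \subset \closure{C}$ and $C \cap \Delta_\calV \subset I(a)$. So $C \cap \Delta_\calV \subset I(a) \subset \closure{C} \cap \Delta_\calV$. It therefore remains to show that any cusp in $\closure{C} - C$ lies in $I(a)$.

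The set $\closure{C} - C$ consists of at most two points, namely the endpoints of the open interval $C$. (If $P$ is a single point $p$, then $C = \Circle - \{p\}$ and $\closure{C} - C = \{p\}$.) Let $c$ be such an endpoint and suppose that $c$ is a cusp. By \reflem{InaccessibleCusps}, $I(a)$ is a singleton, $J \cap \Delta_\calV$ for a closed interval $J$, or all of $\Delta_\calV$. It is not a singleton, since it has at least two points. If $I(a) = \Delta_\calV$ then $c \in I(a)$ and we are done.

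Otherwise $I(a) = J \cap \Delta_\calV$ with $J$ a closed interval, and we may take $J$ to be the smallest closed interval containing $I(a)$. Cusps are dense in $\Circle$, and $C \cap \Delta_\calV \subset I(a) \subset J$, so $C \subset J$. Since $J$ is closed, $\closure{C} \subset J$. Hence $c \in J \cap \Delta_\calV = I(a)$. This gives $I(a) = \closure{C} \cap \Delta_\calV$.

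The main obstacle is the density step: passing from $C \cap \Delta_\calV \subset J$ to $C \subset J$. This needs $C$ to be a nondegenerate open interval, so that it contains cusps accumulating on each of its endpoints. That holds because $C$ is a component of the complement of the closed set $P$ (\reflem{Parting}). One must also handle the degenerate case where $J$ is all of $\Circle$ but $I(a)$ misses some cusp; this cannot happen, since then $J \cap \Delta_\calV = \Delta_\calV$. Note that this step is exactly where the closedness of $J$ from \reflem{InaccessibleCusps} is used, and that closedness was the content of the claim in its proof that $\closure{C} - C$ contains no cusps outside $I(a)$.
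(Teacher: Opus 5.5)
Your proof is correct, but it is organised differently from the paper's.

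The paper does not use \reflem{I(a)Component} here. After the case split from \reflem{InaccessibleCusps}, it shows directly that when $I(a) = \Delta_\calV$ the parting set is a single point. When $I(a) = J \cap \Delta_\calV$, it proves two things by contradiction. First, $P$ misses the interior of $J$, since otherwise there are cusps of $I(a)$ on both sides of a parting, contradicting \reflem{a_interval}. Second, no endpoint of $J$ lies in the component $C$ containing the interior of $J$, since otherwise a closed subinterval of $C$ about that endpoint is collapsing by \reflem{ClosedSubIntervals}, yet contains cusps of $I(a)$ and of some $I(b)$ with $b \neq a$. So the paper identifies $C$ with the interior of $J$ by hand.

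You instead take the sandwich $C \cap \Delta_\calV \subset I(a) \subset \closure{C}$ from \reflem{I(a)Component}, which already packages essentially those two arguments. You then finish by observing that the closed set $J$ contains the dense subset $C \cap \Delta_\calV$ of the open set $C$, and hence contains $\closure{C}$. Your route is shorter and isolates the closedness of $J$ as the only new input. The paper's route is self-contained and records slightly more: that $C$ is exactly the interior of $J$, and that $P$ is a singleton when $I(a) = \Delta_\calV$. You recover $\closure{C} = J$ as well, since necessarily $J = \closure{I(a)}$. Choosing a ``smallest'' $J$ is harmless but unnecessary: any closed $J$ with $J \cap \Delta_\calV = I(a)$ works.
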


\begin{proof}
Since $I(a)$ contains at least two points we are in the third or fourth case of \reflem{InaccessibleCusps}.
Suppose that $I(a) = \Delta_\calV$.
In this case, \reflem{a_interval} implies that $P$ is a single point.
Thus $\closure{C} = \Circle$ and we are done.

Suppose instead that $I(a) = J \cap \Delta_\calV$ where $J$ is a closed interval.
We must show that $J \cap P = \bdy J$.
We first claim that $P$ misses the interior of $J$.
For a contradiction, suppose that $p \in P$ lies in the interior of $J$.
Since cusps are dense, there are cusps of $I(a)$ on either side of $p$, contradicting \reflem{a_interval}.

Now let $C$ be the component of $\Circle - P$ containing the interior of $J$.
Let $q$ be an endpoint of $J$.
Suppose for a contradiction that $q$ lies in the interior of $C$.
Fix a closed sub-interval $I$ of $C$ containing $q$ in its interior.
The interval $I$ contains cusps of $I(a)$ since it meets the interior of $J$.
Also, $I$ contains cusps not in $I(a)$ (and so in some $I(b)$ for $b \neq a$, by \refrem{AttractorPartition}) by the definition of $J$ and because $I$ meets the interior of $\Circle - J$.
But by \reflem{ClosedSubIntervals} the interval $I$ is a collapsing interval.
This is a contradiction.
\end{proof}

\begin{corollary}
\label{Cor:NotCuspBefore}
Suppose that $P$ has at least two points.
Suppose that $p \in P$ is not an accumulation point of $P$.
Then $p$ is not a cusp.
\end{corollary}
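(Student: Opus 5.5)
We argue by contradiction. Suppose that $p \in P$ is an isolated point of $P$, that $P$ has at least two points, and that $p$ is a cusp. Since $p$ is isolated in $P$ and $P$ has another point, there are two distinct components $C_-$ and $C_+$ of $\Circle - P$ with $p$ a common endpoint of their closures; $C_-$ lies just clockwise of $p$ and $C_+$ just anticlockwise. (They are distinct because $P$ has a second point.)

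\emph{Step 1.} Show that each of $C_\pm$ contains cusps in a single set $I(a_\pm)$. Cusps are dense, so $C_\pm$ contains cusps. By \reflem{ClosedSubIntervals} every closed subinterval of $C_\pm$ is collapsing, and collapsing intervals have a single limit. Exhausting $C_\pm$ by nested closed subintervals therefore shows that all cusps of $C_\pm$ lie in $I(a_\pm)$ for a single $a_\pm \in A$. In particular $I(a_\pm)$ is infinite, so \reflem{PartingsDecomposeDeltaM} applies: $I(a_\pm) = \closure{C_\pm} \cap \Delta_\calV$. Since $p \in \closure{C_-} \cap \closure{C_+}$ is a cusp, it lies in both $I(a_-)$ and $I(a_+)$. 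By \refrem{AttractorPartition} we get $a_- = a_+ =: a$.

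\emph{Step 2.} Derive the contradiction. We now have $\closure{C_-} \cap \Delta_\calV = I(a) = \closure{C_+} \cap \Delta_\calV$. Both sides contain cusps near $p$, and cusps on opposite sides of $p$ lie only in one of the two closures. Concretely, a cusp $c_+ \in C_+$ lies in $\closure{C_+}$. It lies in $\closure{C_-}$ only if $c_+$ is an endpoint of $C_-$, which is impossible since $c_+$ is interior to $C_+$, disjoint from $C_-$, and not in $P$. Hence $c_+ \notin \closure{C_-} \cap \Delta_\calV = I(a)$, contradicting $c_+ \in I(a)$.

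Alternatively, one could note that $c_- \in C_-$, $p$, and $c_+ \in C_+$ all lie in $I(a)$. Then \reflem{a_interval} gives that one of $[c_-, c_+]^\acw$ or $[c_+, c_-]^\acw$ is an $a$--interval. The first contains $p$ in its interior. The second contains the rest of $P$, which is nonempty. Either way a parting lies in the interior of a collapsing interval, contradicting \refdef{Parting}. I would present this version, since it is shorter.

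The main obstacle is in Step 1: checking that a single component $C_\pm$ collapses everything to one point $a_\pm$, and that the boundary cusp $p$ belongs to the same $I(a_\pm)$. The first needs the exhaustion by closed subintervals. The second is exactly where \reflem{PartingsDecomposeDeltaM} (and behind it \reflem{InaccessibleCusps}) is used. Once $p$, $c_-$, $c_+$ are known to share an attractor, the contradiction with \reflem{a_interval} is immediate.
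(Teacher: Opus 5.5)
Your proof is correct and is essentially the paper's argument. The paper also takes the two components of $\Circle - P$ adjacent to $p$ and uses \reflem{ClosedSubIntervals} and \reflem{PartingsDecomposeDeltaM} to get $I(a) = \closure{C} \cap \Delta_\calV$ and $I(a') = \closure{C}' \cap \Delta_\calV$; it then observes that these sets differ, so $a \neq a'$, and a cusp $p$ in both closures contradicts the disjointness in \refrem{AttractorPartition}. Your Step~2 is the same contradiction run in the opposite order. Your preferred ending via \reflem{a_interval} and \refdef{Parting} is also valid, but it only replaces the final line and rests on exactly the same Step~1.
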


\begin{proof}
Let $C$ and $C'$ be the components of $\Circle - P$ adjacent to $p$.
Since $P$ has at least two points, $C$ and $C'$ are disjoint.
By \reflem{ClosedSubIntervals}, both contain collapsing intervals.
By \reflem{PartingsDecomposeDeltaM}, there are points $a$ and $a'$ in $A$ so that $I(a) = \closure{C} \cap \Delta_\calV$ and $I(a') = \closure{C}' \cap \Delta_\calV$.
Therefore $a \neq a'$.
By \refrem{AttractorPartition}, we have that $I(a)$ and $I(a')$ are disjoint.
Note that $p$ is a point of $\closure{C} \cap \closure{C}'$.
If $p$ were a cusp then $p$ would lie in both $I(a)$ and in $I(a')$, a contradiction.
\end{proof}

\begin{lemma}
\label{Lem:SingletonGoesToCusp}
Suppose for some $a \in A$, the set $I(a)=\{c\}$ has a single element.  
Then $a$ is a cusp.
\end{lemma}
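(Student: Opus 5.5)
The plan is to argue by contradiction, in the spirit of the proof of \reflem{InaccessibleCusps}. Suppose $I(a) = \{c\}$ and that $a$ is not a cusp. Since $I(a)$ is a singleton, every other cusp $d$ lies in some $I(b)$ with $b \neq a$ (\refrem{AttractorPartition}).

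\emph{Step 1.} Choose a bi-leaf $\ell$ of $F^\calV$ and a bi-leaf $m$ of $F_\calV$, both centred at $c$. By \reflem{Diagonal} the Hausdorff limits
\[
L = \lim_i g_i(\closure{\ell}) \qquad \text{and} \qquad M = \lim_i g_i(\closure{m})
\]
exist, and both contain $a = \lim_i g_i(c)$. By \refprop{LimitOfLeaves}, each of $L$ and $M$ is a point of $\Circle$, the closure of a bi-leaf, or the closure of a non-cusp leaf.

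\emph{Step 2: neither limit is a point.} I will show $L$ contains a point $b \neq a$; the same argument applies to $M$. Pick a cusp $d \neq c$ such that some skeletal rectangle $R$ has both $c$ and $d$ in $\closure{R}$, and $R$ meets $\ell$. Such an $R$ exists: take a tetrahedron rectangle with $c$ on a side and crossed by the cusp leaf $\ell \cap \Link$, using \refdef{Loom}\refitm{Cusp} and \refitm{Tet}. The limit of $g_i(\closure{R})$ contains $a$ and $b = \lim_i g_i(d) \neq a$. Since the $g_i$ are distinct, \refprop{LimitOfRectangles} says this limit is the closure of a leaf or of a bi-leaf. I then need to transfer this non-degeneracy to $L$. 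The plan is to choose $R$ so that $\ell$ separates $d$ from the thorns of $\ell$ inside $\closure{R}$, and to apply \reflem{OrdersAgree} to cusps on both sides of $\ell$, whose images converge to points different from $a$. This should force the images $g_i(\closure{\ell})$ to keep a definite size. Failing that, I would replace $\ell$ by a sequence of bi-leaves centred at $c$, chosen so that at least one of them has a non-degenerate limit.

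\emph{Step 3: conclude.} Once $L$ and $M$ are both closures of leaves or bi-leaves sharing the point $a$, we look at how $a$ sits in them. If $a$ were the centre of a bi-leaf, then $a$ would be a cusp, which we have excluded. So $a$ is a common ideal point of leaves of $L$ and $M$ that is not a cusp. By \refcor{OnlyAsymptotics} these leaves coincide, and in particular lie in the same foliation. But $g_i(\ell)$ and $g_i(m)$ lie in opposite foliations, so the final statement of \refprop{LimitOfLeaves} puts $L$ and $M$ in opposite foliations, a contradiction. Hence $a$ is a cusp.

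The main obstacle is Step 2, ruling out the degenerate limit in which $g_i(\closure{\ell})$ shrinks to the single point $a$. This is exactly where the hypothesis that $c$ is alone in $I(a)$, together with the circular-order control of \reflem{Direction} and \reflem{OrdersAgree}, has to be used.
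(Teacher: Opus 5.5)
Your Steps~1 and~3 are sound. They repeat the argument from the proof of Lemma~\ref{Lem:InaccessibleCusps}: once $L$ and $M$ are known to be non-degenerate, a non-cusp point $a$ lying in both would be a common ideal point of a leaf of $F^\calV$ and a leaf of $F_\calV$, which Corollary~\ref{Cor:OnlyAsymptotics} forbids. The gap is Step~2, the only place the hypothesis $I(a)=\{c\}$ enters, and you do not actually prove it. The rectangle detour does not help: knowing that $\lim_i g_i(\closure{R})$ is the closure of a leaf or bi-leaf through $a$ and $b$ that meets $L$ is perfectly compatible with $L=\{a\}$. Nor does Lemma~\ref{Lem:OrdersAgree} apply directly. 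It only compares cusps lying in three distinct sets $I(a_j)$, whereas the points you must keep away from $a$ are the thorns of $\ell$ and $m$, which are not cusps. Phrases such as ``this should force'' and ``failing that, I would replace $\ell$'' leave the crux open.

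The missing idea is a short circular-order argument showing that no point $x \neq c$ can have $g_i(x) \to a$ along a subsequence.
\begin{enumerate}
\item Pick cusps $d_1 \in (c,x)^\acw$ and $d_2 \in (x,c)^\acw$. Since $I(a)=\{c\}$, Lemmas~\ref{Lem:Diagonal} and~\ref{Lem:WellDef} give $g_i(d_k) \to b_k \in A - \{a\}$.
\item Take an open interval $V \ni a$ whose closure misses $b_1$ and $b_2$. For large $i$ both $g_i(c)$ and $g_i(x)$ lie in $V$, while $g_i(d_1)$ and $g_i(d_2)$ do not.
\item Since $g_i$ is an orientation-preserving homeomorphism of $\Circle$, the open subinterval of $V$ between $g_i(c)$ and $g_i(x)$ is the image under $g_i$ of $(c,x)^\acw$ or of $(x,c)^\acw$. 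So it contains $g_i(d_1)$ or $g_i(d_2)$, a contradiction.
\end{enumerate}
Apply this to a thorn of $\ell$ and to a thorn of $m$, using compactness of $\Circle$ to extract a convergent subsequence. This shows that $L$ and $M$ each contain a point other than $a$, and your Step~3 then finishes.

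With this repair your route is a valid alternative to the paper's. The paper instead uses Lemma~\ref{Lem:InaccessibleCusps} to show that infinitely many sets $I(b)$ meet each side of $c$, so that $A$ is a clamshell boundary. It then uses Lemma~\ref{Lem:OrdersAgree} to place $a$ at the unique accumulation point of $A$, namely its cusp. Your version trades that order-theoretic bookkeeping for the Hausdorff-limit machinery of Proposition~\ref{Prop:LimitOfLeaves}.
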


\begin{proof}
Suppose that $x \in \Circle - \{c\}$ is any point.  
Consider the interval $(c, x)^\acw$.  
Suppose for a contradiction that $I(b)$ meets $(c, x)^\acw$ for only finitely many of the $b$ in $A$.  
Thus $c$ is in the closure of one of the $I(b)$.  
\reflem{InaccessibleCusps} (applied to $I(b)$) implies that $a = b$.
Thus $I(a)$ meets $(c, x)^\acw$.  
Thus $I(a)$ is not a singleton, a contradiction.  
We deduce that $A$ is infinite.  
By \refdef{DecompositionElementsCircle} we find that $A$ is the decomposition element for some cusp.
Note that, similarly to the above, the interval $(x, c)^\acw$ also meets infinitely many of the sets $I(b)$ for $b$ in $A$.  

Fix $a_0 \in A - \{a\}$, a thorn so that $I(a_0)$ is non-empty.
Fix $c_0 \in I(a_0)$.
Note that $c_0 \neq c$.
With $c_0$ replacing $x$ above, we find distinct $(a_k)_{k\in \ZZ}$ in $A$ so that for all $k$
\begin{itemize}
\item the set $I(a_k)$ is non-empty, and
\item there is some $c_k \in I(a_k)$ with $\calO_\calV(c, c_k, c_0) = \sign(k)$.
\end{itemize}

\reflem{OrdersAgree} now implies that $\calO_\calV(a, a_k, a_0) = \sign(k)$.
Two thorns of $A$ separate the remaining elements of $A$ into a finite set and an infinite set. 
See \reffig{CuspAtEndOfInterval}.
We deduce that $a$ is a cusp, as desired. 
\end{proof}

The following is immediate from \reflem{SingletonGoesToCusp} and \refdef{DecompositionElementsCircle}.
\begin{corollary}
\label{Cor:OneSingleton}
For at most one $a\in A$, the set $I(a)$ consists of a single element. \qed
\end{corollary}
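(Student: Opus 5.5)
The statement to prove is \refcor{OneSingleton}: at most one $a \in A$ has $I(a)$ a singleton. The plan is a contradiction argument built on \reflem{SingletonGoesToCusp} and the structure of decomposition elements.

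Suppose $a \neq a'$ are two elements of $A$ with $I(a) = \{c\}$ and $I(a') = \{c'\}$. By \reflem{SingletonGoesToCusp}, both $a$ and $a'$ are cusps. Both lie in the single decomposition element $A$. By \refdef{DecompositionElementsCircle}, each decomposition element contains at most one cusp:

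\begin{itemize}
\item a clamshell boundary contains exactly its centre cusp, because its thorns are leaf ideal points, and these are never cusps by \reflem{Laminations}\refitm{NoCusps};
\item leaf pairs contain no cusps, for the same reason;
\item singletons contain no cusps, by definition.
\end{itemize}

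Hence $a = a'$, a contradiction.

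The only step needing care is the claim that a decomposition element contains at most one cusp. This follows directly from \reflem{Laminations}\refitm{NoCusps} together with the fact that distinct cusps give distinct clamshells, which is \refcor{CircleDecomposition}. So I expect no real obstacle. The substantive work was already done in \reflem{SingletonGoesToCusp}, whose proof also shows that $A$ is then the clamshell boundary of a cusp, pinning $a$ down as the unique cusp of $A$.
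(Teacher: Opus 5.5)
Your proof is correct and is the paper's argument: the paper calls the corollary immediate from Lemma~\ref{Lem:SingletonGoesToCusp}, which makes each such $a$ a cusp, together with Definition~\ref{Def:DecompositionElementsCircle}. That definition gives that the decomposition element $A$ contains at most one cusp, because thorns and endpoints of interior leaves are never cusps.
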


Recall that if $\ell$ is an oriented leaf of either foliation of $\Link$ then $H(\ell)$ is the half-disc in $\Disc$ associated to $\ell$. 
See \refdef{HalfDisc}.

\begin{lemma}
\label{Lem:Case2}
Suppose that the parting set $P$ consists of exactly two points $p$ and $q$.  
Then $P$ is contained in a single decomposition element.
\end{lemma}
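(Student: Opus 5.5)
The plan is to argue by contradiction, reducing to the same mechanism used in the proof of \reflem{InaccessibleCusps}. Let $C$ and $C'$ be the two components of $\Circle - P$.

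First, I will record what the sequence does on each side. By \reflem{ClosedSubIntervals} every closed subinterval of $C$ or of $C'$ is collapsing. By \reflem{PartingsDecomposeDeltaM} there are $a, a' \in A$ with $I(a) = \closure{C} \cap \Delta_\calV$ and $I(a') = \closure{C}' \cap \Delta_\calV$.

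Next, $a \neq a'$. Otherwise, take cusps $c \in C$ and $d \in C'$. By \reflem{a_interval}, one of $[c,d]^\acw$ and $[d,c]^\acw$ is an $a$--interval. That interval contains $p$ or $q$ in its interior, which contradicts the definition of a parting. In particular $g_i \to a$ uniformly on compact subsets of $C$, and $g_i \to a'$ uniformly on compact subsets of $C'$. Also, by \refcor{NotCuspBefore}, neither $p$ nor $q$ is a cusp.

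The key step is as follows. Suppose, for a contradiction, that $[p]_{S^1} \neq [q]_{S^1}$. I want to produce a leaf or bi-leaf $\ell$ of $F^\calV$ and a leaf or bi-leaf $m$ of $F_\calV$ such that each has one endpoint in $C$ and one in $C'$. Granting this, \reflem{Diagonal} (applied to bi-leaves; for non-cusp leaves I would add them to the countable diagonalisation, or choose bi-leaves) gives Hausdorff limits $L$ of $g_i(\closure{\ell})$ and $M$ of $g_i(\closure{m})$. Both $L$ and $M$ contain the distinct points $a$ and $a'$. So by \refprop{LimitOfLeaves} each is the closure of a non-cusp leaf or of a bi-leaf. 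By the final statement of that proposition, $L$ lies in $F^\calV$ and $M$ lies in $F_\calV$. Then two leaves, or bi-leaves, of opposite foliations have both $a$ and $a'$ as common ideal points. By \refcor{OnlyAsymptotics} any shared ideal point must be a cusp, with the leaves sharing it being cusp leaves. Combined with \reflem{NoDoubleCusps} and \reflem{Laminations}\refitm{NoEndpoints}, this rules out two shared points, which is the desired contradiction.

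It remains to construct $\ell$ and $m$, and here I use that $p$ is isolated in $P$. Any leaf whose two endpoints lie close to $p$, on opposite sides of $p$, and differ from $p$, has one endpoint in $C$ and the other in $C'$. There are three cases for $p$.
\begin{itemize}
\item If $p$ is a singleton, \refcor{SingletonSingleLeaf} gives non-cusp leaves $\ell_i$ in either chosen foliation with $H(\ell_i)$ shrinking to $p$. For large $i$ these have endpoints on both sides of $p$, close to $p$.
\item If $p$ is an endpoint of an interior leaf $\mu$ with other endpoint $p' \neq q$, there are two foliations to handle. For the foliation opposite to $\mu$, I use \refprop{SingleLeaf}. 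For the foliation of $\mu$ itself, I use \reflem{Perturb}. Its leaves $\mu_k$ have endpoints converging to $p$ and to $p' \in C \cup C'$. Choosing the side of approach appropriately, the endpoint of $\mu_k$ near $p$ falls in the component not containing $p'$. (If $p' = q$ then $p$ and $q$ lie in a common decomposition element and we are done.)
\item If $p$ is a thorn of a clamshell, I would use the interleaving of the crowns (\reflem{Laminations}\refitm{Interleave}), together with the boundary leaves of the two crowns adjacent at $p$ and bi-leaves centred at the cusp, to find leaves of each foliation with endpoints straddling $p$. The argument is the same as above, except that I must check that $q$ is not itself a thorn of that clamshell, since otherwise we are done.
\end{itemize}

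I expect this last case, where $p$ is a thorn, to be the main obstacle. There, the leaves through $p$ come in adjacent pairs sharing the thorn, and one must verify carefully, via \reflem{Laminations}\refitm{CrownZZ} and \refitm{Approach}, that leaves of \emph{both} foliations can be chosen with endpoints in $C$ and in $C'$ without any endpoint equal to $p$.
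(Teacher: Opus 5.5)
You take a genuinely different route from the paper, and your contradiction mechanism is sound. Suppose there is a leaf or bi-leaf of $F^\calV$ and one of $F_\calV$ whose closures both meet $C$ and $C'$. Their Hausdorff limits lie in opposite foliations, by the last clause of \refprop{LimitOfLeaves}, and both contain the distinct points $a \neq a'$. That is impossible, since such closures share at most one point of $\Circle$, namely a common centre. Your proof that $a \neq a'$ is fine, and so are your singleton and interior-leaf-end cases.

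The gap is the thorn case, and it is not just a matter of care: the strategy you sketch there cannot work. Say $p = t_0$ is a thorn of the upper crown $\Lambda^c$, with its thorns $(t_j)_{j \in \ZZ}$ indexed anticlockwise, so that $\{t_{-1}, p\}$ and $\{p, t_1\}$ are boundary leaves. Consider any leaf of $\Lambda^\calV$, interior or boundary (so this also covers the endpoint pair of any bi-leaf of $F^\calV$). If it had one ideal point in $(t_{-1}, p)^\acw$ and the other in $(p, t_1)^\acw$, it would link $\{t_{-1}, p\}$. So no leaf or bi-leaf of $F^\calV$ has ideal points on both sides of $p$ near $p$. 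Only the opposite foliation $F_\calV$ is supplied by \refprop{SingleLeaf}. Since $p$ and $q$ may both be thorns, of different clamshells, you cannot dodge this by working at $q$ instead.

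The missing idea is that the $F^\calV$ leaf must be found from the position of $q$, not from local structure at $p$. If $q \notin [c]_{S^1}$, then, since the thorns accumulate only at $c$, $q$ lies in a unique gap $(t_j, t_{j+1})^\acw$.
\begin{itemize}
\item If $j \notin \{-1, 0\}$, the bi-leaf of $F^\calV$ centred at $c$ with endpoints $t_j$ and $t_{j+1}$ has one endpoint in each of $C$ and $C'$.
\item If $j \in \{-1, 0\}$, then $q$ lies under a boundary leaf ending at $p$. Interior leaves approaching that boundary leaf from its non-cusp side (\reflem{Laminations}\refitm{Approach}) eventually have one ideal point in each of $C$ and $C'$.
\end{itemize}
There is also a uniform alternative that handles all three of your cases at once. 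If no leaf of $\Lambda^\calV$ linked $\{p, q\}$, the geodesic from $p$ to $q$ in $D^\calV$ would either be a leaf or lie in one complementary region. By the proof of \reflem{Equiv}, that region is some $D^c$, so either way $[p]_{S^1} = [q]_{S^1}$. A linking boundary leaf then yields a bi-leaf with the same endpoints.

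By contrast, the paper never needs leaves of both foliations separating $p$ from $q$. It takes the neighbourhood bases $H(\ell_j)$ and $H(\ell'_j)$ of $p$ and $q$ from \refprop{SingleLeaf}, in whatever foliation they come. All the limits $\lim_i g_i(\closure{\ell}_j)$ and $\lim_i g_i(\closure{\ell}'_j)$ equal a single leaf or bi-leaf $L$ joining $a$ to $b$. It then pulls back a non-cusp leaf $m$ of the other foliation crossing $L$. Since $g_i^{-1}(m)$ eventually crosses both $\ell_j$ and $\ell'_j$, the Hausdorff limit of $g_i^{-1}(\closure{m})$ is a leaf or bi-leaf through both $p$ and $q$. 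Because the second foliation enters only at the limit leaf $L$, where any crossing leaf will do, that argument needs no case analysis on the type of $p$.
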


\begin{proof}
By \refcor{NotCuspBefore} neither $p$ nor $q$ is a cusp.  
By \refprop{SingleLeaf} there is a nested neighbourhood basis $\{ H(\ell_j) \}_{j \in \NN}$ of $p$. 
After discarding a finite number of basis elements, the closure of each $\ell_j$ connects a point of some $a$--interval to a point of some $b$--interval.
The Hausdorff limit $L_j = \lim_i g_i(\closure{\ell}_j)$ contains the points $a$ and $b$.  
By \refprop{LimitOfLeaves}, and since $a \neq b$, we find that $L_j$ is the closure of either a non-cusp leaf or a bi-leaf with endpoints $a$ and $b$.  
There is only one such, so the limit $L = L_j$ is independent of $j$. 
There is likewise a nested neighbourhood basis of $q$, say $\{H(\ell_j')\st j\in\NN\}$, where each $\ell_j'$ connects some $a$--interval to some $b$--interval.  
The limits $\lim_i g_i(\closure{\ell}_j')$ are also equal to $L$.

Breaking symmetry, suppose that the interior of $L$ lies in $F^\calV$.
Choose any non-cusp leaf $m$ in $F_\calV$ which crosses $L$.  
For each $j$, there is an integer $N(j)$ so that the leaves $g_i(\ell_j)$ and $g_i(\ell_j')$ cross $m$ whenever $i \geq N(j)$.  
It follows that the leaves $g_i^{-1}(m)$ cross $\ell_j$ and $\ell_j'$ for $i \geq N(j)$.  
In particular, after passing to a subsequence, the Hausdorff limit $M$ of the leaves $g_i^{-1}(\closure{m})$ must contain the partings $p$ and $q$.  
By \refprop{LimitOfLeaves} the interior of $M$ is therefore a non-cusp leaf or bi-leaf connecting $p$ to $q$.  
In either case $P$ is contained in a single decomposition element.
\end{proof}

\begin{lemma}
\label{Lem:NoCuspsAfter}
Suppose both $\closure{I(a)}$ and $\Circle - \closure{I(a)}$ have non-empty interior.  Then $a$ is not a cusp.
\end{lemma}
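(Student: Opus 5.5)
We argue by contradiction. Suppose $a$ is a cusp while both $\closure{I(a)}$ and $\Circle - \closure{I(a)}$ have non-empty interior. By \reflem{InaccessibleCusps} we have $I(a) = J \cap \Delta_\calV$ for a closed interval $J$, which is neither a point nor the whole circle. By \reflem{PartingsDecomposeDeltaM}, the interior of $J$ is a component $C$ of $\Circle - P$. So, by \reflem{ClosedSubIntervals}, every closed sub-interval of $C$ is an $a$--interval.

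Next we produce a second collapsing interval outside $J$. The interior of $\Circle - J$ contains infinitely many cusps, and each lies in some $I(b)$ with $b \neq a$. By \refcor{OneSingleton}, at most one of these sets is a singleton. So we may pick a cusp $d$ there with $I(b)$ having at least two points. Then \reflem{PartingsDecomposeDeltaM} gives a component $C'$ of $\Circle - P$ with $I(b) = \closure{C'} \cap \Delta_\calV$. Since cusps are dense, we may move $d$ into the interior of $C'$. We then obtain a closed $b$--interval $K$ with non-empty interior, disjoint from $J$.

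Now choose a non-cusp leaf $\ell$ of $F^\calV$ and a non-cusp leaf $m$ of $F_\calV$, each having one ideal point in the interior of $J$ and the other in the interior of $K$. Since each ideal point lies in a collapsing interval, the Hausdorff limits (after the diagonal passage of \reflem{Diagonal}, extended to these two leaves) $L = \lim_i g_i(\closure{\ell})$ and $M = \lim_i g_i(\closure{m})$ both contain $a$ and $b$, with $a \neq b$. By \refprop{LimitOfLeaves}, each is the closure of a non-cusp leaf or of a bi-leaf. Moreover, $L$ lies in $F^\calV$ and $M$ in $F_\calV$, by the final sentence of that proposition.

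By \reflem{Laminations}\refitm{NoCusps}, a non-cusp leaf has no ideal point at a cusp. Thus the closure of a non-cusp leaf cannot contain the cusp $a$. A bi-leaf's closure meets $\Circle$ only in its centre and its two (non-cusp) endpoints. So both $L$ and $M$ are closures of bi-leaves centred at $a$, each having $b$ as an endpoint. Since $b$ is an endpoint of a bi-leaf, it is not a cusp. Then the cusp leaves of $L$ and $M$ ending at $b$ are distinct leaves sharing the non-cusp ideal point $b$, unless they coincide. \refcor{OnlyAsymptotics} forbids the former, so the two cusp leaves coincide. Hence $L$ and $M$ lie in the same foliation, which contradicts the previous paragraph.

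The main obstacle is the choice of $\ell$ and $m$: we need, in each foliation, a non-cusp leaf whose two ideal points lie in two prescribed disjoint open arcs. I would obtain this as follows.
\begin{itemize}
\item Take a leaf of $F^\calV$ through a point $p$ of $\Link$.
\item Vary $p$ along an embedded polygonal line running from a cusp $e$ in the interior of $J$ to a cusp $e'$ in the interior of $K$ (\reflem{Span}).
\item Use \reflem{LeftIsLeft} together with \refthm{Disc}\refitm{Ends} to track the ideal points. Near $e$ the leaf is nested inside a small half-disc about $e$; near $e'$ it is nested inside a small half-disc about $e'$. In between, some leaf crossed must separate $e$ from $e'$.
\item Perturb that leaf to a nearby non-cusp leaf using \refrem{NonCuspLeavesDense} and \reflem{Perturb}.
\end{itemize}
The argument for $F_\calV$ is the same.
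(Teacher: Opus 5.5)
Your endgame is sound. If you had non-cusp leaves $\ell \in F^\calV$ and $m \in F_\calV$, each joining $\operatorname{int} J$ to $\operatorname{int} K$, then \refprop{LimitOfLeaves} would make both limits bi-leaf closures centred at $a$ with common thorn $b$. That contradicts \reflem{Laminations}\refitm{NoEndpoints}; this is the mechanism of \reflem{InaccessibleCusps}.

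The gap is the existence of $\ell$ and $m$, which you flag but do not establish. As stated, for two prescribed disjoint open arcs, the claim is false. Suppose a leaf $\lambda$ of $\Lambda^\calV$ has one ideal point in each component of $\Circle - (\operatorname{int} J \cup \operatorname{int} K)$. Then any leaf of $F^\calV$ from $\operatorname{int} J$ to $\operatorname{int} K$ would be linked with $\lambda$. By \reflem{LeftIsLeft}\refitm{CrossIFFLinked} it would cross $\lambda$, which is impossible within one foliation. This is not hypothetical. When the parting set is the pair of ideal points of a single leaf (compare \reflem{Case2}), that leaf separates $J$ from the other component of $\Circle - P$. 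Your argument never uses that $a$ is a cusp to exclude this.

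The intermediate-value sketch does not help. A leaf separating $e$ from $e'$ only has an ideal point in each of the two large arcs of $\Circle - \{e, e'\}$. Moreover, the ideal points of leaves met along a transversal move monotonically but with jumps, so there is no intermediate value theorem to invoke. Your first bullet is also inaccurate. Leaves of $F^\calV$ through points tending to $e$ along a cusp leaf of $F_\calV$ converge to a bi-leaf centred at $e$, so their ideal points tend to two thorns of $e$, not to $e$.

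The paper needs only one leaf. Write $\closure{I(a)} = [x, y]^\acw$. The paper takes $\ell$ with one ideal point $u$ in the interior of $\closure{I(a)}$ and the other outside it:
\begin{itemize}
\item if $x$ is not a cusp, from the half-disc neighbourhood basis at $x$ given by \refprop{SingleLeaf};
\item if both $x$ and $y$ are cusps, as a bi-leaf centred at $x$ whose thorns straddle $y$.
\end{itemize}
It then uses that $a = \lim_i g_i(u)$ is an endpoint, not the centre, of the limit $L$, and endpoints of non-cusp leaves and bi-leaves are never cusps.

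That step has a short justification. Suppose leaves of one foliation Hausdorff converge to $\closure{k}$, where $k = \ell' \cup \{c\} \cup \ell''$. Then they eventually lie in the component of $\Disc - \closure{k}$ between $\ell'$ and $\ell''$, on the divider side. So their ideal points lie in the closed arc between the two thorns of $k$ that misses $c$, and they converge to thorns. With this observation, your second interval $K$ and second leaf $m$ become unnecessary.
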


\begin{proof}
Suppose $\closure{I(a)}=[x, y]^\acw$.  We divide into two cases, depending on whether or not $x$ is a cusp. 

\begin{figure}[htbp]

\subfloat[Case: $x$ is not a cusp.]{
\labellist
\small\hair 2pt
\pinlabel {$\closure{I(a)}$} [r] at 1 125
\pinlabel {$x$} [br] at 38 215
\pinlabel {$y$} [tr] at 38 35
\pinlabel {$\ell$} [tl] at 75 187
\pinlabel {$u$} [tl] at 16 171
\pinlabel {$c'$} [br] at 60 227
\pinlabel {$v$} [b] at 84 240
\pinlabel {$c$} [b] at 114 248
\endlabellist
\includegraphics[width=0.43\textwidth]{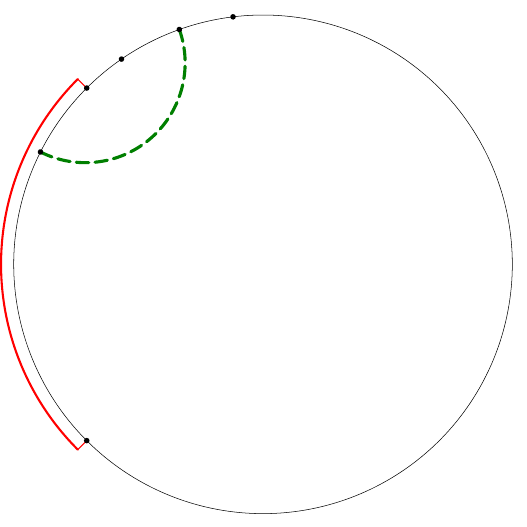}
\label{Fig:NoCuspsxNotCusp}
}
\qquad
\subfloat[The limit when $L$ is a leaf (as opposed to a bi-leaf).]{
\labellist
\small\hair 2pt
\pinlabel {$a$} [r] at 4 115
\pinlabel {$L$} [tl] at 110 147
\pinlabel {$b'$} [bl] at 158 244
\pinlabel {$z$} [bl] at 190 230
\pinlabel {$b$} [bl] at 228 192
\endlabellist
\includegraphics[width=0.43\textwidth]{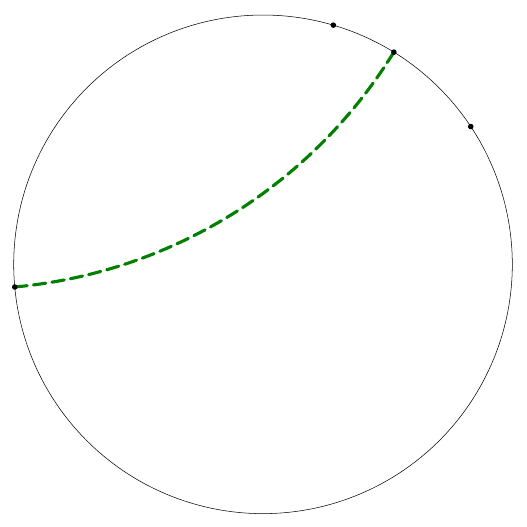}
\label{Fig:NoCuspsLimit}
}

\subfloat[Case: $x$ is a cusp.]{
\labellist
\small\hair 2pt
\pinlabel {$\closure{I(a)}$} [r] at 1 125
\pinlabel {$x$} [br] at 38 215
\pinlabel {$y$} [tr] at 38 35
\pinlabel {$\ell_{j}$} [tl] at 60 129
\pinlabel {$\ell_{j+1}$} [t] at 122 155
\pinlabel {$z_{j}$} [r] at 11 70
\pinlabel {$z_{j+1}$} [l] at 245 141
\endlabellist
\includegraphics[width=0.43\textwidth]{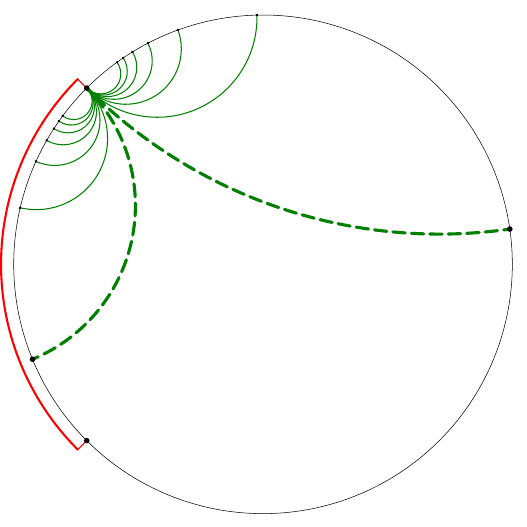}
\label{Fig:NoCuspsxCusp}
}
\caption{The leaf or bi-leaf $\ell$ is drawn with a dashed line.}
\label{Fig:NoCusps}
\end{figure}

Suppose that one of $x$ and $y$ is \emph{not} a cusp. 
Breaking symmetry, suppose that $x$ is not a cusp.
See \reffig{NoCuspsxNotCusp}.
In this case \refprop{SingleLeaf} gives us a leaf $\ell$ joining a point $u$ in the interior of $\closure{I(a)}$ to a point $v$ in the interior of $\Circle - \closure{I(a)}$.  
There are cusps $c, c'$ in the interior $\Circle - \closure{I(a)}$ which separate $v$ from $\closure{I(a)}$; choose them so $\calO_\calV(c, v, c') = 1$.  
Thus we have $b$ and $b'$, distinct from $a$, so that $c \in I(b)$ and $c' \in I(b')$.  
Passing to a subsequence, the limit $z = \lim_i g_i(v)$ must lie in the (possibly degenerate) interval $[b, b']^\acw$, which does not contain $a$. See \reffig{NoCuspsLimit}.
Passing to a further subsequence, the limit $L = \lim_i g_i(\closure{\ell})$ cannot be a point of $\Circle$. 
By \refprop{LimitOfLeaves}, the limit $L$ is the closure of either a non-cusp leaf or a bi-leaf.
In either case the endpoints of $L$ are equal to $a$ and $z$.  
No cusp can be the endpoint of a non-cusp leaf or bi-leaf, so $a$ is not a cusp.

Suppose that both $x$ and $y$ are cusps.
See \reffig{NoCuspsxCusp}.
In this case we claim there is a bi-leaf $\ell$ with
\begin{itemize}
\item
centre at $x$,
\item
one endpoint in the interior of $\closure{I(a)}$, and 
\item
one endpoint in the interior of $\Circle - \closure{I(a)}$.  
\end{itemize}
Given the claim, we argue as above that $a$ is an endpoint of $\lim_i g_i(\closure{\ell})$, and so cannot be a cusp.  

It remains to find the bi-leaf.  
Let $\{ \ell_j \}_{j \in \ZZ}$ be the set of cusp leaves of $F^\calV$ emanating from $x$.  
Let $z_j$ be the endpoint of $\ell_j$ different from $x$.  
We assume the leaves $\ell_j$ are  indexed so that $\calO_\calV( z_{j}, z_{j+1}, x) = 1$ for all $j$. 
By \reflem{NoDoubleCusps} no $z_j$ is a cusp.  
Thus no $z_j$ is equal to $y$.  
There is thus a unique $j$ so that $\calO_\calV(z_j, y, z_{j+1}) = 1$.  
Thus $\ell_j \cup \ell_{j+1}$ is the desired bi-leaf.
\end{proof}

\begin{lemma}
\label{Lem:FiniteImpliesTwo}
Suppose that the parting set $P= (p_1, \ldots, p_n)$ is finite.
Then $n \leq 2$.
\end{lemma}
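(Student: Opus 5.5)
Suppose for contradiction that $n \geq 3$. List the partings in anticlockwise order, and let $C_1, \ldots, C_n$ be the components of $\Circle - P$, with $C_j$ running from $p_j$ to $p_{j+1}$. Each $C_j$ is an open interval, so by \reflem{ClosedSubIntervals} every closed subinterval of $C_j$ is collapsing. Cusps are dense, so each $C_j$ contains cusps; since $\closure{C_j}$ contains no partings in its interior, all these cusps lie in a single $I(a_j)$. \reflem{PartingsDecomposeDeltaM} then gives $I(a_j) = \closure{C_j} \cap \Delta_\calV$. By \refcor{NotCuspBefore}, no $p_j$ is a cusp, so the sets $I(a_j)$ are pairwise disjoint and the $a_j$ are pairwise distinct. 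Moreover, \refrem{AttractorPartition} shows these $I(a_j)$ exhaust $\Delta_\calV$. Finally, since $n\ge 3$, each $\closure{I(a_j)} = \closure{C_j}$ and its complement both have non-empty interior. So \reflem{NoCuspsAfter} shows that no $a_j$ is a cusp.

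The plan is to derive a contradiction from the structure of $A$. We have $n\ge 3$ distinct points $a_1, \ldots, a_n$ in the single decomposition element $A$, none of them a cusp. By \refdef{DecompositionElementsCircle}, $A$ is either a singleton, an interior-leaf pair, or a clamshell boundary. The first two have at most two points, so $A$ must be the clamshell boundary of some cusp $c$, and the $a_j$ are thorns. But $c$ itself is a cusp in $A$, and every cusp lies in some $I(a_j)$. We therefore need to show that some cusp converges to $c$, which would contradict \reflem{NoCuspsAfter}.

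To get this, I would exploit the circular order. By \reflem{OrdersAgree}, the cyclic order of $a_1, \ldots, a_n$ in $\Circle$ agrees with that of the $C_j$, so the $a_j$ appear anticlockwise in that order. Between consecutive $a_j$ and $a_{j+1}$ lies a gap of $A$, and $c$ lies in exactly one such gap, say between $a_k$ and $a_{k+1}$. Now take a bi-leaf or non-cusp leaf $\ell$ joining points in $C_k$ and $C_{k+1}$ near $p_{k+1}$, using \refprop{SingleLeaf} since $p_{k+1}$ is not a cusp. Its limit $\lim_i g_i(\closure{\ell})$ is the closure of a leaf or bi-leaf with endpoints $a_k$ and $a_{k+1}$, by \refprop{LimitOfLeaves}. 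Since $a_k$ and $a_{k+1}$ are thorns of the same crown, this limit must be the corresponding bi-leaf centred at $c$, or a crown leaf. Meanwhile, leaves near $p_j$ for other $j$ limit to leaves joining other consecutive thorns.

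To finish, I would run the argument of \reflem{Case2} in reverse. Pick a non-cusp leaf $m$ crossing the limit leaf joining $a_k$ to $a_{k+1}$. Then $g_i^{-1}(m)$ crosses leaves approaching $p_{k+1}$ and, as in \reflem{Case2}, the Hausdorff limit of $g_i^{-1}(\closure m)$ must contain $p_{k+1}$. The same crossing argument, applied to leaves of the opposite foliation from the interleaving crowns (\reflem{Laminations}\refitm{Interleave}), forces a single leaf limit to contain three or more partings. This contradicts \refprop{LimitOfLeaves}, since a leaf closure has only two endpoints. This last step is where I expect the main difficulty: making the interleaving of upper and lower crowns produce leaves whose pulled-back limits must pass through three partings. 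If this proves awkward, I would instead try to show directly that $n \geq 3$ forces $c \in \lim g_i(\text{some cusp})$ by a sandwiching argument with \reflem{OrdersAgree}.
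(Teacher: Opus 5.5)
Your setup through the identification of the limits is sound and matches the paper. The $a_j$ are distinct and circularly ordered like the $C_j$, no $p_j$ is a cusp, and $A$ is the clamshell boundary of a cusp $c$. For a non-cusp leaf $\ell_j$ separating $p_j$ from $P - \{p_j\}$, the limit $L_j$ of $g_i(\closure{\ell}_j)$ is the closure of a bi-leaf centred at $c$ with thorns $a_{j-1}$ and $a_j$.

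The gap is the final step, and the mechanism you propose cannot close it. The argument of Lemma~\ref{Lem:Case2} puts $p_j$ into the limit of $g_i^{-1}(\closure{m})$ only when $m$ crosses $L_j$. That forces $\bdyi m$ to separate $a_{j-1}$ from $a_j$. Since the $a_j$ are circularly ordered, the two points of $\bdyi m$ separate at most two cyclically consecutive pairs. So this route places at most two partings on one limit, which is no contradiction: it is exactly the conclusion of Lemma~\ref{Lem:Case2}. Your fallback of forcing some cusp to converge to $c$ is a hope rather than an argument.

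The missing idea is the combinatorics of the crown at $c$, and you already hold every ingredient.
\begin{enumerate}
\item Consecutive limits $L_j$ and $L_{j+1}$ share the thorn $a_j$, which is not a cusp. By Corollary~\ref{Cor:OnlyAsymptotics} they lie in the same foliation. Hence all the $L_j$ lie in one foliation, and all the $a_j$ are thorns of a single crown at $c$. This also justifies your unproved remark that $a_k$ and $a_{k+1}$ lie in the same crown.
\item Write $m_j$ for the cusp leaf from $c$ to $a_j$. Each $L_j = m_{j-1} \cup \{c\} \cup m_j$ requires $m_{j-1}$ and $m_j$ to be separated by exactly one cusp leaf of the other foliation (Definition~\ref{Def:BiLeaf}).
\item Now use the gap you singled out. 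The cusp $c$ lies between $a_k$ and $a_{k+1}$, so, as $n \geq 3$, the thorn $a_{k+2}$ lies on the arc from $a_{k+1}$ to $a_k$ avoiding $c$. Thus $m_{k+2}$ separates $m_k$ from $m_{k+1}$.
\item Since the upper and lower crowns at $c$ interleave (Lemma~\ref{Lem:Laminations}), $m_k$ and $m_{k+1}$ are then separated by at least two cusp leaves of the other foliation. So $L_{k+1}$ is not a bi-leaf, a contradiction.
\end{enumerate}
This is the paper's proof. It needs neither a reverse application of Lemma~\ref{Lem:Case2} nor any analysis of which cusps converge to $c$.
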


\begin{figure}[htbp]
\subfloat[The parting set and complementary intervals.]{
\labellist
\small\hair 2pt
\pinlabel {$p_1$} [b] at 127 9
\pinlabel {$p_2$} [r] at 245 125
\pinlabel {$p_3$} [t] at 127 243
\pinlabel {$p_n$} [l] at 9 125
\pinlabel {$\ell_1$} [b] at 127 49
\pinlabel {$\ell_2$} [r] at 205 125
\pinlabel {$\ell_3$} [t] at 127 203
\pinlabel {$\ell_n$} [l] at 49 125
\pinlabel {$C_1$} [tl] at 213 38
\pinlabel {$C_2$} [bl] at 213 213
\pinlabel {$C_3$} [br] at 38 213
\pinlabel {$C_n$} [tr] at 39 39
\endlabellist
\includegraphics[width=0.43\textwidth]{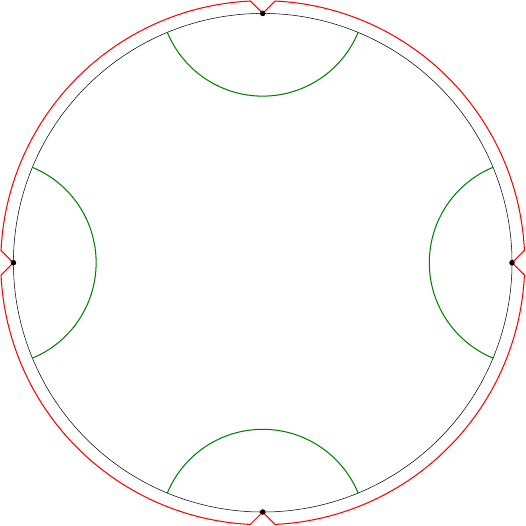}
\label{Fig:FinitePBefore}
}
\qquad
\subfloat[The limit.]{
\labellist
\small\hair 2pt
\pinlabel {$c$} [t] at 140 5
\pinlabel {$a_1$} [tl] at 221 50
\pinlabel {$a_2$} [bl] at 200 218
\pinlabel {$a_3$} [br] at 33 201
\pinlabel {$a_n$} [tr] at 53 34
\endlabellist
\includegraphics[width=0.43\textwidth]{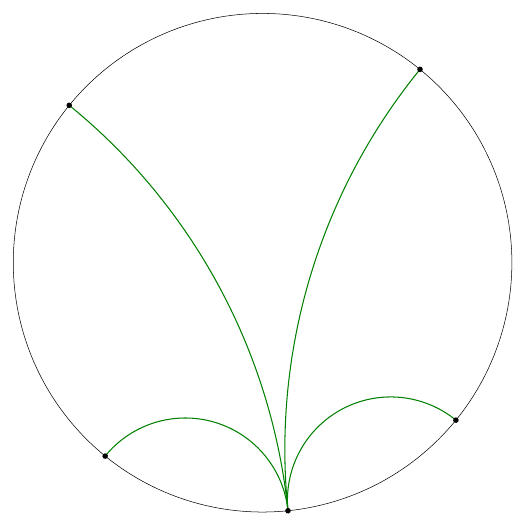}
\label{Fig:FinitePAfter}
}

\caption{Ruling out more than two parting points (here $n=4$).}
\label{Fig:FiniteP}
\end{figure}

\begin{proof}
For a contradiction, suppose that $n\geq 3$.
We assume that the $p_j$ are arranged in anticlockwise order according to their indices, which we take modulo $n$.
Set $C_j = (p_j, p_{j+1})^\acw$.
By Lemmas \ref{Lem:ClosedSubIntervals} and \ref{Lem:PartingsDecomposeDeltaM}, there are distinct $a_j \in A$ so that $I(a_j) = \closure{C}_j \cap \Delta_\calV$.
No $p_j$ is a cusp by \refcor{NotCuspBefore}.
By \reflem{OrdersAgree}, the $a_j$ are also circularly ordered consistently with their indices.
Fix $j$.
\refprop{SingleLeaf} provides a non-cusp leaf $\ell_j$ separating $p_j$ from $P - \{p_j\}$.
Passing to a subsequence, the Hausdorff limit $L_j = \lim_i g_i(\closure{\ell}_j)$ exists and contains the points $ a_{j-1}$ and $a_j$.
In particular the intersection in $\Circle$ of $L_j$ and $L_{j+1}$ is non-empty for each $j$.  
Consulting the possible limits in \refprop{LimitOfLeaves} we see that this can only happen if each $L_j$ is the closure of a bi-leaf running from $a_{j-1}$ to $a_j$, and all these bi-leaves have centre at a fixed cusp $a$.
By \refcor{OnlyAsymptotics}, all of these bi-leaves lie in the same foliation, say $F^\calV$.

Let $m_j$ be the cusp leaf running between $a_j$ and $a$.
Thus for all $j$, the union $m_{j-1} \cup m_j$ is the interior of $L_j$.  
Re-indexing if necessary, we may assume that $\calO_\calV(a_n, a ,a_1)=1$.
But then the cusp leaves $m_n$ and $m_1$ are separated by $m_2$; 
thus $L_1$ is not the closure of a bi-leaf, a contradiction.
\end{proof}

We next address the case that there are infinitely many $a\in A$ so that $I(a)$ is non-empty.
By \refdef{DecompositionElementsCircle}, this implies that $A$ is the decomposition element associated to some cusp $a_A$. 

\begin{definition}
A convergent sequence $\{p_j\}\to p$ in $\Circle$ is an \emph{anticlockwise converging sequence} if $\calO_\calV(p_j,p_{j+1},p) = 1$ for all $j$.
The limit of an anticlockwise converging sequence in $P$ will be called an \emph{anticlockwise accumulation point of $P$}.
We similarly define \emph{clockwise converging sequences} and \emph{clockwise accumulation points}.
\end{definition}

Here is a lemma which can be proved from the circular order.

\begin{lemma}
\label{Lem:AccumulatingPartings}
The parting set $P$ has at most one accumulation point.
\end{lemma}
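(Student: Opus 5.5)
We argue by contradiction: suppose $P$ has two distinct accumulation points $p$ and $q$. The plan is to show that the components of $\Circle - P$ accumulating on them carry infinitely many distinct attractors, and that the circular order then forces two incompatible positions for $a_A$.

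First, since $P$ is closed and has infinitely many points, the complement $\Circle - P$ has infinitely many components. Those components whose closures contain at least two cusps are the interesting ones. By \reflem{ClosedSubIntervals} each component $C$ contains collapsing intervals. Because cusps are dense, $C$ contains at least two cusps, so \reflem{PartingsDecomposeDeltaM} gives some $a_C \in A$ with $I(a_C) = \closure{C} \cap \Delta_\calV$. Distinct components give distinct attractors, since the $I(a)$ are disjoint (\refrem{AttractorPartition}) and two such closures share at most one endpoint. By \reflem{InaccessibleCusps} that endpoint is not a cusp, so the two closures contain disjoint sets of cusps. Hence $A$ is infinite, and $A$ is the clamshell decomposition element of a cusp $a_A$.

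Next, transport the circular order. Choose components $C_j$ accumulating on $p$, together with cusps $c_j \in C_j$ converging to $p$. By \reflem{OrdersAgree}, the circular order of the $c_j$ agrees with that of the distinct points $a_{C_j} \in A - \{a_A\}$. These are thorns, and any sequence of distinct thorns converges to $a_A$ by \reflem{Laminations}\refitm{CrownZZ}. The same holds near $q$.

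The key step is to exploit the combinatorics of the clamshell. Its thorns are order-isomorphic to $\ZZ$, with $a_A$ sitting at both ends: as in the proof of \reflem{SingletonGoesToCusp}, two thorns separate the others into a finite set and an infinite set. Pick a cusp $d$ in a component of $\Circle - P$ lying strictly between $p$ and $q$ on one side, and a cusp $e$ in a component on the other side. Such components exist because if $p \neq q$, both arcs of $\Circle - \{p, q\}$ must contain components (otherwise $P$ would contain a whole arc, and arcs contain cusps, contradicting \reflem{AtMostTwo} applied to the $I(a)$ containing those cusps).

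Now work with the four points $d$, $c_j$ (near $p$), $e$, and $c'_k$ (near $q$); they are cyclically arranged in the order $d, c_j, e, c'_k$. Their images under $a_{(\cdot)}$ have the same circular order. As $j$ and $k$ grow, the images $a_{C_j}$ and $a_{C'_k}$ run to $a_A$ through thorns. These thorns must eventually lie in the infinite side determined by the two thorns $a_d$ and $a_e$. That side is a single interval of $\Circle - \{a_d, a_e\}$, so both sequences lie in the same arc. This contradicts the fact that $c_j$ and $c'_k$ are separated by $\{d, e\}$.

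The main obstacle is the bookkeeping here. One must check that $a_d$ and $a_e$ are genuinely thorns distinct from $a_A$; otherwise we replace $d$ and $e$ by cusps in other components, using \refcor{OneSingleton}. One must also check that the near-$p$ and near-$q$ sequences eventually avoid the finite side. Both facts follow from the $\ZZ$ order on the thorns and the convergence of distinct thorns to $a_A$.
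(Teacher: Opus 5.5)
\paragraph{The gap: $a_d$ and $a_e$ need not be thorns.}
The step that fails is the one you call bookkeeping: arranging that $a_d$ and $a_e$ are thorns rather than the cusp $a_A$. Replacing $d$ by a cusp in another component is impossible when the arc of $\Circle - \{p,q\}$ containing $d$ is a \emph{single} component $C$ of $\Circle - P$. Nothing excludes this a priori. $P$ may accumulate on $p$ and on $q$ only from inside the other arc, so that $(p,q)^\acw = C$.

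If $a_C = a_A$, the transported order gives no contradiction. For large $j,k$ the attractors sit in the cyclic order $a_{C'_k}, a_e, a_{C_j}, a_A$. A crown allows exactly this: the near-$q$ thorns approach $a_A$ from one side and the near-$p$ thorns from the other. Both arcs of $\Circle - \{a_A, a_e\}$ then contain infinitely many thorns.

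Worse, running your own argument with $a_C$ a thorn shows that $a_C = a_A$ is \emph{forced} in this configuration, so no rechoosing of $d$ or $e$ avoids it. \refcor{OneSingleton} is not the relevant tool here. It controls when $I(a)$ is a singleton, not when $a$ is the cusp.

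\paragraph{The fix.}
The missing input is \reflem{NoCuspsAfter}. Since $P$ has at least two points, every component $C$ of $\Circle - P$ satisfies two things: $\closure{I(a_C)} = \closure{C}$ has non-empty interior, and so does $\Circle - \closure{C}$. Hence $a_C$ is never a cusp. With this, every $a_C$ is a thorn, no replacement is needed, and your finite-side/infinite-side contradiction goes through uniformly.

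That makes your route somewhat cleaner than the paper's. The paper splits into two cases:
\begin{itemize}
\item two anticlockwise accumulation points, where it uses varying pairs $(a_j,b_j)$, so the cusp can absorb only finitely many of them;
\item one clockwise and one anticlockwise accumulation point, which is precisely the configuration above.
\end{itemize}

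\paragraph{Smaller repairs.}
\begin{itemize}
\item Your reason that $P$ contains no arc does not follow from \reflem{AtMostTwo} alone once $A$ may be infinite, since the cusps of such an arc could go to infinitely many different attractors. Instead, by \reflem{PartingsDecomposeDeltaM}, every cusp in the interior of an arc of $P$ has $I(a)$ a singleton, and \refcor{OneSingleton} allows only one such cusp.
\item Your opening claim that $\Circle - P$ has infinitely many components needs this no-arc fact first.
\item The $a_C$ are distinct for the simple reason that the cusps of $C$ lie in $I(a_C)$ but not in $\closure{C'}$. Your appeal to \reflem{InaccessibleCusps} is not needed.
\end{itemize}
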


\begin{figure}[htbp]
\labellist
\footnotesize\hair 2pt
\pinlabel $p$ [r] at 1 118
\pinlabel $p_1$ [br] at 40 205
\pinlabel $p_2$ [br] at 21 185
\pinlabel $p_3$ [r] at 13 172
\pinlabel $I(a_1)$ [tl] at 35 195
\pinlabel $I(a_2)$ [tl] at 22 175

\pinlabel $q$ [l] at 243 118
\pinlabel $q_1$ [tl] at 204 31
\pinlabel $q_2$ [tl] at 223 51
\pinlabel $q_3$ [l] at 231 64
\pinlabel $I(b_1)$ [br] at 209 45
\pinlabel $I(b_2)$ [br] at 222 65
\endlabellist
\includegraphics[width=0.45\textwidth]{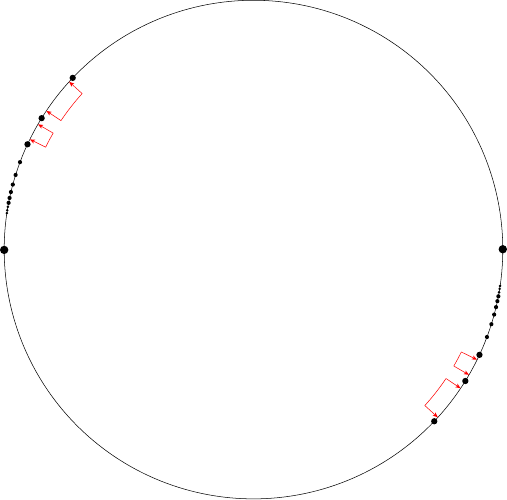}
\caption{Two anticlockwise converging sequences. }
\label{Fig:AntiClockwiseAccumulating}
\end{figure}

\begin{proof}
We first show that there is at most one anticlockwise accumulation point.
Suppose by way of contradiction that there are two anticlockwise converging sequences $p_j\to p$ and $q_j\to q$ in $P$. 
See \reffig{AntiClockwiseAccumulating}. 
Truncating the sequences if necessary we may assume $\calO_\calV(p, q_j, q) = \calO_\calV(q, p_j, p) = 1$ for each $j$. 
For each $j\in\NN$ choose cusps $c_j$ and $d_j$ so that $c_j \in [p_j, p_{j+1}]^\acw$ and $d_j \in [q_j, q_{j+1}]^\acw$.
Suppose that $c_j \in I(a_j)$ and $d_j \in I(b_j)$.
By \reflem{I(a)Component}, we deduce that $I(a_j)\subset [p_j, p_{j+1}]^\acw$ and $I(b_j)\subset [q_j, q_{j+1}]^\acw$.  
Applying \reflem{OrdersAgree}, for any $j$ we have that both $[a_j, b_j]^\acw$ and $[b_j, a_j]^\acw$ have infinite intersection with $A$.  
As in the proof of \reflem{SingletonGoesToCusp}, this can only happen if one of $a_j, b_j$ is the cusp, so it can only happen for one $j$, a contradiction.
The argument that there is at most one clockwise accumulation point is similar.

Suppose now there is a clockwise converging sequence $p_j\to p$ and an anticlockwise converging sequence $q_j\to q$.  
Arguing as in the first paragraph, if $I(a) \subset [q,p]^\acw$, then $a$ is the cusp $a_A$.
Thus $p = q$.
\end{proof}

Thus, when $P$ is infinite, it has a unique accumulation point.
We denote this by $c_P$.

\begin{lemma}
\label{Lem:AccumulationCusp}
Suppose that $P$ is infinite.
Then $c_P$ is a cusp.
\end{lemma}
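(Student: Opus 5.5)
The plan is to argue by contradiction: suppose $c_P$ is not a cusp. Then use \refprop{SingleLeaf} to find non-cusp leaves near $c_P$ whose images under the $g_i$ must converge to bi-leaves joining thorns of a single clamshell. The combinatorics of a clamshell forbids infinitely many such bi-leaves.

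\emph{Step 1: set-up.} Since $P$ is infinite, $\Circle - P$ has infinitely many components. Each component $C$ is an open interval, so it contains infinitely many cusps. By \reflem{ClosedSubIntervals} all of these cusps lie in a single $I(a_C)$. So $I(a_C)$ has at least two points, and by \reflem{PartingsDecomposeDeltaM} we get $I(a_C) = \closure{C} \cap \Delta_\calV$. By \refrem{AttractorPartition}, distinct components give distinct $a_C$. Hence $A$ is infinite, so by \refdef{DecompositionElementsCircle} it is the clamshell boundary of the cusp $a_A$. Every $a_C$ other than at most one (\refcor{OneSingleton} and \reflem{SingletonGoesToCusp}) is a thorn of $A$.

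\emph{Step 2: choosing leaves.} By \reflem{AccumulatingPartings}, the partings accumulate only at $c_P$; breaking symmetry, they accumulate anticlockwise, via $p_j \to c_P$. Let $C_j = (p_j, p_{j+1})^\acw$ and $a_j = a_{C_j}$. By \reflem{OrdersAgree} the $a_j$ are distinct and monotonically circularly ordered in $A$. Since $a_A$ is the unique accumulation point of the clamshell (\reflem{Laminations}\refitm{CrownZZ}), we get $a_j \to a_A$ from one side. On the clockwise side of $c_P$, take either the component $C'$ of $\Circle - P$ whose closure contains $c_P$, or, if $c_P$ is also accumulated clockwise, components $C'_k$ converging to $c_P$ from that side. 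Call the associated points $a'$, respectively $a'_k$. Assuming $c_P$ is not a cusp, \refprop{SingleLeaf} gives non-cusp leaves $\ell_k$ with $H(\ell_k)$ shrinking to $c_P$. For $k$ large, each $\ell_k$ has one endpoint $u_k$ in some $C_{j_k}$ with $j_k \to \infty$, and the other endpoint $v_k$ in $C'$ (or in $C'_k$). These endpoints lie in interiors of collapsing intervals, so $g_i(u_k) \to a_{j_k}$ and $g_i(v_k) \to a'$ (or $a'_k$).

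\emph{Step 3: limits are bi-leaves.} Pass to a subsequence and apply \refprop{LimitOfLeaves} to $g_i(\closure{\ell}_k)$. The limit contains the two distinct points $a_{j_k}$ and $a'$, so it is the closure of a non-cusp leaf or of a bi-leaf with these endpoints. Both are thorns of the clamshell $A$, that is, ideal points of boundary leaves. By \reflem{Laminations}\refitm{Adjacent} and \refcor{OnlyAsymptotics}, no interior leaf ends there. Hence the limit is a bi-leaf centred at $a_A$ whose endpoints are the thorns $a_{j_k}$ and $a'$ (or $a'_k$).

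\emph{Step 4: contradiction.} Order the thorns of $A$ by $\ZZ$ as in the proof of \reflem{DecompositionElementsRecover}. A bi-leaf centred at $a_A$ consists of two cusp leaves separated by exactly one divider (\refdef{BiLeaf}), so its two endpoint thorns differ by a bounded amount in this indexing (two steps). In the first case, $a'$ is fixed while the $a_{j_k}$ are infinitely many distinct thorns, which is impossible. In the second case, $a_{j_k}$ and $a'_k$ tend to $a_A$ from opposite sides, so their index difference is unbounded, again impossible. Hence $c_P$ is a cusp.

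The main obstacle I expect is Step 3 and the exact combinatorics of Step 4. One must carefully justify that the limiting bi-leaf has both endpoints at thorns and not at the cusp $a_A$ itself, and make the bounded-index-difference claim precise using \reflem{Laminations}\refitm{Interleave} and \reflem{NoTriangles}.
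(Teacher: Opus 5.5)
Your proposal is correct and is essentially the paper's argument: assume $c_P$ is not a cusp, take the non-cusp leaves $\ell_k$ from Proposition~\ref{Prop:SingleLeaf}, use Proposition~\ref{Prop:LimitOfLeaves} to show that the limits of $g_i(\closure{\ell}_k)$ are bi-leaves centred at $a_A$, and derive a contradiction in the one-sided and two-sided accumulation cases. Your bounded-index formulation is equivalent to the paper's observation that the thorns of $L_{k+1}$ would separate those of $L_k$ from $a_A$.

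Two details need tightening. First, you must choose the $\ell_k$ with ideal points off the countable set $P$; the paper does this with Lemma~\ref{Lem:Perturb}. Second, the fact that $a'$ is a thorn rather than $a_A$ should come from Lemma~\ref{Lem:NoCuspsAfter}, since both $\closure{C'}$ and its complement have interior. Corollary~\ref{Cor:OneSingleton} and Lemma~\ref{Lem:SingletonGoesToCusp} concern singleton sets $I(a)$, and your ``at most one exception'' leaves open exactly the bad case where the fixed $a'$ is the cusp.
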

 
\begin{proof}
For a contradiction, suppose that $c_P$ is not a cusp.
Then \refprop{SingleLeaf} provides non-cusp leaves $\ell_k$ determining a nested neighbourhood basis of $c_P$.  
Applying \reflem{Perturb}, we may assume that for all $k$, the ideal points $u_k$ and $v_k$ of $\ell_k$ are disjoint from $P$.
We orient $\ell_k$ away from $u_k$, arranging matters so that $\calO_\calV(v_k, c_P, u_k) = 1$.  
Thus $c_P$ is to the left of $\ell_k$.  

Now suppose that $c_P$ has both a clockwise and an anticlockwise converging sequence.
Passing to a subsequence, we may assume that for $k \neq k'$ the four points $u_k, v_k, u_{k'}, v_{k'}$ lie in distinct components of $\Circle - P$. 
We deduce that $u_k \to c_P$ and $v_k \to c_P$ are clockwise and anticlockwise converging sequences, respectively. 

Let $L_k$ be the Hausdorff limit of $g_i(\closure{\ell}_k)$.
Since $u_k$ and $v_k$ lie in distinct components of $\Circle - P$, we deduce that $L_k$ meets at least two points of $A$.
Thus, by \refprop{LimitOfLeaves} it is the closure of a bi-leaf or a non-cusp leaf.
However, $A$ is an infinite decomposition element and hence contains exactly one cusp, $a_A$, and a collection of thorns.
Non-cusp leaves do not have these as ideal points, thus $L_k$ is the closure of a bi-leaf.

Let $a_k$ and $b_k$ be the thorns of $L_k$, named so that $\calO_\calV(b_k, a_A, a_k) = 1$.
Since $u_k \to c_P$ is a clockwise converging sequence, by \reflem{OrdersAgree} we have that $a_k \to a_A$ is also a clockwise converging sequence.  
Similarly, $b_k \to a_A$ is an anticlockwise converging sequence.  
Thus the endpoints of $L_k$ are separated from $a_A$ by the endpoints of $L_{k+1}$, contradicting the fact that $L_{k+1}$ is the closure of a bi-leaf with centre $a_A$. 
Thus we have proved that $c_P$ is a cusp. 

Now, breaking symmetry, suppose that $c_P$ has an anticlockwise converging sequence in $P$, but no clockwise converging sequence.
We run the above argument again, except that in this case we may assume that $u_k$ and $u_{k'}$ lie in the same component of $\Circle - P$.
Again we have that $u_k \to c_P$ and $v_k \to c_P$ are clockwise and anticlockwise converging sequences, respectively. 
The rest of the argument is the same except that the $b_k$'s all agree, and we reach the same contradiction.
\end{proof}

\begin{lemma}
\label{Lem:InfiniteCase}
Suppose that $P$ is infinite.
Suppose that $c_P$ is the cusp given by \reflem{AccumulationCusp}.
Then $P$ is contained in the decomposition element~$[c_P]_{S^1}$.
\end{lemma}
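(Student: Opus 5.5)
Set $c = c_P$. I will show that every parting $p \in P - \{c\}$ is a thorn of the clamshell boundary $[c]_{S^1}$, that is, an ideal point of a boundary leaf in $\Lambda^c$ or $\Lambda_c$. The clamshell boundary $[c]_{S^1}$ is the closed set consisting of $c$ together with its thorns, which accumulate only at $c$. Since $c$ is the unique accumulation point of $P$, this would give $P \subset [c]_{S^1}$.

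\textbf{Setup.} Every $p \in P - \{c\}$ is isolated in $P$. By \refcor{NotCuspBefore}, no such $p$ is a cusp. Let $C$ and $C'$ be the components of $\Circle - P$ adjacent to $p$. By Lemmas~\ref{Lem:ClosedSubIntervals} and~\ref{Lem:PartingsDecomposeDeltaM}, there are distinct $a, a' \in A$ with $I(a) = \closure{C} \cap \Delta_\calV$ and $I(a') = \closure{C}' \cap \Delta_\calV$. Since $P$ is infinite, \reflem{FiniteImpliesTwo}'s setting shows that $A$ is the clamshell boundary of the cusp $a_A$.

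\textbf{First step: produce a limiting bi-leaf.} Apply \refprop{SingleLeaf} at $p$ to get non-cusp leaves $\ell_j$ whose half-discs shrink to $p$. Each $\ell_j$ has one endpoint in $C$ and one in $C'$. Passing to a subsequence, the Hausdorff limit $\lim_i g_i(\closure{\ell}_j)$ contains $a$ and $a'$. Since $A$ contains no ideal points of non-cusp leaves, \refprop{LimitOfLeaves} forces this limit to be the closure of a bi-leaf centred at $a_A$ with thorns $a$ and $a'$. As in \reflem{Case2}, this limit is independent of $j$; call it $L$.

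\textbf{Second step: pull back a transverse leaf.} Choose a non-cusp leaf $m$ in the opposite foliation crossing $L$ near $a_A$. For large $i$, the leaf $g_i(\ell_j)$ crosses $m$. Hence $g_i^{-1}(m)$ crosses $\ell_j$ for every $j$, so the Hausdorff limit $M$ of the closures $g_i^{-1}(\closure{m})$ contains $p$.

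I expect this step to be the main obstacle: showing that $M$ also contains $c$. The plan is to choose $m$ so that it also crosses the analogous limiting bi-leaves obtained from nearby partings accumulating on $c$. These all share the centre $a_A$, and are consecutive bi-leaves of the crown at $a_A$, interleaving with the opposite crown by \reflem{Laminations}\refitm{Interleave}. One then shows that $g_i^{-1}(m)$ must separate $p$ from partings arbitrarily close to $c$. If this fails, I would instead take a cusp leaf $m$ emanating from $a_A$ itself and use the ordering in \reflem{OrdersAgree}.

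\textbf{Conclusion.} Granting that $M$ contains both $p$ and $c$, the points are distinct, so by \refprop{LimitOfLeaves} $M$ is the closure of a non-cusp leaf or of a bi-leaf. A cusp is never an endpoint of a non-cusp leaf, and $c$ is a cusp while $p$ is not. So $M$ is a bi-leaf centred at $c$ with $p$ as a thorn. Thus $p \in [c]_{S^1}$, as claimed.
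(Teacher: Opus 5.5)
The step you flag as the main obstacle is a genuine gap, and the mechanism you propose for it cannot work.

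\textbf{Why the proposed mechanism fails.} The limiting bi-leaves attached to the various partings are all centred at $a_A$ and lie in a single foliation. So each is a union of two cusp leaves of $a_A$ in that foliation. By \reflem{NoTriangles}, a leaf $m$ of the opposite foliation meets at most one of these cusp leaves. Hence $m$ crosses at most two of the limiting bi-leaves, namely the two sharing the cusp leaf it meets. No choice of $m$ lets $g_i^{-1}(m)$ reach partings arbitrarily close to $c$, so nothing in your argument puts $c$ into $M$. Your fallback, taking $m$ to be a cusp leaf at $a_A$, needs control of $g_i^{-1}(a_A)$. That is unavailable at this stage: \reflem{OrdersAgree} only concerns forward images of cusps. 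As written, your conclusion rests on an unproved claim.

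\textbf{A repair within your setup.} Your framework can be fixed if you aim for two consecutive partings instead of for $c$.
\begin{enumerate}
\item Index $P - \{c\}$ anticlockwise as $(p_j)$, and let $a_j$ be the attractor of $(p_j, p_{j+1})^\acw$. The limiting bi-leaves $L_j$ at $p_j$ and $L_{j+1}$ at $p_{j+1}$ both contain the unique cusp leaf $\mu_j$ from $a_A$ to the thorn $a_j$.
\item Take $m$ to be a non-cusp leaf crossing $\mu_j$. The argument of \reflem{Case2} shows that the limit $M_j$ of the $g_i^{-1}(\closure{m})$ contains both $p_j$ and $p_{j+1}$.
\item Neither point is a cusp (\refcor{NotCuspBefore}). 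So by \refprop{LimitOfLeaves}, $M_j$ is the closure of a non-cusp leaf or bi-leaf with endpoints exactly $p_j$ and $p_{j+1}$.
\item Now $M_j$ and $M_{j+1}$ share the non-cusp endpoint $p_{j+1}$. By \refcor{OnlyAsymptotics}, their leaves ending at $p_{j+1}$ coincide. That leaf cannot be a non-cusp leaf, since otherwise $M_j = M_{j+1}$. Hence all the $M_j$ are bi-leaves with a common centre $c'$.
\item Every $p_j$ is therefore a thorn of $c'$. Thorns accumulate only at their cusp (\reflem{Laminations}\refitm{CrownZZ}), so $c' = c$.
\end{enumerate}

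\textbf{Comparison with the paper.} This repaired route would be shorter than the paper's proof, which works in the opposite direction. The paper pushes forward bi-leaves centred at $c_P$ and proves, in turn:
\begin{itemize}
\item that $c_P$ is a two-sided accumulation point of $P$;
\item that $g_i(c_P) \to a_A$;
\item that every bi-leaf of $F_\calV$ at $c_P$ has its endpoints in $P$;
\item and finally that each $p_j$ is joined to $c_P$ by a cusp leaf of $F_\calV$.
\end{itemize}
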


\begin{proof}
The point $c_P$ is either an anticlockwise or a clockwise accumulation point (or possibly both).
Breaking symmetry, we suppose that $c_P\in P$ is an anticlockwise accumulation point.  
Index the elements of $P - \{c_P\}$ either by the integers or by the natural numbers so that $\calO_\calV(p_j, p_{j+1}, c_P) = 1$ for all $j$.  
If $c_P$ is an accumulation point from both sides, we use all the integers; 
otherwise we use only the non-negative integers.

By Lemmas \ref{Lem:ClosedSubIntervals} and \ref{Lem:PartingsDecomposeDeltaM}, for each $p_j\in P - \{c_P\}$, there is some $a_j\in A$ so that $I(a_j) = \Delta_\calV\cap [p_j, p_{j+1}]^\acw$.  
By \reflem{NoCuspsAfter}, no $a_j$ is a cusp.  
(Note that not every element of $A$ is necessarily given an index here.)
By \refcor{NotCuspBefore}, no $p_j$ is a cusp.
By \refprop{SingleLeaf}, we can find disjoint non-cusp leaves $\ell_j$ so that $\ell_j$ separates $p_j$ from $P - \{p_j\}$.  
As in the proof of \reflem{FiniteImpliesTwo}, the limit $L_j = \lim_i g_i(\closure{\ell}_j)$ is the closure of a bi-leaf joining $a_{j-1}$ to $a_j$, and passing through the cusp $a_A\in A$.  
Applying \refprop{LimitOfLeaves}, it follows that the $\ell_j$ were all in the same foliation.
Breaking symmetry, we suppose that the $\ell_j$ are in the foliation~$F^\calV$.

\begin{claim}
\label{Clm:GreenThorns}
For each $j$, the point $a_j$ is a non-cusp ideal point of a bi-leaf of $F^\calV$.
\end{claim}

\begin{proof}
Since the $\ell_j$ lie in $F^\calV$, by \refprop{LimitOfLeaves},  the interior of each $L_j$ is a bi-leaf of $F^\calV$.
\end{proof}

\begin{claim}
\label{Clm:AccumulatedBothSides}
The cusp $c_P$ is both an anticlockwise and a clockwise accumulation point of $P$.
\end{claim}

\begin{proof}
Suppose that the elements of $P - \{c_P\}$ are indexed by the natural numbers, starting with $p_1$.
Then $(c_P, p_1)^\acw$ is a component of $\Circle - P$.
By Lemmas \ref{Lem:ClosedSubIntervals} and \ref{Lem:PartingsDecomposeDeltaM} there is some $a_0 \in A$ so that $I(a_0) = \Delta_\calV\cap [c_P, p_1]^\acw$.  
Thus the cusp $c_P$ lies in $I(a_0)$.
Note also that, by \reflem{NoCuspsAfter}, the point $a_0$ is not a cusp, so it is not $a_A$.

Fix a bi-leaf $\ell$ of $F^\calV$ with centre $c_P$ whose other ideal points $p$ and $q$ do not lie in $(c_P, p_1)^\acw$.
Note that $p$ and $q$ do not lie in $P$ because $\ell$ cannot cross the leaves $\ell_j$.
Thus $p$ and $q$ lie in an $a_j$--interval and an $a_k$--interval, respectively, for some $j, k > 0$.
Let $L = \lim_i g_i(\closure{\ell})$.
Since $L$ contains $a_j$, $a_k$, and $a_0$, it is neither a point, nor the closure of a non-cusp leaf or bi-leaf, contradicting \refprop{LimitOfLeaves}.
\end{proof}

Thus the elements of $P - \{c_P\}$ are indexed by the integers.

\begin{claim}
\label{Clm:ForeverAlone}
$\lim_i g_i(c_P) = a_A$.
\end{claim}

\begin{proof}
Since $c_P$ is a cusp, it belongs to $I(a)$ for some $a \in A$ by \refrem{AttractorPartition}.
By \refclm{AccumulatedBothSides} and Lemmas \ref{Lem:ClosedSubIntervals} and \ref{Lem:PartingsDecomposeDeltaM}, the set $I(a)$ is a singleton.
By \reflem{SingletonGoesToCusp}, the limit $a$ is the cusp $a_A$.
\end{proof}

\begin{claim}
\label{Clm:PurpleThorns}
Suppose that $m$ is a bi-leaf of $F_\calV$ with centre $c_P$.
Then the ideal points of $m$ lie in $P$.
\end{claim}

\begin{proof}
Suppose that some ideal point of $m$, say $q$, does not lie in $P$. 
Then $q$ lies in some component of $\Circle - P$.
Thus by \reflem{ClosedSubIntervals}, the point $q$ lies in an $a_j$--interval for some $j \in \ZZ$.
Thus $M = \lim_i g_i(\closure{m})$ contains $a_A$ and $a_j$.
By \refprop{LimitOfLeaves} and \refrem{Cover}, the interior of $M$ is a bi-leaf of $F_\calV$.
However, this with \refclm{GreenThorns} and \refcor{OnlyAsymptotics} gives a contradiction.
\end{proof}

\begin{claim}
\label{Clm:CuspLeaves}
For each $j \in \ZZ$ there is exactly one bi-leaf of $F^\calV$ with centre $c_P$ and other ideal points in an $a_j$--interval and an $a_{j+1}$--interval.
\end{claim}

\begin{proof}
The proof is similar to that of \refclm{AccumulatedBothSides} but with some key differences.
Suppose that $\ell$ is a bi-leaf of $F^\calV$ with centre $c_P$.
Let $m$ be either of the two bi-leaves of $F_\calV$ that contain the divider (see \refdef{BiLeaf}) of $\ell$.
Let $q$ and $q'$ be the other ideal points of $\ell$.
Let $q''$ be the ideal point of $m$ which is also the ideal point of the divider of $\ell$.
As in the proof of \refclm{AccumulatedBothSides}, the points $q$ and $q'$ do not lie in $P$.
Suppose that $q$ and $q'$ lie in an $a_j$--interval and an $a_{j'}$--interval, respectively. 
Let $L = \lim_i g_i(\closure{\ell})$.
Note that $L$ contains $a_j$, $a_{j'}$, and by \refclm{ForeverAlone}, the cusp $a_A$.
By \reflem{NoCuspsAfter}, neither $a_j$ nor $a_{j'}$ equals $a_A$.

We deduce from \refprop{LimitOfLeaves} and \refrem{Cover} that $L$ is the closure of a bi-leaf of $F^\calV$ with centre $a_A$.
Thus $a_j$ and $a_{j'}$ are either identical or consecutive elements of $A - \{a_A\}$.
Thus either $j = j'$ or, by \reflem{OrdersAgree}, we have that $j$ and $j'$ are consecutive integers.
Suppose, for a contradiction, that $j=j'$.
Then $q''$ lies in an $a_j$--interval, contradicting \refclm{PurpleThorns}.

Induction (in both directions) now proves the claim.
\end{proof}

\begin{claim}
For each $p_j\in P - \{c_P\}$ there is a cusp leaf of $F_\calV$ joining $c_P$ to $p_j$.
\end{claim}

\begin{proof}
By \refclm{CuspLeaves}, there is a bi-leaf $\ell$ of $F^\calV$ with centre $c_P$ and other ideal points ending in an $a_{j-1}$--interval and an $a_j$--interval.
The divider of $\ell$ has the desired property by \refclm{PurpleThorns}.
\end{proof}

Thus $P$ lies in the decomposition element $[c_P]_{S^1}$.
\end{proof}

Putting together Lemmas \ref{Lem:FiniteImpliesTwo}, \ref{Lem:Case2}, and \ref{Lem:InfiniteCase}, 
we have the following.

\begin{proposition}
\label{Prop:FindingB}
There is a unique decomposition element $B \subset \Circle$ containing all the partings of $\{g_n\}$.
Furthermore, the sequence of maps $g_n|(\Sphere -\{B\})$ converges uniformly to the constant map $(\Sphere -\{B\})\to \{A\}$. \qed
\end{proposition}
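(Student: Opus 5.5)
The plan is to assemble the proposition from the case analysis on the size of the parting set $P$, and then deduce the uniform convergence from \reflem{AlmostCollapsing}. By \reflem{Parting}, $P$ is closed and non-empty. There are three cases: $P$ is a single point, $P$ is finite with at least two points, or $P$ is infinite.

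\emph{Case one: $P$ is a single point $p$.} We take $B = [p]_{S^1}$. This element contains $P$ trivially.

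\emph{Case two: $P$ is finite with at least two points.} By \reflem{FiniteImpliesTwo}, $P$ has exactly two points. \reflem{Case2} then says that both lie in a single decomposition element $B$.

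\emph{Case three: $P$ is infinite.} By \reflem{AccumulatingPartings}, $P$ has a unique accumulation point $c_P$. By \reflem{AccumulationCusp}, $c_P$ is a cusp. By \reflem{InfiniteCase}, $P \subset [c_P]_{S^1}$, so we take $B = [c_P]_{S^1}$.

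Uniqueness of $B$ is immediate. $P$ is non-empty, and the decomposition elements partition $\Circle$ by \refcor{CircleDecomposition}. So at most one decomposition element can contain a given parting, hence at most one contains all of $P$.

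For the convergence statement, regard $B$ (and $A$) as points of $\Sphere$ via $\Psi_\calV$. Since $P \subset B$, we have $\Psi_\calV(P) = \{B\}$. So $\Sphere - \{B\} = \Sphere - \Psi_\calV(P)$. \reflem{AlmostCollapsing} gives that the $g_n$ converge uniformly on compact subsets of this set to the constant map with image $A$. This is exactly the collapsing condition of \refdef{Collapsing}, with repeller $B$ and attractor $A$.

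The only subtlety I expect is a matter of reading: whether "uniformly" in the statement means uniformly on compact subsets. That is what \refdef{Collapsing} requires, and it is what the lemma provides. Genuine uniform convergence on all of $\Sphere - \{B\}$ cannot hold when $A \neq B$, because points near $B$ need not be carried near $A$. So I read the statement in the compact-set sense. Everything substantive was done in the preceding lemmas, so no real obstacle remains.
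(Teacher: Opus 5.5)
Your proposal is correct and follows the paper's route. The paper simply assembles Lemmas~\ref{Lem:FiniteImpliesTwo}, \ref{Lem:Case2}, and~\ref{Lem:InfiniteCase}, then uses \reflem{AlmostCollapsing} for the collapsing statement; you also correctly make explicit the one-point case, uniqueness via \refcor{CircleDecomposition}, and the compact-sets reading of ``uniformly''.

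One small correction to your closing remark: literal uniform convergence on all of $\Sphere - \{B\}$ fails even when $A = B$. Each $g_n$ is a homeomorphism, so it maps $\Sphere - \{B\}$ onto a dense set. This does not affect your argument.
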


\begin{corollary}
\label{Cor:Collapsing}
The sequence $(g_i)$ is a collapsing sequence. \qed
\end{corollary}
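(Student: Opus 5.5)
The plan is to read \refcor{Collapsing} directly off \refprop{FindingB} and \refdef{Collapsing}. We have already passed (via Lemmas~\ref{Lem:Diagonal} and~\ref{Lem:Direction}) to a subsequence of the original arbitrary infinite sequence of distinct elements. So it suffices to show that this subsequence $(g_i)$, acting on $\Sphere$, is collapsing, with repeller the point $\Psi_\calV(B)$ and attractor the point $\Psi_\calV(A)$. First, $\Sphere$ is a compactum with at least three points by \refthm{Sphere}. Second, \refprop{FindingB} gives a unique decomposition element $B$ containing the parting set $P$, so $\Psi_\calV(P)=\{\Psi_\calV(B)\}$ is a single point of $\Sphere$.

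Next, \reflem{AlmostCollapsing} says that on $\Sphere-\Psi_\calV(P)=\Sphere-\{\Psi_\calV(B)\}$ the $g_i$ converge uniformly on compact sets to the constant map with value $\Psi_\calV(A)$. This is exactly the condition in \refdef{Collapsing}, with $x=\Psi_\calV(B)$ and $y=\Psi_\calV(A)$.

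The only point needing care is bookkeeping. The sets $A$ and $B$ are decomposition elements in $\Circle$, while the definition requires points of $\Sphere$. We need compact subsets of $\Sphere-\{\Psi_\calV(B)\}$ to pull back under $\Psi_\calV$ to compact subsets of $\Circle$ missing $P$. This holds because $\Psi_\calV$ is a quotient map of a compact space onto a Hausdorff space, hence closed, and the preimage of a closed set in $\Sphere$ is closed in the compact space $\Circle$. That argument is already carried out inside \reflem{AlmostCollapsing}.

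I expect no real obstacle at this stage: all the difficulty sits in \refprop{FindingB}, that is, in ruling out three or more partings, or infinitely many partings not lying in one clamshell. The corollary is therefore immediate, and together with \refrem{Cover} it yields \refprop{Convergence}.
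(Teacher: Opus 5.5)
Your proposal is correct and follows the paper's own route. Proposition~\ref{Prop:FindingB} places the parting set $P$ (non-empty by Lemma~\ref{Lem:Parting}) inside the single decomposition element $B$, so $\Psi_\calV(P)=\{\Psi_\calV(B)\}$, and Lemma~\ref{Lem:AlmostCollapsing} then gives Definition~\ref{Def:Collapsing} with repeller $\Psi_\calV(B)$ and attractor $\Psi_\calV(A)$. A small note on your bookkeeping step: continuity of $\Psi_\calV$ and compactness of $\Circle$ already make preimages of compact subsets of $\Sphere$ compact, so you do not need $\Psi_\calV$ to be a closed map.
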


\noindent
This completes the proof of \refprop{Convergence}.

\section{Cusps are parabolic}
\label{Sec:Parabolic}

We prove the following:

\begin{proposition}
\label{Prop:Parabolic}
Suppose that $c \in \Delta_\calV$ is a cusp. 
Then $[c]_{S^1} \in \Sphere$ is a bounded parabolic point for the action of $\pi_1(M)$.
\end{proposition}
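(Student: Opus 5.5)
We need three things: that $[c]_{S^1}$ has infinite stabiliser, that no loxodromic fixes it, and that $\Stab([c]_{S^1})$ acts cocompactly on $\Sphere - \{[c]_{S^1}\}$.

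\textbf{Stabiliser.} The cusp $c$ corresponds to a boundary component of $\cover{M}$, a plane covering a torus or Klein bottle boundary component of $M$. So the stabiliser $\Gamma_c$ of $c$ in $\pi_1(M)$ contains a $\ZZ^2$ subgroup and is infinite. Since the action preserves the set of clamshell boundaries, $g$ fixes $[c]_{S^1}$ if and only if $g(c) = c$. Indeed, $c$ is the unique accumulation point of its clamshell boundary (\reflem{Laminations}\refitm{CrownZZ}), so $\Stab([c]_{S^1}) = \Gamma_c$. By \refrem{FiniteIndex}-style reasoning, or by passing to the transverse veering cover as in \refrem{Cover}, we may assume $\calV$ is transverse veering.

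\textbf{No loxodromics.} Suppose $g \in \Gamma_c$ fixes a second point $z \in \Sphere$. The cleanest route is to pass to hyperbolic geometry. By \refcor{Yaman}, the sphere is identified with $\bdyi \HH^3$, but that corollary presumably depends on this proposition, so we argue directly. Every nontrivial $g \in \Gamma_c$ preserves the crowns $\Lambda^c$ and $\Lambda_c$ and acts on each as a translation of $\ZZ$ (or fixes it). The peripheral $\ZZ^2$ acts on the $\ZZ$--indexed thorns through a homomorphism to $\ZZ$ whose kernel is infinite cyclic. Kernel elements fix every thorn and hence every cusp leaf at $c$. I would rule this out, or treat it, by noting that such an element fixes each sector at $c$ and acts on it by a "translation along the cusp". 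More simply, I would use convergence dynamics. By \refprop{Convergence}, an infinite-order $g$ has a collapsing subsequence of powers $g^n$ with attractor and repeller. If $g$ were loxodromic with fixed points $[c]$ and $z$, then $\langle g\rangle$ would act properly cocompactly on $\Sphere - \{[c], z\}$. But a finite-index $\ZZ^2 \le \Gamma_c$ commutes with $g$, so it permutes the fixed points of $g$. An index-at-most-$2$ subgroup fixes $z$, so $\ZZ^2$ would stabilise two points. A standard convergence-group fact (Tukia) says the stabiliser of a pair of points in a convergence group is virtually cyclic, which is a contradiction. So $[c]_{S^1}$ is parabolic.

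\textbf{Cocompactness.} This is the main obstacle. I would build an explicit compact fundamental domain. Pick one thorn $t_0$ of the crown $\Lambda^c$ and a generator $h \in \Gamma_c$ acting nontrivially on thorns, say shifting them by $k$. Using finiteness of $\calV$, only finitely many tetrahedron rectangles meet $c$ up to the $\Gamma_c$ action, since edges at the cusp correspond to the finitely many edges of the cusp triangulation of the boundary torus. So the closed arc $J = [t_0, h(t_0)]^\acw \subset \Circle$ together with its image under the kernel direction covers. More precisely, I would show that $\Circle - \{c\}$ is covered by $\Gamma_c$--translates of a compact interval $K \subset \Circle - [c]_{S^1}$ union finitely many thorns. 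Then $\Psi_\calV(K')$, with $K'$ compact in $\Circle$ avoiding $c$, is a compact subset of $\Sphere - \{[c]\}$, and its $\Gamma_c$--translates cover. Each complementary interval between consecutive thorns lies in a sector at $c$. Sectors at $c$ correspond to the edges of the boundary-torus triangulation lifted to the plane, so there are finitely many $\Gamma_c$--orbits of sectors. The ideal arc of each sector is a closed interval between two thorns, which is disjoint from $c$. Taking $K'$ to be the union of one representative closed interval per orbit gives the domain. The delicate point to verify is that the sectors' ideal intervals exhaust $\Circle - \{c\}$, which follows from \reflem{Laminations}\refitm{CrownZZ} and interleaving \refitm{Interleave}. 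It also needs the fact that the $\Gamma_c$--orbits of sectors are finite, which is where finiteness of $\calV$ enters. Since compactness in $\Sphere - \{[c]\}$ only requires that $K'$ avoid the clamshell, not just $c$, I need thorns in $K'$ to map to $[c]$. To handle this, I would instead shrink each interval slightly inward and cover the thorn neighbourhoods using intervals from the adjacent crown of the opposite lamination, via interleaving.
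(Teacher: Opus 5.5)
Your first two steps are sound. Identifying $\Stab([c]_{S^1})$ with $\Stab(c)$ is exactly Lemma~\ref{Lem:StabC}. Your no-loxodromic argument is also a legitimate alternative to the paper's. The paper reads off, from the explicit dynamics of $\Stab(c)$ on $\Circle$, that no nontrivial element has a finite orbit in $\Circle - C$. You instead use that, after passing to an oriented cover, $\Stab(c)\isom\ZZ^2$ commutes with $g$ and so fixes the second fixed point $z$, and that a pair-stabiliser in a convergence group is virtually cyclic; given Proposition~\ref{Prop:Convergence}, that works.

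The genuine gap is in cocompactness. Write $C = [c]_{S^1}$. A compact subset of $\Sphere - \{[c]_{S^1}\}$ pulls back under $\Psi_\calV$ to a compact subset of $\Circle - C$, so it must avoid every thorn, not just $c$. The ideal arc of a sector at $c$ is a closed interval $[t_i, t_{i+1}]^\acw$ between consecutive points of $C - \{c\}$, so your $K'$ maps onto a set containing $[c]_{S^1}$. Your repair does not work. The thorns of both crowns already lie in $C$, so interleaving with the opposite crown supplies nothing new. More fundamentally, no compact $K \subset \Circle - C$ can have translates covering a neighbourhood of a thorn $t$ unless elements of $\Stab(c)$ fix $t$ and carry points of $K$ arbitrarily close to $t$. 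Relatedly, sectors at $c$ do not correspond to edges of the lifted cusp triangulation. Those edges have trivial stabiliser in $\ZZ^2$, whereas each sector at $c$ is preserved by an infinite cyclic subgroup, and that subgroup is exactly what is needed.

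The missing ingredient is an element $\beta \in \Stab(c)$ that fixes every point of $C$ and has no fixed point in any open complementary interval $(t_i, t_{i+1})^\acw$. Such a $\beta$ acts on each of these intervals as a translation, so each has a compact fundamental domain $J_i$ for $\subgroup{\beta}$. The union of the $J_i$, over finitely many orbit representatives, is then the required compact set. The paper takes $\beta$ from \cite[Lemma~A.8]{FSS22} (Proposition~\ref{Prop:ParabolicFacts}(2)). It gets finitely many intervals from the translation $\gamma$ of Proposition~\ref{Prop:ParabolicFacts}(3).

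You could also close the gap inside your own framework:
\begin{enumerate}
\item $\Stab(c) \isom \ZZ^2$ acts on the $\ZZ$--indexed sectors at $c$ by translations, so some $\beta' \neq 1$ fixes every sector, and hence every thorn.
\item Suppose $\beta'$ fixed a point $x$ of some $(t_i, t_{i+1})^\acw$. Then in $\Sphere$ it would fix both $[c]_{S^1}$ and $\Psi_\calV(x) \neq [c]_{S^1}$.
\item The fixed points of an infinite-order element of a convergence group lie among the repeller and attractor of a collapsing subsequence of its powers. So $\beta'$ would have exactly two fixed points, making it loxodromic and contradicting your second step.
\end{enumerate}
You still need finitely many $\Stab(c)$--orbits of sectors, which you only assert from finiteness of $\calV$. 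This needs an argument, for instance that each sector at $c$ contains an edge rectangle with a corner at $c$, and there are finitely many such edges up to $\Stab(c)$.
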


Fix $c \in \Delta_\calV$ and take $C = [c]_{S^1}$ to be the associated clamshell boundary.
Applying \refrem{Cover} if needed we assume that cusps are tori, not Klein bottles.

\begin{proposition}
\label{Prop:ParabolicFacts}
\leavevmode
\begin{enumerate}
\item
\label{Itm:ZZ^2}
$\Stab(c) \isom \ZZ^2$.
\item
\label{Itm:beta} 
There is a primitive element $\beta \in \Stab(c)$ whose fixed-point set in $\Circle$ is $C$. 
\item
\label{Itm:gamma}
If $\gamma\in \Stab(c) - \subgroup{\beta}$, then $\gamma$ preserves $C$ setwise.  
Also 
\[
\lim_{n \to \infty}\gamma^n(x) = c
\]
for any $x \in \Circle$. 
\end{enumerate}
\end{proposition}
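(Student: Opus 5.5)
We argue in the transverse veering setting (using \refrem{Cover}), with torus cusps. For \refitm{ZZ^2}: the cusp $c \in \Delta_\calV$ corresponds to a boundary component of $\cover{M}$. Its stabiliser is the image of the fundamental group of the corresponding boundary torus of $M$. By \reflem{VeeringHyperbolic} the interior of $M$ is a complete finite-volume hyperbolic manifold, so boundary tori are $\pi_1$--injective. Hence $\Stab(c) \isom \ZZ^2$.

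For \refitm{beta} we use the combinatorics of the cusp in the veering triangulation. The upper crown $\Lambda^c$ and the lower crown $\Lambda_c$ are $\Stab(c)$--invariant. By \reflem{Laminations}\refitm{CrownZZ} and~\refitm{Interleave}, the thorns of $C$ are a $\ZZ$--ordered set, with the cusp $c$ as their only accumulation point. $\Stab(c)$ acts on this ordered set by order-preserving bijections, since the action preserves the circular order on $\Circle$. This gives a homomorphism $\Stab(c) \to \ZZ$ (the shift amount). Its kernel $K$ fixes every thorn. The key claim is that $K$ is infinite cyclic and that $\Stab(c)$ surjects onto a nontrivial subgroup of $\ZZ$. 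In the boundary torus, the veering triangulation induces a triangulation of the cusp torus. The upper and lower ladderpoles are a finite, nonzero, even number of parallel curves of a common slope. Translation along the ladderpole slope moves every cusp rectangle (hence every cusp leaf and thorn) to itself. The transverse direction shifts the crown. So we take $\beta$ to be a primitive element representing the ladderpole slope. We then check that $\beta$ fixes each thorn and $c$. Conversely, $\beta$ fixes no other point: on each complementary interval between consecutive thorns, $\beta$ acts as a nontrivial translation of the leaves of the crown structure nested there. Equivalently, $\beta$ has no fixed point in the link space $\Link$, being a nontrivial deck translation of a sector. This is the main obstacle. I would handle it by showing that, in the interval between adjacent thorns $t, t'$ (the ends of a cusp leaf $\ell$ from $c$), $\beta$ translates the points of $\ell$. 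By \reflem{LeftIsLeft}\refitm{LeftIsLeft}, it therefore translates the leaves crossing $\ell$, and hence their ideal points, which are dense in $(t,t')$. This forces $x$ and $\beta(x)$ to be distinct for every $x$ in that interval. The translation along $\ell$ holds because $\ell$ projects to a ladderpole-parallel line in the cusp torus.

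For \refitm{gamma}: any $\gamma \in \Stab(c)$ preserves $\Lambda^c$ and $\Lambda_c$, hence $C$. If $\gamma \notin \subgroup{\beta}$, its image in $\ZZ$ is a nonzero shift $k$. We use \reflem{Laminations}\refitm{CrownZZ} on each point $x$ of $\Circle$. If $x = c$ there is nothing to show. Otherwise $x$ lies in some closed interval between thorns $t_j$ and $t_{j+1}$ (or is a thorn). Then $\gamma^n(x)$ lies between $t_{j+nk}$ and $t_{j+nk+1}$. Since the thorns converge to $c$ as the index goes to $\pm\infty$, and the intervals between consecutive thorns shrink to $c$ (they are nested in shrinking half-discs around $c$), we get $\gamma^n(x) \to c$. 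The shrinking follows because any neighbourhood $H$ of $c$ contains all but finitely many crown leaves, and the corresponding thorn intervals lie in $\bdyi H$.
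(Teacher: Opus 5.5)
The paper does not argue any of this directly. Item (1) comes from the hyperbolic structure, as in your sketch, while (2) and (3) are quoted from \cite[Lemma~A.8]{FSS22} and \cite[Lemma~7.15]{FSS22}. Your self-contained route has a genuine gap in (2), exactly at the step you flag.

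The ladderpole discussion is unsupported here: nothing in this setting relates cusp leaves at $c$ to ladderpoles. Your claim that $\beta$ ``moves every cusp rectangle to itself'' is also false, since a nontrivial deck transformation fixes no tetrahedron, hence no skeletal rectangle at $c$. The ladderpoles are in any case unnecessary. The kernel $K$ of the shift map $\Stab(c)\to\ZZ$ is automatically nontrivial, and once $K\isom\ZZ$ you may simply take $\beta$ to be a generator of $K$.

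The real content of (2) is that $\beta$ has no fixed points off $C$, and there your argument fails.
\begin{itemize}
\item You claim $\beta$ translates a cusp leaf $\ell$ at $c$ ``because $\ell$ projects to a ladderpole-parallel line in the cusp torus.'' This has no meaning: $\ell$ lives in $\Link$, and there is no map from $\Link$ to the cusp torus.
\item This freeness is essentially the whole statement. If $\beta(p)=p$ for some $p\in\ell$, then the leaf of the other foliation through $p$ is $\beta$--invariant, and its two ideal points are fixed points outside $C$.
\item Even granting freeness on $\ell$, the step ``$\beta$ moves a dense set of ideal points of the interval, hence fixes none'' does not follow. A homeomorphism of an interval can move a dense set and still have a nowhere dense fixed set.
\item A cusp leaf from $c$ has only one thorn as an endpoint. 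The intervals you must treat are the ideal boundaries of the sectors at $c$.
\end{itemize}

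So you need an independent argument for freeness. For example, suppose $\beta$ fixed $p\in\ell$, and let $m$ be the other cusp leaf bounding the sector. The leaves of $\ell$'s foliation that cross both $m$ and the leaf through $p$ form a $\beta$--invariant family. By Proposition~\ref{Prop:LimitOfLeaves}, the far frontier of this family is a bi-leaf whose centre is a cusp $d\neq c$. Then $\beta$ must fix $d$, contradicting the fact that distinct cusp stabilisers meet trivially in the hyperbolic structure of Lemma~\ref{Lem:VeeringHyperbolic}.

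To pass from $\ell$ to the ideal interval, use monotonicity rather than density. Send $s\in\ell$ to the far ideal point of the leaf crossing $\ell$ at $s$. This map is strictly monotone, with limits at the two bounding thorns. Hence each ideal point of the sector is cut by a unique point of $\ell$, and so cannot be fixed by $\beta$.

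Finally, your (3) needs $K=\subgroup{\beta}$, that is, that the shift map is nonzero. You only assert this (``the transverse direction shifts the crown''). It does hold: there are finitely many $\Stab(c)$--orbits of edges at $c$, while each of the infinitely many sectors at $c$ contains an edge rectangle cornered at $c$. Granting these points, your derivation of (3) from the shift is correct.
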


\begin{proof}
The first item follows from the fact that $c$ is a cusp of $\cover{M}$ and that the interior of $M$ admits a finite-volume hyperbolic metric. 
See~\cite[Section~7.13]{FSS22}.
The second item is a restatement of~\cite[Lemma~A.8]{FSS22}.
The third item follows from~\cite[Lemma~7.15]{FSS22}.
\end{proof}

\begin{lemma}
\label{Lem:StabC}
The stabilisers $\Stab(c)$ and $\Stab(C)$ are equal. 
\end{lemma}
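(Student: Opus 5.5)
We prove the two inclusions separately. Here $\Stab(C)$ means the setwise stabiliser of the clamshell boundary $C = [c]_{S^1}$ in $\Circle$. Equivalently, it is the stabiliser of the point $[c]_{S^1}$ of $\Sphere$, since the action of $\pi_1(M)$ permutes decomposition elements.

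\emph{First inclusion: $\Stab(c) \subset \Stab(C)$.} Suppose that $g \in \Stab(c)$. The upper and lower laminations $\Lambda^\calV$ and $\Lambda_\calV$ are $\pi_1(M)$--invariant, and boundary leaves are sent to boundary leaves together with their associated cusps. Hence $g$ sends the crowns $\Lambda^c$ and $\Lambda_c$ to the crowns $\Lambda^{g(c)} = \Lambda^c$ and $\Lambda_{g(c)} = \Lambda_c$. It follows that $g$ preserves the set of ideal points of these crowns, together with $c$ itself; that is, $g(C) = C$.

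\emph{Second inclusion: $\Stab(C) \subset \Stab(c)$.} This is the only point needing an argument. Suppose that $g(C) = C$. By \refcor{CircleDecomposition} the decomposition elements partition $\Circle$, and by the construction in \refdef{DecompositionElementsCircle} the clamshell boundary $C$ contains exactly one cusp, namely $c$. All other points of $C$ are thorns, and by \reflem{Laminations}\refitm{NoCusps} thorns are not cusps. The action of $\pi_1(M)$ preserves the set of cusps $\Delta_\calV$. Therefore $g$ maps the unique cusp of $C$ to the unique cusp of $g(C) = C$, so $g(c) = c$.

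An alternative, purely topological way to see the same fact is as follows. By \reflem{Laminations}\refitm{CrownZZ}, $c$ is the unique accumulation point of the infinite set $C$, and homeomorphisms preserve accumulation points. Either argument completes the proof. I expect no real obstacle here; the only care needed is to note that thorns are never cusps, which is exactly \reflem{Laminations}\refitm{NoCusps}.
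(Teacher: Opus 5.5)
Your proof is correct. Your argument for $\Stab(C) \subset \Stab(c)$ is exactly the paper's: $\pi_1(M)$ preserves $\Delta_\calV$, the thorns are not cusps, and $c$ is the unique accumulation point of $C$ by \reflem{Laminations}\refitm{CrownZZ}. The paper invokes both of these facts together.

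You take a different route for the other inclusion. The paper obtains $\Stab(c) \subset \Stab(C)$ by citing \refprop{ParabolicFacts}. There, item~\refitm{beta} says $\beta$ fixes $C$ pointwise, and item~\refitm{gamma} says every other element of $\Stab(c)$ preserves $C$ setwise; both are imported from \cite{FSS22}. You instead use naturality, $g(C) = [g(c)]_{S^1}$. This is the same equivariance the paper already relies on when it says the action descends to $\Sphere$.

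Your route is more elementary and does not depend on the dynamics of peripheral elements. You could make the naturality step intrinsic, rather than an appeal to the construction in \cite{FSS22}, as follows. By \reflem{Laminations}\refitm{CrownZZ} and~\refitm{Adjacent}, the crowns are exactly the non-singleton classes of the equivalence relation on leaves generated by sharing an ideal point. The cusp of a crown is the unique accumulation point of its ideal points. Hence any homeomorphism of $\Circle$ preserving $\Lambda^\calV$ and $\Lambda_\calV$ carries crowns to crowns and their cusps to their cusps.

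The paper's citation is simply economical: the finer information in \refprop{ParabolicFacts} is needed immediately afterwards to show that $C$ is a bounded parabolic point.
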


\begin{proof}
Note that $\pi_1(M)$ preserves $\Delta_\calV$.
By \refdef{DecompositionElementsCircle}, the clamshell boundary $C$ consists of $c$, together with its thorns (none of which are cusps).
By \reflem{Laminations}\refitm{CrownZZ}, the cusp $c$ is the unique accumulation point of $C$.
From this we deduce that $\Stab(C) < \Stab(c)$.
\refprop{ParabolicFacts} gives the reverse inclusion.
\end{proof}

\begin{proof}[Proof of \refprop{Parabolic}]
Suppose that $C = [c]_{S^1}$ is the decomposition element containing the cusp $c \in \Delta_\calV$. 
We first prove that $C$ is a parabolic point, as in \refdef{Parabolic}.

By \refprop{ParabolicFacts}\refitm{ZZ^2}, the stabiliser of $c$ is isomorphic to $\ZZ^2$.
Applying \reflem{StabC}, we have that $\Stab(C)$ is infinite.

We now show that no element of $\Stab(C)$ is loxodromic. 
By \refprop{ParabolicFacts}, the stabiliser $\Stab(C)$ contains an element $\beta$ whose fixed set in $\Circle$ is equal to $C$.  
Fix $\delta \in \Stab(C) - \{1\}$.  
If $\delta$ is a power of $\beta$ then its fixed-point set is equal to $C$ in $\Circle$.  
Thus, it acts as a translation on every component of $\Circle - C$.  
If $\delta$ lies in $\Stab(C) - \subgroup{\beta}$ then its powers send every point of $\Circle - c$ eventually to $c$.  
In either case, $\delta$ has no finite orbit in $\Circle - C$.
Thus $\delta$ does not stabilise a singleton or the endpoints of a leaf.

By \refcor{CircleDecomposition} the last case we need to consider is that $\delta$ fixes, setwise, some infinite decomposition element $D \neq C$.  
We deduce that $D$ contains a cusp $d$.
By \reflem{Laminations}\refitm{CrownZZ}, the element $\delta$ fixes $d$.  
But then $\delta$ has an orbit of size one, namely $d$ itself.  
This is the desired contradiction. 
We deduce that $C$ is a parabolic point.

We now prove that $C$ is a bounded parabolic point.
Fix some $\gamma \in \Stab(C) - \subgroup{\beta}$.  
By \refprop{ParabolicFacts}\refitm{gamma}, the element $\gamma$ acts as a translation on $\Circle - \{c\}$.
Let $I \subset \Circle - \{c\}$ be a fundamental domain for the action, with endpoints in $C$.  
Note that $C \cap I$ is finite.  
We take the closures of the components of $I - C$ to obtain closed intervals $I_1, I_2, \ldots, I_k$.

For each $j$ we choose a closed interval $J_j \subset I_j$ which is a fundamental domain for the action of $\subgroup{\beta}$ on the interior of $I_j$. 
The union $K = \medcup_j J_j$ is compact.  
Also  its $\Stab(C)$--translates cover all of $\Circle - C$.  
It follows that the image of $K$ in $\Sphere$ under the quotient map $\Psi_\calV$ (\refdef{VeeringSphere}) is again compact.
Additionally, the $\Stab(C)$--translates of $\Psi_\calV(K)$ cover all of $\Sphere - \{C\}$.  
Thus $\Stab(C)$ acts cocompactly on $\Sphere - \{C\}$.
This proves that $C$ is a bounded parabolic point.
\end{proof}

\section{Non-cusp leaves are conical}
\label{Sec:LeafConical}

We prove the following: 

\begin{proposition}
\label{Prop:LeafConical}
Suppose that $\ell$ is a non-cusp leaf of either foliation.
Suppose that $x \in \bdyi \ell$.
Then the decomposition element $[x]_{S^1} \in \Sphere$ is a conical limit point for the action of $\pi_1(M)$.
\end{proposition}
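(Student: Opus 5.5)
The plan is to exploit cocompactness of the action on the link space by sliding along $\ell$ towards $x$. We then translate back to a fixed skeletal rectangle, and read off the conical limit point from \refprop{Convergence} together with the Hausdorff-limit classifications of \refprop{LimitOfLeaves} and \refprop{LimitOfRectangles}. By \refrem{Cover} we may assume $\calV$ is transverse veering; since $\calV$ is finite, there are only finitely many $\pi_1(M)$--orbits of tetrahedron rectangles.

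Concretely, by \reflem{Extension} choose points $p_k \in \ell$ converging to $x$ in $\Disc$. Let $R_k$ be a tetrahedron rectangle containing $p_k$, which exists by \refdef{Loom}\refitm{Tet}. Passing to a subsequence, there are $g_k \in \pi_1(M)$ with $g_k(R_k) = R_0$ for a fixed tetrahedron rectangle $R_0$. Since the $p_k$ exit $\ell$ and each tetrahedron rectangle meets $\ell$ in a bounded segment, infinitely many $R_k$ are distinct, so we may take the $g_k$ distinct.

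\textbf{Step 1 (where $R_k$ goes).} By \refprop{LimitOfRectangles}, after passing to a subsequence, the closures $\closure{R}_k$ Hausdorff converge to a point of $\Circle$, the closure of a leaf, or the closure of a bi-leaf, and this limit contains $x$. By \refcor{OnlyAsymptotics} a leaf other than $\ell$ cannot share the non-cusp ideal point $x$ with $\ell$. A bi-leaf cannot contain $x$ either, since its endpoints are thorns or cusps and $x$ is the endpoint of an interior leaf. Hence the limit lies in $\closure{\ell}$, so the four corner cusps $g_k^{-1}(c_1), \ldots, g_k^{-1}(c_4)$ of $R_k$ converge into $\bdyi \ell \subset [x]_{S^1}$. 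Here $c_1, \ldots, c_4$ are the corner cusps of $R_0$.

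\textbf{Step 2 (identify the repeller).} By \refprop{Convergence}, pass to a collapsing subsequence with repeller $b$ and attractor $a$ in $\Sphere$. The $c_j$ lie in four distinct clamshells, so at most one of them equals $a$. For any $j$ with $[c_j] \neq a$, uniform convergence on compacta away from $b$ forces $\lim_k g_k^{-1}(c_j) = b$: otherwise $c_j = g_k(g_k^{-1}(c_j)) \to a$. Combining this with Step 1 gives $b = [x]_{S^1}$.

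\textbf{Step 3 (where $[x]$ goes).} The leaves $g_k(\ell)$ all cross $R_0$. Passing to a subsequence, $g_k(\closure{\ell})$ Hausdorff converges by \refprop{LimitOfLeaves} to the closure of a non-cusp leaf or bi-leaf $L$ meeting $\closure{R}_0$; it is not a point, since it meets $\Link$. Thus $g_k([x]_{S^1})$ converges to the point $z = \Psi_\calV(\bdyi L)$ of $\Sphere$.

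\textbf{Step 4 (main obstacle: $z \neq a$).} It remains to show $z \neq a$, which gives conicality via \refdef{CLP}. Since $L$ crosses $R_0$, its ideal points separate the corners $c_1, \ldots, c_4$ into two nonempty groups in $\Circle$. The first thing I would try is to choose points $y$ of $\Sphere$ away from $[x]_{S^1}$ whose images are forced to lie on a prescribed side of $g_k(\ell)$. Candidates are the endpoints of a fixed leaf $m$ crossing $\ell$ on the side away from $x$, or the cusps $g_k^{-1}(c_j)$ from the other side of $\ell$, chosen via the staircases and the astroid lemma so that they avoid $[x]$. Then $g_k(y) \to a$ while lying in a fixed half-disc of $R_0$ disjoint from $\bdyi L$. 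A clamshell or leaf decomposition element containing both $\bdyi L$ and such limits would contradict \reflem{NoneShallPass} or \refcor{OnlyAsymptotics}. If this direct separation argument stalls, I would instead pick $p_k$ from both ends of $\ell$ at once and compare the two resulting limits.
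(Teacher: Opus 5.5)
\textbf{There is a genuine gap in Step~4.} Your Steps 1--3 are essentially sound, and in particular Step~2 correctly identifies the repeller as $[x]_{S^1}$. But Step~4 carries the entire content of conicality, and you have not proved it. Worse, with your choice of group elements it is not true in general, so no argument confined to Step~4 can rescue the proof.

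You only ask that $R_k$ be \emph{some} tetrahedron rectangle through a point $p_k\in\ell$ that slides towards $x$. Through each $p_k$ there are tetrahedron rectangles of every shape, from tall thin ones hugging $\ell$ to wide short ones hugging the leaf $m_{p_k}$ of the other foliation through $p_k$. Suppose you pick the wide short ones, wide enough for each $k$, and view $\Sphere$ as $\bdyi\HH^3$, as the main theorem eventually allows. Then the corresponding tetrahedra of $\cover{\calV}$ lie deep towards $[\bdyi m_{p_k}]_{S^1}$, a point distinct from $[x]_{S^1}$ but converging to it. So they approach $[x]_{S^1}$ tangentially rather than conically, much as the points $(w_k,1)$ with $w_k\to\infty$ approach $\infty$ in upper half-space. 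Consequently $g_k([x]_{S^1})$ converges to the attractor, that is, $z=a$. Steps 1--3 hold verbatim for such a sequence, so they cannot detect conicality.

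Your separation heuristic does not close the gap either. Lying on a prescribed side of $g_k(\ell)$ does not keep $g_k(y)$ away from $\bdyi L$, because $\bdyi L$ lies in the closure of both complementary regions of $L$.

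\textbf{The missing idea} is to let the rectangles \emph{shrink onto} $\closure{\ell}$, rather than slide along it, while keeping a fixed point of $\ell$ inside all of them. The paper takes $e$, $e'$ and $(g_i)$ from Lemma~\ref{Lem:BoxPairsForLeaf}, a consequence of \cite[Lemma~A.14]{FSS22}. There the products $g_i(P^{\calV}(e'))\subset g_i(P^{\calV}(e))$ are nested neighbourhood bases of $\bdyi\ell$ in $\Mobius$.
\begin{enumerate}
\item A fixed non-cusp leaf $m$ crossing $R(e)$ then crosses every $R(g_i(e))$ and $R(g_i(e'))$. So the fixed point $r=\ell\cap m$ satisfies $g_i^{-1}(r)\in R(e)\cap R(e')$.
\item That intersection is precompact in $\Link$, because $e$ and $e'$ share no cusp and $R(e')$ south--north spans $R(e)$. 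This stops the translates of $r$ from running into a corner cusp, where the limits below could become bi-leaves with a common centre.
\item Hence the Hausdorff limits $L$ of $g_i^{-1}(\closure{\ell})$ and $M$ of $g_i^{-1}(\closure{m})$ cross at a point of $\Link$, so $[\bdyi L]_{S^1}\neq[\bdyi M]_{S^1}$.
\item Repeating this with a second leaf $m'$ crossing $R(e)$, and using that $(g_i^{-1})$ is collapsing, shows that $[\bdyi M]_{S^1}$ is the attractor and $[x]_{S^1}$ is the repeller. This is exactly conicality.
\end{enumerate}
In your notation: replace ``$R_k\ni p_k$ with $p_k\to x$'' by rectangles that all contain a fixed point of $\ell$ and Hausdorff converge to $\closure{\ell}$. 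Step~4 then becomes the statement that the limits of $g_k(\closure{\ell})$ and $g_k(\closure{m})$ cross in $\Link$.
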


Suppose that $x$ is an ideal point of a non-cusp leaf $\lambda$ of $\Lambda^\calV$.
(The case of a non-cusp leaf of $\Lambda_\calV$ is similar.)
The equivalence class $[x]_{S^1} = \bdyi \lambda$ is an unordered pair of points of $\Circle$.  
Let $\Mobius$ be the space of unordered pairs of points of $\Circle$.  
This is an open M\"obius band.
In~\cite[Definition~A.13]{FSS22} the authors associate to each edge $e$ of a veering triangulation the \emph{upper and lower product neighbourhoods} $P^\calV(e)$ and $P_\calV(e)$.  
These are the images of products of intervals in $\Circle \times \Circle$ minus the diagonal.
We reproduce this definition here.

\begin{figure}[htbp]
\subfloat[Rectangular style.]{
\labellist
\scriptsize\hair 2pt
\pinlabel {$SE^\calV(e)$} [t] at 170   18 
\pinlabel {$NE^\calV(e)$} [b] at 170  253
\pinlabel {$NW^\calV(e)$} [b] at   0  253
\pinlabel {$SW^\calV(e)$} [t] at   0   18
\endlabellist
\includegraphics[width = 0.32 \textwidth]{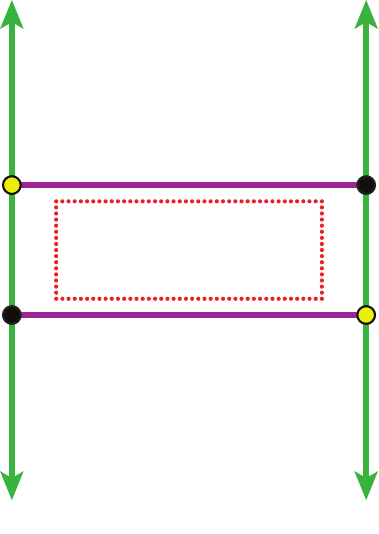}
}
\qquad
\subfloat[Hyperbolic style.]{
\labellist
\scriptsize\hair 2pt
\pinlabel {$SE^\calV(e)$} [tl] at 200   36 
\pinlabel {$NE^\calV(e)$} [bl] at 200  205
\pinlabel {$NW^\calV(e)$} [br] at   50  214
\pinlabel {$SW^\calV(e)$} [tr] at   50   26
\endlabellist
\includegraphics[width = 0.46 \textwidth]{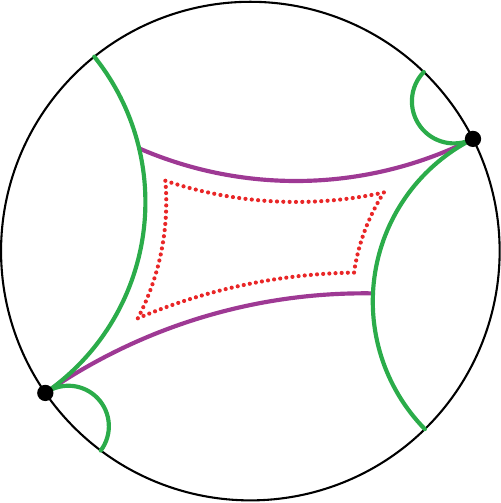}
}
\caption{The eastern and western bi-leaves of the edge rectangle $R(e)$ determine the upper product neighbourhood, $P^\calV(e)$.}
\label{Fig:UpperProductNeighbourhood}
\end{figure}

\begin{definition}
\label{Def:BoxAnatomy}
Let $e$ be an edge of $\calT$.
Let $R(e)$ be the associated edge rectangle.
Let $k^W(e)$ be the unique bi-leaf of $F^\calV$ which 
\begin{itemize}
\item
contains the west side of $R(e)$ and
\item
is a Hausdorff limit of non-cusp leaves of $F^\calV$ meeting $R(e)$. 
\end{itemize}
We define $k^E(e)$ similarly with respect to the east side of $R(e)$. 
The centres of $k^W(e)$ and $k^E(e)$ are the endpoints of the edge $e$.  
The bi-leaf $k^W(e)$ has a northern endpoint $NW^\calV(e)$ and a southern endpoint $SW^\calV(e)$.  
Similarly, $k^E(e)$ has a northern endpoint $NE^\calV(e)$ and a southern one $SE^\calV(e)$.  
The \emph{upper product neighbourhood} $P^\calV(e)$ is the image of the product 
\[
[SW^\calV(e),SE^\calV(e)]^\acw\times[NE^\calV(e),NW^\calV(e)]^\acw
\]
in $\Mobius$. 

We define the bi-leaves $k_S(e)$ and $k_N(e)$, in $F_\calV$, with respect to the south and north sides. 
The endpoints $SW_\calV(e)$ and $SE_\calV(e)$ of $k_S(e)$ and $NE_\calV(e)$ and $NW_\calV(e)$ of $k_N(e)$ determine the \emph{lower product neighbourhood} $P_\calV(e)$ which is the image of the product 
\[
[NW_\calV(e),SW_\calV(e)]^\acw \times [SE_\calV(e),NE_\calV(e)]^\acw 
\]
in $\Mobius$.
\end{definition}

See \reffig{UpperProductNeighbourhood} for depictions of an upper product neighbourhood in the rectangular and hyperbolic styles.

Recall that for each non-cusp leaf $\lambda$ of $\Lambda^\calV$, there is a non-cusp leaf of $F^\calV$ with endpoints $\bdyi \lambda$.
We observe the following.

\begin{lemma}
\label{Lem:RectangleBox}
Suppose that $\ell$ is a non-cusp leaf of $F^\calV$.
The set of endpoints $\bdyi \ell$ lies in $P^\calV(e)$ if and only if $\ell$ meets $R(e)$.  \qed
\end{lemma}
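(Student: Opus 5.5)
The plan is to read both conditions off the disc $\Disc$ of \refthm{Disc}, using \reflem{LeftIsLeft} to pass between crossing in $\Link$ and linking in $\Circle$. Fix a non-cusp leaf $m$ of $F_\calV$ meeting the interior of $R(e)$; such leaves exist because non-cusp leaves are dense (\refrem{NonCuspLeavesDense}). By construction, $m$ crosses the west side of $R(e)$, hence $k^W(e)$, and it crosses the east side, hence $k^E(e)$. Moreover, the segment of $m$ between these two crossing points is exactly the horizontal slice $m \cap \closure{R(e)}$.

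Applying \reflem{LeftIsLeft}\refitm{CrossIFFLinked} to $m$ and each of the two cusp leaves of $k^W(e)$ it meets, one ideal point of $m$ lies in $(NW^\calV(e), SW^\calV(e))^\acw$, the arc to the west of $k^W(e)$. Likewise, the other ideal point of $m$ lies in the arc to the east of $k^E(e)$. The four points $SW^\calV(e), SE^\calV(e), NE^\calV(e), NW^\calV(e)$ occur in this anticlockwise order. Thus a pair $\{u,v\}$ lies in $P^\calV(e)$ exactly when one point lies in the southern arc $[SW^\calV(e),SE^\calV(e)]^\acw$ and the other lies in the northern arc $[NE^\calV(e),NW^\calV(e)]^\acw$.

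For the forward direction, suppose that $\ell$ meets $R(e)$. Choose $m$ through a point of $\ell \cap R(e)$, so that $\ell$ crosses $m$ at a point $q$ strictly between $m \cap k^W(e)$ and $m \cap k^E(e)$. Applying \reflem{LeftIsLeft}\refitm{LeftIsLeft} along $m$ (with the leaves $\ell$ and the cusp leaves of $k^W(e)$, $k^E(e)$ in the roles of $m,m'$ there), the ideal points of $\ell$ are separated from the western ideal point of $m$ by those of $k^W(e)$. They are similarly separated from the eastern ideal point of $m$ by those of $k^E(e)$. Since $\ell$ links $m$, exactly one endpoint of $\ell$ falls in each of the southern and northern arcs. (Endpoints cannot coincide with the thorns of $k^W(e)$ or $k^E(e)$ by \refcor{OnlyAsymptotics}.) Hence $\bdyi\ell \in P^\calV(e)$.

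For the converse, suppose that $\bdyi \ell$ has one point in each arc. Then $\bdyi\ell$ links $\bdyi m$, because the two ideal points of $m$ lie in the western and eastern complementary arcs. So $\ell$ crosses $m$ by \reflem{LeftIsLeft}\refitm{CrossIFFLinked}. Running the separation argument of \reflem{LeftIsLeft}\refitm{LeftIsLeft} backwards shows the crossing point lies strictly between $m\cap k^W(e)$ and $m \cap k^E(e)$, i.e.\ in $R(e)$. The step needing most care is the boundary behaviour: ensuring that the closed intervals in the definition of $P^\calV(e)$ cause no trouble and that the chosen $m$ really exits through both bi-leaves rather than running into a cusp. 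Here I would use that $\ell$ is a non-cusp leaf, so its endpoints avoid the thorns (\refcor{OnlyAsymptotics}). I would also use the defining property that $k^W(e)$ and $k^E(e)$ are Hausdorff limits of leaves meeting $R(e)$, so that no leaf of $F^\calV$ fits strictly between $k^W(e)$ and the west side of $R(e)$.
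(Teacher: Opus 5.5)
The paper records this lemma as an observation with no written proof, so your argument must stand on its own. Its structure is the natural one: a transverse non-cusp leaf $m$ through $R(e)$, together with \reflem{LeftIsLeft}. Your converse direction works as written, because there you only apply \reflem{LeftIsLeft}\refitm{LeftIsLeft} to the cusp leaves containing the west and east sides of $R(e)$, and those do cross $m$.

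The forward direction, however, has a step the cited lemma does not give. Write $c$ for the centre of $k^W(e)$, $\lambda$ for its cusp leaf containing the west side of $R(e)$, and $\lambda'$ for the other one. By \reflem{NoTriangles}, $m$ crosses $\lambda$ but not $\lambda'$. So your phrase ``each of the two cusp leaves of $k^W(e)$ it meets'' is off, although that paragraph's conclusion still follows from the single crossing with $\lambda$, since $c \in (NW^\calV(e), SW^\calV(e))^\acw$. Since \reflem{LeftIsLeft}\refitm{LeftIsLeft} requires both leaves to cross $m$, it only shows that $\bdyi\ell$ is separated from the western end of $m$ by $\bdyi\lambda$. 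That pair consists of $c$ and one thorn; it is not $\{NW^\calV(e), SW^\calV(e)\}$. Combining this with the east side and with $\bdyi\ell$ linking $\bdyi m$ still allows an endpoint of $\ell$ in the open arc between $c$ and the thorn of $\lambda'$, and similarly at the east. Those arcs lie outside both closed arcs defining $P^\calV(e)$, so the conclusion does not yet follow.

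The missing input is that $\ell$ and $\lambda'$ are distinct leaves of $F^\calV$, so they do not cross. By \reflem{LeftIsLeft}\refitm{CrossIFFLinked}, $\bdyi\ell$ therefore does not link $\bdyi\lambda'$. Both endpoints of $\ell$ cannot lie in the arc between $c$ and the thorn of $\lambda'$, because that arc lies in one component of $\Circle - \bdyi m$ and they would fail to link $\bdyi m$. Combined with the separation by $\bdyi\lambda$, this gives $\bdyi\ell \subset (SW^\calV(e), NW^\calV(e))^\acw$. Symmetrically, $\bdyi\ell \subset (NE^\calV(e), SE^\calV(e))^\acw$, and linking with $\bdyi m$ then places one endpoint in each of the southern and northern arcs.

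Equivalently, $\closure{\ell}$ is disjoint from $\closure{k^W(e)}$, so it lies in the component of $\Disc - \closure{k^W(e)}$ containing $R(e)$, whose ideal boundary is $(SW^\calV(e), NW^\calV(e))^\acw$. This is where the Hausdorff-limit clause in \refdef{BoxAnatomy} is actually used. It selects the bi-leaf whose divider contains a side of $R(e)$. That choice puts $c$ in the western arc and puts $R(e)$ on the side of $\closure{k^W(e)}$ facing the opposite arc.

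Your closing sentence gestures at this but misstates it, since the west side of $R(e)$ lies inside $k^W(e)$. Your worry about $m$ ``running into a cusp'' is vacuous, since cusps lie in $\Circle$ and not in $\Link$.
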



\begin{definition}
Suppose that $U$ and $V$ are subsets of a topological space. 
Then we say that $U$ is \emph{strictly nested} in $V$ if $\closure{U} \subset \operatorname{Int}(V)$.
\end{definition}

The following statement is a consequence of~\cite[Lemma~A.14]{FSS22}.
See \reffig{BoxAtInfinity} for depictions of the conclusions of this lemma.

\begin{figure}[htbp]
\subfloat[Rectangular style.]{
\includegraphics[width = 0.35\textwidth]{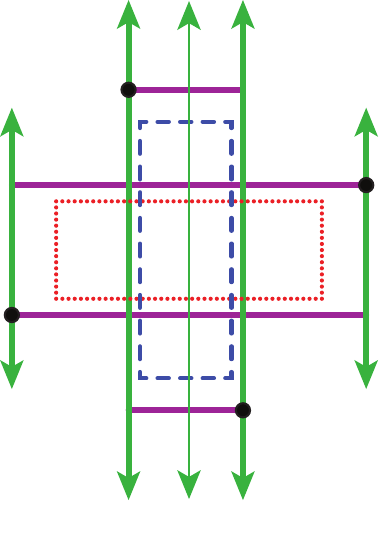}
}
\qquad
\subfloat[Hyperbolic style.]{
\labellist
\scriptsize\hair 2pt
\pinlabel {$p$} [b] at 125 242
\pinlabel {$q$} [t] at 125 0
\endlabellist 
\raisebox{10pt}{\includegraphics[width = 0.45 \textwidth]{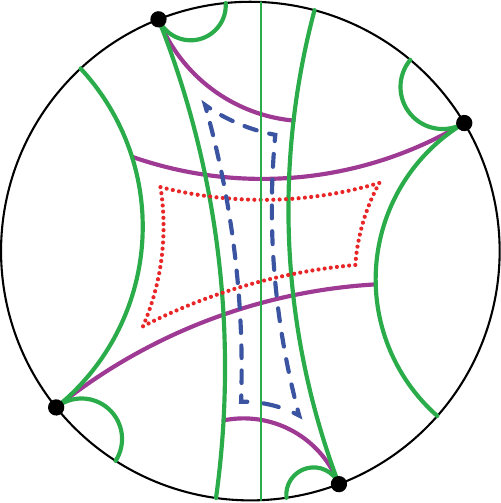}}
}
\caption{The nesting of intervals in $\Circle$ induced by the products $P^\calV(e)$ and $P^\calV(e')$.}
\label{Fig:BoxAtInfinity}
\end{figure}

\begin{lemma}
\label{Lem:BoxPairsForLeaf}
Suppose that $\ell$ is a non-cusp leaf of $F^\calV$.
There is a pair of edges $e$ and $e'$ and a sequence of group elements $(g_i)$ in $\pi_1(M)$ with the following properties.
\begin{enumerate}
\item
\label{Itm:NoCommonCusp} 
The edges $e'$ and $e$ do not share a cusp.
\item
\label{Itm:SNSpan} 
The edge rectangle $R(e')$ south-north spans the edge rectangle $R(e)$.
\item
\label{Itm:StrictlyNested} 
$P^\calV(e')$ is strictly nested in $P^\calV(e)$.
\item 
\label{Itm:NeighbourhoodBasis}
The products $g_i(P^\calV(e'))$ form a nested neighbourhood basis for 
$\bdyi \ell \in \Mobius$.
So do the products $g_i(P^\calV(e))$. \qed
\end{enumerate}
\end{lemma}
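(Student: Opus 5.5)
The plan is to get the lemma from the combinatorics of edge rectangles crossed by $\ell$, followed by a pigeonhole argument using finiteness of $\calV$. This is essentially the content of~\cite[Lemma~A.14]{FSS22}. Throughout, \reflem{RectangleBox} translates between the link space and $\Mobius$: $\bdyi \ell \in P^\calV(e)$ exactly when $\ell$ meets $R(e)$.

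\emph{Step 1: a spanning sequence of edge rectangles.} Fix a point $p \in \ell$. Using the astroid lemma~\cite[Lemma~4.10]{SchleimerSegerman24} and \refdef{Loom}\refitm{Tet}, I would build a sequence of edge rectangles $R(e_n)$, all meeting $\ell$, with $R(e_{n+1})$ south-north spanning $R(e_n)$, and whose widths shrink towards $\ell$. Concretely, the east and west sides of $R(e_n)$ should come from cusps of the staircases for $p$ that approach $\ell$.

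Since $\ell$ is a non-cusp leaf, \refprop{LimitOfLeaves} forces the bi-leaves $k^W(e_n)$ and $k^E(e_n)$ to Hausdorff converge to $\closure{\ell}$. Their endpoints therefore converge to $\bdyi \ell$. Hence the products $P^\calV(e_n)$ are nested and form a neighbourhood basis of $\bdyi \ell$ in $\Mobius$.

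Passing to a subsequence, I can also arrange that for each $n$:
\begin{itemize}
\item the nesting $P^\calV(e_{n+1}) \subset P^\calV(e_n)$ is strict in the sense of strictly nested sets, since the four endpoints move strictly inward and thorns are never cusps (\reflem{Laminations}\refitm{NoEndpoints});
\item $e_n$ and $e_{n+1}$ share no cusp.
\end{itemize}
The second point holds because a common cusp would have to be a centre of both pairs of bounding bi-leaves, which is impossible once the widths are small compared with the staircase cusps.

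\emph{Step 2: pigeonhole.} Since $\calV$ is finite, there are finitely many edges modulo $\pi_1(M)$. The relative position of $R(e_{n+1})$ with respect to $R(e_n)$ is determined by finitely many combinatorial data, namely the finitely many tetrahedra traversed between them. This uses local finiteness of skeletal rectangles containing a given rectangle~\cite[Lemma~4.16]{SchleimerSegerman24}, after bounding the step in Step~1. Hence the pairs $(e_n, e_{n+1})$ fall into finitely many $\pi_1(M)$-orbits.

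Choose one orbit hit infinitely often, by indices $n_i$. Fix a representative pair $(e, e')$ in that orbit, and let $g_i$ be the element with $g_i(e) = e_{n_i}$ and $g_i(e') = e_{n_i+1}$. The properties \refitm{NoCommonCusp}, \refitm{SNSpan}, and \refitm{StrictlyNested} are $\pi_1(M)$-invariant, so they hold for $(e, e')$ because they hold for $(e_{n_i}, e_{n_i+1})$. Property \refitm{NeighbourhoodBasis} follows from Step~1, since $g_i(P^\calV(e)) = P^\calV(e_{n_i})$ and $g_i(P^\calV(e')) = P^\calV(e_{n_i+1})$ are subsequences of a nested neighbourhood basis.

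\emph{Expected main obstacle.} The hard part is Step~1's finiteness requirement. The successive rectangles must be spaced so that the pair $(R(e_n), R(e_{n+1}))$ has bounded combinatorial complexity, yet still achieve strict nesting and no shared cusp.

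My first approach would be to take $e_{n+1}$ to be the first edge, in the layered sequence of edge rectangles crossed by $\ell$, that satisfies both conditions. I would then argue, via the astroid lemma, that such an edge appears within a uniformly bounded number of tetrahedron moves. Failing that, I would pigeonhole on pairs $(e_n, e_{n+N})$ for a fixed large $N$, which still gives finitely many orbits.
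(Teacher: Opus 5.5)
The paper gives no argument of its own here: it states the lemma as a consequence of \cite[Lemma~A.14]{FSS22}. Your reconstruction has a genuine gap exactly where you flag it.

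Everything rests on the pairs $(e_n,e_{n+1})$ falling into finitely many $\pi_1(M)$--orbits. Once you pass to a subsequence to force ``no common cusp'', nothing bounds the combinatorics of a pair. \cite[Lemma~4.16]{SchleimerSegerman24} does not help, since a fixed $R(e)$ is south--north spanned by infinitely many edge rectangles.

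Your first repair is false: the first good edge after $e_n$ need not appear within a uniformly bounded number of tetrahedron moves. Suppose $\ell$ crosses a cusp leaf of $F_\calV$ at $c$ very near $c$. Then $\ell$ crosses a long fan $R(cd_1),\ldots,R(cd_J)$ built from the staircase cusps $d_j$ of that sector at $c$. Take an edge rectangle crossed by $\ell$ that south--north spans $R(cd_j)$ and is south--north spanned by $R(cd_{j+1})$. Its vertical range is forced to start at the level of $c$, so one horizontal side lies on a cusp leaf of $c$. By \reflem{NoDoubleCusps} it therefore has $c$ as a corner. In the figure-eight knot complement, for instance, one can check that the first edge crossed by $\ell$ which spans $R(cd_1)$ and avoids $c$ spans all of $R(cd_J)$. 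A suitable non-cusp leaf meets such fans at infinitely many scales with $J\to\infty$, so the pairs you pigeonhole on have unbounded type.

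Your fallback, pigeonholing $(e_n,e_{n+N})$ for a fixed $N$, needs two things you have not supplied. First, consecutive chain elements must be combinatorially adjacent; your astroid-lemma construction in Step~1 does not give this. Second, good pairs must occur at bounded gap infinitely often; inside a fan longer than $N$ the pair still shares $c$. Both can be arranged.

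\begin{itemize}
\item Build the chain by single moves. Extend $R(e_n)$ vertically to the tetrahedron rectangle in which $e_n$ is the west--east diagonal. Let $e_{n+1}$ be the unique edge of that tetrahedron whose rectangle is crossed by $\ell$ and south--north spans $R(e_n)$.
\item The widths then shrink to $\ell$ by \cite[Lemma~4.16]{SchleimerSegerman24}. Since $\ell$ is a non-cusp leaf, each cusp is a corner of only finitely many $R(e_n)$.
\item Hence for infinitely many $n$ one of two things happens. Either $e_n$ and $e_{n+1}$ share no cusp. Or $e_n\cap e_{n+1}=\{c\}$ and $e_{n+1}\cap e_{n+2}=\{d\}$ with $d\neq c$. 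In the latter case $e_n$ and $e_{n+2}$ share no cusp: if $e_n=cx$ and $e_{n+2}=dx$, then $R(cx)$ and $R(dx)$ are staircase rectangles in one sector at $x$, and $R(cd)$ could not south--north span $R(cx)$.
\item These pairs satisfy (1) and (2). They lie in at most two tetrahedra sharing an edge, so they fall into finitely many orbits.
\item Property (3) follows from (1) and (2), because bounding bi-leaves with distinct centres share no endpoints by \refcor{OnlyAsymptotics}. That, rather than \reflem{Laminations}(5), is also the right reason for strict nesting in your Step~1.
\end{itemize}

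Now pigeonhole on these pairs as in your Step~2.
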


\begin{proof}[Proof of \refprop{LeafConical}]
Suppose that $\ell$ is a leaf of $F^\calV$; the other case is similar. 
Let $e$, $e'$, and $(g_i)$ be as in the conclusion to \reflem{BoxPairsForLeaf}.
Recall that $\ell$ crosses $R(e)$ and $R(e')$.
  
Using the notation of \refdef{BoxAnatomy}, set $I(e) = [NE^\calV(e), NW^\calV(e)]^\acw$ and $J(e) = [SW^\calV(e),SE^\calV(e)]^\acw$.  
We make the similar definitions for $I(e')$ and $J(e')$.
Let $p$ and $q$ be the points of $\bdyi\ell$.
Breaking symmetry, suppose that $p\in I(e')\subset I(e)$ and $q\in J(e')\subset J(e)$.  
By \reflem{BoxPairsForLeaf}\refitm{StrictlyNested}, the products $P^\calV(e')\subset P^\calV(e)$ are strictly nested.
Therefore, the intervals $I(e')\subset I(e)$ and $J(e') \subset J(e)$ are also strictly nested.  
By \reflem{BoxPairsForLeaf}\refitm{NeighbourhoodBasis}, the products $g_i(P^\calV(e))$ and $g_i(P^\calV(e'))$ form neighbourhood bases for $\bdyi\ell$.

Either infinitely many of the $g_i$ send $I(e)$ into itself, or all but finitely many send $I(e)$ into $J(e)$.
In the first case there is a subsequence so that
\begin{itemize}
\item the intervals $g_i(I(e))$ and $g_i(I(e'))$ form neighbourhood bases for $p$ and 
\item the intervals $g_i(J(e))$ and $g_i(J(e'))$ form neighbourhood bases for $q$.
\end{itemize}
The second case swaps $p$ with $q$ in the above and is handled similarly.
Applying \refprop{Convergence}, we pass to a further subsequence and reindex so that $(g_i^{-1})$ is a collapsing sequence for the action of $\pi_1(M)$ on $\Sphere$.
 
Let $m$ be a non-cusp leaf of $F_\calV$ which crosses $R(e)$ and thus crosses $\ell$.
Let $r = \ell \cap m$. 

\begin{claim*}
The point $r$ lies in $R(g_i(e')) \cap R(g_i(e))$.
\end{claim*}

\begin{proof}
By \reflem{BoxPairsForLeaf}\refitm{NeighbourhoodBasis}, for every $i$, the leaf $\ell$ crosses $R(g_i(e'))$ and $R(g_i(e))$.
We prove the corresponding statement for $m$ by induction.
Since $m$ crosses $R(e)$, it also crosses $R(e')$.
Also $g_0$ is the identity.
This gives the base case.

Suppose that $m$ crosses $R(g_i(e'))$ and $R(g_i(e))$.
Since the neighbourhood bases are nested (\reflem{BoxPairsForLeaf}), we find that $R(g_i(e'))$ and $R(g_i(e))$ west-east span $R(g_{i+1}(e))$ and thus west-east span $R(g_{i+1}(e'))$.
Thus $m$ crosses $R(g_{i+1}(e'))$ and $R(g_{i+1}(e))$.
\end{proof}

Thus the point $(g_i^{-1}(r))$ lies in $R(e') \cap R(e)$.
Note that the conclusions~\refitm{NoCommonCusp} and~\refitm{SNSpan} of \reflem{BoxPairsForLeaf} imply that the intersection $R(e') \cap R(e)$ is precompact in $\Link$.  
Passing to a subsequence, we have that $(g_i^{-1}(r))$ converges to some point $r_\infty$ in the closure of $R(e') \cap R(e)$.
In particular, $r_\infty$ is in $\Link$.

Passing to further subsequences, the following Hausdorff limits exist.
\[
L = \lim_{i\to\infty} g_i^{-1}\left(\closure{\ell}\right) \qquad M = \lim_{i\to\infty} g_i^{-1}\left(\closure{m}\right)
\]
By \refprop{LimitOfLeaves}, the limits $L$ and $M$ are the closures of non-cusp or bi-leaves of $F^\calV$ and $F_\calV$, respectively.
Since $L$ and $M$ cross at $r_\infty$, they are not bi-leaves for a common cusp.
From Definitions~\ref{Def:DecompositionElementsCircle} and~\ref{Def:DecompositionElementsSphere} we deduce that $\bdyi L$ and $\bdyi M$ lie in distinct decomposition elements of $\Sphere$. 
So define $A = [\bdyi M]_{S^1}$ and $B = [\bdyi L]_{S^1}$ and note that $A \neq B$.

Suppose that $m' \neq m$ is another non-cusp leaf of $F_\calV$ crossing $R(e)$. 
As above, we obtain $A' = [\bdyi M']_{S^1}$ with $A' \neq B$.
Since $(g^{-1}_i)$ is a collapsing sequence, we deduce that $A = A'$.
It follows that $\bdyi \ell$ is the repelling point.
From this we deduce that $A$ is the attracting point.
Since $A \neq B$ we have verified the properties of \refdef{CLP}.
\end{proof}

\section{Singletons are conical}
\label{Sec:SingletonConical}

We prove the following: 

\begin{proposition}
\label{Prop:SingletonConical}
Suppose that $x \in \Circle$ is a singleton.
Then the decomposition element $[x]_{S^1} \in \Sphere$ is a conical limit point for the action of $\pi_1(M)$.
\end{proposition}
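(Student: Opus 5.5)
We follow the template of the proof of \refprop{LeafConical}: find a compact piece of $\Link$ and group elements $g_i$ such that points near $x$ are pulled back by $g_i^{-1}$ into that fixed compact piece, while $x$ itself escapes. The paper signals that the needed tool is the \emph{hull} between pairs of points of $\Disc$ (\refdef{Hull}). So first we develop hulls, then extract a recurrent sequence of skeletal rectangles, and finally run the collapsing-sequence argument.

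\textbf{Step 1 (nested leaves).} Apply \refprop{SingleLeaf} and \refcor{SingletonSingleLeaf}. This gives two nested neighbourhood bases $H(\ell_j)$ of $x$, with $\ell_j$ non-cusp leaves of $F^\calV$, and $H(m_j)$, with $m_j$ non-cusp leaves of $F_\calV$. We may interleave them so that $\ell_j$ and $m_j$ cross for every $j$. This uses \reflem{IVT}: the sector containing $x$ at the cusp it supplies has boundary leaves from both foliations, so crossing leaves can be chosen inside each half-disc.

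\textbf{Step 2 (hull and recurrence).} Let $p_j=\ell_j\cap m_j\in\Link$. The hull between $p_0$ and $x$ should be a nested union of rectangles along a path from $p_0$ towards $x$; it is built from the staircases at each $p_j$ pointing towards $x$. Consider the skeletal (tetrahedron) rectangles $R_j$ containing $p_j$, or meeting the hull near $p_j$. Since $\calV$ is finite, there are only finitely many $\pi_1(M)$--orbits of tetrahedron rectangles. By pigeonhole, after passing to a subsequence there is a fixed tetrahedron rectangle $R$ and elements $g_j$ with $g_j(R)=R_j$. Because $\ell_j$ and $m_j$ both cross $R_j$ and $R$ is precompact, the points $g_j^{-1}(p_j)$ lie in $\closure{R}$. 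After passing to a subsequence, the leaves $g_j^{-1}(\ell_j)$ and $g_j^{-1}(m_j)$ Hausdorff converge, by \refprop{LimitOfLeaves}, to leaves or bi-leaves $L$ and $M$ of opposite foliations that cross inside $\closure{R}$.

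\textbf{Step 3 (conical).} By \refprop{Convergence}, pass to a subsequence on which $(g_j^{-1})$ is collapsing with repeller $B$ and attractor $A$. Any non-cusp leaf through $R_0$ that is crossed by all the $\ell_j$ is sent by $g_j^{-1}$ towards a bounded region. This shows that points of $\Sphere$ away from $[x]$ converge to a single element $A$, while the quantities $g_j^{-1}(x)$ stay in the ideal region cut off by $L$ (on the $x$ side). The set $\bdyi L$ and the limit of $g_j^{-1}(x)$ land in a decomposition element different from $A$. As in \refprop{LeafConical}, using two distinct leaves $m,m'$ through $R$ forces the attractor to be $A=[\bdyi M]$ and the repeller to be $[x]$. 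Meanwhile $\lim g_j^{-1}(x)$ lies in the closed half-disc cut off by $L$. That half-disc is disjoint from $A$ because $L$ and $M$ cross and so lie in distinct decomposition elements. This verifies \refdef{CLP}.

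\textbf{Main obstacle.} The main obstacle is Step 2. We must choose rectangles that genuinely separate $x$ from a fixed basepoint, so that $g_j^{-1}$ pulls the crossing points into a compact set and not off to a cusp. The danger is that the nested leaves $\ell_j$ may converge towards cusps without a bounded local picture. The first attempt would use the astroid lemma on the staircases at each $p_j$ to show that the hull is a chain of tetrahedron rectangles, each south-north or west-east spanning the next. Since $x$ is a singleton, not a cusp or a leaf endpoint, this chain must cross infinitely many rectangles of both spanning types, and that is what gives the recurrence needed for the pigeonhole argument.
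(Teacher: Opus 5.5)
Your overall template (pigeonhole over the finitely many orbits of skeletal rectangles, then \refprop{Convergence}, then the repeller/attractor bookkeeping of \refprop{LeafConical}) is the right one. However, Step~3 has a genuine gap, and it sits exactly where the singleton case is harder than the leaf case.

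You need $\lim_j g_j^{-1}(x)$ to lie in a decomposition element different from the attractor. Your justification is that the closed half-disc cut off by $L$ on the $x$ side is disjoint from $A = [\bdyi M]_{S^1}$ because $L$ and $M$ cross. This is false. Crossing means $\bdyi L$ and $\bdyi M$ are linked, so one endpoint of $M$ lies in that half-disc. If $M$ is a bi-leaf, $A$ is a whole clamshell with thorns on both sides of $L$. Nothing you say stops $g_j^{-1}(x)$ from converging to that endpoint or to such a thorn.

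Your identification of the attractor also fails. The collapsing property controls images of \emph{fixed} points, but the $g_j^{-1}(m_j)$ are images of varying leaves. The ``non-cusp leaf crossed by all the $\ell_j$'' that you invoke does not exist: one of its two endpoints would lie in $\bigcap_j \bdyi H(\ell_j) = \{x\}$, contradicting that $x$ is a singleton. In \refprop{LeafConical} the leaf through $x$ supplies a fixed point $r = \ell \cap m$ lying in every $R(g_i(e))$; for a singleton there is no such point.

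Step~2 also asserts that a skeletal rectangle is precompact. It is not: its closure in $\Link$ is not compact, because the cusps on its boundary are missing. So $g_j^{-1}(p_j)$ may converge to a cusp of $R$. Then $L$ and $M$ are bi-leaves with a common centre, and $[\bdyi L]_{S^1} = [\bdyi M]_{S^1}$. This is why \reflem{BoxPairsForLeaf} uses two edges $e, e'$ with no common cusp. Step~1's interleaving is also inconsistent as stated: if $H(m_j) \subset H(\ell_j)$ then $\ell_j$ and $m_j$ cannot cross.

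The missing idea is a fixed second singleton $y$ together with a buffer between $y$ and $x$. The paper proceeds as follows.
\begin{enumerate}
\item It takes south-north spanning edge rectangles $R_n$ of $\Hull(y, x)$, with cusps $d_n$ and $e_n$, Hausdorff converging to $x$ (\reflem{SpanningEdgeRectangles}).
\item It passes to a single orbit with $g_n(R_0) = R_n$ preserving the foliation orientations.
\item It uses the separating sectors $S_n$ at $d_n$ and $T_n$ at $e_n$ (\refdef{SeparatingSectors}, \reflem{SeparatingSectorProperties}). Since $g_n^{-1}$ carries $S_n, T_n$ to $S_0, T_0$, the point $g_n^{-1}(y)$ stays in the closure $Y$ of the $y$-side of $S_0$, and $g_n^{-1}(x)$ stays in the closure $X$ of the $x$-side of $T_0$.
\item The two sectors between $Y$ and $X$, via \reflem{NoneShallPass}, guarantee that no decomposition element (in particular no clamshell) meets both $\bdyi Y$ and $\bdyi X$. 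This gives $[y_\infty]_{S^1} \neq [x_\infty]_{S^1}$.
\item Every singleton $y'$ separated from $x$ by $S_0$ yields the same rectangles and the same collapsing sequence. These $y'$ play the role of the leaves $m, m'$ in \refprop{LeafConical}: they pin down the attractor and force $[x]_{S^1}$ to be the repeller.
\end{enumerate}
Your Step~2 chooses rectangles near the points $p_j$ with no relation to any fixed reference point. It therefore gives no control over where $g_j^{-1}$ sends the rest of $\Sphere$ relative to $g_j^{-1}(x)$.
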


\subsection{Hulls}

Our proof of \refprop{SingletonConical} uses \emph{hulls} in $\Disc$.
We follow~\cite[Section~6]{SchleimerSegerman24}.
Recall the notion of a sector (\refdef{Sector}) of a point of $\Link \cup \Delta_\calV$. 

\begin{definition}
Suppose that $u$, $v$, and $w$ are points of $\Disc$, 
with $v$ in $\Link \cup \Delta_\calV$.
Suppose that there are non-adjacent sectors $S$ and $T$ at $v$ so that $u \in S$ and $w \in T$.
Then we say that $v$ is \emph{sector-between} $u$ and $w$.  
\end{definition}

The next two results follow directly from the definition.

\begin{lemma}
\label{Lem:BetweenTransitive}
Suppose that $u$, $v$, $w$ and $x$ are points of $\Disc$, 
with $v$ and $w$ in $\Link \cup \Delta_\calV$.
Suppose that $v$ is sector-between $u$ and $x$. 
Suppose that $w$ is sector-between $v$ and $x$.
Then $w$ is sector-between $u$ and $x$. \qed
\end{lemma}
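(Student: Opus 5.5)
The approach is to turn ``sector-between'' into a separation statement in the disc $\Disc$ and then chain the two separations. For a point $p \in \Link \cup \Delta_\calV$ and a sector $T$ at $p$, write $N_p(T)$ for the union of $T$ with the sectors at $p$ adjacent to $T$. By \refdef{Sector}, the closures of the leaves at $p$ cut $\Disc$ into its sectors. So $N_p(T)$ is a closed region whose frontier consists of two closed half-leaves at $p$ (or two cusp leaves, when $p$ is a cusp). Moreover, $p$ is sector-between $y$ and $z$ exactly when $z \in T$ and $y \notin N_p(T)$, provided $y$ lies in some sector at $p$; this proviso always holds, since the sectors at $p$ cover $\Disc$. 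The lemma then amounts to showing that $u \notin N_w(T_w)$, where $T_w$ is the sector at $w$ containing $x$.

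The key steps, in order, are as follows.
\begin{enumerate}
\item Let $S_w$ be the sector at $w$ containing $v$. By hypothesis $S_w$ is not adjacent to $T_w$. Since $v \ne w$, I would first check that $v$ lies in a single sector at $w$ and is off its boundary half-leaves. The boundary leaves of $S_w$ do pass through $w$; but if $v$ lay on one of them, then $w$ would lie on a leaf at $v$. That leaf would be the common boundary of two adjacent sectors at $w$, and this contradicts non-adjacency as soon as one checks the cases.
\item Let $T$ be the sector at $v$ containing $x$, and $S$ the sector at $v$ containing $u$. I would show that the complement of $N_v(T)$ (that is, the union of the sectors at $v$ not adjacent to $T$) is contained in $S_w$.
\item Conclude: $u \in S \subset S_w$, and $S_w$ is not adjacent to $T_w$. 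Hence $u$ lies in a sector at $w$ not adjacent to $T_w$, which is the conclusion of the lemma.
\end{enumerate}

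Step 2 is the main obstacle. To prove it, I would argue that no leaf through $w$ meets the interior of any sector at $v$ that is not adjacent to $T$.
\begin{itemize}
\item A leaf at $w$ and a leaf at $v$ of the same foliation are disjoint, or equal, or are adjacent cusp leaves of a bi-leaf.
\item A leaf at $w$ of the opposite foliation crosses the corresponding leaf at $v$ at most once, by \reflem{LeftIsLeft}\refitm{CrossIFFLinked} and the planarity of $\Link$.
\item Since $x \in T_w$ and $x \in T$, the leaves at $w$ that separate $v$ from $x$ must pass through $N_v(T)$. The separation here is in the sense of \refthm{Disc}\refitm{Disc}: the closure of a leaf is a spanning arc of $\Disc$.
\item A leaf at $w$ crossing a boundary leaf of $N_v(T)$ cannot then reach the far sectors without crossing a second leaf at $v$ of the same foliation, which is impossible. \reflem{NoTriangles} handles the cusp case.
\end{itemize}
Hence the far sectors at $v$ lie in a single complementary region of the leaves at $w$. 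That region contains $v$, because those sectors have $v$ in their closure and $v$ is off the leaves at $w$ by step 1. So the region is $S_w$. Cusp sectors, where $v$ or $w$ lies in $\Delta_\calV$ and the sectors are bounded by cusp leaves, are treated in the same way, using \refcor{OnlyAsymptotics} to rule out shared ideal points.
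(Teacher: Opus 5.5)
There is a genuine gap exactly where you say cusp sectors ``are treated in the same way''. \refcor{OnlyAsymptotics} does not rule out a leaf at $w$ sharing the ideal point $v$ itself with a leaf at $v$. Let $v=c$ be a cusp, let $(\lambda_j)_{j\in\ZZ}$ be its cusp leaves in order, and let $\Sigma_j$ be the sector at $c$ between $\lambda_{j-1}$ and $\lambda_j$. If $T=\Sigma_k$, the sectors not adjacent to $T$ form two fans, $\bigcup_{j\le k-2}\Sigma_j$ and $\bigcup_{j\ge k+2}\Sigma_j$, which meet only at $c$. If $w$ lies on $\lambda_k$, then $\closure{\lambda_k}$ ends at $c$ and separates the two fans. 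So your step~2 fails, and so does step~1, since $v\in\closure{\lambda_k}$. In fact the statement itself fails in this configuration. Let $m$ be the other leaf through $w$, take $x$ in the interior of $\Sigma_k$ on the side of $\closure{m}$ away from $c$, and take $u$ in the interior of $\Sigma_{k-2}$.
\begin{itemize}
\item $c$ is sector-between $u$ and $x$, since $\Sigma_{k-2}$ and $\Sigma_k$ meet only at $c$.
\item $w$ is sector-between $c$ and $x$, because $c\in\closure{\lambda_k}$ lies in both sectors at $w$ on its side of $\closure{m}$, and one of these is opposite the sector containing $x$.
\item However, $u$ and $x$ lie strictly on the same side of $\closure{\lambda_k}$ (the side containing $\lambda_{k-1}$). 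So every sector at $w$ containing $u$ is equal or adjacent to the one containing $x$.
\end{itemize}
So the lemma needs the extra hypothesis that $w$ is not on a cusp leaf of $v$ when $v\in\Delta_\calV$. This holds in both of the paper's applications: in \reflem{InHull}, $w$ lies in the open rectangle $Q(c,x)$, and in \reflem{ConstructSpanning}, $v$ and $w$ are distinct cusps. Nothing downstream is affected, but no proof of the statement as written can succeed.

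Separately, even in the plane-like case your set-up mishandles the fact that sectors are closed. In a product chart take $u=(-1,0)$, $v=(0,0)$, $w=(1,0)$, $x=(2,1)$. Both hypotheses and the conclusion hold, yet two of your steps fail.
\begin{itemize}
\item $v$ lies on the leaf of $F_\calV$ through $w$, contradicting your step~1. Sector-between only asks for \emph{some} non-adjacent pair, so $v$ lying in two adjacent sectors at $w$ is harmless.
\item $u\in N_w(T_w)$. So your reformulation via $N_p(T)$ is strictly stronger than the definition, and your opening reduction is to a false statement.
\end{itemize}
The last bullet of step~2 is also not the real obstruction. For $v\in\Link$, crossing a boundary half-leaf of $N_v(T)$ leads directly into the far quadrant. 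What prevents a leaf at $w$ from doing so is the hypothesis on $w$, which that bullet never uses.

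A workable route starts from the following reformulation. A point $p\in\Link$ is sector-between $y$ and $z$ if and only if each of the two leaf-closures at $p$ has $y$ and $z$ in opposite \emph{closed} sides; a point on the arc counts as lying on both sides. At a cusp, the same holds for some consecutive pair of cusp leaves. Then transfer one leaf at a time. Take a leaf $a$ at $w$, and the leaf $b$ at $v$ of the same foliation supplied by the first hypothesis. The closures $\closure{a}$ and $\closure{b}$ are equal, disjoint, or share a single cusp. In each case $\closure{a}$ inherits the property of having $u$ and $x$ in opposite closed sides, except when the shared cusp is $v$ itself. Apart from the trivial case $w=v$, that exception is exactly the configuration above.
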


\begin{lemma}
\label{Lem:CuspBetween}
Suppose that $u$, $v_i$, and $w$ are points of $\Disc$, with $v_i \in \Link$.
Suppose that $c$ is a cusp.
Suppose that $u$ and $w$ do not lie on cusp leaves of $c$.
Suppose that the $v_i$ tend to $c$ and that the $v_i$ are sector-between $u$ and $w$.
Then $c$ is sector-between $u$ and $w$. \qed
\end{lemma}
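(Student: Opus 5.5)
Since $u$ and $w$ do not lie on cusp leaves of $c$ (and hence are distinct from $c$), each lies in the interior of a unique sector at $c$; call these $S_u$ and $S_w$. The aim is to show that $S_u$ and $S_w$ are distinct and non-adjacent. The plan is to pass the "separating cross" at $v_i$ to a Hausdorff limit at $c$. The sectors at $c$ are bounded by the (closures of the) cusp leaves of $c$, which alternate between $F^\calV$ and $F_\calV$.

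First I will encode the hypothesis. Let $\ell_i$ and $m_i$ be the leaves of $F^\calV$ and $F_\calV$ through $v_i$. If either is a cusp leaf, I replace it by the bi-leaf that contains the relevant segment; this matches \refdef{Sector}, where sectors are cut out by closures of leaves. Since $u$ and $w$ lie in non-adjacent sectors at $v_i$, both $\closure{\ell}_i$ and $\closure{m}_i$ separate $u$ from $w$ in $\Disc$. Passing to a subsequence, the closures Hausdorff converge to sets $L$ and $M$, both of which contain $c = \lim v_i$.

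Next I will identify $L$ and $M$ using \refprop{LimitOfLeaves}. Cusp leaves through $v_i$ have been replaced by bi-leaves, and a non-cusp leaf never ends at a cusp by \reflem{Laminations}\refitm{NoCusps}. So each of $L$ and $M$ is either the point $\{c\}$ or the closure of a bi-leaf centred at $c$. I then rule out the point case. Suppose $L = \{c\}$. Choose a half-disc neighbourhood $U$ of $c$ whose closure misses $u$ and $w$, using \reflem{HalfDiscNeighBasis}. The complement $\Disc - U$ is connected and contains $u$ and $w$. For large $i$ we have $\closure{\ell}_i \subset U$, so $\closure{\ell}_i$ does not separate $u$ from $w$, a contradiction. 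Thus $L$ and $M$ are closures of bi-leaves centred at $c$. By the final clause of \refprop{LimitOfLeaves}, $L$ lies in $F^\calV$ and $M$ lies in $F_\calV$.

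Then I will pass the separation to the limit. Since $u$ and $w$ do not lie on cusp leaves of $c$, they are not in $L \cup M$. So they have small connected neighbourhoods disjoint from $L$ and $M$, and hence disjoint from $\closure{\ell}_i$ and $\closure{m}_i$ for large $i$. I expect this step to be the main obstacle: one needs that "$\closure{\ell}_i$ separates $u$ from $w$" is preserved under Hausdorff convergence to the spanning arc $L$. I would argue via the half-discs. Whether $u$ is left or right of $\ell_i$ is determined by the circular position of the endpoints of $\ell_i$, which converge to the endpoints of $L$, together with the Jordan separation of \refthm{Schoenflies} applied to $L$. The upshot is that $L$ separates $u$ from $w$, and so does $M$.

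Finally I will do the combinatorics at $c$. If $S_u = S_w$, then no cusp leaf of $c$ separates $u$ from $w$, contradicting the previous step. Suppose instead that $S_u$ and $S_w$ are adjacent. Then they share exactly one bounding cusp leaf $k$. Every bi-leaf centred at $c$ that separates $u$ from $w$ must therefore contain $k$, since the remaining cusp leaves of $c$ lie on one side. In particular both $L$ and $M$ contain $k$, but they lie in opposite foliations, which is impossible. Hence $S_u$ and $S_w$ are non-adjacent, so $c$ is sector-between $u$ and $w$.
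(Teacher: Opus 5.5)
The paper gives no argument for this lemma; it says only that it follows directly from the definition. So your proof is an elaboration rather than a reconstruction, and it is essentially sound. Your steps are: pass the crosses at $v_i$ to Hausdorff limits via \refprop{LimitOfLeaves}; rule out the degenerate limit $\{c\}$; use the final clause of that proposition to put $L$ in $F^\calV$ and $M$ in $F_\calV$; and observe that two adjacent sectors at $c$ are separated only by bi-leaves containing their common cusp leaf, which lies in a single foliation. This is the content hidden behind the paper's "directly from the definition". What your route buys is a rigorous reason why the cross at $v_i$ degenerates to a pair of bi-leaves at $c$ in opposite foliations. The cost is a dependence on \refprop{LimitOfLeaves}, which is proved earlier and does not use this lemma, so there is no circularity.

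The step you flag as the main obstacle is actually immediate, and your sketch for it is not right as stated. $L$ is the closure of a bi-leaf, so it meets $\Circle$ at $c$ as well as at its two endpoints. Hence it is not a spanning arc, and $\Disc - L$ has three components. Also, for $u \in \Link$, which side of $\ell_i$ it lies on is not determined by the positions of the endpoints of $\ell_i$ alone. Argue instead as follows. Suppose $u$ and $w$ lay in one component $V$ of $\Disc - L$. Then $V$ is open in the disc $\Disc$, hence path-connected, so some path in $V$ joins them. That path is compact and disjoint from $L$, so it is at positive distance from $L$, and therefore misses $\closure{\ell}_i$ for all large $i$. This contradicts the fact that $\closure{\ell}_i$ separates $u$ from $w$.

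Two smaller points. First, your opening claim that $\closure{\ell}_i$ and $\closure{m}_i$ separate $u$ from $w$ holds only once $u,w \notin \closure{\ell}_i \cup \closure{m}_i$, since a point on a leaf through $v_i$ lies in two sectors. You get this for large $i$ from $u,w \notin U$ in the point case, and from $u,w \notin L \cup M$ otherwise; so order the argument accordingly. Second, you are reading the hypothesis as "$u,w$ do not lie in the closures of the cusp leaves of $c$", so that $u,w$ are neither $c$ nor thorns of $c$. That reading is what gives $u,w \notin L \cup M$ and puts each of them in the interior of a unique sector. It holds in all of the paper's applications, where $u$ and $w$ are singletons or cusps other than $c$.
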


\begin{notation}
Suppose that $u$ is a point of $\Link \cup \Delta_\calV$ and $x$ is a singleton.
We denote the sector at $u$ containing $x$ by $S(u, x)$.
\end{notation}

\begin{lemma}
\label{Lem:NestedSectors}
Suppose that $c$ and $d$ are distinct cusps and $x$ is a singleton.
Suppose that $d$ is sector-between $c$ and $x$.
Then $S(d, x)$ is a proper subset of $S(c, x)$.
\end{lemma}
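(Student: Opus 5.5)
The plan is to describe sectors at a cusp concretely and then compare them. At a cusp $c$ the leaves through $c$ are its cusp leaves. They alternate between $F^\calV$ and $F_\calV$ and are indexed by $\ZZ$ in circular order. Their closures are arcs from $c$ to the thorns of the clamshell boundary $[c]_{S^1}$. So each sector $S$ at $c$ is the closed region of $\Disc$ bounded by two consecutive cusp leaves $m$ and $m'$ of $c$ (one in each foliation) and by the closed interval of $\Circle$ between their thorns. Since $x$ is a singleton, it is neither $c$ nor a thorn, so $x$ lies in the interior of the ideal boundary of a unique sector, namely $S(c,x)$. The same description applies at $d$.

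First, $d$ lies in $S(c,x)$. Since $d$ is sector-between $c$ and $x$, the point $c$ lies in some sector $T$ at $d$ that is not adjacent to $S(d,x)$. By \reflem{NoneShallPass}\refitm{MeetsOne}, applied with the roles of the cusps arranged suitably, the clamshell boundary $\bdyi [d]_{S^1}$ meets only one component of $\Circle - \bdyi S(c,x)$. Hence the ideal points of the cusp leaves of $d$ lie on one side of the material boundary $m \cup m'$ of $S(c,x)$. I would argue that this side is the side of $S(c,x)$ itself. Otherwise all of $d$'s sectors except one, namely the sector containing $m \cup m' \cup \{c\}$, would lie outside $S(c,x)$. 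The sector $S(d,x)$ must contain $x$, and $x$ is in $S(c,x)$, so $S(d,x)$ would be that exceptional sector. But then $c$ would also lie in $S(d,x)$, contradicting that $T$ is not adjacent to $S(d,x)$ (in particular $T \neq S(d,x)$). So every cusp leaf of $d$ lies in $S(c,x)$. In particular, none crosses $m$ or $m'$, using \reflem{NoTriangles} and \refcor{OnlyAsymptotics}, which prevent a leaf of $d$ from sharing an endpoint with a leaf of $c$.

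Next, the cusp leaves of $d$ cut $S(c,x)$ into pieces. Exactly one sector at $d$, call it $T_0$, contains $c$ together with $m \cup m'$. Since $c \in T$ and $T$ is non-adjacent to $S(d,x)$, we get $T_0 = T \neq S(d,x)$. Every other sector at $d$ lies entirely inside $S(c,x)$, because its boundary consists of two cusp leaves of $d$ together with an interval of $\Circle$ contained in $\bdyi S(c,x)$. Therefore $S(d,x) \subseteq S(c,x)$.

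For properness, note that $c \in S(c,x)$ but $c \notin S(d,x)$, since $c \in T_0$ and $T_0$ and $S(d,x)$ share only points on leaves of $d$, which $c$ avoids. The main obstacle is the first step, namely rigorously showing that $d$ and its leaves lie on the $S(c,x)$ side. I would settle it with \reflem{NoneShallPass} together with the fact that the ideal-boundary interval of $S(c,x)$ contains $x$, and hence contains the endpoints of the sector $S(d,x)$.
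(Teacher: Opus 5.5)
There is a genuine gap, and it sits in the claim your whole argument rests on: that no cusp leaf of $d$ crosses the boundary leaves $m, m'$ of $S(c,x)$. From this you conclude that the sector $T$ at $d$ containing $c$ also contains $m \cup m'$, and that every other sector at $d$ lies in $S(c,x)$. This is false.

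Let $c$ and $d$ be the cusps at opposite corners of an edge rectangle $R$. This is exactly the configuration in which the paper applies the lemma, in the proof of \reflem{ConstructSpanning}. Let $T$ be the sector at $d$ containing $R$, so $c \in T$, and let $x$ lie in a sector at $d$ not adjacent to $T$.
\begin{itemize}
\item Then $S(c,x)$ is the sector at $c$ containing $R$, and its two boundary leaves contain two sides of $R$.
\item The other two sides of $R$ lie in the cusp leaves of $d$ bounding $T$.
\item These leaves of $d$ cross the boundary leaves of $S(c,x)$ at the two material corners of $R$.
\end{itemize}
So $T$ does not contain $m \cup m'$, and the two sectors at $d$ adjacent to $T$ are not contained in $S(c,x)$.

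The tools you cite cannot give your claim anyway. \reflem{NoneShallPass} needs the cusp of the clamshell, here $d$, to lie outside $\bdyi S(c,x)$. That is the opposite of what you are trying to prove. Its conclusion also concerns the two components of $\Circle - \bdyi S(c,x)$, both of which lie outside $S(c,x)$, so it cannot place $d$'s leaves on the $S(c,x)$ side. Likewise, \reflem{NoTriangles} says only that a leaf of $c$ meets at most one cusp leaf of $d$ in the opposite foliation, not none.

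A symptom of the problem is that your argument uses non-adjacency only through $T \neq S(d,x)$. In the example above, the sectors at $d$ whose boundary leaves cross $\bdy S(c,x)$ are exactly $T$ and its two neighbours, so non-adjacency is doing real work.

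The missing idea is to look only at the two boundary leaves $\ell_d, m_d$ of $S(d,x)$, and to use non-adjacency to show that they do not cross the boundary leaves $\ell_c, m_c$ of $S(c,x)$. Suppose, say, that $\ell_d$ met $m_c$ at $p$. By \reflem{NoTriangles}, the arc of $m_c$ from $c$ to $p$ meets no other cusp leaf of $d$. So $c$ lies in a sector at $d$ with $\ell_d$ in its boundary, that is, in $S(d,x)$ or its neighbour across $\ell_d$. This contradicts $d$ being sector-between $c$ and $x$.

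Once the bounding leaves are disjoint, the two sectors are either nested or disjoint. They share $x$, and $c \in S(c,x) - S(d,x)$. Together these give $S(d,x) \subsetneq S(c,x)$.
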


\begin{proof}
We first prove that $\bdy S(c, x)$ is disjoint from $\bdy S(d, x)$. 
Let $\ell_c$, $m_c$, $\ell_d$, and $m_d$ be the cusp leaves on the boundaries of $S(c, x)$ and $S(d, x)$, respectively.
Leaves from the same foliation do not intersect. 
So, for a contradiction, suppose that $p = \ell_d \cap m_c$.
Consider the initial arc of $m_c$ ending at $p$.
This is entirely contained in some sector $T$ based at $d$, with $\ell_d$ in its boundary.
Thus $T$ contains $c$, and $T$ is adjacent to (or equal to) $S(d, x)$.
Thus $d$ is not sector-between $c$ and $x$, a contradiction.

Since $d$ is sector-between $c$ and $x$, we deduce that $c$ does not lie in $S(d, x)$.
Since the cusp leaves of $S(c, x)$ and $S(d, x)$ are disjoint, we deduce that the sectors either are disjoint or nest as desired.
However, the sectors are not disjoint because both contain $x$.
\end{proof}

\begin{definition}
\label{Def:Hull}
Suppose that $u, v \in \Disc$.
The \emph{hull} of $u$ and $v$, written $\Hull(u, v)$, is the closure of the set of points of $\Disc$ that are sector-between $u$ and $v$.
\end{definition}

From this and \reflem{CuspBetween} we have the following.

\begin{corollary}
\label{Cor:CuspBetween}
Suppose that $c$ is a cusp in $\Hull(u, v)$.
Then $c$ is sector-between $u$ and $v$. \qed
\end{corollary}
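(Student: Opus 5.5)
The statement is \refcor{CuspBetween}: if $c$ is a cusp in $\Hull(u, v)$, then $c$ is sector-between $u$ and $v$. By \refdef{Hull}, $\Hull(u,v)$ is the closure of the set $X$ of points that are sector-between $u$ and $v$. So either $c$ lies in $X$, in which case there is nothing to prove, or $c$ is a limit of points of $X$.

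In the second case I would pick a sequence $(v_i)$ in $X$ converging to $c$ in $\Disc$. Every sector-between point lies in $\Link \cup \Delta_\calV$. Since $\Delta_\calV$ is countable, I would first handle the case where infinitely many $v_i$ are cusps. Cusps lie on $\Circle$, and points of $\Link$ near a cusp $d$ in the interior of its sector are sector-between $u$ and $v$ whenever $d$ is. So I would replace each such cusp $v_i$ by a nearby point of $\Link$ lying in the interior of the union of the sectors at $v_i$. This gives a sequence in $\Link$ still tending to $c$ and still sector-between $u$ and $v$.

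With $v_i \in \Link$ I would then apply \reflem{CuspBetween}. That requires checking that $u$ and $v$ do not lie on cusp leaves of $c$. This check is where I expect the only real subtlety. Sector-betweenness of $v_i$ forces $u$ and $v$ off the leaves at $v_i$, and $c$ is a limit of the $v_i$. If $u$ (say) lay on a cusp leaf of $c$, I would look at the sectors at $v_i$ containing $u$ and $v$. As $v_i \to c$, their boundary leaves converge to bi-leaves at $c$ by \refprop{LimitOfLeaves}. The sectors at $v_i$ containing $u$ and $v$ are non-adjacent, so in the limit their boundary leaves separate $u$ from $v$. This forces $u$ strictly inside a sector at $c$ rather than on its boundary, giving a contradiction.

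If instead the intended reading is that the hypothesis of \reflem{CuspBetween} is implicit, as the \qed in the statement suggests, the proof is just the sequence argument above together with \reflem{CuspBetween}. I would present it that way and record the cusp-leaf check as a remark.
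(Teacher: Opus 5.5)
Your skeleton (take points sector-between $u$ and $v$ converging to $c$, then apply Lemma~\ref{Lem:CuspBetween}) is the paper's proof. The paper's proof consists of Definition~\ref{Def:Hull} plus that lemma and nothing else.

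\textbf{The gap: your check of the cusp-leaf hypothesis.} Your limiting argument never uses the fact that, in your second case, $c$ is not itself sector-between $u$ and $v$. Without that fact its conclusion is false. Let $(S_j)_{j \in \ZZ}$ be the sectors at $c$ in order, with consecutive sectors sharing a cusp leaf of $c$. Put $u$ on the cusp leaf shared by $S_0$ and $S_1$, and put $v$ at a point of $\Link$ in the interior of $S_3$. Every point $p \in \Link$ in the interior of $S_3$ and close enough to $c$ is sector-between $u$ and $v$. Indeed, the leaves through $p$ stay inside $S_2 \cup S_3 \cup S_4$. So $u$ lies in the sector at $p$ containing $c$, while $v$ lies in the opposite one. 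Hence sequences $v_i \to c$ of sector-between points of $\Link$ exist with $u$ on a cusp leaf of $c$. Your argument would derive a contradiction in a configuration that actually occurs.

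The faulty step is the passage to the limit. Sectors at $v_i$ do not converge to single sectors at $c$. Here the sector at $v_i$ containing $c$ converges to the union of all $S_j$ with $j \notin \{2,3,4\}$. The point $u$ lies on a leaf in the interior of that union, so nothing forces it into the interior of a single sector.

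\textbf{Where the hypothesis actually comes from.} Your fallback, treating the hypothesis as implicit, is the right reading, but the remark you plan to attach is wrong. The hypothesis holds for a different reason. In every application in the paper (for instance Lemma~\ref{Lem:InHull} and the discussion before Definition~\ref{Def:SeparatingSectors}) the points $u$ and $v$ are singletons. By Theorem~\ref{Thm:Disc}, the closure of a cusp leaf meets $\Circle$ only in its cusp and its thorn. So singletons lie in no such closure, and the hypothesis is automatic.

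For arbitrary $u$ and $v$ you need a separate case. Suppose $u$ lies on the leaf between $S_0$ and $S_1$ and $c$ is not sector-between $u$ and $v$, so every sector at $c$ containing $v$ is $S_0$ or $S_1$. Then one must show directly that no point of $\Link \cup \Delta_\calV$ near $c$ is sector-between $u$ and $v$. You do this by checking which $S_j$ the leaves at such a point can reach. It does not follow from Proposition~\ref{Prop:LimitOfLeaves} as you sketch.

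\textbf{The replacement step also needs repair.} As stated, your replacement of cusps $v_i = d_i$ by nearby points of $\Link$ is false. A point near $d_i$ in a sector containing neither $u$ nor $v$ has both of them in its sector containing $d_i$. Moreover, if $u$ and $v$ lie on cusp leaves of $d_i$, there need not be any nearby sector-between point of $\Link$ at all. The fix has three steps:
\begin{enumerate}
\item If $c$ occurs infinitely often among the $d_i$, then $c$ is sector-between $u$ and $v$ and there is nothing to prove.
\item Otherwise, note that a given point lies in the closures of cusp leaves of at most two cusps. So you may discard the finitely many $d_i$ for which $u$ or $v$ does.
\item For the remaining $d_i$, take $p_i \in \Link$ very close to $d_i$, inside the sector at $d_i$ containing $u$. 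This places $u$ in the sector at $p_i$ opposite $d_i$, and $v$ in the sector at $p_i$ containing $d_i$.
\end{enumerate}
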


\subsection{Spanning rectangles}

\begin{definition}
\label{Def:RectangleTowards}
Suppose that $x$ is a singleton and $c$ is a cusp.
We define the \emph{rectangle at $c$ towards $x$}, denoted $Q(c, x)$, to be the maximal rectangle so that
\begin{itemize}
\item $Q(c, x) \subset S(c, x)$,
\item $c$ is an ideal corner of $Q(c, x)$, and 
\item each point of $Q(c, x)$ is sector-between $c$ and $x$. \qedhere
\end{itemize}
\end{definition}

\begin{lemma}
\label{Lem:RectangleTowards}
$Q(c, x)$ exists and is unique.
\end{lemma}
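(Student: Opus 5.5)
We prove existence and uniqueness by working in the sector $S = S(c,x)$, viewed in the rectangular style. Let $\ell$ and $m$ be the two cusp leaves of $c$ bounding $S$; they belong to opposite foliations. Because $\ell$ and $m$ are adjacent cusp leaves at $c$, there is a cusp rectangle $Q_0$ in $S$ with ideal corner $c$ and cusp sides contained in $\ell$ and $m$. This uses \refdef{Loom}\refitm{Cusp}.

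Every rectangle in $S$ with ideal corner $c$ is determined by a pair of points: its other two sides lie in a leaf crossing $m$ and a leaf crossing $\ell$. We may therefore parametrise such rectangles by pairs $(s,t)$, where $s$ is a point of $m$ and $t$ is a point of $\ell$. Here $s$ is the position of the side opposite $m$ along $\ell$, and $t$ is the position of the side opposite $\ell$ along $m$. Containment of rectangles is then the product order on $(s,t)$.

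Consider the family $\calQ$ of rectangles in $S$ with ideal corner $c$ all of whose points are sector-between $c$ and $x$. We will show that $\calQ$ is non-empty and directed.

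For non-emptiness, take a point $p$ of $S$ close to $c$. The sectors at $p$ containing $c$ and $x$ are non-adjacent provided $x$ lies in the sector at $p$ opposite to $c$. By \reflem{IVT} applied to $\ell$, there is a cusp $c'$ whose bi-leaves separate $\ell$ from $x$. This lets us find points $p$ whose north-east sector (say) contains $x$. Then every point of the small rectangle spanned by $c$ and $p$ lies in $\calQ$, because for any point $q$ of that rectangle, $c$ lies in the sector of $q$ opposite to the one containing $p$'s opposite sector, and that sector contains $x$.

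For directedness, the key observation is that whether a point $q$ of $S$ is sector-between $c$ and $x$ is detected by the leaves through $q$. It holds exactly when both leaves through $q$ separate $c$ from $x$ in the appropriate sense, namely when $x$ lies in the sector at $q$ diagonally opposite $c$. This condition is monotone. If $q$ is sector-between, then so is every point of the rectangle with corners $c$ and $q$, because the leaves through such points are squeezed between $\ell$ (resp.\ $m$) and the leaves through $q$. It follows that the union of two members of $\calQ$ is covered by the rectangle whose parameters are the coordinatewise maxima of theirs, and that this rectangle is again in $\calQ$.

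Now take the union $Q(c,x)$ of all members of $\calQ$. By directedness and monotonicity, this union is itself a rectangle with ideal corner $c$: it is the ascending union of nested rectangles, with parameter equal to the supremum along $m$ and along $\ell$. Every point of the union lies in some member of $\calQ$, so every point is sector-between $c$ and $x$. The union is maximal by construction, and it is unique because any maximal element of a directed family equals the union of the family.

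The main obstacle is the monotonicity claim: that the sector-between condition is inherited by the whole rectangle spanned by $c$ and $q$. I would first try to prove it with \reflem{LeftIsLeft}\refitm{LeftIsLeft}, translating leaf positions into separation of ideal points in $\Circle$. I would also need to handle the case where a leaf through an intermediate point is a cusp leaf, passing to bi-leaves (\refdef{BiLeaf}). A secondary check is that the supremum rectangle stays inside $S$, which holds because all members of $\calQ$ lie in $S$ and $S$ is bounded by leaves.
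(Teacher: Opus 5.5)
Your route differs from the paper's. The paper writes $Q(c,x)$ down explicitly: it applies \reflem{IVT} to the two cusp leaves $\ell_c$, $m_c$ bounding $S(c,x)$ (your $\ell$ and $m$), extracts bi-leaves $\hat{\ell}$ and $\hat{m}$, and takes the rectangle cut out by $\ell_c$, $m_c$, $\hat{\ell}$, $\hat{m}$. You instead want $Q(c,x)$ as the union of a directed family $\mathcal{Q}$. That can be made to work, but the directedness step has a genuine gap.

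\begin{itemize}
\item \textbf{The join need not exist.} Downward closure (your ``monotonicity'') gives no upper bounds. The rectangle ``whose parameters are the coordinatewise maxima'' of two members $R_1, R_2$ of $\mathcal{Q}$ need not exist in a loom space. The union $R_1 \cup R_2$ is an L-shaped region, and a cusp can sit in its notch. This is exactly how staircases arise; in general the family of all rectangles in $S$ with ideal corner $c$ is not directed.
\item \textbf{Where the real work is.} Ruling out such a cusp is the actual content of the lemma, and it is where the hypothesis that $x$ is a singleton is used. Monotonicity is the easy part, so the main obstacle has been misidentified.
\end{itemize}

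Here is how to close the gap. Orient things so that $c$ is the south-west corner of $S$. Say $R_1$ is tall and thin, $R_2$ is short and wide, and let $q_0$ be the inner corner of $R_1 \cup R_2$.

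\begin{itemize}
\item \textbf{No cusp blocks the notch.} If the join fails to exist, the north-east staircase at $q_0$ has a cusp $d$ that blocks it. Let $v_d$ and $h_d$ be the cusp leaves at $d$ bounding the sector at $d$ that contains $q_0$ and $c$. Then $v_d$ crosses the interior of the north side of $R_2$, and $h_d$ crosses the interior of the east side of $R_1$.
  \begin{itemize}
  \item For $q \in R_2 \cap v_d$, the quadrant at $q$ opposite $c$ lies in the component of $\Disc - \closure{v_d}$ not containing $c$.
  \item For $q' \in R_1 \cap h_d$, the quadrant at $q'$ opposite $c$ lies in the component of $\Disc - \closure{h_d}$ not containing $c$.
  \item These two regions are disjoint, since $v_d$ and $h_d$ are consecutive cusp leaves at $d$. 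So $x$ cannot lie in both, a contradiction.
  \end{itemize}
\item \textbf{New points are sector-between.} Once the join exists, you must still show a new point $q$ in the notch is sector-between $c$ and $x$. This is also not monotonicity. Let $q_a \in R_1$ lie on the leaf of $F_\calV$ through $q$, and $q_b \in R_2$ on the leaf of $F^\calV$ through $q$. The quadrant at $q$ opposite $c$ contains the intersection of the corresponding quadrants at $q_a$ and $q_b$, and both of those contain $x$.
\item \textbf{Non-emptiness.} Your paragraph here is garbled, and \reflem{IVT} is not needed. Take $p$ in a cusp rectangle at $c$ with $p \to c$. The leaves through $p$ converge to the bi-leaves containing $\ell_c$ and $m_c$, and $x$ is not a thorn, so eventually $x$ lies in the quadrant at $p$ opposite $c$.
\end{itemize}

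With these repairs your argument yields maximality and uniqueness directly, which the paper's explicit construction leaves implicit. On the other hand, the paper's description of $Q(c,x)$ via $\hat{\ell}$ and $\hat{m}$ is what is actually used afterwards, in \reflem{InHull} and \reflem{ConstructSpanning}.
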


\begin{proof}
Let $\ell_c \in F^\calV$ and $m_c \in F_\calV$ be the cusp leaves at $c$ bounding $S(c, x)$.
We extend their orientations (namely, away from $c$) to obtain orientations of $F^\calV$ and $F_\calV$.
With these orientations, $c$ is the south-west corner of $S(c, x)$.

By \reflem{IVT} there are cusps $a$ and $b$ with cusp leaves $\ell$, $\ell'$, $\ell''$, $m$, $m'$, and $m''$ so that
\begin{itemize}
\item $\ell$ crosses $m_c$, 
\item $m$ crosses $\ell_c$, 
\item $\ell'$ and $\ell''$ form bi-leaves with $\ell$, and 
\item $m'$ and $m''$ form bi-leaves with $m$.
\end{itemize}
Let $\hat{\ell}$ be the bi-leaf (formed by $\ell$ and either $\ell'$ or $\ell''$) so that $c$ is contained in the sector with boundary consisting of $\ell$ and the divider of $\hat{\ell}$.
We define $\hat{m}$ similarly.

Then $Q(c, x)$ is the rectangle with boundary contained in the union of $\ell_c$, $m_c$, $\hat{\ell}$, and $\hat{m}$.
\end{proof}

The bi-leaves $\hat{\ell}$ and $\hat{m}$ separate $c$ from $x$ and are not nested, so there are four cases, as shown in \reffig{FindSpanningEdgeRectangle}.

\begin{figure}[htbp] 
\subfloat[]{
\labellist
\small\hair 2pt
\pinlabel {$c$} [tr] at 2 2
\pinlabel {$\ell_c$} [r] at 2 60
\pinlabel {$m_c$} [t] at 60 2
\pinlabel {$\ell$} [l] at 168 60
\pinlabel {$m$} [b] at 60 230
\pinlabel {$a=b$} [b] at 166 230
\endlabellist
\includegraphics[width = 0.35 \textwidth]{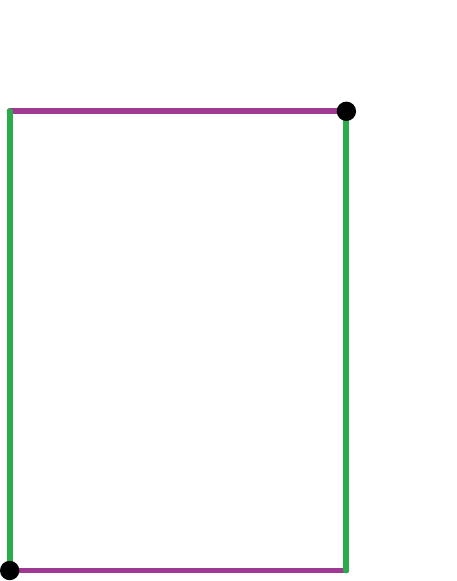}
\label{Fig:Case0}
}
\subfloat[]{
\labellist
\small\hair 2pt
\pinlabel {$c$} [tr] at 2 2
\pinlabel {$\ell_c$} [r] at 2 60
\pinlabel {$m_c$} [t] at 60 2
\pinlabel {$\hat{\ell}$} [l] at 168 60
\pinlabel {$\hat{m}$} [b] at 60 230
\pinlabel {$a$} [t] at 95 221
\pinlabel {$b$} [r] at 163 265
\endlabellist
\includegraphics[width = 0.35 \textwidth]{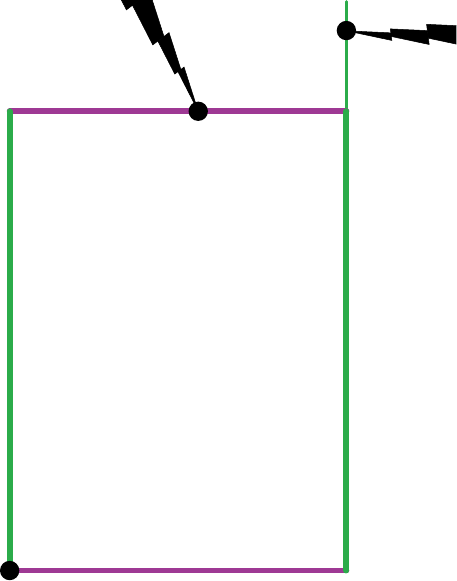}
\label{Fig:Case1}
}

\subfloat[]{
\labellist
\small\hair 2pt
\pinlabel {$c$} [tr] at 2 2
\pinlabel {$\ell_c$} [r] at 2 60
\pinlabel {$m_c$} [t] at 60 2
\pinlabel {$\hat{\ell}$} [l] at 168 60
\pinlabel {$\hat{m}$} [b] at 60 230
\pinlabel {$a$} [t] at 195 221
\pinlabel {$b$} [r] at 163 111
\endlabellist
\includegraphics[width = 0.35 \textwidth]{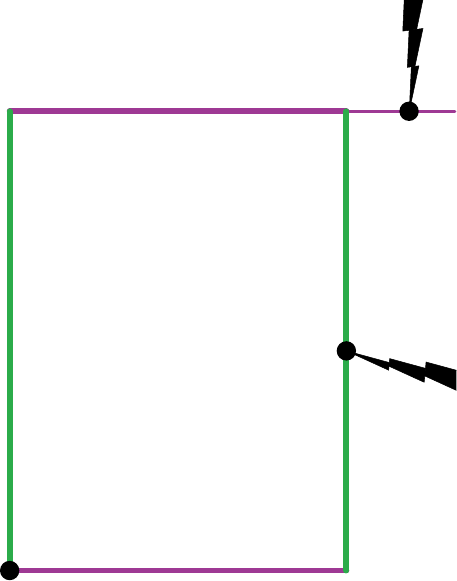}
\label{Fig:Case2}
}
\subfloat[]{
\labellist
\small\hair 2pt
\pinlabel {$c$} [tr] at 2 2
\pinlabel {$\ell_c$} [r] at 2 60
\pinlabel {$m_c$} [t] at 60 2
\pinlabel {$\hat{\ell}$} [l] at 168 60
\pinlabel {$\hat{m}$} [b] at 60 230
\pinlabel {$a$} [t] at 95 221
\pinlabel {$b$} [r] at 163 111
\endlabellist
\includegraphics[width = 0.35 \textwidth]{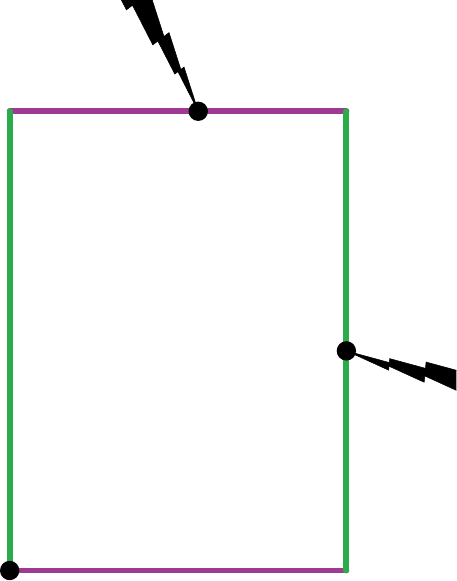}
\label{Fig:Case3}
}
\caption{The rectangles $Q(c, x)$.}
\label{Fig:FindSpanningEdgeRectangle}
\end{figure}

\begin{lemma}
\label{Lem:InHull}
Suppose that $x \neq y$ are singletons in $\Circle$.
Suppose that $c$ is a cusp in $\Hull(y, x)$.
Then $Q(c, x) \subset \Hull(y, x)$.
\end{lemma}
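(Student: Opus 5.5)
By \refcor{CuspBetween}, since $c$ lies in $\Hull(y, x)$, the cusp $c$ is sector-between $y$ and $x$. So there are non-adjacent sectors $T$ and $S(c, x)$ at $c$ with $y \in T$. The plan is to show directly that every point $q$ of $Q(c, x)$ is itself sector-between $y$ and $x$. Since $\Hull(y, x)$ is the closure of the set of such points, this gives $Q(c, x) \subset \Hull(y, x)$.

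Fix $q \in Q(c, x)$. By \refdef{RectangleTowards}, $q$ is sector-between $c$ and $x$. That is, there are non-adjacent sectors $S(q, x)$ and $U$ at $q$ with $c \in U$. The natural tool is \reflem{BetweenTransitive}, applied with $u = y$, $v = c$, $w = q$, and $x$: we have $c$ sector-between $y$ and $x$, and $q$ sector-between $c$ and $x$, so $q$ is sector-between $y$ and $x$. Note that \reflem{BetweenTransitive} requires $v, w \in \Link \cup \Delta_\calV$; this holds since $c$ is a cusp and $q \in \Link$. It remains to check that the hypotheses are met exactly as stated, in particular that the relevant sectors are well defined. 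A point on the boundary of a sector lies in several sectors, so we must make sure $c$ does not lie on the leaves through $q$. This is clear, because $q$ is in the open rectangle $Q(c, x)$ with $c$ as an ideal corner, and leaves through interior points of a rectangle do not pass through its corners. Similarly, $x$ is a singleton, so it lies on no leaf closure and lies in a unique sector at every point.

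The main obstacle I expect is whether \reflem{BetweenTransitive} really applies as a black box, or whether its proof implicitly needs some nesting of sectors, as in \reflem{NestedSectors}. If the lemma is insufficient, I would argue by hand as follows. The sector $T$ at $c$ containing $y$ is non-adjacent to $S(c, x)$, so $T$ is separated from $Q(c, x) \subset S(c, x)$ by the two cusp leaves $\ell_c$ and $m_c$ bounding $S(c, x)$. For $q \in Q(c, x)$, the leaves through $q$ cross $Q(c, x)$ and exit it through the sides lying on $\hat\ell$ and $\hat m$, or on $\ell_c$ and $m_c$. Using the four cases of \reffig{FindSpanningEdgeRectangle}, one checks that the sector $U$ at $q$ containing $c$ also contains $T$: the leaves through $q$ cannot cross $\ell_c$ or $m_c$ into $T$ without meeting the opposite leaf at $c$, which is ruled out by \reflem{NoTriangles}. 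It follows that $y \in U$, which is non-adjacent to $S(q, x)$, as required.

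Finally, take the closure. This is immediate, because the hull is defined as a closure and $Q(c, x)$ lies in the set of sector-between points.
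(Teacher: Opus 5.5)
Your argument is correct and is the paper's: \refcor{CuspBetween} makes $c$ sector-between $y$ and $x$, and each point of $Q(c,x)$ is sector-between $c$ and $x$. Then \reflem{BetweenTransitive} shows every point of $Q(c,x)$ is sector-between $y$ and $x$, hence lies in $\Hull(y,x)$, so your fallback argument is not needed.

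The only difference is the middle fact. You read it off from \refdef{RectangleTowards}. The paper instead re-checks it for the explicit rectangle built in \reflem{RectangleTowards}: for $p = \ell_p \cap m_p$ in $Q(c,x)$, the leaf $m_p$ separates $m_c$ from $\hat{m}$ and hence $c$ from $x$, and similarly for $\ell_p$. This also confirms that the constructed rectangle really has the third defining property.
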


\begin{proof}
Suppose that $p = \ell_p \cap m_p$ is a point of the interior of $Q(c, x)$.
With notation as in the proof of \reflem{RectangleTowards}, we have that $m_p$ separates $m_c$ from $\hat{m}$.
Thus $m_p$ separates $c$ from $x$.
Similarly $\ell_p$ separates $c$ from $x$.
Thus $p$ is sector-between $c$ and $x$.

By \refcor{CuspBetween}, we have that $c$ is sector-between $y$ and $x$. 
Thus \reflem{BetweenTransitive} implies that $p$ is sector-between $y$ and $x$.
Thus $Q(c, x)$ is contained in $\Hull(y, x)$.
\end{proof}

We next pick out specific rectangles in $\Hull(y, x)$.

\begin{definition}
An edge rectangle $R$ contained in $\Hull(u, v)$ is \emph{south-north spanning} if its south and north sides lie in the boundary of $\Hull(u, v)$. 
\end{definition}

\begin{lemma}
\label{Lem:ConstructSpanning}
Suppose that $x \neq y$ are singletons.
Suppose that $c$ is a cusp in $\Hull(y, x)$.
Then there are cusps $d$ and $e$ of $\Hull(y, x)$ as follows.
\begin{itemize}
\item $d$ is sector-between $c$ and $x$,
\item $e$ is sector-between $d$ and $x$,
\item $R = S(d, x) \cap S(e, y)$ is an edge rectangle with cusps $d$ and $e$, and
\item $R$ is south-north spanning.
\end{itemize}
\end{lemma}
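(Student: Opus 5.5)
The plan is to walk from $c$ towards $x$ using the rectangles $Q(\cdot, x)$ of \refdef{RectangleTowards}, and to stop when the rectangle becomes an edge rectangle whose two cusps are separated from one another by the hull. We may use the symmetries of the configuration to assume that $Q(c,x)$ is oriented as in \reffig{FindSpanningEdgeRectangle}, with $c$ at its south-west corner. By \reflem{InHull}, the rectangle $Q(c,x)$ lies in $\Hull(y,x)$. Its eastern and northern sides lie in the bi-leaves $\hat\ell$ and $\hat m$, whose centres are the cusps $a$ and $b$ produced by \reflem{IVT}. These cusps lie in the closure of $Q(c,x)$, hence in $\Hull(y,x)$. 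By \refcor{CuspBetween} and \reflem{BetweenTransitive}, each of them is sector-between $c$ and $x$.

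The first key step is to choose $d$. We take $d$ to be one of $a$ or $b$, chosen according to which of the four cases of \reffig{FindSpanningEdgeRectangle} occurs, so that the sector $S(d,x)$ opens in the direction that "crosses" the hull. Concretely, we want the cusp leaves bounding $S(d,x)$ to be the leaves that exit the hull through $\bdy\Hull(y,x)$ on both the $y$-side and the $x$-side.

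Next we produce $e$. We apply the symmetric construction $Q(d,y)$, which should be the "other half" near $d$, and take $e$ to be the cusp in the corner of the maximal rectangle $S(d,x)\cap S(e,y)$. Equivalently, $e$ is the first cusp met by the staircase of $d$ within $S(d,x)$ on the side facing $y$. We then check three things:
\begin{enumerate}
\item $e$ lies in the interior of $S(d,x)$, so that by \reflem{NestedSectors} and \reflem{BetweenTransitive} it is sector-between $d$ and $x$;
\item the rectangle $R=S(d,x)\cap S(e,y)$ has exactly $d$ and $e$ on its boundary, with $d$ and $e$ at opposite corners, so that $R$ is an edge rectangle in the sense of~\cite[Definition~2.28]{SchleimerSegerman24};
\item the south and north sides of $R$, being segments of the cusp leaves at $d$ and $e$ that bound the sectors, lie on $\bdy\Hull(y,x)$.
\end{enumerate}
For the third point, any point on those sides fails to be sector-between $y$ and $x$, because it is adjacent to $S(d,x)$ or to $S(e,y)$; every point of $R$ itself, on the other hand, is sector-between $y$ and $x$.

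The main obstacle is the choice of $d$ and the verification that $R$ is an edge rectangle rather than a face or tetrahedron rectangle. If a third cusp sat on a side of $R$, that side would not be in the hull boundary. I would rule this out by iterating: if $S(d,x)\cap S(e,y)$ contains an additional cusp $f$ on its boundary, then $f$ is sector-between $d$ and $x$, and we replace $d$ by $f$. By \reflem{NestedSectors}, the sectors $S(d,x)$ strictly shrink. Termination would come from finiteness: only finitely many skeletal rectangles contain a fixed rectangle~\cite[Lemma~4.16]{SchleimerSegerman24}, applied to a small rectangle near a point on the interior of the hull between the current cusp and $x$. Alternatively, termination could come from the astroid lemma, which shows that the cusps of the staircase toward $x$ cannot accumulate inside the hull, since $x$ is a singleton and not the endpoint of any leaf.
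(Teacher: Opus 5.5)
There is a genuine gap. The cusp $e$ is never actually constructed, and the configuration that makes the lemma delicate is not the one you address.

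\textbf{The construction of $e$.} Defining $e$ as ``the cusp in the corner of $S(d,x)\cap S(e,y)$'' is circular. The suggested route through $Q(d,y)$ points the wrong way. Since $d$ is sector-between $y$ and $x$, the sector $S(d,y)\supset Q(d,y)$ is not adjacent to $S(d,x)$. The cusp leaves of any cusp inside $S(d,y)$ stay within $S(d,y)$ and the two sectors adjacent to it, so they never separate $d$ from $x$. Hence no cusp found there is sector-between $d$ and $x$.

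Your check (1) also uses the wrong criterion. Lying in the interior of $S(d,x)$ does not make a cusp sector-between $d$ and $x$, and \reflem{NestedSectors} only gives the converse implication. What works, and what the paper does, is to take $e\in\closure{Q(d,x)}$ and apply \reflem{CuspBetween}.

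Relatedly, it is not true in general that both $a$ and $b$ lie in $\closure{Q(c,x)}$: in \reffig{Case2} the cusp $a$ lies beyond the north-east corner. Choosing $d\in\{a,b\}$ rather than $d=c$ just restarts the same problem from a new cusp.

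\textbf{The misdiagnosed obstacle.} Whenever the boundary leaves of $S(d,x)$ and $S(e,y)$ cross in pairs, the intersection is bounded by four single leaf segments and the two cusps. So a third cusp on a side never occurs, and your iteration never triggers.

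The real failure is the configuration of \reffig{Case2}. There $d=c$ and $e=b$ (the cusp on the east side of $Q(c,x)$) satisfy the first three bullets. However, the north side of $R$ is the divider of $\hat\ell$, and the points of $Q(c,x)$ just north of it lie in $\Hull(y,x)$ by \reflem{InHull}. So $R$ is not south-north spanning.

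The missing idea is the paper's walk. Let $c^{k+1}$ be the cusp $b$ of $Q(c^k,x)$. While the walk stays in this case, the northern sides of all the $Q(c^k,x)$ lie in the single leaf $m$ through the north-west corner of $Q(c,x)$, so the cusp $a$ never changes. The astroid lemma then forces the walk to reach some $Q(c^N,x)$ of one of the other three shapes, with $a\in\closure{Q(c^N,x)}$. One then takes $d=c^N$ and $e=a$.

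Finiteness via \cite[Lemma~4.16]{SchleimerSegerman24} cannot replace this step: successive rectangles of the walk have disjoint interiors and contain no common rectangle.

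\textbf{The spanning check.} ``Adjacent to $S(d,x)$ or to $S(e,y)$'' does not prove spanning. What must be shown is that points beyond the north and south sides are not sector-between $y$ and $x$. This depends on where $y$ lies among the sectors at $d$. For example, suppose $y$ lay in the sector two steps from $S(d,x)$ across the cusp leaf $m_d$ carrying the south side. Then points of $m_d$ near $d$ would themselves be sector-between $y$ and $x$, contradicting your item (3).

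For the north side, the paper argues as follows. First, $y$ is south of $m$, since otherwise it would lie in a sector at $d$ equal or adjacent to $S(d,x)$. Second, $x$ is strictly south of every point north of $m$.
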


\begin{proof}
From $c$ we will generate a cusp $d$ so that $Q(d, x)$ contains $e$ and thus the desired edge rectangle $R$.
Here we use the same orientation conventions as in the proof of \reflem{RectangleTowards}.

To start, if $Q(c, x)$ is as in Figures \ref{Fig:Case0}, \ref{Fig:Case1}, and \ref{Fig:Case3} then we take $d = c$, take $e = a$, and take $R$ to be the edge rectangle with those cusps.
The case in \reffig{Case2} is more delicate; 
we proceed as follows.

Set $c^0 = c$ and $a^0 = a$.
In general, let $c^{k+1}$ be the cusp labelled $b$ and let $a^{k+1}$ be the cusp labelled $a$ in \reffig{Case2} as applied to $Q(c^k, x)$.
In all cases, $c^{k+1}$ is sector-between $c^k$ and $x$.
Induction and \reflem{BetweenTransitive} imply that $c^{k+1}$ is sector-between $c$ and $x$.

As long as the rectangles $Q(c^k, x)$ are as in case \reffig{Case2} their northern sides all lie in the leaf $m$ running through the north-west corner of $Q(c, x)$.
Thus the sequence $a^k$ is constant.
The astroid lemma (\refsec{Astroid}) implies that this process eventually terminates with a rectangle $Q(c^N, x)$ in one of the other three cases.
At that point we take $d = c^N$, take $e = a^N = a$ and set $R \subset Q(c^N, x)$ to be the edge rectangle with cusps $d$ and $e$.

In all of the above cases, the first property follows by construction and the second property follows because $e$ lies in the closure of $Q(d, x)$ (\reflem{CuspBetween}).
The construction also gives $R \subset S(d, x)$.
Since $d$ is sector-between $y$ and $x$, we have that $S(d, y)$ intersects $S(d, x)$ only at $d$.
Note that $e$ lies in the interior of $S(d, x)$.
Thus, $d$ is sector-between $y$ and $e$.
By \reflem{NestedSectors} we have that $S(d, y)$ is a proper subset of $S(e, y)$.
In particular, $d$ lies in $S(e, y)$.
Thus $R \subset S(e, y)$.
The cusp leaves on the boundaries of $S(d, x)$ and $S(e, y)$ run along the sides of $R$ and thus intersect in pairs.
Thus $R = S(d, x) \cap S(e, y)$.

Finally, we show that the rectangle $R$ is a south-north spanning edge rectangle for $\Hull(y, x)$.
By \reflem{InHull} we have that $R \subset \Hull(y, x)$.
We show that the north and south sides of $R$ lie in the boundary of $\Hull(y, x)$.
Suppose that $p$ is north of $m$.
Note that $y$ is south of $m$ because if it was north, it would be contained in the union of the two sectors at $d$ to either side of $\ell_d$.
This would mean that $y$ and $x$ would be in the same or adjacent sectors, contradicting the fact that $d$ is sector-between $y$ and $x$.
Moreover, $x$ is (strictly) south of $p$.
(In cases \ref{Fig:Case1} and \ref{Fig:Case3}, the singleton $x$ is south of $m$ thus south of $p$.
In case \ref{Fig:Case0}, the singleton $x$ may be either south or north of $m$, but in both cases $x$ is south of $p$.)
Thus $p$ is not sector-between $y$ and $x$.
A similar argument applies for points $p$ south of $m_d$.
\end{proof}

\begin{lemma}
\label{Lem:EdgeSpanningInHalfDisc}
Suppose that $\ell$ is a non-cusp leaf so that the half-disc $H(\ell)$ contains $x$ and does not contain $y$.
Then there is a south-north spanning edge rectangle for $\Hull(y, x)$ that is contained in $H(\ell)$.
\end{lemma}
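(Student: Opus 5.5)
We reduce the statement to \reflem{ConstructSpanning}: it suffices to find a cusp $c$ of $\Hull(y, x)$ that lies in $H(\ell)$ and is ``far enough'' inside $H(\ell)$ that everything sector-between $c$ and $x$ also lies in $H(\ell)$. The lemma then supplies cusps $d$ and $e$, with $d$ sector-between $c$ and $x$ and $e$ sector-between $d$ and $x$, together with a south-north spanning edge rectangle $R = S(d, x) \cap S(e, y)$. Since $R \subset S(d, x)$ and $d$ sector-between $c$ and $x$ gives $S(d, x) \subset S(c, x)$ by \reflem{NestedSectors}, it will be enough to arrange that $S(c, x) \subset H(\ell)$.

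To find $c$, apply \reflem{IVT} to the non-cusp leaf $\ell$ and the singleton $x$. We obtain a cusp $c$ with cusp leaves $m$, $m'$, $m''$ such that $m$ crosses $\ell$, both $m \cup \{c\} \cup m'$ and $m \cup \{c\} \cup m''$ are bi-leaves, and exactly one of $m'$, $m''$, say $m'$, separates $\ell$ from $x$. Then $c$ lies in $H(\ell)$, since $m$ crosses $\ell$ and $m'$ lies on the $x$ side. The sector $S(c, x)$ is bounded by $m'$ and a cusp leaf of the opposite foliation, and it lies on the far side of $m'$ from $\ell$. Because $\ell$ is not a cusp leaf, it does not cross $m'$ (they are in the same foliation). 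The leaf of the other foliation bounding $S(c,x)$ emanates from $c$ into $H(\ell)$ and cannot cross $\ell$; otherwise, by \reflem{NoTriangles}, $\ell$ would meet both it and $m$ from the same cusp side. Hence $S(c, x) \subset H(\ell)$, as wanted.

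It remains to check that $c \in \Hull(y, x)$, that is, that $c$ is sector-between $y$ and $x$. The point $y$ is not in $H(\ell)$, so it lies on the other side of $\ell$, which lies in the sector at $c$ on the $\ell$-side of $m'$. That sector must be non-adjacent to $S(c,x)$. This is the step I expect to be the main obstacle, because the sector containing $\ell$'s side may a priori be adjacent to $S(c,x)$ across the divider. To handle it, I would use that $m$ crosses $\ell$ and that $m'$ is separated from $\ell$ by the divider of the bi-leaf. The region of $\Disc - H(\ell)$ then lies in sectors at $c$ that are across both $m$ and the divider from $S(c,x)$, and these are non-adjacent. If needed, I would first replace $c$ by a cusp produced by iterating \reflem{IVT} once more, with $\ell$ replaced by a non-cusp leaf near $m'$ (\reflem{Perturb}), so that two full sectors separate $y$ from $x$. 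Once $c$ is sector-between, \reflem{ConstructSpanning} finishes the proof.
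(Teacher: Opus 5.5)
Your overall plan is the paper's. You apply \reflem{IVT} to $\ell$ and $x$ to get $c$, check that $S(c,x)\subset H(\ell)$ and that $c\in\Hull(y,x)$, and then invoke \reflem{ConstructSpanning} with $R\subset S(d,x)\subseteq S(c,x)$. However, the middle of your argument rests on a foliation mix-up.

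Since $m$ crosses $\ell$, it lies in the foliation opposite to $\ell$. So do $m'$ and $m''$, because a bi-leaf lies in a single foliation. It is the divider $\ell'$ of $m\cup\{c\}\cup m'$ that lies in $\ell$'s foliation. This has three consequences:
\begin{itemize}
\item ``$\ell$ does not cross $m'$ (they are in the same foliation)'' gives the wrong reason. Disjointness is part of the conclusion of \reflem{IVT}, or follows from \reflem{NoTriangles} since $\ell$ meets $m$.
\item Your appeal to \reflem{NoTriangles} for the other boundary leaf of $S(c,x)$ is misapplied: that leaf is in $\ell$'s foliation, while $m$ is not.
\item $S(c,x)$ need not be bounded by $m'$ at all; it is just some sector on the far side of $m'$ from $\ell$.
\end{itemize}
That last observation is all you need here: $m'$ separates $S(c,x)$ from $\ell$, so $S(c,x)\subset H(\ell)$.

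The genuine gap is the step you flag as the main obstacle, showing that $c$ is sector-between $y$ and $x$. Your justification is that the complement of $H(\ell)$ lies in sectors ``across both $m$ and the divider'' from $S(c,x)$. This is false. Because $\ell$ crosses $m$, the side of $\ell$ containing $y$ meets both sectors bordering $m$. The one bounded by $m$ and $\ell'$ lies on the same side of $m$ as $S(c,x)$. Your fallback, re-applying \reflem{IVT} to a leaf obtained via \reflem{Perturb}, is unnecessary; \reflem{Perturb} also concerns non-cusp leaves, not $m'$.

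The separating pair is $\ell'$ and $m'$. The argument runs as follows.
\begin{itemize}
\item $\ell$ meets $m$ and cannot cross the two cusp leaves of its own foliation that, together with $m$, bound the sectors bordering $m$; one of these is $\ell'$.
\item Hence $\ell$, and the side of $\ell$ containing $y$, lie in the union of those two sectors.
\item That union is on the opposite side of $\ell'$ from $m'$, and hence from $x$.
\item So the whole sector at $c$ bounded by $\ell'$ and $m'$ separates $y$ from $x$.
\item Therefore $S(c,y)$ and $S(c,x)$ are non-adjacent, and $c\in\Hull(y,x)$.
\end{itemize}
This is the paper's argument. With it in place, the rest of your proof goes through, including $S(d,x)\subseteq S(c,x)$, which is trivial when $d=c$ and follows from \reflem{NestedSectors} otherwise.
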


\begin{proof}
\reflem{IVT} applied to $\ell$ and $x$ gives us a cusp $c$ with cusp leaves $m$, $m'$, and $m''$ so that 
\begin{itemize}
\item $m$ crosses $\ell$,
\item $m$ and $m'$ form a bi-leaf, as do $m$ and $m''$, and
\item exactly one of $m'$ or $m''$ separates $\ell$ from $x$. 
\end{itemize}

See \reffig{IVT}.

\begin{figure}[htbp]
\labellist
\footnotesize\hair 2pt
\pinlabel $y$ [r] at 11 115
\pinlabel $x$ [l] at 367 118
\pinlabel $c$ [l] at 379 187
\pinlabel $\ell$ [r] at 127 123
\pinlabel $m$ [b] at 217 190
\pinlabel $m'$ [l] at 313 129
\pinlabel $m''$ [l] at 313 245
\pinlabel $\ell'$ [r] at 236 79
\endlabellist
\includegraphics[width=0.45\textwidth]{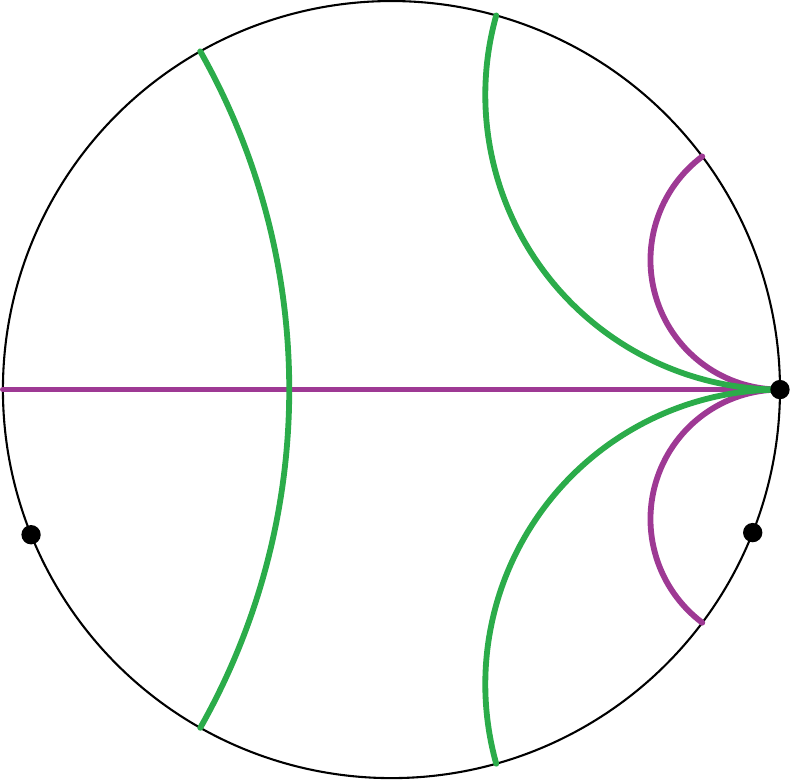}
\caption{The cusp and cusp leaves found in \reflem{IVT}. }
\label{Fig:IVT}
\end{figure}

\begin{claim*} 
$S(c, x) \subset H(\ell)$ and $c \in \Hull(y, x)$.
\end{claim*}

\begin{proof}
Breaking symmetry, suppose that $m'$ separates $\ell$ from $x$, and therefore separates $y$ from $x$.
It follows that $m'$ separates $S(c, x)$ from $\ell$.
Thus $S(c, x) \subset H(\ell)$.

Furthermore, $m'$ separates $S(c, x)$ from $y$.
Thus $S(c, y)$ is not equal to $S(c, x)$.
Let $\ell'$ be the divider of the bi-leaf containing $m$ and $m'$.
Since $\ell$ and $\ell'$ are in the same foliation, they do not cross.
Since $m$ crosses $\ell$, we deduce that $\ell'$ separates $y$ from $x$.
So the sector at $c$ bounded by $\ell'$ and $m'$ also separates $y$ from $x$.
Thus the sectors at $c$ containing $y$ and $x$ are not adjacent.
\end{proof}

From \reflem{ConstructSpanning}, we obtain the cusps $d$ and $e$ and their edge rectangle $R$. 
Since $R$ is contained in the sector $S(c, x)$ it is contained in the half-disc $H(\ell)$.
\end{proof}

\begin{lemma}
\label{Lem:SpanningEdgeRectangles}
Suppose that $x \neq y$ are singletons.
The hull $\Hull(y, x)$ contains a sequence of south-north spanning edge rectangles $(R_n)$ whose closures Hausdorff converge to $x$.
\end{lemma}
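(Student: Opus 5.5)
The plan is to combine \refprop{SingleLeaf} (in the form of \refcor{SingletonSingleLeaf}) with \reflem{EdgeSpanningInHalfDisc}. Since $x$ is a singleton, it is not a cusp. \refprop{SingleLeaf} therefore provides non-cusp leaves $(\ell_n)$ whose half-discs $H(\ell_n)$ form a nested neighbourhood basis of $x$ in $\Disc$. Because $y \neq x$ and $\Disc$ is Hausdorff (\reflem{Hausdorff}), after discarding finitely many terms we may assume that no $H(\ell_n)$ contains $y$. For each $n$, \reflem{EdgeSpanningInHalfDisc} applied to $\ell_n$ then gives a south-north spanning edge rectangle $R_n$ for $\Hull(y, x)$ with $R_n \subset H(\ell_n)$.

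Next we identify the Hausdorff limit. Pass to a subsequence so that the closures $\closure{R}_n$ Hausdorff converge in $\Disc$, which is compact by \refthm{Disc}, to a set $L$. The half-discs are nested and form a neighbourhood basis of $x$, so any limit point of the sets $\closure{R}_n$ lies in $\bigcap_n \closure{H(\ell_n)}$. This intersection is $\{x\}$, since the half-discs form a neighbourhood basis and $\Disc$ is a compact Hausdorff (hence regular) space. Hence $L = \{x\}$.

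The step requiring care is passing from the open half-discs to their closures. Given a neighbourhood $U$ of $x$, regularity gives a neighbourhood $V$ with $\closure{V} \subset U$, and for large $n$ we have $H(\ell_n) \subset V$. Then $\closure{R}_n \subset \closure{V} \subset U$ for all large $n$. This gives the Hausdorff convergence directly, even without first passing to a subsequence.

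We must also check that the $R_n$ are nonempty. They are nonempty because they are rectangles, and any point of $\closure{R}_n$ lies close to $x$, so the limit is exactly $\{x\}$ rather than empty. Apart from these verifications, the statement is essentially a direct combination of the two cited results.
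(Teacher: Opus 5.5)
Your proposal is correct and follows essentially the same route as the paper's proof: take the non-cusp leaves $\ell_n$ from \refprop{SingleLeaf}, pass to a tail so that each $\ell_n$ separates $y$ from $x$, and apply \reflem{EdgeSpanningInHalfDisc} to get $R_n \subset H(\ell_n)$. Your regularity argument spells out the Hausdorff convergence to $x$, which the paper leaves implicit. The appeal to \refcor{SingletonSingleLeaf} is not needed.
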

 
\begin{proof}
\refprop{SingleLeaf} provides a sequence of non-cusp leaves $(\ell_n)$ so that the half-discs $H(\ell_n)$ give a neighbourhood basis for $x$ in $\Disc$.
Pass to a tail so that for all $n$, the leaf $\ell_n$ separates $y$ from $x$.
Applying \reflem{EdgeSpanningInHalfDisc} we obtain south-north spanning edge rectangles $R_n$ contained in $H(\ell_n)$.
\end{proof}

\begin{remark}
In fact the collection of all edge rectangles south-north spanning $\Hull(y, x)$ is linearly ordered, from $y$ to $x$, ``along the hull''.
The order has exactly two accumulation points, at $y$ and at $x$.
Curiously, these accumulations may be from one side only.
\end{remark}

\subsection{Separating sectors}

By \refcor{CuspBetween}, the cusp $d$ is sector-between $y$ and $x$.
By \reflem{ConstructSpanning}, the rectangle $R$ is contained in $S(d, x)$.
Thus the cusp $y$ is not inside the sector $S(d, x)$.
Thus the following is well-defined.

\begin{definition}
\label{Def:SeparatingSectors}
Suppose that the cusps $d$ and $e$, and the rectangle $R$ are as given in \reflem{ConstructSpanning}.
We define the \emph{separating sectors} $S$ and $T$, as follows.
Let $\ell_d$ and $m_d$ be the boundary leaves of $S(d, x)$.
If $\ell_d$ separates $y$ from $R$ then let $S$ be the sector at $d$ on the opposite side of $\ell_d$ from $R$.
If instead $m_d$ separates $y$ from $R$ then let $S$ be the sector at $d$ one step further away from $R$ than the sector on the opposite side of $m_d$ from $R$.
We similarly define the sector $T$ at $e$ relative to $x$. 
\end{definition}
Assuming the orientation conventions as in the proof of \reflem{RectangleTowards}, there are four possible cases. 
These are shown in \reffig{Rainbows}.

\begin{figure}[htbp]
\subfloat[]{
\labellist
\scriptsize\hair 2pt
\pinlabel {$d$} [bl] at 55 55 
\pinlabel {$e$} [tr] at 210 270
\pinlabel {$S$} at 25 25
\pinlabel {$T$} at 245 240
\endlabellist
\includegraphics[width = 0.35 \textwidth]{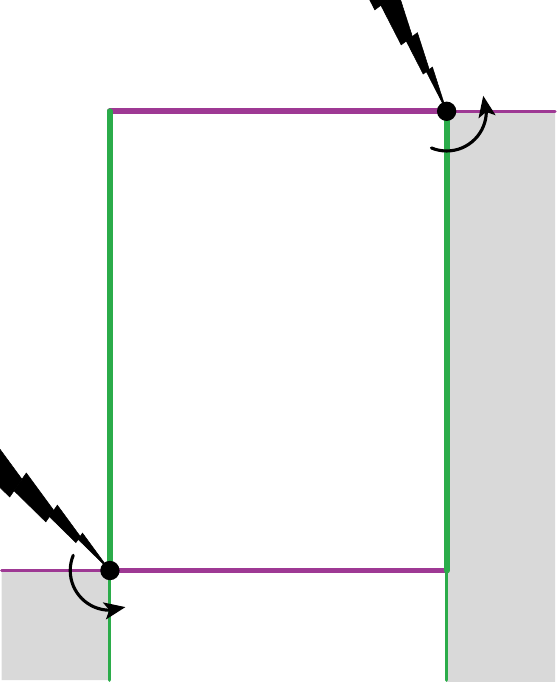}
\label{Fig:RainbowAA}
}
\qquad
\subfloat[]{
\labellist
\scriptsize\hair 2pt
\pinlabel {$d$} [bl] at 55 55 
\pinlabel {$e$} [tr] at 210 270
\pinlabel {$S$} at 25 85
\pinlabel {$T$} at 245 240
\endlabellist
\includegraphics[width = 0.35 \textwidth]{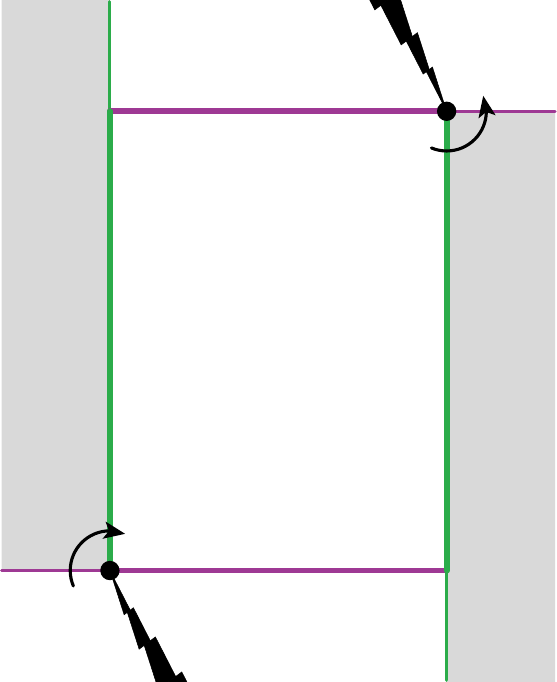}
\label{Fig:RainbowCA}
}

\subfloat[]{
\labellist
\scriptsize\hair 2pt
\pinlabel {$d$} [bl] at 55 55 
\pinlabel {$e$} [tr] at 210 270
\pinlabel {$S$} at 25 25
\pinlabel {$T$} at 245 300
\endlabellist
\includegraphics[width = 0.35 \textwidth]{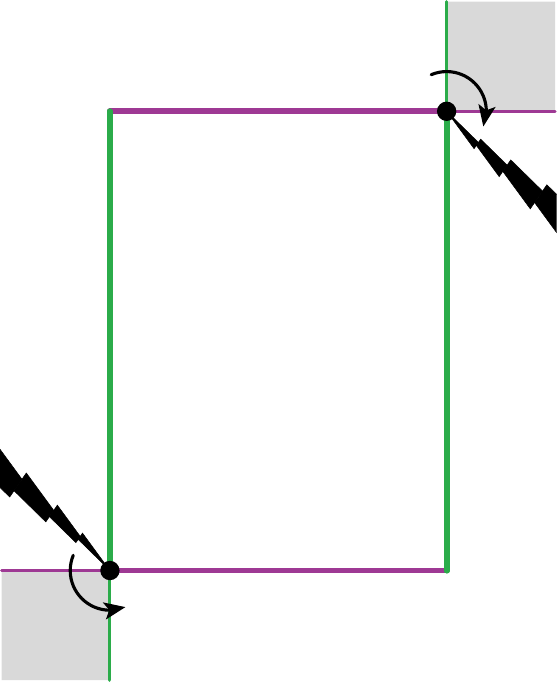}
\label{Fig:RainbowAC}
}
\qquad
\subfloat[]{
\labellist
\scriptsize\hair 2pt
\pinlabel {$d$} [bl] at 55 55 
\pinlabel {$e$} [tr] at 210 270
\pinlabel {$S$} at 25 85
\pinlabel {$T$} at 245 300
\endlabellist
\includegraphics[width = 0.35 \textwidth]{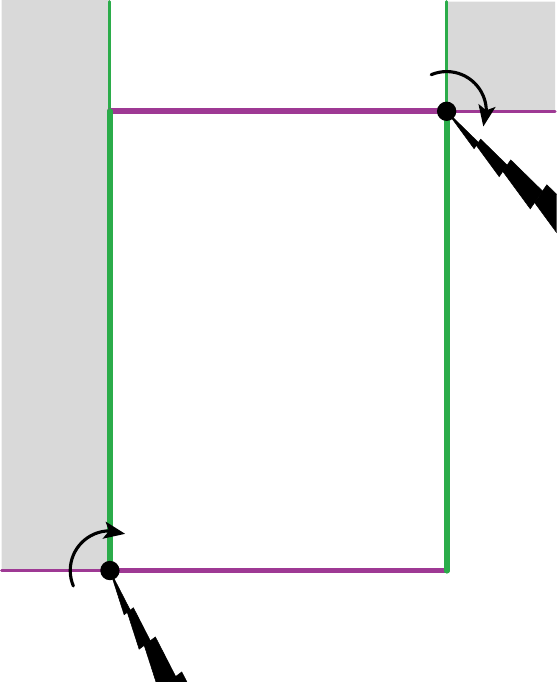}
\label{Fig:RainbowCC}
}
\caption{The four ways in which $y$ and $x$ can be arranged relative to $d$ and $e$. The arrows point from $y$ towards $x$.}
\label{Fig:Rainbows}
\end{figure}

\begin{lemma}
\label{Lem:SeparatingSectorProperties}
The separating sectors $S$ and $T$ satisfy the following.
\begin{itemize}
\item $S$ and $T$ are disjoint.
\item $S$ separates $y$ from $T$, while $T$ separates $S$ from $x$.
\end{itemize}
\end{lemma}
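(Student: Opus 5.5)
We work in the local picture of \reffig{Rainbows}, using the orientation conventions from the proof of \reflem{RectangleTowards}. Thus $d$ is the south-west corner of $S(d, x)$, with boundary leaves $\ell_d \in F^\calV$ running north and $m_d \in F_\calV$ running east. By \reflem{ConstructSpanning}, $e$ is the north-east cusp of the edge rectangle $R = S(d, x) \cap S(e, y)$. The boundary leaves of $S(e, y)$ run south and west from $e$ along the remaining two sides of $R$. We treat $S$ and $T$ symmetrically, using the rotation by $\pi$ that exchanges $(d, y)$ with $(e, x)$, and we check the four cases of \reffig{Rainbows} (two choices for $S$, two for $T$) in parallel.

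\textbf{Step 1: $S$ and $T$ are disjoint.} We show that $S$ and $T$ are separated by a cusp leaf through a side of $R$.
\begin{itemize}
\item In the case where $\ell_d$ separates $y$ from $R$, the sector $S$ lies west of the bi-leaf containing $\ell_d$.
\item In the case where $m_d$ separates $y$ from $R$, the sector $S$ is the second sector past $m_d$. Its boundary consists of the continuation of $\ell_d$ southward together with a further cusp leaf of $d$. So $S$ lies in the region south-west of $d$ bounded by cusp leaves of $d$.
\end{itemize}
In either case $S$ is contained in the closure of the complement of $S(d, x) \cup (\text{adjacent sector across the relevant leaf})$. Symmetrically, $T$ lies in the sectors at $e$ on the far side of $R$. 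Every sector at $d$ other than $S(d, x)$ and its neighbours is contained in a component of $\Disc$ minus the bi-leaf containing $\ell_d$ or $m_d$. We then show $T$ lies in the complementary component, using that $e$ and $R$ lie inside $S(d, x)$. The tool is \reflem{NestedSectors}: since $e$ is sector-between $d$ and $x$, we have $S(e, x) \subsetneq S(d, x)$, and dually $S(d, y) \subsetneq S(e, y)$. We must also control the sector $T$ in the ``one step further'' case, which is not contained in $S(e, x)$. For this we use that its bounding leaves at $e$ cross neither $\ell_d$ nor $m_d$, since leaves of the same foliation are disjoint and the other crossings are ruled out by \reflem{NoTriangles} applied at $d$ and at $e$.

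\textbf{Step 2: separation.} Let $k_S$ be the bi-leaf (or pair of cusp leaves) forming $\bdy S$. We show that $y \in S$, which then gives that $S$ separates $y$ from everything outside $S$, in particular from $T$ by Step 1. Here $y \in S$ is taken in the sense that $\bdy S$ separates $y$ from $T$; if needed, we show $y$ is in the interior of $S \cap \Circle$.
\begin{itemize}
\item In the case where $\ell_d$ separates $y$ from $R$, this holds by definition.
\item In the case where $m_d$ separates $y$ from $R$, we use that $d$ is sector-between $y$ and $x$ (\refcor{CuspBetween}). Thus $y$ is not in the sector adjacent to $S(d, x)$ across $m_d$, and it lies on the far side of the next leaf. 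This uses the argument at the end of the proof of \reflem{ConstructSpanning}, where $y$ is shown to lie south of $m$.
\end{itemize}
The statement for $T$ and $x$ is identical after rotation.

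The main obstacle is the ``one step further'' cases (\reffig{RainbowCA} and \reffig{RainbowCC}). There we must verify two things:
\begin{itemize}
\item that $y$ (respectively $x$) really lies in that second sector rather than in a farther one;
\item that the two sectors remain disjoint even when both are of this type.
\end{itemize}
I would handle these using the south-north spanning property of $R$ and the astroid-style picture of the cusp leaves of $d$ and $e$, arguing by contradiction. If $y$ lay in a farther sector, then some cusp leaf of $d$ would separate $y$ from $x$ while meeting the boundary of $\Hull(y, x)$ in the interior of a side of $R$, contradicting that the south and north sides of $R$ lie in $\bdy \Hull(y, x)$.
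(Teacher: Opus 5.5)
There is a genuine gap, and it lies in Step 2, which aims at the wrong target. You set out to show $y \in S$, and your ``main obstacle'' is showing that $y$ is not in a sector farther from $R$ than $S$. But the lemma asserts the opposite. For $S$ to separate $y$ from $T$, the point $y$ must lie outside $S$, in the component of $\Disc - S$ not containing $R$ and $T$; that is, strictly beyond the far boundary leaf of $S$. This is exactly how the lemma is used in \refprop{SingletonConical}, where $Y$ is the closure of the component of $\Disc - S_0$ containing $y$.

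Your claim is in fact false. When $\ell_d$ separates $y$ from $R$, the sector $S$ is adjacent to $S(d,x)$ across $\ell_d$. So $y \in S$ would put $x$ and $y$ in adjacent sectors at $d$, contradicting that $d$ is sector-between $y$ and $x$ (\refcor{CuspBetween}). Hence ``this holds by definition'' is wrong. Lying beyond $\ell_d$ only places $y$ in one of the infinitely many sectors at $d$ on that side, and sector-betweenness is precisely what excludes $S$ and pushes $y$ past its other boundary leaf.

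When $m_d$ separates $y$ from $R$, excluding the sector $S'$ across $m_d$ is right but insufficient. You must also exclude $S$ itself, which is non-adjacent to $S(d,x)$, so sector-betweenness at $d$ alone does not do it. The paper uses the observation, from the end of the proof of \reflem{ConstructSpanning}, that points south of $m_d$ (across the south side of $R$) are not sector-between $y$ and $x$; if $y$ lay in $S$, points of $S'$ near $d$ would be.

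Step 1 has the right ingredient, \reflem{NoTriangles}, but the appeal to \reflem{NestedSectors} is misdirected. First, $T$ is never $S(e,x)$, since $x \notin T$. Second, when $T$ is the sector across the east side of $R$, it is not even contained in $S(d,x)$, because that leaf crosses $m_d$ at the south-east corner of $R$.

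A uniform argument works in all four cases. Take a leaf $\lambda$ of $F^\calV$ through the interiors of the south and north sides of $R$. It crosses $m_d$ and the cusp leaf at $e$ containing the north side, so by \reflem{NoTriangles} it meets no other cusp leaf at $d$ or at $e$. Hence $\lambda$ lies in the two sectors at $d$ sharing $m_d$, and in the two sectors at $e$ sharing the north-side leaf. It therefore misses $S$ and $T$, and separates $d \in S$ from $e \in T$. Combined with a corrected Step 2 and its rotated version for $T$ and $x$, this gives both bullets.
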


\begin{proof}
Suppose first that $\ell_d$ separates $y$ from $R$ (as in Figures~\ref{Fig:RainbowCA} and~\ref{Fig:RainbowCC}).
Let $m$ be the bi-leaf at $d$ with divider $\ell_d$.
Suppose for a contradiction that $y$ is north of $m$.
Then there is a non-cusp leaf $m'$ close to $m$ so that both $y$ and $x$ are north of $m'$ and $d$ is south.
But then $d$ is not sector-between $y$ and $x$, a contradiction.
It follows that in this case, $S$ separates $y$ from $R$.

Suppose instead that $m_d$ separates $y$ from $R$.
(See Figures~\ref{Fig:RainbowAA} and~\ref{Fig:RainbowAC}.)
Let $S'$ be the sector at $d$ on the opposite side of $m_d$ from $R$.
We saw in the proof of \reflem{ConstructSpanning} that no points south of $m_d$ are sector-between $y$ and $x$.
Thus $y$ cannot be in either $S$ or $S'$.

Similar arguments show that $T$ separates $R$ from $x$. 
Finally, any leaf passing through the interiors of the southern and northern sides of $R$ separates $S$ from $T$.
\end{proof}
 
\begin{proof}[Proof of \refprop{SingletonConical}]
\reflem{SpanningEdgeRectangles} gives us a sequence of south-north spanning edge rectangles $(R_n)$ that Hausdorff converge to $x$.

Recall that there are only finitely many edges in the veering triangulation $\calV$.
So pass to a subsequence $(R_n)$ so that the $R_n$ belong to a single orbit under the action of $\pi_1(M)$.
Pass to a further subsequence and reorient the foliations if necessary to ensure that each $R_n$ has its cusps $d_n$ and $e_n$ (provided in \reflem{ConstructSpanning}) at its south-west and north-east corners, respectively.
\refdef{SeparatingSectors} gives separating sectors $S_n$ based at $d_n$ and $T_n$ based at $e_n$.

Pass to a further subsequence and so find group elements $g_n$ so that, for each, we have $g_n(R_0) = R_n$ and $g_n$ preserves the orientations of the foliations.
(Note that we may need to choose a new $R_0$ to arrange this.)
Applying \refprop{Convergence}, we pass to a further subsequence so that $(g_n^{-1})$ is a collapsing sequence for the action of $\pi_1(M)$ on $\Sphere$.

We now set $y_n = g_n^{-1}(y)$ and $x_n = g_n^{-1}(x)$.
Pass to a further subsequence so that $y_n \to y_\infty$ and $x_n \to x_\infty$ both converge in $\Circle$.
We now verify the properties of \refdef{CLP} for $\pi_1(M)$ acting on $\Sphere$ for the three points $[x]_{S^1}$,  $A = [y_\infty]_{S^1}$, and $B = [x_\infty]_{S^1}$ (in that order).

Let $Y$ be the closure of the component of $\Disc - S_0$ that contains $y$.
Let $X$ be the closure of the component of $\Disc - T_0$ that contains $x$. 
Note that $y_n$, and thus $y_\infty$ lies in $Y$.
Similarly, $x_n$, and thus $x_\infty$ lies in $X$.

\begin{claim}
No decomposition element of $\Circle$ meets both $\bdyi Y$ and $\bdyi X$.
\end{claim}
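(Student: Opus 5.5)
We argue by contradiction, using the structure of the separating sectors $S_0$ and $T_0$ from \reflem{SeparatingSectorProperties}. By that lemma $S_0$ and $T_0$ are disjoint, $S_0$ separates $y$ from $T_0$, and $T_0$ separates $S_0$ from $x$. Hence $Y$ and $X$ are disjoint closed regions of $\Disc$. Moreover $\bdyi Y$ and $\bdyi X$ are disjoint closed intervals of $\Circle$, separated on both sides by the open intervals $\bdyi S_0$ and $\bdyi T_0$ (minus endpoints). The endpoints of $\bdyi Y$ are ideal points of the bounding cusp leaves of $S_0$, namely thorns of the cusp $d_0$. Similarly the endpoints of $\bdyi X$ are thorns of $e_0$.

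Suppose a decomposition element $D$ meets both $\bdyi Y$ and $\bdyi X$. We split into the three types of \refdef{DecompositionElementsCircle}.

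\emph{Singleton.} A singleton is a single point, so it cannot lie in two disjoint sets.

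\emph{Interior leaf.} Suppose $D = \bdyi \lambda$ for an interior leaf $\lambda$, with associated non-cusp leaf $\ell$ in $\Link$. Then $\ell$ has one ideal point in $\bdyi Y$ and one in $\bdyi X$. These ideal points are linked with the ideal points of both boundary cusp leaves of $S_0$ that cross into the sector. By \reflem{LeftIsLeft}\refitm{CrossIFFLinked}, $\ell$ must therefore cross both bounding leaves of $S_0$, one from each foliation. But $\ell$ lies in one foliation and cannot cross a leaf of its own foliation. We must check the case where an ideal point of $\ell$ equals an endpoint of $\bdyi Y$; this is excluded by \refcor{OnlyAsymptotics}, since that endpoint is a thorn of a cusp leaf and $\ell$ is a non-cusp leaf.

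\emph{Clamshell.} Suppose $D$ is the clamshell boundary of a cusp $c$. Then we apply \reflem{NoneShallPass} twice. If $c \notin \bdyi S_0$, then by part \refitm{MeetsOne}, $\bdyi D$ meets only one component of $\Circle - \bdyi S_0$. But $\bdyi Y$ and $\bdyi X$ lie in different components of $\Circle - \bdyi S_0$, because $S_0$ separates $y$ from $T_0$ and hence from $X$. Part \refitm{NoBoundary} handles the endpoints: the thorns of $d_0$ on $\partial S_0$ are not in $D$ when $c \neq d_0$. So $c$ lies in $\bdyi S_0$, possibly $c = d_0$. The symmetric argument with $T_0$ shows that $c$ lies in $\bdyi T_0$, possibly $c = e_0$. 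Since $S_0$ and $T_0$ are disjoint and $d_0 \neq e_0$, this is a contradiction, except in the boundary cases $c = d_0$ or $c = e_0$.

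In those cases we argue directly. Suppose $c = d_0$. Every cusp leaf of $d_0$ other than the boundary leaves of $S_0$ lies in the sectors at $d_0$ outside $S_0$, so its thorns lie in the closure of $\Circle - \bdyi S_0$ on one side. We must show that no thorn of $d_0$ lands in $\bdyi X$. The thorns of $d_0$ on the $X$ side are separated from $\bdyi X$ by the bounding leaves of $T_0$, via \reflem{NestedSectors}, since $e_0$ is sector-between $d_0$ and $x$. The case $c = e_0$ is symmetric.

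The main obstacle is this boundary bookkeeping: whether thorns of $d_0$ or $e_0$ can sit at, or beyond, the endpoints of $\bdyi X$ or $\bdyi Y$. This is where the specific choice of $S$ and $T$ in \refdef{SeparatingSectors} matters, in particular the case where $S$ is taken ``one step further'' from $R$. I would check it case by case against the four configurations of \reffig{Rainbows}.
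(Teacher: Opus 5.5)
Your argument is correct and is essentially the paper's: singletons are trivial; a non-cusp leaf with ideal points in both $\bdyi Y$ and $\bdyi X$ would have to cross both bounding cusp leaves of $S_0$, which lie in opposite foliations; and clamshells are ruled out by applying \reflem{NoneShallPass} to $S_0$ and to $T_0$, together with the disjointness in \reflem{SeparatingSectorProperties}. The one slip is that the ``boundary cases'' $c = d_0$ and $c = e_0$ are not exceptions, so your last two paragraphs, including the appeal to \reflem{NestedSectors} and the proposed check against the four configurations, can simply be deleted: $\bdyi S_0$ contains its apex $d_0$ (this is why $\Circle - \bdyi S_0$ has two components, and why $\bdyi Y$ runs from $d_0$ to a thorn rather than between two thorns), your two applications of \reflem{NoneShallPass} assume nothing about whether $c$ equals $d_0$ or $e_0$, and so together they already give $c \in \bdyi S_0 \cap \bdyi T_0 \subset S_0 \cap T_0 = \emptyset$ in every case.
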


\begin{proof}
By \reflem{SeparatingSectorProperties} we have that $Y$ and $X$ are separated by both $S_0$ and $T_0$. 

First, suppose that $p$ is a singleton meeting $\bdyi Y$.
Since $\bdyi Y$ is disjoint from $\bdyi X$, we deduce that $p$ does not meet $\bdyi X$.

Next, suppose that $\ell$ is a non-cusp leaf of $F^\calV$ so that $\bdyi \ell$ meets $\bdyi Y$.
(The case when $\ell$ is a non-cusp leaf of $F_\calV$ is similar.)
The leaf $\ell$ can cross at most one boundary leaf of the sector $S_0$, namely the leaf that $S_0$ shares with $Y$.
Thus $\bdyi \ell$ is contained in $\bdyi Y \cup \bdyi S_0$.
So again, $\bdyi \ell$ does not meet $\bdyi X$.

Finally, suppose that $C$ is the clamshell boundary associated to a cusp $c$. 
Let $V$ be the closure of the component of $\Disc - S_0$ that does not contain $y$.
Let $W$ be the closure of the component of $\Disc - T_0$ that does not contain $x$.
Suppose that some point of $\bdyi C$ meets $\bdyi Y$.  
There are two (overlapping) cases.
\begin{itemize}
\item 
Suppose that $c$ lies in $\bdyi Y$.
In particular, by \reflem{SeparatingSectorProperties}, the cusp $c$ is disjoint from $\bdyi T_0$.
Applying \reflem{NoneShallPass}\refitm{MeetsOne}, we have that $\bdyi C$ meets precisely one of the two components of $\Circle - \bdyi T_0$.
Since $c$ lies in $\bdyi Y$, we deduce that $\bdyi C$ meets $W$.
Thus $\bdyi C$ does not meet $\bdyi X - \bdyi T_0$.
Applying \reflem{NoneShallPass}\refitm{NoBoundary}, we are done in this case.
\item
Suppose that a thorn of $C$ lies in $\bdyi Y$. 
If no point of $C$ meets $\bdyi V$ then we are done.
Otherwise, by the contrapositive of \reflem{NoneShallPass}\refitm{MeetsOne}, we have that $c$ lies in $\bdyi S_0$. 
Thus by \reflem{SeparatingSectorProperties} we have that $c$ does not lie in $\bdyi T_0$ and we are done as before. \qedhere
\end{itemize}
\end{proof}

We conclude that $A \neq B$.

Suppose that $y' \neq y$ is another singleton separated from $x$ by $S_0$.
The hulls $\Hull(y, x)$ and $\Hull(y', x)$ are not equal; 
however they coincide on the component of $\Disc - S_0$ containing $x$.
Thus $\Hull(y, x)$ and $\Hull(y', x)$ are both spanned by the edge rectangle $R_n$, for all $n$.
Thus the construction above, applied to $y'$, produces the same collapsing sequence $(g^{-1}_n)$.
Setting $A' = [y'_\infty]_{S^1}$, we find as above that $A' \neq B$.
As in the proof of \refprop{LeafConical}, we are done.
\end{proof}

\section{Proving the theorem}
\label{Sec:MainProof}

We gather the propositions together to prove our main result. 

\begin{theorem}
\label{Thm:Convergence}
Suppose that $\calV$ is a finite locally veering triangulation of a three-manifold $M$. 
Then the action of $\pi_1(M)$ on the veering two-sphere $\Sphere$ is a geometrically finite convergence action.  
Furthermore, its parabolic points are exactly the cusps $\Delta_\calV$.
\end{theorem}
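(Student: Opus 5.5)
The plan is to assemble the main theorem directly from the propositions of the preceding sections. First we reduce to the transverse veering case. If $\calV$ is only locally veering, we pass (via \refrem{Cover}) to a finite cover $M'$ with a transverse veering triangulation $\calV'$. Then $\pi_1(M')$ has finite index in $\pi_1(M)$, and the lifted triangulation has the same universal cover $\cover{\calV}$, hence the same veering circle and the same two-sphere $\Sphere$. Convergence passes up by \refrem{FiniteIndex}. Conicality is inherited directly: a collapsing sequence in a subgroup is a collapsing sequence in $\pi_1(M)$. Boundedness of a parabolic point also passes up, since the stabiliser in $\pi_1(M)$ contains the stabiliser in $\pi_1(M')$, so a compact fundamental set for the smaller group still works. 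The space $\Sphere$ is a two-sphere by \refthm{Sphere}, so it is a compactum with at least three points. Finally, $\pi_1(M)$ is countable because $\calV$ is finite.

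Next, \refprop{Convergence} gives that the action of $\pi_1(M)$ on $\Sphere$ is a convergence action. For geometrical finiteness, we classify the points of $\Sphere$ using \refcor{CircleDecomposition} and \reflem{Slice}: every point is the image of a decomposition element of exactly one of three types. These are a clamshell boundary $[c]_{S^1}$ for a cusp $c$, the pair of ideal points of an interior leaf, or a singleton. Interior leaves of $\Lambda^\calV$ and $\Lambda_\calV$ correspond to non-cusp leaves of $F^\calV$ and $F_\calV$ (\refsec{Link}). We then treat the three types in turn.
\begin{enumerate}
\item A clamshell point is a bounded parabolic point by \refprop{Parabolic}.
\item A leaf point is conical by \refprop{LeafConical}.
\item A singleton is conical by \refprop{SingletonConical}.
\end{enumerate}
Hence every point is conical or bounded parabolic, which is \refdef{GeoFinConv}.

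For the last sentence of the theorem, it remains to check that parabolic points are exactly the cusps. Every cusp is parabolic by step (1). Conversely, we must show that no conical limit point is parabolic. I would prove this with the standard convergence-group fact that a point cannot be both conical and parabolic (Tukia, Bowditch). If a self-contained argument is wanted instead, one can use a concrete observation. For leaf points and singletons, consider an element with infinite stabiliser fixing such a point. It then has a power preserving a leaf or fixing a singleton, and in a convergence group such an element is loxodromic: this follows from the trichotomy of elements into elliptic, parabolic and loxodromic, together with the fact that parabolic elements fix only a clamshell, by \refprop{ParabolicFacts}. Either route shows that these points are not parabolic.

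The main obstacle is this final step, the disjointness of the conical and parabolic points. I would simply cite Tukia's result that conical limit points are never parabolic for convergence groups. With that in hand, the theorem is immediate from the propositions already proved.
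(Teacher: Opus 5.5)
Your proposal is correct and is essentially the paper's proof, which likewise combines Proposition~\ref{Prop:Convergence} with Propositions~\ref{Prop:Parabolic}, \ref{Prop:LeafConical}, and~\ref{Prop:SingletonConical} over the three types of decomposition element. The paper leaves the clause ``parabolic points are exactly the cusps'' implicit, and citing Tukia's theorem that conical limit points are never parabolic is the right way to supply it; your sketched ``self-contained'' alternative should be dropped, because it assumes what is to be proved, namely that a parabolic element can only fix a clamshell, and Proposition~\ref{Prop:ParabolicFacts} says nothing about elements not already known to lie in a cusp stabiliser.
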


\begin{proof}
By \refprop{Convergence} the action is a convergence action.
Suppose that $[x]_{S^1}$ is a point of $\Sphere$. 
By \refprop{Parabolic} if $x = c$ is a cusp, then $[c]_{S^1}$ is a bounded parabolic point for the action.
By Propositions~\ref{Prop:LeafConical} and~\ref{Prop:SingletonConical} if $x$ is the end of a leaf, or a singleton, then $[x]_{S^1}$ is a conical limit point for the action.
By \reflem{Laminations} these are all of the points of $\Sphere$. 
Thus the assumptions of \refdef{GeoFinConv} are satisfied.
\end{proof}

Combining Theorems~\ref{Thm:HyperbolicGeomFinite},~\ref{Thm:Yaman}, and~\refthm{Convergence}, and recalling \refrem{VeeringHyperbolic}, we have the following. 

\begin{corollary}
\label{Cor:Yaman}
Suppose that $\calV$ is a finite locally veering triangulation of a three-manifold $M$. 
Then the veering two-sphere $\Sphere$ is equivariantly homeomorphic to $\bdyi \HH^3$ (as equipped with the action of $\pi_1(M)$ coming from the discrete and faithful representation). \qed
\end{corollary}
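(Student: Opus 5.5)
The plan is to apply the uniqueness half of \refthm{Yaman}, with $Z = \Sphere$ and $Z' = \bdyi \HH^3$, and $G = \pi_1(M)$ acting on $\bdyi\HH^3$ through the discrete faithful representation fixed in \refrem{VeeringHyperbolic}. Accordingly, I would check the hypotheses of \refthm{Yaman} for each of the two actions and then show that the two actions have the same maximal parabolic subgroups.

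\textbf{Hypotheses for $\Sphere$.} By \refthm{Sphere}, $\Sphere$ is a two-sphere. It is therefore a perfect metrisable compactum with more than three points. \refthm{Convergence} says that $\pi_1(M)$ acts on it by a geometrically finite convergence action whose parabolic points are exactly the clamshells $[c]_{S^1}$, for $c \in \Delta_\calV$. By \reflem{StabC} and \refprop{ParabolicFacts}\refitm{ZZ^2} (perhaps after passing to the cover of \refrem{Cover}), the maximal parabolics are the cusp stabilisers $\Stab(c)$. These are peripheral subgroups, virtually $\ZZ^2$, and hence finitely generated. Since $\calV$ is finite, $\Delta_\calV$ has finitely many $\pi_1(M)$--orbits, so $\calP/G$ is finite.

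\textbf{Hypotheses for $\bdyi\HH^3$.} Here \refthm{HyperbolicGeomFinite} gives a geometrically finite convergence action, and $\bdyi\HH^3$ is a perfect metrisable compactum. The parabolic points are the fixed points of the cusp subgroups of the finite-volume manifold, and the maximal parabolics are the stabilisers of the components of $\bdy\cover{M}$. These are again finitely many up to conjugacy, and each is finitely generated.

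\textbf{Matching the parabolics.} This is the only step that needs any care. The bijection $\Delta_\calV \leftrightarrow \Delta_M$ from \refsec{Triangulations} identifies each ideal vertex $c$ of $\cover{\calV}$ with a component of $\bdy\cover{M}$. This bijection is $\pi_1(M)$--equivariant, so $\Stab(c)$ is the same subgroup of $\pi_1(M)$ in both settings. Under the identification of \refrem{VeeringHyperbolic}, each such boundary component corresponds to a horoball centred at a parabolic fixed point in $\bdyi\HH^3$, and its stabiliser is that parabolic fixed point's stabiliser. Hence the two families $\calP$ coincide, and the uniqueness clause of \refthm{Yaman} provides the equivariant homeomorphism $\Sphere \to \bdyi\HH^3$.
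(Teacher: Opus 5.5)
Your proposal is correct and follows the same route as the paper. The paper deduces the corollary in one line by combining Theorem~\ref{Thm:HyperbolicGeomFinite}, the uniqueness clause of Theorem~\ref{Thm:Yaman}, and Theorem~\ref{Thm:Convergence} (whose final sentence identifies the parabolic points with the cusps), together with Remark~\ref{Rem:VeeringHyperbolic}; you simply make explicit the hypothesis checks the paper leaves implicit, in particular matching the maximal parabolic subgroups through the equivariant bijection between $\Delta_\calV$ and the components of $\bdy\cover{M}$.
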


\section{From veering triangulations to Cannon--Thurston maps, and back again}
\label{Sec:CannonThurston}

Here we discuss the application of our work to the theory of Cannon--Thurston maps~\cite{CannonThurston07}.
We begin with a generalisation of the original definition.

\begin{definition}
\label{Def:CT}
Suppose that $M$ is a finite-volume hyperbolic three-manifold.
Let  $S^2_M \homeo \bdyi \HH^3$ be the associated two-sphere at infinity, equipped with its canonical action of $\pi_1(M)$.
Suppose that $S^1_M$ is a circle, equipped with an action of $\pi_1(M)$.
A \emph{Cannon--Thurston map} for $M$ is a continuous equivariant map $\Phi \from S^1_M \to S^2_M$.  
\end{definition}



Recall that all orbits of the $\pi_1(M)$--action on $S^2_M$ are dense.
Thus the Cannon--Thurston map $\Phi$ is surjective.

\begin{definition}
\label{Def:Equivalence}
Suppose that $\Phi_M$ and $\Phi_N$ are Cannon--Thurston maps for $M$ and $N$, respectively.
We say that $\Phi_M$ and $\Phi_N$ are \emph{equivalent} if there are homeomorphisms
$f \from S^1_M \to S^1_N$ and $g \from S^2_M \to S^2_N$ so that $g \circ \Phi_M = \Phi_N \circ f$.
\end{definition}

\begin{remark}
\label{Rem:CommensurableEquivalent}
In particular, suppose that $(M, \calV)$ and $(N, \calW)$ are commensurable:
they have a common cover.
Then $\Psi_\calV$ and $\Psi_\calW$ are equivalent.
\end{remark}

We obtain Cannon--Thurston maps from locally veering triangulations (see \refrem{LocallyVeering}) by applying \refcor{Yaman}.

\begin{theorem}
\label{Thm:VeeringCT}
Suppose that $\calV$ is a finite locally veering triangulation of a three-manifold $M$. 
Then composing the surjection $\Circle \to \Sphere$ and the equivariant homeomorphism given by \refcor{Yaman} gives a Cannon--Thurston map $\Psi_\calV \from \Circle \to \bdyi \HH^3$.
\end{theorem}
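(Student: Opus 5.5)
The plan is to check the definition of a Cannon--Thurston map (\refdef{CT}) directly for the composition. Write $\Psi_\calV \from \Circle \to \Sphere$ for the quotient map of \refdef{VeeringSphere}, and write $h \from \Sphere \to \bdyi \HH^3$ for the $\pi_1(M)$--equivariant homeomorphism supplied by \refcor{Yaman}. Here $\bdyi\HH^3$ carries the action coming from the discrete faithful representation fixed in \refrem{VeeringHyperbolic}, so that $S^2_M = \bdyi\HH^3$. I take $S^1_M = \Circle$ with its action from~\cite[Theorem~7.1]{FSS22}, and I will verify that $h \circ \Psi_\calV$ is continuous and equivariant.

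Continuity is immediate. $\Psi_\calV$ is a quotient map, so it is continuous by definition of the quotient topology, and \reflem{Homeo} together with \refthm{Sphere} identifies $\Sphere$ with a genuine two-sphere. Composing with the homeomorphism $h$ keeps the map continuous.

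For equivariance, the decomposition elements of \refdef{DecompositionElementsCircle} are defined from $\Lambda^\calV$, $\Lambda_\calV$, and $\Delta_\calV$, and all three are $\pi_1(M)$--invariant. Hence $g[x]_{S^1} = [gx]_{S^1}$, and the action on $\Sphere$ is by definition the one descended from $\Circle$. This gives $\Psi_\calV(gx) = g\Psi_\calV(x)$. Since $h$ is equivariant, the composition is equivariant as well.

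The only place needing care is the locally veering hypothesis. \refcor{Yaman} is already stated for finite locally veering triangulations, but the circle $\Circle$ and its action are built in~\cite{FSS22} for the transverse veering case. I would treat this as the main obstacle, though it should be bookkeeping. Either one uses the construction of $\Circle$ for locally veering triangulations as in~\cite[Section~10]{FSS22}, or one passes to the finite cover of \refrem{Cover}. In the second case one notes that $\Circle$, $\Sphere$, and the decomposition are defined in the universal cover and do not depend on the cover chosen. The full group $\pi_1(M)$ still acts there by homeomorphisms, though possibly reversing orientation, and equivariance only needs an action by homeomorphisms. Surjectivity, which \refdef{CT} does not require, then follows from the density of orbits as remarked after that definition.
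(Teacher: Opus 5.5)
Your proposal is correct and follows the paper's proof: pass to a finite transverse veering cover if needed (the constructions live in the universal cover), observe that the quotient map $\Psi_\calV \from \Circle \to \Sphere$ is a continuous equivariant surjection, and compose with the equivariant homeomorphism of \refcor{Yaman}. Your extra checks of continuity and equivariance only spell out what the paper treats as immediate. Surjectivity needs no appeal to dense orbits here, since a quotient map followed by a homeomorphism is already onto.
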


\begin{proof}
Taking a finite cover if needed, we obtain a transverse veering triangulation $(M',\calV')$.
Since our constructions are insensitive to taking covers, we obtain 
$S^1(\calV)$, the veering circle, and $S^2(\calV)$, the veering two-sphere.

Recall from \refdef{VeeringSphere} that $\Psi_\calV$ is an equivariant surjection from $S^1(\calV)$ to $S^2(\calV)$.
By \refcor{Yaman}, we have that $S^2(\calV)$ is equivariantly homeomorphic to $S^2_M \homeo \bdyi \HH^3$.
Composing these maps gives the Cannon--Thurston map $\Psi_{\calV} \from S^1(\calV) \to S^2_M$ \emph{induced by} $\calV$.
\end{proof}

In the context of locally veering triangulations, we obtain a converse to \refrem{CommensurableEquivalent}.

\begin{theorem}
\label{Thm:Equivalence}
Suppose that $M$ and $N$ are finite-volume hyperbolic three-manifolds.
Suppose that $\calV$ is a locally veering triangulation of $M$.
Let $\Psi_\calV$ be the Cannon--Thurston map for $M$ induced by $\calV$.
Suppose that~$\Phi$ is a Cannon--Thurston map for $N$.
Suppose that $\Psi_\calV$ is equivalent to~$\Phi$.
Then $N$ is commensurable with $M$.
Furthermore, there is a locally veering triangulation $\calW$ on $N$ so that $\Phi$ is equivalent to $\Psi_\calW$ and $(N, \calW)$ is commensurable with $(M, \calV)$.
\end{theorem}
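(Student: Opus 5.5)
We plan to recover the combinatorics of $\calV$ from the data of the Cannon--Thurston map alone, transport it along the equivalence, and then use the rigidity of the action to get commensurability.

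First we unpack the equivalence. We have homeomorphisms $f \from \Circle \to S^1_N$ and $g \from S^2_M \to S^2_N$ with $g \circ \Psi_\calV = \Phi \circ f$. The fibres of $\Psi_\calV$ are exactly the decomposition elements of \refdef{DecompositionElementsCircle}, since the homeomorphism of \refcor{Yaman} is injective. Hence $f$ carries decomposition elements to the fibres of $\Phi$. Applying \reflem{DecompositionElementsRecover}, the infinite fibres of $\Phi$ determine, via $f$, a pair of laminations on $S^1_N$ that are the images of $\Lambda^\calV$ and $\Lambda_\calV$, up to swapping. Along with the circle $S^1_N$ and its cusp set (the accumulation points of infinite fibres), this is the full data of a veering circle together with its laminations.

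The second step is to build the group relation. Transport the action of $\pi_1(N)$ on $S^1_N$ to $\Circle$ via $f$. Equivariance of $\Phi$, together with the fact that $\pi_1(N)$ acts on $S^2_N$ through the discrete faithful representation, gives an action of $\pi_1(N)$ on $\Circle$. This action preserves the set of fibres, and hence preserves $\Lambda^\calV \cup \Lambda_\calV$ up to possibly swapping the two laminations. Conjugating by $g$, both $\pi_1(M)$ and $\pi_1(N)$ become kleinian groups $\Gamma_M, \Gamma_N \subset \Isom(\HH^3)$ of finite covolume.

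We then consider the group $G$ of all homeomorphisms of $\Circle$ preserving the decomposition and the pair of laminations (allowing a swap), and which commute with the induced map on $\Sphere \homeo \bdyi\HH^3$ being a Möbius transformation. We intend to use \refthm{Isomorphism} from \refapp{Action} (the action on $\Circle$ recovers $\calV$): such symmetries act on the link space $\Link$ preserving the loom structure, and thus act on $\cover{\calV}$ by combinatorial automorphisms (possibly reversing coorientation). This yields a discrete group $G$ of isometries containing both $\Gamma_M$ and $\Gamma_N$, with $\Gamma_M$ of finite index in $G$, because $\cover{\calV}/\Gamma_M$ is finite and tetrahedron stabilisers are finite. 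Then $\Gamma_N$ is also of finite index, by covolume comparison, so $\Gamma_M \cap \Gamma_N$ has finite index in both. This gives commensurability.

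For the final assertion we set $\calW = \cover{\calV}/\Gamma_N$. It is a finite taut triangulation of $N$ (using the hyperbolic identification of \refrem{VeeringHyperbolic}), and it is locally veering because it is covered by the common cover $\cover{\calV}/(\Gamma_M \cap \Gamma_N)$, which is a finite cover of $(M,\calV)$ and hence becomes transverse veering after a further finite cover. Then $(N,\calW)$ is commensurable with $(M,\calV)$, and $\Psi_\calW = \Psi_\calV$ is equivalent to $\Phi$ via $f$ and $g$.

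The main obstacle is the discreteness step: showing that $\pi_1(N)$ acts on $\cover{\calV}$ by genuine combinatorial automorphisms, so that the group generated by $\Gamma_M$ and $\Gamma_N$ is discrete. This requires \refthm{Isomorphism} to apply to homeomorphisms of $\Circle$ that preserve only the laminations, and possibly swap them, which forces $\calW$ to be merely locally veering. We would first try to show that the edge and tetrahedron rectangles of $\Link$ are determined purely by the laminations, for instance through the product neighbourhoods of \refdef{BoxAnatomy}.
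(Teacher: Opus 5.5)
There is a genuine gap, and it is not the step you flag. You assert that ``conjugating by $g$'' makes $\pi_1(M)$ and $\pi_1(N)$ kleinian groups of finite covolume in a common $\Isom(\HH^3)$. You then obtain $[G:\Gamma_N]<\infty$ by covolume comparison. But $g \from S^2_M \to S^2_N$ is only a homeomorphism; the definition of equivalence does not even mention the groups. So for $h \in \pi_1(N)$ the transported maps $g^{-1}\circ h\circ g$ are a priori just homeomorphisms of $S^2_M$. Nothing you have shown makes them M\"obius, so $\pi_1(N)$ is not known to lie in your $G$. Even if it did, you would still need the resulting discrete group to have finite covolume, and Mostow--Prasad cannot supply this, since it assumes both groups are lattices. (That $g$ is in fact induced by an isometry is a consequence of the theorem, not an ingredient.)

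Note which index is free. Since $\pi_1(M)$ has finite index in the group $\Gamma$ of combinatorial automorphisms of $\cover{\calV}$, the intersection $\pi_1(M)\cap\pi_1(N)$ automatically has finite index in $\pi_1(N)$. The real content is that it has finite index in $\pi_1(M)$, equivalently that $\cover{\calV}/\pi_1(N)$ has finitely many tetrahedra. You also need this for $\calW$ to be a finite triangulation.

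The step you call the main obstacle is straightforward, and is what the paper does. The laminations determine $\Link$ and its tetrahedron rectangles, so the transported $\pi_1(N)$ acts by combinatorial automorphisms of $\cover{\calV}$. It acts faithfully, because a kernel element fixes the dense set $\Delta_\calV$, hence $\Circle$, hence $S^2_N$. Working inside the abstract group $\Gamma$, no discreteness in $\Isom(\HH^3)$ is needed.

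The paper closes the real gap homologically. Let $\hat M$ and $\hat N$ be the covers corresponding to $\hat\Gamma=\pi_1(M)\cap\pi_1(N)$; then $\hat N$ is a finite cover of $N$. Since $\hat M$ deformation retracts to a two-complex, $H_3(\hat N)\isom H_3(\hat M)\isom 0$, so $\hat N$ is cusped. With $\calR$ its peripheral $\ZZ^2$ subgroups, Bieri--Eckmann gives $H_3(\hat\Gamma,\calR)\isom H_3(\hat N,\calB)\isom\ZZ$. Transported into $\pi_1(\hat M)$, these are pairwise non-conjugate rank-two parabolic subgroups. They give disjoint cusp neighbourhoods $\calA$ with $H_3(\hat M,\calA)\isom\ZZ$. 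This forces $\hat M$ to be compact relative to finitely many cusps, hence of finite volume, and only then is Mostow--Prasad applied.

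If you prefer to keep your geometric route, you need two extra steps. First, show that the group of decomposition-preserving homeomorphisms of $\Circle$ acts on $\Sphere\homeo\bdyi\HH^3$ by isometries. To do this, pass to the normal core of $\pi_1(M)$, apply Mostow--Prasad to each conjugation automorphism, and use density of loxodromic fixed points. Then the transported $\pi_1(N)$ is a kleinian group, topologically conjugate via $g$ to a lattice. Second, use that conical and bounded parabolic points are topological notions. So this group is geometrically finite with limit set the whole sphere, and hence has finite covolume.
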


\begin{remark}
That is, up to commensurability, a locally veering triangulation can be recovered from its Cannon--Thurston map.
\end{remark}

\begin{proof}[Proof of \refthm{Equivalence}]
As in \refthm{VeeringCT}, our constructions are insensitive to taking covers.
Thus we may assume that $M$ and $N$ are oriented.
We can also assume that $\calV$ is transverse veering.
The fibres of $\Psi_\calV$ are the decomposition elements set out in \refdef{DecompositionElementsCircle}.
From the infinite fibres, and applying \reflem{DecompositionElementsRecover}, we recover the upper and lower laminations $\Lambda^\calV$ and $\Lambda_\calV$ (up to swapping them).

In~\cite[Chapter~10]{FSS22} we construct the link space $\Link$ from the laminations $\Lambda^\calV$ and $\Lambda_\calV$.
By \refthm{LinkIsLoom}, the link space is a loom space.
Moreover, the action of $\pi_1(M)$ is by homeomorphisms preserving the loom space structure. 
Since the Cannon--Thurston maps $\Psi_\calV$ and $\Phi$ are equivalent, after transporting the action of $\pi_1(N)$, 
we find that it also acts by homeomorphisms preserving the loom space structure on $\Link$.

Recall that $\cover{\calV}$ is the induced ideal triangulation of the universal cover.

\begin{claim}
\label{Clm:Faithful}
$\pi_1(N)$ acts faithfully on $\cover{\calV}$.  
\end{claim}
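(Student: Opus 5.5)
The plan is to reduce faithfulness on $\cover{\calV}$ to faithfulness of $\pi_1(N)$ on $S^2_N \homeo \bdyi \HH^3$, which holds because the action comes from the discrete and faithful representation. Throughout, $\pi_1(N)$ acts on $\Circle$ by transporting its action on $S^1_N$ through the homeomorphism $f$ of \refdef{Equivalence}. The action on $\Sphere$ is transported in the same way through $g$. With these actions, $\Psi_\calV$ is $\pi_1(N)$--equivariant.

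First I will make precise how $\pi_1(N)$ acts on $\cover{\calV}$. As noted above, the transported action preserves the decomposition elements, hence the laminations. It therefore acts on $\Link$ by homeomorphisms preserving the loom space structure. Such homeomorphisms send cusp rectangles to cusp rectangles and skeletal rectangles to skeletal rectangles of the same type. They also preserve the incidences among them, namely face rectangles lying in tetrahedron rectangles and cusps lying at corners. Since the tetrahedra, faces, edges and ideal vertices of $\cover{\calV}$ are in bijection with the tetrahedron, face and edge rectangles and the cusps of $\Link$ (see \cite{SchleimerSegerman24}), this gives an action of $\pi_1(N)$ on $\cover{\calV}$ by combinatorial automorphisms. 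Under this action, the cusps $\Delta_\calV \subset \Circle$ are permuted exactly as the ideal vertices of $\cover{\calV}$.

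Next, suppose that $\gamma \in \pi_1(N)$ acts trivially on $\cover{\calV}$. Then $\gamma$ fixes every ideal vertex, so it fixes every point of $\Delta_\calV \subset \Circle$. The cusps are dense in $\Circle$ (\refthm{Disc}\refitm{Dense}), and $\gamma$ acts on $\Circle$ by a homeomorphism. Hence $\gamma$ is the identity on $\Circle$. By equivariance, $\gamma \circ \Psi_\calV = \Psi_\calV \circ \gamma = \Psi_\calV$. Since $\Psi_\calV$ is surjective, $\gamma$ acts trivially on $\Sphere$, and transporting through $g$, trivially on $S^2_N$. A M\"obius transformation fixing every point of $\bdyi\HH^3$ is the identity, and the representation of $\pi_1(N)$ is faithful. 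Therefore $\gamma = 1$.

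The only step needing care is the first one: checking that a loom-structure-preserving homeomorphism really induces a simplicial automorphism of $\cover{\calV}$ compatible with the identification of ideal vertices with $\Delta_\calV$. If this is awkward to verify directly, I would instead avoid the triangulation. Any $\gamma$ acting trivially on $\cover{\calV}$ fixes all tetrahedron rectangles. Their corners are cusps, and every cusp is a corner of some tetrahedron rectangle by \refdef{Loom}\refitm{Tet}. So such a $\gamma$ fixes every cusp, which is all the argument above uses.
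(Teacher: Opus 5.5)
Your proof is correct and is essentially the paper's argument: an element in the kernel fixes every cusp of $\Delta_\calV$, so by density it fixes $\Circle$ pointwise, hence fixes $\Sphere$ and $S^2_N$ pointwise, and so is trivial. The only difference is how you justify that the $\pi_1(N)$--action is compatible with $\Delta_\calV \subset \Circle$: the paper recovers the cusps directly from $\Psi_\calV$, each being the unique accumulation point of an infinite fibre, whereas you route this through the induced loom-space action on rectangles.
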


\begin{proof}
Consider $\Delta_\calV$, the cusps of $\cover{\calV}$.  
These give a countable dense subset of $S^1(\calV)$.  
We claim that these can be recovered from $\Psi_\calV$ (and thus from $\Phi$).  
To see this, note that the decomposition elements of cusps are exactly the infinite fibres of the map $\Psi_\calV$. 
Also, the cusp $c$ is the unique accumulation point in $S^1(\calV)$ of the decomposition element~$[c]_{S^1}$.

Thus $\Delta_\calV \subset S^1(\calV)$ is also preserved by the $\pi_1(N)$ action.  
Suppose that $\gamma \in \pi_1(N)$ lies in the kernel of the action on $\cover{\calV}$.  
Thus $\gamma$ fixes all of $\Delta_\calV$, a dense subset of $S^1(\calV)$.  
Thus $\gamma$ fixes $S^1(\calV)$ pointwise.  
We deduce that $\gamma$ fixes $S^2(\calV)$ and thus $S^2_N$ pointwise.  
By \refdef{CT} we have that $S^2_N = \bdyi \cover{N}$.
However, the only element of $\pi_1(N)$ that acts trivially on $\bdyi \cover{N}$ is the identity.
\end{proof}

Let $\Gamma$ be the group of all combinatorial automorphisms of $\cover{\calV}$.
This contains $\pi_1(M)$ as a finite-index subgroup.
By \refclm{Faithful}, we have an embedding of $\pi_1(N)$ into $\Gamma$.
Let $\hat\Gamma$ be the intersection of $\pi_1(M)$ and $\pi_1(N)$ inside of $\Gamma$.
Thus $\hat\Gamma$ is finite index in $\pi_1(N)$.
Let $\hat{M}$ and $\hat{N}$ be the induced covers of $M$ and $N$, respectively. 
These are hyperbolic manifolds with identical fundamental groups; 
so they are homotopy equivalent.
Note that $\hat{N}$ is a finite cover of $N$.

\begin{claim}
\label{Clm:Finite}
$\hat{M}$ is a finite cover of $M$.
\end{claim}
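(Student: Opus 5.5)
We must show $\hat\Gamma = \pi_1(M)\cap\pi_1(N)$ has finite index in $\pi_1(M)$. The plan is to compare covolumes, since both groups act on $\HH^3$ (via their actions on $S^2_N \homeo \bdyi \HH^3$) as lattices. Recall the identifications: $\pi_1(M)$ and $\pi_1(N)$ act on $\Circle$, and through $\Psi_\calV$ (and the equivalence with $\Phi$) on the same two-sphere. By \refcor{Yaman} this sphere is $\bdyi\HH^3$ with its action by $\pi_1(M)$, a lattice in $\Isom(\HH^3)$. The action of $\pi_1(N)$ is equally the boundary action of a lattice, because $\Phi$ is a Cannon--Thurston map for $N$. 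A homeomorphism of $\bdyi\HH^3$ conjugating the two convergence actions extends, by Mostow--Prasad rigidity, to an isometry. So we may regard $\pi_1(M)$ and $\pi_1(N)$ as lattices in a common copy of $\Isom(\HH^3)$, both sitting inside the image of $\Gamma$.

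The key step is to show that $\Gamma$ itself acts on $\HH^3$ as a discrete group containing $\pi_1(M)$ with finite index. Every combinatorial automorphism of $\cover\calV$ preserves the laminations, hence the decomposition elements. It therefore acts on $\Sphere \homeo \bdyi\HH^3$, and the kernel is trivial by the argument of \refclm{Faithful}. Since $\pi_1(M)$ has finite index in $\Gamma$ (finitely many tetrahedra, and automorphisms are determined by the image of one oriented tetrahedron), $\Gamma$ is commensurable with a lattice. Hence $\Gamma$ normalises a finite-index subgroup of $\pi_1(M)$, and it is discrete by standard facts: for example, the commensurator of a nonarithmetic lattice is discrete, or more simply a finite extension of a discrete group acting faithfully by boundary homeomorphisms is discrete. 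Thus $\Gamma$ is a lattice.

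Now $\pi_1(N)$ is a subgroup of the lattice $\Gamma$ and is itself a lattice, because $N$ has finite volume. It therefore has finite index in $\Gamma$, with index equal to the ratio of covolumes. Two finite-index subgroups $\pi_1(M)$ and $\pi_1(N)$ of $\Gamma$ intersect in a subgroup $\hat\Gamma$ of finite index in each. In particular $[\pi_1(M):\hat\Gamma]<\infty$, so $\hat M$ is a finite cover of $M$.

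The main obstacle is justifying that the $\pi_1(N)$ action on $\HH^3$ is compatible with $\Gamma$, so that the covolume comparison takes place in one discrete group. My first attempt avoids volumes by using homotopy equivalence. $\hat M$ and $\hat N$ have isomorphic fundamental groups and $\hat N$ has finite volume, so by Mostow--Prasad rigidity (for the finite-volume manifold, with the peripheral structure recovered from the parabolics, which \refthm{Convergence} identifies with the cusps in both actions) $\hat M$ is a finite-volume cover of $M$ and hence a finite cover. I would check carefully that the parabolic subgroups agree, which is where \refprop{Parabolic} is used.
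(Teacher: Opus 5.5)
Your argument has a genuine gap at the sentence ``$\pi_1(N)$ is a subgroup of the lattice $\Gamma$ and is itself a lattice, because $N$ has finite volume.'' Finite volume of $N$ only tells you that $\pi_1(N)$ is \emph{abstractly} isomorphic to a lattice. It says nothing about the covolume of the specific discrete embedding $\pi_1(N) \hookrightarrow \Gamma < \Isom(\HH^3)$ coming from the combinatorial action on $\cover{\calV}$. Since $\pi_1(M)$ has finite index in $\Gamma$, that embedding has finite covolume if and only if $\hat\Gamma$ has finite index in $\pi_1(M)$, which is exactly the claim.

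Your appeals to Mostow--Prasad rigidity have the same defect, because rigidity compares representations already known to be lattices. In your first paragraph it cannot place $\pi_1(N)$ isometrically in a common $\Isom(\HH^3)$: the equivalence $g$ is an arbitrary homeomorphism, and the transported $\pi_1(N)$--action on $\Sphere$ is a priori only by homeomorphisms. In your last paragraph it cannot show that $\hat{M}$ has finite volume from $\pi_1(\hat{M}) \cong \pi_1(\hat{N})$. In the paper, rigidity is invoked only \emph{after} this claim. Your key step that $\Gamma$ is a lattice is salvageable, but ``discrete'' only makes sense once you show $\Gamma$ acts by M\"obius maps. That follows from rigidity for the normal core of $\pi_1(M)$ in $\Gamma$, plus the fact that a homeomorphism of the sphere commuting with a lattice action is the identity.

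The missing idea is a proof that a cusped finite-volume group sitting discretely inside $\pi_1(M)$ has finite covolume. The paper does this homologically. First, $\hat{M}$ deformation retracts to a two-complex, so $H_3(\hat{N}) \cong H_3(\hat{M}) = 0$ and $\hat{N}$ has cusps. By Bieri--Eckmann, $H_3(\hat\Gamma, \calR) \cong H_3(\hat{N}, \calB) \cong \ZZ$, where $\calR$ are the cusp subgroups. These $\ZZ^2$ subgroups are parabolic and pairwise non-conjugate in $\pi_1(\hat{M})$. So they give disjoint cusp neighbourhoods $\calA$ in $\hat{M}$ with $H_3(\hat{M}, \calA) \cong \ZZ$, which forces $\hat{M}$ minus $\calA$ to be compact.

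Alternatively, you could complete your own route. The action of $\pi_1(N) < \Gamma$ on $\Sphere$ agrees with the transported action, since both are determined by the action on $\Delta_\calV$. It is therefore topologically conjugate, via $g$, to the lattice action on $S^2_N$, so every point is a conical limit point or a bounded parabolic point. These notions are topological, so by Beardon--Maskit (in Bowditch's formulation) the Kleinian group $\pi_1(N) < \Gamma$ is geometrically finite with full limit set. Hence it has finite covolume and finite index in $\Gamma$. Either argument has to be supplied; as written, the proposal asserts the crux rather than proving it.
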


Our proof is essentially a combination of~\cite[page~92]{Kapovich10} and~\cite[Section~1]{BieriEckmann78}. 


\begin{proof}[Proof of \refclm{Finite}]
It suffices to prove that the hyperbolic volume of $\hat{M}$ is finite.

Since $M$ is cusped, it deformation retracts to a two-complex.
Lifting, we find that $\hat{M}$ deformation retracts to a two-complex.
So $H_3(\hat{M}) \isom 0$. 
Thus $H_3(\hat{N}) \isom 0$.
We deduce that $\hat{N}$ is not closed.
Since the hyperbolic volume of $\hat{N}$ is finite, we deduce that $\hat{N}$ has (finitely many) cusps.

Let $\calB$ be a collection of disjoint closed cusp neighbourhoods $B_i$ in $\hat{N}$, one per cusp of $\hat{N}$.
Set $Q_i = \pi_1(B_i)$.
Since $\hat{N}$ is oriented each $Q_i$ is isomorphic to $\ZZ^2$.
Let $R_i < \hat\Gamma$ be the images of the $Q_i$. 
Set $\calR = (R_i)$. 
We note that 
\[
H_3(\hat{\Gamma}, \calR) \isom H_3(\hat{N}, \calB) \isom \ZZ
\]
where the first isomorphism follows from~\cite[Theorem~1.3]{BieriEckmann78}. 


We transport the groups $R_i$ to $\pi_1(\hat{M})$; 
take $P_i$ to be the image of $R_i$.
Each $P_i$ gives us a parabolic subgroup;
no pair of these is conjugate. 
So we obtain (finitely many) distinct cusps and thus disjoint cusp neighbourhoods $A_i \subset \hat{M}$.
Set $\calA = (A_i)$. 

Finally, we have 
\[
H_3(\hat{M}, \calA) \isom H_3(\hat{\Gamma}, \calR) \isom \ZZ
\]
Thus $\hat{M}$ is compact relative to a finite set of cusps.
So $\hat{M}$ has finite volume. 
\end{proof}

We apply Mostow-Prasad rigidity~\cite[Theorem~B]{Prasad73} to deduce that $\hat{M}$ is isometric to, thus homeomorphic to, $\hat{N}$. 
(Alternatively, we could use the proof of \refclm{Finite} to obtain a homotopy equivalence of space pairs; 
then apply~\cite[Theorem~6.1]{Waldhausen68}.)
Thus $M$ and $N$ are commensurable.

Finally, we define $\calW = \cover{\calV} / \pi_1(N)$.
\end{proof}

\subsection{New Cannon--Thurston maps}

From \reflem{CommensurablePreservesLayered} and \refthm{Equivalence} we deduce the following.

\begin{corollary}
\label{Cor:NotMadeThere}
Suppose that $M$ and $N$ are finite-volume hyperbolic three-manifolds.
Suppose that $\calV$ is a locally veering triangulation for $M$.
Let $\Psi_\calV$ be the Cannon--Thurston map for $M$ induced by $\calV$.
Suppose that~$\Phi$ is a Cannon--Thurston map for $N$.
Suppose that $\Psi_\calV$ is equivalent to~$\Phi$.
Then $\calV$ is layered if and only if $\Phi$ is induced by a fibring of $N$. \qed
\end{corollary}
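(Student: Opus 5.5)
Both directions go through commensurability, using \refthm{Equivalence} and \reflem{CommensurablePreservesLayered}, together with the classical fact that Cannon--Thurston maps of fibrations are the Cannon--Thurston maps of layered veering triangulations. We first fix what ``induced by a fibring of $N$'' means: there is a fibration of (a finite cover of) $N$ whose fibre group $H$ gives, via Bowditch's extension~\cite[Section~6]{Bowditch07}, a circle $\bdyi H \homeo S^1$ with a $\pi_1(N)$--action and an equivariant surjection onto $S^2_N$, and $\Phi$ is equivalent to this map.

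For the forward direction, suppose $\calV$ is layered. By \refthm{Equivalence} there is a locally veering triangulation $\calW$ on $N$ with $\Phi$ equivalent to $\Psi_\calW$ and $(N,\calW)$ commensurable with $(M,\calV)$. By \reflem{CommensurablePreservesLayered}, $\calW$ is layered, so (after passing to a cover) $N$ fibres with fibre carried by $\calB_\calW$. Next we identify $\Psi_\calW$ with the classical Cannon--Thurston map of this fibration. By Agol's construction and Gu\'eritaud~\cite{Gueritaud16}, the veering circle of a layered triangulation is equivariantly identified with $\bdyi H$ for the fibre group $H$, and the upper and lower laminations are the stable and unstable laminations of the monodromy. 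The fibres of the classical map are then exactly our decomposition elements: leaf endpoints, and crowns together with their cusp. Both maps are equivariant quotients of the same circle by the same decomposition, and both target spheres are identified with $\bdyi\HH^3$ by \refcor{Yaman}. So $\Phi$ is induced by a fibring.

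For the converse, suppose $\Phi$ is induced by a fibring of $N$. Agol's theorem gives a layered veering triangulation $\calW'$ on the fibred cover of $N$, after first puncturing at singularities of the monodromy if necessary. By the identification above, $\Psi_{\calW'}$ is equivalent to $\Phi$, hence to $\Psi_\calV$. Applying \refthm{Equivalence} to $\Psi_{\calW'}$ shows that $(M,\calV)$ is commensurable with $(N',\calW')$. By \reflem{CommensurablePreservesLayered}, $\calV$ is layered.

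The main obstacle is this identification of the layered veering circle and its decomposition with the classical Cannon--Thurston data, and the puncturing issue. If the monodromy has interior singularities, Agol's triangulation lives on a punctured manifold rather than on $N$. I would first check that the intended meaning of ``induced by a fibring'' allows passing to such a cover or punctured setting. Failing that, I would restrict to fibrings whose associated flow has no interior singular orbits, since those are exactly the fibrings carried by veering triangulations.
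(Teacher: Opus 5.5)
Your route matches the paper's: the corollary is stated there as an immediate consequence of Theorem~\ref{Thm:Equivalence} and Lemma~\ref{Lem:CommensurablePreservesLayered}, together with the fact, asserted in the introduction, that the Cannon--Thurston map of a layered veering triangulation is the classical map of any carried fibration. Your forward direction is fine. The converse, however, has the gap you flag yourself and then leave open.

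If the monodromy had an interior singularity, Agol's triangulation $\calW'$ would live on the drilled manifold. Its Cannon--Thurston map is then a map for a different group, and it is not equivalent to $\Phi$. So neither the step ``by the identification above, $\Psi_{\calW'}$ is equivalent to $\Phi$'' nor the subsequent application of Theorem~\ref{Thm:Equivalence} is available in that case. Neither of your fallbacks repairs this. Passing to the punctured setting changes the manifold and its fundamental group. Restricting to fibrings without interior singular orbits changes the statement.

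The missing observation is that the hypothesis already rules out interior singularities. Suppose $g \circ \Psi_\calV = \Phi \circ f$ as in Definition~\ref{Def:Equivalence}. Then every point-preimage of $\Phi$ is the $f$--image of a point-preimage of $\Psi_\calV$, that is, of a decomposition element (Definition~\ref{Def:DecompositionElementsCircle}). So it has one, two, or infinitely many points. On the other hand, an interior $k$--prong singularity ($k \geq 3$) of the monodromy gives a complementary ideal $k$--gon of the lifted stable lamination in $\cover{F}$. Its $k$ ideal vertices form a complete fibre of the classical Cannon--Thurston map, which is the same mechanism used in Appendix~\ref{App:InternalSing}. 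Hence the monodromy is singular only at punctures, and Agol's construction applies to the fibred cover of $N$ itself, with no drilling. The rest of your converse then goes through.

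One smaller point. As stated, Theorem~\ref{Thm:Equivalence} only gives commensurability of $(M, \calV)$ with \emph{some} $\calW$ on $N'$ whose map is equivalent to $\Psi_{\calW'}$, not with $\calW'$ itself. To get $\calW'$ you need the proof. There $\cover{\calV}$ is rebuilt from the fibres of the map, so $f$ identifies $\cover{\calV}$ with $\cover{\calW'}$, and $\calW = \cover{\calV}/\pi_1(N')$ with $\calW'$. This holds up to swapping the two laminations, which does not affect layeredness.
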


\begin{example}
\label{Exa:s227}
Suppose that $M$ is the SnapPy~\cite{snappy} census manifold \usebox{\NonfibredSnappy}.
By~\cite[Section~4]{HRST11}, the manifold $M$ is non-fibred. 
Suppose that $\calV$ is the veering triangulation \usebox{\NonfibredVeer} of $M$.  
By \refcor{NotMadeThere}, the Cannon--Thurston map $\Psi_\calV$ is not equivalent to any Cannon--Thurston map arising from a cusped fibred manifold.
\end{example}

Thus we have given new Cannon--Thurston maps in the cusped non-fibred case.

\appendix
\section{The action on the veering circle \\ recovers the veering triangulation}
\label{App:Action}

In \refthm{Equivalence} we showed that the veering triangulation can be recovered from the Cannon--Thurston map. 
Here we show that it can be recovered from just the action of the fundamental group on the veering circle.
This is more delicate: we can no longer recover leaves from fibres of the Cannon--Thurston map. 
Instead we recover leaves from the dynamics of the action. 

\begin{theorem}
\label{Thm:Isomorphism}
Suppose that $\calV$ and $\calW$ are finite transverse veering triangulations of a three-manifold $M$.
Suppose that the actions of $\pi_1(M)$ on $S^1(\calV)$ and $S^1(\calW)$ are topologically conjugate.
Then $\calV$ and $\calW$ are isomorphic as taut triangulations.
Furthermore, if the topological conjugacy is the identity on $\Delta_M$ then $\calV$ is isotopic to $\calW$.
\end{theorem}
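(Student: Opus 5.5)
Let $h \from S^1(\calV) \to S^1(\calW)$ be the conjugating homeomorphism. The plan is to show that $h$ carries the upper and lower laminations of $\calV$ to those of $\calW$, up to swapping. The constructions of \cite{FSS22} and \refsec{Loom} then give an equivariant isomorphism of link spaces preserving the loom structure. Since skeletal rectangles are determined by the loom structure, and tetrahedron rectangles correspond bijectively to tetrahedra of $\cover{\calV}$, we obtain an equivariant combinatorial isomorphism $\cover{\calV} \to \cover{\calW}$. This descends to an isomorphism $\calV \to \calW$ of taut triangulations, since the taut structure is read off from the rectangles.

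First, recover the cusps dynamically. By \refprop{ParabolicFacts}, a cusp $c$ is fixed by a $\ZZ^2$ subgroup, and that subgroup contains $\beta$ whose fixed set is the clamshell boundary $[c]_{S^1}$. A non-cusp point has cyclic (or trivial) stabiliser, because it corresponds to a conical point of the convergence action (\refthm{Convergence}). Hence $\Delta_\calV$ is exactly the set of points with $\ZZ^2$ stabiliser, and $h(\Delta_\calV) = \Delta_\calW$.

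Next, the fixed set of the primitive $\beta_c$ is $[c]_{S^1}$, and this element is canonical: it is the unique primitive element of $\Stab(c)$ (up to inverse) with non-trivial fixed points other than $c$. By \refprop{ParabolicFacts}\refitm{gamma}, every other element moves points towards $c$. Thus $h$ maps clamshell boundaries to clamshell boundaries. Taking consecutive thorns in the $\ZZ$-ordering about the cusp, as in the proof of \reflem{DecompositionElementsRecover}, recovers the boundary leaves. The linking-graph argument of that lemma splits these into two classes, and closure then recovers the laminations. So $h$ sends $\{\Lambda^\calV, \Lambda_\calV\}$ to $\{\Lambda^\calW, \Lambda_\calW\}$.

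The main obstacle is the swap. If $h$ exchanges upper and lower, the resulting map of link spaces exchanges the foliations. We then compare orientations: the circle order is preserved by $h$, and in a transverse veering triangulation the colouring and coorientation conventions of \refdef{Veering} fix which lamination is upper relative to the orientation. A swap would therefore reverse the orientation of the link space, contradicting \refthm{LinkIsLoom} combined with the preservation of the circular order. Alternatively, a swapped loom space still gives a taut triangulation, namely the one with the coorientation reversed, and this is isomorphic as an unoriented taut triangulation. I would try the orientation argument first and fall back on the second option.

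For isotopy, suppose $h$ is the identity on $\Delta_M$. Then the induced map $\cover{\calV} \to \cover{\calW}$ fixes every ideal vertex and is $\pi_1(M)$-equivariant. Straighten both triangulations, or realise each tetrahedron by its vertices, to obtain an equivariant homotopy from the identity. In a hyperbolic, hence Haken, manifold, a homotopy rel ideal vertices between homeomorphisms can be upgraded to an isotopy, for example by \cite{Waldhausen68}. This gives the isotopy from $\calV$ to $\calW$.
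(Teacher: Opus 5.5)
Your outline is the paper's proof. It identifies the clamshell boundaries as the infinite fixed sets of the elements $\beta$, applies \reflem{DecompositionElementsRecover}, rebuilds the link spaces, and matches tetrahedron rectangles and their adjacencies. For the second clause it straightens and applies Waldhausen. The paper detects the clamshells a little more directly: by \refprop{FixedPoints} and \refprop{ParabolicFacts}, an element has infinitely many fixed points exactly when it is a non-trivial power of some $\beta$. Your route through $\ZZ^2$ stabilisers also works.

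The step that would fail is the orientation argument you plan to try first against the swap. The hypotheses do not make $h$ orientation-preserving. Also, the colouring in \refdef{Veering} is determined by the angles and the orientation of $M$ alone, so it says nothing about which lamination is upper.

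Swaps genuinely occur. Let $\calW$ be $\calV$ with its coorientation reversed. This is again a transverse veering triangulation of $M$, with the same colouring. Reversing the coorientation reverses the orientations of the landscapes of continents, hence the circular order on $\Delta_M$. It also exchanges the upper and lower laminations; in the layered case, replace the monodromy by its inverse. So the identity on $\Delta_M$ extends to an orientation-reversing conjugacy $S^1(\calV)\to S^1(\calW)$ carrying $\Lambda^\calV$ to $\Lambda_\calW$. Any argument showing that $h$ never swaps the laminations would therefore prove something false.

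You must use your fallback. Exchanging the two foliations changes neither the set of skeletal rectangles nor which cusps of a tetrahedron rectangle lie on opposite sides, and those opposite pairs are the two edges with angle $\pi$. So the angle structure is recovered, and only the coorientation may flip. This is exactly why the theorem asserts isomorphism, and isotopy, of taut rather than transverse taut triangulations. The Waldhausen step is unaffected.
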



We begin with a bit of necessary dynamical background.

\begin{definition}
Suppose that $g \from S^1 \to S^1$ is a homeomorphism. 
We say that $g$ has \emph{north-south dynamics} if $g$ has exactly two fixed points, with one attracting and the other repelling.
We say that $g$ has \emph{north-south-north-south dynamics} if $g$ has exactly four fixed points, with two attracting and two repelling. 
\end{definition}

Note that, when $g$ has north-south-north-south dynamics, its attractors and repellers are necessarily linked.
We now give a consequence of our main result, \refthm{Convergence}.

\begin{proposition} 
\label{Prop:FixedPoints}
Suppose that $\calV$ is a finite locally veering triangulation of a three-manifold $M$.
Let $S^1(\calV)$ be the veering circle.  
Every non-trivial element $\gamma$ of $\pi_1(M)$ satisfies exactly one of the following.
\begin{enumerate}
\item 
\label{Itm:Peripheral}
$\gamma$ is peripheral.
\item 
\label{Itm:NS}
$\gamma$ has north-south dynamics on $S^1(\calV)$.
Here the fixed points of $\gamma$ are singletons.
\item 
\label{Itm:NSNS}
$\gamma^2$ has north-south-north-south dynamics.
Here the attractors of $\gamma^2$ are the ends of a leaf (of $\Lambda^\calV$ or $\Lambda_\calV$) and likewise for the repellers.
\end{enumerate}
\end{proposition}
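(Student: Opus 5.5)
We transfer the trichotomy for elements of a geometrically finite convergence group back to $\Circle$ along the quotient $\Psi_\calV$. By \refthm{Convergence} and \refcor{Yaman}, the action on $\Sphere$ is conjugate to the action of a finite-volume kleinian group on $\bdyi \HH^3$. So every non-trivial $\gamma$ is either parabolic, fixing exactly one point which is a parabolic point, or loxodromic, with exactly two fixed points $A \neq B$ in $\Sphere$ and north-south dynamics there. Discreteness and torsion-freeness rule out elliptics. In the parabolic case, \refthm{Convergence} says the fixed point is a cusp class $[c]_{S^1}$. Then $\gamma \in \Stab(c)$ by \reflem{StabC}, so $\gamma$ is peripheral, giving \refitm{Peripheral}. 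Conversely a peripheral element lies in some $\Stab(c) \isom \ZZ^2$, which is parabolic in $\HH^3$ and so cannot be loxodromic. This establishes exclusivity between \refitm{Peripheral} and the other two cases; \refitm{NS} and \refitm{NSNS} are exclusive because their fixed-point structures in $\Circle$ differ.

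Now suppose $\gamma$ is loxodromic with fixed points $A, B \in \Sphere$. Neither is a cusp class, since a loxodromic cannot fix a parabolic point in a convergence group. So each of $\Psi_\calV^{-1}(A)$ and $\Psi_\calV^{-1}(B)$ is either a singleton or the pair of ideal points of an interior leaf. Moreover these two preimages are of the same type, since $\gamma$ is conjugate to $\gamma^{-1}$-type dynamics at either end; we will argue this directly below. Since $\gamma$ preserves the circular order, $\gamma^2$ fixes each point of $\Psi_\calV^{-1}(A) \cup \Psi_\calV^{-1}(B)$.

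Next we show that these are the only fixed points of $\gamma^2$ in $\Circle$. Any other fixed point $z$ would give a $\gamma^2$-fixed point $\Psi_\calV(z) \in \{A,B\}$, so $z$ would lie in one of the two fibres anyway. Away from the fibres we use the convergence dynamics: $\gamma^{n} \to A$ uniformly on compact subsets of $\Sphere - \{B\}$. Lifting via \reflem{AlmostCollapsing}-style reasoning, every closed interval $J$ of $\Circle$ disjoint from $\Psi_\calV^{-1}(B)$ is mapped by $\gamma^{2n}$ into an arbitrarily small neighbourhood of $\Psi_\calV^{-1}(A)$. Hence each point of $\Psi_\calV^{-1}(A)$ is attracting for $\gamma^2$, and symmetrically each point of $\Psi_\calV^{-1}(B)$ is repelling, using $\gamma^{-1}$. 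Each complementary interval of the fixed set has one attracting and one repelling endpoint.

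This leaves a combinatorial case analysis, which I expect to be the main obstacle. The fixed set has cardinality $2$, $3$, or $4$, and the alternation between attracting and repelling points around the circle forces an even count. This rules out the mixed case where one fibre is a singleton and the other a leaf, giving the same-type claim above. If both fibres are singletons, then $\gamma^2$, and hence $\gamma$ itself (it has the same two fibres and preserves orientation), has north-south dynamics with singleton fixed points, giving \refitm{NS}. If both fibres are leaves, $\gamma^2$ has four alternating fixed points: the attractors are the ends of one leaf and the repellers the ends of another, giving \refitm{NSNS}. Finally we must check that in the two-leaf case the attracting leaf and the repelling leaf are genuinely leaves of $\Lambda^\calV$ or $\Lambda_\calV$ whose endpoints alternate. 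This follows from the alternation, and equivalently from \reflem{LeftIsLeft}\refitm{CrossIFFLinked}: the two leaves cross, so they lie in opposite laminations.
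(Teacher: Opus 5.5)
Your proposal is correct and follows essentially the paper's route. You pass to $\Sphere$ via Corollary~\ref{Cor:Yaman} and note that the fixed points $A, B$ of a loxodromic are not cusp classes. You then use continuity of $\Psi_\calV$ to make the points of $\Psi_\calV^{-1}(A)$ attracting and those of $\Psi_\calV^{-1}(B)$ repelling, and count the fixed points of $\gamma^2$; your alternation argument is a slightly more careful version of the paper's ``at least two more fixed points, hence exactly four''.

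One slip: you show that loxodromic elements satisfy (2) or (3), but exclusivity needs the converse direction, namely that a peripheral element cannot satisfy (2) or (3). This is immediate, because a peripheral $\gamma$ (and hence $\gamma^2$) fixes its cusp $c \in \Circle$, which is neither a singleton nor an ideal point of a leaf by Lemma~\ref{Lem:Laminations}. The paper gets this instead from Proposition~\ref{Prop:ParabolicFacts}, which says a peripheral element has exactly one or infinitely many fixed points.
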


\begin{proof}
Suppose first that $\gamma$ is peripheral (so satisfies \refitm{Peripheral}) and thus fixes a cusp.
By \refprop{ParabolicFacts} the action of $\gamma$ on $S^1(\calV)$ has either infinitely many fixed points or has exactly one. 
Thus we do not satisfy \refitm{NS} or \refitm{NSNS}.

Suppose now that $\gamma$ is not peripheral.
By \refcor{Yaman}, the action of $\pi_1(M)$ on the veering two-sphere $\Sphere$ is topologically conjugate to its action on $\bdyi \HH^3$. 
Let $\rho \from \pi_1(M) \to \Isom(\HH^3)$ be the holonomy representation.
Thus $\rho(\gamma)$ is loxodromic. 
(It is possibly a glide reflection, but this does not affect the arguments.)
Thus the action of $\gamma$ on $\Sphere$ has exactly two fixed points;
one, say $A$, is attracting and one, say $B$, is repelling. 
Applying \refthm{Convergence} we have that neither $A$ nor $B$ is a cusp. 

We now have various cases as $A$ is a singleton or the pair of ends of a leaf, and similarly for $B$.
If $A$ is a singleton, it contributes one fixed point to the action of $\gamma$ on $\Circle$.
If $A$ is the pair of ends of a leaf, then the ends are either both fixed or are interchanged by the action of $\gamma$ on $\Circle$.

First suppose that $A = \{x\}$ and $B = \{y\}$ are singletons.  
Since $A$ is attracting for the action of $\gamma$ on $S^2(\calV)$, the continuity of the Cannon--Thurston map $\Psi_\calV$ implies that $x$ is attracting for the action of $\gamma$ on $S^1(\calV)$.
Similarly, $y$ is repelling.
Thus we satisfy \refitm{NS}.

Replacing $\gamma$ by $\gamma^{-1}$ if needed, we next suppose that $A = \{x, y\}$ where $x$ and $y$ are the ends of a leaf in the veering circle.  
Squaring $\gamma$ if necessary, we can assume that $\gamma$ (as it acts on $S^1(\calV)$) fixes $x$ and $y$ individually.  
Similar to the above,
we deduce that $x$ and $y$ are attracting fixed points for the action of $\gamma$ on $S^1(\calV)$.  
We deduce that $\gamma$, acting on $S^1(\calV)$, has (at least)
two more fixed points: one in $(x, y)^\acw$ and one in $(y, x)^\acw$.  
Thus by the above it has precisely four fixed points.
So $B$ is also the ends of a leaf, and we are done.
\end{proof}


We are now equipped to prove \refthm{Isomorphism}.

\begin{proof}[Proof of \refthm{Isomorphism}] 
By \refprop{FixedPoints} and \refprop{ParabolicFacts}, an element of $\pi_1(M)$ has infinitely many fixed points as it acts on $S^1(\calV)$ if and only if it is a non-trivial power of $\beta$ for some cusp $c$.
Applying \reflem{DecompositionElementsRecover}, we recover the upper and lower laminations $\Lambda^\calV$ and $\Lambda_\calV$.
In~\cite[Chapter~10]{FSS22} we construct the link space $\Link$ from the laminations $\Lambda^\calV$ and $\Lambda_\calV$.
We do the same for $\calW$.
We deduce that the link spaces of $\calV$ and $\calW$ are isomorphic.
Furthermore, the conjugacy takes the action on the link space $\link(\calV)$ to that on $\link(\calW)$.
This gives a bijection of orbits of tetrahedron rectangles as well as their adjacencies. 

Suppose that the conjugacy is the identity on $\Delta_M$.
In this case, the isomorphism of link spaces built in the previous paragraph is in fact the identity.
The resulting isomorphism of triangulations induces a homeomorphism $h \from M \to M$.
The isomorphism preserves all cusps of $\Delta_M$.
Thus $h$ sends homotopy classes of edges to themselves.
Using the hyperbolic geometry on $M$, we straighten $h$ and find that it is homotopic to the identity.
Applying~\cite[Theorem~7.1]{Waldhausen68}, we find that $h$ is isotopic to the identity.
\end{proof}

\section{Pseudo-Anosov homeomorphisms \\ with internal singularities}
\label{App:InternalSing}

We record the following for future work on classifying veering triangulations.

Suppose that $M$ is a cusped hyperbolic three-manifold.
Suppose that $\calT$ is a layered triangulation of $M$.
Suppose that $\calO_\calT$ is the induced circular order on $\Delta_\calT$~\cite[Lemmas~2.17 and~2.20]{FSS22}.
Suppose that $f \from S \to S$ is any pseudo-Anosov homeomorphism carried by the layered triangulation $\calT$.
Note that $S$ is of finite type and has punctures.
Let $\cover{S} \homeo \HH^2$ be its universal cover.

Let $L^f$ and $L_f$ be the full preimages (in $\cover{S}$) of the stable and unstable geodesic laminations for $f$.
In fact, the laminations $L^f$ and $L_f$ are independent of the choice of $f$; 
accordingly we denote them by $L^\calT$ and $L_\calT$.
We say that $L^\calT$ has \emph{internal singularities} if some component of $\HH^2 - L^\calT$ is a finite sided ideal polygon.


\begin{lemma}
Suppose that $M$ is a cusped hyperbolic three-manifold.
Suppose that $\calT$ is a layered triangulation of $M$.
Suppose that $L^\calT$ has internal singularities.
Then $\calO_\calT$ is not equal to $\calO_\calV$ for any veering triangulation $\calV$ of $M$.
\end{lemma}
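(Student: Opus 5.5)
Argue by contradiction: assume $\calO_\calT = \calO_\calV$ for some veering triangulation $\calV$ of $M$. The idea is that both circles are determined by their cusps and circular orders, so equal orders give an equivariant identification of the Cannon--Thurston maps, and then the decomposition elements must match. But internal singularities force a finite non-trivial decomposition element with at least three points, and no such element exists for a veering triangulation.

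First, identify the circles. The veering circle $\Circle$ is the order completion of $(\Delta_\calV, \calO_\calV)$ (see~\cite[Theorem~7.1]{FSS22}). Likewise, the circle $\bdyi \cover{S}$ is the completion of the punctures $\Delta_\calT$ with $\calO_\calT$, since parabolic fixed points are dense in $\bdyi \HH^2$. Equality of the orders, equivariant under $\pi_1(M)$ via the identification $\Delta_\calT = \Delta_M = \Delta_\calV$, therefore gives an equivariant homeomorphism $f \from \bdyi\cover{S} \to \Circle$. Here $\pi_1(M)$ acts on $\bdyi\cover S$ via the fibration, in the standard Cannon--Thurston way.

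Next, compare fibres. Let $\Phi \from \bdyi \cover{S} \to \bdyi\HH^3$ be the classical Cannon--Thurston map of the fibration~\cite{Bowditch07}, and let $\Psi_\calV$ be the map of \refthm{VeeringCT}. By \refcor{Yaman} together with Yaman's uniqueness (\refthm{Yaman}), the maps $\Psi_\calV \circ f$ and $\Phi$ are both equivariant continuous surjections onto $\bdyi\HH^3$ agreeing on the dense set of cusps. This holds up to post-composition by the equivariant homeomorphism of $\bdyi \HH^3$, which is the identity because $\pi_1(M)$ is non-elementary. Hence $\Psi_\calV \circ f = \Phi$, and $f$ carries the fibres of $\Phi$ to the decomposition elements of \refdef{DecompositionElementsCircle}.

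Finally, derive the contradiction. The fibres of $\Phi$ are known: points are identified exactly when they are ideal vertices of the closure of a common complementary region, or leaf, of $L^\calT$ or $L_\calT$. A finite-sided ideal polygon complementary to $L^\calT$ has $k \ge 3$ ideal vertices, none a puncture. The polygon is \emph{internal}, so its vertices are not parabolic fixed points, and since the stable and unstable laminations share no endpoints, the fibre is exactly this $k$-point set. Its image under $f$ would be a finite decomposition element of $\Circle$ with $k\ge3$ points. But by \refdef{DecompositionElementsCircle} and \refcor{CircleDecomposition}, finite decomposition elements have one or two points, and clamshell boundaries are infinite. This is the contradiction.

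The main obstacle is the first step: checking that the circular order on cusps really determines the circle together with the action, with no extra points from gaps in the order. For $\Circle$ I would cite the construction in~\cite{FSS22}. For $\bdyi\cover S$ I would use density of parabolic points and completeness of $\bdyi \HH^2$ to conclude that it is the Dedekind completion of $\Delta_\calT$.
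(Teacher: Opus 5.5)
Your argument is correct, but it takes a different route from the paper.

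\textbf{The paper's route.} The paper never compares Cannon--Thurston maps. After identifying both order completions with $\Circle$, it works only with the dynamics of the circle action.
On the veering side, \refprop{FixedPoints} singles out the clamshell boundaries as the fixed sets of those elements with infinitely many fixed points. \reflem{DecompositionElementsRecover} then rebuilds $\Lambda^\calV$ and $\Lambda_\calV$.
On the fibred side, \cite[Theorem~4]{Thurston88} rebuilds $\bdy L^\calT$ and $\bdy L_\calT$ from the fixed points of lifts of the pseudo-Anosov monodromy, and these lifts are themselves elements of $\pi_1(M)$.
So the two pairs of laminations coincide (up to swapping), and the crown structure of the veering laminations leaves no room for finite-sided ideal polygons.

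\textbf{Your route.} You instead show $\Psi_\calV \circ f = \Phi$ and compare point-preimages. This is shorter and uses \refthm{VeeringCT} directly. It needs only the coarse fact that fibres of $\Psi_\calV$ have one, two, or infinitely many points.

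The price is an external import: the ``only if'' half of the classical description of the fibres of $\Phi$ for punctured-surface bundles, namely that there are no identifications beyond leaves and complementary regions of $L^\calT$ and $L_\calT$ (\cite{CannonThurston07}, \cite{Bowditch07}). That is a substantial theorem. The paper's route needs only Thurston's dynamical description of lifts, and it yields the stronger conclusion that the laminations themselves agree.

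\textbf{A step to make explicit.} You should say why $\Phi(c) = \Psi_\calV(f(c))$ for each cusp $c$. Both points are fixed by the rank-two parabolic subgroup $\Stab(c)$, which has a unique fixed point in $\bdyi \HH^3$. This, not Yaman's uniqueness, gives agreement on the dense set of cusps; continuity then gives equality everywhere.
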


\begin{proof}
We prove the contrapositive.
Suppose that $\calO_\calT = \calO_\calV$ for some veering triangulation $\calV$ of $M$.
Thus their order closures agree, giving $\Circle$.
By \refprop{FixedPoints}, the action of $\pi_1(M)$ on $\Circle$ recovers the decomposition elements of cusps.
Applying \reflem{DecompositionElementsRecover}, we recover the upper and lower laminations $\Lambda^\calV$ and $\Lambda_\calV$.

By~\cite[Theorem~4]{Thurston88}
we can recover the boundary $\bdy L^\calT$ from the action of (the various lifts of) $f$ on $\bdy \cover{S}$.
Since $\bdy \cover{S}$ is naturally homeomorphic to the order completion of $\calO_\calT$, and since $f$ appears as an element of $\pi_1(M)$, we have that 
\begin{itemize}
\item $\bdy L^\calT = \Lambda^\calV$ and $\bdy L_\calT = \Lambda_\calV$ or 
\item $\bdy L^\calT = \Lambda_\calV$ and $\bdy L_\calT = \Lambda^\calV$.
\end{itemize}
By \refcor{CircleDecomposition}, we find that $L^\calT$ has no finite-sided ideal polygons.
Thus $L^\calT$ has no internal singularities.
\end{proof}

\section{Drawing approximations \\ of Cannon--Thurston maps}
\label{App:Drawing}

\begin{center}
\textsc{Saul Schleimer and Henry Segerman}
\end{center}

Thurston generated the first approximations to Cannon--Thurston maps
(for certain punctured torus bundles over the circle).
See the last frame of~\cite[Figure~8]{Thurston82} and also~\cite[Figure~12]{Thurston82}.
The first of these is reproduced in \reffig{Thurston}.
Other approximations in the literature include work by Dicks, McMullen (in the closed case), Wada, and Wright.
For an extensive discussion and references, see~\cite[Sections~4 and~6.4]{BachmanGoernerSchleimerSegerman22}.

\begin{figure}[htbp] 
\includegraphics[width = 0.6 \textwidth]{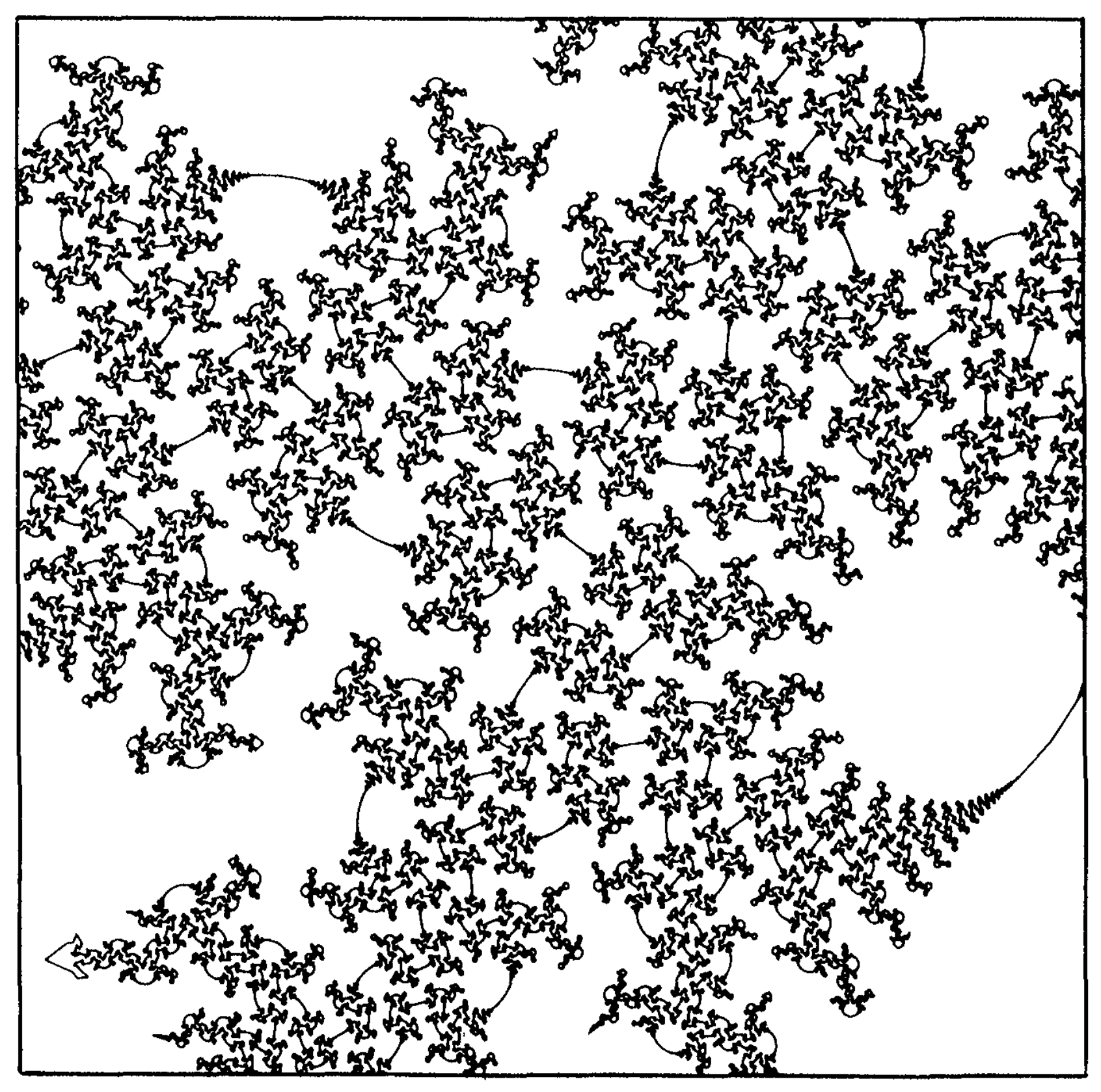}
\caption{The last frame of~\cite[Figure~8]{Thurston82}.
An approximation to (a part of) the Cannon--Thurston map for the complement of the figure-eight knot.}
\label{Fig:Thurston}
\end{figure}

As a running example, let $M$ be the complement of the figure-eight knot in the three-sphere. Let $F$ be a Seifert surface of the knot (a punctured torus in $M$). 
Fix an elevation $\cover{F} \subset \cover{M}$.
Then the Cannon--Thurston map is the map
\[
\Phi \from S^1 \homeo \bdyi \cover{F} \to \bdyi \cover{M} \homeo S^2
\]
Since $\Phi$ is a surjection onto $S^2$, its image does not illustrate its structure.
Instead, we draw finite approximations.

\subsection{The disc method}

Our understanding of Thurston's algorithm is as follows.
To approximate $\Phi$, Thurston builds a large disc $D$ in $\HH^3$.
The disc $D$ is a union of finitely many ideal hyperbolic triangles, with boundary a union of geodesics in $\HH^3$.
Their ideal endpoints lie in $\bdyi \HH^3$.
The \emph{bending angles} between adjacent triangles are carefully chosen so that $D$ is isometric to a subdisc of $\cover{F}$.
(Recall that $\cover{F}$ is the elevation of $F$, defined above.
The surface $F$ is realised geometrically as a subsurface of the two-skeleton of the canonical geometric triangulation of the figure-eight knot complement.)
Thurston's approximation is the result of projecting $\bdy D$ into $\bdyi \HH^3$.

\subsection{The continent method}

The disc method does not apply to veering Cannon--Thurston maps in general;
this is because the underlying manifold $M$ may not be fibred.
So, we replace the disc $D$ with a \emph{continent} in $\cover{M}$, in the sense of~\cite[Definition~4.15]{FSS22}. 
A continent $C$ is a finite face-connected collection of tetrahedra of $\cover{\calT}$ which admits a layering.
We refer to the layers of a continent as its \emph{landscapes};
these are all discs.
They share a common boundary, which we call the \emph{coast} of the continent.
The continent $C$ is a closed three-ball, minus finitely many points on its coast, corresponding to its cusps~\cite[Corollary~4.23]{FSS22}. 
The circle of geodesics making up the coast replaces the boundary of the disc $D$ in Thurston's procedure.

We have implemented the construction of continents and production of approximations to Cannon--Thurston maps in~\cite{VeeringCodebase}.
Example images of approximations to Cannon--Thurston maps for veering triangulations with up to 12 tetrahedra are available on the webpage for the census of veering triangulations~\cite{GSS19}.

\begin{remark}
Suppose that the manifold $M$ has a layered veering triangulation $\calV$.
Suppose that $D$ is any disc carried by the horizontal branched surface (as is the case in Thurston's examples of punctured torus bundles).
Then there is a continent $C$ whose cusps include the ideal points of $D$, in the same circular order.
Thus if we choose the continent appropriately, we exactly recover Thurston's examples.
\end{remark}

\subsection{Filling in the details}
\label{Sec:DrawingAlgorithm}

When making a line drawing, there are (at least) two relevant parameters:
\begin{itemize}
\item 
a \emph{feature size} $\epsilon$ such that details smaller than $\epsilon$ cannot be discerned, and
\item 
an amount of work we are willing to do as measured by, say, the number $N$ of line segments we are willing to draw.
\end{itemize} 

When drawing an approximation to a Cannon--Thurston map, using the disc method, one must build the disc $D$.
The naive algorithm has inputs
\begin{itemize}
\item
a maximal depth $d_{\max}$ and 
\item
an initial triangle $f_0$ in $\cover{F}$.
\end{itemize}
If $e$ is an edge of $\cover{F}$ then we define its \emph{depth} to be its combinatorial distance (in $\cover{F}$) from $f_0$.

The algorithm starts with $D = f_0$.
We map the vertices of $f_0$ to $\infty$, $0$, and $1$ in $\bdyi \HH^3$.
(This fixes a developing map, and thus the position of all vertices.)
Next, for every edge $e$ in $\bdy D$ we do the following.
Suppose that $f$ is the triangle in $\cover{F}$ adjacent to $e$ and not in $D$.
We replace $D$ by $D \cup f$ if
\begin{itemize}
\item
the depth of $e$ is less than $d_{\max}$.
\end{itemize}
(Thus the amount of work we do is proportional to $\exp(d_{\max})$.)

Note that the naive algorithm does not produce the same level of detail everywhere along its curve. 
As the maximal depth increases, more and more work is done inside of single pixels, and is thus invisible.

\subsection{Thurston's algorithm}
\label{Sec:ThurstonAlgorithm}

From our experiments, we understand Thurston's algorithm to be the following. 
It is a modification of the naive algorithm.
There is an additional input of
\begin{itemize}
\item
a feature size $\epsilon$. 
\end{itemize}
Let $p_0$ be the projection of $1$ to the edge $e_0$ between $0$ and $\infty$.
Using the ball model, this equips $\bdyi \HH^3$ with a spherical metric.
If $e$ is any edge of $\cover{F}$ then we define its \emph{length} to be the spherical distance between its endpoints.\footnote{One can instead use the upper-half space model here, and measure euclidean distance in $\CC$.
It is very difficult to tell which Thurston used.}

In the recursive step, when considering an edge $e$, we now replace $D$ by $D \cup f$ if
\begin{itemize}
\item
the depth of $e$ is less than $d_{\max}$ and
\item
the length of $e$ is greater than the feature size $\epsilon$.
\end{itemize}

Thurston~\cite[page~373]{Thurston82} says that the resulting
\begin{quotation}
\emph{curve actually fills $\CC$, but to make the computer actually fill $\CC$ would
require an amount of computer money (and foolishness) approximating the
``defense'' budget.\footnote{We believe that here by ``$\CC$'' Thurston in fact means $\CP^1$.
Since $\CC$ is not compact it can never be filled by any algorithm,
regardless of funding.}}
\end{quotation}
Indeed, we can see large blank regions in \reffig{Thurston}.

One measure of how well a curve $\phi$ fills $\bdyi \HH^3$ is the \emph{injectivity radius} of $\bdyi \HH^3 - \phi$: 
the spherical radius of the largest round disc with interior disjoint from the curve.
In \reffig{Thurston} we see that the injectivity radius is many times larger than the feature size $\epsilon$.

\begin{remark}
We can estimate $\epsilon$ by the maximal distance between consecutive vertices.
Thus $\epsilon$ in \reffig{Thurston} seems to be smaller than a pixel.
Note that there are some visible line segments near the lower left corner.
These happen at leaves of the search tree that hit maximal depth before hitting minimal feature size.
Thurston seems to have removed this part of the stopping criterion in his more recent image of the Cannon--Thurston map for the figure eight knot~\cite[Figure~2]{Thurston98}.
\end{remark}

Under the Cannon--Thurston map $\Phi$, each parabolic fixed point has countably many preimages: 
one cusp and two countable collections of interleaving thorns (see \reflem{Laminations}\refitm{Interleave} and \reffig{Laminations}).
We take a small interval (in the circle) about a thorn.
The image of this interval, under $\Phi$, is a neighbourhood (with fractal boundary) of an arc of a circle (in $\bdyi \HH^3$).
The blue shaded region in \reffig{CT_fig8_neck} is an approximation of such a neighbourhood.
Clearly, Thurston's approximation $\phi$ has trouble entering these neighbourhoods.

This is due to the \emph{problem of necks} raised by Mumford, Series, and Wright~\cite[page~184, 248--252]{MSW02}.
A \emph{thin neck} is an edge $e$ with endpoints at cusps $c$ and $d$ so that
\begin{itemize}
\item $e$ has length less than $\epsilon$, but
\item the image of the arc $[c,d]^\acw$ under $\Phi$ has large diameter.
\end{itemize}
\reffig{CT_fig8_neck} illustrates the problem of thin necks (for Thurston's algorithm). 
In fact, for every cusp, and for every associated thorn, there will eventually be such a thin neck.
This is why Thurston's algorithm yields curves with relatively large injectivity radius.

\begin{figure}[htbp]
\includegraphics[width = 0.8\textwidth]{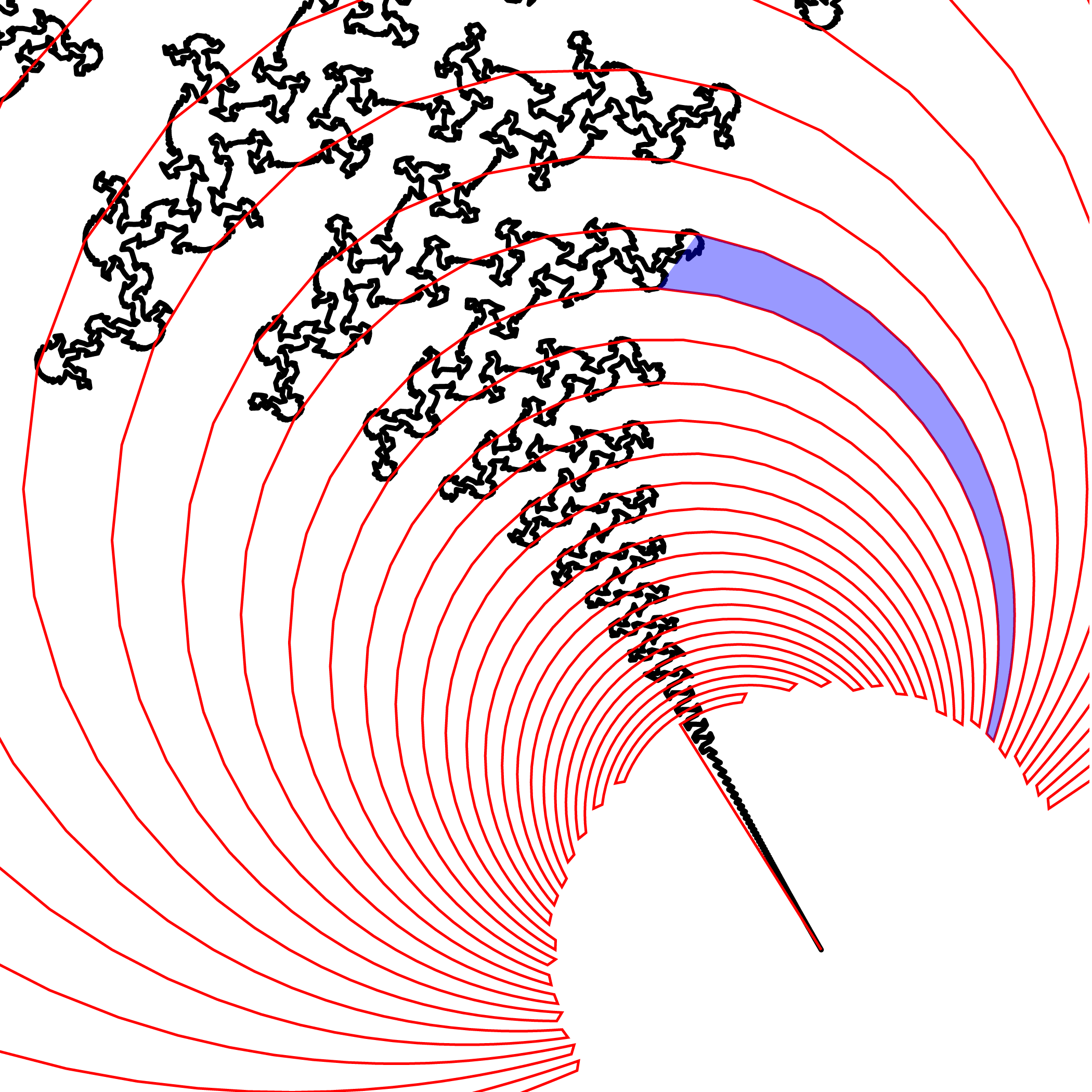}
\caption{A close-up of our recreation of Thurston's algorithm (in black) overlaid with a curve (in red) produced by our new algorithm.
The ratio of feature sizes is about 36.
The thin neck is near the intersection of the black curve and the blue shaded region. 
The ``head'', on the other side of the neck, is approximated by the blue shaded region.}
\label{Fig:CT_fig8_neck}
\end{figure}

\subsection{Our algorithm}
\label{Sec:OurAlgorithm}

As discussed above, we produce a continent instead of a disc.
We choose our continent to ensure that all edges on the boundary of the continent have length less than $\epsilon$.

Our algorithm requires the following inputs:
\begin{itemize}
\item
an initial tetrahedron $t_0$ in $\cover{\calV}$ and
\item 
a feature size $\epsilon$.
\end{itemize}

The algorithm starts with $C = t_0$.
We map the vertices of $t_0$ to $\infty$, $0$, $1$, and $z$, the shape parameter for $t_0$.
(As before, this fixes a developing map.)
We again use the ball model, centred on the same point $p_0$.
Next, for each face $f$ in $\bdy C$ we do the following.
Suppose that $t$ is a tetrahedron in $\cover{\calV}$ adjacent to $f$ and not in $C$.
We replace $C$ by the smallest \emph{convex} continent that contains $C \cup t$ if
\begin{itemize}
\item the length of some edge of $f$ is greater than the feature size $\epsilon$.
\end{itemize}  

We define convexity in~\cite[Definition~5.17]{FSS22}.
We detail the algorithm to produce the convex continent in the proof of~\cite[Proposition~5.4]{FSS22}.

\begin{theorem}
\label{Thm:Terminates}
The continent algorithm terminates.
\end{theorem}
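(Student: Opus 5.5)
We argue by contradiction: we suppose the algorithm never halts and extract a geometric contradiction from the resulting infinite continent. Each step adds at least one tetrahedron, so non-termination yields an infinite strictly increasing sequence of convex continents $C_0 \subset C_1 \subset \cdots$ in $\cover{\calV}$. A tetrahedron $t$ is attached across a face $f$ of $\bdy C_k$ only when some edge of $f$ has spherical length greater than $\epsilon$. The ideal vertices of $C_k$ are cusps, which we view as points of $\Circle$; via $\Psi_\calV$ and \refcor{Yaman} they are placed in $\bdyi \HH^3$ by the fixed developing map. We therefore aim to show that only finitely many edges of $\cover{\calV}$ have length at least $\epsilon$ and can occur on the boundary of some continent $C_k$. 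Once that is established, only finitely many tetrahedra can ever be added, a contradiction.

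The central step is a finiteness statement, which we prove as follows. Suppose infinitely many distinct edges $e_n$ of length at least $\epsilon$ appear on the boundaries of the $C_k$. The corresponding edge rectangles $R(e_n)$ are skeletal rectangles in $\Disc$. Passing to a subsequence, \refprop{LimitOfRectangles} shows that their closures Hausdorff converge. Since the $e_n$ are distinct, the sequence is not eventually constant, so the limit $L$ is a point of $\Circle$, the closure of a leaf, or the closure of a bi-leaf. In every case $L \cap \Circle$ lies in a single decomposition element of \refdef{DecompositionElementsCircle}. The two cusp endpoints of $e_n$ lie in $\closure{R(e_n)} \cap \Circle$, so they accumulate inside one decomposition element. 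Because $\Psi_\calV$ is continuous and collapses each decomposition element to a point, the images of the two endpoints in $\bdyi \HH^3$ converge to a common point. Hence the length of $e_n$ tends to zero, contradicting the bound of $\epsilon$. This shows that only finitely many edges of $\cover{\calV}$ have length at least $\epsilon$, and the argument does not depend on the continents at all.

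It remains to connect this to the growth of the continents. Every expansion step is triggered by one of these finitely many long edges lying on $\bdy C$. Convex hulling may add further tetrahedra, but each hull of a finite set is finite by \cite[Proposition~5.4]{FSS22}. The main obstacle is to rule out the following scenario: the same long edge $e$ stays on the coast forever while the tetrahedra on the other side of its boundary faces keep changing. We would handle this by observing that an edge $e$ of $\cover{\calV}$ has only finitely many incident tetrahedra, since $\calV$ is a finite triangulation and each edge has finite degree. Once all of them have been added, $e$ is interior to $C$ and no longer on the boundary. So each long edge can trigger only finitely many additions, and the total number of additions is finite. To make this bookkeeping valid, we must check that the layering and convexity conventions of \cite[Definition~5.17]{FSS22} guarantee that an edge all of whose incident tetrahedra lie in $C$ is not in $\bdy C$.
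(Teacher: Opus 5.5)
Your proposal is correct, but its key finiteness step takes a different route from the paper's.

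The paper argues entirely in $\HH^3$. Suppose infinitely many distinct edges have endpoints at spherical distance at least $\epsilon$. Pass to a subsequence whose endpoint pairs converge; the limit points are \emph{distinct} because of the length bound. The geodesic edges then converge to the geodesic joining the two limit points, so infinitely many of them meet a fixed small ball. This contradicts local finiteness of $\cover{\calV}$.

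You instead work in $\Disc$. The edge rectangles $R(e_n)$ are distinct, so their Hausdorff limit meets $\Circle$ inside a single decomposition element. (The convergent subsequence itself comes from compactness of $\Disc$, not from \refprop{LimitOfRectangles}, which only classifies the limit.) Continuity of $\Psi_\calV$ then forces the lengths to zero.

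The two routes trade off as follows.
\begin{itemize}
\item The paper's argument is elementary and independent of the main results. It does lean on the straightened edges forming a locally finite family, which the paper attributes to $\calV$ being geometric. For non-geometric $\calV$ one would add a short proper-discontinuity argument.
\item Your argument applies verbatim to non-geometric $\calV$. The cost is that it invokes \refthm{Convergence}, \refcor{Yaman}, and the Hausdorff-limit classification.
\end{itemize}

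Your route also needs one sentence that you only asserted. The developing-map position of a cusp $c$ must agree with the image of $[c]_{S^1}$ under the equivariant homeomorphism. This holds because both are the unique point of $\bdyi \HH^3$ fixed by the parabolic subgroup $\Stab(c) \isom \ZZ^2$.

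Your bookkeeping is more explicit than the paper's, which simply asserts that non-termination yields infinitely many distinct long edges. However, the check you flag about $\bdy C$ is unnecessary. Each trigger adds a tetrahedron that contains the triggering long edge and was not yet in $C$. So finite edge degree directly bounds the number of triggers per long edge.
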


\begin{proof}
Suppose for contradiction that it does not terminate.
Then there is a sequence $(e_n)$ of distinct hyperbolic edges whose projections to $\bdyi \HH^3$ have length greater than or equal to $\epsilon$. 
Since $\bdyi \HH^3$ is compact we may pass to a subsequence, again denoted $(e_n)$, where the projections converge.
Thus for any $\delta$ we may find a point $p$ of $\HH^3$ so that infinitely many of the $e_n$ intersect the $\delta$--ball about $p$.
This contradicts the local finiteness of the (geometric) triangulation $\cover{\calV}$.
\end{proof}

\begin{figure}[htbp]
\includegraphics[width = \textwidth]{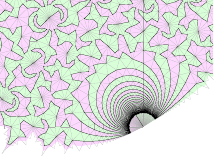}
\caption{The upper (green) and lower (purple) landscapes with common boundary the approximation to the Cannon--Thurston map.}
\label{Fig:two_landscapes}
\end{figure}

\reffig{two_landscapes} shows an approximation to the Cannon--Thurston map as the image of the coast of the continent. 
We also draw the edges of the landscapes on the boundary of the continent.
In particular, there are landscape edges that connect the coast to the large cusp in the lower right of the figure. 
Requiring that these edges be short forces the continent algorithm to explore the pointy ``heads'' atop the thin necks (that Thurston's algorithm cannot see).

\begin{remark}
\label{Rem:DiscVsContinent}
\reffig{CT_fig8_intro} shows two approximations to the Cannon--Thurston map associated to the figure-eight knot complement.
The curve produced by Thurston's algorithm is in black;
it consists of 835,774 arcs and uses a feature size of $0.00265752$.
The curve produced by our algorithm is the common boundary of the green and purple regions;
it consists of 83,576 arcs and uses a feature size of $0.04$.
Note that our curve has one-tenth the arcs and uses a feature size that is (a bit more than) fifteen times larger.
Nonetheless, our curve fills the sphere much more evenly.
\end{remark}

\begin{lemma}
\label{Lem:InjectivityRadius}
Suppose that $\calV$ is geometric.
Suppose that $0 < \epsilon \leq \pi/2$. 
Suppose that $\psi_\calV$ is the approximation produced by the continent method.
Then the complement of $\psi_\calV$ in $\bdyi \HH^3$ has injectivity radius at most $\epsilon$.
\end{lemma}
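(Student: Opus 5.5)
We argue by contradiction, using the stopping criterion of the continent algorithm together with the geometry of the coast. Let $C$ be the final (convex) continent produced by the algorithm. By construction, when it terminates, every face $f$ on $\bdy C$ has all of its edges of spherical length at most $\epsilon$; otherwise the algorithm would have added the adjacent tetrahedron. In particular every edge of each landscape on $\bdy C$ (the upper and lower landscapes, whose common boundary is the coast) has ideal endpoints at spherical distance at most $\epsilon$. The approximation $\psi_\calV$ is the image of the coast: a closed curve in $\bdyi \HH^3$, made of the short arcs (of length at most $\epsilon$) joining consecutive coastal cusps.

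Suppose that a round disc $B \subset \bdyi \HH^3$ of spherical radius $r > \epsilon$ has interior disjoint from $\psi_\calV$. Since $\psi_\calV$ is connected, $B$ lies in one complementary region of $\psi_\calV$. The key step is to show that the union of the projections of the boundary faces of $C$ covers all of $\bdyi \HH^3$. The boundary $\bdy C$ is a sphere made of the upper and lower landscapes, which are discs glued along the coast. Since $\calV$ is geometric, each face is an ideal hyperbolic triangle, and $\bdy C$ is a topological sphere in $\HH^3 \cup \bdyi \HH^3$. Nearest-point (or radial from $p_0$) projection gives a degree-one map from $\bdy C$ to $\bdyi \HH^3$, and so its image is everything. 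I would instead use the simpler map that sends each face to the spherical ideal triangle spanned by its vertices (the ``shadow'' triangle, with sides on the circles through pairs of vertices). These shadows tile a degree-one image, because adjacent faces share edges, and hence they cover the sphere. Thus the centre of $B$ lies in the shadow of some boundary face $f$.

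Every edge of $f$ has spherical length at most $\epsilon$, and $\epsilon \le \pi/2$. So the three vertices of $f$ lie in a spherical cap of small radius, and the shadow triangle of $f$ lies within distance $\epsilon$ of each of its vertices. Here is where I would use $\epsilon \le \pi/2$: it ensures that the short geodesic arc and the circle arc between nearby points behave comparably, so the triangle has diameter at most about $\epsilon$. The vertices of $f$ are cusps of $C$, and every cusp of $C$ lies on the coast, since the continent is a ball minus coastal points. Therefore the centre of $B$ is within $\epsilon$ of a point of $\psi_\calV$, which contradicts $r > \epsilon$.

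The main obstacle is the covering claim together with the diameter bound for the shadow of a face. One must make sure that the shadows of faces whose edges are short (in endpoint distance) really are small. A face might project to the ``large'' complementary cap of its three vertices rather than the small one. To handle this, I would argue via continuity of the degree-one projection: the choice of small caps is consistent across adjacent faces, and the coast separates the sphere into the two regions shadowed by the upper and lower landscapes. This is also where the hypothesis that $\calV$ is geometric enters.
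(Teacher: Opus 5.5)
\paragraph{Gap: $p_0$ must lie in the interior of $C$.}
Your outline matches the paper's. You radially project $\bdy C$ from $p_0$, observe that the projected faces cover $\bdyi \HH^3$, show that each projected face lies in the closed $\epsilon$--disc about each of its vertices, and note that those vertices lie on $\psi_\calV$. The problem is that the step you flag as the main obstacle is the real content of the proof, and you do not supply it.

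Both your degree-one assertion and the identification of each face's image with the \emph{small} spherical triangle depend on one fact: $p_0$ lies in the interior of $C$, and in particular on no face of $\bdy C$. If $p_0$ were outside $C$, the radial projection of $\bdy C$ would have degree zero. If $p_0$ lay on a boundary face, that face would not project to a small triangle.

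Your proposed remedies do not address this. That adjacent faces share edges says nothing about degree. The ``shadow triangle'' is not defined until you say which of the two regions cut out by the three arcs you mean. The claim that the coast separates the sphere into the shadows of the two landscapes presupposes that the projected coast is embedded and that the landscape shadows have disjoint interiors. Nothing guarantees this, since there is no reason for $C$ to be star-shaped about $p_0$. In fact the paper leaves embeddedness of these approximations as an open question.

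\paragraph{The missing idea.}
The edge $e_0$ joins $0$ to $\infty$, which are at spherical distance $\pi > \epsilon$. At termination every edge of every face of $\bdy C$ has length at most $\epsilon$. So $e_0 \subset C$ is not an edge of $\bdy C$, hence $p_0 \in e_0$ lies in the interior of $C$ and misses every face of $\bdy C$.

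Since $\calV$ is geometric, positivity of the shapes makes $C$ an embedded three-ball in $\closure{\HH}^3$. So radial projection of the sphere $\bdy C$ from the interior point $p_0$ has degree one, and the projected faces cover $\bdyi \HH^3$.

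A face missing $p_0$ projects into a hemisphere. Its image is the spherical triangle whose sides are the short great-circle arcs between its vertices, each of length at most $\epsilon$. Because $\epsilon \le \pi/2$, the closed $\epsilon$--disc about a vertex is convex and contains all three vertices, hence the whole triangle. This convexity, not a comparison of geodesic and circular arcs, is where $\epsilon \le \pi/2$ is used.
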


\begin{proof}
Since $\calV$ is geometric the ideal hyperbolic tetrahedra of $\cover{\calV}$ have shapes with positive imaginary part.
Set $\closure{\HH}^3 = \HH^3 \cup \bdyi \HH^3$;
this is a closed three-ball in $\RR^3$. 
Positivity of the shapes ensures that $C$, the continent, is a three-ball embedded in $\closure{\HH}^3$. 
(The embedding is almost proper, in the sense that the preimage of $\bdyi \HH^3$ is only a finite subset of $\bdy C$:
the ideal vertices of the tetrahedra in $C$.)
Thus $\bdy C$, the boundary of the continent, is a topological two-sphere embedded in $\closure{\HH}^3$. 

Recall that $p_0$, our basepoint, lies in the edge $e_0$ connecting $0$ to $\infty$. 
The spherical distance between $0$ and $\infty$ is $\pi$.
Since $\epsilon \leq \pi/2$ all triangles meeting $e_0$ lie in the interior of $C$.
Thus the basepoint $p_0$ also lies in the interior of $C$.
It follows that the outward radial projection of $\bdy C$ to $\bdyi \HH^3$ is a degree-one map.
In particular, every point of $\bdyi \HH^3$ lies in the image of some ideal hyperbolic triangle of $\bdy C$.
Also, the approximate Cannon--Thurston map $\psi_\calV$ meets every ideal vertex of $C$.

Now, all faces of $\bdy C$ are disjoint from $p_0$.
Fix one such.  
Its outward radial projection is a spherical triangle $t$ with edges of length at most $\epsilon$.
Since $\epsilon \leq \pi/2$, we find that $t$ is contained in some hemisphere.
We deduce that the (closed) $\epsilon$--disc about any one vertex of $t$ contains $t$.
This implies that the vertices of the approximation $\psi_\calV$ are $\epsilon$--dense in $\bdyi \HH^3$, as desired.
\end{proof}

\subsection{Non-layered example}

\begin{figure}[htbp]
\vspace{-10 pt}
\subfloat[A layered example. ]{
\includegraphics[width = 0.46 \textwidth]{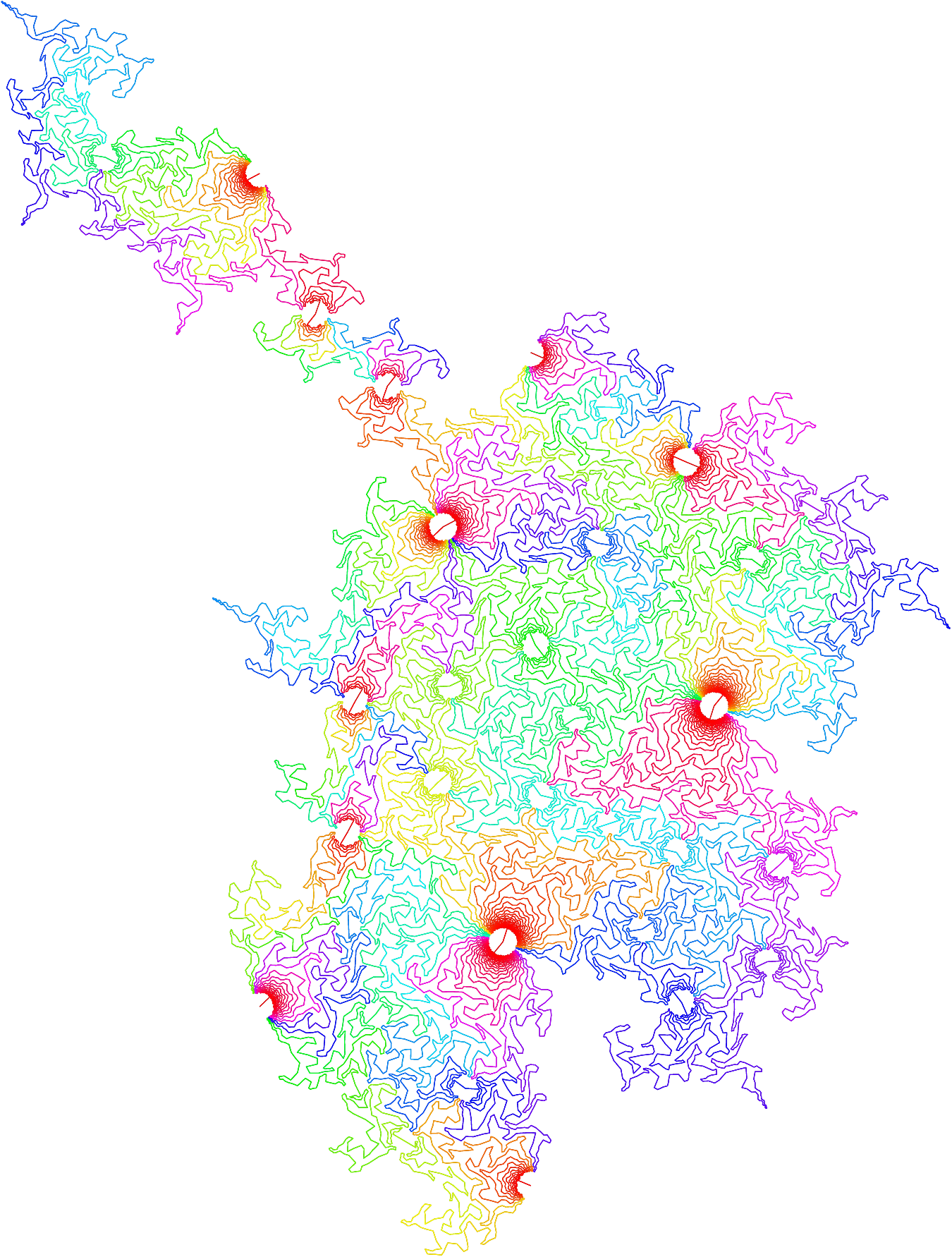}
\label{Fig:CT_example_fibred}
}
\subfloat[A non-layered example.]{
\includegraphics[width = 0.46 \textwidth]{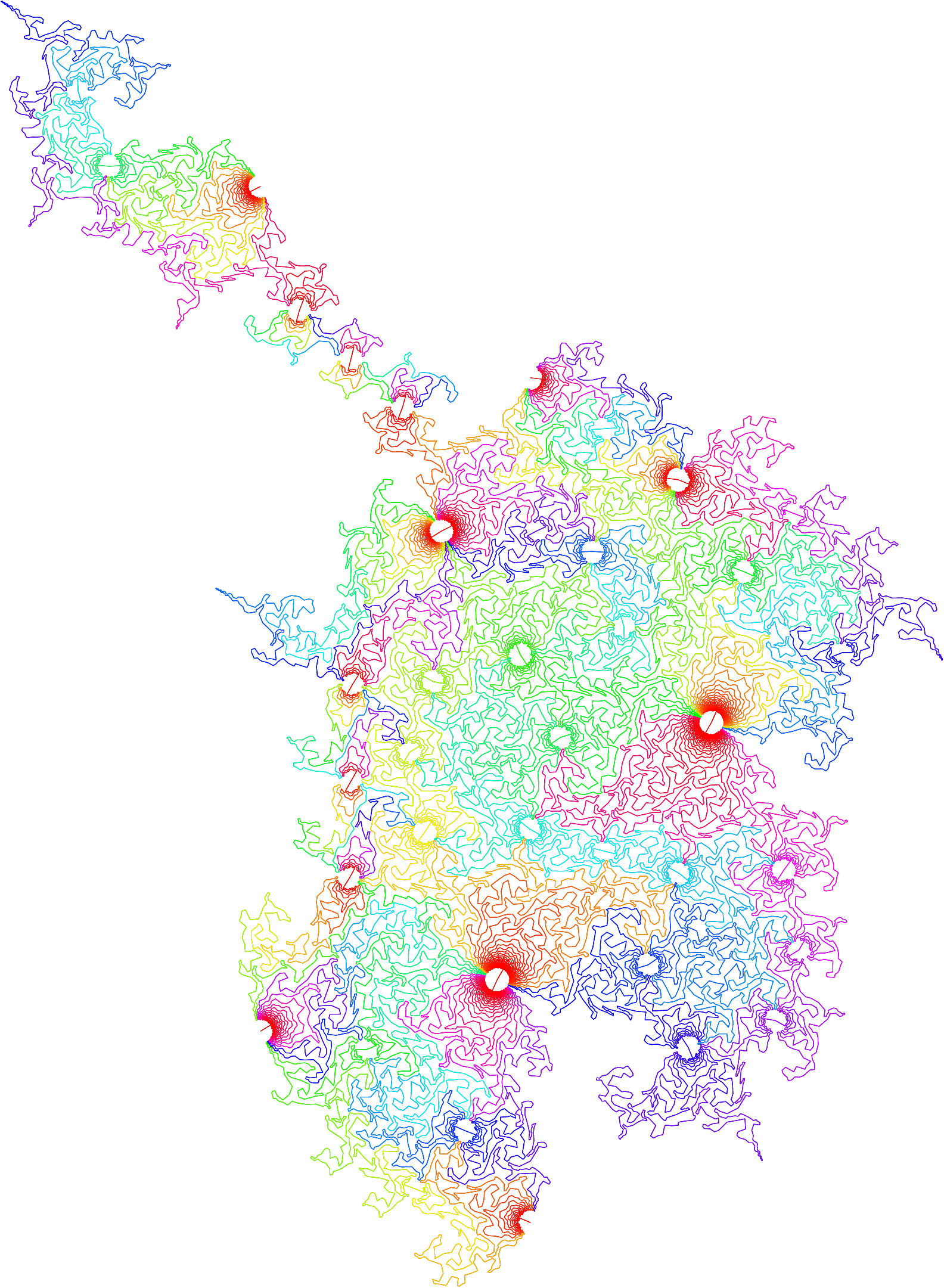}
\label{Fig:CT_example_non_fibred}
}
\caption{Fundamental domains for the approximations to the Cannon--Thurston maps in both a layered example (\usebox{\fibredVeer}) and in a non-layered example (\usebox{\NonfibredVeer}). We cycle colours around the spectrum as we draw each curve in an attempt to illustrate the proximity of points along the curve.}
\label{Fig:CT_examples}
\end{figure}

\reffig{CT_examples} shows repeating units for approximations to the Cannon--Thurston maps for two veering triangulations.
In the notation of the veering census~\cite{GSS19} these are \usebox{\fibredVeer}  and \usebox{\NonfibredVeer}.
The latter is a triangulation of the manifold \usebox{\NonfibredSnappy} from the SnapPy census~\cite{snappy}, which is non-fibred~\cite[Section~4]{HRST11} (and thus the veering triangulation is not layered). 
As mentioned in \refexa{s227}, \reffig{CT_example_non_fibred} could not have been generated by previous techniques.

On the other hand, \usebox{\fibredVeer} is a layered triangulation of the (therefore) fibred manifold \usebox{\fibredSnappy}.
The maps shown in \reffig{CT_examples} illustrate that the difference between layered and non-layered Cannon--Thurston maps is somewhat subtle.
Indeed, the triangulations \usebox{\NonfibredVeer} and \usebox{\fibredVeer} are related by a single application of ``veering Dehn surgery''.

\subsection{Questions}

Calegari and Loukidou~\cite[page~26]{CalegariLoukidou26} point out that, in the closed case, CaTherine wheels are approximated by embeddings.
The same argument applies to our surjection $\Circle \to \Sphere$.
However, this does not answer the following.

\begin{question}
In all of the examples we have produced (using the algorithm of \refsec{OurAlgorithm}), our approximate Cannon--Thurston maps are embedded. 
In particular, this happens even when the veering triangulation is not geometric. 
Are these approximate maps embeddings?
\end{question}


At feature size $\epsilon$, by \reflem{InjectivityRadius} the continent algorithm (\refsec{OurAlgorithm}) has injectivity radius at most $\epsilon$.
In \refsec{ThurstonAlgorithm}, for the example in \reffig{Thurston} we visually estimated the injectivity radius of Thurston's algorithm.

\begin{question}
Give useful bounds for the injectivity radius of Thurston's algorithm at feature size $\epsilon$.
\end{question}

In \refrem{DiscVsContinent} we observe that in at least one example, our algorithm does about a tenth the work (measured in number of arcs) at a larger feature size and nonetheless produces an approximation with far smaller injectivity radius.
We suspect that this holds more generally.

\begin{question}
\label{Que:ThurstonWork}
Estimate the amount of work done by Thurston's algorithm at feature size $\epsilon$. 
(And so justify the quote given in \refsec{ThurstonAlgorithm}.)
\end{question}

\begin{question}
\label{Que:OurWork}
Estimate the amount of work done by the continent algorithm at feature size~$\epsilon$.
\end{question}

Instead of using injectivity radius to measure the quality of an approximation to a Cannon--Thurston map, 
Jeremy Kahn suggests using $C^0$ distance on the space of maps from the circle to the sphere.
Since $C^0$ distance is defined via a supremum, giving lower bounds is straightforward:
one looks for arcs of the approximation which stray far from the corresponding ``true'' position.
In the case of Thurston's approximation, this amounts to finding thin necks; see \reffig{CT_fig8_neck}.

Providing upper bounds is much more difficult, as it requires understanding the shape of the discs bounded by \emph{lightning curves}~\cites{DicksWright12}{Gueritaud16}.

\begin{question}
Estimate how close the disc method and continent method come (as measured by the work done) to the true Cannon--Thurston map in $C^0$.
\end{question}

\section{Curves, continua, discs, and spheres}
\label{App:Topology}

In this appendix we gather the necessary point-set topology needed for our constructions of the veering two-sphere~(\refsec{Sphere}) and veering disc~(\refsec{Disc}).
We rely on Wilder's book~\cite{Wilder49}, and the somewhat more recent text of Steen and Seebach~\cite{SteenSeebach95}, as our standard references for topological matters.

\subsection{Jordan curves}

If $\sigma \from S^1 \to \RR^2$ is an embedding,
then we call the image $\Sigma = \sigma(S^1)$ a \emph{Jordan curve}~\cite[page~107]{Osgood03}.
If $\sigma \from \RR^1 \to \RR^2$ is a proper embedding
then we call the image $\Sigma = \sigma(\RR^1)$ a \emph{Jordan line}.

We will need a version of the Jordan curve theorem~\cite[page~593]{Jordan87} called the \emph{Schoenflies extension theorem}~\cite[page~324]{Schoenflies06}.  
This statement follows from~\cite[page~94]{Wilder49}.  


\begin{theorem}
\label{Thm:Schoenflies}
Suppose that $\Sigma \subset \RR^2$ is a Jordan curve (Jordan line).  
Then there is an orientation-preserving homeomorphism of $\RR^2$ taking $\Sigma$ to the unit circle $S^1$ (to the $x$--axis).
\qed
\end{theorem}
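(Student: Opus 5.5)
This is the Schoenflies extension theorem, which the paper quotes from Wilder (marked \qed). So the plan is to derive it from the classical Schoenflies theorem rather than prove it from scratch.

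\textbf{Jordan curves.} I will invoke the classical Schoenflies theorem in the form found in Wilder. It says that a Jordan curve $\Sigma\subset\RR^2$ bounds a closed disc $D$. Moreover, the embedding $\sigma$, composed with the inverse of a homeomorphism $S^1\to\Sigma$, extends to a homeomorphism from the closed unit disc onto $D$.

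To handle the outside, I will pass to the one-point compactification $S^2=\RR^2\cup\{\infty\}$. There $\Sigma$ also bounds the complementary closed disc $D'=S^2-\operatorname{int}D$. Applying Schoenflies again (for instance after a Möbius inversion that moves $\infty$ into the interior of $D$), $D'$ is homeomorphic to the outer closed disc in $S^2$. I can arrange this homeomorphism to send $\infty$ to $\infty$ and to agree with the chosen parametrisation along $\Sigma$: on the boundary circle the two homeomorphisms differ only by a circle homeomorphism, which extends radially (Alexander trick) over a disc.

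Gluing the inner and outer homeomorphisms along $\Sigma$ gives a homeomorphism of $S^2$ fixing $\infty$. Restricting it to $\RR^2$ gives the required homeomorphism taking $\Sigma$ to $S^1$. If this homeomorphism reverses orientation, I will post-compose with the reflection $(x,y)\mapsto(x,-y)$, which preserves $S^1$.

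\textbf{Jordan lines.} For a properly embedded line $\Sigma$, properness means that both ends of $\sigma$ converge to $\infty$ in $S^2$. Hence $\Sigma\cup\{\infty\}$ is a Jordan curve in $S^2$. I apply the sphere version of the previous step, with a homeomorphism sending $\infty$ to a point of the target circle. Then I compose with a Möbius map taking that circle to $\RR\cup\{\infty\}$ and the chosen point to $\infty$. The result fixes $\infty$ and maps $\Sigma$ onto the $x$--axis; restricting to $\RR^2$ and fixing orientation by a reflection in the $x$--axis finishes the proof.

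\textbf{Main obstacle.} The only delicate point is matching the boundary parametrisations of the two disc homeomorphisms along $\Sigma$. The radial extension of a circle homeomorphism handles this. Since the statement is classical, in the paper a citation suffices.
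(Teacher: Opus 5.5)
Your proposal is correct and matches the paper, which gives no argument beyond citing Wilder (page~94) for the classical Schoenflies theorem. You simply write out the routine deduction the paper leaves implicit: inner and outer discs glued via the radial extension of a circle homeomorphism, orientation fixed by a reflection, and the line case reduced to the curve case in the one-point compactification followed by a M\"obius map (the inversion in the outer-disc step should be centred at a point of $\operatorname{int} D$, which is what you use).
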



\subsection{Peano continua}

\begin{definition}
\label{Def:PeanoSpace}
A topological space $P$ is a \emph{Peano space}~\cite[page~76]{Wilder49} if it is non-degenerate~\cite[page~2]{Wilder49}, perfectly separable~\cite[page~70]{Wilder49}, normal~\cite[page~49]{Wilder49}, locally compact~\cite[page~29]{Wilder49}, connected~\cite[page~7]{Wilder49}, and locally connected~\cite[page~40]{Wilder49}.
\end{definition}

Note that many authors use the terminology \emph{second countable}~\cite[page~7]{SteenSeebach95} instead of ``perfectly separable'';
we will do the same.  
We also note the standard result that if a space $Q$ is Hausdorff~\cite[page~69]{Wilder49} and compact~\cite[page~34]{Wilder49}, then $Q$ is normal~\cite[Theorem~1.27, page~74]{Wilder49}.

\begin{definition}
\label{Def:PeanoContinuum}
A Peano space $P$ is a \emph{Peano continuum}~\cite[page~76]{Wilder49} if it is compact. 
\end{definition}

\subsection{Spanning arcs of the disc}

Suppose that $D$ is a topological space and $C$ is a subspace.
Suppose that $(I, \bdy I)$ is the unit interval.
Suppose that $\rho \from (I, \bdy I) \to (D, C)$ is an embedding of pairs with $\rho^{-1}(C) = \bdy I$.
Then we will call $\rho$ a \emph{spanning arc} for the pair $(D, C)$~\cite[pages~27 and~88]{Wilder49}.

We use Zippin's characterisation of the disc~\cite{Zippin33}.
As usual we follow Wilder's terminology;
see~\cite[page~92]{Wilder49}.

\begin{theorem}
\label{Thm:Zippin}
Suppose that $D$ is a Peano continuum containing a circle $C$.
Suppose that we have the following.
\begin{enumerate}
\item
\label{Itm:Exists}
$D$ contains an arc that spans $C$.
\item
\label{Itm:All}
Every arc of $D$ that spans $C$ separates $D$.
\item
\label{Itm:None}
No closed proper subset of an arc spanning $C$ separates $D$.
\end{enumerate}
Then there is a homeomorphism of pairs $(D, C) \homeo (D^2, S^1)$. \qed
\end{theorem}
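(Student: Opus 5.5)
The plan is Zippin's classical route: cut $D$ repeatedly by spanning arcs into pieces of small diameter, build the same combinatorial decomposition of the standard disc $D^2$, and take the limit of the matching homeomorphisms of $1$-skeleta.

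\emph{Splitting by one arc.} Let $\rho$ be an arc spanning $C$ with endpoints $x$ and $y$, and let $\delta$ and $\epsilon$ be the two components of $C - \{x,y\}$.
\begin{enumerate}
\item By hypothesis~\refitm{All}, $D - \rho$ is disconnected.
\item Hypothesis~\refitm{None} says no closed proper subset of $\rho$ separates. So every point of $\rho$ is a limit point of every component of $D - \rho$.
\item Hence $D - \rho$ has exactly two components, and their closures meet $C$ in $\closure{\delta}$ and $\closure{\epsilon}$ respectively. Two points of $\delta$ lie in the same component because $\delta$ is connected. If $\delta$ and $\epsilon$ lay in the same component, a small subarc of $\rho$ would already separate. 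A third component would have to meet $C$, which is impossible.
\item Each closure $D_1 = \closure{H}$ is a Peano continuum, using local connectedness along $\rho$. Its frontier $C_1 = \rho \cup \closure{\delta}$ is a circle.
\item $(D_1, C_1)$ again satisfies \refitm{Exists}--\refitm{None}. An arc spanning $C_1$ in $D_1$ either is part of an arc spanning $C$ in $D$, or can be concatenated with a subarc of $\rho$ to give one. The separation properties then transfer.
\end{enumerate}
So $D$ becomes a $\theta$-curve decomposition, and the same move can be iterated inside each piece.

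\emph{Small mesh.} For each $n$ I want finitely many spanning arcs cutting $D$ into cells $(D_i, C_i)$ of diameter less than $1/n$, each refining the previous stage.
\begin{enumerate}
\item By the Peano arc theorem, $D$ is arcwise and locally arcwise connected.
\item Fix a cell of large diameter. Choose a finite $1/n$-dense set of points on its boundary circle. Join them by arcs through the interior of the cell that are transverse in the sense of meeting only at endpoints, trimming overlaps.
\item If some resulting piece still has large diameter, it contains an interior point $p$ far from its boundary circle. In that case add an arc spanning through a small connected neighbourhood of $p$.
\item Repeat, using compactness of $D$ to show the process terminates at each stage.
\end{enumerate}
This is the step I expect to be the main obstacle. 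The danger is cells that remain ``fat'' because the spanning arcs keep avoiding some region. To handle it I would argue by contradiction. Take a sequence of cells at successive stages that do not shrink, pass to a Hausdorff limit, and show the limit is a continuum containing an interior point. One then finds a small spanning arc through the interior of the limit continuum. By hypothesis~\refitm{All} together with local connectedness, that arc eventually splits the cells of the sequence, contradicting the choice of the sequence.

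\emph{The homeomorphism.} Record each stage as a finite planar graph $G_n = C \cup (\text{spanning arcs})$ whose faces are the cells. Build the same abstract graphs $G'_n$ inside $D^2$ using straight chords in a nested way, so that the faces of $G'_n$ have diameter tending to $0$. Choose homeomorphisms $G_n \to G'_n$ extending one another, parametrising each new arc compatibly with its endpoints. Define $h \from D \to D^2$ as follows.
\begin{enumerate}
\item For $z \in D$, the cells containing $z$ at each stage form nested sequences of connected unions of at most finitely many adjacent cells.
\item Let $h(z)$ be the intersection of the corresponding nested sets of faces in $D^2$. This is a single point because the mesh in $D^2$ tends to $0$.
\end{enumerate}
Continuity of $h$ follows from the mesh tending to zero on the $D$ side. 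Bijectivity follows by symmetry, since the cells in $D$ also have mesh tending to zero. As $D$ is compact and $D^2$ is Hausdorff, $h$ is a homeomorphism, and it carries $C$ onto $S^1$.
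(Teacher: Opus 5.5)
The paper does not prove this statement. It is quoted without proof from Zippin's 1933 paper, in Wilder's formulation, and the paper's only work is checking hypotheses (1)--(3) for the veering disc in the proof of Theorem~\ref{Thm:Disc}. So you are reproving a classical and delicate theorem, and your outline has a genuine gap at its core: the small-mesh step.

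\begin{itemize}
\item Your refinement move cannot make cells small. An arc ``spanning through a small neighbourhood of $p$'' must run out to the boundary circle of the piece, which is far from $p$. So it creates new pieces with long boundary arcs, and there is no decreasing quantity behind ``compactness shows the process terminates''.
\item The contradiction argument also fails. Non-shrinking cells need not converge to a set with interior: already in $D^2$, strips between nearly parallel chords converge to a segment. Long thin cells are exactly the failure mode to rule out, and an arc through an interior point does nothing against them.
\item What is actually needed is, first, spanning arcs of a cell joining two prescribed disjoint arcs of its boundary circle, with control on their size. Your step of joining a $1/n$-dense set of boundary points by arcs through the interior already presupposes this.
\item Second, you need a uniform statement that a cell with small boundary circle is itself small. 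One route is to show that every point of $D$ lies on a spanning arc, so that by (3) $D$ has no cut points, and then argue by compactness.
\item Hypothesis (1) gives only a single spanning arc with uncontrolled endpoints. So all of this must be extracted from (2) and (3), and the proposal does not do so.
\item Even a limit continuum with interior would not help. A spanning arc of the limit is not a spanning arc of the approximating cells, and your procedure never forces such an arc to be added. So there is nothing to contradict.
\end{itemize}

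There are two further gaps.

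\begin{itemize}
\item In the final step the pattern of cuts is dictated by $D$. You assert without argument that it can be realised by straight chords in $D^2$ with mesh tending to $0$. Nothing shows that a face of $G'_n$ is small when the corresponding cell of $G_n$ is. The usual cure is to refine alternately on the two sides and transport new arcs in both directions. But carrying a chord of $D^2$ over to $D$ needs the same prescribed-endpoints lemma.
\item In the splitting step, the reasons you give for ``exactly two components'' cannot work. Item~2 of that step is equivalent to (3) for $\rho$ itself. It already implies that $D - F$ is connected for every proper closed $F \subset \rho$. So no subarc of $\rho$ separates $D$, and nothing yet forces an extra component to meet $C$. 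The contradiction has to come from (2) applied to other spanning arcs, for instance ones crossing $\rho$, and you would first have to construct these.
\end{itemize}
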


\subsection{Quotients of the two-sphere}

We use Moore's theorem~\cite[Theorem~25]{Moore25}.  
We follow~\cite[Theorem~1.1]{Timorin10};
see also~\cite[Theorem~10.18]{Calegari07}.

\begin{definition}
\label{Def:ClosedEquiv}
An equivalence relation on a topological space $X$ is \emph{closed} if it is closed as a subset of $X \times X$.
\end{definition}

\begin{theorem}
\label{Thm:Moore}
Suppose that $E$ is a closed equivalence relation on $S^2$ with at least two classes.  
Suppose that, for all $x$, the class $[x]_E$ is closed, connected, and non-separating.  
Then the quotient $S^2/E$ is homeomorphic to $S^2$. \qed
\end{theorem}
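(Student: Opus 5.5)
The plan is to identify $S^2/E$ by a topological characterisation of the two-sphere. Write $\pi \from S^2 \to S^2/E$ for the quotient map. Among the characterisations of $S^2$ I would use the Kline sphere characterisation, as proved by Bing: a non-degenerate, locally connected, compact metric continuum that is separated by every simple closed curve but by no pair of points is homeomorphic to $S^2$. The argument then has three steps: show that $S^2/E$ is a Peano continuum, show that no pair of points separates it, and show that every simple closed curve in it separates. I will use Čech cohomology throughout.

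\emph{Step one.} Since $E$ is closed in $S^2 \times S^2$ and $S^2$ is compact Hausdorff, the saturation of any closed set $F$ is $p_2\big(E \cap (F \times S^2)\big)$. This is closed, so $\pi$ is a closed map and the decomposition is upper semicontinuous. Standard arguments then show that $S^2/E$ is compact, Hausdorff and second countable, hence metrisable by Urysohn. It is connected and locally connected because it is a continuous image of $S^2$; by Hahn--Mazurkiewicz it is a Peano continuum. It is non-degenerate because $E$ has at least two classes. I also record that $\pi$ is \emph{monotone}: every fibre is connected. For a closed monotone map, the preimage of a connected set is connected.

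\emph{Step two: no pair of points separates.} Let $p \neq q$ be points of $S^2/E$. Their preimages $A = [x]_E$ and $B = [y]_E$ are disjoint continua, and neither separates $S^2$. Janiszewski's theorem for $S^2$ says that if two compacta each fail to separate two points and their intersection is connected (here it is empty, which is allowed in the $S^2$ version), then their union also fails to separate those points. Hence $S^2 - (A \cup B)$ is connected, and its image $(S^2/E) - \{p, q\}$ is connected. The same argument with a single class shows that no point separates $S^2/E$.

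\emph{Step three: every simple closed curve $J \subset S^2/E$ separates.} I expect this to be the main obstacle, because curves in the quotient do not lift. I would argue cohomologically. Each fibre $[x]_E$ is a non-separating continuum in $S^2$. By Alexander duality, $\check H^1([x]_E) \isom \tilde H_0\big(S^2 - [x]_E\big) = 0$, so every fibre is acyclic in reduced Čech cohomology. Now restrict $\pi$ to the saturated compactum $K = \pi^{-1}(J)$. This is a closed surjection onto $J$ with acyclic fibres, so the Vietoris--Begle theorem gives $\check H^1(K) \isom \check H^1(J) \isom \ZZ$. Alexander duality in $S^2$ then shows that $S^2 - K$ has exactly two components, say $U_1$ and $U_2$. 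Since $K$ is saturated, each fibre meeting $S^2 - K$ is connected and lies in a single $U_i$, so each $U_i$ is saturated. Because $\pi$ is a closed map, the images $\pi(U_1)$ and $\pi(U_2)$ are disjoint, non-empty, open subsets of $(S^2/E) - J$ whose union is all of $(S^2/E) - J$. Therefore $J$ separates $S^2/E$.

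With all three steps in hand, the Kline--Bing characterisation applies and gives $S^2/E \homeo S^2$. The points that need care are the upper semicontinuity in Step one and the use of Čech rather than singular cohomology in Step three, since the fibres may be wild continua.
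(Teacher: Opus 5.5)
The paper does not prove this statement. It quotes it, with a \qed, from Moore~\cite[Theorem~25]{Moore25}, in the formulation of Timorin~\cite[Theorem~1.1]{Timorin10} (see also~\cite[Theorem~10.18]{Calegari07}). Your argument is therefore an independent route rather than a reconstruction, and as far as I can check it is correct.

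You reduce to Bing's solution of the Kline sphere problem and verify its two separation hypotheses with Čech cohomology.
\begin{itemize}
\item Alexander duality turns ``connected and non-separating'' into the vanishing of reduced $\check{H}^0$ and $\check{H}^1$ of each fibre. That is exactly what Vietoris--Begle needs to give $\check{H}^1(\pi^{-1}(J)) \isom \check{H}^1(J) \isom \ZZ$.
\item Additivity of $\check{H}^1$ over the disjoint union $A \sqcup B$ justifies your Janiszewski step for pairs of points; the empty-intersection case is indeed fine on $S^2$.
\end{itemize}

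Two small remarks:
\begin{itemize}
\item You do not need full acyclicity of the fibres, only degrees $0$ and $1$. Degree $2$ also vanishes anyway, because the fibres are proper subsets of $S^2$.
\item Openness of $\pi(U_i)$ follows because $U_i$ is a saturated open set and $\pi$ is a quotient map; closedness of $\pi$ is not what is needed there.
\end{itemize}

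Your route gives a short proof that handles wild fibres without trouble, and it makes transparent why exactly these hypotheses appear. The cost is that all the genuine planar topology is outsourced to Bing's characterisation, a deep theorem in its own right. So this is a reduction to another classical characterisation of $S^2$, not a more elementary proof. The paper, needing only the statement, simply cites it.
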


\renewcommand{\UrlFont}{\tiny\ttfamily}
\renewcommand\hrefdefaultfont{\tiny\ttfamily}
\bibliographystyle{plainurl}
\bibliography{veering_to_convergence}

\begin{thebibliography}{10}

\bibitem{Agol11}
Ian Agol.
\newblock Ideal triangulations of pseudo-{A}nosov mapping tori.
\newblock In {\em Topology and geometry in dimension three}, volume 560 of {\em
  Contemp. Math.}, pages 1--17. Amer. Math. Soc., Providence, RI, 2011.
\newblock \href {https://doi.org/10.1090/conm/560/11087}
  {\path{doi:10.1090/conm/560/11087}}.

\bibitem{AlperinDicksPorti99}
Roger~C. Alperin, Warren Dicks, and Joan Porti.
\newblock The boundary of the {G}ieseking tree in hyperbolic three-space.
\newblock {\em Topology Appl.}, 93(3):219--259, 1999.
\newblock \href {https://doi.org/10.1016/S0166-8641(97)00270-8}
  {\path{doi:10.1016/S0166-8641(97)00270-8}}.

\bibitem{BachmanGoernerSchleimerSegerman22}
David Bachman, Matthias Goerner, Saul Schleimer, and Henry Segerman.
\newblock Cohomology fractals, {C}annon-{T}hurston maps, and the geodesic flow.
\newblock {\em Exp. Math.}, 31(4):1047--1085, 2022.
\newblock \href {https://doi.org/10.1080/10586458.2021.1994059}
  {\path{doi:10.1080/10586458.2021.1994059}}.

\bibitem{BakerRiley13}
O.~Baker and T.~R. Riley.
\newblock Cannon-{T}hurston maps do not always exist.
\newblock {\em Forum Math. Sigma}, 1:Paper No. e3, 11, 2013.
\newblock \href {https://doi.org/10.1017/fms.2013.4}
  {\path{doi:10.1017/fms.2013.4}}.

\bibitem{BarthelmeMann26}
Thomas Barthelmé and Kathryn Mann.
\newblock Pseudo-{A}nosov flows: a plane approach, 2026.
\newblock \href {https://arxiv.org/abs/2509.15375} {\path{arXiv:2509.15375}}.

\bibitem{BeardonMaskit74}
Alan~F. Beardon and Bernard Maskit.
\newblock Limit points of {K}leinian groups and finite sided fundamental
  polyhedra.
\newblock {\em Acta Math.}, 132:1--12, 1974.
\newblock \href {https://doi.org/10.1007/BF02392106}
  {\path{doi:10.1007/BF02392106}}.

\bibitem{BieriEckmann78}
Robert Bieri and Beno Eckmann.
\newblock Relative homology and {P}oincar\'e{} duality for group pairs.
\newblock {\em J. Pure Appl. Algebra}, 13(3):277--319, 1978.
\newblock \href {https://doi.org/10.1016/0022-4049(78)90012-9}
  {\path{doi:10.1016/0022-4049(78)90012-9}}.

\bibitem{Bonatti26}
Christian Bonatti.
\newblock Action on the circle at infinity of foliations of {$\Bbb R^2$}.
\newblock {\em Enseign. Math.}, 72(1-2):85--146, 2026.
\newblock \href {https://doi.org/10.4171/lem/1086}
  {\path{doi:10.4171/lem/1086}}.

\bibitem{Bowditch93}
B.~H. Bowditch.
\newblock Geometrical finiteness for hyperbolic groups.
\newblock {\em J. Funct. Anal.}, 113(2):245--317, 1993.
\newblock \href {https://doi.org/10.1006/jfan.1993.1052}
  {\path{doi:10.1006/jfan.1993.1052}}.

\bibitem{Bowditch99}
B.~H. Bowditch.
\newblock Convergence groups and configuration spaces.
\newblock In {\em Geometric group theory down under ({C}anberra, 1996)}, pages
  23--54. de Gruyter, Berlin, 1999.

\bibitem{Bowditch98}
Brian~H. Bowditch.
\newblock A topological characterisation of hyperbolic groups.
\newblock {\em J. Amer. Math. Soc.}, 11(3):643--667, 1998.
\newblock \href {https://doi.org/10.1090/S0894-0347-98-00264-1}
  {\path{doi:10.1090/S0894-0347-98-00264-1}}.

\bibitem{Bowditch07}
Brian~H. Bowditch.
\newblock The {C}annon-{T}hurston map for punctured-surface groups.
\newblock {\em Math. Z.}, 255(1):35--76, 2007.
\newblock \href {https://doi.org/10.1007/s00209-006-0012-4}
  {\path{doi:10.1007/s00209-006-0012-4}}.

\bibitem{Buckminster26}
Ellis Buckminster.
\newblock {C}annon--{T}hurston maps for {A}nosov foliations, 2026.
\newblock \href {https://arxiv.org/abs/2604.21201} {\path{arXiv:2604.21201}}.

\bibitem{regina}
Benjamin~A. Burton, Ryan Budney, William Pettersson, et~al.
\newblock Regina: Software for low-dimensional topology, 1999--2026.
\newblock \url{http://regina-normal.github.io/}.

\bibitem{Calegari07}
Danny Calegari.
\newblock {\em Foliations and the geometry of 3-manifolds}.
\newblock Oxford Mathematical Monographs. Oxford University Press, Oxford,
  2007.

\bibitem{CalegariDunfield03}
Danny Calegari and Nathan~M. Dunfield.
\newblock Laminations and groups of homeomorphisms of the circle.
\newblock {\em Invent. Math.}, 152(1):149--204, 2003.
\newblock \href {https://doi.org/10.1007/s00222-002-0271-6}
  {\path{doi:10.1007/s00222-002-0271-6}}.

\bibitem{CalegariLoukidou26}
Danny Calegari and Ino Loukidou.
\newblock Ca{T}herine wheels, 2026.
\newblock \href {https://arxiv.org/abs/2604.24619} {\path{arXiv:2604.24619}}.

\bibitem{CannonThurston07}
James~W. Cannon and William~P. Thurston.
\newblock Group invariant {P}eano curves.
\newblock {\em Geom. Topol.}, 11:1315--1355, 2007.
\newblock \href {https://doi.org/10.2140/gt.2007.11.1315}
  {\path{doi:10.2140/gt.2007.11.1315}}.

\bibitem{snappy}
Marc Culler, Nathan~M. Dunfield, Matthias Goerner, and Jeffrey~R. Weeks.
\newblock Snap{P}y, a computer program for studying the geometry and topology
  of three-manifolds.
\newblock \url{http://snappy.computop.org}.

\bibitem{DicksWright12}
Warren Dicks and David~J. Wright.
\newblock On hyperbolic once-punctured-torus bundles {IV}: {A}utomata for
  lightning curves.
\newblock {\em Topology Appl.}, 159(1):98--132, 2012.
\newblock \href {https://doi.org/10.1016/j.topol.2011.08.018}
  {\path{doi:10.1016/j.topol.2011.08.018}}.

\bibitem{Fenley12}
S\'ergio Fenley.
\newblock Ideal boundaries of pseudo-{A}nosov flows and uniform convergence
  groups with connections and applications to large scale geometry.
\newblock {\em Geom. Topol.}, 16(1):1--110, 2012.
\newblock \href {https://doi.org/10.2140/gt.2012.16.1}
  {\path{doi:10.2140/gt.2012.16.1}}.

\bibitem{Fenley16}
S\'{e}rgio~R. Fenley.
\newblock Quasigeodesic pseudo-{A}nosov flows in hyperbolic 3-manifolds and
  connections with large scale geometry.
\newblock {\em Adv. Math.}, 303:192--278, 2016.
\newblock \href {https://doi.org/10.1016/j.aim.2016.05.015}
  {\path{doi:10.1016/j.aim.2016.05.015}}.

\bibitem{FLT26}
Sérgio~R. Fenley, Michael~P. Landry, and Samuel~J. Taylor.
\newblock Simultaneous universal circles and continuous extension, 2026.
\newblock \href {https://arxiv.org/abs/2607.05703} {\path{arXiv:2607.05703}}.

\bibitem{Frankel15}
Steven Frankel.
\newblock Quasigeodesic flows and sphere-filling curves.
\newblock {\em Geom. Topol.}, 19(3):1249--1262, 2015.
\newblock \href {https://doi.org/10.2140/gt.2015.19.1249}
  {\path{doi:10.2140/gt.2015.19.1249}}.

\bibitem{FSS22}
Steven Frankel, Saul Schleimer, and Henry Segerman.
\newblock From veering triangulations to link spaces and back again, 2022.
\newblock \href {https://arxiv.org/abs/1911.00006} {\path{arXiv:1911.00006}}.

\bibitem{Freden95}
Eric~M. Freden.
\newblock Negatively curved groups have the convergence property. {I}.
\newblock {\em Ann. Acad. Sci. Fenn. Ser. A I Math.}, 20(2):333--348, 1995.

\bibitem{Freden97}
Eric~M. Freden.
\newblock Properties of convergence groups and spaces.
\newblock {\em Conform. Geom. Dyn.}, 1:13--23 (electronic), 1997.
\newblock \href {https://doi.org/10.1090/S1088-4173-97-00011-8}
  {\path{doi:10.1090/S1088-4173-97-00011-8}}.

\bibitem{FuterGueritaud13}
David Futer and Fran\c{c}ois Gu\'{e}ritaud.
\newblock Explicit angle structures for veering triangulations.
\newblock {\em Algebr. Geom. Topol.}, 13(1):205--235, 2013.
\newblock \href {https://arxiv.org/abs/1012.5134} {\path{arXiv:1012.5134}},
  \href {https://doi.org/10.2140/agt.2013.13.205}
  {\path{doi:10.2140/agt.2013.13.205}}.

\bibitem{GehringMartin}
F.~W. Gehring and G.~J. Martin.
\newblock Discrete convergence groups.
\newblock In {\em Complex analysis, {I} ({C}ollege {P}ark, {M}d., 1985--86)},
  volume 1275 of {\em Lecture Notes in Math.}, pages 158--167. Springer,
  Berlin, 1987.
\newblock \href {https://doi.org/10.1007/BFb0078350}
  {\path{doi:10.1007/BFb0078350}}.

\bibitem{GSS19}
Andreas Giannopolous, Saul Schleimer, and Henry Segerman.
\newblock A census of veering structures, 2025.
\newblock \url{https://math.okstate.edu/people/segerman/veering.html}.

\bibitem{Gueritaud13}
Fran\c{c}ois Gu\'{e}ritaud, 2013-05-28.
\newblock Personal communication.

\bibitem{Gueritaud16}
Fran\c{c}ois Gu\'{e}ritaud.
\newblock Veering triangulations and {C}annon-{T}hurston maps.
\newblock {\em J. Topol.}, 9(3):957--983, 2016.
\newblock \href {https://doi.org/10.1112/jtopol/jtw016}
  {\path{doi:10.1112/jtopol/jtw016}}.

\bibitem{HRST11}
Craig~D. Hodgson, J.~Hyam Rubinstein, Henry Segerman, and Stephan Tillmann.
\newblock Veering triangulations admit strict angle structures.
\newblock {\em Geom. Topol.}, 15(4):2073--2089, 2011.
\newblock \href {https://arxiv.org/abs/1011.3695} {\path{arXiv:1011.3695}},
  \href {https://doi.org/10.2140/gt.2011.15.2073}
  {\path{doi:10.2140/gt.2011.15.2073}}.

\bibitem{Jordan87}
Camille Jordan.
\newblock {\em Cours d'analyse de l'\'{E}cole polytechnique. {T}ome {III}}.
\newblock Gauthier-Villars, 1887.
\newblock [Calcul Int\'{e}gral. \'{E}quations diff\'{e}rentielles.].

\bibitem{Kapovich10}
Michael Kapovich.
\newblock {\em Hyperbolic manifolds and discrete groups}.
\newblock Modern Birkh\"auser Classics. Birkh\"auser Boston, Ltd., Boston, MA,
  2009.
\newblock Reprint of the 2001 edition.
\newblock \href {https://doi.org/10.1007/978-0-8176-4913-5}
  {\path{doi:10.1007/978-0-8176-4913-5}}.

\bibitem{Lackenby00}
Marc Lackenby.
\newblock Taut ideal triangulations of 3-manifolds.
\newblock {\em Geom. Topol.}, 4:369--395, 2000.
\newblock \href {https://doi.org/10.2140/gt.2000.4.369}
  {\path{doi:10.2140/gt.2000.4.369}}.

\bibitem{pyx}
J{\"o}rg Lehmann, Michael Schindler, and Andr{\'e} Wobst.
\newblock {PyX}: {A} {Python} package for the generation of {PostScript},
  {PDF}, and {SVG} graphics, 2025.
\newblock Version 0.17.
\newblock \url{https://pyx-project.org}.

\bibitem{Mather82}
John~N. Mather.
\newblock Foliations of surfaces. {I}. {A}n ideal boundary.
\newblock {\em Ann. Inst. Fourier (Grenoble)}, 32(1):viii, 235--261, 1982.
\newblock \href {https://doi.org/10.5802/aif.867} {\path{doi:10.5802/aif.867}}.

\bibitem{Mj98top}
Mahan Mitra.
\newblock Cannon--{T}hurston maps for hyperbolic group extensions.
\newblock {\em Topology}, 37(3):527--538, 1998.
\newblock \href {https://doi.org/10.1016/S0040-9383(97)00036-0}
  {\path{doi:10.1016/S0040-9383(97)00036-0}}.

\bibitem{Mj98jdg}
Mahan Mitra.
\newblock Cannon--{T}hurston maps for trees of hyperbolic metric spaces.
\newblock {\em J. Differential Geom.}, 48(1):135--164, 1998.

\bibitem{Mj09}
Mahan Mj.
\newblock Cannon--{T}hurston maps for pared manifolds of bounded geometry.
\newblock {\em Geom. Topol.}, 13(1):189--245, 2009.
\newblock \href {https://doi.org/10.2140/gt.2009.13.189}
  {\path{doi:10.2140/gt.2009.13.189}}.

\bibitem{Mj17}
Mahan Mj.
\newblock Cannon--{T}hurston maps for {K}leinian groups.
\newblock {\em Forum Math. Pi}, 5:e1, 49, 2017.
\newblock \href {https://doi.org/10.1017/fmp.2017.2}
  {\path{doi:10.1017/fmp.2017.2}}.

\bibitem{Mj18}
Mahan Mj.
\newblock Cannon--{T}hurston maps.
\newblock pages 885--917, 2018.

\bibitem{MjPal11}
Mahan Mj and Abhijit Pal.
\newblock Relative hyperbolicity, trees of spaces and {C}annon-{T}hurston maps.
\newblock {\em Geom. Dedicata}, 151:59--78, 2011.
\newblock \href {https://doi.org/10.1007/s10711-010-9519-2}
  {\path{doi:10.1007/s10711-010-9519-2}}.

\bibitem{MjSardar12}
Mahan Mj and Pranab Sardar.
\newblock A combination theorem for metric bundles.
\newblock {\em Geom. Funct. Anal.}, 22(6):1636--1707, 2012.
\newblock \href {https://doi.org/10.1007/s00039-012-0196-1}
  {\path{doi:10.1007/s00039-012-0196-1}}.

\bibitem{Moore25}
Robert~L. Moore.
\newblock Concerning upper semi-continuous collections of continua.
\newblock {\em Trans. Amer. Math. Soc.}, 27(4):416--428, 1925.
\newblock \href {https://doi.org/10.2307/1989234} {\path{doi:10.2307/1989234}}.

\bibitem{MSW02}
David Mumford, Caroline Series, and David Wright.
\newblock {\em Indra's pearls}.
\newblock Cambridge University Press, New York, 2002.
\newblock The vision of Felix Klein.
\newblock \href {https://doi.org/10.1017/CBO9781107050051.024}
  {\path{doi:10.1017/CBO9781107050051.024}}.

\bibitem{Osgood03}
William~F. Osgood.
\newblock A {J}ordan curve of positive area.
\newblock {\em Trans. Amer. Math. Soc.}, 4(1):107--112, 1903.
\newblock \href {https://doi.org/10.2307/1986455} {\path{doi:10.2307/1986455}}.

\bibitem{VeeringCodebase}
Anna Parlak, Saul Schleimer, and Henry Segerman.
\newblock veering 0.3, code for studying taut and veering ideal triangulations,
  2025.
\newblock \url{https://github.com/henryseg/Veering}.

\bibitem{Prasad73}
Gopal Prasad.
\newblock Strong rigidity of {${\bf Q}$}-rank {$1$} lattices.
\newblock {\em Invent. Math.}, 21:255--286, 1973.
\newblock \href {https://doi.org/10.1007/BF01418789}
  {\path{doi:10.1007/BF01418789}}.

\bibitem{rhino}
{Robert McNeel \& Associates}.
\newblock {\em Rhinoceros, Version 8.0}.
\newblock Robert McNeel \& Associates, Seattle, WA, 2024.
\newblock \url{https://rhino3d.com}.

\bibitem{SchleimerSegerman20}
Saul Schleimer and Henry Segerman.
\newblock Essential loops in taut ideal triangulations.
\newblock {\em Algebr. Geom. Topol.}, 20(1):487--501, 2020.
\newblock \href {https://doi.org/10.2140/agt.2020.20.487}
  {\path{doi:10.2140/agt.2020.20.487}}.

\bibitem{SchleimerSegerman24}
Saul Schleimer and Henry Segerman.
\newblock From loom spaces to veering triangulations.
\newblock {\em Groups Geom. Dyn.}, 18(2):419--462, 2024.
\newblock \href {https://doi.org/10.4171/ggd/742} {\path{doi:10.4171/ggd/742}}.

\bibitem{Schoenflies06}
A.~Schoenflies.
\newblock Beitr\"{a}ge zur {T}heorie der {P}unktmengen. {III}.
\newblock {\em Math. Ann.}, 62(2):286--328, 1906.
\newblock \href {https://doi.org/10.1007/BF01449982}
  {\path{doi:10.1007/BF01449982}}.

\bibitem{SteenSeebach95}
Lynn~Arthur Steen and J.~Arthur Seebach, Jr.
\newblock {\em Counterexamples in topology}.
\newblock Dover Publications, Inc., Mineola, NY, 1995.
\newblock Reprint of the second (1978) edition.

\bibitem{sage}
{The Sage Developers}.
\newblock {\em {S}age{M}ath, the {S}age {M}athematics {S}oftware {S}ystem},
  2025.
\newblock \url{https://www.sagemath.org}, \href
  {https://doi.org/10.5281/zenodo.6259615} {\path{doi:10.5281/zenodo.6259615}}.

\bibitem{Thurston80}
William~P. Thurston.
\newblock Geometry and topology of three-manifolds.
\newblock Lecture notes, 1978.
\newblock \url{https://library.slmath.org/nonmsri/gt3m/}.

\bibitem{Thurston82}
William~P. Thurston.
\newblock Three-dimensional manifolds, {K}leinian groups and hyperbolic
  geometry.
\newblock {\em Bull. Amer. Math. Soc. (N.S.)}, 6(3):357--381, 1982.
\newblock \href {https://doi.org/10.1090/S0273-0979-1982-15003-0}
  {\path{doi:10.1090/S0273-0979-1982-15003-0}}.

\bibitem{Thurston88}
William~P. Thurston.
\newblock On the geometry and dynamics of diffeomorphisms of surfaces.
\newblock {\em Bull. Amer. Math. Soc. (N.S.)}, 19(2):417--431, 1988.
\newblock \href {https://doi.org/10.1090/S0273-0979-1988-15685-6}
  {\path{doi:10.1090/S0273-0979-1988-15685-6}}.

\bibitem{Thurston97}
William~P. Thurston.
\newblock Three-manifolds, foliations and circles, {I}, 1997.
\newblock \href {https://arxiv.org/abs/math/9712268}
  {\path{arXiv:math/9712268}}.

\bibitem{Thurston98}
William~P. Thurston.
\newblock Hyperbolic structures on 3-manifolds, {II}: {S}urface groups and
  3-manifolds which fiber over the circle, 1998.
\newblock \href {https://arxiv.org/abs/math/9801045}
  {\path{arXiv:math/9801045}}.

\bibitem{Timorin10}
Vladlen Timorin.
\newblock Moore's theorem, 2010.
\newblock \href {https://arxiv.org/abs/arXiv:1001.5140}
  {\path{arXiv:arXiv:1001.5140}}.

\bibitem{Tukia94}
Pekka Tukia.
\newblock Convergence groups and {G}romov's metric hyperbolic spaces.
\newblock {\em New Zealand J. Math.}, 23(2):157--187, 1994.

\bibitem{Waldhausen68}
Friedhelm Waldhausen.
\newblock On irreducible {$3$}-manifolds which are sufficiently large.
\newblock {\em Ann. of Math. (2)}, 87:56--88, 1968.
\newblock \href {https://doi.org/10.2307/1970594} {\path{doi:10.2307/1970594}}.

\bibitem{Wilder49}
Raymond~Louis Wilder.
\newblock {\em Topology of {M}anifolds}.
\newblock American Mathematical Society Colloquium Publications, vol. 32.
  American Mathematical Society, New York, N. Y., 1949.

\bibitem{Yaman04}
Asl\i Yaman.
\newblock A topological characterisation of relatively hyperbolic groups.
\newblock {\em J. Reine Angew. Math.}, 566:41--89, 2004.
\newblock \href {https://doi.org/10.1515/crll.2004.007}
  {\path{doi:10.1515/crll.2004.007}}.

\bibitem{Zippin33}
Leo Zippin.
\newblock A {C}haracterisation of the {C}losed 2-{C}ell.
\newblock {\em Amer. J. Math.}, 55(1-4):207--217, 1933.
\newblock \href {https://doi.org/10.2307/2371123} {\path{doi:10.2307/2371123}}.

\end{thebibliography}
\end{document}